\tikzset{commutative diagrams/.cd, column sep/normal=3ex}
\def\hhmm{\number\hh:\ifnum\mm<10{}0\fi\number\mm}
\renewcommand{\phi}{\varphi}
\DeclareMathOperator{\image}{im}
\DeclareMathOperator{\Sw}{Sw}
\DeclareMathOperator{\Swan}{Swan}
\DeclareMathOperator{\supp}{supp}
\DeclareMathOperator{\unr}{unr}
\DeclareMathOperator{\pr}{pr}
\DeclareMathOperator{\sep}{sep}
\DeclareMathOperator{\sw}{sw}
\DeclareMathOperator{\Aug}{Aug}
\DeclareMathOperator{\rank}{rank}
\DeclareMathOperator{\tame}{tame}
\renewcommand{\epsilon}{\varepsilon}
\DeclareMathOperator{\card}{card}
\DeclareMathOperator{\coker}{coker}
\DeclareMathOperator{\Spec}{Spec}
\DeclareMathOperator{\Char}{char}
\DeclareMathOperator{\ord}{ord}
\DeclareMathOperator{\Aut}{Aut}
\DeclareMathOperator{\Hom}{Hom}
\renewcommand{\hom}{\Hom}
\DeclareMathOperator{\IntHom}{\underline{\Hom}}
\DeclareMathOperator{\HHom}{\mathscr{H}\!\!\mathit{om}}
\DeclareMathOperator{\can}{can}
\DeclareMathOperator{\End}{End}
\DeclareMathOperator{\alg}{alg}
\DeclareMathOperator{\Repf}{Repf}
\DeclareMathOperator{\Cu}{Cu}
\DeclareMathOperator{\Gal}{Gal}
\DeclareMathOperator{\id}{id}
\DeclareMathOperator{\RES}{Res}
\DeclareMathOperator{\GL}{GL}
\DeclareMathOperator{\opp}{opp}
\DeclareMathOperator{\rad}{rad}
\DeclareMathOperator{\Ind}{Ind}
\DeclareMathOperator{\Tr}{Tr}
\DeclareMathOperator{\Frac}{Frac}
\DeclareMathOperator{\Fib}{Fib}
\DeclareMathOperator{\sh}{sh}
\DeclareMathOperator{\sTor}{\mathcal{T}\!\!\mathit{or}}
\newcommand{\TopGroup}{\mathbf{TopGroup}}
\DeclareMathOperator{\FinSet}{\mathbf{FiniteSet}}
\DeclareMathOperator{\Set}{\text{\bf{Set}}}
\DeclareMathOperator{\FEt}{\mathbf{F\acute{E}t}}
\renewcommand{\div}{\mathop{\mathrm{div}}}
\DeclareMathOperator{\an}{\text{an}}
\newcommand{\ab}{\text{ab}}
\DeclareMathOperator{\et}{\text{\'et}}
\DeclareMathOperator{\rk}{rk}
\newcommand{\Z}{\mathbb{Z}}
\newcommand{\G}{\mathbb{G}}
\newcommand{\R}{\mathbb{R}}
\newcommand{\Q}{\mathbb{Q}}
\newcommand{\N}{\mathbb{N}}
\newcommand{\C}{\mathbb{C}}
\newcommand{\A}{\mathbb{A}}
\newcommand{\F}{\mathbb{F}}
\newcommand{\Qlb}{{\bar{\Q}_\ell}}
\newcommand{\llparen}{(\!(}
\newcommand{\rrparen}{)\!)}
\renewcommand{\P}{\mathbb{P}}
\newcommand{\fm}{\mathfrak{m}}
\newcommand{\fp}{\mathfrak{p}}
\newcommand{\fq}{\mathfrak{q}}
\renewcommand{\O}{\mathcal{O}}
\newcommand{\sC}{{\mathcal C}}
\newcommand{\sE}{{\mathcal E}}
\newcommand{\sF}{{\mathcal F}}
\newcommand{\sG}{{\mathcal G}}
\newcommand{\sI}{{\mathcal I}}
\newcommand{\sL}{{\mathcal L}}
\newcommand{\sM}{{\mathcal M}}
\newcommand{\sO}{{\mathcal O}}
\newcommand{\sR}{{\mathcal R}}
\newcommand{\sV}{{\mathcal V}}
\renewcommand{\subset}{\subseteq}
\renewcommand{\supset}{\supseteq}
\renewcommand{\subsetneq}{\subsetneqq}
\renewcommand{\supsetneq}{\supsetneqq}
\numberwithin{equation}{section}
\newtheorem{theorem}{Theorem}[section]
\newtheorem{proposition}[theorem]{Proposition}
\newtheorem{lemma}[theorem]{Lemma}
\newtheorem{conjecture}[theorem]{Conjecture}
\newtheorem{corollary}[theorem]{Corollary}
\theoremstyle{definition} 
\newtheorem{definition}[theorem]{Definition}
\newtheorem{convention}[theorem]{Convention}
\newtheorem{notation}[theorem]{Notation}
\newtheorem{example}[theorem]{Example}
\newtheorem{exercise}[theorem]{Exercise}
\newtheorem{remark}[theorem]{Remark}
\newtheorem*{claim}{Claim}
\newtheorem{question}[theorem]{Question}
\newcommand{\xr}[1] {\xrightarrow{#1}}
\newcommand{\inj}{\hookrightarrow}
\newcommand{\eq}[2]{\begin{equation}\label{#1}#2 \end{equation}}
\newcommand{\ml}[2]{\begin{multline}\label{#1}#2 \end{multline}}
\newcommand{\mlnl}[1]{\begin{multline*}#1 \end{multline*}}
\title{Introductory course on $\ell$-adic sheaves and their ramification
theory on curves}
\author{Lars Kindler}
\author{Kay R\"ulling}
\address{Freie Universit\"at Berlin, Mathematisches Institut, Arnimallee 3,
D-14195 Berlin, Germany}
\email{kindler@math.fu-berlin.de, kay.ruelling@fu-berlin.de}
\thanks{\emph{Email}: \texttt{kindler@math.fu-berlin.de,
kay.ruelling@fu-berlin.de}\\Both authors were supported by ERC Advanced Grant
226257. The second author is supported by the DFG Heisenberg Grant RU 1412/2-1.}
\begin{document}
	\begin{abstract}
	These are the notes accompanying 13 lectures given by the
	authors at
	the Clay Mathematics Institute Summer School 2014 in Madrid. They
	give an
	introduction into the theory of $\ell$-adic sheaves with emphasis on their
	ramification theory on curves.
	\end{abstract}
\maketitle
\tableofcontents

\section{Introduction}
These are the  notes accompanying 13 lectures given by the
authors at
the Clay Mathematics Institute Summer School 2014 in Madrid. 
The goal of this lecture series is to introduce the audience to the theory
of $\ell$-adic sheaves with emphasis on their ramification theory. 
Ideally, the lectures and these notes will equip
the audience with the necessary background knowledge to read
current literature
on the subject, particularly \cite{EK12}, which is the focus of a second
series of lectures at the same summer school. We do not attempt to give a
panoramic exposition of recent research in the subject.

Before giving an outline of this document, the authors wish to stress that
there is \emph{no original mathematical content in these notes, and that
inaccuracies, omissions and errors are solely their responsibility.} 


To introduce ideas, let $p$ be a prime number and $U$ a smooth,
connected curve over the finite field $\F_p$. If $\ell\neq p$ is a
second prime number, an $\ell$-adic sheaf can be understood as a continuous
representation $\rho:\pi_1^{\et}(U,u)\rightarrow \GL_r(E)$, where $E$ is a
finite extension of the field of $\ell$-adic numbers $\Q_{\ell}$, and where
$\pi_1^{\et}(U,u)$ is the \'etale fundamental group of $U$ with respect to the
base point $u$. This group is an
algebraic variant of the usual fundamental group of a topological space;
additionally, it carries a topology. We summarize its construction and
properties in Section
\ref{sec:fundamentalGroup}. 

A representation $\rho$ as above has a geometric interpretation (hence the word ``sheaf''),
which is explained in Section \ref{sec:l-adic-sheaves}. If $X$ is the unique smooth projective
curve over $\F_p$ containing $U$   as an open dense subvariety, then one might
ask whether $\rho$ extends to a representation of $\pi^{\et}(X,u)$,
i.e.~whether $\rho$
factors through the canonical map $\pi_1^{\et}(U,u)\rightarrow
\pi_1^{\et}(X,u)$. If it does, $\rho$ is called unramified. 
If it does not, then it is natural to ask whether it can be
measured ``how bad'' the ramification of $\rho$ is. The two important definitions
in this context are \emph{tame ramification} and \emph{wild ramification}.
Tamely ramified $\ell$-adic sheaves are much better understood than wildly
ramified ones and they behave similarly to 
regular singular local systems on Riemann surfaces. 

One of the main results presented in these notes is the construction and analysis of an invariant of
$\rho$, locally at the finitely many closed points of $X\setminus U$, which
measures how wild the ramification of $\rho$ is; this is the so called Swan
conductor. In the course of the construction, we give a proof of the Hasse-Arf
theorem and we show that the Swan conductor arises from the character of a
projective $\Z_{\ell}$-representation, the Swan representation.
See Sections \ref{Ram-groups-Hasse-Arf} and \ref{sec:swan}.

The second main result gives a
global, cohomological interpretation of the Swan conductor: This is the 
formula of Grothendieck-Ogg-Shafarevich, see Section \ref{sec:GOS}.

The final sections survey generalizations of these notions to higher
dimensional varieties. One approach based on ideas of Wiesend, further developed by Kerz, Schmidt, 
Drinfeld and Deligne is presented in Section \ref{RTvC}. The idea is to
study an $\ell$-adic sheaf via its ``skeleton'': If $U$ is an algebraic
variety over $\F_p$, consider the family  $\{\phi_C:C\hookrightarrow U\}$ of
all curves lying on $U$, and let $\phi^N_C:C^N\rightarrow C\hookrightarrow U$
be the composition with the normalization of $C$. If $\rho$ is an
$\ell$-adic sheaf on $U$, then, roughly, its skeleton is the family of
$\ell$-adic
sheaves $(\phi^N_C)^*\rho$ on $C^N$. Using the ramification theory on curves one obtains in this way
a ramification theory for $\ell$-adic sheaves in higher dimensions. 
Recently, Deligne proved that on a smooth connected scheme over a finite field, there are only
finitely many irreducible lisse $\Qlb$-representations with bounded rank and ramification, at least up to twist
with a character of the Weil group of the ground field,
see Theorem \ref{thm-finiteness-ram-sheaves}. One of the aims of these notes
is to give the background necessary to understand
the statement of this theorem. Notice, however, that its proof lies far beyond the scope of the material
presented here and we refer the reader to \cite{EK12} for details.

In the final section we give a very brief outlook on the higher-dimensional generalizations of the
Grothedieck-Ogg-Shafarevich formula due to Kato-Saito. There are further generalizations of this formula 
due to Abbes and Saito who also develop a ramification theory in higher
dimensions. We can say nothing about this,
but give some references for further reading at the end of Section
\ref{higher-GOS}.\\

\emph{Acknowledgements:} The authors wish to thank the participants of the
summer school for their attention and input. In particular they wish to thank
Pedro \'Angel Castillejo, Javier Fres\'an and Shahab Rajabi for pointing out numerous
misprints. The authors are also grateful to the referees for making several valuable comments.

\part{Ramification theory of local fields}

\section{Infinite Galois theory}\label{sec:galois}
In this section we first define the notion of a profinite group and then briefly summarize Galois theory for infinite algebraic
extensions. We will give more details in the more general situation of Section
\ref{sec:fundamentalGroup}.
\subsection{Profinite groups}
We begin by recalling the notion of a projective limit\index{projective
limit}; we mainly follow \cite[Ch.~V]{CasselsFroehlich}.

\begin{definition}
	\begin{enumerate}[label=(\alph*)]
		\item A \emph{directed set}\index{directed set} is a set $I$
			together with a partial ordering $\leq$, such that for
			every pair $i,j\in I$ there exists $n\in I$ such that
			$i\leq n$ and $j\leq n$.
		\item If $\mathcal{C}$ is a category, then a \emph{projective
				system in $\mathcal{C}$ indexed by a directed set
		$I$}\index{projective system} consists
			of the following data: For every $i\in I$ an object
			$X_i$ of $\mathcal{C}$ and for every $j\in I$ with $i \leq j$, a
			morphism $\phi_{ji}: X_j\rightarrow X_i$, such that if
			$i\leq j\leq n$, the diagram
			\begin{equation*}
				\begin{tikzcd}[row sep=.4cm, column sep=.7cm]
					X_n\ar[swap]{dd}{\phi_{nj}}\ar{dr}{\phi_{ni}}\\
					&X_i\\
					X_j\ar[swap]{ur}{\phi_{ji}}
				\end{tikzcd}
			 \end{equation*}
			commutes, and such that $\phi_{ii}=\id_{X_i}$.
%
		\item If 
			$\{ (X_i)_{i\in I}, (\phi_{ji})_{i\leq j\in I}\}$ is a
			projective system,
			let $\mathcal{C}'$ be the following category: Its 
			objects are tuples $(X,(q_i)_{i\in I})$, where
			$X$ is an object of $\mathcal{C}$ and
			$q_i:X\rightarrow X_i$ morphisms
			in $\mathcal{C}$, such that for every $i\leq
			j\in I$, the diagram
			\begin{equation*}
				\begin{tikzcd}[row sep=.4cm, column sep=.7cm]
					&X_j\ar{dd}{\phi_{ji}}\\
					X\ar{ur}{q_j}\ar[swap]{dr}{q_i}\\
					&X_i
				\end{tikzcd}
			 \end{equation*}
			 commutes.  A morphism $(X,(q_i)_{i\in
			 I})\rightarrow (X',(q'_i)_{i\in I})$ in
			 $\mathcal{C}'$ is a morphism $f:X\rightarrow X'$ in
			 $\mathcal{C}$, such that $q_i=q'_if$ for all $i\in
			 I$. 

			 If $\mathcal{C}'$ has a final object, then it is
			 unique up to unique isomorphism and this
			 final object is called the \emph{projective limit} of
			 the projective system $\{(X_i)_{i\in I},
			 (\phi_{ji})_{i\leq j\in I}\}$, and usually denoted
			 $(\varprojlim_I X_i, (\pr_i)_{i\in I})$. Often the
			 morphisms $\pr_i$ are omitted from the notation.
	\end{enumerate}
\end{definition}
\begin{example}\label{example:projectiveLimits}
	\begin{enumerate}[label=(\alph*)]
		\item The simplest example of a projective system is a
			constant projective system: If $X$ is an object of
			$\mathcal{C}$, and $I$ a directed set, consider the projective system given by
			$X_i=X$, $\phi_{ji}=\id_X$. Clearly, $\varprojlim_I X_i
			\cong X$, and $\pr_i=\id$. 
		\item\label{example:padics} If $\mathcal{C}$ is the category of groups, let $I=\N$
			with its usual order, $p$ a prime and $G_n:=\Z/p^n\Z$.
			If $m\geq n$, then the projection map $\phi_{mn}:\Z/p^{m}\rightarrow
			\Z/p^n$ makes $\{G_n, \{\phi_{mn}\}_{n\leq m\in N}\}$ into a projective
			system, and the projective limit $\varprojlim_{\N}
			G_n$ of this system is an abelian
			group, the \emph{$p$-adic integers} $\Z_p$. 
			
			Moreover, as the
			maps $\phi_{mn}$ are maps of rings, the group $\Z_p$
			also carries a ring structure, which makes it a
			projective limit in the category of commutative rings.
		\item\label{example:zhat} Let $\mathcal{C}$ be the category of groups and equip $I=\N$
			with the partial order defined by $(n$ ``$\leq$'' $m)
			:\Leftrightarrow n|m$. If $n,m$ are integers such that
			$n|m$, then again we have a projection morphism
			$\phi_{mn}:\Z/m\twoheadrightarrow \Z/n$, and
			$\{(\Z/n)_{n\in N}, (\phi_{mn})_{n|m\in \N}\}$ is a
			projective system. Its projective limit is an abelian
			group denoted $\widehat{\Z}$. It is not difficult to
			check that $\widehat{\Z}\cong \prod_{p\text{ prime}}
			\Z_p$.

			As in the previous example, $\widehat{\Z}$ is also a
			projective limit in the category of commutative rings.
		\item More generally: If $\mathcal{C}$ is the category of
			groups, $I$ a directed set and $\{(G_i)_{i\in I},
			(\phi_{ji})_{j\geq i\in I})\}$ a projective system in
			$\mathcal{C}$, then $\varprojlim_I G_i$ exists and it
			is a subgroup of the product $\prod_{i\in I} G_i$:
			\begin{equation}\label{eqn:product}\varprojlim_I G_i=\left\{\left. (g_i)\in \prod_{i\in I}
				G_i\right|\forall i\leq j:
				\phi_{ji}(g_j)=g_i\right\}\subset \prod_{i\in
				I} G_i.\end{equation}
			The morphisms $\pr_j:\varprojlim_I G_i\rightarrow G_j$
			belonging to the datum of the projective limit are
			induced by the projection maps $\prod_{i\in I}
			G_i\rightarrow G_j$.
	\end{enumerate}
\end{example}

We are now interested in the category $\TopGroup$ of topological groups. 
\begin{proposition}
	The projective limit of a projective system of topological groups
	exists. If the groups in the projective system are Hausdorff and
	quasi-compact, then so is
	the inverse limit of the system.  
\end{proposition}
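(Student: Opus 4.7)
The plan is to construct the projective limit explicitly as a subgroup of the product, equipped with the subspace topology, and then verify the universal property and the topological properties separately.

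First I would form the product $P := \prod_{i \in I} G_i$, endowed with the product topology, which makes it a topological group since each $G_i$ is one. Inside $P$ I set
\[
L := \left\{ (g_i) \in P \;\middle|\; \phi_{ji}(g_j) = g_i \text{ for all } i \leq j \right\},
\]
as suggested by Example \ref{example:projectiveLimits}(d), and give it the subspace topology. Using that each $\phi_{ji}$ is a continuous group homomorphism, one checks that $L$ is a subgroup of $P$, hence a topological group; the projection maps $\pr_i : L \to G_i$ are continuous and compatible with the transition maps by construction. Verifying the universal property in $\TopGroup$ reduces to the universal property of the product together with the continuity of the restriction of a continuous map to a subspace: given any topological group $X$ with compatible continuous maps $q_i : X \to G_i$, the product gives a unique continuous homomorphism $X \to P$, and the compatibility $\phi_{ji} \circ q_j = q_i$ forces its image to lie in $L$.

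For the topological assertion, I would argue that if each $G_i$ is Hausdorff and quasi-compact, then $P$ is Hausdorff (product of Hausdorff spaces) and quasi-compact (Tychonoff's theorem). The key step is to show that $L$ is a closed subset of $P$. For each pair $i \leq j$, the set
\[
L_{ji} := \left\{ (g_k) \in P \;\middle|\; \phi_{ji}(g_j) = g_i \right\}
\]
is the preimage of the diagonal $\Delta_{G_i} \subset G_i \times G_i$ under the continuous map $P \to G_i \times G_i$ sending $(g_k)$ to $(\phi_{ji}(g_j), g_i)$. Since $G_i$ is Hausdorff, $\Delta_{G_i}$ is closed, so each $L_{ji}$ is closed, and hence $L = \bigcap_{i \leq j} L_{ji}$ is closed in $P$. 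A closed subspace of a Hausdorff, quasi-compact space is itself Hausdorff and quasi-compact, which proves the claim.

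The only genuinely non-formal ingredients are Tychonoff's theorem and the fact that the diagonal of a Hausdorff group is closed; everything else is routine bookkeeping. The main thing to be careful about is that the transition maps $\phi_{ji}$ need to be continuous, which is part of the definition of a projective system in $\TopGroup$, so the closedness argument for $L$ goes through without issue.
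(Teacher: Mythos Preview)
Your proposal is correct and takes essentially the same approach as the paper, which simply says ``this follows from the description \eqref{eqn:product}'' and leaves all the details you spell out (the universal property check, Tychonoff, and the closedness of $L$ via the diagonal) to the reader. You have filled in exactly what the paper omits.
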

Indeed, this follows from the description \eqref{eqn:product}.

If
$G$ is a finite group, we consider it as a topological group by equipping it
with the discrete topology.

\begin{proposition}
	Let $I$ be a directed set and $\{ (G_i)_{i\in I},
	(\phi_{ji})_{i\leq j\in I}\}$ a projective system of finite groups.
	Its projective limit $\varprojlim_I G_i$ computed in the category of
	abstract groups exists and it is a subgroup of the product
	$\prod_{i\in I} G_i$. If we equip each $G_i$ with the discrete
	topology and $\varprojlim_I G_i$ with the topology induced by
	$\prod_{i\in I} G_i$, then $\varprojlim_I G_i$ is also a projective
	limit of the system $\{ (G_i)_{i\in I},(\phi_{ji})_{i\leq j \in I}\}$
	computed in the category of topological groups.

	Moreover, the topology on $\varprojlim_I G_i$ is the coarsest topology
	such that the projections $\pr_i$ are continuous.
\end{proposition}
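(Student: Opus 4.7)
The plan is to reduce everything to the explicit description of $\varprojlim_I G_i$ as a subgroup of $\prod_{i \in I} G_i$ given in Example \ref{example:projectiveLimits}(d), and then simply unwind the definitions of the product and subspace topologies. So the first step is to recall that description: as an abstract group, $\varprojlim_I G_i$ is the subgroup of $\prod_{i \in I} G_i$ consisting of compatible families, with transition maps $\pr_i$ induced by the canonical projections. This already settles the existence claim.

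Next I would equip the product $\prod_{i \in I} G_i$ with the product topology (each $G_i$ being discrete), which is a topological group since addition and inversion are defined coordinatewise. The subset $\varprojlim_I G_i$ is cut out by the equations $\phi_{ji}(\pr_j(x)) = \pr_i(x)$ for $i \leq j$; both sides are continuous maps into the discrete, hence Hausdorff, group $G_i$, so their equalizer is closed. In particular $\varprojlim_I G_i$ is a topological subgroup of $\prod_{i \in I} G_i$, and each restricted projection $\pr_i : \varprojlim_I G_i \to G_i$ is continuous.

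To verify the universal property in $\TopGroup$, let $(H,(q_i)_{i \in I})$ be a compatible family, with $H$ a topological group and $q_i : H \to G_i$ continuous group homomorphisms. By the universal property in the category of abstract groups, there is a unique group homomorphism $q : H \to \varprojlim_I G_i$ with $\pr_i \circ q = q_i$. The composite $H \xrightarrow{q} \varprojlim_I G_i \hookrightarrow \prod_{i \in I} G_i$ is continuous because its composition with every canonical projection equals $q_i$, and by the universal property of the product topology this suffices. Since $\varprojlim_I G_i$ carries the subspace topology, it follows that $q$ itself is continuous, which gives the desired uniqueness up to unique isomorphism in $\TopGroup$.

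For the final assertion, let $\tau$ denote the subspace topology and let $\tau'$ be any topology on $\varprojlim_I G_i$ for which all $\pr_i$ are continuous. By definition, $\tau$ is generated by preimages $\pr_i^{-1}(U)$ with $U \subseteq G_i$ open (equivalently, any subset, as $G_i$ is discrete), and each such set must be open in $\tau'$ by continuity of $\pr_i$; hence $\tau \subseteq \tau'$. None of the steps present a real obstacle — the only mildly substantive point is the verification of continuity of the universal map $q$, which as explained above is immediate from the universal property of the product topology.
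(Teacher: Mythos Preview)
Your argument is correct and is the standard one. The paper, however, states this proposition without proof: the only commentary following it is the paraphrase ``In other words: we get the same result if we equip the abstract group $\varprojlim_I G_i$ with the topology induced by the product, or if we compute the projective limit in the category of topological groups, by considering the $G_i$ as discrete groups.'' So there is no proof in the paper to compare against; your write-up simply fills in the routine details the authors chose to omit.
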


In other words: We get the same result if we equip the abstract group $\varprojlim_I G_i$ with the
topology induced by the product, or if we compute the projective limit
in the category of topological groups, by considering the $G_i$ as discrete
groups.

\begin{definition}
	A topological group $G$ is called \emph{profinite}\index{profinite
	group}, if it is
	isomorphic to the projective limit of a projective system of finite
	(discrete) groups in the category of topological groups.

	Similarly, if $p$ is a prime number, $G$ is called
	\emph{pro-$p$-group}\index{pro-$p$-group} if $G$ is isomorphic to the projective limit of a
	projective system of finite $p$-groups.
\end{definition}

\begin{example}
	\begin{enumerate}[label=(\alph*)]
		\item The groups $\Z_p$ and $\widehat{\Z}$ from Example
			\ref{example:projectiveLimits} are profinite groups. 
		\item The group $\Z_p$ is a pro-$p$-group.
	\item If $G$ is an abstract group, then we write
		\[\widehat{G}:=\varprojlim_{\begin{subarray}{c}H\lhd
				G\\(G:H)<\infty\end{subarray}}G/H,\]
			where $H$ runs through the directed set of normal
			subgroups of finite index of $G$. The profinite group $\widehat{G}$ is
			called the \emph{profinite completion of
			$G$}\index{profinite completion}.
		\item Similarly, if $p$ is a prime number,  we write 
			\[\widehat{G}^{(p)}:=\varprojlim_{\begin{subarray}{c}H\lhd
					G\\(G:H)=p\text{-power}\end{subarray}}G/H,\]
					where $H$ runs through the directed
					set of normal
					subgroups of finite $p$-power index.
					The pro-$p$-group
					$\widehat{G}^{(p)}$ is called the
					\emph{pro-$p$-completion of
					$G$}\index{pro-$p$-completion}.
	\end{enumerate}
\end{example}

In fact, there is a purely topological description of profinite groups:

\begin{proposition}[{\cite[V, \S1.4]{CasselsFroehlich}}]
	A topological group is profinite if and only if it is
	compact and
	totally disconnected.
\end{proposition}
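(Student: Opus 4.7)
The plan is to prove the two implications separately; the forward direction is formal, while the converse requires real work.

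For the direction profinite $\Rightarrow$ Hausdorff + quasi-compact + totally disconnected, I would invoke the explicit description \eqref{eqn:product}: a profinite group sits as a closed subgroup of $\prod_i G_i$ with each $G_i$ finite and discrete. Finite discrete groups are trivially Hausdorff, quasi-compact and totally disconnected; the product preserves Hausdorffness and total disconnectedness coordinatewise, and quasi-compactness by Tychonoff; closed subspaces inherit all three properties. One only needs to check that the image in the product is indeed closed, which follows because the conditions $\phi_{ji}(g_j)=g_i$ cut out a closed subset in a Hausdorff product.

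For the converse, the goal is to realise $G$ as the limit $\varprojlim_N G/N$ over its open normal subgroups. Granting that such $N$ form a fundamental system of neighborhoods of $e$, each $G/N$ is finite and discrete (quasi-compactness of $G$ and the open cover by $N$-cosets give $(G:N)<\infty$; openness of $N$ makes $G/N$ discrete). The canonical map $\varphi \colon G \to \varprojlim_N G/N$ is then a continuous group homomorphism. Hausdorffness combined with the neighborhood-basis property forces $\ker\varphi=\bigcap_N N=\{e\}$, so $\varphi$ is injective. Its image surjects onto every factor $G/N$ and is therefore dense; it is also compact as the continuous image of $G$, hence closed in the Hausdorff target, hence all of $\varprojlim_N G/N$. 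A continuous bijection from a quasi-compact space to a Hausdorff space is automatically a homeomorphism, so $\varphi$ is an isomorphism of topological groups.

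The main obstacle is thus establishing the neighborhood basis of open normal subgroups. I would first use the standard fact that in a compact Hausdorff totally disconnected space the clopen sets form a basis of the topology (this comes from the identification of the connected component of a point with the intersection of all clopens containing it). Given a clopen neighborhood $V$ of $e$, the compactness of $V$ together with the tube lemma applied to $V\times\{e\}\subset \mu^{-1}(V)$ (where $\mu\colon G\times G\to G$ is multiplication) produces a symmetric open neighborhood $W$ of $e$ with $VW\subset V$. By induction $W^n\subset V$ for all $n$, so $H=\bigcup_n W^n$ is a subgroup contained in $V$, and it is open because it contains $W$. Quasi-compactness of $G$ forces $(G:H)<\infty$, and intersecting the finitely many conjugates of $H$ produces an open normal subgroup $N\subset V$, which finishes the argument.
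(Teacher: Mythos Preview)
Your proof is correct and follows the standard route. Note, however, that the paper does not actually supply a proof of this proposition: it is stated with a citation to \cite[V, \S1.4]{CasselsFroehlich} and no argument is given in the text. So there is no ``paper's own proof'' to compare against; what you have written is essentially the classical argument one finds in the cited reference, and it would serve perfectly well as a self-contained replacement for the bare citation.
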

\begin{exercise}
	If $G$ is a topological group and $U$ an open subgroup, show
	that $U$ is also closed. Moreover, if $G$ is compact, show that $U$
	has finite index in $G$.
\end{exercise}
\begin{corollary}
	If $G$ is profinite, then 
	\[G\cong \varprojlim G/U,\]
	where $U$ runs through the set of open normal subgroups of $G$.
\end{corollary}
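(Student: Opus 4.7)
The plan is to construct the natural continuous homomorphism $\phi: G \to \varprojlim_U G/U$ using the universal property, and then verify it is a homeomorphism by showing it is a continuous bijection between compact Hausdorff spaces.

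First I would produce the map $\phi$. For each open normal subgroup $U \trianglelefteq G$, by the exercise just above we know $U$ is closed and of finite index, so $G/U$ is a finite discrete topological group, and the quotient $q_U: G \to G/U$ is continuous. The collection $\{q_U\}_U$, indexed by the directed set of open normal subgroups ordered by reverse inclusion, is compatible with the transition maps $G/U \to G/V$ for $U \subseteq V$, and hence induces a continuous homomorphism $\phi: G \to \varprojlim_U G/U$. Note the target is profinite, hence Hausdorff and compact.

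Next, injectivity. Write $G \cong \varprojlim_{i \in I} G_i$ for some projective system of finite discrete groups, with projections $p_i: G \to G_i$, and set $U_i := \ker(p_i)$, which is open and normal in $G$. From the explicit description \eqref{eqn:product} of the inverse limit as a subgroup of $\prod_i G_i$, an element $g \in G$ lies in every $U_i$ if and only if $p_i(g) = e$ for all $i$, which forces $g = e$. Hence $\bigcap_i U_i = \{e\}$, and a fortiori $\bigcap_U U = \{e\}$ where $U$ ranges over all open normal subgroups. Thus $\ker \phi = \{e\}$.

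For surjectivity, which I expect to be the main subtlety, I would use the compactness of $G$. Given $(\bar g_U)_U \in \varprojlim_U G/U$, consider the closed subsets $F_U := q_U^{-1}(\bar g_U) \subset G$; each is closed because $\{\bar g_U\}$ is open and closed in the discrete group $G/U$. The family $\{F_U\}$ has the finite intersection property: given $U_1, \ldots, U_n$, the subgroup $V := U_1 \cap \cdots \cap U_n$ is again open (finite intersection of opens) and normal, and the compatibility condition on $(\bar g_U)$ forces $F_V \subseteq F_{U_1} \cap \cdots \cap F_{U_n}$, with $F_V \neq \emptyset$ by surjectivity of $q_V$. Since $G$ is quasi-compact, $\bigcap_U F_U \neq \emptyset$, and any $g$ in this intersection satisfies $\phi(g) = (\bar g_U)_U$.

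Finally, $\phi$ is a continuous bijection from the compact space $G$ to the Hausdorff space $\varprojlim_U G/U$, so it is automatically a homeomorphism, and hence an isomorphism of topological groups. The only step that requires genuine care is the surjectivity argument, where one must pass from compatibility of the tuple $(\bar g_U)$ on pairwise to a global element via compactness; this is the step that genuinely uses that $G$ is quasi-compact rather than merely Hausdorff and totally disconnected.
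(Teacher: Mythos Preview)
Your argument is correct and is the standard proof of this fact. The paper states this result as a corollary of the topological characterization of profinite groups (Hausdorff, quasi-compact, totally disconnected) and the preceding exercise, without supplying a proof, so there is no detailed argument in the paper to compare against; your write-up fills in exactly the expected steps.

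One small remark: for injectivity you appeal to an explicit presentation $G \cong \varprojlim_i G_i$ to exhibit enough open normal subgroups with trivial intersection. This is perfectly fine and is in the spirit of the paper's definition of profinite. An alternative, closer to how the paper has arranged things, would be to invoke the topological characterization directly: in a compact Hausdorff totally disconnected group the open normal subgroups form a neighborhood base of the identity, hence their intersection is $\{e\}$. Either route works; yours is arguably more elementary since it avoids quoting that additional structural fact.
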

\begin{corollary}
	If $G$ is a profinite group and $H\subset G$ a closed normal subgroup,
	then $H$ and $G/H$ are both profinite.

	More precisely,
	\[ H\cong \varprojlim H/H\cap U\text{ and }G/H\cong \varprojlim G/UH,\]
	where in both cases $U$ runs through the set of open normal
	subgroups of $G$.
\end{corollary}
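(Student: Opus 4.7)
The plan is to construct the natural maps
\[
\Phi_H: H \longrightarrow \varprojlim_U H/(H\cap U), \qquad \Phi_{G/H}: G/H \longrightarrow \varprojlim_U G/(UH),
\]
(where $U$ runs over open normal subgroups of $G$) and show that each is an isomorphism of topological groups. Since $H\cap U$ is open normal in $H$ with finite quotient $H/(H\cap U)\hookrightarrow G/U$, and $UH$ is open normal in $G$ (here we use that $H$ is normal, so $UH$ is a subgroup) with finite quotient $G/(UH)$, the right-hand sides are both profinite by definition. Thus, once the isomorphisms are established, the corollary follows. The maps $\Phi_H$ and $\Phi_{G/H}$ are built from the obvious maps $H\to H/(H\cap U)$ and $G\to G/(UH)$ (the latter factoring through $G/H$), and they are continuous by the universal property of the projective limit.

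For injectivity of $\Phi_H$, an element in the kernel lies in $\bigcap_U(H\cap U)=H\cap\bigcap_U U=\{1\}$, using the preceding corollary $G\cong \varprojlim G/U$ which implies $\bigcap_U U=\{1\}$. For injectivity of $\Phi_{G/H}$, I need to identify the kernel of $G\to\varprojlim G/(UH)$ with $H$, i.e.\ prove $\bigcap_U UH=H$. The inclusion $\supset$ is obvious; for $\subset$, given $g\notin H$, I would use that $H$ is closed to find a neighborhood of $g$ disjoint from $H$; this neighborhood contains a coset $gU$ for some open normal $U$, and then $g\notin HU$ follows. I expect this step, together with the verification that the relevant coset-intersections behave well, to be the main obstacle, as it is the one point where the closedness hypothesis on $H$ really enters.

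For surjectivity, I first handle $H$: given a coherent family $(h_U(H\cap U))_U$, the coherence condition in particular says $h_V h_U^{-1}\in H\cap U\subset U$ for $V\subset U$, so $(h_U U)_U$ is a coherent family in $\varprojlim G/U\cong G$, giving an element $g\in G$ with $g\in h_U U$ for all $U$. Since $h_U\in H$, this shows $g\in\bigcap_U HU=H$, and then $gh_U^{-1}\in U\cap H$ gives $\Phi_H(g)=(h_U(H\cap U))_U$. For $G/H$, I would similarly lift a coherent family in $\varprojlim G/(UH)$ to a coherent family in $\varprojlim G/U$ (this is possible because each $G/U$ is finite, so one can choose compatible lifts along the surjections $G/U\twoheadrightarrow G/(UH)$ by a standard inverse limit/compactness argument), producing an element of $G$ whose image in $G/H$ maps to the given family.

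Finally, for the upgrade from group isomorphism to topological isomorphism, I would invoke the fact that a continuous bijection between compact Hausdorff spaces is a homeomorphism. The domain of $\Phi_H$ is compact Hausdorff (closed subspace of the compact Hausdorff $G$), and the domain of $\Phi_{G/H}$ is Hausdorff because $H$ is closed and compact as the continuous image of $G$; both targets are profinite hence compact Hausdorff. This closes the argument.
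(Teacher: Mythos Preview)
The paper does not supply a proof for this corollary; it is stated immediately after the topological characterization of profinite groups and the isomorphism $G\cong\varprojlim G/U$, and is evidently meant to be read as a routine consequence of those two facts. Your argument is correct and considerably more explicit than anything the paper provides.

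A couple of small comments. Your identification of $\bigcap_U UH=H$ as the place where closedness of $H$ enters is exactly right, and your argument for it is fine: if $gU\cap H=\emptyset$ then $g\notin HU$, since $g=hu$ would give $gu^{-1}\in gU\cap H$. For the surjectivity of $\Phi_{G/H}$, the phrase ``standard inverse limit/compactness argument'' is carrying genuine content; to make it precise you can either invoke that a cofiltered limit of nonempty finite sets is nonempty (applied to the fibers of $\varprojlim G/U\to\varprojlim G/(UH)$), or observe directly that the image of the compact group $G$ in the Hausdorff space $\varprojlim G/(UH)$ is closed and maps onto each finite factor, hence is dense, hence is everything. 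Either version completes the step.

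An alternative, closer in spirit to the paper's placement of the corollary, is to use the topological characterization: $H$ is closed in a compact Hausdorff totally disconnected space, hence is itself such, hence profinite; $G/H$ is compact (continuous image), Hausdorff ($H$ closed), and totally disconnected (the $UH/H$ are clopen and their intersection is trivial by the same $\bigcap_U UH=H$). One then gets the explicit projective-limit descriptions from the preceding corollary applied to $H$ and to $G/H$, after checking that the $H\cap U$ (resp.\ $UH/H$) are cofinal among open normal subgroups. Your direct approach avoids this cofinality check at the cost of building the isomorphisms by hand; both are perfectly valid.
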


\begin{remark}
	Some caution is required: If $G$ is a profinite group, then it is not
	always true  that the canonical map  $G\rightarrow \widehat{G}$ of $G$
	into its profinite completion is an
	isomorphism. In other words, it is not true that
	every finite index subgroup is open. However, if $G$ is a topologically
	finitely generated profinite group (i.e.~if there exists a finitely
	generated dense subgroup of $G$), then every subgroup of finite index
	is open (\cite{Segal}). 
	
	For example, if $p$ is a prime, consider the projective system
	$((\Z/p\Z)^n, \phi_{ji})$ indexed by $\N$, where for $j\geq i$,
	$\phi_{ji}:(\Z/p\Z)^j\rightarrow (\Z/p\Z)^i$ is the projection onto the
	first $i$ factors. Then $\varprojlim_n ((\Z/p\Z)^n, \phi_{ji})$ is the product of countably many copies of $\Z/p\Z$. This is a profinite group
	which has infinitely many finite index subgroups which are not open.
	This can be used to show that there are
	finite index subgroups of $\Gal(\overline{\Q}/\Q)$, which are not
	open. In fact this is true for the subfield
	of $\overline{\Q}$ spanned by $\sqrt{-1}$ and $\{\sqrt{p}\;|\;p\text{
	 prime}\}$ (exercise or \cite[\S7.]{Milne/FieldTheory}). 
\end{remark}
\subsection{The Galois correspondence for infinite algebraic extensions}
In this section we recall the basic statements of Galois theory for infinite
extensions, as it can be found, for example, in \cite{Neukirch/1999},
\cite{Milne/FieldTheory} or \cite{Szamuely}. For a more general treatment see Section
\ref{sec:fundamentalGroup}. 

We assume that the reader is familiar with Galois
theory of finite extensions.

Let $K$ be a field and $L$ an algebraic extension of $K$. The extension $L/K$
is called Galois if $L^{\Aut_K(L)}=K$. In this case we also write
$\Gal(L/K):=\Aut_K(L)$. Equivalently, $L/K$ is Galois if and only if the
minimal polynomial of every element $\alpha\in L$ splits into distinct linear
factors over $L$. In particular, Galois extensions are separable.
\begin{proposition}
	If $L/K$ is a Galois extension of fields, write 
	\[\mathcal{G}_{L/K}:=\{F\subset L\;|\; F/K\text{ is a finite Galois
	extension}\}.\] Then $L=\bigcup_{F\in \mathcal{G}_{L/K}}F$, and the inclusion
	relation makes $\mathcal{G}_{L/K}$ into a directed set. If 	$F_1\subset
	F_2\in \mathcal{G}_{L/K}$, then restriction
	of automorphisms
	induces a homomorphism $\Gal(F_2/K)\rightarrow \Gal(F_1/K)$. This
	makes the set of groups $\Gal(F/K)$, $F\in \mathcal{G}_{L/K}$, into a projective
	system, and we have
	\[\Gal(L/K)=\varprojlim_{F\in \mathcal{G}_{L/K}}\Gal(F/K).\]
	
	In particular, $\Gal(L/K)$ is a profinite group.
\end{proposition}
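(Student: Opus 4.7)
The plan is to produce the claimed isomorphism as the obvious restriction map $\sigma \mapsto (\sigma|_F)_F$, and to verify it is a bijection using the description of the projective limit of groups as a subgroup of $\prod_F \Gal(F/K)$ given in Example \ref{example:projectiveLimits}(d).

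First I would check that every $\alpha \in L$ lies in some $F \in \mathcal{G}_{L/K}$. Since $L/K$ is Galois, the minimal polynomial $f_\alpha \in K[x]$ of $\alpha$ splits into distinct linear factors over $L$. Let $F \subseteq L$ be the subfield generated over $K$ by the roots of $f_\alpha$; this is a finite, normal, and separable (because the roots are distinct) extension of $K$, hence finite Galois, and $\alpha \in F$. Thus $L = \bigcup_{F \in \mathcal{G}_{L/K}} F$. Next, $\mathcal{G}_{L/K}$ is directed by inclusion: for $F_1, F_2 \in \mathcal{G}_{L/K}$, the compositum $F_1F_2 \subseteq L$ is finite Galois over $K$, as it is the splitting field inside $L$ of the product of the minimal polynomials of finite sets of generators of $F_1$ and $F_2$. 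Finally, if $F_1 \subseteq F_2$ in $\mathcal{G}_{L/K}$, then $F_1/K$ is normal, so restriction $\sigma \mapsto \sigma|_{F_1}$ is a well-defined homomorphism $\phi_{F_2,F_1}\colon\Gal(F_2/K) \to \Gal(F_1/K)$; these are transitive in the obvious way, giving the desired projective system.

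Now the restriction maps assemble into a continuous homomorphism
\[
\Phi\colon \Gal(L/K) \longrightarrow \varprojlim_{F \in \mathcal{G}_{L/K}} \Gal(F/K), \qquad \sigma \longmapsto (\sigma|_F)_F.
\]
Injectivity is immediate from $L = \bigcup_F F$: if $\sigma|_F = \id$ for every $F$, then $\sigma = \id$. For surjectivity, given a compatible family $(\sigma_F)$, define $\sigma\colon L \to L$ by $\sigma(\alpha) := \sigma_F(\alpha)$ for any $F \in \mathcal{G}_{L/K}$ containing $\alpha$. Well-definedness follows because any two such fields $F, F'$ both embed into the compositum $FF' \in \mathcal{G}_{L/K}$, on which $\sigma_{FF'}$ restricts to both $\sigma_F$ and $\sigma_{F'}$. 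Checking that $\sigma$ is a $K$-algebra automorphism reduces to identities between finitely many elements at a time, each of which can be verified inside a single sufficiently large $F \in \mathcal{G}_{L/K}$. Profiniteness of $\Gal(L/K)$ is then inherited from the right-hand side, which is a projective limit of finite groups.

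The only genuine technical point is the stability of $\mathcal{G}_{L/K}$ under compositum — that is, that the compositum of two finite Galois subextensions of $L/K$ remains finite Galois. Once this is in place, both the directedness of $\mathcal{G}_{L/K}$ and the well-definedness step in the surjectivity argument become formal, and the rest of the proof is bookkeeping.
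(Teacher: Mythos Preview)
Your proof is correct and follows the standard argument. Note, however, that the paper does not actually give a proof of this proposition: it is stated without proof as part of a brief summary of infinite Galois theory, with the remark that more details will be given in the more general setting of the \'etale fundamental group in Section~\ref{sec:fundamentalGroup} (see also the references to \cite{Neukirch/1999}, \cite{Milne/FieldTheory}, \cite{Szamuely}). So there is no paper proof to compare against; what you have written is exactly the argument one would expect, and it is complete.
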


\begin{exercise}
	\begin{enumerate}[label=(\alph*)]
		\item Let $p$ be a prime, $n\in \N$, $q=p^n$  and $\F_{q}$ the
			field with $q$
	elements. Fix an algebraic closure $\overline{\F_q}$. Show that the
	profinite group $\Gal(\overline{\F}_q/\F_{q})$ is isomorphic to
	$\widehat{\Z}$.
\item Let $\Q^{\text{cyc}}\subset \C$ be the smallest subfield containing all roots of unity.
	Is $\Q^{\text{cyc}}$ a Galois extension of $\Q$? If so,  
	what is $\Gal(\Q^{\text{cyc}}/\Q)$?
\end{enumerate}
\end{exercise}

\begin{theorem}[Galois correspondence\index{Galois correspondence}]
	Let $L/K$ be a Galois extension and $\Gal(L/K)$ its Galois group. There is an order
	reversing bijection
\begin{equation*}
	\begin{tikzcd}[row sep=1ex]
			\left\{\text{subextensions of
			}L/K\right\}\rar{\cong}&\left\{\text{closed subgroups
			of }\Gal(L/K)\right\}\\
			F\rar[mapsto]&\Gal(L/F)
		\end{tikzcd}
	\end{equation*}

	Under this correspondence,
	\begin{enumerate}
		\item finite subextensions of $L/K$ correspond to
	open subgroups, 
\item subextensions which are Galois over $K$ correspond
	to closed normal subgroups.
	\end{enumerate}
\end{theorem}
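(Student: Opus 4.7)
The plan is to construct explicit inverse maps $F \mapsto \Gal(L/F)$ and $H \mapsto L^H$, checking the bijection and the two refinements by reducing to the classical finite Galois correspondence applied to the finite Galois subextensions $F' \in \mathcal{G}_{L/K}$. First I would verify the maps have the claimed targets. For a subextension $F$, the extension $L/F$ is again Galois (the minimal polynomial over $F$ of any $\alpha \in L$ divides the one over $K$, which splits into distinct linear factors), so $\Gal(L/F)$ is a genuine subgroup of $\Gal(L/K)$. Closedness follows because, for each $\alpha \in L$, the stabilizer $\Stab(\alpha) = \{\sigma : \sigma(\alpha)=\alpha\}$ is the preimage of a finite set under the continuous projection $\Gal(L/K) \to \Gal(F'/K)$ for any $F' \in \mathcal{G}_{L/K}$ with $\alpha \in F'$, whence $\Gal(L/F) = \bigcap_{\alpha \in F} \Stab(\alpha)$ is closed. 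In the opposite direction, for a closed subgroup $H \subseteq \Gal(L/K)$, the fixed field $L^H$ is manifestly a subextension.

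Next, the identity $L^{\Gal(L/F)} = F$ is immediate from $L/F$ being Galois. The serious point, which I expect to be the main obstacle, is $\Gal(L/L^H) = H$ for a closed $H$; the inclusion $\supseteq$ is tautological. For the reverse I would argue by contradiction. If $\tau \in \Gal(L/L^H) \setminus H$, then since $H$ is closed some basic open neighborhood $\tau \cdot \Gal(L/F')$ of $\tau$, with $F' \in \mathcal{G}_{L/K}$, is disjoint from $H$. Projecting to the finite group $\Gal(F'/K)$ gives $\bar\tau \notin \bar H$, so the \emph{finite} Galois correspondence applied to $F'/K$ and $\bar H$ produces an $\alpha \in (F')^{\bar H}$ with $\tau(\alpha) \neq \alpha$. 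Every element of $H$ fixes $\alpha$, so $\alpha \in L^H$, contradicting $\tau \in \Gal(L/L^H)$. This is precisely where closedness of $H$ is used, and it explains why arbitrary subgroups cannot appear in the correspondence.

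Finally I would address the two refinements. The open subgroups of $\Gal(L/K)$ are exactly those containing some $\Gal(L/F')$ with $F' \in \mathcal{G}_{L/K}$, since these form a neighborhood basis of the identity in the profinite topology; by the exercise above, any such open subgroup has finite index. Thus $H$ is open iff $L^H \subseteq F'$ for some such $F'$, iff $L^H/K$ is finite. For the correspondence between Galois subextensions and normal closed subgroups, if $F/K$ is Galois then $\sigma(F) = F$ for all $\sigma \in \Gal(L/K)$, and a direct conjugation computation shows $\Gal(L/F)$ is normal. Conversely, if $H$ is closed and normal, the identity $\sigma H \sigma^{-1} = H$ gives $\sigma(L^H) \subseteq L^H$ for every $\sigma$, so $L^H/K$ is stable under $\Gal(L/K)$, and together with separability inherited from $L/K$ this shows $L^H/K$ is Galois.
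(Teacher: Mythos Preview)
Your argument is correct and is the standard reduction to the finite Galois correspondence via the profinite description $\Gal(L/K)=\varprojlim_{F'\in\mathcal{G}_{L/K}}\Gal(F'/K)$; the key step $\Gal(L/L^H)=H$ is handled properly, and the use of closedness of $H$ is exactly where it should be. Note, however, that the paper does not actually prove this theorem: it is stated as background with references to \cite{Neukirch/1999}, \cite{Milne/FieldTheory}, and \cite{Szamuely}, and the promised generalization is deferred to Section~\ref{sec:fundamentalGroup}. So there is no in-paper proof to compare against; your write-up would serve perfectly well as the omitted argument.
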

\begin{remark}
	In Section \ref{sec:fundamentalGroup} we will see a vast
	geometric generalization of this correspondence. If $F/K$ is a finite
	separable extension, then the associated morphism $\Spec F \rightarrow
	\Spec K$ will be interpreted as a (nontrivial!) covering space. The
	Galois group of $F/K$ will then act as the group of deck
	transformations.
\end{remark}


\section{Ramification groups and the theorem of Hasse-Arf}\label{Ram-groups-Hasse-Arf}
In this rather lengthy section we summarize the ramification theory
of finite separable extensions of a complete discretely valued field. Our main
references are \cite{Serre/LocalFields} and \cite{Neukirch/1999}. If $L/K$
is such an extension, assume that the associated extension of residue fields is
also separable (which is automatic if the residue field of $K$ is perfect). In this case, if $L/K$ is Galois,  we will construct two descending filtrations on the Galois group
$G:=\Gal(L/K)$: the \emph{lower numbering filtration}  $\{G_u\}_{u\in \Z_{\geq
-1}}$ and the \emph{upper
numbering} $\{G^v\}_{v\in \R_{\geq -1}}$.  The subgroups appearing in 
both filtrations are the same, but the lower numbering is adapted to
taking subgroups, while the upper numbering is adapted to taking quotients of
$G$, i.e.~if $H\lhd G$ is a normal subgroup, then $H_u=G_u\cap H$, and
$(G/H)^v=G^v/(H\cap G^v)$.

The numbers $\lambda\in \R_{\geq -1}$, for which  $G^{\lambda}\neq
G^{\lambda+\epsilon}$ for all $\epsilon>0$, are called \emph{jumps} or
\emph{breaks} of the
filtration. The
theorem of Hasse-Arf (Theorem \ref{thm:HasseArf}) states that the jumps are integers if
$G$ is abelian. This theorem is a crucial ingredient for the ramification
theory of $\ell$-adic representations developed in the following sections.

\subsection{Discretely valued fields and discrete valuation rings}
We first recall some basic material on discrete valuations.

\begin{definition}
	Let $K$ be a field. A \emph{discrete valuation}\index{discrete
	valuation on $K$} is a surjective homomorphism of abelian groups
	\[v: K^\times \rightarrow \Z\]
such that $v(x+y)\geq \min\{v(x),v(y)\}$, for all $x,y\in K^\times$ with $x+y\neq 0$. We
	extend $v$ to $K$, with the convention that $v(0)=\infty$.

	If $K$ is equipped with a discrete valuation $v$, then $(K,v)$ is
	called a \emph{discretely valued field}\index{discretely valued
	field}.
\end{definition}
\begin{remark}
	Some authors do not require a valuation to be surjective, but only
	nontrivial.
\end{remark}

\begin{exercise}\label{ex:nonArchimedianTriangleInequality}
	Let $(K,v)$ be a discretely valued field and $x,y\in K$. Show that
	$v(x+y)=\min\{v(x),v(y)\}$ if $v(x)\neq v(y)$.
\end{exercise}

If $(K,v)$ is a discretely valued field, then the subset $A:=A_K:=\{x\in K|
v(x)\geq 0\}$ is a commutative ring with unique maximal ideal $\mathfrak{m}:=\mathfrak{m}_K:=\{x\in
A| v(x)>0\}$. The local ring $A$ is called \emph{valuation ring of $K$}, and
it is in fact a \emph{discrete valuation
ring}\index{discrete valuation ring}, i.e.~a local principal ideal domain,
which is not a field. A generator $\pi$ of $\mathfrak{m}_K$ is
called \emph{uniformizer of $A$, or uniformizer of $K$}\index{uniformizer}. With our definitions, we
always have $v(\pi)=1$. The
\emph{residue field}\index{residue field} $A/\mathfrak{m}_K$ is also said to be the residue field of
$K$.

\begin{example}
	The three main examples to keep in mind are as follows:
	\begin{enumerate}[label=(\alph*)]\label{ex:dvr}
		\item Let $K=\Q$ be the field of rational numbers and $p$ a
			prime number. An nonzero integer $a\in \Z$ can be uniquely
			written as $a=a'p^r$, with $a'\in \Z$ prime to $p$,
			and $r\in
			\Z_{\geq 0}$. We define $v_p(a):=r$, and for
			$\frac{a}{b}\in
			\Q^\times$, $v_p(\frac{a}{b}):=v_p(a)-v_p(b)$. This defines a
			discrete valuation on $\Q$, which is called the
			\emph{$p$-adic valuation}. The valuation ring
			associated with this valuation is $\Z_{(p)}$: the ring
			of rational numbers with denominators prime to $p$,
			and clearly $p$ is a uniformizer of $(\Q,v_p)$. The
			residue field of $(\Q,v_p)$ is $\F_p$, the finite field with
			$p$ elements.
		\item\label{ex:dvr:curve} Let $k$ be a field and $C$ a
			connected, affine, normal, algebraic curve
			over $k$. This means that $C$ is of the form $C=\Spec
			A$, with $A$ a normal, finite type $k$-algebra of
			Krull dimension $1$. Let $K:=k(C)$ be its function field,
			i.e., the local ring of $C$ at its generic point, which
			amounts to saying that $K$ is the field of fractions
			of $A$.  If
			$c\in C$ is a closed point, we can associated with it
			a discrete valuation $v_c$ on $K$: $c$ corresponds to
			a maximal ideal $\mathfrak{m}_c\subset A$, and if
			$f\in A\setminus \{0\}$, there exists an integer $r\in\Z_{\geq 0}$,
			such that $f\in \mathfrak{m}_c^r\setminus
			\mathfrak{m}_c^{r+1}$. We define $v_c(f):=r$, and for
			$\frac{f}{g}\in K^\times$, $v_c(\frac{f}{g})=v_c(f)-v_c(g)$.
			The discrete valuation ring associate with $v_c$ is
			$A_{\mathfrak{m}_c}=\mathcal{O}_{C,c}$, and its
			residue field $k(c)=A_{\mathfrak{m}_c}/\mathfrak{m}_c$ is a finite extension of $k$.

			If $C$ is a normal, proper curve over $k$, the converse is also true: Every discrete valuation of
			$K$ which is trivial on $k$ is of the shape $v_c$ for some closed point $c\in
			C$ (valuative criterion of properness).
		\item More generally, if $k$ is a field and $X$ a connected, finite type,
			normal $k$-scheme, to every codimension $1$ point
			$\eta$ of $X$ (i.e.~to every point $\eta$ such that the closure of $\{\eta\}$ 
			has codimension $1$), one can attach a discrete valuation
			$v_{\eta}$ in the same way as in the previous example.
			However, if $\dim X>1$, there are discrete valuations
			on the function field $K=k(X)$ which do not arise from
			codimension $1$ points of $X$\footnote{There are even
				discrete valuations which do not arise from
				codimension $1$ points on a different model of
				$k(X)$, see \cite[Ch.~8, Thm.~3.26]{Liu}.}.
	\end{enumerate}
\end{example}

The globalization of the notion of a discrete valuation ring is called
\emph{Dedekind domain}\index{Dedekind domain}. 

\begin{definition} A \emph{Dedekind
	domain} $A$ is a noetherian integral domain, which is not a field, such that the localization at each
maximal ideal is a discrete valuation ring. This is equivalent to $A$ being
normal, noetherian and $1$-dimensional (see \cite[Ch.~I,
Prop.~4]{Serre/LocalFields}).
\end{definition}
Of course $\Z$ is a Dedekind domain,
and so is the ring $A$ from Example \ref{ex:dvr}, \ref{ex:dvr:curve}. In fact, geometrically, one can think of Dedekind domains as rings
of functions on a smooth curve.

\subsection{Completion}
A discretely valued field $(K,v)$ is equipped with a natural topology, given
by the ``ultrametric'' norm $\|\cdot\|_v$ defined by $\|x\|_v=\exp(-v(x))$. In
fact, one could also take $\|x\|:=a^{v(x)}$ for $a$ any real number in
$(0,1)$, but the induced topology on $K$ is independent of the choice of $a$.
We will mostly be interested in discretely valued fields $(K,v)$ which are
complete with respect to this topology. Such fields are, naturally, called
\emph{complete discretely valued fields}\index{complete discretely valued
field}. If $(K,v)$ is complete, then its valuation
ring $A$ is complete in the $\mathfrak{m}_K$-adic topology, i.e.~$A\cong
\varprojlim_n A/\mathfrak{m}_K^n$.

There is a standard process of completing a discretely valued field $(K,v)$: Let
$K_v$ denote the ring of Cauchy
sequences in $K$ modulo the maximal ideal of sequences converging to
$0$. The valuation $v$ can be  extended uniquely to $K_v$ according to the exercise below,
so that $(K_v, v)$ becomes a complete discretely valued field. If $A$ is the valuation ring of $K$ with maximal ideal $\mathfrak{m}_K$, then $K_v$ is the fraction field of the complete discrete valuation ring $\varprojlim_n A/\mathfrak{m}^n$.

\begin{exercise}
	Let $(a_n)_{n\in \N}$ be a Cauchy sequence in $K$ not converging to
	$0$. Show that the sequence of
	integers  $(v(a_n))_{n\in \N}$  becomes stationary.
\end{exercise}

\begin{example}We extend Example \ref{ex:dvr}:
	\begin{enumerate}[label=(\alph*)]
		\item If $K=\Q$ is given the $p$-adic valuation $v_p$, then
			its completion is the field of $p$-adic numbers
			$\Q_p$. Its valuation ring is the ring of $p$-adic
			integers $\Z_p$. Note that the additive group
			underlying $\Z_p$ coincides with the
			pro-$p$-completion of the abelian group $\Z$.
		\item If $k$ is a field and $C$ a normal, connected, algebraic curve over
			$k$, and $c\in C(k)$ a rational point, then the choice
			of a uniformizer $t$ of $\mathcal{O}_{C,c}$ gives rise
			to a canonical isomorphism between the completion
			$(K_c, v_c)$ of $k(C)$ with respect to
			$v_c$ and the
			field $k\llparen t\rrparen$ of Laurent series over $k$.
			Geometrically, $K_c$ should be thought of as an
			infinitesimally small, punctured disc around $c$.
	\end{enumerate}
\end{example}

\subsection{Extensions of discretely valued fields}\label{sec:extensions}
If $(K,v)$ is a complete discretely valued field with valuation ring $A$, and $L/K$ a finite
extension of fields, then there exists a unique discrete valuation $v_L$ on $L$
and an integer $e> 0$, such that for all $x\in K$, $v_L(x)=ev(x)$, and $L$ is
complete with respect to the discrete valuation $v_L$. Moreover, 
the integral closure $B$ of $A$ in $L$ is a complete discrete valuation
ring and a free $A$-module of rank $[L:K]$ (\cite[Ch.~II,
Prop.~3]{Serre/LocalFields}).

\begin{definition}\label{defn:localRamification}Keep the notations from the previous paragraph.
\begin{itemize}
	\item By abuse of notation we say that \emph{$v_L$ extends $v$}.
	\item The integer $e$ is called the \emph{ramification index of $L$ over $K$}\index{ramification index}.
	\item The extension $L/K$ induces a finite extension of their residue fields. Its degree is denoted by $f$ and called the \emph{residue degree of $L$ over $K$}.
	\item We say that $L/K$ is \emph{totally ramified}\index{totally ramified} if $f=1$.
	\item If $e=1$ and if the extension of residue fields is separable, then $L/K$ is
called \emph{unramified}\index{unramified}.
\end{itemize}
\end{definition}

In the situation of the definition, if $n=[L:K]$, then
$n=ef$.\\

We globalize:
Let $(K,v)$ be a discretely valued field (not necessarily complete) with
valuation ring $A$ and $L/K$  a finite extension. Denote by $B$ the integral
closure of $A$ in $L$, then according to the Krull-Akizuki theorem
(\cite[Ch.~I, Thm.~12.8]{Neukirch/1999}) $B$ is a Dedekind domain, integral
over $A$, but not necessarily local. In other words: The discrete valuation
$v$ can be extended to $L$, but there are finitely many distinct valuations
$w_1,\ldots, w_r$ which extend it. 
\begin{lemma}\label{lemma:japanese}
	The Dedekind domain $B$ is a finitely generated $A$-algebra, and hence
	a finitely generated $A$-module, in each of the
	following situations:
	\begin{enumerate}
		\item $A$ is a complete discrete valuation ring
			(see the discussion
			before Definition \ref{defn:localRamification}).
		\item $L/K$ is separable
			\emph{(\cite[Ch.~I, Prop.~8]{Serre/LocalFields})}.
		\item $A$ is the localization of a finitely generated algebra
			over a field\footnote{or
			more generally, if $A$ is a \emph{Japanese/N-2 ring}, see
			\cite[(31.H)]{Matsumura/CommutativeAlgebra}.}.
	\end{enumerate}
\end{lemma}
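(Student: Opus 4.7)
My plan is to treat the three cases separately; in each it suffices to prove that $B$ is finitely generated as an $A$-module, since Krull--Akizuki already gives that $B$ is Noetherian and integral over $A$. Throughout, note that $A$, being a DVR, is integrally closed in $K$.

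Case (a) requires nothing new: the complete case was recalled in the paragraph preceding Definition \ref{defn:localRamification}, following \cite[Ch.~II, Prop.~3]{Serre/LocalFields}; there $B$ is itself a complete DVR and a free $A$-module of rank $[L:K]$. For case (b) I would run the standard dual-basis argument via the trace pairing. Choose a $K$-basis $(e_1,\dots,e_n)$ of $L$ whose elements lie in $B$, possible after clearing denominators. Separability of $L/K$ makes the symmetric bilinear form $(x,y)\mapsto\Tr_{L/K}(xy)$ non-degenerate on $L$, so it admits a dual basis $(e_1^*,\dots,e_n^*)$. For $b\in B$, the element $be_i$ is integral over $A$, and since the trace of an element integral over an integrally closed base lies in that base one has $\Tr_{L/K}(be_i)\in A$. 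Expanding $b=\sum_i\Tr_{L/K}(be_i)\,e_i^*$ then exhibits $B$ as a submodule of the finitely generated $A$-module $\bigoplus_i A\,e_i^*$, and Noetherianity of $A$ finishes the argument.

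For case (c), my plan is to invoke the Japanese (N-2) property for localizations of finite-type $k$-algebras. Nagata's theorem asserts that every finitely generated algebra $A_0$ over a field $k$ is a Nagata ring; in particular, if $A_0$ is a domain with fraction field $K_0$, then for every finite field extension $M/K_0$ the integral closure of $A_0$ in $M$ is a finite $A_0$-module. By hypothesis $A=(A_0)_\p$ for some prime $\p\subset A_0$, so the integral closure $B$ of $A$ in $L$ is the localization $B_0\otimes_{A_0}A$ of the integral closure $B_0$ of $A_0$ in $L$; since $B_0$ is finite over $A_0$, $B$ is finite over $A$, as desired. The main obstacle is precisely this case: Nagata's theorem is the only nontrivial input, and it genuinely matters here because the trace argument of (b) breaks down when $L/K$ is inseparable, the relevant situation in positive characteristic.
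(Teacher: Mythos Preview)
Your proposal is correct. The paper does not actually prove this lemma; it states it with references in lieu of a proof --- part (a) points back to the discussion before Definition~\ref{defn:localRamification}, part (b) to \cite[Ch.~I, Prop.~8]{Serre/LocalFields}, and part (c) to \cite[(31.H)]{Matsumura/CommutativeAlgebra} via the Japanese/N-2 footnote. Your arguments are precisely the standard proofs behind those citations: the dual-basis trace argument for (b) is exactly Serre's proof, and invoking Nagata's theorem plus the compatibility of integral closure with localization is the content of the Matsumura reference for (c). So you have supplied what the paper deliberately omitted, and there is nothing to correct.
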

In our applications, $B$ will always be a finitely  generated, hence finite, $A$-algebra, so from now on
we tacitly assume this is the case.

\begin{proposition}[{\cite[Ch.~II, Thm.~1]{Serre/LocalFields}}]Let $(K,v)$ be a
	discretely valued field with valuation ring $A$ and $L/K$ a finite
	extension of degree $n$. Assume that the integral closure $B$ of $A$ in $L$ is
	a finite $A$-algebra. Write $w_1,\ldots, w_r$ for the valuations of
	$L$ extending $v$.
	\begin{enumerate}[label=\emph{(\alph*)}]\label{prop:extensionAndCompletion}
		\item The canonical map $L\otimes_K K_{v}\rightarrow
			\prod_{i=1}^rL_{w_i}$ is an isomorphism.
		\item $L_{w_i}/K_{v}$ is a finite extension.
	\end{enumerate}
\end{proposition}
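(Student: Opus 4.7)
The plan is to compute the $\m_K$-adic completion of $B$ in two different ways and compare. First I would observe that $B$ is a semi-local Dedekind domain whose maximal ideals correspond bijectively to the valuations $w_1,\ldots,w_r$: since the valuation ring $A=A_K$ is a local DVR and $B$ is a finite $A$-algebra, $B$ is semi-local, and any extension of $v$ to $L$ has a valuation ring that contains $B$ (by integrality), so the $w_i$ are exactly the valuations associated to the maximal ideals $\m_1,\ldots,\m_r$ of $B$. The valuation ring of $w_i$ is $B_{\m_i}$, and $L_{w_i}$ is the fraction field of the $\m_i$-adic completion $\widehat{B_{\m_i}}$.

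Next I would write $\m_K B = \m_1^{e_1}\cdots\m_r^{e_r}$, which is available because $B$ is Dedekind. The Chinese Remainder Theorem then gives $B/\m_K^n B \cong \prod_{i=1}^r B/\m_i^{n e_i}$ for every $n\geq 1$, and passing to the inverse limit yields
\[
\widehat{B} \;:=\; \varprojlim_n B/\m_K^n B \;\cong\; \prod_{i=1}^r \widehat{B_{\m_i}}.
\]
On the other hand, since $B$ is a finite $A$-module and $A$ is noetherian, the standard fact that completion of a finite module over a noetherian local ring coincides with the tensor product with the completion of the ring gives $\widehat{B} \cong B\otimes_A \widehat{A} = B\otimes_A A_{K_v}$. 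Combining both identifications and then inverting a uniformizer (equivalently, tensoring further with $K_v$ over $A_{K_v}$, which turns $B$ into $L$ on the left and $\widehat{B_{\m_i}}$ into $L_{w_i}$ on the right) yields the canonical isomorphism
\[
L \otimes_K K_v \;\cong\; \prod_{i=1}^r L_{w_i},
\]
proving (a). Assertion (b) is then immediate: $B\otimes_A A_{K_v}$ is a finite free $A_{K_v}$-module of rank $n=[L:K]$, so the product $\prod_i L_{w_i}$ has dimension $n$ over $K_v$, forcing each $L_{w_i}/K_v$ to be finite (and $\sum_i [L_{w_i}:K_v] = n$).

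The main obstacle is the bookkeeping for the two completion identities: one must verify that the hypotheses ensure $B$ is a semi-local Dedekind domain whose maximal ideals really do exhaust the extensions of $v$, and that completion commutes with the tensor product $B\otimes_A -$. The latter is precisely where the finiteness hypothesis on $B/A$ (granted by Lemma \ref{lemma:japanese}) enters in an essential way; without it, $B\otimes_A A_{K_v}$ would not in general coincide with $\widehat{B}$, and neither the isomorphism in (a) nor the finiteness in (b) could be extracted from this argument.
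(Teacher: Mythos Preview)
The paper does not supply its own proof of this proposition; it simply records the statement and cites \cite[Ch.~II, Thm.~1]{Serre/LocalFields}. Your argument is correct and is in fact the standard proof one finds in that reference: compute the $\m_K$-adic completion of $B$ two ways, once via CRT and the Dedekind factorization $\m_K B=\prod_i\m_i^{e_i}$, and once via the finiteness of $B$ over $A$ to identify it with $B\otimes_A \widehat{A}$, then invert a uniformizer. Nothing further to add.
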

Thus, for each $i$ we obtain numbers $n_i$, $e_i$, and $f_i$: the degree,
ramification index and residue degree of the extension $L_{w_i}/K_{v}$ of
complete discretely valued fields. As before we have $n_i=e_if_i$ and
\begin{equation}\label{eq:nef-formular}n= \sum_{i=1}^r n_i=\sum_{i=1}^re_if_i.\end{equation}

There is also an ideal theoretic interpretation of these numbers. The Dedekind
domain $B$ is a finite extension of the discrete valuation ring $A$, and the
discrete valuations $w_1,\ldots, w_r$ correspond to the finitely many nonzero prime 
ideals $\mathfrak{P}_i$ of $B$, i.e.~$\mathfrak{P}_i=\{b\in B|w_i(b)>0\}$. Since $B$
is a Dedekind domain, it has the pleasant property that any ideal $I$ can be uniquely
written as a product
\[I=\mathfrak{P}_1^{s_1}\cdot\ldots\cdot\mathfrak{P}_r^{s_r}.\]
In particular, if
$\mathfrak{m}$ is the maximal ideal of $A$, then  
$\mathfrak{m}B$ is a proper ideal in $B$, as the extension is integral, and
hence $\mathfrak{m}B$ can be written as such a product of prime ideals.
Indeed, 
\[\mathfrak{m}B=\mathfrak{P}_1^{e_1}\cdot\ldots\cdot\mathfrak{P}_r^{e_r},\]
where the $e_i$ are precisely the ramification indices from above. Moreover,
since $B$ is $1$-dimensional, the (nonzero) prime ideals $\mathfrak{P}_i$ are
maximal ideals, so we get finite extensions $k(v)=A/\mathfrak{m}\subset
B/\mathfrak{P}_i$, and these extensions are precisely the degree $f_i$
extensions of the residue fields of $L_{w_i}/K_{v}$.

\begin{definition}\label{def:unramified}
	Keep the notation from the previous paragraph. 
	\begin{enumerate}[label=(\alph*)]
		\item If $r=1$ and $f_1=1$, i.e.~if there is only one
			valuation $w_1$ on $L$
			extending $v$ and the residue extension is trivial,
			then $L/K$ is called \emph{totally
			ramified}\index{totally ramified}.
		\item If $e_i=1$ and if the residue extension
			$A/\mathfrak{m}_K\subset B/\mathfrak{P}_i$ of
			$L_{w_i}/K_{v_K}$ is separable, then $L/K$ is called
			\emph{unramified at $\mathfrak{P}_i$ or at
		$w_i$}\index{unramified}.
	\item If $L/K$ is unramified at $w_1,\ldots, w_r$, then $L/K$ is
		called \emph{unramified}\index{unramified}.
\end{enumerate}
\end{definition}
\subsection{Discriminant and different}
We start with a definition.
\begin{definition}\label{defn:norm}
	Let $L/K$ be a finite extension of fields, or, more generally, let $K$
	be a ring and $L$ a $K$-algebra which is free of finite rank as a
	$K$-module.
	\begin{itemize}
		\item For $x\in L$ define $N_{L/K}(x)\in K$ as the determinant of the $K$-linear endomorphism of $L$ given by multiplication with $x$.  
		The map $N_{L/K}:L^\times\rightarrow K^\times$  is called the	\emph{norm of $L/K$}\index{norm}.

	\item Similarly we define the \emph{trace map of $L/K$}\index{trace},
	$\Tr_{L/K}:L\rightarrow K$, by defining $\Tr_{L/K}(x)$ to be the trace
	of the $K$-linear endomorphism of $L$ given by multiplication with
	$x$.
	\end{itemize}
\end{definition}
An easy computation shows the following standard facts.
\begin{proposition}
	Let $L/K$ be a finite separable extension, and fix an algebraic closure
	$\overline{K}$ of $K$. Let $\sigma_1,\ldots, \sigma_r$ be the
	$K$-embeddings of $L$ into $\overline{K}$.
	Then for $x\in L$, we have
	\[N_{L/K}(x)=\prod_{i=1}^r\sigma_i(x)\]
	and
	\[\Tr_{L/K}(x)=\sum_{i=1}^r\sigma_i(x).\]
	Moreover if $K$ is complete with respect to the discrete valuation $v$, and $v_L$ discrete valuation on $L$ extending $v$, then for $x\in L$,
	\[v_L(x)=\frac{1}{f}v(N_{L/K}(x)),\]
	where $f$ is the degree of the residue extension of $L/K$.
\end{proposition}
Recall the following fact.
\begin{proposition}[{\cite[IX, \S2, Prop.~5]{Bourbaki/AlgebreIX}}]
	\label{prop:separability}
	If $R$ is a finite $K$-algebra, then $R$ is a product of finite separable field
	extensions of $K$ if and
	only if $R\times R\rightarrow R$, $(x,y)\mapsto \Tr_{L/K}(xy)$ is a nondegenerate bilinear form. 
\end{proposition}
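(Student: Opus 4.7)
The plan is to treat the two implications separately and reduce each to statements about a single field extension, exploiting the fact that $R$ will (in both directions) decompose as a product $R = \prod_i L_i$ and the trace form splits accordingly: for $x = (x_i), y = (y_i) \in R$, one has $T_{R/K}(x,y) = \sum_i T_{L_i/K}(x_i, y_i)$, so the form on $R$ is the orthogonal direct sum of the forms on the $L_i$, and is nondegenerate iff each summand is.

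For the \emph{forward direction}, assume $R = \prod_i L_i$ with $L_i/K$ finite separable. By the splitting above, it suffices to show that $T_{L/K}$ is nondegenerate for a single finite separable extension $L/K$. I would pick a primitive element $\alpha$ with $L = K(\alpha)$, take the power basis $1, \alpha, \dots, \alpha^{n-1}$, and compute the Gram matrix of $T_{L/K}$ using the formula $\Tr_{L/K}(\alpha^{i+j}) = \sum_k \sigma_k(\alpha)^{i+j}$ from the previous proposition. This Gram matrix factors as $V V^T$ where $V$ is the Vandermonde matrix in the distinct conjugates $\sigma_k(\alpha)$, and separability forces these conjugates to be distinct, so $\det V \neq 0$ and hence $T_{L/K}$ is nondegenerate.

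For the \emph{backward direction}, assume $T_{R/K}$ is nondegenerate. First I would show $R$ is reduced: if $x \in R$ is nilpotent, then for every $y \in R$, multiplication by $xy$ on $R$ is a nilpotent $K$-linear endomorphism, hence has trace zero, so $T_{R/K}(x,y) = 0$ for all $y$, forcing $x = 0$ by nondegeneracy. Next, a reduced finite-dimensional (hence Artinian) commutative $K$-algebra is a finite product of fields, so $R = \prod_i L_i$ with each $L_i/K$ a finite field extension. Finally I need to rule out inseparability of any $L_i$.

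The only real obstacle is the last step: showing that if some $L_i/K$ were inseparable then $T_{L_i/K}$ would vanish identically, contradicting nondegeneracy of the orthogonal summand. I would argue as follows: let $L_i^{\sep}$ denote the separable closure of $K$ in $L_i$, so $[L_i : L_i^{\sep}] = p^e$ with $p = \Char K$ and $e \geq 1$ by assumption. For any $x \in L_i$, the minimal polynomial of $x$ over $L_i^{\sep}$ is of the form $T^{p^f} - c$, so the characteristic polynomial of multiplication by $x$ on $L_i$ over $L_i^{\sep}$ has vanishing coefficient of $T^{[L_i:L_i^{\sep}]-1}$; hence $\Tr_{L_i/L_i^{\sep}}(x) = 0$. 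Transitivity $\Tr_{L_i/K} = \Tr_{L_i^{\sep}/K} \circ \Tr_{L_i/L_i^{\sep}}$ then yields $\Tr_{L_i/K} \equiv 0$, so $T_{L_i/K} \equiv 0$, contradicting nondegeneracy of the form on $L_i$. Thus every $L_i$ is separable over $K$, completing the proof.
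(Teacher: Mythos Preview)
The paper does not give its own proof of this proposition; it simply states the result and cites Bourbaki \cite[IX, \S2, Prop.~5]{Bourbaki/AlgebreIX}. So there is nothing to compare against at the level of argument.

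Your proof is correct. The forward direction via the Vandermonde factorization of the Gram matrix in a power basis is standard and clean (and is essentially the computation the paper later records in Lemma~\ref{lem:DiscriminantFormula}). The backward direction is also fine: nondegeneracy forces $R$ to be reduced, hence a product of fields, and then the vanishing of $\Tr_{L_i/L_i^{\sep}}$ for a nontrivial purely inseparable extension (combined with transitivity of trace) rules out inseparable factors. One small point worth making explicit in your write-up: when $x \in L_i^{\sep}$ (so $f=0$), the characteristic polynomial over $L_i^{\sep}$ is $(T-x)^{p^e}$ and the trace is $p^e x = 0$ since $e \ge 1$; your phrasing ``the characteristic polynomial has vanishing coefficient of $T^{[L_i:L_i^{\sep}]-1}$'' covers this case too, but spelling it out avoids any doubt.
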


\begin{definition}\label{defn:Bdual}
Let $A$ be a Dedekind domain with fraction field $K$ and $L$ a finite
separable extension. Write $B$ for the integral closure of $A$ in $L$. Define the
$B$-module
\[B^{\vee}:=\{x\in L|\Tr_{L/K}(xy)\in A\text{ for all }y\in B\}.\]
\end{definition}
\begin{proposition}
	We have $B\subset B^\vee$, and $B^{\vee}$ is a finitely
	generated $B$-submodule of $L$, i.e.~a fractional ideal of $B$.
	The map
	\[B^\vee\rightarrow \Hom_{A}(B,A),\quad x\mapsto (y\mapsto
		\Tr_{L/K}(xy))\]
	is an isomorphism of $B$-modules. The inverse of the fractional
	ideal $B^{\vee}$ is an ideal in $B$.
\end{proposition}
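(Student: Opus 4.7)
The plan is to verify the four assertions in turn, drawing on the nondegeneracy of the trace pairing (Proposition \ref{prop:separability}) and the fact that $A$ is integrally closed (being Dedekind).

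First, for the inclusion $B \subset B^{\vee}$: for $x,y \in B$ the product $xy$ is integral over $A$, so its minimal polynomial over $K$ has coefficients in $A$ by normality of $A$. Since the trace of $xy$ is (up to sign) a coefficient of its characteristic polynomial, and the characteristic polynomial is a power of the minimal polynomial, $\Tr_{L/K}(xy) \in A$, proving $B \subset B^{\vee}$ and in particular $B^{\vee} \neq 0$.

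Next, to show $B^{\vee}$ is a fractional ideal and to produce the isomorphism, I would use the nondegeneracy of $T_{L/K}$. Pick a $K$-basis $e_1,\ldots,e_n$ of $L$ consisting of elements of $B$ (any $K$-basis of $L$ can be scaled into $B$). By Proposition \ref{prop:separability}, let $e_1^{*},\ldots,e_n^{*} \in L$ be the dual basis with respect to $T_{L/K}$, so $\Tr_{L/K}(e_i e_j^{*}) = \delta_{ij}$. For any $x = \sum_i c_i e_i^{*} \in B^{\vee}$ with $c_i \in K$, pairing against $e_j$ gives $c_j = \Tr_{L/K}(x e_j) \in A$, so
\[ B \subset B^{\vee} \subset \sum_{i=1}^{n} A \cdot e_i^{*}. \]
Since $A$ is Noetherian (Dedekind) and the right-hand side is a finitely generated $A$-module, $B^{\vee}$ is finitely generated over $A$, hence over $B$; combined with $B^{\vee} \neq 0$ this makes it a fractional ideal. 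For the isomorphism, the map $\Phi: B^{\vee} \to \Hom_A(B,A)$, $x \mapsto (y \mapsto \Tr_{L/K}(xy))$, is $B$-linear and well-defined by the very definition of $B^{\vee}$. Injectivity follows from nondegeneracy: if $\Tr_{L/K}(xy) = 0$ for all $y \in B$, then since $L = K \cdot B$ the same vanishing holds for all $y \in L$, forcing $x=0$. For surjectivity, given $\phi \in \Hom_A(B,A)$, extend it $K$-linearly to $\phi_K : L \to K$; by nondegeneracy there is a unique $x \in L$ with $\phi_K(-) = \Tr_{L/K}(x \cdot -)$, and restricting to $B$ shows $x \in B^{\vee}$ and $\Phi(x) = \phi$.

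Finally, for $(B^{\vee})^{-1} \subset B$, I would invoke the fact that in the Dedekind domain $B$ every nonzero fractional ideal is invertible with respect to ideal multiplication, and the correspondence $I \mapsto I^{-1}$ reverses inclusions. Since $B \subset B^{\vee}$ and $B^{-1} = B$, we obtain $(B^{\vee})^{-1} \subset B$, i.e.\ $(B^{\vee})^{-1}$ is an honest ideal of $B$.

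The only point requiring genuine care is the surjectivity in step two, where one must pass from $A$-linearity on $B$ to $K$-linearity on $L$; this is routine once one observes that any element of $L$ has a nonzero $A$-multiple lying in $B$, so the extension $\phi_K$ is well-defined and unique. Everything else is essentially bookkeeping with the trace pairing.
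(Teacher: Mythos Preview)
Your proof is correct and follows essentially the same approach as the paper: both identify $B^{\vee}$ with $\Hom_A(B,A)$ via the nondegenerate trace pairing on $L$, and deduce the remaining claims from there. The only cosmetic difference is that the paper obtains finite generation directly from $\Hom_A(B,A)$ being finitely generated (since $B$ is finitely generated over the Noetherian ring $A$), whereas you use an explicit dual-basis sandwich $B\subset B^{\vee}\subset\sum_i A e_i^{*}$; and for $(B^{\vee})^{-1}\subset B$ the paper simply notes $1\in B^{\vee}$ gives $(B^{\vee})^{-1}\subset B^{\vee}\cdot(B^{\vee})^{-1}=B$, which is the same Dedekind-invertibility observation you invoke.
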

\begin{proof}
	Since $L/K$ is separable, Proposition \ref{prop:separability} shows
	that the map of $K$-vector spaces
	\begin{equation}\label{eq:nondegenerateIso}L\rightarrow \Hom_K(L,K),\quad x\mapsto
		(y\mapsto \Tr_{L/K}(xy))\end{equation}
	is an isomorphism.
	On the other hand, every $A$-morphism $B\rightarrow A$ extends
	uniquely to a $K$-morphism $L\rightarrow K$, so $\Hom_{A}(B,A)$
	is a sub-$A$-module of $\Hom_K(L,K)$, and it is precisely the image
	of $B^\vee$
	under \eqref{eq:nondegenerateIso}. Since $B$ is a finitely generated
	$A$-module, the same is true for $\Hom_{A}(B,A)$ and hence for
	$B^{\vee}$.

	From $\Tr_{L/K}(B)\subset A$,  it follows that $B^{\vee}\supset B$, so
	$B=B^{\vee}\cdot(B^{\vee})^{-1}\supset (B^{\vee})^{-1}$. This finishes
	the proof.
\end{proof}
Here is another characterization of $B^{\vee}$:
\begin{lemma}\label{lemma:maxCharOfDifferent}
	With the notations from above, $B^{\vee}$ is the largest
	sub-$B$-module of $L$ with the property that $\Tr_{L/K}(B^{\vee})\subset A$.
	In particular, if $\mathfrak{b}\subset L$ and $\mathfrak{a}\subset K$
	are fractional ideals of $B$, $A$, then $\Tr_{L/K}(\mathfrak{b})\subset
	\mathfrak{a}$ if and only if $\mathfrak{b}\subset \mathfrak{a}
	B^{\vee}$.
\end{lemma}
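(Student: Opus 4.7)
The plan is to prove the two assertions in sequence, with the second being a direct consequence of the first combined with the invertibility of fractional ideals over the Dedekind domain $A$ (and the fact that fractional ideals over $B$ are, in particular, $B$-submodules of $L$).

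For the first statement, the containment $\Tr_{L/K}(B^{\vee})\subset A$ already follows from the definition of $B^{\vee}$ by taking $y=1\in B$. To see maximality, let $M\subset L$ be any sub-$B$-module with $\Tr_{L/K}(M)\subset A$. Then for $x\in M$ and any $y\in B$, one has $xy\in M$ (since $M$ is a $B$-module), so $\Tr_{L/K}(xy)\in A$. This is exactly the defining condition for $x\in B^{\vee}$, hence $M\subset B^{\vee}$.

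For the second statement, the $(\Leftarrow)$ direction is straightforward: if $\mathfrak{b}\subset \mathfrak{a}B^{\vee}$, then any element of $\mathfrak{b}$ is a finite sum $\sum a_iv_i$ with $a_i\in \mathfrak{a}\subset K$ and $v_i\in B^{\vee}$, and by $K$-linearity of the trace together with $\Tr_{L/K}(v_i)\in A$, we obtain $\Tr_{L/K}(\sum a_iv_i)=\sum a_i\Tr_{L/K}(v_i)\in \mathfrak{a}$. For $(\Rightarrow)$, assume $\Tr_{L/K}(\mathfrak{b})\subset \mathfrak{a}$. Since $A$ is a Dedekind domain, the fractional ideal $\mathfrak{a}$ is invertible, and $\mathfrak{a}^{-1}\mathfrak{b}$ is again a sub-$B$-module of $L$. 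By $K$-linearity of the trace,
\[
\Tr_{L/K}(\mathfrak{a}^{-1}\mathfrak{b})=\mathfrak{a}^{-1}\Tr_{L/K}(\mathfrak{b})\subset \mathfrak{a}^{-1}\mathfrak{a}=A.
\]
Applying the maximality shown in the first part to the sub-$B$-module $\mathfrak{a}^{-1}\mathfrak{b}$ yields $\mathfrak{a}^{-1}\mathfrak{b}\subset B^{\vee}$, and multiplying through by $\mathfrak{a}$ gives the desired inclusion $\mathfrak{b}\subset \mathfrak{a}B^{\vee}$.

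There is no real obstacle here; the only subtlety to be mindful of is justifying $\Tr_{L/K}(\mathfrak{a}^{-1}\mathfrak{b})=\mathfrak{a}^{-1}\Tr_{L/K}(\mathfrak{b})$ as an equality of $A$-submodules of $K$, which rests on the $K$-linearity of $\Tr_{L/K}$ together with the fact that $\mathfrak{a}^{-1}\mathfrak{b}$ is generated as an $A$-module by products $\alpha z$ with $\alpha\in \mathfrak{a}^{-1}$ and $z\in \mathfrak{b}$. Invertibility of fractional ideals of the Dedekind domain $A$ is what makes the reduction to the first part work.
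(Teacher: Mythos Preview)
Your proof is correct and follows essentially the same approach as the paper: the maximality of $B^{\vee}$ is obtained from the $B$-module structure exactly as you do, and the second statement is reduced to the first by multiplying through by $\mathfrak{a}^{-1}$. Your write-up simply fills in the details that the paper leaves implicit.
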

\begin{proof}
	If $E$ is a sub-$B$-module of $L$ such that $\Tr_{L/K}(E)\subset A$, then
	$E\subset B^{\vee}$, so the maximality statement is clear.

	For the second statement, note that
	$\Tr_{L/K}(\mathfrak{b})\subset\mathfrak{a}$ if and only if
	$\Tr_{L/K}(\mathfrak{a}^{-1}\mathfrak{b})\subset A$, if and only if
	$\mathfrak{b}\subset \mathfrak{a}B^{\vee}$.
\end{proof}

\begin{definition}\label{def:different}
	Keeping the notations from Definition \ref{defn:Bdual}, we write
	$\mathfrak{D}_{B/A}:=\left(B^{\vee}\right)^{-1}$. This ideal of
	$B$ is called the \emph{different of $B/A$}\index{different}.

	The ideal $N_{L/K}(\mathfrak{D}_{B/A}):=\mathfrak{d}_{B/A}$ of $A$ is called
	the \emph{discriminant of $B/A$}\index{discriminant}.
\end{definition}
\begin{remark}
	Here the norm of an ideal is defined as follows: In a Dedekind domain,
	every ideal can be uniquely written as a product of prime ideals. If
	$\mathfrak{P}$ is a nonzero prime ideal of $B$, then we define
	$N_{L/K}(\mathfrak{P}):=\prod_{\mathfrak{P}|\mathfrak{p}}\mathfrak{p}^{f_{\mathfrak{p}}}$,
	where $f_{\mathfrak{p}}$ is the degree of the residue extension
	$(B/\mathfrak{P})/(A/\mathfrak{p})$. Since we will only use these
	definitions in the case where $L/K$ is an extension of \emph{complete}
	discretely valued fields, we will only care about Dedekind rings which
	are discrete valuation rings, so we can forget the more complicated
	definition from above, and take the usual norm.
\end{remark}
\begin{proposition}[Transitivity of different and
	discriminant]\label{prop:transitivityOfDifferent}
	Let $L/K$ and $M/L$ be finite separable extensions, $A\subset K$ a
	Dedekind domain with $\Frac(A)=K$, and $B, C$ the integral closures of
	$A$ in $L$, resp.~$M$.
	 Then
	\[\mathfrak{D}_{C/A}=\mathfrak{D}_{C/B}\mathfrak{D}_{B/A}\]
	and
	\[\mathfrak{d}_{C/A}=N_{L/K}(\mathfrak{d}_{C/B})\mathfrak{d}_{B/A}^{[M:L]}.\]
\end{proposition}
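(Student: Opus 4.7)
The plan is to prove the formula for the different first, deduce the formula for the discriminant from it, and use transitivity of the trace and norm maps throughout.

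For the different, I will use the maximality characterization in Lemma \ref{lemma:maxCharOfDifferent}, combined with the well-known transitivity of trace $\Tr_{M/K} = \Tr_{L/K} \circ \Tr_{M/L}$. Write $C^\vee_{/A}$, $C^\vee_{/B}$, $B^\vee_{/A}$ for the three complementary modules. Given $x \in M$, I unravel $x \in C^\vee_{/A}$ step by step: it means $\Tr_{M/K}(xC) \subset A$; by trace transitivity this is $\Tr_{L/K}(\Tr_{M/L}(xC)) \subset A$; by the definition of $B^\vee_{/A}$ as the maximal $B$-submodule of $L$ with trace in $A$, this is equivalent to $\Tr_{M/L}(xC) \subset B^\vee_{/A}$; finally applying the second part of Lemma \ref{lemma:maxCharOfDifferent} to the extension $M/L$ with fractional ideal $\mathfrak{a} = B^\vee_{/A} B$ and $\mathfrak{b} = xC$, this translates to $xC \subset B^\vee_{/A} \cdot C^\vee_{/B}$, i.e.\ $x \in B^\vee_{/A} \cdot C^\vee_{/B}$. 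This shows
\[ C^\vee_{/A} = B^\vee_{/A} \cdot C^\vee_{/B}, \]
and inverting this identity of fractional ideals of $C$ yields $\mathfrak{D}_{C/A} = \mathfrak{D}_{C/B} \cdot \mathfrak{D}_{B/A}$.

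For the discriminant, I take norms of both sides. By definition $\mathfrak{d}_{C/A} = N_{M/K}(\mathfrak{D}_{C/A})$, and since the norm is multiplicative on ideals I get
\[ \mathfrak{d}_{C/A} = N_{M/K}(\mathfrak{D}_{C/B}) \cdot N_{M/K}(\mathfrak{D}_{B/A}). \]
For the first factor I use transitivity of the norm $N_{M/K} = N_{L/K} \circ N_{M/L}$, which gives $N_{M/K}(\mathfrak{D}_{C/B}) = N_{L/K}(\mathfrak{d}_{C/B})$. For the second factor, I note that $\mathfrak{D}_{B/A}$ is (the extension to $C$ of) an ideal coming from $B$, so $N_{M/L}(\mathfrak{D}_{B/A} C) = \mathfrak{D}_{B/A}^{[M:L]}$ (since extending a $B$-ideal to $C$ and then taking the norm back raises to the power $[M:L]$, which one checks prime by prime via the $\sum e_i f_i = [M:L]$ relation \eqref{eq:nef-formular}). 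Applying $N_{L/K}$ and using multiplicativity, this gives $N_{M/K}(\mathfrak{D}_{B/A}) = \mathfrak{d}_{B/A}^{[M:L]}$, and combining yields the claimed formula.

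The main obstacle is the different formula; the discriminant formula is then essentially bookkeeping with norms. Within the different formula, the delicate point is the final step, where one must apply Lemma \ref{lemma:maxCharOfDifferent} to $M/L$ with a fractional ideal of $B$ rather than of $L$ itself, and recognize that $B^\vee_{/A} \cdot C^\vee_{/B}$ is the product of a fractional ideal of $B$ extended to $C$ and a fractional ideal of $C$ — hence itself a $C$-module, so membership of $x$ follows from membership of $xC$. The proof implicitly uses that $B^\vee_{/A}$, $C^\vee_{/B}$ are invertible fractional ideals, which requires $L/K$ and $M/L$ to be separable (as assumed), so that the trace forms are nondegenerate by Proposition \ref{prop:separability}.
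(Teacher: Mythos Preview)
Your proof is correct and follows exactly the approach the paper indicates: the paper's own proof is essentially just the hint ``the formula for the different follows from the transitivity of the trace (exercise); the formula for the discriminant follows from the formula for the different by applying the norm,'' and you have carried out that exercise in full, using Lemma \ref{lemma:maxCharOfDifferent} and the transitivity of trace and norm precisely as intended.
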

\begin{proof}
	The formula for the discriminant follows from the formula for the
	different by applying $N_{M/K}$. The formula for the different follows
	from the transitivity of the trace, i.e.~from the fact that
	$\Tr_{M/K}=\Tr_{L/K}\circ\Tr_{M/L}$ (exercise).
\end{proof}

The different and discriminant indicate whether an extension is ramified or
not. To see this, we now put ourselves in the complete local situation. 
\begin{lemma}\label{lem:DiscriminantFormula}
	Let $L/K$ be a finite separable extension of complete discretely
	valued fields, $A$ the valuation ring of $K$, $B$ its integral closure
	in $L$. Let $x_1,\ldots, x_n$ be a
	basis of $B$ as an $A$-module. Let $K^{\sep}$ be a separable closure of
	$K$. Then
	\[\mathfrak{d}_{B/A}=(\det( (\Tr_{L/K}(x_ix_j))_{ij}))=\left(\det(
		(\sigma_ix_j)_{ij})\right)^2,\]
	where $\sigma_i$ runs through the finite number of $K$-linear embeddings of $L$ in
	$K^{\sep}$. 
\end{lemma}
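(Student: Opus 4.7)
The plan is to prove the two equalities separately, starting with the simpler matrix identity. Setting $M := (\sigma_i x_j)_{ij}$, a direct multiplication shows $M^T M = (\Tr_{L/K}(x_k x_l))_{kl}$ since $\sum_i \sigma_i(x_k)\sigma_i(x_l) = \sum_i \sigma_i(x_k x_l) = \Tr_{L/K}(x_k x_l)$; taking determinants then yields the second equality.

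For the first equality, I would exploit the dual basis of the trace form. By separability and Proposition \ref{prop:separability}, the trace form is nondegenerate, so there exist unique $x_1^\vee, \ldots, x_r^\vee \in L$ with $\Tr_{L/K}(x_i^\vee x_j) = \delta_{ij}$. The key intermediate lemma to establish is that $B^\vee = \bigoplus_i A x_i^\vee$: for $y = \sum c_k x_k^\vee \in L$ one has $\Tr(y x_j) = c_j$, so the condition $\Tr(y B) \subset A$ from Definition \ref{defn:Bdual} becomes simply $c_j \in A$ for all $j$. Setting $T := (\Tr(x_i x_j))$, the relation $x_i^\vee = \sum_j (T^{-1})_{ij} x_j$ inverts to express $(x_j)$ in terms of $(x_i^\vee)$ with change-of-basis matrix $T$ (up to transpose); thus the inclusion $B \hookrightarrow B^\vee$, viewed in the two $A$-bases, has determinant $\det T$.

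To match this with $\mathfrak{d}_{B/A}$, I would use that by completeness both $A$ and $B$ are DVRs, so $\mathfrak{D}_{B/A} = \varpi^d B$ for a uniformizer $\varpi$ of $B$ and some $d \geq 0$. Then $B^\vee = \varpi^{-d} B$ and $\mathfrak{d}_{B/A} = N_{L/K}(\varpi^d) A = \pi^{fd} A$, where $\pi$ is a uniformizer of $A$ and $f$ the residue degree. The structure theorem for modules over a DVR identifies $v_K(\det T)$ with $\mathrm{length}_A(B^\vee/B) = \mathrm{length}_A(B/\varpi^d B)$, and filtering by powers of $\varpi$ shows this length equals $fd$ (each graded piece is the residue field of $B$, of $A$-length $f$). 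Hence $(\det T)A = \mathfrak{d}_{B/A}$. The main bookkeeping step, where I expect potential confusion, is the identification $B^\vee = \bigoplus A x_i^\vee$ together with the direction of the change-of-basis matrix; once these are handled cleanly, the rest is a routine length computation on a DVR.
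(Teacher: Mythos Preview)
Your proof is correct. The second equality is handled identically to the paper. For the first equality, both you and the paper use the dual basis $x_i^\vee$ and the identification $B^\vee = \bigoplus_i A x_i^\vee$, but the mechanisms diverge from there.

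The paper's argument is purely determinantal: it introduces a generator $\beta$ of the principal fractional ideal $B^\vee$, computes the trace-form determinant $d_\beta$ for the basis $(\beta x_i)$ of $B^\vee$, and shows $d_\beta = N_{L/K}(\beta)^2 d$. It then observes that the determinant $d'$ for the dual basis $(x_i')$ satisfies $dd' = 1$ (via a change-of-basis computation), and since $(d') = (d_\beta)$ as ideals, one obtains $(d)^2 = \mathfrak{d}_{B/A}^2$.

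Your route is more module-theoretic: you recognise $T$ as the matrix of the inclusion $B \hookrightarrow B^\vee$ in the two bases, so that $v_K(\det T) = \mathrm{length}_A(B^\vee/B)$ by Smith normal form, and then compute this length as $fd$ by filtering $B/\varpi^d B$ by powers of $\varpi$. This buys you a cleaner conceptual picture (the discriminant ideal measures the index of $B$ in its trace-dual) and avoids the somewhat ad hoc introduction of the generator $\beta$ and the auxiliary determinants $d'$, $d_\beta$. The paper's argument, on the other hand, never appeals to the structure theorem and stays entirely within determinant manipulations, which makes it slightly more self-contained. Both are standard; yours is arguably the more transparent of the two.
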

\begin{proof}First note that for any $a_1,\ldots, a_n, b_1,\ldots, b_n\in L$, we have
	\[\Tr_{L/K}(a_ib_j)=\sum_{k=1}^n\sigma_k(a_i)\sigma_k(b_j)=(\sigma_1(a_i),\ldots, \sigma_n(a_i))\cdot\begin{pmatrix}\sigma_1(b_j)\\\vdots\\\sigma_n(b_j)
		\end{pmatrix},\]
	so $\det(\Tr_{L/K}(a_ib_j)_{ij})=\det( (\sigma_j(a_i))_{ij})\det((\sigma_i(b_j))_{ij})$.
	
	The $A$-module $B^\vee=\Hom_A(B,A)$ is spanned by the basis $x_1',\ldots,
	x_n'$, where $x_i'(x_j)=\Tr_{L/K}(x_i'x_j)=\delta_{ij}$. But $B^{\vee}$ is a
	fractional ideal, so $B^{\vee}=(\beta)$ for some $\beta\in L^\times$, and $\beta x_1,\ldots, \beta
	x_n$ is also an $A$-basis for $B^{\vee}$. 

	Let $M=(m_{ij})\in \GL_n(A)$ be the base change matrix such that $\beta x_i = \sum_{j=1}^n m_{ij}x'_j$. Then $M
	\cdot(\sigma_jx_i')_{ij}=(\sigma_j(\sum_{r=1}^nm_{ir}x'_r))_{ij}=(\sigma_j(\beta x_i))_{ij}$.
	It follows that 
	\begin{align*}\det(\Tr_{L/K}(\beta x_ix_j)) &= \det( M\cdot(\sigma_j(x'_i))_{ij}\cdot (\sigma_i(x_j))_{ij})\\
		&=\det(M\cdot(\Tr_{L/K}(x'_ix_j))_{ij})\\&
		=\det(M)\in A^\times.\end{align*}
	On the other hand, $\det(\Tr_{L/K}(\beta x_ix_j))_{ij}=N_{L/K}(\beta)\det(\Tr_{L/K}(x_ix_j)_{ij})$. It follows that $\mathfrak{d}_{B/A}=(N_{L/K}(\beta^{-1}))=(\det(\Tr_{L/K}(x_ix_j)_{ij}))=(\det((\sigma_i(x_j))_{ij}))^2$.
	
\end{proof}

\begin{proposition}
	Let $L/K$ be a finite separable extension of complete discretely valued fields.  Then
	$L/K$ is unramified if and only if $\mathfrak{D}_{B/A}=B$, if and
	only if $\mathfrak{d}_{B/A}=A$.
\end{proposition}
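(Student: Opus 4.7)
The plan is to interpret the condition $\mathfrak{d}_{B/A}=A$ via reduction modulo $\mathfrak{m}_K$, using the explicit formula of Lemma \ref{lem:DiscriminantFormula}, and then to characterize the resulting condition on the residue algebra $\bar{B}:=B/\mathfrak{m}_K B$ by means of Proposition \ref{prop:separability}.

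First I would dispose of the equivalence between the two statements involving $\mathfrak{D}_{B/A}$ and $\mathfrak{d}_{B/A}$. Since $L/K$ is a finite extension of complete discretely valued fields, $B$ is a discrete valuation ring, so every nonzero fractional ideal of $B$ is a power of $\mathfrak{m}_B$; write $\mathfrak{D}_{B/A}=\mathfrak{m}_B^d$. By the definition of the discriminant and the description of the norm of a prime ideal, $\mathfrak{d}_{B/A}=N_{L/K}(\mathfrak{m}_B^d)=\mathfrak{m}_K^{fd}$, where $f$ is the residue degree. Hence $\mathfrak{D}_{B/A}=B$ iff $d=0$ iff $\mathfrak{d}_{B/A}=A$.

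Next, pick an $A$-basis $x_1,\ldots,x_n$ of the free $A$-module $B$ (recall $n=ef=[L:K]$). The reductions $\bar{x}_1,\ldots,\bar{x}_n$ form a $k$-basis of the $k$-algebra $\bar{B}=B/\mathfrak{m}_K B$. Since the trace is computed as the trace of the multiplication endomorphism, the matrix $(\Tr_{L/K}(x_ix_j))_{ij}$ has entries in $A$ and reduces modulo $\mathfrak{m}_K$ to the matrix $(\Tr_{\bar{B}/k}(\bar{x}_i\bar{x}_j))_{ij}$. By Lemma \ref{lem:DiscriminantFormula},
\[\mathfrak{d}_{B/A}=(\det(\Tr_{L/K}(x_ix_j))_{ij}),\]
and this ideal equals $A$ iff the generator is a unit, iff its reduction $\det(\Tr_{\bar{B}/k}(\bar{x}_i\bar{x}_j))_{ij}$ is nonzero in $k$, iff the trace form of the finite $k$-algebra $\bar{B}$ is non-degenerate. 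By Proposition \ref{prop:separability}, this happens iff $\bar{B}$ is a product of finite separable field extensions of $k$.

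It remains to compare this last condition with the definition of being unramified. Writing $\pi$ for a uniformizer of $A$, the ideal $\mathfrak{m}_K B$ equals $(\pi)=\mathfrak{m}_B^e$, so $\bar{B}=B/\mathfrak{m}_B^e$. If $e>1$, then the image of a uniformizer of $B$ is a nonzero nilpotent in $\bar{B}$, which is therefore not reduced and hence not a product of fields. If $e=1$, then $\bar{B}=B/\mathfrak{m}_B$ is a field, the residue field of $L$, which is a product of separable field extensions of $k$ exactly when the residue extension is separable. Combining these two cases, $\bar{B}$ is a product of finite separable field extensions of $k$ iff $e=1$ and the residue extension is separable, i.e.\ iff $L/K$ is unramified. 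The only subtle point is really this last analysis of $\bar{B}$; once one observes that $e>1$ forces nilpotents, everything else is a direct linking of the three equivalent conditions.
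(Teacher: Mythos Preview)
Your proof is correct and follows essentially the same route as the paper: reduce the discriminant modulo $\mathfrak{m}_K$ via Lemma \ref{lem:DiscriminantFormula}, invoke Proposition \ref{prop:separability} to characterize $\bar{B}=B/\mathfrak{m}_KB$ as a product of separable field extensions, and then identify this condition with unramifiedness. The paper is somewhat terser (it simply observes that $\bar{B}$ is local, so ``product of separable field extensions'' collapses to ``separable field extension''), whereas you spell out explicitly why $e>1$ forces nilpotents in $\bar{B}$ and give a more detailed justification of the equivalence $\mathfrak{D}_{B/A}=B\Leftrightarrow\mathfrak{d}_{B/A}=A$.
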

\begin{proof}
As $\mathfrak{d}_{B/A}=N_{L/K}(\mathfrak{D}_{B/A})$, we see that  
	$\mathfrak{D}_{B/A}=B$ if and only if
	$\mathfrak{d}_{B/A}=A$.

	By definition,  the extension $L/K$ is unramified if and only if $\mathfrak{m}_KB=\mathfrak{m}_L$,
	and if the residue extension is separable. 
	Let $x_1,\ldots, x_r$ be a basis of $B$ over $A$. Then the residue
	classes $\bar{x}_i$ form a basis of $B/\mathfrak{m}_KB$ over
	$A/\mathfrak{m}_K$. Since $B/\mathfrak{m}_KB$ is a local ring, $B$ is
	unramified over $A$, if and only if $B/\mathfrak{m}_KB$ is a separable
	$A/\mathfrak{m}_K$-algebra. By Proposition \ref{prop:separability}
	this is equivalent to $\det(\Tr_{L/K}(\bar{x}_i\bar{x}_j))\not\equiv 0 \mod
	\mathfrak{m}_K$.  According to the lemma this is equivalent to 
	$\mathfrak{d}_{B/A}=A$.
\end{proof}

We need two more facts about the different.
\begin{theorem}\label{thm:ableitung}
	Let $L/K$ be a finite separable extension of complete discretely valued fields.
	Assume that there exists $\alpha\in B$ which 
	generates $B$ as an $A$-algebra. If $f\in
	K[X]$ is the monic minimal polynomial of $\alpha$, then $f\in A[X]$, and
	$\mathfrak{D}_{B/A}=(f'(\alpha))$.
\end{theorem}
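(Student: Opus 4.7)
The plan is to compute $B^\vee$ explicitly using the power basis $1,\alpha,\alpha^2,\dots,\alpha^{n-1}$ of $B$ over $A$ (where $n=[L:K]$), and then observe that the $A$-module $B^\vee$ turns out to be $f'(\alpha)^{-1}B$.

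First I would dispose of the easy assertion that $f\in A[X]$. Since $\alpha\in B$ is integral over $A$, it satisfies a monic polynomial over $A$; its minimal polynomial $f$ over $K$ divides this polynomial, and since $A$ is integrally closed (being a DVR), Gauss's lemma forces $f\in A[X]$. Equivalently, the coefficients of $f$ are, up to signs, elementary symmetric functions of the conjugates $\alpha_1=\alpha,\alpha_2,\dots,\alpha_n$ of $\alpha$ in $K^{\sep}$, and these lie in $A$ since each $\alpha_i$ is integral over $A$.

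The heart of the argument is the computation of the $A$-basis of $B^\vee$ dual to $1,\alpha,\dots,\alpha^{n-1}$. Write
\[
\frac{f(X)}{X-\alpha}=b_0+b_1X+\cdots+b_{n-1}X^{n-1},\qquad b_i\in B,
\]
where explicitly $b_{n-1}=1$, $b_{n-2}=\alpha+a_{n-1}$, etc.\ if $f(X)=X^n+a_{n-1}X^{n-1}+\cdots+a_0$. The key identity is the Lagrange interpolation formula
\[
\sum_{k=1}^n \frac{\alpha_k^{\,j}}{f'(\alpha_k)}\cdot\frac{f(X)}{X-\alpha_k}=X^j,\qquad 0\leq j\leq n-1,
\]
valid because both sides are polynomials of degree $<n$ agreeing at the $n$ distinct points $\alpha_1,\dots,\alpha_n$. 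Comparing the coefficient of $X^i$ and recognizing the left-hand side as a trace, one obtains
\[
\Tr_{L/K}\!\left(\frac{\alpha^j\,b_i}{f'(\alpha)}\right)=\delta_{ij}.
\]
Hence $\bigl\{b_i/f'(\alpha)\bigr\}_{i=0}^{n-1}$ is the $A$-basis of $B^\vee$ dual to $\{\alpha^j\}$.

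To conclude, note that the transition matrix from $(1,\alpha,\dots,\alpha^{n-1})$ to $(b_{n-1},b_{n-2},\dots,b_0)$ is upper triangular with $1$'s on the diagonal (by the recursion above), so $b_0,\dots,b_{n-1}$ is another $A$-basis of $B$. Therefore $B^\vee=f'(\alpha)^{-1}B$ as fractional $B$-ideals, and so
\[
\mathfrak{D}_{B/A}=(B^\vee)^{-1}=(f'(\alpha)).
\]
The main obstacle is Step 3: properly deriving the duality between $\{\alpha^j\}$ and $\{b_i/f'(\alpha)\}$ via Lagrange interpolation; the rest is formal once this is in hand.
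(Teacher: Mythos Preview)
Your proof is correct and follows essentially the same route as the paper: both compute the dual basis $\{b_i/f'(\alpha)\}$ to $\{\alpha^j\}$ via the Lagrange interpolation identity, then observe that the $b_i$ form another $A$-basis of $B$ (the paper uses the explicit recursion $b_{n-i}-\alpha b_{n-i+1}=a_{n-i+1}$, which is exactly your upper-triangular transition matrix). The only cosmetic difference is that the paper phrases the interpolation as $\Tr_{L[X]/K[X]}\bigl(\frac{f(X)}{X-\alpha}\cdot\frac{\alpha^r}{f'(\alpha)}\bigr)=X^r$ and then reads off coefficients, whereas you write the sum over conjugates directly.
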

\begin{proof}
	Let $f(X)\in K[X]$ be the monic minimal polynomial of $\alpha$. The
	coefficients of $f(X)$ are symmetric functions in the roots of $f(X)$.
	Since $\alpha$ is integral over $A$, the same is true for all roots of
$f(X)$ (in a sufficiently large extension of $K$), so $f(X)\in A[X]$.
Let $b_0,\ldots,b_{n-1}\in L$ such that
\begin{equation}\label{eq:defnOfb}\frac{f(X)}{X-\alpha}:=b_0+b_1
	X+\ldots+b_{n-1}X^{n-1}.\end{equation}
The same argument as above shows that $b_0,\ldots, b_{n-1}$ are integral over
$B$, hence elements of $B$.

Note that $1,\alpha,\ldots, \alpha^{n-1}$ is a basis of $B$ over $A$. We
compute its dual basis of $B^{\vee}$.

Let $\alpha=\alpha_1,\ldots, \alpha_n$ be the distinct roots of $f(X)$ (in a
sufficiently large extension of $K$). Then $f'(\alpha_i)=\prod_{j\neq
i}(\alpha_i-\alpha_j)$. Hence, for $r\geq 0$ we  compute
\[\left(\frac{f(X)}{X-\alpha_i}\right)(\alpha_j)\cdot\frac{\alpha_i^r}{f'(\alpha_i)}=\delta_{ij}\alpha_i^r.\]
Consequently, if $r=0,\ldots, n-1$, then 
\[\Tr_{L[X]/K[X]}\left(\frac{f(X)}{X-\alpha}\frac{\alpha^r}{f'(\alpha)}\right)=\sum_{i=1}^n\frac{f(X)}{X-\alpha_i}\frac{\alpha_i^r}{f'(\alpha_i)}=X^r,\]
as the difference of both sides is a polynomial of degree $<n$ with zeroes $\alpha_1,\ldots,
\alpha_n$. In turn, we get 
\[X^r=\Tr_{L[X]/K[X]}\left(\frac{f(X)}{X-\alpha}\frac{\alpha^r}{f'(\alpha)}\right)=\sum_{j=0}^{n-1}\Tr_{L/K}\left(\alpha^r\frac{b_j}{f'(\alpha)}\right)X^j,\]
which shows that $\frac{b_0}{f'(\alpha)},\ldots, \frac{b_{n-1}}{f'(\alpha)}\in
B^{\vee}$ is the dual basis of $1,\alpha, \ldots, \alpha^{n-1}$, and hence
\[B^{\vee}=\frac{1}{f'(\alpha)}\cdot(b_0A+\ldots+ b_{n-1}A)\subset \frac{1}{f'(\alpha)}B.\]

Finally, if we write $f(X)=X^n+\sum_{i=0}^{n-1}a_iX^i$ with $a_0,\ldots, a_{n-1}\in K$, then from \eqref{eq:defnOfb} we know that 
\begin{align*}
	b_{n-1}&=1\\
	b_{n-i}-\alpha b_{n-i+1}&=a_{n-i+1}\text{ for }1<i<n\\
	-b_0\alpha&=a_0.
\end{align*}
Inductively it follows that $\alpha^j\in \bigoplus_{i=0}^{n-1}b_iA$ for
$j=0,\ldots, n-1$, so $B=\bigoplus_{i=0}^{n-1}b_iA$. It follows that
\[B^{\vee}=(f'(\alpha))^{-1},\]
which proves the claim.
\end{proof}
The hypothesis of the theorem is always satisified when the residue extension of $L/K$ is
separable.
\begin{theorem}[{\cite[Ch.~III, Prop.~12]{Serre/LocalFields}}]\label{thm:generator}
	Let $L/K$ be a finite extension of complete discretely valued fields
	with separable residue extension. Then there exists an element
	$x\in B$, generating $B$ as an $A$-algebra. Moreover, $x$ can be
	chosen such that  there exists a
	monic polynomial $R\in A[X]$ of degree $f$ such that $R(x)$ is a
	uniformizer of $B$. If $L/K$ is totally ramified, any uniformizer $x$
generates $B$ as an $A$-algebra.\end{theorem}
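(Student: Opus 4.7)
The plan is to construct the generator $x$ as a lift of a primitive element of the residue extension $\ell/k:=(B/\fm_L)/(A/\fm_K)$, corrected by a multiple of a uniformizer of $B$ so that a lift of its residual minimal polynomial, evaluated at $x$, actually produces a uniformizer of $B$. Once this is in place, the identity $B=A[x]$ will follow from a short Nakayama argument using the right $n=ef$ elements of $A[x]$.

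First, by the separability hypothesis and the primitive element theorem, pick $\bar y\in\ell$ with $\ell=k(\bar y)$. Let $\bar R\in k[X]$ be its monic, separable, degree-$f$ minimal polynomial, and lift this data to a monic $R\in A[X]$ of degree $f$ and some $y\in B$ mapping to $\bar y$. Then $R(y)\equiv\bar R(\bar y)=0\pmod{\fm_L}$, so $v_L(R(y))\geq 1$. If $v_L(R(y))=1$, put $x:=y$. Otherwise $v_L(R(y))\geq 2$, and we set $x:=y+\pi_L$ for a uniformizer $\pi_L$ of $B$; Taylor expansion yields
\[R(x)=R(y)+R'(y)\pi_L+\pi_L^2\cdot(\ldots),\]
where $R'(y)$ reduces modulo $\fm_L$ to $\bar R'(\bar y)\neq 0$ by separability of $\bar R$, so $R'(y)$ is a unit in $B$. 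Hence $v_L(R'(y)\pi_L)=1$ while the remaining two terms have valuation $\geq 2$, so $v_L(R(x))=1$. In either case $\pi':=R(x)\in A[x]$ is a uniformizer of $B$, and $\bar x=\bar y$ generates $\ell$ over $k$.

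To prove $B=A[x]$, consider the $n=ef$ elements $x^i(\pi')^j$, $0\leq i<f$, $0\leq j<e$, all of which lie in $A[x]$. Using the $\fm_L$-adic filtration of $B/\fm_KB$ (recall $\fm_KB=\fm_L^e$), each graded piece $\fm_L^j/\fm_L^{j+1}$ is a one-dimensional $\ell$-vector space with generator $(\pi')^j$, hence an $f$-dimensional $k$-vector space with basis $\bar x^i(\pi')^j$, $0\leq i<f$, because $1,\bar x,\ldots,\bar x^{f-1}$ is a $k$-basis of $\ell=k(\bar x)$. Stacking over $j=0,\ldots,e-1$, the $\bar x^i(\pi')^j$ form a $k$-basis of $B/\fm_KB$, so $A[x]\hookrightarrow B$ becomes surjective after reduction modulo $\fm_K$. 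Since $B$ is a finitely generated $A$-module, Nakayama's lemma applied to $B/A[x]$ forces $A[x]=B$. In the totally ramified case ($f=1$, $\ell=k$), take $\bar R(X)=X$ and $y=x$ an arbitrary uniformizer of $B$; then $R(y)=y$ is automatically a uniformizer and the same Nakayama argument gives $B=A[x]$.

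The main obstacle is the first step: producing $x$ for which $R(x)$ has valuation \emph{exactly} one, not merely positive. This is precisely where the separability of the residue extension enters, via the non-vanishing of $\bar R'(\bar y)$, which makes $R'(y)$ a unit and legitimizes the Taylor-expansion correction.
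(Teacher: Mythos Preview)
Your proof is correct and is essentially the argument given in Serre's \emph{Local Fields} (Ch.~III, \S6, Prop.~12), which is exactly the reference the paper cites in lieu of its own proof. The paper does not supply a proof of this statement, so there is nothing further to compare.
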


For future reference, we also prove:
\begin{proposition}\label{valuation-of-discriminant}
	Let $L/K$ be a finite separable extension of complete discretely valued fields
	with separable residue extension. Write $A$ for the valuation ring of
	$K$ and $B=A[\alpha]$ for its integral closure in $L$. 
	\begin{enumerate}
		\item If the
			ramification index of $L/K$ is $e$, then
			\[v_L(\mathfrak{D}_{B/A})\geq e-1\] with equality if and only
			if $L/K$ is tamely ramified (Definition \ref{defn:tameRamification}).
		\item\label{item:valuation-of-discriminant:b} 
			If $L/K$ is Galois with group $G$, then
			\[v_L(\mathfrak{D}_{B/A})=\sum_{\sigma\in G\setminus \{1\}}i_G(\sigma)\]
			where $i_G(\sigma):=v_L(\alpha-\sigma(\alpha))$, see Section
			\ref{sec:lowerNumbering}.
	\end{enumerate}
\end{proposition}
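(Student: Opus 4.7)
The plan is to deduce both statements from Theorem \ref{thm:ableitung}, which identifies $\mathfrak{D}_{B/A}=(f'(\alpha))$, where $f\in A[X]$ is the minimal polynomial of a generator $\alpha$ of $B$ over $A$; such $\alpha$ exists by Theorem \ref{thm:generator} thanks to the separability of the residue extension.

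For part \ref{item:valuation-of-discriminant:b}, the Galois hypothesis gives the factorization $f(X)=\prod_{\sigma\in G}(X-\sigma(\alpha))$ in $L[X]$, since $\alpha$ generates $L$ as a $K$-algebra (being a generator of $B$) so its $G$-orbit has exactly $|G|=[L:K]$ elements. Differentiating by the Leibniz rule and evaluating at $\alpha$ annihilates every summand except those where the factor $X-\alpha$ is differentiated, yielding
\begin{equation*}
f'(\alpha)=\prod_{\sigma\in G\setminus\{1\}}(\alpha-\sigma(\alpha)).
\end{equation*}
Applying $v_L$ and invoking Theorem \ref{thm:ableitung} then gives $v_L(\mathfrak{D}_{B/A})=\sum_{\sigma\in G\setminus\{1\}}i_G(\sigma)$.

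For part (1), I first reduce to the totally ramified case. Let $K_0\subset L$ be the maximal unramified subextension of $L/K$, with valuation ring $A_0$; then $K_0/K$ is unramified, so by the characterization of unramified extensions recalled just before the proposition we have $\mathfrak{D}_{A_0/A}=A_0$. Applying the transitivity formula (Proposition \ref{prop:transitivityOfDifferent}) to the tower $K\subset K_0\subset L$ yields $\mathfrak{D}_{B/A}=\mathfrak{D}_{B/A_0}$. Moreover, $L/K_0$ is totally ramified of degree $e$ with (trivially) separable residue extension, so we may replace $K$ by $K_0$ and assume $L/K$ itself is totally ramified of degree $e$. By the last clause of Theorem \ref{thm:generator}, any uniformizer $\pi$ of $B$ generates $B$ as an $A$-algebra, and its minimal polynomial $f(X)=X^e+a_{e-1}X^{e-1}+\cdots+a_1 X+a_0$ is Eisenstein: every $a_i\in\mathfrak{m}_K$, so $v_L(a_i)\geq e$.

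It remains to compute
\begin{equation*}
f'(\pi)=e\pi^{e-1}+\sum_{i=1}^{e-1}ia_i\pi^{i-1}
\end{equation*}
term by term. The leading term satisfies $v_L(e\pi^{e-1})=v_L(e)+(e-1)\geq e-1$, while for each $1\leq i\leq e-1$ we have $v_L(ia_i\pi^{i-1})\geq v_L(a_i)+(i-1)\geq e+(i-1)\geq e>e-1$. Hence the minimum over all terms equals $e-1$ and is attained only by $e\pi^{e-1}$; consequently $v_L(f'(\pi))\geq e-1$, with equality precisely when $v_L(e)=0$, i.e.\ when the residue characteristic does not divide $e$, which is the definition of tame ramification (Definition \ref{defn:tameRamification}). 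The only mildly delicate point in the argument is ensuring that the strict inequality $v_L(ia_i\pi^{i-1})>e-1$ holds \emph{uniformly} in $i$, so that no cancellation with the leading term can lower $v_L(f'(\pi))$ below $v_L(e\pi^{e-1})$; the Eisenstein bound $v_L(a_i)\geq e$ is exactly what supplies this.
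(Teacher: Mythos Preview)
Your proof is correct and follows essentially the same approach as the paper: both parts rely on Theorem \ref{thm:ableitung}, with part \ref{item:valuation-of-discriminant:b} using the Galois factorization of $f'(\alpha)$ and part (a) reducing to the totally ramified case and analyzing the Eisenstein minimal polynomial of a uniformizer term by term. Your argument for part (a) is in fact slightly more streamlined than the paper's, which additionally observes that the term valuations are pairwise distinct modulo $e$ to pin down $v_L(f'(\alpha))$ exactly; you get by with just the bounds, which suffice for the stated inequality and the equality criterion.
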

\begin{proof}
After reducing to the totally ramified
			case, there exists a uniformizer $\alpha \in B$ such that
			$B=A[\alpha]$. Let $f$ be the minimal polynomial of
			$\alpha$. For both formulas we have to compute
			$v_L(f'(\alpha))$.
	\begin{enumerate}
		\item We only sketch the proof, as we will not use this result
			in the sequel.  We already saw that $L/K$
			is unramified if and only if $\mathfrak{D}_{B/A}=B$ if
			and only if $v_L(\mathfrak{D}_{B/A})=0$.

			The minimal polynomial of $\alpha$ can be seen to
			be an Eisenstein polynomial
			$f(X)=X^e+a_{e-1}X^{e-1}+\ldots + a_0$ with $a_i\in
			A$.
			We want to compute the valuation of
			\[f'(\alpha)=e\alpha^{e-1}+(e-1)a_{e-1}\alpha^{e-2}+\ldots+a_1.\]
			The valuation of each summand is
			\[v_i:=v_L(
				(e-i)a_{e-i}\alpha^{e-i-1})=ev_K(e-i)+ev_K(a_{e-i})+e-i-1
				\]
			for $i=0,\ldots, e-1$. Note that the $v_i$ are
			pairwise distinct, because they are modulo $e$. Hence
			\[v_L(f'(\alpha))=\min_{i=0,\ldots, e-1}
				\{v_i\}\]
			Note that for $i>0$,
			$v_i=ev_K(e-i)+ev_K(a_{e-i})+e-i-1\geq e$, as
			$f$ is an Eisenstein polynomial, and hence
			$v_K(a_{e-i})>0$.
			It follows that $v_L(f'(\alpha))=e-1$ if and
			only if $v_0=e-1$. 
			But $v_0=ev_K(e)+e-1$, so $v_0=e-1$ if and
			only if $v_K(e)=0$ if and only if $L/K$ is
			tamely ramified.

			If $L/K$ is not tamely ramified, then $v_K(e)>0$, so
			$v_0=v_L(e)+e-1\geq e$, so $e\leq v_L(f'(\alpha))\leq
			v_L(e)+e-1$. This completes the proof.
		\item This is much easier: $f'(\alpha)=\prod_{\sigma\in G\setminus
				\{\id\}}(\alpha-\sigma\alpha)$; the formula
				for
				$v_L(f'(\alpha))=v_L(\mathfrak{D}_{B/A})$
				follows.
	\end{enumerate}
\end{proof}
\begin{remark}\label{rem:differentAnnihilatesDifferentials}
	Theorem \ref{thm:ableitung} allows us to characterize the different
	$\mathfrak{D}_{B/A}$ geometrically as follows: Let $\mu:B\otimes_A
	B\rightarrow B$ be the multiplication map $\mu(b_1\otimes
	b_2)=b_1b_2$ and $I$ its kernel. Then $I/I^2$ is a  $B\otimes_A
	B$-module, so it also carries two $B$-module structures, obtained via
	the two maps $B\rightarrow B\otimes_A B$, $b\mapsto b\otimes 1$ and
	$b\mapsto 1\otimes b$. It is not difficult to check that these two
	$B$-module structures on $I/I^2$ coincide. The $B$-module $I/I^2$ is
	denoted by $\Omega_{B/A}^1$, and it is called the module of
	\emph{K\"ahler differentials}\index{K\"ahler differentials}. The map
	$d:B\rightarrow \Omega^1_{B/A}$, $d(b)=b\otimes 1-1\otimes b$ is an
	$A$-derivation. 

	If $B=A[X]/(f(X))$, write $x$ for the image of $X$ in $B$. Then
	$\Omega^1_{B/A}$ is generated by $dx$, and the annihilator of
	$\Omega^1_{B/A}$ is precisely the ideal $(f'(x))$. Indeed, there is an
	exact sequence of $B$-modules
	\[(f(X))/(f(X))^2\xrightarrow{d\otimes 1} \Omega^1_{A[X]/A}\otimes_A B\rightarrow
	\Omega^1_{B/A}\rightarrow 0,\]
	and $df(X)=f'(X)dX$.
\end{remark}

\subsection{Galois theory of discretely valued fields}
We are now going to study the Galois theory of finite extensions $L/K$, where
$K$ is equipped with a discrete valuation $v_K$. We start with the noncomplete
setting.

Let $(K,v_K)$ be a discretely valued field, $L/K$ a finite
separable extension. Let $A$ be the valuation ring of $K$ and $B$ its integral
closure in $L$. The separability of $L/K$ implies that $B$ is finitely generated over
$A$ (Lemma \ref{lemma:japanese}), so we can apply the results from Section
\ref{sec:extensions}.

Write $w_1,\ldots, w_r$ for the discrete valuations on $L$
extending $v_K$. We saw in Proposition \ref{prop:extensionAndCompletion} that
we have a canonical isomorphism of $K_{v_K}$-algebras
$L\otimes_K K_{v_K}\xrightarrow{\cong}\prod_{i=1}^rL_{w_i}$. From this it follows
that the extensions $L_{w_i}/K_{v_K}$ are separable as well. Indeed, the
extension $L/K$ is
defined by separable polynomials with coefficients in $K$, hence the finite
$K_{v_K}$-algebra
$L\otimes_K K_{v_K}$ is defined by separable polynomials with coefficients in
$K_{v_K}$.

If $L/K$ is in addition a Galois extension with group $G:=\Gal(L/K)$, then $G$
acts on the set of valuations $\{w_1,\ldots, w_r\}$ over $v_K$: If $\sigma \in
G$, then $w_i\circ \sigma$ is a discrete valuation lying over $v_K$. Moreover
this action is transitive (\cite[Ch.~I, Prop.~19]{Serre/LocalFields}, or
\cite[Ch.~II, Prop.~9.1]{Neukirch/1999}), so the
numbers $e_i$ and $f_i$ are independent of $i\in\{1,\ldots, r\}$; we just
write $e$ and $f$. The formula \eqref{eq:nef-formular} becomes $n=efr$, where
$n=[L:K]$.

Next, we define subgroups $D_{w_i}:=\{\sigma\in G| w_i\circ \sigma =
w_i\}$. By construction they are all conjugate subgroups of order $ef$ of $G$,

\begin{definition}
	The subgroup $D_{w_i}$ is called \emph{decomposition group of
	$w_i$}\index{decomposition group}.
\end{definition}

In ideal theoretic language, if $\mathfrak{P}_i$ is the prime ideal of $B$
corresponding to $w_i$, then $G$ acts transitively on
the set $\{\mathfrak{P}_1,\ldots, \mathfrak{P}_r\}$, and $D_{w_i}=\{\sigma\in G|
	\sigma(\mathfrak{P}_i)=\mathfrak{P}_i\}\subset G$.

\begin{proposition}
	If $L/K$ is Galois, then $L_{w_i}/K_{v_K}$ is Galois with group
	$D_{w_i}$.
\end{proposition}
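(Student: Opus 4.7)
My plan is to exhibit an injective homomorphism $D_{w_i} \hookrightarrow \Aut_{K_{v_K}}(L_{w_i})$ of the right size, and then use a cardinality argument to conclude that $L_{w_i}/K_{v_K}$ is Galois with this group.

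First, I would observe that any $\sigma \in D_{w_i}$ preserves $w_i$ on $L$, hence is an isometry for the metric associated with $w_i$. In particular $\sigma$ is uniformly continuous and therefore extends uniquely to a continuous $K_{v_K}$-automorphism $\hat\sigma$ of the completion $L_{w_i}$. Since $L \subset L_{w_i}$ is dense and $\hat\sigma$ is determined by its restriction to $L$, the resulting map $D_{w_i} \to \Aut_{K_{v_K}}(L_{w_i})$, $\sigma \mapsto \hat\sigma$, is a well-defined injective group homomorphism.

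Next I would count: since $G$ acts transitively on $\{w_1,\dots,w_r\}$ with stabilizer $D_{w_i}$, the orbit-stabilizer theorem gives $|D_{w_i}| = |G|/r = n/r = ef$. On the other hand, already established local theory tells us that $L_{w_i}/K_{v_K}$ is a finite separable extension of degree $n_i = e_if_i = ef$ (using that the $e_j$, $f_j$ are independent of $j$ in the Galois case, and Proposition \ref{prop:extensionAndCompletion}).

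Combining, we have at least $ef$ distinct $K_{v_K}$-automorphisms of the separable extension $L_{w_i}/K_{v_K}$ of degree $ef$. Since $|\Aut_{K_{v_K}}(L_{w_i})| \leq [L_{w_i}:K_{v_K}]$ in general, with equality precisely when the extension is Galois, it follows that $L_{w_i}/K_{v_K}$ is Galois and that the map $D_{w_i} \to \Gal(L_{w_i}/K_{v_K})$ is an isomorphism.

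The only genuinely delicate point is the first step, verifying that $\sigma \in D_{w_i}$ really extends to the completion; but this is essentially formal once one notes that the condition $w_i \circ \sigma = w_i$ means $\sigma$ is an isometry, so Cauchy sequences are sent to Cauchy sequences and null sequences to null sequences. Everything else is either the orbit-stabilizer formula, the identity $n = efr$, or the standard characterization of Galois extensions in terms of the order of the automorphism group.
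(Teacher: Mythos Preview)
Your proof is correct and follows essentially the same approach as the paper: exhibit an injection $D_{w_i} \hookrightarrow \Aut_{K_{v_K}}(L_{w_i})$ and then compare $|D_{w_i}| = ef$ with $[L_{w_i}:K_{v_K}] = ef$ to conclude. The paper's proof is a one-liner that takes both of these facts as already established, whereas you spell out the isometry/continuity argument for the extension to the completion and the orbit-stabilizer count explicitly; but the underlying argument is identical.
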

\begin{proof}Indeed, we have seen that $[L_{w_i}:K_{v_K}]=ef=|D_{w_i}|$, and
	${D}_{w_i}\subset \Aut_{K_{v_K}}(L_{w_i})$.\end{proof}

\vspace{.3cm}

Since we are mainly interested in the decomposition group, we now put
ourselves in a complete local situation. Let $L/K$ be a finite Galois
extension of complete discretely valued fields. Write $v$ for the
valuation of $K$, $v_L$ for the valuation of $L$, $A$ for the
valuation ring of $K$, and $B$ for its integral closure in $L$. The Galois group
$G=\Gal(L/K)$ is equal to the decomposition group of $v_L$. 

Next, consider the residue extension $k(v_L)/k(v)$. Since $v_L$ is the unique
valuation of $L$ lying over $v$,  we get a homomorphism
of groups $G\rightarrow \Aut_{k(v)}(k(v_L))$.
\begin{proposition}[{\cite[Ch.~I, Prop.~20]{Serre/LocalFields}}]
	If the extension  $k(v_L)/k(v)$ is separable, then it is Galois, and 
	\[G\rightarrow\Gal(k(v_L)/k(v))\]
	is a surjective homomorphism of groups.
\end{proposition}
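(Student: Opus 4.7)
The plan is to prove two things: (i) the residue extension $k(v_L)/k(v)$ is normal (and hence Galois, given the assumed separability); (ii) every $\bar\sigma\in\Gal(k(v_L)/k(v))$ lifts to some $\sigma\in G$. Both steps exploit the same fact: any element $\beta\in B$ has minimal polynomial $h\in A[X]$ over $K$ (the coefficients lie in $A$ because $\beta$ is integral over $A$ and the coefficients are elementary symmetric functions in the conjugates of $\beta$, which are also integral), and $h$ splits completely in $B$ because $L/K$ is Galois. Hence the reduction $\bar h\in k(v)[X]$ also splits over $k(v_L)$.

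For normality, pick any $\bar\beta\in k(v_L)$ and lift it to $\beta\in B$. If $\bar g\in k(v)[X]$ is the minimal polynomial of $\bar\beta$, then $\bar g$ divides $\bar h$ (where $h$ is as above), since $\bar\beta$ is a root of $\bar h$. Because $\bar h$ splits over $k(v_L)$, so does $\bar g$. Thus $k(v_L)/k(v)$ is normal, and combined with the hypothesis that it is separable, it is Galois.

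For surjectivity, I would invoke the primitive element theorem (applicable since $k(v_L)/k(v)$ is finite separable) to get $\bar\alpha\in k(v_L)$ generating $k(v_L)$ over $k(v)$; lift it to $\alpha\in B$, and let $f\in A[X]$ be its minimal polynomial over $K$. Given $\bar\sigma\in\Gal(k(v_L)/k(v))$, the element $\bar\sigma(\bar\alpha)$ is a root of $\bar f$ in $k(v_L)$, hence equals $\bar{\alpha'}$ for some root $\alpha'$ of $f$ in $B$ (here one uses that the roots of $f$ all lie in $B$, and reducing them mod $\mathfrak{m}_L$ gives all roots of $\bar f$, counted without multiplicity; since $\bar\alpha$ is separable, the root $\bar\sigma(\bar\alpha)$ is simple in $\bar f$ and lifts uniquely to a root of $f$). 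Because $L/K$ is Galois and $f$ is the minimal polynomial of $\alpha$, there exists $\sigma\in G$ with $\sigma(\alpha)=\alpha'$. The image of $\sigma$ in $\Aut_{k(v)}(k(v_L))$ then sends $\bar\alpha$ to $\bar\sigma(\bar\alpha)$, and since $\bar\alpha$ generates $k(v_L)$ over $k(v)$, this image equals $\bar\sigma$.

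The one delicate point is the lifting of roots: a priori, $\bar f$ could have multiple roots reducing from the same root of $f$, or several roots of $f$ reducing to $\bar\sigma(\bar\alpha)$. Separability of $\bar\alpha$ over $k(v)$ rules out a genuine obstruction here, but one should either invoke Hensel's lemma (applied in the complete ring $B$ to factor $f$ near $\bar\sigma(\bar\alpha)$) or, alternatively, appeal to Theorem \ref{thm:generator} to pick $\alpha$ so that $B=A[\alpha]$ outright, which makes the correspondence between roots of $f$ in $B$ and roots of $\bar f$ in $k(v_L)$ transparent. This is the main technical step; the normality argument and the Galois-action argument are then essentially formal.
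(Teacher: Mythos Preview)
The paper does not supply its own proof of this proposition; it simply cites \cite[Ch.~I, Prop.~20]{Serre/LocalFields}. Your argument is correct and is essentially the standard one found there.

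One remark: your final paragraph over-worries about the root-lifting step. Since $f$ splits completely in $B$, you already have $\bar f = \prod_i (X - \bar\alpha_i)$ in $k(v_L)[X]$, so any root of $\bar f$ in $k(v_L)$---in particular $\bar\sigma(\bar\alpha)$---is automatically equal to some $\bar\alpha_j$. Neither Hensel's lemma nor uniqueness of the lift is needed; you only need \emph{some} root $\alpha_j$ of $f$ reducing to $\bar\sigma(\bar\alpha)$, and that is immediate from the factorisation of $\bar f$. The separability of $\bar\alpha$ is used only to invoke the primitive element theorem, not for the lifting.
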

Its kernel is denoted by $I:=I_{L}$.

\begin{definition}
	The normal subgroup $I$ of $G$ is called \emph{inertia group of
	$v_L$}\index{inertia group}.
\end{definition}

The Galois correspondence shows that $I$ corresponds to a field
$L^I$ lying between $L$ and $K$; the fixed field of $I$. It is Galois over
$K$, and by construction this is the maximal subextension of $L/K$ which is
unramified. Indeed, $|G/I|=f=[L^I:K]$, so the ramification index of $L^I/K$ is
$1$. On the other hand, the residue extension of $L/L^I$ is trivial, so
$L/L^I$ is totally ramified.

The inclusion $I\subset G$ is the first step in
the so called \emph{ramification filtration}.

\subsection{The ramification filtration --- lower numbering}\label{sec:lowerNumbering}
We continue with the previous setup: $L/K$ is a finite Galois extension of
complete discretely valued fields with separable residue extension. Let
$G=\Gal(L/K)$.  We also write $p=\Char(k(v))\geq 0$,
$A, B$ for the valuation rings of $K$, $L$, respectively, and
$\mathfrak{m}_K,\mathfrak{m}_L$ for their maximal ideals.

\begin{definition}
	Let $i\geq -1$ be an integer. Note that $G$ acts on
	$B/\mathfrak{m}_L^{i+1}$. Define 
	\[G_i:=\{\sigma\in G| \sigma\text{ acts trivially on
	}B/\mathfrak{m}_L^{i+1}\}.\]
	The $G_i$ form a descending chain of subgroups of $G$, and $G_i$ is
	called the \emph{$i$-th ramification subgroup of
	$G$}\index{ramification subgroup}. This filtration is called the
	\emph{ramification filtration of $G$ in the lower
	numbering}\index{ramification filtration!lower
	numbering}.
\end{definition}

We make the definition of the $G_i$ a little bit more concrete:
\[G_i=\left\{\sigma\in G |\forall b\in B: v_L(\sigma(b)-b)\geq i+1\right\}.\]

By Theorem \ref{thm:generator}, there exists $x\in B$ such that $B=A[x]$. Then
we can also write
\begin{equation}\label{eq:GiViaGenerator}G_i=\{\sigma\in G |
	v_L(\sigma(x)-x)\geq i+1\}\end{equation}
Indeed, $x$ also
generates $B/\mathfrak{m}_L^{i+1}$ as an $A$-algebra, so $\sigma$ acting
trivially on $B/\mathfrak{m}_L^{i+1}$ is equivalent to $v_L(\sigma(x)-x)\geq i+1$.

\begin{proposition}[{\cite[Ch.~IV, Prop.~1]{Serre/LocalFields}}]
	We have $G_{-1}=G$ and $G_0=I$ the inertia group. The $G_i$ are normal subgroups of $G$, and
	$G_i=\{1\}$ for $i\gg 0$
\end{proposition}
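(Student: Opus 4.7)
The plan is to dispatch the four assertions in turn, each of which follows directly from the definitions together with one small tool: Theorem \ref{thm:generator} for the finiteness statement.

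For $G_{-1}=G$, observe that $\mathfrak{m}_L^0=B$, so $B/\mathfrak{m}_L^{0}=0$ and every $\sigma\in G$ acts trivially. For $G_0=I$, note that $B/\mathfrak{m}_L=k(v_L)$, so by definition $G_0$ is the kernel of the action $G\to\Aut_{k(v)}(k(v_L))$, which is precisely the inertia group $I$ introduced in the previous subsection.

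For normality, the key point is that every $\sigma\in G$ preserves the valuation $v_L$. Indeed, since $L/K$ is a Galois extension of complete discretely valued fields, $v_L$ is the unique extension of $v$ to $L$, and $v_L\circ\sigma$ is another such extension, hence equals $v_L$. Consequently $\sigma(\mathfrak{m}_L^{i+1})=\mathfrak{m}_L^{i+1}$ for every $i$, so the $G$-action on $B$ descends to an action on $B/\mathfrak{m}_L^{i+1}$. By construction $G_i$ is the kernel of the resulting homomorphism $G\to\Aut(B/\mathfrak{m}_L^{i+1})$, and kernels of group homomorphisms are normal.

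For the last assertion, invoke Theorem \ref{thm:generator} (which applies because the residue extension is separable) to write $B=A[x]$ for some $x\in B$. Then the characterization \eqref{eq:GiViaGenerator} gives
\[
G_i=\{\sigma\in G\mid v_L(\sigma(x)-x)\geq i+1\}.
\]
Since $x$ generates $B$ over $A$, it generates $L$ over $K$, so any $\sigma\in G\setminus\{1\}$ satisfies $\sigma(x)\neq x$, hence $v_L(\sigma(x)-x)<\infty$. As $G$ is finite, the integer
\[
N:=\max_{\sigma\in G\setminus\{1\}} v_L(\sigma(x)-x)
\]
is finite, and $G_i=\{1\}$ for every $i\geq N$. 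The only mild subtlety anywhere in the argument is the initial observation that $G$ preserves $v_L$, and everything else is bookkeeping around the definitions.
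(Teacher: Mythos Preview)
Your proof is correct and follows essentially the same approach as the paper's, which simply asserts that the identifications of $G_{-1}$, $G_0$, and the normality of the $G_i$ ``follow directly from the definition'' and that the finiteness of $G$ together with \eqref{eq:GiViaGenerator} gives $G_i=\{1\}$ for large $i$. You have usefully made explicit the details the paper leaves implicit, in particular the observation that $G$ preserves $v_L$ (hence $\mathfrak{m}_L^{i+1}$) so that $G_i$ is genuinely the kernel of a homomorphism.
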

\begin{proof}
	The identifications of $G_0$ and $G_{-1}$ follow directly from the
	definition, and so does the claim that $G_i$ are normal. 
	Finally, since $G$ is a
	finite group, the description \eqref{eq:GiViaGenerator} shows that $G_i=\{1\}$ for $i$ large enough.
\end{proof}

The description \eqref{eq:GiViaGenerator} shows that if $x$ is a generator of $B$ as an
$A$-algebra, then
for $\sigma\in G\setminus\{\id\}$, the integer
\eq{iG}{i_G(\sigma):=v_L(\sigma(x)-x)}
is independent of the choice of $x$. We obtain a map $i_G:G\rightarrow
\Z_{\geq 0}\cup \{\infty\}$, which has the property that
$G_i=i_G^{-1}([i+1,\infty])$. As the $G_i$ are normal subgroups, we also see
that $i_G$ is a class function, i.e.~that it is constant on conjugacy classes.

\begin{example}\label{ex:ASlowerNumbering}
	We begin to study an important example: Let $k$ be an algebraically
	closed field of characteristic $p>0$, and $k\llbracket x\rrbracket$
	the ring of formal power series in one indeterminate. Write
	$K:=\Frac(k\llbracket x\rrbracket)=k\llparen x\rrparen$, and consider
	the polynomial $t^{p^n}-t-\frac{1}{x}\in K[t]$.  It is not difficult to see
	that it is irreducible. Indeed, if we make the change of coordinates
	$u=\frac{1}{t}$, we obtain an isomorphism of fields	
	\[L:=K[u]/(u^{p^n}+xu^{p^n-1}-x)\cong K[t]/(t^{p^n}-t-\frac{1}{x}).\]
	Note that $u^{p^n}+xu^{p^n-1}-x$ is a separable \emph{Eisenstein polynomial} (\cite[Ch.~I,
	Prop.~17]{Serre/LocalFields}).
	The extension $L/K$ is a Galois extension with group $\F_{p^n}$, where
	$a\in \F_{p^n}$ acts via $t\mapsto t+a$, i.e.~$u\mapsto
	\frac{u}{1+au}$.	This extension is called
	\emph{Artin-Schreier extension}\index{Artin-Schreier extension}.

	Moreover, according to \cite[Ch.~I, Prop.~17]{Serre/LocalFields}, the
	integral closure of $k\llbracket x\rrbracket$ in $L$ is
	$k\llbracket x\rrbracket [u]/(u^{p^n}+xu^{p^{n}-1}-x)$, which is a discrete valuation ring with
	uniformizer $u=\frac{1}{t}$. The ramification index of this extension
	is $p^n$: $u^{p^n}=x(1-u^{p^n-1})$, and $1-u^{p^n-1}$ is a unit.

	To determine the ramification groups of $\Gal(L/K)=\F_{p^n}$, we
	compute  for every $a\in \F_{p^n}\setminus\{0\}$:
	\[v_L\left(\frac{u}{1+au}-u\right)=v_L\left(u\left(\frac{1-(1+au)}{1+au}\right)\right)=2.\]
	It follows that $\F_{p^n}=G=G_0=G_1\supsetneq G_2=0$.

	Following \cite{Laumon/Swan}, one similarly proves that for the
	$\F_{p}$-Galois extension
	$L=K[t]/(t^{p}-t-x^{-m})$, one has $G_0=G_1=\ldots=G_m\supsetneq
	G_{m+1}=0$, if $(m,p)=1$.
\end{example}
\begin{proposition}[Compatibility with taking
	subgroups]\label{prop:compatibilityWithSubgroups}
	Let $H$ be a subgroup of $G$ and $L^H$ its fixed field. Then
	$L/L^H$ is Galois with group $H$, and $H_i=G_i\cap H$ for all $i$.
\end{proposition}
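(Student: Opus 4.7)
The claim has two parts, and the first is essentially a consequence of standard Galois theory: for any subgroup $H\subset G=\Gal(L/K)$, the extension $L/L^H$ is automatically Galois with group $H$, so in order to make sense of the ramification filtration $\{H_i\}$ on $H$ I only need to verify that the hypotheses used to define the $G_i$ are inherited by the subextension $L/L^H$. Since $L$ is complete and discretely valued, the same is true of its subfield $L^H$, and the residue extension of $L/L^H$ sits inside the separable extension $k(v_L)/k(v_K)$, hence is itself separable.

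The second part should then be essentially a definitional unfolding. The key observation is that all data entering the definition of the ramification subgroups is intrinsic to $L$: the valuation $v_L$ is the unique one on the complete field $L$, the valuation ring $B$ of $L$ is intrinsic (and coincides with the integral closure of the valuation ring of $L^H$ in $L$), and so is its maximal ideal $\mathfrak{m}_L$. Consequently, the condition
\[\sigma\text{ acts trivially on }B/\mathfrak{m}_L^{i+1}\]
appearing in the definition of $G_i$ is a condition on $\sigma$ alone and does not depend on whether we view $L$ as Galois over $K$ or over $L^H$.

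Therefore I would conclude
\[H_i=\{\sigma\in H\mid \sigma\text{ acts trivially on }B/\mathfrak{m}_L^{i+1}\}
=H\cap\{\sigma\in G\mid \sigma\text{ acts trivially on }B/\mathfrak{m}_L^{i+1}\}=H\cap G_i,\]
the middle equality being simply that imposing a condition on elements of $H$ is the same as intersecting $H$ with the set of all elements of $G$ satisfying that condition. Equivalently, if one prefers the description \eqref{eq:GiViaGenerator}: any $A$-algebra generator $x$ of $B$ remains a generator of $B$ over the valuation ring of $L^H$, so both $G_i$ and $H_i$ can be described by the same inequality $v_L(\sigma(x)-x)\geq i+1$, and the equality $H_i=H\cap G_i$ is immediate. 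There is no real obstacle here; the content of the statement is that the lower-numbering filtration is well-behaved under restriction to a subgroup, which is exactly why this numbering is ``adapted to subgroups'' as advertised in the introduction to Section~\ref{Ram-groups-Hasse-Arf}.
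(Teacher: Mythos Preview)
Your proof is correct and essentially contains the paper's own argument: the paper's proof is a one-liner observing that an $A$-algebra generator $x$ of $B$ is also a generator over the valuation ring of $L^H$, which is exactly your second formulation via \eqref{eq:GiViaGenerator}. Your first formulation, using the intrinsic description of $G_i$ as the kernel of the action on $B/\mathfrak{m}_L^{i+1}$, is an equally valid (and arguably cleaner) way to phrase the same observation.
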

\begin{proof}
This is clear, since if $x$ generates $B$ over $A$, then $x$ also generates
$B$ over the valuation ring of $L^H$. 
\end{proof}

\begin{remark}\label{rem:quotientsInLowerNumbering}
	\begin{enumerate}[label=(\alph*)]
		\item If we apply the proposition to the group $G_0$, then $L^{G_0}$ is the
			maximal unramified subextension of $L/K$, and $L/L^{G_0}$ is totally
			ramified. To study the higher ramification groups
			$G_i$, $i>0$, we may 	assume that $L/K$ is totally ramified. In this case the
			degree of $L/K$ is equal to the ramification index of $L/K$.
		\item If $H$ is a normal subgroup of $G$, then $G/H$ is the
			Galois group of the extension $L^H/K$. One might
			hope that the
			ramification filtration is compatible with quotients,
			i.e.~that for $i\geq 0$, the image of $G_i$ in $G/H$
			is the $i$-th ramification subgroup of $G/H$.
			This is not the case. The image of $G_i$ in
			$G/H$ is one of the ramification subgroups of $G/H$,
			but in general not $(G/H)_i$.  The indexing has to be
			adjusted. To make this
			precise one introduces the \emph{upper numbering
			filtration}\index{ramification filtration!upper
			numbering}, see Section \ref{sec:upperNumbering}.
	\end{enumerate}
\end{remark}

Now let $\pi$ be a uniformizer
for $L$ and assume that $L/K$ is totally ramified. In this case $G=G_0$, and $\pi$
generates $B$ as  $A$-algebra, see Theorem \ref{thm:generator}. 
For $\sigma\in G_0$, we see that $v_L(\sigma(\pi))=1$, so $\sigma(\pi)/\pi\in
B^\times$. Moreover, by \eqref{eq:GiViaGenerator}, $G_i$ is characterized as
the set of $\sigma\in G_0$, such that $\sigma(\pi)-\pi\in (\pi)^{i+1}$. Consequently, 
\[G_i=\left\{\sigma\in G \left| \frac{\sigma(\pi)}{\pi}\equiv 1 \mod
	\mathfrak{m}_L^{i}\right.\right\}.\]
\begin{definition}
For $i>0$, the set $U_L^{i}:=1+\mathfrak{m}_L^i\subset B^\times$ is easily seen to be a
subgroup, the group of \emph{$i$-th units}\index{higher units}. We define
$U_L:=U_L^0:=B^\times$.
\end{definition}
This defines a descending filtration of the group of units $U_L$, and
the observation that $\sigma(\pi)/\pi\in B^\times$ can be made more precise:
\begin{proposition}\label{prop:map-lowernumbering-to-units}
	For $i\geq 0$, the assignment $\sigma\mapsto \sigma(\pi)/\pi$ induces an injective homomorphism
	of groups
	\[G_i/G_{i+1}\hookrightarrow U^{i}_L/U_{L}^{i+1},\]
	which is independent of the choice of the uniformizer $\pi$.
\end{proposition}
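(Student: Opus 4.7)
By Remark \ref{rem:quotientsInLowerNumbering}, since $G_i = G_0 \cap G_i$ for $i \geq 0$, the subgroups $G_i$ only depend on the totally ramified subextension $L/L^{G_0}$, and I may assume $L/K$ is totally ramified. Then by Theorem \ref{thm:generator} any uniformizer $\pi$ of $L$ generates $B$ as an $A$-algebra, so the characterization $G_i = \{\sigma \in G : \sigma(\pi)/\pi \in U_L^i\}$ displayed just before the proposition applies. In particular the map $\theta_i^\pi : G_i \to U_L^i/U_L^{i+1}$, $\sigma \mapsto \sigma(\pi)/\pi$, is well-defined, and its kernel is exactly $G_{i+1}$ (since $\sigma(\pi)/\pi \in U_L^{i+1}$ is equivalent to $\sigma \in G_{i+1}$). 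Thus, once I show $\theta_i^\pi$ is a group homomorphism and independent of $\pi$, I am done.

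Both remaining assertions rest on a single estimate: if $\sigma \in G_i$ and $u \in U_L^{j}$ with $j \geq 0$, then $\sigma(u)/u \in U_L^{i+1}$ whenever $j \geq 1$, and more generally $\sigma(u)/u \in U_L^{\max(j,i+1)-j+\text{(something)}}$... let me just use the clean version: for $\sigma\in G_i$ and any $b \in B$ one has $\sigma(b) - b \in \mathfrak{m}_L^{i+1}$ by definition of $G_i$. Hence if $u = 1 + m \in U_L^0$ with $m \in B$, then $\sigma(u) - u = \sigma(m) - m \in \mathfrak{m}_L^{i+1}$, so $\sigma(u)/u \in U_L^{i+1}$ since $u$ is a unit.

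Applying this I verify the homomorphism property: for $\sigma, \tau \in G_i$, writing $u := \tau(\pi)/\pi \in U_L^i \subset U_L^0$,
\begin{equation*}
\frac{(\sigma\tau)(\pi)}{\pi} = \frac{\sigma(\pi)\,\sigma(u)}{\pi\, u} = \frac{\sigma(\pi)}{\pi}\cdot\frac{\tau(\pi)}{\pi}\cdot\frac{\sigma(u)}{u},
\end{equation*}
and the estimate above shows the last factor lies in $U_L^{i+1}$. Thus $\theta_i^\pi$ is a homomorphism modulo $U_L^{i+1}$. For the independence of $\pi$, any other uniformizer has the form $\pi' = \pi w$ with $w \in U_L^0$, and the same estimate gives
\begin{equation*}
\frac{\sigma(\pi')}{\pi'} = \frac{\sigma(\pi)}{\pi}\cdot\frac{\sigma(w)}{w} \equiv \frac{\sigma(\pi)}{\pi} \pmod{U_L^{i+1}}.
\end{equation*}

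There is no real obstacle here; the only thing to watch is that the estimate $\sigma(u)/u \in U_L^{i+1}$ for $u \in U_L^0$ and $\sigma \in G_i$ requires $u$ to be a unit (so that division is harmless), which is why the argument works uniformly for $i \geq 0$ without needing to separate $i = 0$ and $i \geq 1$.
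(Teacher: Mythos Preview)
Your proof is correct and follows essentially the same approach as the paper: both rest on the observation that for $\sigma\in G_i$ and any unit $u\in B^\times$ one has $\sigma(u)-u\in\mathfrak{m}_L^{i+1}$, hence $\sigma(u)/u\in U_L^{i+1}$. The only cosmetic difference is that the paper first proves independence of $\pi$ and then deduces the homomorphism property by noting that $\tau(\pi)$ is again a uniformizer, whereas you apply the estimate directly in both places; the content is the same.
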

\begin{proof}
	We have seen that $\sigma(\pi)/\pi\in U^i_L$ if $\sigma \in G_i$. If
	$\tau$ is a second uniformizer, then $\tau=u\pi$ with $u\in
	B^\times$, and 
	\[\frac{\sigma(\tau)}{\tau}=\frac{\sigma(u)}{u}\cdot\frac{\sigma(\pi)}{\pi}.\]
	If $\sigma \in G_i$, then by definition $\sigma(u)\equiv u\mod\mathfrak{m}_L^{i+1}$, and hence
	$\sigma(u)/u\in U^{i+1}_L$, as $u\in B^\times$. We obtain a map 
	$G_i\rightarrow U^i_L/U^{i+1}_L$, independent of the choice of $\pi$. This is a group homomorphism: If
	$\sigma, \sigma'\in G_i$, then
	\[\frac{\sigma\sigma'(\pi)}{\pi}=\frac{\sigma'(\pi)}{\sigma'(\pi)}\cdot\frac{\sigma(\sigma'(\pi))}{\pi}=\frac{\sigma'(\pi)}{\pi}\cdot\frac{\sigma(\sigma'(\pi))}{\sigma'(\pi)},\]
	and $\sigma'(\pi)$ also is a uniformizer of $B$.

	Finally, $\sigma(\pi)/\pi \in U^{i+1}_L$ if and only if $\sigma\in
	G_{i+1}$, so the kernel of the map $G_i\rightarrow
	U_L^i/U_L^{i+1}$ is $G_{i+1}$.  This completes the proof.
\end{proof}

We can now use the fairly concrete structure of the groups $U_L^i$ to get some
information about the groups $G_i$:
\begin{corollary}\label{cor:structure-of-ramification-groups}
	\begin{enumerate}[label=\emph{(\alph*)}]
		\item $G_0/G_1$ is cyclic, and if $p=\Char(k(v_K))>0$, then the order of $G_0/G_1$ is prime to $p$.
		\item If $p=0$, then $G_i=0$ for $i>0$.
		\item If $p>0$, then for $i>0$, the groups $G_i$ are
			$p$-groups, and the quotients $G_i/G_{i+1}$ are
			abelian $p$-groups.
		\item $G_0$ is a semi-direct product of a cyclic group of
			order prime to $p$ with a $p$-group. In particular,
			$G_0$ is solvable and $G_1$ is its unique $p$-Sylow
			subgroup.
	\end{enumerate}
\end{corollary}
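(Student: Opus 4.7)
The strategy is to apply Proposition \ref{prop:map-lowernumbering-to-units} and read off the structure of each $G_i/G_{i+1}$ from the successive quotients of the unit filtration. First I would reduce to the case where $L/K$ is totally ramified by passing from $K$ to $L^{G_0}$: by Proposition \ref{prop:compatibilityWithSubgroups} the lower-numbering ramification subgroups of $G_0 = \Gal(L/L^{G_0})$ coincide with the $G_i$ for $i \geq 0$, so nothing in the statement is affected. Then I would establish the well-known identifications
\begin{align*}
U_L^0/U_L^1 &\xrightarrow{\;\cong\;} k(v_L)^\times, \\
U_L^i/U_L^{i+1} &\xrightarrow{\;\cong\;} (k(v_L),+) \quad (i \geq 1),
\end{align*}
the second via $1 + \pi^i y \mapsto y \bmod \mathfrak{m}_L$ for a chosen uniformizer $\pi$; the additivity in the latter follows from $(1+\pi^i y)(1+\pi^i y') \equiv 1 + \pi^i(y+y') \bmod \mathfrak{m}_L^{2i}$.

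With these identifications, the four statements become almost formal. For (a), the finite group $G_0/G_1$ embeds in $k(v_L)^\times$; any finite subgroup of the multiplicative group of a field is cyclic, and in characteristic $p > 0$ there is no $p$-torsion in $k(v_L)^\times$, so $|G_0/G_1|$ is prime to $p$. For (b), if $p = 0$ then $(k(v_L),+)$ is torsion-free, whence each $G_i/G_{i+1}$ for $i \geq 1$ vanishes; combined with $G_i = \{1\}$ for $i$ sufficiently large, a descending induction gives $G_i = \{1\}$ for all $i \geq 1$. For (c), if $p > 0$ and $i \geq 1$, the quotient $G_i/G_{i+1}$ is a finite subgroup of an $\F_p$-vector space, hence elementary abelian $p$; another descending induction on $i$, starting from the large index at which $G_i$ is trivial, shows each $G_i$ is itself a $p$-group.

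For (d), by (a) and (c) the normal subgroup $G_1 \trianglelefteq G_0$ is a $p$-group and its index $[G_0 : G_1]$ is coprime to $p$, so $G_1$ coincides with a Sylow $p$-subgroup of $G_0$; being normal, it is the unique such. The Schur--Zassenhaus theorem then produces a complement, yielding $G_0 \cong G_1 \rtimes (G_0/G_1)$. Solvability is automatic, since $G_0/G_1$ is cyclic and $G_1$ is a finite $p$-group, hence nilpotent. I do not anticipate any genuine obstacle: Proposition \ref{prop:map-lowernumbering-to-units} carries almost all the weight, and the only outside input is Schur--Zassenhaus (which applies cleanly because of the coprimality furnished by (a) and (c)).
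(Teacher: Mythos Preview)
Your proof is correct and follows essentially the same approach as the paper: both use Proposition \ref{prop:map-lowernumbering-to-units} together with the identifications $U_L^0/U_L^1 \cong k(v_L)^\times$ and $U_L^i/U_L^{i+1} \cong (k(v_L),+)$ for $i\geq 1$, and then deduce the splitting in (d) from the coprimality of $|G_1|$ and $|G_0/G_1|$ (the paper phrases this last step without naming Schur--Zassenhaus, but it is the same argument).
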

\begin{proof}
	\begin{enumerate}[label={(\alph*)}]
		\item We know that the quotients $G_i/G_{i+1}$ are isomorphic to subgroups
	of $U^i_L/U^{i+1}_L$. If $i=0$, then $U^0_L/U^1_L=k(v_L)^\times$. As
	every finite subgroup of the multiplicative group of a field is cyclic, we see
	that $G_0/G_1$ is cyclic. If $p>0$, there are no elements of order $p$
	in $k(v_L)^\times$, so $G_0/G_1$ is cyclic of order prime to $p$.
\item Note that for $i>0$, $U_L^i/U_L^{i+1}$ is isomorphic to
	$\mathfrak{m}^{i}_L/\mathfrak{m}_L^{i+1}$, which is isomorphic to the
	additive group underlying $k(v_L)$. If $p=0$, then the additive group
	$k(v_L)$ has no nontrivial elements of finite order, so
	$G_i/G_{i+1}=0$ for all $i>0$. As $G_i=0$ for $i\gg0$, it follows that
	$G_1=0$.
\item If $p>0$ and $i>0$, then again $G_i/G_{i+1}$ is isomorphic to
	a finite subgroup of the additive group of the residue field $k(v_L)$,
	hence an abelian $p$-group. This implies that the order of $G_1$ is a
	$p$-power, so $G_1$ is a $p$-group.
\item We just have to show that the sequence
	\begin{equation*}
		\begin{tikzcd}
			0\rar&G_1\rar&G_0\rar&G_0/G_1\rar&0
		\end{tikzcd}
	\end{equation*}
	admits a splitting. But this follows from the fact that the orders of
	$G_0/G_1$ and $G_1$ are coprime.
	
	\end{enumerate}
\end{proof}

 The
ramification filtration of the Galois group $G=\Gal(L/K)$ allows us to
``measure'' the quality of ramification. If $L/K$ is totally ramified, then
its ramification index equals the degree $[L:K]$, the more $G_i$ are nonzero,
the ``worse'' is the ramification. We will make this more precise later on.
For now, we content ourselves with the following definition:

\begin{definition}\label{defn:tameRamification}
	A finite separable extension $L/K$ of complete discretely valued fields is called \emph{tamely ramified}\index{tame
	ramification} if its ramification index is prime to the characteristic
	$p$
	of the residue field $k(v)$ of $K$, and if the extension of the
	residue fields is separable. Otherwise $L/K$ is called \emph{wildly
	ramified}\index{wild ramification}.

	These definitions are easily extended to the case of general separable extensions
	of discretely valued fields which are not necessarily
	complete: If $(K,v)$ is a discretely valued field,
	$L/K$ a finite separable extension and $w$ a discrete valuation on $L$
	extending $v$ (Definition \ref{defn:localRamification}), then
	$(L,w)/(K,v)$ is \emph{tamely ramified} if the extension $L_w/K_v$ of the
	completions is tamely ramified, i.e.~if the ramification index of $w$
	over $v$ is prime to $p$ and if the residue extension is separable. Otherwise, $(L,w)/(K,v)$ is
	\emph{wildly
		ramified}.
\end{definition}

\begin{exercise}
Let $L/K$ be a Galois extension of discretely valued fields, with separable
residue extension and Galois group $G$. Show that $L/K$ is tamely ramified if and
only if $G_1=0$ if and only if $(|G_0|,p)=1$.
\end{exercise}

\subsection{The ramification filtration --- upper numbering}\label{sec:upperNumbering}
We continue with the previous setup: Let $L/K$ be a finite Galois extension of
complete discretely valued fields with Galois group $G$ and separable residue
extension. Let $H\subset G$ be a
normal subgroup, such that $G/H=\Gal(L^H/K)$. As indicated in Remark
\ref{rem:quotientsInLowerNumbering}, the image
of the ramification filtration of $G$ in $G/H$ is the ramification filtration
of $\Gal(L^H/K)$, up to changing the numbering. We start this section by
making this remark precise, before we then fix this numbering defect by
introducing the \emph{upper numbering for the ramification filtration.} Among
other things, this will have the advantage that we can pass to the inverse
limit to obtain a ramification filtration by closed subgroups of the absolute
Galois group.

Recall that we defined a class function $i_G$ on $G$, by setting
$i_G(\sigma)=v_L(\sigma(x)-x)$ for $\sigma\neq \id$, $x$ a generator of
$B$ as $A$-algebra, and $i_G(\id)=\infty$.

\begin{proposition}
	Let $H$ be a normal subgroup of $G$, and $e'$ the ramification index
	of $L/L^H$. We can then compute $i_{G/H}$ in terms of $i_G$:
	\[i_{G/H}(s)=\frac{1}{e'}\sum_{\sigma\mapsto s}i_G(\sigma).\]
\end{proposition}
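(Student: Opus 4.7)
My plan is to establish a per-coset lower bound that sums to give the correct global identity, from which equality in each individual bound follows. First, the trivial case: if $s=1$, then the fibre over $s$ is $H$, which contains the identity $1\in G$ with $i_G(1)=\infty$, so both sides of the claimed equality are $\infty=i_{G/H}(1)$. From now on assume $s\neq 1$, and fix a lift $\sigma\in G$ of $s$; in particular $\sigma\notin H$.

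Let $A'=B\cap L^H$, the valuation ring of $L^H$. By Theorem \ref{thm:generator} I can choose $x\in B$ generating $B$ as an $A$-algebra (and hence also as an $A'$-algebra) and $y\in A'$ generating $A'$ as an $A$-algebra; the necessary residue extensions are separable because those of $L/K$ are. Let $f(T)=\prod_{\tau\in H}(T-\tau(x))\in A'[T]$ be the minimal polynomial of $x$ over $L^H$. Since $H\lhd G$, $\sigma$ restricts on $L^H$ to $s$ and preserves $A'$, so $\sigma(f)(T):=\prod_{\tau\in H}(T-\sigma\tau(x))$ is obtained from $f$ by applying $\sigma$ to each coefficient.

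The key computation is as follows. Since $A'=A[y]$, I can write $f(T)=\sum_i P_i(y)T^i$ with $P_i\in A[U]$. Then each coefficient of $\sigma(f)-f$ is $P_i(\sigma(y))-P_i(y)$, divisible by $\sigma(y)-y$ in $A[y,\sigma(y)]\subseteq A'$, so there exists $h(T)\in A'[T]$ with $\sigma(f)(T)-f(T)=(\sigma(y)-y)\,h(T)$. Evaluating at $T=x$ and using $f(x)=0$ gives
\[
\pm \prod_{\tau\in H}(\sigma\tau(x)-x)=\sigma(f)(x)=(\sigma(y)-y)\cdot h(x).
\]
Since $h(x)\in B$ and $v_L(\sigma(y)-y)=e'\,v_{L^H}(\sigma(y)-y)=e'\,i_{G/H}(s)$, taking $v_L$ yields the key inequality
\[
\sum_{\sigma'\mapsto s} i_G(\sigma')=e'\,i_{G/H}(s)+v_L(h(x))\;\geq\; e'\,i_{G/H}(s).
\]

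The main obstacle is upgrading this to an equality: a priori $v_L(h(x))$ could be positive, and verifying $h(x)\in B^\times$ by hand is delicate (it would require a case analysis depending on whether $s$ lies in the inertia of $L^H/K$). The elegant way out is to sum the inequality over all $s\neq 1$ and invoke the transitivity of the different (Proposition \ref{prop:transitivityOfDifferent}), $\mathfrak{D}_{B/A}=\mathfrak{D}_{B/A'}\cdot\mathfrak{D}_{A'/A}$. Applying Proposition \ref{valuation-of-discriminant}\ref{item:valuation-of-discriminant:b} to each of $L/K$, $L/L^H$ and $L^H/K$, together with Proposition \ref{prop:compatibilityWithSubgroups} (which gives $i_H(\tau)=i_G(\tau)$ for $\tau\in H$), yields
\[
\sum_{\sigma\in G\setminus H}i_G(\sigma)=v_L(\mathfrak{D}_{B/A})-v_L(\mathfrak{D}_{B/A'})=v_L(\mathfrak{D}_{A'/A})=e'\sum_{s\neq 1}i_{G/H}(s).
\]
Since the summed inequality is already an equality, each individual inequality must be an equality, proving the proposition.
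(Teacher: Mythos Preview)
Your argument is correct, and the first half---showing that $\sigma(y)-y$ divides $\prod_{\tau\in H}(\sigma\tau(x)-x)$ in $B$---is essentially identical to the paper's. Where you diverge is in establishing the reverse inequality. The paper proves the reverse divisibility directly: choosing a polynomial $G\in A[X]$ with $G(x)=y$, one has $G(X)-y=f(X)h(X)$ in $B'[X]$; applying $\sigma$ and evaluating at $X=x$ gives $y-\sigma(y)=(\sigma h)(x)\cdot\prod_{\tau\in H}(\sigma\tau(x)-x)$, so the two elements are associated and one gets the exact equality $e'\,i_{G/H}(s)=\sum_{\sigma\mapsto s}i_G(\sigma)$ coset by coset. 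Your route instead sums the one-sided inequality over all nontrivial $s$ and matches it against the global identity $v_L(\mathfrak{D}_{B/A})=v_L(\mathfrak{D}_{B/A'})+e'\,v_{L^H}(\mathfrak{D}_{A'/A})$ coming from transitivity of the different, forcing each inequality to be sharp. The paper's approach is more self-contained and purely local (no appeal to the different), while yours is a nice illustration that the transitivity of the different already encodes the coset-wise identity; both are short, and neither introduces circularity since Propositions~\ref{prop:transitivityOfDifferent} and~\ref{valuation-of-discriminant} are proved earlier and independently.
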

\begin{proof} Let $A$ and $B$ be the valuation rings of $K,L$ and $B'$ the
	valuation ring of $L^H$.  By Theorem \ref{thm:generator} we find $x\in
	B$, $y\in B'$ such that $B=A[x]$, $B'=A[y]$.  By definition we have
	for $s\in G/H$:
	\[e'i_{G/H}(s)=v_L(s(y)-y).\]
	Let $\sigma\in G$ be a fixed element of the preimage of $s$. Then
	$\sigma|_{L^H}=s$ and 
	every element of the preimage of
	$s$ is of the form $\sigma\tau$ for $\tau\in H$. We show that the
	elements
	$\sigma y-y$ and $\prod_{\tau\in H}(\sigma\tau x -x)$ of $B$ are associated,
	i.e.~generate the same ideals. 

	Let $f(X)\in B'[X]$ be the minimal polynomial of $x$ over $L^H$. Write
	$f(X)=X^n+b_{n-1}X^{n-1}+\ldots+b_0$ with $b_i\in B'$. The
	coefficients of the polynomial $(\sigma f)(X)-f(X)\in B'[X]$ are 
	$\sigma(b_i)-b_i$, $i=0,\ldots, n-1$. Let $B_i(Y)\in
	A[Y]$ be polynomials such that $B_i(y)=b_i$. Then
	$\sigma(b_i)-B_i(\sigma(y))=0$, so $Y-\sigma(y)$ divides the polynomial
	$\sigma(b_i)-B_i(Y)\in B'[Y]$, whence $y-\sigma(y)$ divides $\sigma(b_i)-b_i$. 
	This shows that $\sigma y-y$ divides $(\sigma
	f)(x)-f(x)=\prod_{\tau\in H}(\sigma\tau x -x)$.

	Conversely, let $G\in A[X]$ be a polynomial such that $G(x)=y$. Then
	$x$ is a zero of $G(X)-y\in B'[X]$, so $G(X)-y=f(X)h(X)$
	for some polynomial $h(X)$. Applying $\sigma$ gives
	\[G(X)-\sigma y = (\sigma f)(X)(\sigma h)(X)\]
	and finally 
	\[y-\sigma y= G(x)-\sigma y=(\sigma h)(x)
		\prod_{\tau\in H}(\sigma\tau x - x).\]
\end{proof}

For notational reasons, the following definition is convenient:
\begin{definition}\label{defn:reals}
	If $u\in [-1,\infty)\subset \R$, then $G_u:=G_{\lceil
	u\rceil}$, where $\lceil u\rceil$ denotes the smallest
	integer $\geq u$.
\end{definition}
We will use this notation in the proof of the following corollary.

\begin{corollary}[Herbrand\index{Herbrand's theorem}]\label{cor:Herbrand}
	In the situation of the proposition, the ramification groups of $G/H$
	are the images of the ramification groups of $G$.
\end{corollary}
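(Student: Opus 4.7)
The plan is to extract from the formula $e' \cdot i_{G/H}(s) = \sum_{\sigma\mapsto s} i_G(\sigma)$ of the preceding proposition an explicit Herbrand-type function relating the lower-numbering index of $s \in G/H$ to that of a well-chosen lift of $s$ in $G$, and then to read off the statement.

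For fixed $s \in G/H\setminus\{1\}$, I would select a lift $\sigma_0 \in G$ maximizing $m := i_G(\sigma_0)$ among preimages of $s$; the preimages are then precisely $\{\sigma_0\tau : \tau \in H\}$. The key elementary fact is that $i_G(\alpha\beta) \geq \min(i_G(\alpha), i_G(\beta))$, with equality whenever $i_G(\alpha) \neq i_G(\beta)$, which follows from an ultrametric estimate on $v_L(\alpha(x) - x)$ for a generator $x$ of $B$ over $A$, using the Galois-invariance of $v_L$. Combined with the maximality of $m$ and the identification $i_G|_H = i_H$ from Proposition~\ref{prop:compatibilityWithSubgroups}, this yields the identity
\[i_G(\sigma_0\tau) = \min(m, i_H(\tau)) \quad \text{for all } \tau\in H,\]
so that the proposition becomes
\[e' \cdot i_{G/H}(s) = \sum_{\tau\in H}\min(m, i_H(\tau)).\]

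Next, introduce the Herbrand function $\varphi_H: [-1,\infty)\to[-1,\infty)$ defined by
\[\varphi_H(u) = -1 + \frac{1}{e'}\sum_{\tau\in H}\min(u+1, i_H(\tau)).\]
An elementary calculation shows that $\varphi_H$ is continuous, piecewise linear, strictly increasing, and fixes $-1$; hence it is a self-bijection of $[-1,\infty)$ with inverse $\psi_H$. The formula above rewrites as $i_{G/H}(s) = \varphi_H(m-1) + 1$, whence $s \in (G/H)_v$ iff $m \geq \psi_H(v) + 1$, which by Definition~\ref{defn:reals} is exactly the condition that some lift of $s$ lies in $G_{\psi_H(v)}$. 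This gives $(G/H)_v = G_{\psi_H(v)} H/H$, proving in particular that every ramification subgroup of $G/H$ is the image of a ramification subgroup of $G$.

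The principal subtlety is the boundary case $i_H(\tau) = m$ in the key identity: the lower bound $i_G(\sigma_0\tau) \geq m$ follows immediately from subadditivity, but the reverse $i_G(\sigma_0\tau) \leq m$ must be extracted from the maximality of $m$ among lifts of $s$, since otherwise $\sigma_0\tau$ would violate the choice of $\sigma_0$. Once the identity is established, everything that follows is routine bookkeeping with the piecewise linear function $\varphi_H$ and the convention in Definition~\ref{defn:reals}.
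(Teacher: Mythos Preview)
Your proof is correct and follows essentially the same approach as the paper: both use the proposition's formula together with the ultrametric inequality $i_G(\alpha\beta)\geq\min(i_G(\alpha),i_G(\beta))$ (with equality when the values differ), a lift $\sigma_0$ of $s$ maximizing $i_G$, and the Herbrand function $\varphi_H$. The only difference is organizational: you derive the exact identity $i_{G/H}(s)=\varphi_H(m-1)+1$ in one pass via the maximal lift, whereas the paper first proves the inclusion $G_iH/H\subset (G/H)_{\varphi_H(i)}$ by an inequality for an arbitrary $\sigma_0\in G_i$ and then uses the maximal lift for the reverse inclusion.
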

\begin{proof}
	Let $i\in \Z_{\geq -1}$, $\sigma_0\in G_i$ and let $s_0$ be its image in $G/H$. The
proposition allows us to compute $i_{G/H}(s_0)$:
\[i_{G/H}(s_0)=\frac{1}{e'}\sum_{\sigma\mapsto s_0}i_{G}(\sigma).\]
Write $H_j:=G_j\cap H$ and $h_j:=|H_j|$. We can rewrite the above formula:
\[
	i_{G/H}(s_0)=\frac{1}{e'}\left(\sum_{j=-1}^{\infty}\sum_{\sigma\in H_j\setminus H_{j+1}} i_G(\sigma_0\sigma) + i_G(\sigma_0)\right).
\]
Note that $i_G(\sigma_0\sigma)\geq \min\{i_G(\sigma), i_G(\sigma_0)\}$ with
equality if $i_G(\sigma)\neq i_{G}(\sigma_0)$ (Exercise \ref{ex:nonArchimedianTriangleInequality}), and that $i_G(\sigma)=j+1$ if
$\sigma\in H_j\setminus H_{j+1}$. Using that $\sigma_0\in G_i$, we compute
\begin{align*}
	i_{G/H}(s_0)&=\frac{1}{e'}\left(\sum_{j=-1}^{i-1}(h_j-h_{j+1})(j+1)+i_G(\sigma_0)+\sum_{j\geq i} \sum_{\sigma\in H_j\setminus H_{j+1}}i_{G}(\sigma\sigma_0)\right)\\
&\geq\frac{1}{e'}\left(\sum_{j=-1}^{i-1}(h_j-h_{j+1})(j+1)+(i+1)\cdot\left(1+\sum_{j\geq i, h_j>1}(h_j-h_{j+1})\right)\right)\\
&=\frac{1}{e'}\sum_{j=0}^{i}h_j\end{align*}
This last number is important and we give it a name:
\[\varphi_H(i)+1:= \frac{1}{|G_0\cap H|}\sum_{j=0}^{i}|G_j\cap H|\in \Q.\]
Our computation shows that the image $G_iH/H\subset G/H$ of $G_i$ is a
subgroup of $(G/H)_{\phi_H(i)}$ (see Definition \ref{defn:reals}). Let us show that this is in fact an equality.
Let $s\in (G/H)_{\phi_{H}(i)}\setminus \{1\}$, and let $i'$ be maximal with the property that
there exists a preimage $\sigma_0$ of $s$ in $G_{i'}$. Let $\sigma\in
H_j\setminus H_{j+1}$. If $j<i'$, then $i_G(\sigma_0\sigma)=i_G(\sigma)=j+1$.
If $j\geq i'$, then $i_G(\sigma_0\sigma)\geq i'+1$, and hence by the maximality
of $i'$, $i_G(\sigma_0\sigma)=i'+1$. Our computation from above then shows
\begin{align*}
	i_{G/H}(s)&=\frac{1}{e'}\left(\sum_{j=-1}^{i'-1}(h_j-h_{j+1})(j+1)+(i'+1)\cdot\left(1+
	\sum_{j\geq i', h_j>1}(h_j-h_{j+1})\right)\right)\\
	&=\frac{1}{e'}\sum_{j=0}^{i'}h_j\\
	&=\phi_{H}(i')+1.\end{align*}
	We immediately see that $i_{G/H}(s)\geq \phi_{H}(i)+1$ implies that
	$i'\geq i$, so $s\in G_iH/H$. This completes the proof that
	$G_iH/H=(G/H)_{\phi_H(i)}$.
%
\end{proof}

To define the upper numbering filtration of $G$, we extend the definition of
the function $\phi$ that appeared in the proof of Corollary
\ref{cor:Herbrand}.

\begin{definition}\label{defn:HerbrandFunction}
	As before, let $L/K$ be a finite Galois extension of complete
	discretely valued fields with group $G$, and separable residue
	extension of degree $f$.
	\begin{enumerate}[label={(\alph*)}]
		\item For $u\in [-1,0)$, define $(G_0:G_u):=(G_{u}:G_0)^{-1}$,
				i.e.~$(G_0:G_{-1})=(G_{-1}:G_0)^{-1}=\frac{1}{f}$ and
			$(G_0:G_u)=1$ if $u\in (-1,0)$.
		\item Define the function $\phi_{L/K}:[-1,\infty)\rightarrow [-1,\infty)$ by
				\[\phi_{L/K}(u)=\int_{0}^u\frac{1}{(G_0:G_t)}\mathrm{d}t\]
	\end{enumerate}
\end{definition}

 If $u\geq 0$, then
	 \begin{equation}\label{eq:formulaForPhi}\phi_{L/K}(u)+1=\frac{1}{|G_0|}\left(|G_0|+|G_1|+\ldots+|G_{\lfloor
		 u\rfloor}| + (u-\lfloor u\rfloor)|G_{\lfloor u \rfloor+1}|\right),\end{equation}
		 where $\lfloor u\rfloor$ is the largest integer $\leq u$. In particular, if $u=m\in \Z_{\geq 0}$, then
\[\phi_{L/K}(m)+1=\frac{1}{|G_0|}\sum_{i=0}^m|G_i|.\]
This formula shows how the function $\phi_{L/K}$ is related to the function
$\phi_H$ from the proof of Corollary \ref{cor:Herbrand}:
$\phi_{L/L^{H}}=\phi_H$.

Using this notation, we restate the result of our calculation from the proof of Corollary \ref{cor:Herbrand} for
future reference.
\begin{corollary}\label{cor:quotientCalculation}
If $H\subset G$ is a normal subgroup, then for $u\in [-1,\infty)\subset  \R$
	we have
	\[G_{u}H/H=(G/H)_{\phi_{L/L^{H}}(u)}.\]
\end{corollary}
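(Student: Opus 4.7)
My plan is to bootstrap from the integer case (Corollary \ref{cor:Herbrand}, after identifying the $\phi_H$ appearing in its proof with $\phi_{L/L^H}$ via \eqref{eq:formulaForPhi}) to arbitrary real $u$ by exploiting the ceiling conventions of Definition \ref{defn:reals}. Given $u\in[-1,\infty)$, set $m:=\lceil u\rceil$; then $G_u=G_m$ by definition, whence $G_uH/H=G_mH/H=(G/H)_{\phi_{L/L^H}(m)}$ by the integer case. It thus remains to show $(G/H)_{\phi_{L/L^H}(u)}=(G/H)_{\phi_{L/L^H}(m)}$, which by Definition \ref{defn:reals} applied to the filtration of $G/H$ amounts to the ceiling identity $\lceil\phi_{L/L^H}(u)\rceil=\lceil\phi_{L/L^H}(m)\rceil$.

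When $u=m$ this is trivial, and when $u\in(-1,0)$ it is immediate since $\phi_{L/L^H}$ restricted to $[-1,0]$ is the identity. The substantive case is $u\in(m-1,m)$ with $m\ge 1$: here $\phi_{L/L^H}$, being continuous and strictly increasing, maps $(m-1,m)$ onto the open interval $\bigl(\phi_{L/L^H}(m-1),\phi_{L/L^H}(m)\bigr)$, so the desired identity follows once one knows that this open interval contains no integer, for then $\lceil\cdot\rceil$ is constant on the half-open interval $\bigl(\phi_{L/L^H}(m-1),\phi_{L/L^H}(m)\bigr]$ with common value $\lceil\phi_{L/L^H}(m)\rceil$.

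I expect this no-integer-in-the-gap assertion to be the main obstacle, and I would settle it by a direct divisibility argument. Put $D_i:=|H_0|/|H_i|$; these are positive integers satisfying $D_i\mid D_{i+1}$ by Lagrange's theorem applied to the chain $H_{i+1}\subset H_i$. From \eqref{eq:formulaForPhi} one reads off
\[\phi_{L/L^H}(m-1)=\sum_{i=1}^{m-1}\frac{1}{D_i},\qquad \phi_{L/L^H}(m)=\phi_{L/L^H}(m-1)+\frac{1}{D_m}.\]
Multiplying the first expression by $D_m$ yields $D_m\phi_{L/L^H}(m-1)=\sum_{i=1}^{m-1}D_m/D_i\in\Z$, so both endpoints of our interval lie in $\tfrac{1}{D_m}\Z$ and are consecutive points of this lattice, separated by $1/D_m\le 1$. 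Since an open interval whose endpoints are consecutive points of $\tfrac{1}{D_m}\Z$ contains no integer, the argument is complete.
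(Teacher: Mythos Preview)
Your proof is correct. The paper presents this corollary as a mere restatement of the computation in the proof of Corollary~\ref{cor:Herbrand}, which establishes the identity $G_mH/H=(G/H)_{\phi_{L/L^H}(m)}$ for \emph{integer} $m$; the extension to real $u$ via the ceiling convention of Definition~\ref{defn:reals} is left implicit there. You take the same route---reduce to the integer case---but you actually supply the missing step, namely the ceiling identity $\lceil\phi_{L/L^H}(u)\rceil=\lceil\phi_{L/L^H}(\lceil u\rceil)\rceil$, via the divisibility argument showing that $\phi_{L/L^H}(m-1)$ and $\phi_{L/L^H}(m)$ are consecutive points of $\tfrac{1}{D_m}\Z$. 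This is a genuine detail the paper glosses over, and your treatment makes the passage to real indices self-contained.
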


The basic properties of $\phi_{L/K}$ are easy to verify:
\begin{proposition}
	\begin{enumerate}[label=\emph{(\alph*)}]
		\item $\phi_{L/K}$ is continuous, piecewise linear,
			increasing, and concave.
		\item $\phi_{L/K}(0)=0$.
		\item $\phi_{L/K}$ is a homeomorphism of $[-1,\infty)$ onto itself.
	\end{enumerate}
\end{proposition}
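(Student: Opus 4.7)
My plan is to read off all three statements directly from the fact that $\phi_{L/K}$ is the indefinite integral of the explicit step function $f(t) := 1/(G_0:G_t)$, and then check the endpoint values.

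First I would analyze the integrand $f$. Using the convention $G_u = G_{\lceil u\rceil}$, the function $f$ is constant on each interval $(k, k+1]$ for integer $k\geq 0$, with value $1/(G_0:G_{k+1})$; on $(-1,0]$ it takes the value $1$ by the definition $(G_0:G_u)=1$ for $u \in (-1,0)$. Because the filtration $\{G_i\}$ is descending, the indices $(G_0:G_{k+1})$ are weakly increasing in $k$, so $f$ is a weakly decreasing, strictly positive step function (with eventual constant value $1/|G_0|$ once $G_{k+1}=\{1\}$).

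For part (a), piecewise linearity and continuity of $\phi_{L/K}$ follow because it is the indefinite integral of a step function. Monotonicity is immediate from $f>0$ (the integrand is bounded below by $1/|G_0|$), and concavity follows from the fact that the derivative $f$ is weakly decreasing. Part (b) is nothing but the definition $\phi_{L/K}(0)=\int_0^0 f\,dt=0$.

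For part (c), I need $\phi_{L/K}$ to be a continuous bijection $[-1,\infty)\to[-1,\infty)$; since it is strictly increasing and continuous, its inverse will automatically be continuous, so the real content is to pin down the image. At the left endpoint, for $u\in(-1,0)$ we have $f\equiv 1$, hence $\phi_{L/K}(u)=u$; taking $u\to -1$ and using continuity yields $\phi_{L/K}(-1)=-1$. At the right end, $f(t)\geq 1/|G_0|>0$ for all $t\geq 0$, so $\phi_{L/K}(u)\to\infty$ as $u\to\infty$. Combined with strict monotonicity this gives the homeomorphism. There is no real obstacle here; the only mild care is in the handling of the two-regime definition of $(G_0:G_t)$ around $t=0$, which I have already addressed by observing that the single point $t=-1$ contributes nothing to the integral.
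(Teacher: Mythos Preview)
Your proof is correct and is exactly the standard verification the paper has in mind; the paper itself does not spell out a proof, stating only that ``the basic properties of $\phi_{L/K}$ are easy to verify.'' One minor phrasing nit: the ``two-regime'' issue you mention sits at $t=-1$ (where $f(-1)=f$, the residue degree) rather than at $t=0$, but as you note this single point does not affect the integral or any of the conclusions.
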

\begin{definition}
	The inverse of $\phi_{L/K}$ is denoted $\psi_{L/K}$. 
\end{definition}
\begin{proposition}
	\begin{enumerate}[label=\emph{(\alph*)}]
		\item $\psi_{L/K}$ is continuous, piecewise linear,
			increasing, and convex.
		\item $\psi_{L/K}(0)=0$.
		\item $\psi_{L/K}([-1,\infty)\cap \Z)\subset \Z$.
	\end{enumerate}
\end{proposition}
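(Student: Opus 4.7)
The plan is to deduce (a) and (b) formally from the corresponding properties of $\phi_{L/K}$ already established, and to reduce (c) to an elementary divisibility observation about the orders of the ramification subgroups.

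For (a) and (b): since $\phi_{L/K}$ is a homeomorphism of $[-1,\infty)$ onto itself (by the previous proposition), its inverse $\psi_{L/K}$ is automatically a continuous homeomorphism. Continuity, piecewise linearity, monotonicity, and convexity all transfer under inversion: the inverse of a continuous, piecewise linear, strictly increasing, concave function with those properties is continuous, piecewise linear, strictly increasing, and convex. (One can either verify this abstractly, or explicitly invert the formula \eqref{eq:formulaForPhi} on each interval where $\phi_{L/K}$ is linear; each inverse piece is again linear, with reciprocal slope, and the slopes decrease along the direction of increase of $v$, which is exactly convexity.) Finally $\psi_{L/K}(0)=0$ is immediate from $\phi_{L/K}(0)=0$.

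For (c), which is the main point, I would first handle $n=-1$: since $\phi_{L/K}$ is linear of slope $1$ on $(-1,0)$ (as $(G_0:G_t)=1$ there) and $\phi_{L/K}(0)=0$, we get $\phi_{L/K}(-1)=-1$, hence $\psi_{L/K}(-1)=-1$. For an integer $n\geq 0$, use the explicit description of $\phi_{L/K}$ from \eqref{eq:formulaForPhi}: on the interval $[i-1,i]$ (with $i\geq 1$) the function $\phi_{L/K}$ is linear with slope $|G_i|/|G_0|$ and value $\phi_{L/K}(i-1)=(|G_1|+\cdots+|G_{i-1}|)/|G_0|$ at the left endpoint. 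Inverting on each such segment,
\[
\psi_{L/K}(v)=(i-1)+\frac{v\,|G_0|-\bigl(|G_1|+\cdots+|G_{i-1}|\bigr)}{|G_i|}
\qquad\text{for }v\in\bigl[\phi_{L/K}(i-1),\phi_{L/K}(i)\bigr].
\]
For each integer $n$ in this range, $\psi_{L/K}(n)$ is an integer if and only if
\[
n\,|G_0|\;\equiv\;|G_1|+\cdots+|G_{i-1}|\pmod{|G_i|}.
\]

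The key observation — and, as far as I can see, the only nontrivial step — is that $|G_i|$ divides $|G_j|$ for every $j\leq i$. This is immediate because the ramification filtration is a chain $G_0\supseteq G_1\supseteq\cdots\supseteq G_i$, so each $G_j$ (with $j\leq i$) contains $G_i$ as a subgroup, and Lagrange gives $|G_i|\mid|G_j|$. In particular $|G_i|$ divides $|G_0|$ and divides each of $|G_1|,\ldots,|G_{i-1}|$, so both sides of the congruence above are $\equiv 0\pmod{|G_i|}$. Hence $\psi_{L/K}(n)\in\Z$ for every integer $n\geq -1$, completing (c).
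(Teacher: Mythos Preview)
Your proof is correct and follows exactly the route the paper intends: the paper states (a)--(b) without proof (as immediate from the corresponding properties of $\phi_{L/K}$) and leaves (c) as Exercise~\ref{ex:psiOnIntegers} with the hint to use formula~\eqref{eq:formulaForPhi}, which is precisely what you do. The divisibility observation $|G_i|\mid |G_j|$ for $j\le i$ via Lagrange is the intended key step.
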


\begin{exercise}\label{ex:psiOnIntegers}
	Prove that $\psi_{L/K}(\Z\cap [-1,\infty))\subset \Z$ (Use formula
	\eqref{eq:formulaForPhi}).
\end{exercise}

\begin{example}\label{ex:phiandpsiPrimeCase}
	Lets compute $\phi_{L/K}$ and $\psi_{L/K}$ in two simple special
	cases.
	\begin{enumerate}
		\item If $L/K$ is unramified, then $\phi_{L/K}=\psi_{L/K}=\id$.
			Indeed, $(\phi_{L/K})|_{[-1,0)}=\id_{[-1,0)}$ is
				always true. If $u\geq 0$, then 
				\[\phi_{L/K}(u)=\frac{1}{|G_0|}\left(|G_1|+\ldots+|G_{\lfloor
				u\rfloor}|+(u-\lfloor u \rfloor)|G_{\lfloor
				u\rfloor+1 }|\right)=u\]
			as $|G_i|=1$ for $i\geq 0$.
		\item Now assume that $L/K$ is Galois with Galois group
			$\Z/\ell\Z$, where $\ell$ is a prime number. Moreover,
			assume that $G=G_0=G_1=\ldots = G_t$, and that
			$G_{i}=\{1\}$ for $i>t$.
			For $0\leq u\leq t$, we compute
			\[\phi_{L/K}(u)=\frac{u\cdot\ell}{\ell}=u.\]
			For $u>t$, we get
			\[\phi_{L/K}(u)=t+\frac{u-t}{\ell}\]
			so \[\psi_{L/K}(u)=\ell(u-t)+t,\]
			for $u>t$.

			This computation applies to Example
			\ref{ex:ASlowerNumbering}.
	\end{enumerate}
\end{example}
The next proposition shows that the functions $\phi$ and $\psi$ are transitive with respect to Galois subextensions.

\begin{proposition}\label{prop:transitivityOfPhi}
	With the notations from above, let $H\subset G$ be a normal subgroup.
	Then
	\[\phi_{L/K}=\phi_{L^H/K}\circ\phi_{L/L^H}\text{ and
	}\psi_{L/K}=\psi_{L/L^H}\circ\psi_{L^H/K}.\]
\end{proposition}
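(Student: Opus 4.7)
The plan is to reduce the statement about $\psi_{L/K}$ to the statement about $\phi_{L/K}$ by taking inverses, since $\psi$ is defined as the inverse of $\phi$ and composition of inverses reverses order. So I focus on proving $\phi_{L/K} = \phi_{L^H/K}\circ\phi_{L/L^H}$. Both sides are continuous, piecewise linear, strictly increasing maps $[-1,\infty)\to[-1,\infty)$ vanishing at $0$ (for the right-hand side, use that $\phi_{L/L^H}(0)=0$ and $\phi_{L^H/K}(0)=0$). Therefore it is enough to show that their derivatives agree wherever both exist, i.e.\ away from the finitely many break points.

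For $u\in[-1,0)$ all three Herbrand functions reduce to the identity, so both sides equal $u$. For $u\geq 0$, the defining formula gives $\phi_{L/K}'(u) = |G_u|/|G_0|$ (at points of differentiability). By the chain rule,
\[
\bigl(\phi_{L^H/K}\circ\phi_{L/L^H}\bigr)'(u) = \phi_{L^H/K}'\bigl(\phi_{L/L^H}(u)\bigr)\cdot \phi_{L/L^H}'(u).
\]
Proposition \ref{prop:compatibilityWithSubgroups} identifies the ramification subgroups of $H=\Gal(L/L^H)$ as $H_u = G_u\cap H$, so $\phi_{L/L^H}'(u) = |H_u|/|H_0|$. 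Setting $v := \phi_{L/L^H}(u)$, the Herbrand theorem in the form of Corollary \ref{cor:quotientCalculation} gives $(G/H)_v = G_uH/H$, and specializing at $u=0$ gives $(G/H)_0 = G_0H/H$. Hence
\[
|(G/H)_v| = \frac{|G_u|}{|H_u|},\qquad |(G/H)_0| = \frac{|G_0|}{|H_0|},
\]
and therefore
\[
\phi_{L^H/K}'(v) = \frac{|(G/H)_v|}{|(G/H)_0|} = \frac{|G_u|\,|H_0|}{|H_u|\,|G_0|}.
\]
Multiplying by $\phi_{L/L^H}'(u) = |H_u|/|H_0|$ yields $|G_u|/|G_0| = \phi_{L/K}'(u)$, as required.

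The derivatives therefore agree on the complement of the finite set of breakpoints, and combined with continuity and the common value $0$ at $u=0$, this forces equality of the two functions on $[-1,\infty)$. Passing to inverses yields $\psi_{L/K} = \psi_{L/L^H}\circ\psi_{L^H/K}$.

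The main point to watch is the application of Corollary \ref{cor:Herbrand}/\ref{cor:quotientCalculation}: one must verify carefully that the index shift produced by Herbrand's theorem is precisely compensated by the chain-rule factor $\phi_{L/L^H}'(u)$. Once this numerical coincidence is visible, the rest is routine: piecewise-linear continuous functions on $[-1,\infty)$ with matching value at a point and matching one-sided derivatives away from their (finitely many) breaks must coincide.
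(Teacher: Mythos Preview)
Your proof is correct and follows essentially the same approach as the paper: both reduce to showing that the derivatives of the two sides agree away from the break points, using Corollary~\ref{cor:quotientCalculation} to identify $(G/H)_{\phi_{L/L^H}(u)}$ with $G_uH/H$ and Proposition~\ref{prop:compatibilityWithSubgroups} for $H_u=G_u\cap H$. The paper packages the chain-rule computation as an integral substitution, whereas you compute the product of derivatives directly; these are the same argument.
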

\begin{proof}
	Clearly it is enough to prove the claim about the function
	$\phi_{L/K}$. Note that $\phi_{L/L^H}$ is piecewise linear and hence
	almost everywhere continuously differentiable. More precisely, it is
differentiable on $[-1,\infty)\setminus \Z$, because if $u\not\in\Z$, then the
	left and right derivatives of $\phi_{L/L^H}$ at $u$ are equal to
	$(H_0:H_u)^{-1}$. Similarly, $\psi_{L/L^H}$ is continuously
	differentiable at all $u$, for which  $\psi_{L/L^{H}}(u)$ is not an
	integer, and its derivative at such a $u$ is
	\begin{equation}\label{eq:derivativeOfPsi}\psi'_{L/L^H}(u)=(\phi'_{L/L^H}(\psi_{L/L^{H}}(u)))^{-1}=\left(H_0:H_{\psi_{L/L^H}(u)}\right).\end{equation}

	Next, recall Proposition \ref{prop:compatibilityWithSubgroups}, which
	tells us that $H_u=H\cap G_u$, for all $u\in [-1,\infty)$.

	Using Corollary \ref{cor:quotientCalculation} we compute
	\begin{align*}\phi_{L^{H}/K}\circ\phi_{L/L^{H}}(u)&=\int_0^{\phi_{L/L^{H}}(u)}\frac{\mathrm{d}t}{\left((G/H)_0:(G/H)_t\right)}\\
		&=\int_0^{\phi_{L/L^H}(u)}\frac{\left(H_0:H_{\psi_{L/L^H}(t)}\right)}{\left(G_0:G_{\psi_{L/L^{H}}(t)}\right)}\mathrm{d}t.
		\end{align*}
	Now, with a certain degree of sloppiness, we use
	\eqref{eq:derivativeOfPsi} to write
	\[\int_0^{\phi_{L/L^H}(u)}\frac{\left(H_0:H_{\psi_{L/L^H}(t)}\right)}{\left(G_0:G_{\psi_{L/L^{H}}(t)}\right)}\mathrm{d}t=
		\int_0^{\phi_{L/L^H}(u)}\frac{\psi'_{L/L^H}(t)}{\left(G_0:G_{\psi_{L/L^{H}}(t)}\right)}\mathrm{d}t.\]
		Of course $\psi_{L/L^{H}}$ is not everywhere differentiable,
		so to be precise, one should do this calculation on every
		piece where it is differentiable and then sum up.
	
	In the end, we obtain
	\begin{align*}
		\phi_{L^H/K}\circ\phi_{L/L^H}&=
		\int_0^{\phi_{L/L^H}(u)}\frac{\psi'_{L/L^H}(t)}{\left(G_0:G_{\psi_{L/L^{H}}(t)}\right)}\mathrm{d}t\\
		&=\int_0^u\frac{\mathrm{d}t}{\left(G_0:G_t\right)}\\&=\phi_{L/K}(u)
	\end{align*}
	as claimed.
\end{proof}

Now we have gathered all the facts to define the upper numbering for the
ramification filtration.
\begin{definition}\label{defn:upperNumbering}
	We continue to use the notations from above. In particular
	$G=\Gal(L/K)$ is the Galois group of a finite Galois extension of
	complete discretely valued fields with separable residue extension.
	For a real number $v\in [-1,\infty)$, define $G^v:=G_{\psi_{L/K}(v)}$ or
	equivalently $G^{\phi_{L/K}(u)}=G_u$\index{ramification
	filtration!upper numbering}.

	Note that $G=G^{-1}$, $G^0=G_0$.
\end{definition}

As promised, the upper numbering filtration is compatible with passing to
quotients:
\begin{proposition}[Upper numbering filtration and
	quotients, Herbrand's
Theorem]\label{prop:quotientsUpperNumbering}\label{prop:Herbrand}
	Let $H\subset G$ be a normal subgroup. For every real number $v\in
	[-1,\infty)$, we have 
		\[G^{v}H/H = (G/H)^v.\]
\end{proposition}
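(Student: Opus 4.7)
The plan is to reduce the claim to Corollary \ref{cor:quotientCalculation} (Herbrand's theorem in the lower numbering) by using the transitivity of the Herbrand functions (Proposition \ref{prop:transitivityOfPhi}) to translate between the lower and upper numberings on $G$ and on $G/H = \Gal(L^H/K)$.

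Concretely, given $v \in [-1,\infty)$, I would set $u := \psi_{L/K}(v)$, so that by Definition \ref{defn:upperNumbering} we have $G^v = G_u$. Applying Corollary \ref{cor:quotientCalculation} to this $u$ gives
\[
G^v H/H \;=\; G_u H/H \;=\; (G/H)_{\phi_{L/L^H}(u)}.
\]
On the other side, by the definition of the upper numbering applied to the finite Galois extension $L^H/K$ (whose Galois group is $G/H$), we have
\[
(G/H)^v \;=\; (G/H)_{\psi_{L^H/K}(v)}.
\]
So it suffices to verify the numerical identity $\phi_{L/L^H}(u) = \psi_{L^H/K}(v)$.

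This is where the transitivity statement of Proposition \ref{prop:transitivityOfPhi} does the work: from $\psi_{L/K} = \psi_{L/L^H}\circ \psi_{L^H/K}$ one gets
\[
u = \psi_{L/K}(v) = \psi_{L/L^H}\bigl(\psi_{L^H/K}(v)\bigr),
\]
and applying $\phi_{L/L^H}$, which is the inverse of $\psi_{L/L^H}$, yields $\phi_{L/L^H}(u) = \psi_{L^H/K}(v)$, as required. Thus the upper numbering is by construction exactly the re-parametrization of the lower numbering that makes Herbrand's theorem compatible with quotients. No real obstacle arises: the content has already been packaged into Corollary \ref{cor:quotientCalculation} and Proposition \ref{prop:transitivityOfPhi}, so the proof is essentially formal once one unwinds the definition $G^v := G_{\psi_{L/K}(v)}$.
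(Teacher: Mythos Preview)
Your proof is correct and follows essentially the same approach as the paper: both combine the definition $G^v=G_{\psi_{L/K}(v)}$, Corollary \ref{cor:quotientCalculation}, and the transitivity from Proposition \ref{prop:transitivityOfPhi} in the same way. The only cosmetic difference is that the paper applies the $\phi$-transitivity $\phi_{L/K}=\phi_{L^H/K}\circ\phi_{L/L^H}$ to arrive at $(G/H)^v$, whereas you use the equivalent $\psi$-transitivity; the content is identical.
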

\begin{proof}
	Most of the work has already been done in the proof of Corollary
	\ref{cor:Herbrand} and Proposition \ref{prop:transitivityOfPhi}:
	\begin{equation*}
		\begin{tikzcd}[column sep=1.2cm]
			G^vH/H
			\dar[transform canvas={xshift=0.2ex},-]{\text{Def.}}
			\dar[transform canvas={xshift=-0.2ex},-]
			&(G/H)^{\phi_{L^H/K}(\phi_{L/L^H}(\psi_{L/K}(v)))}
			\rar[transform canvas={yshift=0.2ex},-]
			\rar[transform canvas={yshift=-0.2ex},-]{\text{Prop.~\ref{prop:transitivityOfPhi}}}&(G/H)^v\\
			G_{\psi_{L/K}(v)}H/H
			\rar[transform canvas={yshift=0.2ex},-]
			\rar[transform canvas={yshift=-0.2ex},-]{\text{Prop.~\ref{cor:quotientCalculation}}}&(G/H)_{\phi_{L/L^H}(\psi_{L/K}(v))}
			\uar[transform canvas={xshift=0.2ex},-,swap]{\text{Def.}}
			\uar[transform canvas={xshift=-0.2ex},-]
		\end{tikzcd}
	\end{equation*}
\end{proof}

Proposition \ref{prop:quotientsUpperNumbering} allows us to define a
ramification filtration of the absolute Galois group of $K$.

\begin{definition}\label{defn:upperNumberingInfiniteExtension}
	Let $L/K$ be a (possibly infinite) Galois extension, such that the
	extension of the residue fields is separable. For a real number $u\in
	\left[-1,\infty\right)$, we define $\Gal(L/K)^u:=\varprojlim_{L'} \Gal(L'/K)^u$,
	where $L'$ runs through the set of finite Galois subextensions of $L/K$.
\end{definition}

\begin{remark}
If the residue field of $K$ is perfect this applies to a separable closure
$K^{\sep}$ of $K$. We
obtain a descending filtration of $\Gal(K^{\sep}/K)$.
\end{remark}

\subsection{The norm}
Let $L/K$ be an extension of discretely valued fields, and recall the
definition of the norm\index{norm} map $N_{L/K}$ (Definition \ref{defn:norm}). 
Note that if $L/K$ is Galois with Galois group $G$, then \[N_{L/K}:L^\times\rightarrow K^\times,
	\;x\mapsto\prod_{\sigma\in
G}\sigma(x).\]
\begin{proposition}
	The norm $N_{L/K}$ is a homomorphism of abelian groups. If $L/K$
	is a Galois extension of complete discretely valued fields with
	separable residue extension of degree $f$, then 
	\begin{enumerate}
		\item $v_K(N_{L/K}(x))=fv_L(x)$ 
		\item $N_{L/K}(B^\times)\subset A^\times$, where $A, B$ are the
	valuation rings of $K,L$.
	\end{enumerate}
\end{proposition}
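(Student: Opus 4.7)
The plan is to use the explicit Galois-theoretic formula $N_{L/K}(x) = \prod_{\sigma \in G} \sigma(x)$ and then exploit the fact that in the complete local setting $v_L$ is the unique valuation extending $v_K$, so it is $G$-invariant.

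First I would dispose of the homomorphism claim. Writing $m_x : L \to L$ for the $K$-linear endomorphism ``multiplication by $x$'', one has $m_{xy} = m_x \circ m_y$, and $\det$ is multiplicative; this gives $N_{L/K}(xy) = N_{L/K}(x) N_{L/K}(y)$. Restricting to $L^\times$ we get a homomorphism to $K^\times$, since $m_x$ is invertible when $x \neq 0$.

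Next, the key identity $v_K(N_{L/K}(x)) = f v_L(x)$. Since $L/K$ is Galois with group $G$ of order $n = ef$, every $\sigma \in G$ permutes the valuations of $L$ lying over $v_K$; but here there is only one such valuation $v_L$, so $v_L \circ \sigma = v_L$ for all $\sigma \in G$. Combining with the formula of the displayed proposition for $N_{L/K}$ gives
\begin{equation*}
v_L(N_{L/K}(x)) = \sum_{\sigma \in G} v_L(\sigma(x)) = n \cdot v_L(x) = ef \cdot v_L(x).
\end{equation*}
On the other hand $v_L$ extends $v_K$ with ramification index $e$, so $v_L(y) = e \cdot v_K(y)$ for every $y \in K^\times$; applied to $y = N_{L/K}(x) \in K^\times$, this yields $v_L(N_{L/K}(x)) = e \cdot v_K(N_{L/K}(x))$. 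Equating the two expressions and dividing by $e$ gives the desired formula.

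Finally, for $x \in B^\times$ one has $v_L(x) = 0$, hence $v_K(N_{L/K}(x)) = f \cdot 0 = 0$ by the previous step, so $N_{L/K}(x) \in A^\times$. I do not foresee any real obstacle; the only subtle point is the use of the uniqueness of the valuation extension to conclude $v_L \circ \sigma = v_L$, which is why completeness is essential in the hypothesis.
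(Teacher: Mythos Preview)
Your argument is correct; the paper itself gives no proof beyond the word ``Exercise,'' so there is nothing to compare against. What you wrote is exactly the standard solution the paper expects the reader to supply.
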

\begin{proof}
	Exercise.
\end{proof}

\begin{proposition}[{\cite[Ch.~V]{Serre/LocalFields}}]\label{prop:norm}
	Let $L/K$ be a totally ramified Galois extension of complete
	discretely valued fields and write
	$N:=N_{L/K}$.
	\begin{enumerate}
		\item\label{item:normA}  For $m\geq 0$,  $N$ induces a homomorphism of groups
			\[N:U_L^{\psi_{L/K}(m)}\rightarrow U_K^{m}\]
			which is surjective if $G_{\psi_{L/K}(m)}=0$ or if the residue
		field of $K$ is algebraically closed.
	\item\label{item:normB}  For $m\geq 0$, $N$ induces a homomorphism of groups
		\[N:U_L^{\psi_{L/K}(m)+1}\rightarrow U_K^{m+1},\]
		which is surjective if $G_{\psi_{L/K}(m+1)}=0$ or if the residue
		field of $K$ is algebraically closed.
	\item\label{item:normC} From the above we get homomorphisms
		\[N_m: U_L^{\psi_{L/K}(m)}/U_L^{\psi_{L/K}(m)+1}\rightarrow
			U_K^{m}/U_{K}^{m+1},\]
		such that the sequence
	\[0\rightarrow
		G_{\psi_{L/K}(m)}/G_{\psi_{L/K}(m)+1}\xrightarrow{\theta_m}
		U^{\psi_{L/K}(m)}_L/U^{\psi_{L/K}(m)+1}_L\xrightarrow{N_m}
		U_K^{m}/U_{K}^{m+1}\]
		is exact, 
		where the  map $\theta_m$ is the map defined in Proposition
		\ref{prop:map-lowernumbering-to-units}.
	\item\label{item:normD} $N_m$ is surjective if one of the following holds:
		\begin{itemize}
			\item The residue field of $K$ is algebraically
				closed.
			\item The residue field of $K$ is perfect, and
				$G_{\psi_{L/K}(m)}=G_{\psi_{L/K}(m)+1}$.
			\item $G_{\psi_{L/K}(m)}=0$.
		\end{itemize}
\end{enumerate}
\end{proposition}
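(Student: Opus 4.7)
The plan is to analyze the norm through the multiplicative expansion
\[ N_{L/K}(1+y) \;=\; \prod_{\sigma\in G}(1+\sigma(y)) \;=\; 1 + \Tr_{L/K}(y) + \sum_{k=2}^{|G|} s_k(y), \]
where $s_k(y)$ denotes the $k$-th elementary symmetric polynomial in the Galois conjugates of $y$. The three essential inputs already available to us are: Lemma \ref{lemma:maxCharOfDifferent}, which recasts $\Tr_{L/K}(\mathfrak{b}) \subseteq \mathfrak{a}$ as $\mathfrak{b} \subseteq \mathfrak{a}\mathfrak{D}_{B/A}^{-1}$; the different formula $v_L(\mathfrak{D}_{B/A}) = \sum_{i\geq 0}(|G_i|-1)$ from Proposition \ref{valuation-of-discriminant}\ref{item:valuation-of-discriminant:b}; and the explicit description of the Herbrand function via \eqref{eq:formulaForPhi}. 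Combining these three gives the key arithmetic identity $\psi_{L/K}(m) + v_L(\mathfrak{D}_{B/A}) \geq m\cdot e$ where $e = |G|$ (using that $L/K$ is totally ramified), so that by Lemma \ref{lemma:maxCharOfDifferent} one has $\Tr_{L/K}(\mathfrak{m}_L^{\psi(m)}) \subseteq \mathfrak{m}_K^m$ and the strict version with $\psi(m)+1$ and $m+1$.

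For \ref{item:normA} and \ref{item:normB} I would then verify that each term $s_k(y)$ with $k\geq 2$ lies in $\mathfrak{m}_K^{m+1}$: since $v_L(s_k(y)) \geq k\psi(m)$ and this quantity exceeds $(m+1)e$ in essentially all cases, only boundary indices need individual attention. Together with the trace estimate this gives both containment statements.

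For the exact sequence \ref{item:normC}, the inclusion $\image(\theta_m) \subseteq \ker(N_m)$ is immediate from $N_{L/K}(\sigma(\pi)/\pi) = N(\sigma(\pi))/N(\pi) = 1$, and injectivity of $\theta_m$ is already in Proposition \ref{prop:map-lowernumbering-to-units}. The content is exactness in the middle: if $u = 1+y \in U_L^{\psi(m)}$ satisfies $N(u) \in U_K^{m+1}$, the leading-order contribution to $N(u)-1$ modulo $\mathfrak{m}_K^{m+1}$ is precisely $\Tr_{L/K}(y)$, and the induced $k(v_K)$-linear map $\mathfrak{m}_L^{\psi(m)}/\mathfrak{m}_L^{\psi(m)+1} \to \mathfrak{m}_K^m/\mathfrak{m}_K^{m+1}$ has kernel of $k(v_K)$-dimension exactly $|G_{\psi(m)}/G_{\psi(m)+1}|$ (again from the sharp version of the different formula). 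Matching this with the image of $\theta_m$, which has the same cardinality inside the one-dimensional quotient $U_L^{\psi(m)}/U_L^{\psi(m)+1}$, gives the desired equality.

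For surjectivity \ref{item:normD}, identify $U_K^m/U_K^{m+1}$ and $U_L^{\psi(m)}/U_L^{\psi(m)+1}$ with $k(v_K)$ (additively for $m\geq 1$, as $k(v_K)^\times$ for $m=0$, using that the residue extension is trivial). Under this identification $N_m$ becomes a specific nonzero algebraic map of $k(v_K)$; over an algebraically closed residue field it is then automatically surjective. When $G_{\psi(m)}=0$ the map $\theta_m$ vanishes and \ref{item:normC} forces $N_m$ to be injective, hence bijective by equality of orders. When $G_{\psi(m)} = G_{\psi(m)+1}$ and $k(v_K)$ is perfect, again $\theta_m=0$ and one shows that $N_m$ identifies, up to a unit, with a power of Frobenius on $k(v_K)$, which is surjective on a perfect field. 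The main obstacle I expect is the middle-exactness computation in \ref{item:normC}: one must precisely pin down the leading term of $N_{L/K}(1+y)-1$ modulo $\mathfrak{m}_K^{m+1}$ and show that it really does vanish \emph{only} on the image of $\theta_m$, which is where the careful bookkeeping involving the different formula, the Herbrand function, and the structure of the ramification jumps all has to come together simultaneously.
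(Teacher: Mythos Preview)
Your approach contains a genuine error that breaks both \ref{item:normA}/\ref{item:normB} and \ref{item:normC}. You assert that every $s_k(y)$ with $k\ge 2$ lies in $\mathfrak{m}_K^{m+1}$, so that the leading term of $N(1+y)-1$ modulo $\mathfrak{m}_K^{m+1}$ is ``precisely $\Tr_{L/K}(y)$''. But for $k=|G|$ one has $s_{|G|}(y)=N_{L/K}(y)$, and since each $\sigma(y)$ has the same $v_L$-valuation, $v_K(N(y))=v_L(y)\ge \psi_{L/K}(m)$. When $G_{\psi(m)}=G_0$ one has $\psi_{L/K}(m)=m$, so $N(y)$ need only lie in $\mathfrak{m}_K^m$, not in $\mathfrak{m}_K^{m+1}$. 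In the cyclic-prime case analysed in the paper this is exactly what happens for $1\le m\le t$: there $N(1+x)\equiv 1+N(x)$ (for $m<t$) or $1+\Tr(x)+N(x)$ (for $m=t$) modulo $\mathfrak{m}_K^{m+1}$, and the induced map $N_m$ on the residue field is a polynomial of degree $p$, not linear.

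This in turn invalidates your exactness argument. The quotients $U_L^{\psi(m)}/U_L^{\psi(m)+1}$ and $U_K^m/U_K^{m+1}$ are one-dimensional over the residue field $k$ (for $m\ge 1$), but $N_m$ is not $k$-linear: it is an additive polynomial map $k\to k$ of degree $|G_{\psi(m)}|$, a $\mathbb{F}_p$-linear combination of monomials $X^{p^h}$. Its kernel is an $\mathbb{F}_p$-subspace of $k$, and its size cannot be read off by ``matching cardinalities'' or $k$-dimensions; when $k$ is infinite there is no finite-cardinality argument at all. One really has to compute $N_m$ explicitly.

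The paper sidesteps these difficulties by a reduction you have not used: since $G_0$ is solvable (Corollary~\ref{cor:structure-of-ramification-groups}) and both $N_{L/K}$ and $\psi_{L/K}$ are transitive in towers (Proposition~\ref{prop:transitivityOfPhi}), one reduces by induction to the case $G\cong\Z/\ell\Z$ for a prime $\ell$. There the ramification filtration has a single jump at some integer $t$, the different satisfies $v_L(\mathfrak{D}_{B/A})=(t+1)(\ell-1)$, and one can compute $N(1+x)$ explicitly in each of the four ranges $m=0$, $1\le m<t$, $m=t$, $m>t$, obtaining the polynomial $P_m$ describing $N_m$ and checking exactness and the surjectivity criteria directly.
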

\begin{proof}
	This is rather lengthy. 	
	We will prove a more
	explicit version of \ref{item:normD} by induction:
	\begin{claim}Let $k$ be the residue field of $L$ and $K$. Recall from
		the proof of Corollary
		\ref{cor:structure-of-ramification-groups} that we have
		group isomorphisms $U^0_L/U_L^1\cong U^0_K/U^1_K\cong k^\times$, and
		that after choosing uniformizers $\tau\in \mathcal{O}_K$,
		$\pi\in \mathcal{O}_L$, we have group isomorphisms
		$U^i_L/U^{i+1}_L\cong U^i_{K}/U^{i+1}_K\cong (k,+)$ for $i>0$. We claim:
		\begin{itemize}
	\item Identifying $U_L/U_L^1$ and $U_K/U_K^1$ with
	$k^\times$,  $N_0:k^\times\rightarrow k^\times$ is given by the nonconstant monomial $X^{|G_0|}$.
	\item  For $m\geq 1$, if we identify $N_m$ with a map
	$k\rightarrow k$, then  $N_m$ is given by a
	nonconstant polynomial of the form
	$P_m(X)$ which is a linear combination of monomials of the form
	$X^{p^h}$, where $\Char(k)=p\geq 0$. Moreover $\deg
	P_m=|G_{\psi_{L/K}(m)}|$.
	\item Keeping the notations, if $k$ is perfect, then there exists a polynomial $Q_m$ with 
		$P_m=(Q_m)^{p^h}$, where $p^h=|G_{\psi_{L/K}(m)+1}|$.
\end{itemize}
\end{claim}
Part \ref{item:normD} follows from the claim. Indeed, if the residue field $k$
is algebraically closed, this is obvious. If $k$ is perfect and $m\geq 0$ such
that $G_{\psi_{L/K}(m)}=G_{\psi_{L/K}(m)+1}$, we distinguish the cases $m=0$
and $m>0$. If $m=0$, then $G_0=G_1$ is a nontrivial $p$-group and the claim
states that 
$N_0:k^\times \rightarrow k^\times$ can be identified with the polynomial map
$X^{|G_0|}$, which is surjective, as $k$ is assumed to be perfect. If $m>0$,
then the claim shows that $N_m$ can be identified with
$P_m=(Q_m)^{|G_{\psi_{L/K}(m)+1}|}$ and that $\deg P_m=|G_{\psi_{L/K}(m)}|$ so
$\deg Q_m=1$. As $k$ is assumed to be perfect, it follows that $N_m$ is
surjective. Finally, if $G_{\psi_{L/K}(m)}=0$ then $m>0$, as otherwise $L/K$
would be trivial, and the claim implies that $N_m$ can be identified with a
nonconstant polynomial of degree $1$, so $N_m$ is surjective. This shows that
the claim implies \ref{item:normD}.

The surjectivity statements from \ref{item:normA} and
\ref{item:normB} are implied by \ref{item:normD} using a completeness
argument
(\cite[Ch.~V, Lemma 2]{Serre/LocalFields}).

It remains to prove \ref{item:normC} and the claim. Since the Galois group of $L/K$
is solvable, it has a quotient which is cyclic of prime order. Using
induction and the transitivity of the norm and $\psi$ (Proposition
\ref{prop:transitivityOfPhi}), one reduces the
proof of \ref{item:normC} and the claim to
the case where the Galois group of $L/K$ is $G=G_0=\Z/\ell\Z$ with $\ell$ a prime (see
\cite[Ch.~V, \S6]{Serre/LocalFields}). We
assume that $G_0=G_1=\ldots=G_t$, and $G_i=\{1\}$ for $i>t$. 

In this case we already computed the
functions $\phi_{L/K}$ and $\psi_{L/K}$ in Example
\ref{ex:phiandpsiPrimeCase}. We will use the following lemma:
\begin{lemma}\label{lemma:norm}Let $B/A$ be the extension of discrete valuation rings
	attached to $L/K$. For brevity, we write $N:=N_{L/K}$ and $\Tr:=\Tr_{L/K}$.
	\begin{enumerate}[label=\emph{(\roman*)}]
		\item $\mathfrak{D}_{B/A}=\mathfrak{m}_L^{(t+1)(\ell-1)}$. 
		\item If $n\geq 0$, then 
			\[\Tr(\mathfrak{m}_L^n)=\mathfrak{m}_K^{\left\lfloor\frac{(t+1)(\ell-1)+n}{\ell}\right\rfloor}.\]
		\item If $x\in \mathfrak{m}_L^n$, then 
			\[N(1+x)\equiv1+\Tr(x)+N(x)\mod
				\Tr(\mathfrak{m}^{2n}_L).\]
		\end{enumerate}
	\end{lemma}
	\begin{proof}
\begin{enumerate}[label={(\roman*)}]
	\item Proposition \ref{valuation-of-discriminant},
		\ref{item:valuation-of-discriminant:b}.
	\item Recall that if $\mathfrak{b}\subset L$ and $\mathfrak{a}\subset
		K$ are fractional ideals,
		then $\Tr(\mathfrak{b})\subset \mathfrak{a}$ if and only if
		$\mathfrak{b}\subset \mathfrak{a}\mathfrak{D}_{B/A}^{-1}$
		(Lemma \ref{lemma:maxCharOfDifferent}). It
		follows that
		$\Tr(\mathfrak{m}^n_L)\subset\mathfrak{m}^r_K=\mathfrak{m}^{\ell
		r}_L$ if and only if $r\leq \frac{(t+1)(\ell-1)+n}{\ell}$.
	\item Exercise (see \cite[Ch.~V, \S3., Lemma 5]{Serre/LocalFields}).
\end{enumerate}
\end{proof}
We can proceed with the proof of \ref{item:normC} and the claim.
\begin{itemize}
	\item[$m=0$:] Clearly $N(U_L)\subset U_K$ and
		$N(U^1_L)\subset U^1_K$. The map $N_0:k^\times
		\rightarrow k^\times$ can be identified
		with the polynomial $X^\ell$. Let us determine the kernel of
		$N_0$: Note that $\ell=p:=\Char(k)$ if and only if $t>0$, as $G_1$ is
		the unique $p$-Sylow subgroup of $G_0$ (Corollary
		\ref{cor:structure-of-ramification-groups}). Thus, if $t=0$, then the kernel
		of $N_0$ is of order $\ell$ and contains $\theta_0(G_0)$ which
		is also of order $\ell$. If $t>0$, then $\ell=p$ and hence
		$N_0$ is injective. Thus
		for $m=0$ we have checked that \ref{item:normC} holds. 
	\item[$1\leq m < t$]: In this case $\psi_{L/K}(m)=m$ (Example
		\ref{ex:phiandpsiPrimeCase}), so we first show that
		$N(U^m_L)\subset U^m_K$. If $x\in \mathfrak{m}^m_L$, then
		$\Tr(x)\in \mathfrak{m}^{m+1}_K$ according to Lemma \ref{lemma:norm}, as
		\[\left\lfloor \frac{(t+1)(\ell-1)+m}{\ell}\right\rfloor>
			\left\lfloor\frac{\ell
			m+(\ell-1)}{\ell}\right\rfloor\geq m.\]
			This also shows that $\Tr(\mathfrak{m}^{2m}_L)\subset
			\mathfrak{m}^{m+1}_K$, so
			Lemma \ref{lemma:norm} yields
			\[N(1+x)\equiv 1+N(x)\mod \mathfrak{m}^{m+1}_K.\]
			But $N(x)\in \mathfrak{m}^{m}_K$, so we see that
		$N(U_L^{m})\subset U_K^m$, and also that $N(U_{L}^{m+1})\subset
		U_{K}^{m+1}$.

		To make $N_m$ explicit, note that since $t>0$ we have
		$\ell=p$, and if $x=u \pi^m$ with $u\in U_L$ and $\pi$ a
		uniformizer, then $N(x)\equiv u^pN(\pi)^m\mod \mathfrak{m}_K^{m+1}$
		(recall that $\sigma u -u \in \mathfrak{m}_L^{t+1}$ for all
		$u$). If
		$\tau$ is a uniformizer for $K$, then $N(\pi^m)=a_m\tau^m$,
		with $a_m\in U_K$.
		Then $N(1+u\pi^m)\equiv 1+a_mu^p\tau^m\mod\mathfrak{m}^{m+1}_K$. Thus $N_m$ can be identified
		with the polynomial $\alpha_m X^p$, where $\alpha_m$ is the
		image of $a_m$ in $k^\times$.
		From this we also see that $N_m$ is injective, and that it is
		surjective if $k$ is perfect.
	\item[$m=t$:] Let $x\in \mathfrak{m}^t_L$. Lemma \ref{lemma:norm} shows that
		\[N(1+x)\equiv 1+\Tr(x)+N(x)\mod \mathfrak{m}^{t+1}_K.\]
		Moreover,
		\[\left\lfloor \frac{(t+1)(\ell-1)+t}{\ell}\right\rfloor=t\]
		and
		\[\left\lfloor \frac{(t+1)(\ell-1)+t}{\ell}\right\rfloor=t+1\]
		so $N(U_L^{t})\subset U_K^t$ and $N(U_L^{t+1})\subset
		U_K^{t+1}$. As $\psi_{L/K}(t)=t$, this is what we wanted. Let
		us
		make it more explicit:
		Again let $\tau$ be a uniformizer of $K$, and write
		$\Tr(\pi^t)=b\tau^t$, $N(\pi^t)=a\tau^t$, with $a,b\in A$.
		Every element $x\in U_L^{t}/U_L^{t+1}$ is the image of an
		element of the form $1+u\pi^t$ with $u\in A$. We compute:
		\begin{align*}
			N(1+u\pi^t)&\equiv 1+ \Tr(u\pi^t)+N(u\pi^t)\mod
			\mathfrak{m}_K^{t+1}\\
			&\equiv 1+(bu+au^p)\tau^t\mod
			\mathfrak{m}_K^{t+1}.
		\end{align*}
		Writing $\alpha,\beta$ for the images of  $a,b$ in $k$, we see
		that $N_t$ can be written as the polynomial $\beta X+\alpha
		X^p$. As $\alpha\neq 0$,  we see that the kernel of $N_m$ has
		order at most $p=\ell$, but it contains $\theta_t(G_t)$ (Proposition \ref{prop:map-lowernumbering-to-units}), so it
		has to be equal to $\theta_t(G_t)$. Hence $\beta\neq0$, and 
		\ref{item:normC} is proved.
	\item[$m>t$:] We computed in Example \ref{ex:phiandpsiPrimeCase}
		that $\psi_{L/K}(m)=t+\ell(m-t)$. We have
		\begin{equation}\label{floorEquation}\frac{(t+1)(\ell-1)+\psi_{L/K}(m)}{\ell}=\frac{\ell-1}{\ell}+m,\end{equation}
			so $\Tr(\mathfrak{m}_L^{\psi_{L/K}(m)})\subset
			\mathfrak{m}^{m}_K$. Moreover, $\psi_{L/K}(m)\geq m+1$ and
			$N(\mathfrak{m}^{\psi_{L/K}(m)}_L)\subset
			\mathfrak{m}^{\psi_{L/K}(m)}_K$, so  if
			$x\in \mathfrak{m}_L^{\psi_{L/K}(m)}$, then
			\[N(1+x)\equiv 1+ \Tr(x)\mod \mathfrak{m}_K^{m+1}.\]
		This shows that  $N(U_L^{\psi_{L/K}(m)})\subset U_K^{m}$ and
		similarly also $N(U_{L}^{\psi_{L/K}(m)+1})\subset
		U_K^{m+1}$.
		Again, if $x=u\pi^{\psi_{L/K}(m)}$ with $u\in A$, and
		$\Tr(\pi^{\psi_{L/K}(m)})=a_m\tau^m$, then
		$N(1+u\pi^{\psi_{L/K}(m)})\equiv1+a_mu\tau^{m}\mod\mathfrak{m}_K^{m+1}$, so $N_m$
		can be written as the polynomial $\alpha_mX$, if $\alpha_m$ is the image of
		$a_m$ in $k$. Finally, $\alpha_m\neq0$, as otherwise
		$\Tr(\mathfrak{m}^{\psi_{L/K}(m)}_L)\subset
		\mathfrak{m}^{m+1}_K$, which is impossible according to
		\eqref{floorEquation} and Lemma \ref{lemma:norm}.
\end{itemize}
\end{proof}

For future reference, we prove a fact about the norm of a cyclic
extension.

\begin{theorem}[Hilbert's Theorem 90]\label{thm:hilbert90}\index{Hilbert's
	Theorem 90}
	Let $L/K$ be a Galois extension of fields with cyclic Galois group $G$
	and generator $g$
	(no other assumptions are necessary). If $v\in L^\times$ has norm $1$,
	then there exists $x\in L^\times$, such that $v=\frac{g(x)}{x}$. In
	other words, the quotient group
	\[\ker(N_{L/K})\left/\left\{\left.\frac{g(x)}{x}\right| x\in
	L^\times\right\}\right.\]
	is trivial.
\end{theorem}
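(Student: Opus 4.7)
The plan is to use linear independence of characters (Dedekind's lemma), which states that the distinct automorphisms $g^0, g^1, \ldots, g^{n-1}$ of $L$ (where $n = |G|$) are linearly independent over $L$ as functions $L \to L$. In particular, for any coefficients $a_0, \ldots, a_{n-1} \in L$ not all zero, the operator $\sum_{i=0}^{n-1} a_i g^i : L \to L$ is not identically zero.

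The idea is to build $x$ as a twisted average of $y \in L$, with coefficients designed so that $g(x) = vx$. Concretely, I would define
\[
a_0 := 1, \qquad a_i := \bigl(v \cdot g(v) \cdot g^2(v) \cdots g^{i-1}(v)\bigr)^{-1} \quad (1 \le i \le n-1).
\]
These are chosen so that $g(a_{i-1}) = v\, a_i$ for all $i \ge 1$: indeed, $g(a_{i-1}) = \bigl(g(v) g^2(v) \cdots g^{i-1}(v)\bigr)^{-1}$, and multiplying by $v$ gives $a_i \cdot v \cdot v^{-1} = v\,a_i$ after re-indexing. Moreover, $g(a_{n-1}) = v\, a_0 = v$ because $v \cdot g(v) \cdots g^{n-1}(v) = N_{L/K}(v) = 1$ by hypothesis; this ``cyclic closure'' is exactly the place where the norm-one assumption enters.

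Next, I would invoke linear independence of characters to pick $y \in L$ with
\[
x := \sum_{i=0}^{n-1} a_i\, g^i(y) \ne 0.
\]
Applying $g$ and using $g(a_{i-1}) = v a_i$ together with $g^n = \mathrm{id}$ and $g(a_{n-1}) = v\,a_0$, one computes
\[
g(x) = \sum_{i=0}^{n-1} g(a_i)\, g^{i+1}(y) = v \sum_{i=0}^{n-1} a_{i+1 \bmod n}\, g^{i+1}(y) = v\,x,
\]
so $v = g(x)/x$, as required.

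The only step that requires care is the non-vanishing of $x$ for some $y$; this is the ``main obstacle'' in the sense that it is the nontrivial external input. Everything else is a bookkeeping exercise to verify that the coefficients $a_i$ cycle correctly under $g$, which is precisely encoded in $N_{L/K}(v) = 1$. No completeness, no finiteness of $L/K$, and no assumption on residue fields is used, matching the theorem's stated generality.
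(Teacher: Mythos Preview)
Your proof is correct and uses the same core mechanism as the paper: build a twisted sum $x=\sum_i a_i\,g^i(y)$ with coefficients that cycle under $g$, and invoke linear independence of characters to guarantee $x\neq 0$ for some $y$. The paper's coefficients are $\psi(g^i)=\prod_{j<i}g^j(v)$, the inverses of your $a_i$, which amounts to obtaining $v=x/g(x)$ rather than $v=g(x)/x$; these are equivalent after replacing $x$ by $x^{-1}$.

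The only notable difference is framing. The paper first identifies the quotient in question with the cohomology group $H^1(G,L^\times)$ via crossed homomorphisms, and then proves the stronger statement that every crossed map $G\to L^\times$ is principal for \emph{any} finite Galois group $G$, not just cyclic ones. Your direct argument is cleaner for the cyclic case actually needed in the paper; the cohomological packaging buys the general vanishing $H^1(\Gal(L/K),L^\times)=0$ at no extra cost beyond setting up the language. (One expository nit: your sentence ``multiplying by $v$ gives $a_i\cdot v\cdot v^{-1}=v\,a_i$ after re-indexing'' is garbled, though the verification $g(a_{i-1})=v\,a_i$ is straightforward and your conclusion is right.)
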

\begin{proof}
	Write $V=\ker(N_{L/K})$ and
	$\overline{V}:=V/\left\{\left.\frac{g(x)}{x}\right|x\in
L^\times\right\}$.
The group $\overline{V}$ can be identified with the group cohomology
	$H^1(G,L^\times)$. A brief reminder: A crossed map $\phi:G\rightarrow
	L^\times$, is a map $\phi$, such that $\phi(g_1g_2)=g_1(\phi(g_2))\cdot\phi(g_1)$, $g_1,g_2\in G$. A
	particular example of a crossed map is $g\mapsto
	\frac{gx}{x}$ for $x\in L^\times$; such a crossed map is called
	principal. The set of crossed maps inherits a group
	structure, and 
	\[H^1(G,L^\times)=\{\text{crossed maps}\}/\{\text{principal
	crossed maps}\}.\]
	If $G$ is a cyclic group with generator $g$ and if
	$\phi:G\rightarrow L^\times$ is a crossed map, then $\phi(g)\in
	L^\times$ has norm $1$. It is not difficult to check that this defines 
	a homomorphism $u:\{\text{crossed maps}\}\rightarrow \ker(N_{L/K})$. Moreover, the map $u$ is surjective: If $x\in
	L^\times$ has norm $1$, then $\psi(1):=1$,
	$\psi(g^r):=\prod_{i=0}^{r-1}g^ix$, $1\leq r<|G|$, defines a crossed
	map with $\psi(g)=x$. Finally, one checks that	\[u^{-1}\left(\left\{\left. \frac{g(x)}{x}\right|x\in L^\times\right\}
		\right)=\left\{\text{principal crossed maps}\right\},\]
	from which it follows that $u$ induces an isomorphism
	$H^1(G,L^\times)\xrightarrow{\cong}
	\overline{V}$.

	To prove the theorem, it hence remains to show that every crossed map
	$G\rightarrow L^\times$ is principal, and this is true even if
	$G=\Gal(L/K)$ is not cyclic.

	Let $\phi$ be a crossed map and for $y\in L^\times$ write
	\[x:=\sum_{\sigma\in G}\phi(\sigma)\cdot \sigma(y).\]
	If there exists $y$ such that $x\neq 0$, then $\phi$ is principal.
	Indeed, in this case, for any $\tau\in G$ we can write
	\[\tau(x)=\sum_{\sigma\in G}\tau(\phi(\sigma))\cdot \tau\sigma(y).\]
	Since $\phi(\tau\sigma)=\tau\phi(\sigma)\cdot\phi(\tau)$, we see that
	\[\tau(x)=\phi(\tau)^{-1}\sum_{\sigma\in
	G}\phi(\tau\sigma)\tau\sigma(y)=\phi(\tau)^{-1}\cdot x\]
	which proves that $\phi$ is principal, if $x\neq 0$.

	To see that we can find $y\in L^\times$ such that $x\neq 0$, one
	observes (this is not entirely trivial) that the
	family of maps $\sigma:L^\times \rightarrow L^\times$, $\sigma \in G$
	is linearly
	independent over $L$. In particular: $\sum_{\sigma\in G}
	\phi(\sigma)\cdot \sigma:L\rightarrow L$ is not the zero-map. 
\end{proof}

\begin{exercise}
	\begin{enumerate}[label={(\alph*)}]
		\item Complete the proof that $H^1(G,L^\times)\cong \overline{V}$.
		\item If $T$ is a group and $f_1,\ldots, f_r:T\rightarrow
			L^\times$ pairwise distinct homomorphisms, prove that
			the $f_i$ are linearly independent over $K$, i.e.~if
			there are $a_1,\ldots, a_r\in L$, such that
			$\sum_{i=1}^{r}a_if_i=0$ as map $T\rightarrow L$, then
			$a_1=a_2=\ldots=a_r=0$.
	\end{enumerate}
\end{exercise}
\subsection{Jumps and the theorem of Hasse-Arf}

We denote by $L/K$ a finite Galois extension of complete
discretely valued fields with separable residue extension.
Let $G$ be its Galois group.
We know that there are only finitely many subgroups
which appear in the ramification filtration, regardless of whether we use the
upper or lower numbering. The
real numbers $u\in [-1,\infty)$, where the upper numbering filtration
	$\{G^u\}$ transitions from one group to the next, are of special
	interest.

\begin{definition}
		The numbers $u\in [-1,\infty)$, such that $G^u\supsetneqq
			G^{u+\epsilon}$ for all $\epsilon>0$ are called
			\emph{jumps}\index{jump} or \emph{breaks}\index{break}
			of this filtration.
\end{definition}
The jumps are not necessarily integers, if $L/K$ is not abelian.
\begin{example}
	In \cite[Sec.~4]{Serre/Rationality}, an example of a totally ramified, finite Galois
	extension $L/\Q_2$ is constructed, for which the jumps of the upper
	numbering filtration of its Galois group are not integers. More
	precisely, the Galois group $G:=\Gal(L/\Q_2)$ is the group of quaternions
	$\{\pm 1,\pm i,\pm j,\pm k\}$ with $(-1)^2=1$, $i^2=j^2=k^2=ijk=-1$.
	Its center is $Z(G)=\{\pm 1\}$, and $L/\Q_2$ has the ramification
	filtration
	$G_0=G_1=G$, $G_2=G_3=Z(G)$, $G_n=\{1\}$, $n>3$. It follows that the
	two jumps are $1$ and $3/2$.
\end{example}
%

\begin{theorem}[Hasse-Arf]\label{thm:HasseArf}
	If the group $G$ is \emph{abelian}, then the jumps are integers.	
\end{theorem}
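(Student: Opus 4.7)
The plan is to follow Serre's approach in \emph{Local Fields}, Ch.~V, §7.

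\emph{Step 1 (Reduction to totally ramified).} By Herbrand's theorem (Proposition \ref{prop:Herbrand}), replacing $K$ by $L^{G_0}$ does not affect the upper-numbering jumps in $(-1,\infty)$, so I may assume $L/K$ is totally ramified and $G = G_0$. The jumps of $G^\bullet$ on $[0,\infty)$ are then exactly the values $\phi_{L/K}(v)$ for integers $v \geq 0$ with $G_v \supsetneq G_{v+1}$, and by formula (\ref{eq:formulaForPhi}) integrality is equivalent to the divisibility $|G_0|\mid\sum_{i=0}^{v}|G_i|$ for each such $v$.

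\emph{Step 2 (Reduction to the cyclic $p$-power case).} Induct on $|G|$. Since $G$ is abelian, every subgroup is normal. If $G$ is not cyclic of prime-power order, one can find two proper subgroups $H_1, H_2 \subset G$ with $H_1\cap H_2 = \{1\}$, so that the natural map $G\hookrightarrow G/H_1\times G/H_2$ is injective. By Herbrand, $(G/H_i)^u = G^u H_i/H_i$. If $u$ is a jump of $G^\bullet$ but a jump of neither quotient, then $G^u H_i = G^{u+\epsilon} H_i$ for both $i$ and all small $\epsilon>0$, and injectivity forces $G^u = G^{u+\epsilon}$, a contradiction. Thus $u$ is a jump of $G/H_1$ or of $G/H_2$, and by induction $u\in\Z$. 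What remains is the case where $G$ is cyclic of order $p^n$.

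\emph{Step 3 (Cyclic $p$-power case).} This is the genuine hard part. Here one uses the exact sequence of Proposition \ref{prop:norm}, the explicit polynomial form of the induced maps $N_m\colon U_L^{\psi_{L/K}(m)}/U_L^{\psi_{L/K}(m)+1}\to U_K^m/U_K^{m+1}$, and Hilbert's Theorem 90 (Theorem \ref{thm:hilbert90}) to verify the divisibility by direct calculation. Roughly, letting $b$ be the last lower-numbering jump and $\sigma$ a generator of $G$, Hilbert 90 supplies $x\in L^\times$ with $N_{L/K}(x)=1$ and $\sigma(x)/x$ representing a nontrivial class whose valuation is precisely $b$; tracking this element through the exact sequence and comparing the filtration indices on $K^\times$ and $L^\times$ forces the arithmetic identity $\sum_{i=0}^{b}|G_i|\equiv 0\pmod{|G_0|}$. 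For smaller jumps one then reduces inductively by passing to the unique subextension of $L/K$ corresponding to the last jump. The main obstacle will be this explicit bookkeeping: abelianness is essential not only to make Step 2 go through, but also to keep subfactors cyclic so that Hilbert 90 is applicable at each stage.
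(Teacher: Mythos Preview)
Your Step~2 contains a real gap. From $G^uH_i=G^{u+\epsilon}H_i$ for $i=1,2$ you conclude $G^u=G^{u+\epsilon}$, but injectivity of $G\hookrightarrow G/H_1\times G/H_2$ says only that $H_1\cap H_2=\{1\}$; it does \emph{not} force $G^{u+\epsilon}H_1\cap G^{u+\epsilon}H_2=G^{u+\epsilon}$. Concretely, in $G=\Z/p\times\Z/p$ with $H_1=\Z/p\times 0$, $H_2=0\times\Z/p$, and $A$ the diagonal subgroup, one has $AH_1=AH_2=G$ yet $A\neq G$. So a jump of $G^\bullet$ need not be visible in either quotient, and your induction does not close. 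The paper's reduction avoids this by passing to a \emph{single} cyclic quotient: given $g\in G^v\setminus G^{v+\epsilon_0}$, the structure theorem for finite abelian groups produces a surjection $\gamma\colon G\twoheadrightarrow H$ with $H$ cyclic, $\gamma(G^{v+\epsilon_0})=1$, and $\gamma(g)\neq 1$, so that $v$ is already a jump of $\Gal(L^{\ker\gamma}/K)$. This reduces the theorem to showing that the \emph{last} jump of a totally ramified cyclic extension is an integer (Proposition~\ref{prop:simpleHasseArf}).

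Your Step~3 is only a rough sketch, and the description of how Hilbert~90 enters is slightly backwards (one starts from a norm-$1$ element and writes it as $\sigma(x)/x$, not the reverse). The paper's argument for the cyclic case is more structured than ``bookkeeping'': it builds a filtered isomorphism $\theta\colon G\xrightarrow{\cong}V/W$ where $V=\ker N_{L/K}$ and $W=\{g(y)/y:y\in U_L\}$ (Proposition~\ref{prop:filteredHom}), and then proves by descending induction, using the explicit norm maps of Proposition~\ref{prop:norm} and the assumption that the residue field is not the prime field, that $G_m=0\Leftrightarrow V_m/W_m=0$ (Proposition~\ref{prop:almostHasseArf}, Lemma~\ref{lemma:almostHasseArf}). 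If the last jump $\phi_{L/K}(\mu)$ were non-integral, Lemma~\ref{lemma:almostHasseArf} would give $V_\mu/W_\mu=0$, contradicting $G_\mu\hookrightarrow V_\mu/W_\mu$. You should work through this machinery rather than leave it as an outline.
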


\begin{remark}\label{rem:jumpsleq0}
	We already know a few jumps of the ramification filtration: Since
	$\phi_{L/K}(u)=u$ for $u\in [-1,0]$, we see that the jumps in the
	filtration which happen in $[-1,0]$ can only happen at $-1$ and $0$.
	Thus, in the proof of the Hasse-Arf theorem, it suffices to treat the jumps $>0$.
\end{remark}

This deep theorem is fundamental for everything which we want to cover later on, so
we will sketch a proof. For those who know local class field theory, the
theorem of Hasse-Arf is easy to derive from it, at least in the case that the
residue field of $K$ is a finite field.
\begin{example}\index{class field theory}
	Fix a separable closure $K^{\sep}$ of $K$, and let $K^{\ab}$ be the compositum
	of all finite abelian extensions of $K$ contained in $K^{\sep}$.
	If the residue field of $K$ is finite, local class field theory states
	among other things  that there exists a continuous morphism
	$\theta:K^\times\rightarrow \Gal(K^\ab/K)$, the \emph{local Artin
	map}\index{local Artin map} or \emph{local reciprocity
	map}\index{local reciprocity map}, which becomes an isomorphism after
	profinite completion.  Here, the profinite completion of the
	(non-discrete) topological group
	$K^\times$ is to be understood as $\varprojlim K^\times/U$, where $U$
	runs through the finite index, normal, open subgroups.

	More relevantly for us, if $L/K$ is a finite abelian extension, then
	the composition 
	\[K^\times\xrightarrow{\theta}\Gal(K^\ab/K)\twoheadrightarrow \Gal(L/K)\]
	maps $U_K^v$ onto $\Gal(L/K)^{v}$ for all real $v\in [0,\infty)$, see
		\cite[Thm.~1, Ch.~VI.4]{CasselsFroehlich}. Here
		we use the definition $U_{K}^v:=U_{K}^{\lceil v\rceil}$.
	The jumps of the filtration $\{U_K^v\}$ are obviously integers, hence
	the same is true for the upper numbering filtration of $\Gal(L/K)$,
	whence the Hasse-Arf theorem follows from local class field theory, if
	the residue field of $K$ is finite. 

	Of course proving the existence of the local reciprocity map and
	establishing its properties is difficult. We will follow
	\cite{Serre/LocalFields} and give a more elementary proof of the
	Hasse-Arf theorem for arbitrary residue fields.
\end{example}

The proof of the Hasse-Arf Theorem \ref{thm:HasseArf} is fairly long and
intricate, so we first make our life easier by simplifying the situation.

\begin{proposition}\label{prop:simpleHasseArf}
	Recall that $L/K$ is a finite Galois extension of complete discretely
	valued fields with separable residue extension. Additionally, assume
	that $L/K$ is totally ramified and 
	that the group
$G=\Gal(L/K)$ is \emph{cyclic}. 

If $\mu$ is the largest integer such that
$G_{\mu}\neq 1$, then $\phi_{L/K}(\mu)$ is an integer.
\end{proposition}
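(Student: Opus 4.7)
The plan is induction on $|G|$. The case $|G|=1$ is vacuous. If $G=G_\mu$ then $|G_i|=|G|$ for $0\le i\le\mu$, and formula \eqref{eq:formulaForPhi} gives $\phi_{L/K}(\mu)=\mu\in\Z$ at once. So we may assume $H:=G_\mu\subsetneq G$, and set $K':=L^H$ and $G':=G/H$ (cyclic of order strictly less than $|G|$). The extension $L/K'$ is totally ramified with Galois group $H$, and by Proposition \ref{prop:compatibilityWithSubgroups} its lower numbering satisfies $H_i=G_i\cap H=H$ for $0\le i\le \mu$ and $H_i=1$ for $i>\mu$; in particular \eqref{eq:formulaForPhi} yields $\phi_{L/K'}(\mu)=\mu$ and $\psi_{L/K'}(u)=u$ for $u\in[0,\mu]$.

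By transitivity (Proposition \ref{prop:transitivityOfPhi}), $\phi_{L/K}(\mu)=\phi_{K'/K}(\mu)$; and Herbrand (Corollary \ref{cor:quotientCalculation}), combined with $\psi_{L/K'}=\id$ on $[0,\mu]$, identifies $(G')_u=G_u/H$ for $u\in[0,\mu]$. Hence the largest integer $\mu'$ with $(G')_{\mu'}\ne 1$ is exactly the penultimate jump of the lower-numbering filtration of $G$, namely the largest $\mu'<\mu$ with $G_{\mu'}\supsetneq G_\mu$. The inductive hypothesis applied to the cyclic extension $K'/K$ gives $\phi_{K'/K}(\mu')\in\Z$. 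Since $(G')_u=1$ for $u>\mu'$, the function $\phi_{K'/K}$ is linear on $[\mu',\infty)$ with slope $1/|G'|$, whence
\[
\phi_{L/K}(\mu)=\phi_{K'/K}(\mu')+\frac{\mu-\mu'}{|G'|}.
\]
The proposition therefore reduces to the divisibility $|G/G_\mu|\mid(\mu-\mu')$.

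This divisibility is the main obstacle and constitutes the real arithmetic content of Hasse--Arf: it does not follow from the cyclic group structure alone. For instance, the abstract configuration $|G|=4$, $G=G_0=G_1$, $|G_2|=2$, $G_3=1$ has $\mu-\mu'=1$ while $|G/G_\mu|=2$, so Hasse--Arf forbids this pattern. To establish the divisibility, I would combine Proposition \ref{prop:norm}\ref{item:normC} with Hilbert's Theorem 90 (Theorem \ref{thm:hilbert90}) for the cyclic extension $K'/K$: for each integer $m\in[\mu',\phi_{K'/K}(\mu)]$ one has an exact sequence relating $(G')_{\psi(m)}/(G')_{\psi(m)+1}$, $U_{K'}^{\psi(m)}/U_{K'}^{\psi(m)+1}$ and $U_K^m/U_K^{m+1}$, and Hilbert 90 produces, from an element of norm $1$ built from a uniformizer of $K'$ and a generator of $G'$, an element whose valuation governs the image of $N_{K'/K}$ across these levels. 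Tracking valuations on both sides across the filtration between $\mu'$ and the putative value $\phi_{K'/K}(\mu)$ forces that level to be integral, i.e.\ $|G'|\mid(\mu-\mu')$. This norm/Hilbert 90 bookkeeping --- the delicate matching of indices through many exact sequences --- is the hardest step and the only place where the arithmetic of the cyclic extension, rather than purely group-theoretic input, is used.
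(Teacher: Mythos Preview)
Your inductive reduction is correct and the formula $\phi_{L/K}(\mu)=\phi_{K'/K}(\mu')+(\mu-\mu')/|G'|$ is right, but the proof has a genuine gap: the divisibility $|G/G_\mu|\mid(\mu-\mu')$ is the entire content of the proposition, and your final paragraph only gestures at a method without carrying it out. More seriously, the tools you invoke for $K'/K$ (Proposition~\ref{prop:norm}\ref{item:normC} and Hilbert~90 for $K'/K$) do not by themselves see the integer $\mu$ at all, since $\mu$ is defined by the ramification of $L/K'$ and is not a jump of $G'=\Gal(K'/K)$. You would have to explain precisely how information from $L$ enters the analysis of $N_{K'/K}$; your sketch does not do this.

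The paper avoids induction on $|G|$ and works directly with $L/K$. Hilbert~90 gives a filtered isomorphism $\theta:G\xrightarrow{\cong}V/W$ with $V=\ker N_{L/K}$ and $W=\{g(y)/y:y\in U_L\}$, so that $G_m\hookrightarrow V_m/W_m$ for all $m$ (Proposition~\ref{prop:filteredHom}). The hard step is Lemma~\ref{lemma:almostHasseArf}: assuming the residue field is not $\F_p$, if $V_{m+1}/W_{m+1}=0$ and $G_{\psi_{L/K}(\lceil\phi_{L/K}(m)\rceil)}=0$, then $V_m/W_m=0$. One argues by contradiction: if $\phi_{L/K}(\mu)\notin\Z$ then $\mu<\psi_{L/K}(\nu+1)$ for $\nu=\lfloor\phi_{L/K}(\mu)\rfloor$, so $G_{\psi_{L/K}(\nu+1)}=0$, and the lemma (together with descending induction from large $m$) forces $V_\mu/W_\mu=0$, contradicting $0\neq G_\mu\hookrightarrow V_\mu/W_\mu$. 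The case $\phi_{L/K}(m)\notin\Z$ of the lemma is exactly where the arithmetic lives: one uses the surjectivity of $N:U_L^{\psi(n+1)}\twoheadrightarrow U_K^{n+1}$ from Proposition~\ref{prop:norm} to show $V_m\to U_L^m/U_L^{m+1}$ is surjective, and then a trick with auxiliary elements $y_a=1+az$ (for $a\in U_K$) to show that the image of $W_m$ is a nonzero $k$-subspace of $U_L^m/U_L^{m+1}$ stable under scaling by the residue field, hence everything --- this is where the hypothesis $k\neq\F_p$ is used. The point is that working with $L/K$ rather than $K'/K$ makes $\mu$ the visible index throughout.
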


This special case of Theorem \ref{thm:HasseArf} actually implies the theorem.

\begin{lemma} If Proposition \ref{prop:simpleHasseArf} is true, then so is
	the Theorem \ref{thm:HasseArf} of Hasse-Arf.
\end{lemma}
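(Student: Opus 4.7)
The plan is to deduce Theorem \ref{thm:HasseArf} from Proposition \ref{prop:simpleHasseArf} in two reduction steps: first reduce to the totally ramified case, then reduce from abelian to cyclic via a product decomposition, using Herbrand's theorem (Proposition \ref{prop:Herbrand}) to control upper-numbering jumps under quotients.

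First I would perform the reduction to the totally ramified case. Since $G^0 = G_0$ is the inertia subgroup and $L^{G_0}/K$ is unramified, replacing $K$ by $L^{G_0}$ identifies $G^0$ with $\Gal(L/L^{G_0}) = G$ as far as ramification groups with $u \geq 0$ are concerned. By Remark \ref{rem:jumpsleq0} it suffices to check integrality of jumps $u > 0$, so we may assume $L/K$ is totally ramified and $G = G^0$.

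Next, let $u > 0$ be a jump of the upper numbering filtration of the abelian group $G$. Since the filtration takes only finitely many values, the subgroup $H := \bigcap_{\varepsilon > 0} G^{u+\varepsilon}$ equals $G^{u+\varepsilon}$ for all sufficiently small $\varepsilon > 0$, and $H$ is a (closed) normal subgroup of $G$. By Herbrand's Theorem (Proposition \ref{prop:Herbrand}), $(G/H)^v = G^v H / H$, so $(G/H)^u \neq 1$ while $(G/H)^{u+\varepsilon} = 1$; that is, $u$ is the largest jump of the upper numbering of the finite abelian group $G/H = \Gal(L^H/K)$.

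Now I would decompose $G/H$ as a product of cyclic groups, $G/H \cong C_1 \times \cdots \times C_n$. Each projection $G/H \twoheadrightarrow C_i$ corresponds, via the Galois correspondence, to an intermediate totally ramified cyclic extension $M_i/K$ with $K \subset M_i \subset L^H$ and $\Gal(M_i/K) \cong C_i$. Applying Herbrand again to the surjection $G/H \twoheadrightarrow C_i$, the subgroup $(C_i)^u$ is the image of $(G/H)^u$. Since $(G/H)^u$ is nontrivial and injects into $\prod_i C_i$, at least one component is nonzero, so there exists $i$ with $(C_i)^u \neq 1$; for the same $i$ we have $(C_i)^{u+\varepsilon} = 1$, so $u$ is the largest jump of the upper numbering of $C_i$. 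This jump is exactly $\phi_{M_i/K}(\mu)$, where $\mu$ is the largest integer with $(C_i)_\mu \neq 1$ (because $\phi_{M_i/K}$ is linear on the interval $[\mu,\mu+1]$). Since $M_i/K$ is a totally ramified cyclic extension, Proposition \ref{prop:simpleHasseArf} applies and shows $\phi_{M_i/K}(\mu) \in \mathbb{Z}$, hence $u \in \mathbb{Z}$, as required.

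The main obstacle in this reduction is nothing deep; the content is entirely in Proposition \ref{prop:simpleHasseArf}. The only point requiring slight care is the cyclic decomposition step: one must check that the projections to cyclic factors really do detect the top jump, which is guaranteed by the functoriality provided by Herbrand's theorem applied twice (once to $G \twoheadrightarrow G/H$, once to $G/H \twoheadrightarrow C_i$).
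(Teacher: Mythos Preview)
Your proof is correct and follows essentially the same approach as the paper: reduce to the totally ramified case, then use Herbrand's theorem together with the structure of finite abelian groups to pass to a cyclic quotient detecting the jump, and apply Proposition~\ref{prop:simpleHasseArf}. The only cosmetic difference is that the paper directly invokes the structure theorem to produce a single cyclic quotient $G\twoheadrightarrow H$ with $\gamma(G^{v+\epsilon_0})=1$ and $\gamma(G^v)\neq 1$, whereas you first pass to $G/G^{u+\varepsilon}$ and then pick a cyclic factor of a product decomposition; this is the same idea unpacked.
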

\begin{proof}
	Let $L/K$ be a finite abelian extension with Galois group $G$ and let $v\in
	[0,\infty)$ be a jump, i.e.~$G^v\supsetneqq G^{v+\epsilon}$ for all
		$\epsilon>0$. By Remark \ref{rem:jumpsleq0} we may assume that
		$v>0$, hence $G^v\subsetneqq G^0=G_0$. Replacing $K$ by $L^{G_0}$, we may assume that $L/K$ is totally
		ramified.
		
		Let $\epsilon_0>0$ be sufficiently small, so
		that $v+\epsilon_{0}$ is smaller than the next jump in the
		filtration.
		Since $G$ is a finite abelian group, so is $G/G^{v+\epsilon_0}$, and the structure theorem for
		such groups tells us that there exists a cyclic group $H$, and
		a surjective map $\gamma:G\twoheadrightarrow H$, such that
		$\gamma(G^{v+\epsilon_0})=1$, and $\gamma(G^v)\neq 1$. 
		
		The cyclic group $H$ is the Galois group of a subextension $L'/K$  of
		$L/K$, and by Proposition \ref{prop:quotientsUpperNumbering},
		we know that $H^u=\gamma(G^u)$ for all $u\in 
		\left[-1,\infty\right)$. It
		follows that $H^{v+\epsilon_0}=1$ and $H^v\neq 1$, and that
		there are no jumps between $v$ and $v+\epsilon_0$.
		Finally, write $v=\phi_{L'/K}(\mu)$. Since
		$H_{\mu}=H^{\phi_{L'/K}(\mu)}$, by continuity of
		$\phi$, it follows that $H_{\mu+\epsilon}=1$ for all
		$\epsilon>0$. This means that $\lceil\mu\rceil<\lceil
		\mu + \epsilon\rceil$ for all $\epsilon>0$, and hence
		$\mu$ is an integer; in fact it is the largest integer such that
		$H_\mu\neq 1$. Now we are in the situation of
		Proposition \ref{prop:simpleHasseArf}, which implies, if true,
		that $v=\phi_{L'/K}(\mu)$ is an integer.
\end{proof}

To prove the Hasse-Arf Theorem \ref{thm:HasseArf}, we can thus concentrate on
totally ramified, cyclic extensions $L/K$, which we will do in the following
section.

Before we proceed with the proof of the Hasse-Arf Theorem, we continue Example
\ref{ex:ASlowerNumbering}.
\begin{example}\label{ex:ASupperNumbering}
	Let $K=k\llparen x\rrparen$ with $k$ an algebraically closed
	field of characteristic $p>0$. Let $L$ be the Galois extension given
	by $L:=K[t]/(t^p-t-x^{-m})$, where $(m,p)=1$; its Galois group is
	$G=\F_p$. We saw in Example \ref{ex:ASlowerNumbering} that 
	$G_0=G_1=\ldots=G_m\supsetneq G_{m+1}=0$. Let us find the jumps
	of $L/K$. For this we just have to compute $\phi_{L/K}(m)$, which is
	easy because of \eqref{eq:formulaForPhi}:
	\[\phi_{L/K}(m)=\sum_{i=0}^m\frac{|G_i|}{|G_0|}-1=m.\]
	We see that in this simple example, the conclusion of the Hasse-Arf theorem holds true.
\end{example}

\subsection{The ramification filtration of a cyclic extension}

In this section, let $L/K$ be a totally ramified, finite cyclic Galois extension of complete discretely
valued fields, $G$ its Galois group and $g$ a generator. Let $V$ be the kernel of the norm $N:L^\times\rightarrow
K^\times$. By Hilbert's Theorem 90 (Theorem \ref{thm:hilbert90})
$V=\{\frac{gy}{y}|y\in L^\times\}$. Define 
\[W:=\left\{x\in V| x=\frac{gy}{y}\text{ for some }y\in U_L\right\}\subset V,\]
where $B$ is the valuation ring of $L$ and $U_L=B^\times$. Clearly, $W$ is a subgroup of $V$.

For $i\geq 0$, define $V_i:=V\cap U_L^i$ and $W_i:=W\cap U_L^i$. The
quotients $V_i/W_i$ define a filtration of the finite cyclic group $V/W$.

\begin{proposition}\label{prop:filteredHom}
Let $\pi\in B$ be a uniformizer and consider $G$ to be filtered by the
ramification filtration $\{G_i\}$ in the lower numbering.
\begin{enumerate}
	\item The assignment $\theta(\sigma):=\frac{\sigma\pi}{\pi}$ defines an
		isomorphism of groups $\theta:G\rightarrow V/W$. 
	\item $\theta$ respects the filtrations,
		i.e.~$\theta|_{G_i}:G_i\hookrightarrow
		V_i/W_i$ for all $i\geq 0$.
\end{enumerate}
\end{proposition}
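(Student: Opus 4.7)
First, I would verify that $\theta$ is well-defined as a map $G \to V$. Since $L/K$ is totally ramified, $\sigma(\pi)$ is again a uniformizer of $B$, so $\sigma(\pi)/\pi \in U_L$; and $N(\sigma(\pi)/\pi) = N(\sigma\pi)/N(\pi) = 1$ because the norm is $G$-invariant, so $\theta(\sigma) \in V$. Next, to show that $\theta$ descends to a group homomorphism $G \to V/W$, I would expand
$$\theta(\sigma\tau) = \frac{\sigma(\tau\pi)}{\pi} = \sigma\!\left(\frac{\tau\pi}{\pi}\right)\cdot\frac{\sigma\pi}{\pi} = \sigma(\theta(\tau))\cdot \theta(\sigma),$$
and, using that $V$ is abelian, reduce the homomorphism property modulo $W$ to the claim that $\sigma(u)/u \in W$ for every $u \in U_L$ and every $\sigma \in G$. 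Writing $\sigma = g^b$, this follows from the telescoping identity
$$\frac{g^b u}{u} = \prod_{i=0}^{b-1} \frac{g(g^i u)}{g^i u},$$
each factor on the right-hand side lying in $W$ because $g^i u \in U_L$.

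Second, for bijectivity I would use Hilbert's Theorem~90 (Theorem~\ref{thm:hilbert90}) to identify $V$ with the image of the map $(g-1)\colon L^\times \to L^\times$, $y\mapsto gy/y$, whose kernel is $K^\times$. The induced isomorphism $L^\times/K^\times \xrightarrow{\sim} V$ carries the image of $U_L$ onto $W$, so
$$V/W \;\cong\; L^\times/(K^\times U_L).$$
Since $v_L\colon L^\times \to \Z$ is surjective with kernel $U_L$, and $K^\times$ maps under $v_L$ onto $e\Z$ (total ramification: $e=[L:K]=|G|$), we obtain $V/W \cong \Z/e\Z$. Under this chain of isomorphisms, $\theta(g) = g\pi/\pi = (g-1)\pi$ corresponds to the class of $\pi$, a generator, so $\theta$ sends a generator of $G$ to a generator, hence is an isomorphism between finite groups of the same order.

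Finally, for part (2), the factorisation $\sigma(\pi)-\pi = \pi\bigl(\sigma(\pi)/\pi - 1\bigr)$ yields
$$\sigma \in G_i \iff v_L(\sigma(\pi)-\pi) \geq i+1 \iff v_L\!\left(\tfrac{\sigma\pi}{\pi}-1\right)\geq i \iff \theta(\sigma) \in U_L^i,$$
so $\theta(G_i) \subset V \cap U_L^i = V_i$. The natural map $V_i/W_i \to V/W$ is injective since $V_i \cap W = U_L^i \cap W = W_i$, and combining this with the already-established injectivity of $\theta$ gives the desired injection $G_i \hookrightarrow V_i/W_i$. I expect the main technical obstacle to be the clean identification $V/W \cong \Z/e\Z$; everything else is bookkeeping around Hilbert~90 and the definitions of the ramification groups and higher units.
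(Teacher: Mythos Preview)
Your proof is correct. The main difference from the paper lies in how you establish bijectivity of $\theta$. The paper argues directly: for injectivity it shows that if $\theta(g^r)\in W$, say $(g\pi/\pi)^r=g(u)/u$ with $u\in U_L$, then $\pi^r/u\in K^\times$, forcing $e\mid r$; for surjectivity it notes that every element of $V$ has the form $g(u\pi^i)/(u\pi^i)$, which modulo $W$ is $\theta(g^i)$. You instead use Hilbert~90 structurally, identifying $V/W\cong L^\times/(K^\times U_L)\cong \Z/e\Z$ via $v_L$ and then observing that $\theta(g)$ lands on a generator. Your route makes the target group explicit from the outset and explains \emph{why} $V/W$ is cyclic of order~$e$, at the cost of invoking Hilbert~90 (which the paper has available anyway); the paper's hands-on argument is slightly shorter and self-contained. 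Your treatment of the homomorphism property is also a bit more careful than the paper's: you make explicit the telescoping identity showing $\sigma(u)/u\in W$ for arbitrary $\sigma=g^b$, which the paper leaves implicit.
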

\begin{proof}
	The following argument partly follows lecture notes by I.~Fesenko.
	First, note that for any $\sigma \in G$,
	$\theta(\sigma)=\frac{\sigma(\pi)}{\pi}\in V$. Indeed,
	$v_L(\theta(\sigma))=v_L(\sigma(\pi))-v_L(\pi)=0$, so
	$\theta(\sigma)\in U_L$. Also, 
	\[N_{L/K}(\theta(\sigma))=\prod_{\tau\in
	G}\frac{\tau\sigma(\pi)}{\tau(\pi)}=\frac{\prod_{\tau\in
	G}\tau\sigma(\pi)}{\prod_{\tau\in G}\tau(\pi)}=1.\]

	Let us see that $\theta$ is a homomorphism of groups: For any
	$\tau,\sigma \in G$ we have
	\[\frac{\tau\sigma(\pi)}{\pi}\cdot
		\frac{\pi^2}{\sigma(\pi)\tau(\pi)}=\frac{\sigma\left(\frac{\tau(\pi)}{\pi}\right)}{\frac{\tau(\pi)}{\pi}},\]
		and $\frac{\tau(\pi)}{\pi}\in U_L$. Hence
		$\theta(\sigma\tau)\equiv \theta(\sigma)\theta(\tau)\mod W$.
	
		To see that $\theta$ is injective, assume that
		$\frac{g^r(\pi)}{\pi}\in W$. As we just saw that $\theta$ is a homomorphism, this means that there exists $u\in U_L$ such that
	$\theta(g^r)=\theta(g)^r=\left(\frac{g(\pi)}{\pi}\right)^r=\frac{g(u)}{u}$. But this means
	$g\left(\frac{\pi^r}{u}\right)=\frac{\pi^r}{u}$, so $\pi^r/u\in K$.
	Hence the degree $[L:K]$ divides $r$, so $g^r=1$, and $\theta$ is
	injective.

	For the surjectivity, note that every element in $V$ has the shape
	$\frac{g(u\pi^i)}{u\pi^i}$, for $u\in U_L$ and $i\in \Z$. But in $V/W$
	we have $\frac{g(u\pi^i)}{u\pi^i}=\frac{g(\pi^i)}{\pi^i}=\theta(g^i)$.

	Finally, if $\sigma\in G_i$, then $\frac{\sigma(\pi)}{\pi}\in U_L^i$ by
	definition, so $\theta$ indeed induces injective morphisms
	$\theta_i:G_i\rightarrow V_i/W_i$. 
%
\end{proof}

\begin{proposition}\label{prop:almostHasseArf}
	In addition to the assumptions and notations of Proposition
	\ref{prop:filteredHom}, assume that the residue field of $K$ is not
	the prime field $\F_p$. Then $G_m=0$ if and only if $V_m/W_m=0$. 
\end{proposition}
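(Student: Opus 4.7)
The forward implication $V_m/W_m = 0 \Rightarrow G_m = 0$ is immediate from Proposition~\ref{prop:filteredHom}, which provides the injection $\theta|_{G_m}\colon G_m \hookrightarrow V_m/W_m$. The content of the proposition is the converse, and the plan is to argue by contrapositive.

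Suppose $V_m/W_m \neq 0$ and pick $x \in V_m\setminus W$. By Hilbert~90 (Theorem~\ref{thm:hilbert90}), write $x = g(z)/z$ with $z \in L^\times$, and decompose $z = u\pi^k$ for $u \in U_L$, $k \in \Z$. Then $x = (g(u)/u)\cdot(g(\pi)/\pi)^k$, and since $\theta$ is a homomorphism modulo $W$ (Proposition~\ref{prop:filteredHom}), we obtain $x \equiv \theta(g^k) \pmod{W}$. Since $x \notin W$ and $\theta$ is injective on $G$, $\sigma := g^k \ne 1$. Consequently $\sigma(\pi)/\pi \in U_L^m \cdot W$, so there exists $y \in U_L$ with $\sigma(\pi)/\pi\cdot g(y)/y \in U_L^m$. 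Let $r$ be maximal with $\sigma \in G_r$; I aim to force $r \geq m$, thereby contradicting $\sigma \in G_m = 0$.

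Let $t \geq 1$ be the ramification level of the generator $g$, so $g \in G_t\setminus G_{t+1}$. Since $g$ acts trivially on $B/\mathfrak{m}_L^{t+1}$, a direct calculation gives $W \subseteq U_L^{t+1}$. A valuation comparison then shows: if the leading term of $g(y)/y$ lies at a level $\ne r$, the product $\sigma(\pi)/\pi\cdot g(y)/y$ has valuation $\min(r,\mathrm{level}(g(y)/y))$, which is strictly less than $m$, a contradiction. So $g(y)/y$ must have leading term at level $r$, which by a direct computation with $y = 1 + c\pi^{r-t} + \cdots$ requires $p \nmid (r-t)$ and produces leading coefficient $\bar c(r-t)\bar\beta$ for some $\bar c \in k^\times$, where $\bar\beta \in k^\times$ is the leading coefficient of $g(\pi)/\pi - 1$ at level $t$. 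Cancelling the leading term of $\sigma(\pi)/\pi = 1 + \alpha\pi^r + \cdots$ (with $\bar\alpha \in k^\times$) at level $r$ amounts to solving $\bar c(r-t)\bar\beta = -\bar\alpha$ in $k$.

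Iterating this analysis up to level $m-1$, the higher-order coefficients of $y$ satisfy a system of equations in $k$; the hypothesis $k \ne \F_p$ must be used to exhibit, at some level in $\{r,\dots,m-1\}$, a required cancellation coefficient lying outside the $\F_p$-span of the values achievable from products $g(y_1)/y_1\cdot g(y_2)/y_2\cdots$ at that level. This obstruction forces the contradiction. The main obstacle is the precise bookkeeping of the iterated cancellations and locating the level at which the residue-field hypothesis intervenes; this is the step where the proof genuinely differs from the case $k = \F_p$, in which the cancellations could succeed and a field-extension argument is needed to deduce Hasse--Arf in general.
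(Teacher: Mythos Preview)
Your reduction to the contrapositive and the identification of $\sigma=g^k\neq 1$ with $x\equiv\theta(\sigma)\pmod W$ are fine, but the argument breaks down in the final paragraph, and you essentially say so yourself. You have a \emph{specific} $y\in U_L$ with $\sigma(\pi)/\pi\cdot g(y)/y = x\in U_L^m$; this $y$ exists by construction, so the cancellations you describe \emph{do} occur. You then assert that ``the hypothesis $k\ne\F_p$ must be used to exhibit\ldots a required cancellation coefficient lying outside the $\F_p$-span of the values achievable,'' but this is backwards: a larger residue field gives \emph{more} coefficients, making such cancellations easier, not harder. There is no mechanism in your sketch that extracts from the existence of $y$ any constraint forcing $r\ge m$. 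The sentence ``the main obstacle is the precise bookkeeping\ldots and locating the level at which the residue-field hypothesis intervenes'' is an admission that the heart of the proof is missing. (A minor point: your claim $t\ge 1$ for the ramification level of the generator $g$ fails when $G_0/G_1$ is nontrivial.)

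The paper's proof is structurally quite different. It proceeds by \emph{descending} induction on $m$: first one shows $V_m=W_m$ for $m\gg 0$ by an explicit construction using an element of trace $1$. The inductive step (Lemma~\ref{lemma:almostHasseArf}) uses the norm machinery of Proposition~\ref{prop:norm}, splitting into cases according to whether $\phi_{L/K}(m)\in\Z$. In the non-integer case one shows that both $V_m$ and $W_m$ surject onto $U_L^m/U_L^{m+1}$; for $W_m$ this uses a scaling trick: if $x=g(y)/y\in W_m\setminus U_L^{m+1}$ with $y=1+z$, then $x_a:=g(1+az)/(1+az)\in W_m$ for $a\in U_K$, and one computes $\gamma(x_a)\equiv a\gamma(x)$ in $\mathfrak{m}_L^m/\mathfrak{m}_L^{m+1}$. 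The hypothesis $k\ne\F_p$ enters precisely here: the quotient $(U_L^m/U_L^{m+1})/\text{image}(W_m)$ is a quotient of the cyclic group $V_m/W_m$, but $U_L^m/U_L^{m+1}\cong k$ has no nontrivial cyclic quotient when $k\ne\F_p$, so the image of $W_m$ (shown nonzero by the scaling trick) must be everything. This is a genuinely different argument from what you attempted, and the role of $k\ne\F_p$ is the opposite of what your sketch suggests.
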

\begin{proof}
	We have seen that $\theta$ restricts to an injective map
	$G_m\hookrightarrow V_m/W_m$, so it just remains to show that
	$V_m/W_m=0$ if $G_m=0$. We proceed by decending induction on $m$. We
	know that $G_m=0$ for $m\gg 0$, and the same is true for $V_m/W_m$.
	Indeed, as $L/K$ is separable, there exists $t\in L$ such that
	$\Tr_{L/K}(t)=1$ (Proposition \ref{prop:separability}). Write $M:=\max\{0,-v_L(t)\}$. We claim that $V_m=W_m$
	for all $m>M$. Let $x\in V_m$, and write
	\[y:=t+\sum_{i=1}^{|G|-1}xg(x)g^2(x)\cdot\ldots\cdot
		g^{i-1}(x)g^{i}(t).\]
		Subtracting $\Tr_{L/K}(t)=1$ from both sides we obtain 
	\[y-1=\sum_{i=1}^{|G|-1}(xg(x)g^2(x)\cdot\ldots\cdot
		g^{i-1}(x)-1)g^{i}(t),\]
	and hence $v_L(y-1)>0$. It follows that $y\in U_L^1$, and that 
	$\frac{y}{g(y)}=x,$
	since $N_{L/K}(x)=1$:
	\begin{align*}
		\frac{y}{g(y)}&=\frac{xy}{xg(y)}\\
		&=\frac{xy}{\sum_{i=1}^{|G|-1}xg(x)\cdot\ldots\cdot
		g^{i-1}(x)g^{i}(t)+N_{L/K}(x)t}\\
		&=x.
	\end{align*}
	Thus $x=\frac{y}{g(y)}\in W_m$, so $W_m=V_m$ for $m>M$.

	Now let $G_m=0$, and assume that we know that $V_{m+1}/W_{m+1}=0$. Our
	goal is to show that $V_m/W_m=0$.

	We will actually prove something stronger: 
\begin{lemma}\label{lemma:almostHasseArf}
	We continue to use the notations and assumptions from Proposition
	\ref{prop:almostHasseArf}. In particular, the residue field of $K$ is
	not the prime field. Write $n+1:=\lceil
	\phi_{L/K}(m)\rceil$. If $V_{m+1}/W_{m+1}=0$ and
	$G_{\psi_{L/K}(n+1)}=0$, then $V_m/W_m=0$.
\end{lemma}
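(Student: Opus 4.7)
My plan is to take $x \in V_m$ and produce $y \in U_L$ with $g(y)/y \equiv x \pmod{U_L^{m+1}}$; granting this, $x \cdot (g(y)/y)^{-1}$ lies in $V \cap U_L^{m+1} = V_{m+1}$, which equals $W_{m+1} \subset W$ by the first hypothesis, and $g(y)/y \in W$ by definition, so $x \in W$. I would split the argument according to whether $m = \psi_{L/K}(n+1)$ (Case~A) or $\psi_{L/K}(n) < m < \psi_{L/K}(n+1)$ (Case~B); since $n+1 = \lceil \phi_{L/K}(m) \rceil$, these are the only two possibilities.

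In Case~A I would note that $\phi_{L/K}(m) = n+1$ and apply Proposition~\ref{prop:norm}\ref{item:normC} with parameter $n+1$ to obtain the exact sequence
\[
0 \longrightarrow G_{\psi_{L/K}(n+1)}/G_{\psi_{L/K}(n+1)+1} \xrightarrow{\theta_{n+1}} U_L^{m}/U_L^{m+1} \xrightarrow{N_{n+1}} U_K^{n+1}/U_K^{n+2}.
\]
The hypothesis $G_{\psi_{L/K}(n+1)} = 0$ kills the left term, so $N_{n+1}$ is injective. Since $N(x) = 1$, the class of $x$ in $U_L^{m}/U_L^{m+1}$ vanishes, i.e.\ $x \in V_{m+1} \subset W$, and $y=1$ works.

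In Case~B I would first observe that no lower-numbering jump occurs on the interval $(\psi_{L/K}(n), \psi_{L/K}(n+1))$ (else $\phi_{L/K}$ would attain an integer value strictly between $n$ and $n+1$), so the slope $1/(G_0:G_u)$ is constant there; combined with $G_{\psi_{L/K}(n+1)} = 0$, this forces $G_j = 0$ for every integer $j$ with $\psi_{L/K}(n) < j \leq \psi_{L/K}(n+1)$, in particular $G_m = G_{m+1} = 0$. Let $s = \max\{i : G_i = G\}$, so that $g(\pi)/\pi \in U_L^{s} \setminus U_L^{s+1}$. For a candidate $y = 1 + b\pi^{m-s}$ with $b \in B$, a direct expansion using $g(b) \equiv b \pmod{\mathfrak{m}_L^{s+1}}$ yields
\[
\frac{g(y)}{y} \equiv 1 + (m-s)\,\bar u\,\bar b\,\pi^{m} \pmod{U_L^{m+1}},
\]
where $\bar u \in k^\times$ is the residue class of $(g(\pi)/\pi - 1)/\pi^s$. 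When $p \nmid (m-s)$, this reduces to a non-degenerate $k$-linear equation in $\bar b$ which always has a solution. When $p \mid (m-s)$, the right-hand side collapses into $U_L^{m+1}$, so I would combine contributions from $y$'s at several levels, exploiting that $W$ is stable under replacing $g$ by powers $g^i$ (since $g^i(y)/y = \prod_{k=0}^{i-1} g^k(g(y)/y)$ and $W$ is Galois-stable); here the hypothesis $k \neq \F_p$ supplies the extra residue-field elements needed to realise every class in the image of $V_m$ in $U_L^m/U_L^{m+1}$.

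The hard part will be the degenerate subcase $p \mid (m-s)$ in Case~B, where the naive choice $y = 1 + b\pi^{m-s}$ fails and I must exploit the strict inclusion $\F_p \subsetneq k$; making the multi-level construction of $y$ explicit and verifying that it covers all of $\bar V_m$ in $k$ is the crux of the argument.
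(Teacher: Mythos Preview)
Your Case~A matches the paper. The problems are all in Case~B.

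First, the claim that $G_m = 0$ is both wrongly justified and false in the decisive application. Your parenthetical ``else $\phi_{L/K}$ would attain an integer value strictly between $n$ and $n+1$'' is circular: a lower-numbering jump at $j$ means the \emph{slope} of $\phi_{L/K}$ changes at $j$, not that $\phi_{L/K}(j)\in\Z$; the latter is exactly Hasse--Arf. And in the application to Proposition~\ref{prop:simpleHasseArf} one takes $m=\mu$, the \emph{largest} integer with $G_\mu\neq 0$, so $G_m\neq 0$ there. Fortunately your subsequent computation only uses $g\in G_s$, not $G_m=0$, so this is a red herring rather than a fatal error.

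The real gap is the subcase $p\mid(m-s)$. Your proposed fixes do not work: every generator $g^i$ of $G$ still lies in $G_s\setminus G_{s+1}$, so passing to powers of $g$ does not change the level $s$; and layering $y$'s at other exponents $m-s'$ hits the same obstruction, since the leading coefficient is always a multiple of $(m-s')$ for the relevant $s'$. You also never use the hypothesis $N(x)=1$, nor the hypothesis $G_{\psi_{L/K}(n+1)}=0$, in Case~B---a sign that something essential is missing.

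The paper's Case~B is organised quite differently. It first shows $V_m$ surjects onto $U_L^m/U_L^{m+1}$ using the norm: since $G_{\psi_{L/K}(n+1)}=0$, Proposition~\ref{prop:norm}\ref{item:normA} gives $N:U_L^{\psi_{L/K}(n+1)}\twoheadrightarrow U_K^{n+1}$, so any $x\in U_L^m$ can be corrected by an element of $U_L^{\psi_{L/K}(n+1)}\subset U_L^{m+1}$ to land in $V_m$. Next, since $V_m/W_m$ is a quotient of $V/W\cong G$ (Proposition~\ref{prop:filteredHom}) it is cyclic; but $(U_L^m/U_L^{m+1})/(\text{image of }W_m)\cong k/(\text{subgroup})$ cannot be nontrivial cyclic when $k\neq\F_p$, so $W_m$ has nontrivial image. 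Finally comes the key \emph{scaling trick}: take any $x=g(y)/y\in W_m\setminus U_L^{m+1}$ with $y=1+z\in U_L^1$, set $y_a=1+az$ for $a\in U_K$, and compute $x_a-1=a\cdot\tfrac{y}{y_a}(x-1)$. Thus the residue of $x_a$ is $\bar a$ times that of $x$, and as $a$ runs over $U_K$ one obtains all of $k^\times$ times a fixed nonzero element---hence $W_m$ surjects. This bootstraps from a single nonzero element and sidesteps any $p\mid(m-s)$ divisibility issue entirely.
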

\begin{proof}
	Note that $\psi_{L/K}(n+1)$ is an integer $\geq m$ (Exercise
	\ref{ex:psiOnIntegers}), with equality if
	and only if $\phi_{L/K}(m)$ is an integer.

	We treat these two cases separately: $\phi_{L/K}(m)\in \Z$ and
	$\phi_{L/K}(m)\not\in\Z$.

	First, let us assume that $\phi_{L/K}(m)\in \Z$,
	i.e.~$\phi_{L/K}(m)=n+1$. Proposition
	\ref{prop:norm} then produces a commutative diagram
	\begin{equation*}
		\begin{tikzcd}
			~ &0\dar&0\dar&0\dar\\
			0\rar&V_{m+1}\dar\rar&V_{m}\dar\rar
			&G_{m}/G_{m+1}\dar{\theta_m}\\
			0\rar&U_L^{m+1}\rar\dar{N}&U_{L}^m\rar\dar{N}&
			U^{m}_L/U^{m+1}_L\rar\dar{N_m}&0\\
			0\rar&U_K^{n+2}\rar&U_{K}^{n+1}\rar&
			U^{n+1}_K/U^{n+2}_K\rar&0.\\
		\end{tikzcd}
	 \end{equation*}
	 As $V_{m+1}/W_{m+1}=0$ by assumption, we
	 see that the composition 
	 \[V_m\rightarrow G_m/G_{m+1}\rightarrow
		 (V_m/W_m)/(V_{m+1}/W_{m+1})=V_m/W_m\]
		 is the canonical quotient map. But
		 $G_m=G_{\psi_{L/K}(n+1)}=0$, so $V_m/W_m=0$.


	 Now assume that $\phi_{L/K}(m)\not\in\Z$,
	 i.e.\[n<\phi_{L/K}(m)<n+1\text{ and }
		 \psi_{L/K}(n)+1\leq m<\psi_{L/K}(n+1).\]
	 We have $V_m/V_{m+1}\subset U^m_L/U^{m+1}_L$, and by assumption
	 $V_{m+1}=W_{m+1}$. Thus, to show that $V_m/W_m=0$, it suffices to
	 show that $V_m$ and $W_m$ have the same image in $U^m_L/U^{m+1}_L$.

	 Let $x\in U^m_L$ be arbitrary. We have the inclusion
	 $U^{\psi_{L/K}(n+1)}_L\subset U_L^m$, and the norm induces a
	 surjective morphism $N:U^{\psi_{L/K}(n+1)}_L \twoheadrightarrow
	 U_K^{n+1}$ by Proposition \ref{prop:norm}. It follows that there
	 exists $y\in U^{\psi_{L/K}(n+1)}_L$ with $N(y)=N(x)$. Then
	 $N(xy^{-1})=1$ and hence $xy^{-1}\in V_m$. As $\psi_{L/K}(n+1)\geq
	 m+1$, we see that $y\in U_{L}^{m+1}$, so $x\equiv xy^{-1}\mod
	 U_L^{m+1}$. It follows that $V_m$ maps surjectively to
	 $U_L^m/U_L^{m+1}$.

	 As for the image of $W_m$, from what we showed in the previous
	 paragraph, it follows that $V_m/W_m$ maps surjectively
	 to  $(U_L^{m}/U_L^{m+1})/W_m$, so this group is also cyclic. But for
	 $m>0$, the group $U_L^{m}/U_{L}^{m+1}$ is isomorphic to the additive
	 group underlying the residue field of $K$, which is not
	 cyclic, as the residue field of $K$ is assumed not to be the prime
	 field.  Thus the image of $W_m$ in $U_L^{m}/U_L^{m+1}$ is
	 nontrivial. 
	 
	 Let $x\in W_m\setminus U^{m+1}_L$. Then there exists $y\in U_L$ such
	 that $x=\frac{g(y)}{y}$. By modifying $y$ with an element from $U_K$,
	 we may assume that $y\in U_L^1$, i.e.~$y=1+z$, $v_L(z)>0$. For $a\in
	 U_K$ define $y_a:=1+az$ and $x_a:=\frac{g(y_a)}{y_a}$. 
	 
	 Denote by
	 $\gamma:U_L^{m+1}/U_L^m\rightarrow
	 \mathfrak{m}_L^{m}/\mathfrak{m}_L^{m+1}$ the isomorphism	 given by 
	 $\gamma(1+u\pi^m)=u\pi^m\mod \mathfrak{m}_L^{m+1}$, where $\pi$ is a
	 uniformizer of $L$ and $\mathfrak{m}_L$ the maximal ideal 
	 of the discrete valuation ring of $L$.  We claim that
	 $x_a\in W_m$, and that $\gamma(x_a)= a\gamma(x)\mod \mathfrak{m}_L^{m+1}$. Since
	 $\mathfrak{m}_L^{m}/\mathfrak{m}_L^{m+1}$ is a $1$-dimensional vector space over the residue
	 field of $K$, this would show that $W_m$ maps surjectively to
	 $U_L^m/U_L^{m+1}$, which is what we wanted to show.

	 As for the claim, we compute
	 \begin{align*}
		 x_a-1&=\frac{g(y_a)-y_a}{y_a}\\
		 &=\frac{1+ag(z)-(1+az)}{y_a}\\
		 &=\frac{a(gz-z)}{y_a}\\
		 &=a\frac{g(y)-y}{y_a}\\
		 &=a\frac{y}{y_a}(x-1).\end{align*}
	 It follows that $v_L(x_a-1)=v_L(a)+m\geq m$, so $x_a\in W_m$.
	 Finally, to see that $\gamma(x_a)=a\gamma(x)$, it suffices to remark that $\frac{y}{y_a}\in U^1_L$. 
	 \end{proof}
\end{proof}
\begin{remark}
	If the residue field of $K$ is algebraically closed, almost the same
	proof shows that the isomorphism $\theta:G\xrightarrow{\cong} V/W$ induces
	isomorphisms $G_i\xrightarrow{\cong}V_i/W_i$ for all $i$.
\end{remark}

Finally we can prove Proposition \ref{prop:simpleHasseArf} and hence the
Hasse-Arf theorem \ref{thm:HasseArf}.

\begin{proof}[Proof of Proposition \ref{prop:simpleHasseArf}]
	Recall that $L/K$ is a totally ramified, cyclic extension of complete
	discretely valued fields with separable residue extension. If the
	residue field of $K$ is the prime field, we can replace $K$ by an
	unramified extension with larger residue field (\cite[V, \S 4]{Serre/LocalFields}). This does not change
	$G_i$ for $i\geq 0$. If
	$\mu$ is the largest positive integer such that $G_\mu\neq 0$, then we
	want to show that $\phi_{L/K}(\mu)$ is an integer. If it is not an
	integer, then there exists an integer $\nu$, such that
	$\nu<\phi_{L/K}(\mu)<\nu+1$, and hence $\mu<\psi_{L/K}(\nu+1)$. It
	follows that $G_{\psi_{L/K}(\nu+1)}=0$. Now Lemma
	\ref{lemma:almostHasseArf} applies, and thus $V_{\mu}/W_\mu=0$. But we
	have seen in Proposition \ref{prop:filteredHom} that $\theta:G\xrightarrow{\cong} V/W$ respects the
	filtrations, so $G_\mu\hookrightarrow V_{\mu}/W_{\mu}=0$, which is a
	contradiction. Thus $\phi_{L/K}(\mu)\in \Z$, and the proof is complete.
\end{proof}


\section{The Swan representation}\label{sec:swan}
Let $K$ be a complete discretely valued field with perfect residue field and
$K^{\sep}$ a separable closure. If $\ell$ is a prime number different from the
residue characteristic of $K$ and $E$ a finite extension of $\Q_{\ell}$, then
we want to study continuous morphisms $\rho:\Gal(K^{\sep}/K)\rightarrow
\GL_r(E)$, where $\GL_r(E)$ is considered as a topological group with the
topology induced by $E$ (see Definition \ref{defn:topologyOnGL}).

In Section \ref{sec:swanconductor} will define the \emph{Swan conductor} $\sw(\rho)\in \Z$ of such a
representation, which is
additive in short exact sequences and zero if and only if $\rho$ is
\emph{tame}, i.e.~if $\rho$ restricts to the trivial representation on the wild ramification group. We will see in Section \ref{sec:GOS}
that this invariant has a cohomological interpretation.
Before we come to the Swan conductor, we will use the theorem of Hasse-Arf to  attach to a finite Galois extension
$L/K$ with group $G$, first a complex representation, then a projective
$\Z_\ell[G]$-module $\Sw_G$, the \emph{Swan representation}\index{Swan
representation}. 

\subsection{A minimal amount of representation theory of finite groups}
We roughly follow \cite{Serre/Representations}, but \cite{Lam} and \cite{Webb}
are also helpful.

\subsubsection{Basics}
Let $G$ be a finite group, $E$ a field and $V$ a finite dimensional $E$-vector
space. A morphism $\rho:G\rightarrow
\GL(V)$ is called a \emph{representation of $E$}. If $\rho':G\rightarrow
\GL(V')$ is a second representation, then a \emph{morphism of representations}
$\alpha:\rho\rightarrow \rho'$ is an $E$-linear map $\phi:V\rightarrow V'$, such
that for every $g\in G$, $v\in V$, we have $\phi(\rho(g)(v))=\rho'(g)(\phi(v))$. We
write $\Repf_EG$ for the category of representations of $G$ on finite
dimensional $E$-vector spaces. This category is equivalent to the category of
finitely generated left modules over the group ring $E[G]$. Recall that $E[G]=E\otimes_{\Z}\Z[G]$,
where $\Z[G]$ is the free $\Z$-module with basis $G$, and multiplication
induced by the rule $ag\cdot bg'=(ab)gg'\in \Z[G]$, for $a,b\in\Z$. From this perspective it is
clear that we have the usual notions of subrepresentations, quotients and constructions like direct sums, products,
etc. In the sequel, we will use the notions of ``representation'' and
``finitely generated $E[G]$-module'' interchangeably. Unless
explicitly stated otherwise,  the word ``$E[G]$-module'' means ``finitely generated left-$E[G]$-module''.

Let $\phi:H\rightarrow G$ be a homomorphism of finite groups, and
$\psi:E[H]\rightarrow E[G]$ the induced morphism of $E$-algebras. If $V$ is a
left-$E[G]$-module, then using $\psi$ we can also consider it as an
left-$E[H]$-module. The corresponding representation of $H$ is called the \emph{restriction
	of $V$ to $H$, denoted $\RES_{\phi}
V$}\index{representation!restriction}. Similarly, if $W$ is a
left-$E[H]$-module, and if we consider $E[G]$ as a right-$E[H]$-module via $\phi$,  then the left-action of $E[G]$ on itself makes $E[G]\otimes_{E[H]} W$ into a left-$E[G]$-module. The
corresponding representation of $G$ is said to be \emph{the representation induced by
	$\phi$ and $W$, denoted $\Ind_{\phi}
W$}\index{induced!representation}\index{representation!induced}. These constructions define
	functors $\Ind_{\phi}:\Repf_E H\rightarrow \Repf_E G$, and
	$\RES_{\phi}:\Repf_E G\rightarrow \Repf_E H$. Considering the module
	theoretic definitions of these constructions, we see that these
	functors are actually adjoint:
	\eq{Ind-Res}{\Hom_{\Repf_E H}(-,\RES_\phi(-))=\Hom_{\Repf_E G}(\Ind_\phi(-),-).}

Here are some important examples of representations.
\begin{example}\label{ex:representationsFirstExamples}\leavevmode
\begin{enumerate}[label=(\alph*)]\label{ex:reps}
	\item The $1$-dimensional vector space $E$ with $G$-action given by $ge=e$ for all
		$g\in G$, $e\in E$. This is the \emph{trivial representation
		of rank $1$}, which we denote by $\mathbf{1}_G$ or just $E$. If
		$\rho:G\rightarrow \GL(V)$ is a finite dimensional
		$E$-representation of $G$, such that $\rho(g)=\id$ for all $g\in G$,
		then $\rho$ is isomorphic to a direct sum of finitely many
		copies of $\mathbf{1}_G$. Such a representation $\rho$ is called
		\emph{trivial}. (The trivial $E[G]$-module is the trivial $G$-representation of rank $0$.)

		If $V$ is an $E[G]$-module, we write $V^G:=\{v\in V| \forall g\in
G: gv=v\}$. This is the maximal trivial subrepresentation of $V$.
	\item The ring $E[G]$ is a left-$E[G]$-module. The corresponding
		representation is called the \emph{regular
		representation}\index{representation!regular} of $G$ over $E$.
	\item\label{item:representationsFirstExamples:augmentation} We have a surjective morphism of $E[G]$-modules $E[G]\rightarrow
		E$, $g\mapsto 1$. In other words: The trivial representation
		of rank $1$
		is a quotient of the regular representation. Its kernel is the
		ideal $\Aug_E(G):=( g-1|g\in G)$, and it is
		called the \emph{augmentation
		representation}.\index{representation!augmentation}
	\item If the order of $G$ is invertible in $E$, then the short exact
		sequence
		\[0\rightarrow \Aug_E(G)\rightarrow E[G]\rightarrow
		E\rightarrow 0\]
		is split by the map $\theta:E\rightarrow E[G]$,
		$\theta(e)=\frac{e}{|G|}\sum_{g\in G}g$.
	\item\label{item:internalHom} If $V,W$ are two finite dimensional representations of $G$ over
		$E$, then $\Hom_E(V,W)$ has a canonical structure of a
		representation of $G$: If $\phi\in \Hom_E(V,W)$ and $g\in G$,
		then define $g\bullet \phi$ by
		$(g\bullet\phi)(v)=g\phi(g^{-1}v)$. Clearly
		$\Hom_E(V,W)^G=\Hom_{E[G]}(V,W)$. 
		
		We write $\IntHom(V,W)$ for
		the representation on $\Hom_E(V,W)$ just defined.
	\item If $V$ is a finite dimensional representation of $G$, then
		$\IntHom(V,\mathbf{1}_G)$ is a finite dimensional representation of
		$G$, the \emph{dual representation $V^\vee$ of
		$\rho$}\index{representation!dual}. The underlying vector space is the
		dual vector space $\Hom_E(V,E)$ of $V$ and $G$ acts via
		$(g\bullet \phi)(v)=\phi(g^{-1}v)$.
	\item If $V$ and $W$ are finite dimensional representations of $G$, then the
		tensor product $V\otimes_E W$ carries a canonical $G$-action,  defined via
		$g(v\otimes w)=gv\otimes gw$ for $g\in G$, $v\in V$, $w\in W$. Using this definition we see
		that the natural isomorphism of $E$-vector spaces
		$V^\vee\otimes_E W\rightarrow \Hom_E(V,W)$ is $G$-invariant, and
		hence induces a natural isomorphism of representations
		\[V^{\vee}\otimes W\cong\IntHom(V,W).\]
		Similarly we see that if $U$ is a third representation of $G$,
		then there is a natural isomorphism
		\[\IntHom(U\otimes V, W)\cong \IntHom(U,\IntHom(V,W)).\]
\end{enumerate}
\end{example}

\begin{definition}
	A representation $V$ of $G$ over $E$ is called 
	\begin{itemize}
		\item \emph{irreducible}\index{irreducible!representation} or
	\emph{simple}\index{simple representation} if it is irreducible as  $E[G]$-module, i.e., if it is nonzero and does
	not have nontrivial sub-$E[G]$-modules.
\item \emph{semi-simple}\index{semi-simple representation} if $V$ is a direct sum of irreducible $E[G]$-modules.
\item \emph{decomposable} if $V\cong V_1\oplus V_2$ with $V_1\not\cong V$ and $V_2\not\cong V$. 
\item \emph{indecomposable}\index{indecomposable representation} if $V$ is not decomposable.
	\end{itemize}
\end{definition}

\begin{theorem}[Maschke]\index{Maschke's Theorem}\label{thm:maschke}
	If $G$ is a finite group with order coprime to $\Char(E)$, then every
	representation of $G$ on a finite dimensional $E$-vector space is
	semi-simple.
\end{theorem}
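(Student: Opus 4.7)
The plan is to reduce semisimplicity to the statement that every subrepresentation $W \subset V$ admits a $G$-stable complement; once this is established, a straightforward induction on $\dim_E V$ expresses $V$ as a direct sum of simple subrepresentations.

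To produce the complement, I would begin with an arbitrary $E$-linear projection $\pi \colon V \to W$, which exists since $W$ is a vector subspace of $V$ (forget the $G$-action for this step). The key construction is the averaged operator
\[
\tilde{\pi} \;=\; \frac{1}{|G|} \sum_{g \in G} \rho(g) \circ \pi \circ \rho(g)^{-1},
\]
which is well-defined precisely because $|G|$ is invertible in $E$. I would then verify three properties: (i) its image lies in $W$, using $G$-stability of $W$ together with $\pi(V) \subset W$; (ii) it restricts to the identity on $W$, since for $w \in W$ the vector $\rho(g)^{-1}w$ is again in $W$, so each summand evaluates to $w$; and (iii) it is $G$-equivariant, by the reindexing $g \mapsto hg$ of the sum. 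Consequently $\tilde{\pi}$ is a $G$-equivariant idempotent with image $W$, so $V = W \oplus \ker(\tilde{\pi})$ is a decomposition in $\Repf_E G$.

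The only place where the hypothesis $\gcd(|G|,\Char(E)) = 1$ enters is in the division by $|G|$; without it the averaging construction collapses, and indeed Maschke's theorem fails in the modular case. Beyond that there is no substantial obstacle: the verification of (i)--(iii) is a short formal manipulation, and the induction step is immediate, since both $W$ and $\ker(\tilde{\pi})$ have strictly smaller $E$-dimension than $V$ (unless $W$ is $0$ or $V$, in which case there is nothing to do).
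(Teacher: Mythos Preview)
Your proposal is correct and matches the paper's proof essentially verbatim: the paper also picks an $E$-linear projection $P\colon V\to W$, averages it to $P_0=\frac{1}{|G|}\sum_{g\in G} gPg^{-1}$, and observes that $P_0$ is $E[G]$-linear with $P_0|_W=\id_W$, so $V=W\oplus\ker(P_0)$. The only cosmetic difference is that you spell out the three verifications and the dimension induction explicitly, whereas the paper leaves these as ``easy to check''.
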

\begin{proof}
	Let $V$ be a finite dimensional $E$-representation of
	$G$, and $W\subset V$ a subrepresentation. Pick a projection $P:V\twoheadrightarrow W$  of
	$E$-vector spaces, which splits the inclusion $W\subset V$, i.e.~such
	that $P|_W=\id_W$, and define 
	\[P_0:=\frac{1}{|G|}\sum_{g\in G}gPg^{-1}.\]
	It is easy to check that $P_0$ is a morphism of $E[G]$-modules, and we
	still have $P_0|_W=\id_W$. It follows that $\ker(P_0)$ is a
	subrepresentation of $V$, and $V=W\oplus \ker(P_0)$ as
	$E[G]$-modules.
\end{proof}

To see the contrast to the situation where $|G|$ and $\Char(E)$ are not
coprime, do the following exercise.
\begin{exercise}\label{ex:repsInPosChar}
	Let $p=\Char(E)$. If $G$ is a finite group, then the two cases
	$(|G|,p)=1$ and $(p,|G|)>1$ are very different.
	
	As a ``worst case'' scenario, prove that if $p>0$ and
	if $G$ is a $p$-group
	acting on a finite dimensional $E$-vector space $V\neq 0$, then $V^G\neq 0$. Here $V^G=\{v\in V| \forall g\in G:
	gv=v\}$.

	Deduce that $V$ is a successive extension of the trivial
	rank $1$ representation of $G$ by itself.
\end{exercise}

\begin{proposition}\label{prop:RepresentationsOfAbelianGroups}Let $E$ be a field containing all roots of unity of order
	dividing $|G|$. If $G$ is abelian, then every irreducible
	representation of $G$ over $E$ has rank $1$. If $|G|$ is invertible in
	$E$, then the converse
	is also true.
\end{proposition}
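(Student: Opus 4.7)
I will prove the two directions separately.

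For the forward direction, let $V$ be an irreducible representation of the abelian group $G$. The first observation is that, since $G$ is abelian, each operator $\rho(g)\colon V\to V$ commutes with the action of all of $G$, hence is an $E[G]$-module endomorphism of $V$. My goal is to show that each $\rho(g)$ acts on $V$ as a scalar; once this is done, every $E$-subspace of $V$ is automatically a subrepresentation, and irreducibility forces $\dim_E V=1$.

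It suffices to show that $\rho(g)$ is diagonalizable over $E$: its eigenspaces will then be $G$-subrepresentations (as $\rho(g)$ is $E[G]$-linear), and irreducibility will leave only one. For diagonalizability, let $p:=\Char(E)$ and decompose the finite abelian group as $G=G_p\times G_{p'}$, where $G_p$ is the $p$-Sylow subgroup (trivial if $p=0$ or $p\nmid|G|$) and $G_{p'}$ has order $m$ coprime to $p$. By Exercise~\ref{ex:repsInPosChar} the subspace $V^{G_p}$ is nonzero, and, $G$ being abelian, it is $G$-stable, hence equal to $V$; so $G_p$ acts trivially. For $g\in G_{p'}$ of order dividing $m$, the minimal polynomial of $\rho(g)$ divides $X^m-1$, which has $m$ distinct roots in $E$ by the hypothesis that $E$ contains all $|G|$-th roots of unity, so $\rho(g)$ is diagonalizable. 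The main obstacle is precisely this wild part: the root-of-unity hypothesis is vacuous on $G_p$ in characteristic $p$, so one must invoke the fixed-point property of $p$-groups in characteristic $p$ to get rid of it.

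For the converse, assume $|G|$ is invertible in $E$ and every irreducible $E$-representation of $G$ has rank $1$. By Maschke's Theorem~\ref{thm:maschke}, the regular representation decomposes as $E[G]=\bigoplus_i L_i$ with each $L_i$ irreducible, hence one-dimensional by hypothesis. Every $g\in G$ thus acts on $E[G]$ as a diagonal matrix in a basis adapted to this decomposition. Diagonal matrices commute, and the left regular representation $G\hookrightarrow \End_E(E[G])^\times$ is faithful, so $G$ is abelian.
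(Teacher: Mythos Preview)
Your proof is correct and shares the same core idea as the paper's in both directions: for the forward direction, abelianness makes each $\rho(g)$ an $E[G]$-endomorphism, so its eigenspaces are subrepresentations; for the converse, Maschke plus the rank-$1$ hypothesis simultaneously diagonalises a faithful representation (you take the regular one, the paper takes any).

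The one difference worth noting is in the forward direction. You work to establish that each $\rho(g)$ is \emph{diagonalisable} over $E$, which forces you to split off the $p$-Sylow $G_p$ and invoke Exercise~\ref{ex:repsInPosChar} to kill it. The paper's argument is shorter: it only needs a \emph{single} eigenvalue of $\rho(g)$ in $E$. Since $\rho(g)^{|G|}=\id$, the minimal polynomial of $\rho(g)$ divides $X^{|G|}-1$, and this polynomial splits completely in $E$ by hypothesis---even when $p=\Char(E)$ divides $|G|$, because then $X^{|G|}-1=(X^m-1)^{p^a}$ with $m$ the prime-to-$p$ part, and the $m$-th roots of unity are exactly the roots of unity of order dividing $|G|$. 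So one eigenspace exists over $E$, is a nonzero subrepresentation, and hence equals $V$. Your decomposition $G=G_p\times G_{p'}$ is thus not needed, though it does make the characteristic-$p$ subtlety more visible.
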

\begin{proof}
	Let $\rho:G\rightarrow \GL(V)$ be a nonzero irreducible representation
	of $G$. For $g\in G$, the order of $\rho(g)$ divides $|G|$. Let $v\in
	V$ be an eigenvector of $\rho(g)$ and $\lambda$
	its eigenvalue. Then $\lambda$ is a root of unity of order dividing
	$|G|$, thus $\lambda\in E$ by assumption. For any other $h\in G$ we
	have $\rho(gh)(v)=\rho(hg)(v)=\lambda \rho(h)(v)$. Thus the
	$\lambda$-eigenspace of $\rho(g)$ is a nonzero subrepresentation of $\rho$,
	hence equal to $\rho$ as $\rho$ is irreducible.
	This shows that every element $g\in G$ acts via scalar multiplication
	on $V$. As $\rho$ is irreducible, $\dim V=1$.

	If $|G|$ is invertible in $E$, let
	$\rho$ be a faithful representation of $G$, i.e.~an injective map
	$\rho:G\rightarrow \GL(V)$ (such a representation always exists).  If
	the irreducible representations of $G$ have rank $1$,  Maschke's
	Theorem shows that the matrices $\{\rho(g)|g\in G\}$ are
	simultaneously diagonizable. It follows that $G$ is abelian.
\end{proof}

It is convenient to introduce the following notation.
\begin{definition}\label{def:GrothendieckRing}
	If $G$ is a finite group and $E$ a field, we write $R_E(G)$ for the
	\emph{Grothendieck ring}\index{Grothendieck ring} of
	the category $\Repf_E G$. Recall that $R_E(G)$ is the abelian
	group generated by isomorphism classes $[V]$ of finite dimensional 
	$E$-representations of $G$, subject to  the relations $[V]=[V_1]+[V_2]$ if
	there exists a short exact sequence
	\[0\rightarrow V_1\rightarrow V\rightarrow V_2\rightarrow 0.\]
	The tensor product over $E$ makes $R_E(G)$ into a commutative ring (\emph{Exercise}: Check this!). It is easy to
	see that $R_E(G)$ is the free $\Z$-module generated by the
	isomorphism classes of
	irreducible representations of $G$. We will see in Proposition
	\ref{prop:compositionSeries} that there are only finitely many of
	those.

	There is a natural map  of monoids
	\begin{equation}\label{eqn:isoclassMap}\left(\left\{\text{isomorphism
		classes of objects in }\Repf_E
	G\right\},\oplus\right)\rightarrow R_E(G).\end{equation} Its image is the
	subset of all linear combinations $\sum_{V_i\text {irreducible}}a_i [V_i]$, $a_i\geq 0$, and is denoted by $R_E^+(G)$.

	Maschke's Theorem \ref{thm:maschke} can be phrased as follows: If the
	order $G$
	is prime to the characteristic of $E$, then the map
	\eqref{eqn:isoclassMap} is injective.
\end{definition}

Next, we see that there are only finitely many irreducible representations of a
finite group.

\begin{proposition}\label{prop:compositionSeries}
	If $G$ is a finite group and $E$ a field, then any irreducible finite dimensional
	representation of $G$ on $E$ is a composition factor of the regular
	representation $E[G]$. In particular, up to isomorphism there are only
	finitely many irreducible representations of $G$ on $E$.
\end{proposition}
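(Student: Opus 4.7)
The plan is to exhibit any irreducible representation $V$ as a quotient of the regular representation $E[G]$ and then appeal to the existence of a Jordan-Hölder composition series for $E[G]$.

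First I would pick a nonzero vector $v\in V$ and consider the $E$-linear map $\phi_v\colon E[G]\to V$ sending $g\mapsto gv$ and extended $E$-linearly. By construction $\phi_v$ is $E[G]$-linear (it respects the action of basis elements of $E[G]$, and $E$-linearity extends this to all of $E[G]$). Its image is the $E[G]$-submodule of $V$ generated by $v$; since $V$ is irreducible and this submodule is nonzero, the image must be all of $V$. Thus $\phi_v$ is surjective and we obtain a short exact sequence
\[0\to \ker(\phi_v)\to E[G]\to V\to 0\]
of $E[G]$-modules, exhibiting $V$ as a quotient of $E[G]$.

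Next, since $G$ is finite, $E[G]$ is a finite-dimensional $E$-vector space, so in particular any chain of $E[G]$-submodules has bounded length. Hence $E[G]$ admits a (finite) composition series as an $E[G]$-module. Refine the two-step filtration $0\subset\ker(\phi_v)\subset E[G]$ to a full composition series; the top factor of this refinement is the irreducible quotient $E[G]/\ker(\phi_v)\cong V$, so $V$ appears (up to isomorphism) as one of the composition factors of $E[G]$. By Jordan-Hölder the multiset of composition factors of $E[G]$ is independent of the chosen series, so every irreducible $E[G]$-module occurs in this fixed finite list. This gives both conclusions of the proposition simultaneously.

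There is no serious obstacle: the only thing to verify carefully is that the evaluation map $\phi_v$ is $E[G]$-linear (and not merely $E$-linear), which follows immediately from the definition of the left-multiplication action of $E[G]$ on itself. Everything else is the standard Jordan-Hölder argument in the abelian category of finitely generated $E[G]$-modules, which is legitimate here because $E[G]$ is artinian and noetherian as a module over itself.
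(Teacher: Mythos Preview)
Your proof is correct and follows essentially the same approach as the paper: pick a nonzero $v\in V$, observe that $x\mapsto xv$ is an $E[G]$-linear surjection $E[G]\to V$, and conclude. The paper's argument is simply a terser version of yours, omitting the explicit refinement to a composition series and the invocation of Jordan--H\"older, which it leaves implicit in the phrase ``the claim follows.''
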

\begin{proof}
 As $E[G]$ and all its subrepresentations are finite dimensional $E$-vector spaces, $E[G]$ admits a finite composition series and thus, up to isomorphism, $E[G]$ has only finitely many composition factors due to the Jordan-H\"older Theorem (see e.g. \cite[I, \S 4.7, Thm.~6]{Bourbaki/AlgebraI}).

If $V$ is an irreducible $E[G]$-module, pick any $v\in V\setminus
	\{0\}$. The map $x\mapsto xv$, is an $E[G]$-linear morphism
	$E[G]\rightarrow V$, which is surjective as $V$ is irreducible. Thus $V$ is a composition factor of $E[G]$.
\end{proof}

We close this section with a fundamental but easy lemma.
\begin{lemma}[Schur's Lemma]\index{Schur's Lemma}\label{lemma:schur}
	Let $V_1, V_2$ be two finite dimensional \emph{irreducible}
	$E[G]$-modules. If $f:V_1\rightarrow V_2$ is a morphism of
	representations, then precisely one of the following statements is
	true:
	\begin{enumerate}[label=\emph{(\alph*)}]
		\item $f=0$.
		\item $f$ is an isomorphism and if $E$ is algebraically closed, then for any other isomorphism
			$g:V_1\xrightarrow{\cong} V_2$, there exists $a\in E$,
			such that $f=ag$.
	\end{enumerate}
	In particular, if $V$ is an irreducible $E[G]$-module, then $\End_{E[G]}(V)$ is a division algebra, and
	$\End_{E[G]}(V)=E$ if $E$ is algebraically closed.
\end{lemma}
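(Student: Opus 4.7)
The plan is to exploit irreducibility to constrain $\ker f$ and $\image f$, and then use eigenvalues for the second statement.

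First I would consider $f\neq 0$ and observe that $\ker f\subset V_1$ is a sub-$E[G]$-module. Since $V_1$ is irreducible, $\ker f \in \{0, V_1\}$; as $f\neq 0$, we must have $\ker f=0$. The same reasoning applied to $\image f \subset V_2$ shows $\image f=V_2$ (it cannot be $0$ because $f\neq 0$). Hence $f$ is an isomorphism, which gives the dichotomy (a)/(b).

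Next, for the uniqueness up to scalar when $E$ is algebraically closed, let $g:V_1\xrightarrow{\cong}V_2$ be a second isomorphism. Then $\phi:=f\circ g^{-1}\in \End_{E[G]}(V_2)$ is an endomorphism of the finite-dimensional $E$-vector space $V_2$. Since $E$ is algebraically closed, the characteristic polynomial of $\phi$ has a root $a\in E$, so $\phi - a\cdot\id_{V_2}$ has a nonzero kernel. But $\phi - a\cdot\id_{V_2}\in\End_{E[G]}(V_2)$, so the first part of the lemma forces either $\phi-a\cdot\id_{V_2}=0$ or it is an isomorphism; having nontrivial kernel it must be zero. Therefore $\phi = a\cdot\id_{V_2}$, i.e.\ $f=a g$.

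The ``in particular'' statement follows immediately: for any irreducible $V$, every nonzero element of $\End_{E[G]}(V)$ is an isomorphism by part~(a), hence invertible, so $\End_{E[G]}(V)$ is a division algebra over $E$. When $E$ is algebraically closed, the eigenvalue argument above (applied to endomorphisms of $V$ in place of $\phi$) shows every element of $\End_{E[G]}(V)$ is a scalar multiple of $\id_V$, giving $\End_{E[G]}(V)=E$.

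There is no real obstacle here; the only place where something subtle enters is the use of algebraic closedness to guarantee an eigenvalue in $E$ — without that hypothesis one only obtains the division algebra statement, not the identification with $E$.
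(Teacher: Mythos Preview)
Your proof is correct and follows essentially the same approach as the paper's: the kernel/image argument for the dichotomy, then an eigenvalue of $g^{-1}f$ (you use $fg^{-1}$, which is equivalent) to obtain the scalar relation when $E$ is algebraically closed. The paper is terser but the ideas are identical.
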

\begin{proof}
	If $f$ is not an isomorphism, then $f=0$, due to the irreducibility of
	$V_1, V_2$.
	Now assume that $f$ and $g$ are isomorphisms of $E[G]$-modules and
	that $E$ is algebraically closed. Let
	$a$ be an eigenvalue of $g^{-1}f:V_1\rightarrow V_1$.  Then
	$g^{-1}f-a\id_{V_1}$ has a
	nonzero kernel, so $g^{-1}f=a\cdot \id_{V_1}$ as claimed.
\end{proof}
We give an exemplary application:
\begin{proposition}\label{prop:irreducibilityCriterionNeu}
	Let $G$ be a group and $E$ a field such that $|G|$ is invertible in
	$E$. If $V$ is a representation of $G$ such that
	$\End_{E[G]}(V)=E$, then $V$ is irreducible. If $E$ is algebraically
	closed, the converse is true as well.
\end{proposition}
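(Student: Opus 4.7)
The plan is to prove the two implications separately; both are short and rely on results already at hand.

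For the forward direction, assume $\End_{E[G]}(V)=E$ and argue by contraposition. Suppose $V$ has a nonzero proper subrepresentation $W\subsetneq V$. Since $|G|$ is invertible in $E$, Maschke's Theorem \ref{thm:maschke} produces an $E[G]$-linear splitting $V=W\oplus W'$. The associated projector $p\colon V\twoheadrightarrow W\hookrightarrow V$ is then an element of $\End_{E[G]}(V)$ with $p^{2}=p$, $p\neq 0$, and $p\neq \id_{V}$. A nonzero scalar endomorphism $\lambda\cdot\id_{V}$ is an isomorphism, and $0\cdot\id_{V}=0$, so $p$ cannot lie in $E\cdot\id_{V}$. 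This contradicts $\End_{E[G]}(V)=E$, so $V$ must be irreducible.

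For the converse, assume $E$ is algebraically closed and $V$ is irreducible. Then $\End_{E[G]}(V)=E$ is exactly the second assertion of Schur's Lemma (Lemma \ref{lemma:schur}): any nonzero $f\in \End_{E[G]}(V)$ is an isomorphism (as $\ker f$ and $\image f$ are subrepresentations), and comparing two such isomorphisms via an eigenvalue argument shows any two differ by a scalar; applied to $f$ and $\id_{V}$, this gives $f\in E\cdot\id_{V}$.

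The argument is essentially routine; the only point worth flagging is that Maschke's theorem is exactly what powers the nontrivial direction, so the hypothesis that $|G|$ is invertible in $E$ cannot be dropped. In the converse direction the algebraic closedness of $E$ is likewise essential, since in general Schur's lemma only yields that $\End_{E[G]}(V)$ is a division algebra over $E$, which may be strictly larger than $E$ (for example, the standard $2$-dimensional real representation of a cyclic group of order $4$ is irreducible with endomorphism algebra $\mathbb{C}\supsetneq \mathbb{R}$).
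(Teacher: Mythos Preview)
Your proof is correct and follows essentially the same strategy as the paper: use Maschke's Theorem to split off a subrepresentation and then observe that the endomorphism ring is too large, and invoke Schur's Lemma for the converse. The only cosmetic difference is that the paper counts dimensions (writing $V=V_1\oplus V_2$ with $V_1$ irreducible and noting $\dim_E\End_{E[G]}(V)\geq \dim_E\End_{E[G]}(V_1)+\dim_E\End_{E[G]}(V_2)\geq 2$), whereas you exhibit an explicit non-scalar idempotent; both arguments carry the same content.
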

\begin{proof}
	Since $|G|$ is invertible $V$ can be written as $V=V_1\oplus V_2$ with
	$V_1$ irreducible. We compute
	\[\dim_E \End_{E[G]}(V)\geq \dim_{E}
	\End_{E[G]}(V_1)+\dim_E\End_{E[G]}(V_2)\geq 2\]
	if $V_2\neq 0$. It follows that $V=V_1$ if $\End_{E[G]}(V)=E$.
	The converse is just Schur's Lemma.
\end{proof}
\subsubsection{Extension of scalars}
If $E\subset F$ is an extension of fields, then $-\otimes_E F$ gives rise to a
functor $\Repf_E(G)\rightarrow \Repf_F(G)$. In this section, we analyze this
functor.

\begin{lemma}\label{lemma:homAndBaseChange}
	Let $E\subset F$ be an extension of fields and $G$ a finite group. If $V$ is
	an $E[G]$-module, then $V\otimes_E F=F[G]\otimes_{E[G]} V$ is an
	$F[G]$-module. The natural map \[V^G\otimes_E F\rightarrow (V\otimes_E
		F)^G\] is an isomorphism. Consequently, if $W$ is a second
	$E[G]$-module, the natural map \[\hom_{E[G]}(V,W)\otimes_E F\cong
	\hom_{F[G]}(V\otimes_E F, W\otimes_E F)\]
	is an isomorphism.
\end{lemma}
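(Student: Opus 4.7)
The plan is to prove the three assertions in order, each reducing to a standard fact about flat base change combined with the observation that taking $G$-invariants (for $G$ finite) is a finite limit.

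First I would verify the identification $V \otimes_E F = F[G] \otimes_{E[G]} V$: from the natural ring isomorphism $F[G] \cong F \otimes_E E[G]$ one computes
\[F[G] \otimes_{E[G]} V \;\cong\; (F \otimes_E E[G]) \otimes_{E[G]} V \;\cong\; F \otimes_E V,\]
and the induced $F[G]$-action is $g \cdot (f \otimes v) = f \otimes gv$, as expected.

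For the isomorphism on invariants, the key observation is that $V^G$ fits into an exact sequence of $E$-vector spaces
\[0 \longrightarrow V^G \longrightarrow V \xrightarrow{\;(g - 1)_{g \in G}\;} \bigoplus_{g \in G} V,\]
i.e.\ $V^G$ is the kernel of a map into a \emph{finite} direct sum. Since $F$ is free, hence flat, over $E$, tensoring with $F$ preserves this left-exact sequence; and since the direct sum is finite it commutes with $-\otimes_E F$. The resulting kernel is, by the same description applied to the $F[G]$-module $V \otimes_E F$, precisely $(V \otimes_E F)^G$, and one checks that the induced map from $V^G \otimes_E F$ is the natural one.

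For the final Hom statement, I would first invoke Example~\ref{ex:representationsFirstExamples}\ref{item:internalHom}: the internal Hom $\IntHom_E(V,W) = \Hom_E(V,W)$ carries a canonical $E[G]$-module structure with $(\IntHom_E(V,W))^G = \Hom_{E[G]}(V,W)$, and similarly over $F$. Since $V$ is finite dimensional over $E$ (as a finitely generated $E[G]$-module with $G$ finite), the standard untwisted base-change isomorphism $\Hom_E(V,W) \otimes_E F \xrightarrow{\cong} \Hom_F(V \otimes_E F, W \otimes_E F)$ holds and is easily seen to be $G$-equivariant. Applying the invariants statement already proved to the $E[G]$-module $\Hom_E(V,W)$ then yields
\[\Hom_{E[G]}(V,W) \otimes_E F = \Hom_E(V,W)^G \otimes_E F \cong \bigl(\Hom_E(V,W) \otimes_E F\bigr)^G \cong \Hom_{F[G]}(V \otimes_E F, W \otimes_E F).\]
No step is truly hard here; the only point requiring mild care is bookkeeping the $G$-equivariance of the base-change isomorphism for Hom, which is straightforward because the $G$-action on $\Hom_E(V,W)$ is defined by conjugation on morphisms and base change of a morphism commutes with pre- and post-composition by $E$-linear maps.
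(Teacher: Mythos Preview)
Your proposal is correct and follows essentially the same argument as the paper: the kernel description $V^G=\ker\bigl(V\to\bigoplus_{g\in G}V,\ v\mapsto(gv-v)_g\bigr)$ combined with flatness of $F/E$, then the reduction of the $\Hom$-statement to the invariants statement via the internal Hom and the $G$-equivariance of the untwisted base-change isomorphism $\Hom_E(V,W)\otimes_E F\cong\Hom_F(V\otimes_E F,W\otimes_E F)$. Your write-up is in fact slightly more careful than the paper's in that you explicitly invoke finite-dimensionality of $V$ for the untwisted Hom base change (the paper just says ``since $F$ is flat over $E$'', which by itself is not enough without finite presentation of $V$).
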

\begin{proof}
	Since $F$ is flat over $E$, the natural map 
	of $F$-vector spaces
	\[\hom_E(V,W)\otimes_E F\rightarrow \hom_F(V\otimes_E F, W\otimes_E
		F),\]
	is an isomorphism.
	It is easily seen to be $G$-equivariant, so
	$\IntHom(V,W)\otimes_EF\cong \IntHom(V\otimes_E F,W\otimes_E F)$ as
	$F[G]$-modules, and
	the second statement of the lemma follows from the first as
	$\Hom_{E[G]}(V,W)=\IntHom(V,W)^{G}$.

	For the first statement, consider the $E$-linear map
	$\phi:V\rightarrow \bigoplus_{g\in G} V$, $v\mapsto (gv-v)_{g\in G}$. Its kernel
	is the $E$-vector space $V^G$. Tensoring with $F$ gives the map
	\[\phi\otimes \id_F:V\otimes_E F\rightarrow (V\otimes_E F)^{|G|},\quad
		v\otimes \lambda\mapsto (g(v\otimes\lambda)-v\otimes
		\lambda)_{g\in G}\]
		Its kernel is
		$(V\otimes_E F)^G$, but it is also $V^G\otimes_E F$, as $F$ is
		flat over $E$.
\end{proof}
\begin{proposition}\label{prop:injectivityOnK0}
	Let $E\subset F$ be an arbitrary field and $G$ a finite group. The
	ring homomorphism $\phi:R_E(G)\rightarrow R_F(G)$ induced by $-\otimes_E F$
	is injective.
\end{proposition}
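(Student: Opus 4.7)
My plan is to show that the composition multiplicity $m_S(V)$ of each simple $E[G]$-module $S$ as a composition factor of an object $V \in \Repf_E G$ can be recovered from the class $[V \otimes_E F] \in R_F(G)$. By the Jordan--H\"older theorem, $R_E(G)$ is the free abelian group on isomorphism classes of simple $E[G]$-modules, with $[V] = \sum_S m_S(V)\,[S]$, so this determination would give $[V] = [W]$ in $R_E(G)$ whenever $[V \otimes_E F] = [W \otimes_E F]$ in $R_F(G)$, i.e., the desired injectivity of $\phi$.

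The tool I will use to extract multiplicities is the projective cover. Since $E[G]$ is a finite-dimensional, hence Artinian, $E$-algebra, every simple $E[G]$-module $S$ admits a projective cover $P_S$. Applying the exact functor $\Hom_{E[G]}(P_S,-)$ to any composition series of $V$, and using that $\Hom_{E[G]}(P_S,S') = \Hom_{E[G]}(S,S')$ is nonzero only when $S' \cong S$ (in which case it equals $\End_{E[G]}(S)$), I obtain the standard identity
\[
\dim_E \Hom_{E[G]}(P_S, V) = m_S(V) \cdot \dim_E \End_{E[G]}(S).
\]

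Lemma \ref{lemma:homAndBaseChange} turns the left-hand side into $\dim_F \Hom_{F[G]}(P_S \otimes_E F,\, V \otimes_E F)$. The module $P_S \otimes_E F$ is a projective $F[G]$-module: writing $P_S$ as a direct summand of $E[G]^n$, after tensoring it becomes a summand of $F[G]^n$. Hence it decomposes as a finite direct sum $P_S \otimes_E F \cong \bigoplus_j Q_j^{a_{S,j}}$, where each $Q_j$ is the projective cover of some simple $F[G]$-module $T_j$. Applying the same multiplicity formula now over $F[G]$ gives
\[
\dim_F \Hom_{F[G]}(P_S \otimes_E F,\, V \otimes_E F) = \sum_j a_{S,j} \cdot m_{T_j}(V \otimes_E F) \cdot \dim_F \End_{F[G]}(T_j).
\]
The right-hand side depends only on the class $[V \otimes_E F] \in R_F(G)$, so the same is true of $m_S(V)$, and the proof is complete.

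The main conceptual obstacle is the absence of semisimplicity when $\Char(E) \mid |G|$: in the semisimple case one could simply use $\dim_E \Hom_{E[G]}(S,V)$ to read off the $S$-multiplicity, but in general this formula counts only the $S$-isotypic piece of the socle and not the full composition multiplicity. The projective-cover trick bypasses this by converting composition multiplicities into Hom-dimensions via the exactness of $\Hom_{E[G]}(P_S,-)$, and Lemma \ref{lemma:homAndBaseChange} then transfers those dimensions faithfully across the base change $E \subset F$; no semisimplicity or finiteness hypothesis on $F/E$ is ever needed.
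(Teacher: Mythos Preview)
Your proof is correct and takes a genuinely different route from the paper's. The paper argues by cases on $\Char(E)$: in characteristic $0$ it uses Maschke's theorem together with Lemma~\ref{lemma:homAndBaseChange} to show that nonisomorphic simples $V_1,V_2$ over $E$ tensor up to $F[G]$-modules with no common irreducible summand; in positive characteristic it replaces this by a hands-on argument constructing an element $e\in E[G]$ (via the semisimple quotient $E[G]/\rad(E[G])$) that acts as $\id$ on $V_1$ and as $0$ on $V_2$, whence $V_1\otimes_E F$ and $V_2\otimes_E F$ have disjoint sets of composition factors. Your approach is more uniform: by passing to projective covers you convert composition multiplicities into $\Hom$-dimensions, which Lemma~\ref{lemma:homAndBaseChange} transports directly across the base change without any case distinction. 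The price is that you invoke the existence and basic properties of projective covers over the Artinian ring $E[G]$, material the paper only develops later in Proposition~\ref{prop:projectiveEnvelope}; but this is standard and logically independent of Proposition~\ref{prop:injectivityOnK0}, so there is no circularity. One small point worth making explicit: you only argue that $[V\otimes_E F]=[W\otimes_E F]$ for actual representations $V,W$ forces $[V]=[W]$, whereas injectivity of $\phi$ is a statement about all of $R_E(G)$; the missing sentence is that any $c\in\ker\phi$ can be written $c=[V]-[W]$, reducing to your case.
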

\begin{proof}
Recall that $R_E(G)$ (resp.~$R_F(G)$) is the free abelian group generated by
the classes of irreducible $E[G]$-modules (resp.~classes of irreducible
$F[G]$-modules). If $c\in R_E(G)$ we can write $c=c_+-c_-$ with $c_+,c_-\in
R^+_E(G)$, where $R^+_E(G)$ is the submonoid of $\Z_{\geq 0}$-linear combinations of classes of
irreducible $E[G]$-modules. We have $\phi(c)=0$ if and only
if $\phi(c_+)=\phi(c_-)$. Thus it suffices to show that the map
$R^+_E(G)\rightarrow R^+_F(G)$ is injective. 

Let $V_1,V_2$ be irreducible finitely generated $E[G]$-modules.
If $\Char(E)=0$, then by Maschke's Theorem \ref{thm:maschke} the
$F[G]$-modules $V_1\otimes_E F$ and $V_2\otimes_E
F$ are semi-simple, i.e.~direct sums of irreducible $F[G]$-modules.

If $V_1\not\cong V_2$, then the decompositions over $F$ have no common factors.
Indeed, otherwise Lemma \ref{lemma:homAndBaseChange} would imply that \[0=\dim_E\Hom_{E[G]}(V_1,V_2)=\dim_F\Hom_{F[G]}(V_1\otimes_E
F,V_2\otimes_E F)>0.\]

This shows that $R^+_E(G)\rightarrow R^+_F(G)$ is injective, if $\Char(E)=0$.

If $\Char(E)>0$, we have to generalize the argument a little
(\cite[(7.13)]{Lam}): If $V_1\not\cong V_2$ are irreducible, then
$V_1\otimes_E F$ and $V_2\otimes_E F$ are perhaps not semi-simple, but we show
that their composition series have no common factor. To this end, we show that
there is an element $e\in E[G]$ acting as the identity on $V_1$ and as $0$ on
$V_2$. Then $e\otimes 1$ acts as identity on all composition factors of $V_1\otimes_E F$ and as $0$ on all composition factors of $V_2\otimes_E F$, which would complete the proof of the proposition.

Every simple $E[G]$-module is of the shape $E[G]/M$, where $M\subset E[G]$ is a
maximal left-ideal. Indeed, if $V$ is a simple $E[G]$-module, pick $v\in
V\setminus \{0\}$. The map $\phi_v:E[G]\rightarrow V$, $x\mapsto xv$ is
surjective and its kernel is maximal, as $V$ is simple. Let $\rad(E[G])$ denote
the intersection of all maximal left-ideals. If $x\in \rad(E[G])$, and if $V$
is a simple $E[G]$-module, then for any $v\in V$, $x\in \ker(\phi_v)$, so $x$
acts trivially on every simple $E[G]$-module.  Conversely, if $x\in E[G]$ acts
trivially on every simple $E[G]$-module, then $x\in \rad(E[G])$, as for every
maximal left-ideal $M$, $x$ must map to $0$ in the simple module $E[G]/M$.

Thus $\rad(E[G])$ is the set of elements acting trivially on every simple $E[G]$-module. This shows that $\rad(E[G])$ is a two-sided ideal, so that $E[G]/\rad(E[G])$ is a ring.
As a left-$E[G]$-module, $E[G]/\rad(E[G])$ is semi-simple; in fact it is the direct sum of all simple $E[G]$-modules, with multiplicities (see Lemma \ref{lemma:maxSSQuotient}).
If, say,
\[E[G]/\rad(E[G])=V_1\oplus \ldots \oplus V_n\]
with $V_1,\ldots, V_n$ simple $E[G]$-modules (hence left-ideals of
$E[G]/\rad(E[G])$), then there are orthogonal idempotents $e_1,\ldots, e_n\in E[G]/\rad(E[G])$,
such that $1=e_1+\ldots+e_n$, and such that for every $j=1,\ldots, n$, $e_j$
acts as the identity on $V_j$, but as $0$ on all $V_i$ with $i\neq j$. If
$f_j\in E[G]$ is a lift of $e_j$, then $f_j$ acts as identity on $V_j$ and as $0$ on $V_i$, $i\neq
j$. The proof is complete.
%
\end{proof}

\begin{definition}
	Let $G$ be a finite group and $E$ a field. We say that a
	representation $V$ of $G$ on $E$ is \emph{absolutely
	irreducible}\index{absolutely irreducible} if $V\otimes_E F$ is
	irreducible for all fields $F\supset E$.
\end{definition}
If $|G|$ is invertible in $E$, then we can easily give a criterion for
absolute irreducibility:
\begin{proposition}\label{prop:absolutelyIrreducible}
	Let $G$ be a group and $E$ a field such that $|G|$ is invertible in
	$E$. The following statements for a representation $V$ of $G$ on $E$ are equivalent: 
	\begin{enumerate}
		\item\label{absIr1} $\End_{E[G]}(V)=E$
		\item\label{absIr2} $V$ is absolutely irreducible.
		\item\label{absIr3} There exists an algebraically closed field $F\supset E$
			such that $V\otimes_E F$ is irreducible.
	\end{enumerate}
\end{proposition}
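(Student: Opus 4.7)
The plan is to prove the implications $(1) \Rightarrow (2) \Rightarrow (3) \Rightarrow (1)$, leveraging Lemma \ref{lemma:homAndBaseChange} and Proposition \ref{prop:irreducibilityCriterionNeu}. None of the steps should present serious difficulty; the content is essentially that the endomorphism algebra commutes with base change, combined with the criterion $\End_{E[G]}(V) = E \Leftrightarrow V$ irreducible (available whenever $|G|$ is invertible and $E$ is algebraically closed).

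For $(1) \Rightarrow (2)$: assume $\End_{E[G]}(V) = E$ and let $F \supset E$ be an arbitrary field extension. Lemma \ref{lemma:homAndBaseChange} yields a natural isomorphism
\[
\End_{F[G]}(V \otimes_E F) \cong \End_{E[G]}(V) \otimes_E F \cong E \otimes_E F \cong F.
\]
Since $|G|$ is invertible in $E$ and hence in $F$, Proposition \ref{prop:irreducibilityCriterionNeu} applies and shows that $V \otimes_E F$ is irreducible. Thus $V$ is absolutely irreducible.

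For $(2) \Rightarrow (3)$: this is immediate, since an algebraic closure $F$ of $E$ exists and satisfies $F \supset E$.

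For $(3) \Rightarrow (1)$: suppose $F \supset E$ is algebraically closed and $V \otimes_E F$ is irreducible. By Proposition \ref{prop:irreducibilityCriterionNeu} (applied over the algebraically closed field $F$, where the converse direction is available), $\End_{F[G]}(V \otimes_E F) = F$. Using Lemma \ref{lemma:homAndBaseChange} again,
\[
\End_{E[G]}(V) \otimes_E F \cong \End_{F[G]}(V \otimes_E F) = F,
\]
so $\dim_E \End_{E[G]}(V) = 1$. Since the structural map $E \to \End_{E[G]}(V)$ is injective (it sends $1$ to $\id_V \neq 0$), this forces $\End_{E[G]}(V) = E$. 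The main (and really only) conceptual point is recognizing that all three conditions reduce, via the base-change compatibility of $\End$, to a statement about dimensions of endomorphism algebras, so there is no serious obstacle.
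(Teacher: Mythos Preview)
Your proof is correct and follows essentially the same approach as the paper: both prove the cycle $(1)\Rightarrow(2)\Rightarrow(3)\Rightarrow(1)$ by combining Lemma~\ref{lemma:homAndBaseChange} (base change of $\End$) with Proposition~\ref{prop:irreducibilityCriterionNeu}. Your version is slightly more explicit in the final step, spelling out why $\dim_E \End_{E[G]}(V)=1$ forces $\End_{E[G]}(V)=E$, but otherwise the arguments are identical.
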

\begin{proof}
	Assume \ref{absIr1} and let $F$ be an extension of $E$. Then
	$\End_{F[G]}(V\otimes_E F)=F$ by Lemma \ref{lemma:homAndBaseChange},
	so $V\otimes_E F$ is irreducible according to Proposition
	\ref{prop:irreducibilityCriterionNeu}, so $V$ is absolutely
	irreducible. 

	\ref{absIr2} trivially implies \ref{absIr3}.
	Assume \ref{absIr3} holds. Then $\dim_F \End_{F[G]}(V\otimes_E F)=1$
	according to Proposition \ref{prop:irreducibilityCriterionNeu}, so again we see
	that $\End_{E[G]}(V)=E$. This completes the proof.
\end{proof}
\begin{remark}
	An analogous criterion is true even if $|G|$ is not invertible in $E$, but
	the proof is more complicated, see e.g.~\cite[Thm.~7.5]{Lam}.
\end{remark}
\begin{definition}
	We say that $E$ is a \emph{splitting field}\index{splitting field} for
	$G$, if every irreducible representation of $G$ over $E$ is absolutely
	irreducible.
\end{definition}
\begin{proposition}\label{cor:repsAlgClosedFields}
	If $G$ is a finite group and $E$ a splitting field, then any field
	$F\supset E$ is a splitting field. In this case the map
	$R_E(G)\rightarrow R_F(G)$ is an isomorphism.
\end{proposition}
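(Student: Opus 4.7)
The plan is to use the Artin--Wedderburn decomposition of $E[G]$. For this to work smoothly I would assume that $|G|$ is invertible in $E$, so that Maschke's theorem applies (this is the setting of interest for the later $\ell$-adic applications and is implicit in the preceding propositions); the statement in general is subtler and would require the density/Jacobson radical machinery.

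First I would exploit the splitting hypothesis at the level of group algebras. By Maschke (Theorem \ref{thm:maschke}), $E[G]$ is semisimple. Let $V_1,\ldots,V_r$ be representatives of the finitely many isomorphism classes of irreducible $E[G]$-modules (Proposition \ref{prop:compositionSeries}), and set $D_i:=\End_{E[G]}(V_i)^{\op}$. The Artin--Wedderburn theorem gives
\[
 E[G]\;\cong\;\prod_{i=1}^{r}M_{n_i}(D_i),\qquad n_i=\dim_{D_i}V_i.
\]
Because $E$ is a splitting field, each $V_i$ is absolutely irreducible, so by Proposition \ref{prop:absolutelyIrreducible} one has $\End_{E[G]}(V_i)=E$, i.e.\ $D_i=E$. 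Thus $E[G]\cong\prod_{i=1}^{r}M_{n_i}(E)$.

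Next I would base change to $F$. Since $F$ is flat over $E$, applying $-\otimes_{E}F$ yields
\[
 F[G]\;=\;E[G]\otimes_{E}F\;\cong\;\prod_{i=1}^{r}M_{n_i}(F),
\]
so $F[G]$ is again semisimple with exactly $r$ isomorphism classes of simple modules $W_1,\ldots,W_r$, where $W_i=F^{n_i}$ on which $F[G]$ acts through the $i$-th factor. Each satisfies $\End_{F[G]}(W_i)=F$, so by Proposition \ref{prop:absolutelyIrreducible} again, $W_i$ is absolutely irreducible. Hence $F$ is a splitting field for $G$, proving the first assertion.

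For the final claim, I would identify the images of the $[V_i]$ under the scalar-extension map $R_E(G)\to R_F(G)$. Tracing through the decomposition, $V_i\otimes_{E}F$ is the simple module over the $i$-th factor $M_{n_i}(F)$, i.e.\ $V_i\otimes_{E}F\cong W_i$. Since $\{[V_i]\}_{i=1}^{r}$ and $\{[W_i]\}_{i=1}^{r}$ are $\Z$-bases of $R_E(G)$ and $R_F(G)$ respectively (Definition \ref{def:GrothendieckRing}), the map sends a $\Z$-basis bijectively onto a $\Z$-basis, so it is an isomorphism of abelian groups (compatibility with the ring structure is automatic). Injectivity was already known from Proposition \ref{prop:injectivityOnK0}; the content of the present statement is that the splitting hypothesis forces surjectivity as well. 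The main obstacle, if one wanted to drop the invertibility of $|G|$ in $E$, would be that $\rad(E[G])\otimes_{E}F$ need not equal $\rad(F[G])$ in positive characteristic, so one would have to replace the Wedderburn step by a density-theorem argument showing directly that $\End_{F[G]}(V_i\otimes_{E}F)=F$.
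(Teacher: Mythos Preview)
Your proof is correct under the stated hypothesis that $|G|$ is invertible in $E$, but it takes a different route from the paper. The paper does \emph{not} invoke semisimplicity or Artin--Wedderburn; instead it argues directly via composition series. Since $E$ is a splitting field, every composition factor $C_i/C_{i-1}$ of $E[G]$ is absolutely irreducible, so $(C_i/C_{i-1})\otimes_E F$ is irreducible for every $F\supset E$. Thus a composition series of $E[G]$ tensors up to a composition series of $F[G]$. By Proposition~\ref{prop:compositionSeries} every irreducible $F[G]$-module is a composition factor of $F[G]$, hence of the form $(C_i/C_{i-1})\otimes_E F$; it is therefore absolutely irreducible and lies in the image of $R_E(G)$, settling both claims at once.

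The main thing to note is that you overestimate the difficulty of the general case. The ``density/Jacobson radical machinery'' you anticipate is unnecessary: the composition-series argument uses only flatness of $F$ over $E$, the definition of absolute irreducibility, and Proposition~\ref{prop:compositionSeries}, and goes through verbatim regardless of whether $|G|$ is invertible in $E$. Your Artin--Wedderburn approach gives a clean structural picture in the semisimple case and makes the bijection of bases explicit, but the paper's approach is both more elementary and more general.
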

\begin{proof}
	Assume that $E$ is a splitting field. If $0\subsetneqq C_1\subsetneqq\ldots \subsetneqq
	C_n=E[G]$ is a composition series, then 
	\[0\subsetneqq C_1\otimes_E F\subsetneqq\ldots \subsetneqq
		C_n\otimes_E F=F[G]\]
		is a composition series of $F[G]$, as
		$(C_{i}\otimes_EF)/(C_{i-1}\otimes_E
		F)=(C_{i}/C_{i-1})\otimes_E F$ is (absolutely) irreducible for
		$i=1,\ldots, n$ by assumption. If $W$ is an irreducible
		$F[G]$-module, then  $W\cong
		(C_i/C_{i-1})\otimes_E F$ for some $i$ according to Proposition \ref{prop:compositionSeries}, so $W$ is absolutely
	irreducible and $F$ a splitting field. Moreover, the class of $W$ in $R_F(G)$ lies in the image of the
	map $R_E(G)\hookrightarrow R_F(G)$.
\end{proof}

Proposition \ref{prop:absolutelyIrreducible} translates to:
\begin{corollary}
	Let $E$ be a field and $G$ a group such that $|G|$ is invertible in
	$E$. If $E$ is algebraically closed, then $E$ is a splitting field.
\end{corollary}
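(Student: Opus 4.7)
The plan is to deduce this directly from Schur's Lemma (Lemma \ref{lemma:schur}) and Proposition \ref{prop:absolutelyIrreducible}, since both results have already done the substantive work. Let $V$ be an arbitrary irreducible $E[G]$-module; to show $E$ is a splitting field we must verify that $V$ is absolutely irreducible.

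First, I would invoke Schur's Lemma: since $V$ is irreducible and $E$ is algebraically closed, the endomorphism algebra $\End_{E[G]}(V)$ is a division algebra over $E$ in which every element has an eigenvalue in $E$, so the standard argument (any $\phi \in \End_{E[G]}(V)$ has some eigenvalue $a \in E$, making $\phi - a\cdot\id_V$ a non-invertible endomorphism, hence zero by irreducibility) yields $\End_{E[G]}(V) = E$.

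Next, because $|G|$ is assumed invertible in $E$, Proposition \ref{prop:absolutelyIrreducible} applies and tells us that the condition $\End_{E[G]}(V) = E$ is equivalent to absolute irreducibility of $V$. Combining these two steps, every irreducible $E[G]$-module is absolutely irreducible, so by definition $E$ is a splitting field for $G$.

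There is essentially no obstacle here: the corollary is a formal consequence of the two cited results, and the only thing to be careful about is that the equivalence in Proposition \ref{prop:absolutelyIrreducible} requires the hypothesis that $|G|$ is invertible in $E$, which is precisely one of the assumptions of the corollary.
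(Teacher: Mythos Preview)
Your proof is correct and matches the paper's approach: the corollary is stated there as a direct translation of Proposition \ref{prop:absolutelyIrreducible}. One could shorten your argument slightly by invoking condition (c) of that proposition with $F=E$ (since $E$ is already algebraically closed and $V\otimes_E E=V$ is irreducible), bypassing the explicit appeal to Schur's Lemma, but this is the same idea.
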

\begin{remark}
	The same statement is true if $|G|$ is not necessarily invertible, but
	the proof is a little bit more complicated, see
	\cite[Thm.~8.3]{Lam}.
\end{remark}
\begin{proposition}\label{prop:everythingComesFromBelow}
	Let $G$ be a  finite group and $E$ a field. If $E$ is a splitting field
	(e.g.~algebraically closed), then
	every irreducible representation of $G$ on $E$ comes via base change
	from a finite extension $E_0$ of the prime field.
\end{proposition}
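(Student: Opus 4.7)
The plan is to descend $V$ in two stages. First, I choose an $E$-basis of $V$ and represent the $G$-action by matrices in $\GL(V)$; let $E_1\subset E$ be the subfield generated over the prime field $k$ by the finitely many resulting matrix entries, so that $E_1/k$ is a finitely generated field extension. The $E_1$-span $V_1\subset V$ of the chosen basis is an $E_1[G]$-submodule with $V_1\otimes_{E_1}E\cong V$, and by flatness of $E$ over $E_1$ any proper $E_1[G]$-submodule of $V_1$ would yield a proper $E[G]$-submodule of $V$, so $V_1$ is irreducible. Combining Lemma \ref{lemma:homAndBaseChange} with the absolute irreducibility of $V$ (which follows from the splitting-field hypothesis) gives $\End_{E_1[G]}(V_1)\otimes_{E_1}E=E$, whence $\End_{E_1[G]}(V_1)=E_1$; then Proposition \ref{prop:absolutelyIrreducible} shows that $V_1$ is absolutely irreducible over $E_1$.

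Next, I set $k_1:=E_1\cap\bar{k}$, the algebraic closure of $k$ inside $E_1$. Because $E_1/k$ is finitely generated, $k_1$ is a finite extension of $k$: choosing a transcendence basis $t_1,\ldots,t_d$ of $E_1/k$, the extension $E_1/k(t_1,\ldots,t_d)$ is finite, and $k_1$ is linearly disjoint from $k(t_1,\ldots,t_d)$ over $k$ inside $E_1$, so $[k_1:k]\le[E_1:k(t_1,\ldots,t_d)]<\infty$. It therefore suffices to show that $V_1$ descends from $E_1$ to $k_1$, which yields the proposition with $E_0:=k_1$. To achieve this, I will verify that $k_1$ is itself a splitting field for $G$ and then invoke Proposition \ref{cor:repsAlgClosedFields} applied to $k_1\subset E_1$ to obtain an irreducible $k_1[G]$-module $V_0$ with $V_0\otimes_{k_1}E_1\cong V_1$, and hence $V_0\otimes_{k_1}E\cong V$.

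The main obstacle is verifying that $k_1$ is a splitting field. Let $W$ be any irreducible $k_1[G]$-module. By Schur's Lemma \ref{lemma:schur}, $D:=\End_{k_1[G]}(W)$ is a finite-dimensional division algebra, central over $k_1$; by Lemma \ref{lemma:homAndBaseChange} one has $D\otimes_{k_1}E_1\cong\End_{E_1[G]}(W\otimes_{k_1}E_1)$. Since $E_1$ is a splitting field, every irreducible $E_1[G]$-constituent of $W\otimes_{k_1}E_1$ is absolutely irreducible, and (via the central idempotent separating the Wedderburn block containing $W$) $W\otimes_{k_1}E_1$ is isotypic, so its endomorphism algebra is a matrix algebra $M_r(E_1)$. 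Hence $E_1$ splits the central simple $k_1$-algebra $D$. By the structure theory of central simple algebras, $E_1$ then contains a $k_1$-subalgebra isomorphic to a maximal subfield $L$ of $D$, with $[L:k_1]=\deg D$. But $L/k_1$ is finite and $k_1/k$ is algebraic, so $L$ is algebraic over $k$; thus $L\subset E_1\cap\bar{k}=k_1$, forcing $L=k_1$ and $\deg D=1$. Therefore $D=k_1$, so $W$ is absolutely irreducible and $k_1$ is a splitting field, completing the descent.
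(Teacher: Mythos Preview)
There is a genuine error. The step ``By the structure theory of central simple algebras, $E_1$ then contains a $k_1$-subalgebra isomorphic to a maximal subfield $L$ of $D$'' is not valid: a splitting field of a central simple algebra need not contain a maximal subfield. Worse, your target conclusion---that $k_1$ is a splitting field for $G$---can simply fail. Take $G=Q_8$, the quaternion group. All characters of $Q_8$ are rational, and the unique block of $\Q[Q_8]$ with nontrivial Schur index is the rational quaternion algebra $\mathbb{H}_\Q$. Let $E_1=\Q(C)$ be the function field of the conic $C\colon X^2+Y^2+Z^2=0$, i.e.\ of the Severi--Brauer variety of $\mathbb{H}_\Q$. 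Since $C$ is geometrically integral, $\Q$ is algebraically closed in $E_1$, so $k_1=\Q$. Now $E_1$ splits $\mathbb{H}_\Q$ and is therefore a splitting field for $Q_8$: the two-dimensional irreducible representation is realized over $E_1$ by
\[
i\mapsto\begin{pmatrix}0&1\\-1&0\end{pmatrix},\qquad j\mapsto\begin{pmatrix}x&y\\y&-x\end{pmatrix},\qquad x^2+y^2+1=0.
\]
Running your construction on this representation over $E=\C$ (via any embedding $\Q(C)\hookrightarrow\C$) with this very basis produces exactly $E_1=\Q(C)$ and $k_1=\Q$; but $\Q$ is not a splitting field for $Q_8$, and the two-dimensional representation does \emph{not} descend to $\Q$. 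So your proposed $E_0=k_1$ does not work. (There is also a logically prior gap: you assert that $E_1$ is a splitting field having only checked that $V_1$ is absolutely irreducible. In the example above that assertion happens to be true, so it is not the crux of the failure.)

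The paper avoids all of this by going upward rather than downward: pass to an algebraic closure $\overline{E}$, note that the algebraic closure $\overline{\F}$ of the prime field inside $\overline{E}$ is itself a splitting field, apply Proposition~\ref{cor:repsAlgClosedFields} to descend the irreducible $\overline{E}$-representation $V\otimes_E\overline{E}$ to $\overline{\F}$, and then further to a finite subextension.
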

\begin{proof}
	Write $\F$ for the prime field of $E$.
	Let $\overline{E}$ be an algebraic closure of $E$ and $V$ an
	irreducible representation of $G$ over $E$. If $E$ is a splitting
	field, then $V\otimes\overline{E}$ is also irreducible, so it comes
	via base change from $\overline{\F}\subset \overline{E}$ according to
	Proposition \ref{cor:repsAlgClosedFields}, and hence
	from a finite extension $\F\subset E_V$ contained in $\overline{\F}$. Let $E_0$
	be the compositum of the fields $E_V$ in $\overline{\F}$. As there are
	only finitely many isomorphism classes of irreducible representations
	of $G$ (Proposition \ref{prop:compositionSeries}), $E_0$ is finitely
	generated over $\F$.
\end{proof}
\begin{remark}
	If $\Char(E)=0$, we will see
	in Corollary \ref{cor:everythingComesFromQZeta} that one can always take $E_0=\Q(\zeta_m)$ as a splitting
	field, where $m$ is the exponent of $G$ and $\zeta_m$ an $m$-th root
	of unity. Moreover, the functor $\Repf_{\Q(\zeta_m)}(G)\rightarrow
	\Repf_{\overline{E}}(G)$ is an equivalence because of Maschke's
	Theorem and Lemma \ref{lemma:homAndBaseChange}.

	If $\Char(E)=p>0$, then one can show that the field
	$\Z[\zeta_m]/\mathfrak{p}$ is a splitting field for $G$, where
	$\mathfrak{p}$ is any prime ideal of $\Z[\zeta_m]$ lying over $p$.
\end{remark}

\subsubsection{Characters}We continue to denote by $E$ a field and by $G$ a
finite group. 

\begin{definition}
	Let $G$ be a finite group. A map $\phi:G\rightarrow E$ satisfying
	the condition
	\[\phi(hgh^{-1})=\phi(g)\]
	for all $g,h\in G$, is called \emph{class function}\index{class
	function}.

	If $\rho:G\rightarrow \GL(V)$ is a representation of $G$ on a finite
	dimensional $E$-vector space, then $\chi_\rho:G\rightarrow E$,
	$\chi_\rho(g):=\chi_V(g):=\Tr(\rho(g))$ is a class function; it is called the
	\emph{character of $\rho$}\index{character}.

	If $\rho$ is irreducible, then its character is also called
	\emph{irreducible}\index{irreducible!character}.
\end{definition}

\begin{lemma}
	If $\rho:G\rightarrow \GL(V)$ and $\rho':G\rightarrow \GL(V')$ are
	isomorphic representations over $E$ of the finite
	group $G$, then $\chi_{\rho}=\chi_{\rho'}$.
\end{lemma}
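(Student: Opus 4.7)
The plan is to unpack what it means for $\rho$ and $\rho'$ to be isomorphic and then reduce the equality of characters to the well-known conjugation-invariance of the trace on a finite dimensional vector space.

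First I would pick an isomorphism of representations $\phi : V \xrightarrow{\cong} V'$. By definition (of a morphism in $\Repf_E G$), this is an $E$-linear isomorphism satisfying $\phi \circ \rho(g) = \rho'(g) \circ \phi$ for every $g \in G$. Rewriting, $\rho'(g) = \phi \circ \rho(g) \circ \phi^{-1}$ as endomorphisms of $V'$.

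Then I would apply the trace. The trace of an $E$-linear endomorphism is invariant under conjugation by an $E$-linear isomorphism, since $\Tr(AB) = \Tr(BA)$ for any two endomorphisms $A,B$ of finite dimensional $E$-vector spaces (here applied with $A = \phi \circ \rho(g)$ and $B = \phi^{-1}$). Therefore
\[
\chi_{\rho'}(g) = \Tr(\rho'(g)) = \Tr(\phi \circ \rho(g) \circ \phi^{-1}) = \Tr(\rho(g)) = \chi_\rho(g)
\]
for every $g \in G$, which is the desired equality of class functions. There is no real obstacle: the only ingredient beyond the definitions is the cyclicity of the trace, and everything else is formal.
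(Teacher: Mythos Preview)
Your proof is correct and is essentially identical to the paper's own argument: pick an intertwining isomorphism, conjugate, and invoke the conjugation-invariance of the trace.
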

\begin{proof}
	If $\gamma:V\rightarrow V'$ is a $G$-invariant $E$-isomorphism, then
	$\rho(g)=\gamma\rho'(g)\gamma^{-1}$, and
	$\Tr(\rho'(g))=\Tr(\gamma\rho'(g)\gamma^{-1})$.
\end{proof}

\begin{example}\label{ex:characters}
	\begin{enumerate}[label={(\alph*)}]
		\item Let $V_1$, $V_2$ be two $E[G]$-modules with
			characters $\chi_1, \chi_2$.
			\begin{itemize}
				\item If  $V$ is an $E[G]$-module with character $\chi$, which is an extension of $V_1$ by $V_2$, then 
                                  $\chi=\chi_1+\chi_2$.
				\item The character of $V_1\otimes V_2$ is $\chi_1\cdot
					\chi_2$.
				\item  If $\chi_1^{\vee}$ is the character of
					the dual representation $V_1^{\vee}$, then we see directly from the definition
					that $\chi^{\vee}_1(g)=\chi_1(g^{-1})$ for all $g\in G$.

					If $E$ is a subfield of $\C$, then
					$\chi_1^{\vee}(g)=\overline{\chi_1(g)}$,
					where $\overline{(-)}$ denotes complex
					conjugation. Indeed,
					the eigenvalues of each $g$ acting on
					$V_1$ are roots of unity, and if
					$\xi\in \C$ is a root of unity, then
					$\xi^{-1}=\overline{\xi}$.
       %

				\item The representation $\IntHom(V_1,V_2)$ has
					the character
					$\chi_2\cdot\chi_1^{\vee}$, so if
					$E\subset \C$, then we can write
					$\chi_2\cdot\overline{\chi_1}$.
			\end{itemize}
		\item  Let $E$ be the trivial rank $1$ representation. Its
			character is the constant map $g\mapsto 1\in E$. We denote it by
			$1_G$.
		\item Let $r_G$ denote the character of the regular
			representation $E[G]$ of $G$. Then $r_G(1)=|G|$, and
			$r_G(g)=0$ for $g\neq 1$. Indeed, if $g\neq 1$, then
			$gh\neq h$ for all $h$, so the $|G|\times |G|$-matrix
			of $g$ acting on $E[G]$ (with respect to standard
			basis)  has only zeroes on the diagonal.
		\item From the previous two examples we can compute the
			character of the augmentation representation (Example
			\ref{ex:reps}), which we denote by $u_G$:
			$u_G=r_G-1_G$, so $u_G(1)=|G|-1$ and $u_G(g)=-1$ for
			$g\neq 1$.
	\end{enumerate}
\end{example}

\begin{definition}
	Assume that $|G|$ is invertible in $E$.
	Let $G$ be a finite group and write $\mathbf{C}_{E,G}$ for the 
	$E$-vector space of all class functions
	$G\rightarrow E$.  For arbitrary maps $\phi,\psi:G\rightarrow E$ we define
		\[\left<\phi,\psi\right>_G:=\frac{1}{|G|}\sum_{g\in
		G}\phi(g)\psi(g^{-1}).\]
		This is a symmetric bilinear form on the space of maps
		$G\rightarrow E$, and also on $\mathbf{C}_{E,G}$.
	If confusion is unlikely, we drop the subscript $G$ from
	 $\left<-|-\right>_G$.
\end{definition}

\begin{proposition}[Orthogonality of characters]\label{prop:Orthogonality}
	As before, assume that $|G|$ is invertible in $E$.
	\begin{enumerate}
	\item\label{item:hom} If $V_1, V_2$ are arbitrary representations with characters $\chi_1,\chi_2$, then
		\[\left<\chi_1,\chi_2\right>=\dim_E(\Hom_{E[G]}(V_1,V_2)).\]
		\item Let $\chi_1,\chi_2$ characters of
	irreducible representations $V_1, V_2$ of $G$. Then
	\[\left<\chi_1,\chi_2\right>=\begin{cases}
			a\in \Z_{\geq 1} & \text{if }V_1\cong V_2\\
			0&\text{otherwise}.
		\end{cases}
		\]
	\item If $E$ is algebraically closed then $a=1$.
\end{enumerate}
\end{proposition}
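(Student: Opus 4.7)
The plan is to reduce everything to the basic fact that for any finite-dimensional representation $V$ of $G$ over $E$ (with $|G|$ invertible), one has
\[
\dim_E V^G \;=\; \frac{1}{|G|}\sum_{g\in G}\chi_V(g).
\]
To see this, I would observe that $e := \frac{1}{|G|}\sum_{g\in G} g \in E[G]$ satisfies $he=e$ for all $h\in G$ (reindex the sum), hence $e^2=e$, $eV \subset V^G$, and $ev=v$ for $v\in V^G$. Thus $e$ acts on $V$ as the projector onto $V^G$, so its trace equals $\dim_E V^G$; linearity of the trace then gives the formula. This is the one conceptual input, and the rest is bookkeeping.

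For part (a), I would apply this formula to $V=\IntHom(V_1,V_2)\cong V_1^\vee\otimes V_2$ from Example \ref{ex:reps}\ref{item:internalHom}. Using the character computations of Example \ref{ex:characters} (dual has character $g\mapsto\chi_{V_1}(g^{-1})$, tensor product has the product character), and $\IntHom(V_1,V_2)^G=\Hom_{E[G]}(V_1,V_2)$, we get
\[
\dim_E\Hom_{E[G]}(V_1,V_2)=\frac{1}{|G|}\sum_{g\in G}\chi_{V_1}(g^{-1})\chi_{V_2}(g)=\langle\chi_2,\chi_1\rangle,
\]
and a change of variable $g\mapsto g^{-1}$ shows $\langle-,-\rangle$ is symmetric, giving $\langle\chi_1,\chi_2\rangle$.

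For part (b), apply part (a) together with Schur's Lemma \ref{lemma:schur}: if $V_1\not\cong V_2$ are irreducible then $\Hom_{E[G]}(V_1,V_2)=0$, so the pairing vanishes; if $V_1\cong V_2$, then $\End_{E[G]}(V_1)$ is a (finite-dimensional) division $E$-algebra containing $E\cdot\id$, so $a:=\dim_E\End_{E[G]}(V_1)\geq 1$. For part (c), if $E$ is algebraically closed then the conclusion of Schur's lemma upgrades to $\End_{E[G]}(V_1)=E$, forcing $a=1$.

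The only subtle point, and the one I would double-check carefully, is the evaluation of the character of $V_1^\vee\otimes V_2$ at $g$: one needs that each $\rho_{V_1}(g)$ acts semisimply with eigenvalues that are roots of unity (so that $\chi_{V_1^\vee}(g)=\chi_{V_1}(g^{-1})$ makes sense for any $E$), or equivalently, a direct matrix-trace computation showing that for any invertible $A$ one has $\operatorname{tr}(A^{-1})\operatorname{tr}(B)$ coincides with the trace of $A^{-1}\otimes B$ acting on $V_1^\vee\otimes V_2$; this is a general linear-algebra identity and not really an obstacle. No other step requires work beyond the projector identity and Schur's lemma.
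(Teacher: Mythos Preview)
Your proof is correct and follows essentially the same approach as the paper: both use the averaging idempotent $e=\frac{1}{|G|}\sum_{g\in G}g$ to identify $\dim_E V^G$ with the trace of $e$, apply this to $\IntHom(V_1,V_2)$ using the character formulas from Example~\ref{ex:characters}, and then invoke Schur's Lemma~\ref{lemma:schur} for parts (b) and (c). The paper's write-up is slightly terser (it states upfront that Schur reduces everything to part~\ref{item:hom}), and you are a bit more explicit about the symmetry $\langle\chi_1,\chi_2\rangle=\langle\chi_2,\chi_1\rangle$, but there is no substantive difference.
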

\begin{proof}
	Taking Schur's Lemma \ref{lemma:schur} into account, it follows that we just have to prove
\ref{item:hom}. 
First let $V$ be any representation and write $V=V/V^G\oplus V^G$. The element $\frac{1}{|G|}\sum_{g\in G}g\in E[G]$
induces an $E[G]$-linear map $V\rightarrow V$ whose image is $V^G$, and which is the identity restricted to $V^G$. It follows that
\[\Tr\left(\frac{1}{|G|}\sum_{g\in G}g\right)=\dim_E V^G.\]
We apply this observation to $\IntHom(V_1,V_2)$. 
The character of $\IntHom(V_1,V_2)$ is $\chi_{V_1}^\vee\cdot\chi^{~}_{V_2}$, so 
\begin{align*}
	\dim_E \Hom_{E[G]}(V_1,V_2)&=\dim_E \left(\IntHom(V_1,V_2)\right)^G\\
	&=\frac{1}{|G|}\sum_{g\in G}\chi_{V_1}(g^{-1})\chi_{V_2}(g)\\
	&=\left<\chi_{V_1},\chi_{V_2}\right>.
\end{align*}
\end{proof}

\begin{corollary}\label{cor-irred-rep-in-rep}
	Assume that $|G|$ is invertible in $E$. The isomorphism class of a representation over $E$ is determined by its
	character.

	More precisely, if $V$ is an $E$-representation of $G$, then 
	\[V\cong\bigoplus_{W\text{ irreducible}}W^{\frac{\left<\chi_{V},\chi_{W}\right>}{\left<\chi_{W},\chi_{W}\right>}}\]
	where the direct sum is over the distinct irreducible representations of
	$G$ on $E$.
\end{corollary}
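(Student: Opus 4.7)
The plan is to combine Maschke's theorem with the orthogonality relations established in Proposition \ref{prop:Orthogonality}. Since $|G|$ is invertible in $E$, Maschke's Theorem \ref{thm:maschke} guarantees that any finite dimensional $E[G]$-module $V$ is semi-simple, hence decomposes as
\[V\cong\bigoplus_{W}W^{n_W},\]
where $W$ runs over the (finitely many, by Proposition \ref{prop:compositionSeries}) isomorphism classes of irreducible $E$-representations of $G$, and $n_W\in\Z_{\geq 0}$. The goal is to show that each multiplicity $n_W$ is determined by $\chi_V$ via the claimed formula.

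First I would invoke additivity of the character in short exact sequences (Example \ref{ex:characters}) to obtain
\[\chi_V\;=\;\sum_{W}n_W\,\chi_W\]
as class functions $G\to E$. Next, for any fixed irreducible $W_0$, I would pair both sides with $\chi_{W_0}$ using the bilinearity of $\langle-,-\rangle$, and apply part (b) of Proposition \ref{prop:Orthogonality}: since $\langle\chi_W,\chi_{W_0}\rangle=0$ for $W\not\cong W_0$, all terms on the right-hand side vanish except one, giving
\[\langle\chi_V,\chi_{W_0}\rangle\;=\;n_{W_0}\,\langle\chi_{W_0},\chi_{W_0}\rangle\quad\text{in }E.\]

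Finally, part (a) of Proposition \ref{prop:Orthogonality} identifies $\langle\chi_{W_0},\chi_{W_0}\rangle$ with $\dim_E\End_{E[G]}(W_0)$, which is a positive integer. Dividing by this integer (the fraction in the statement is to be read as the integer $n_{W_0}$, whose image in $E$ is obtained by this division once the denominator is invertible in $E$; in general the displayed identity just rearranges the equation above) yields the advertised formula for $n_{W_0}$. Since the $n_W$ are thereby uniquely pinned down by $\chi_V$ alone, the isomorphism class of $V$ depends only on its character.

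I do not anticipate any serious obstacle here: semisimplicity, additivity of the character, and the orthogonality of irreducible characters have all been established just above, and the argument is the standard formal combination of these three inputs. The only subtlety worth flagging is the interpretation of the quotient in the formula when $\langle\chi_{W_0},\chi_{W_0}\rangle$ is not a priori invertible in $E$, which is harmless because the left-hand side of the numerator equation is automatically a $\Z$-multiple of the denominator.
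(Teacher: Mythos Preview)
Your proof is correct and is exactly the argument the paper intends: the corollary is stated without proof, as an immediate consequence of Maschke's Theorem \ref{thm:maschke} and the orthogonality relations in Proposition \ref{prop:Orthogonality}, and your write-up spells out precisely this combination. Your remark on the divisibility subtlety is also apt and handled correctly.
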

Proposition \ref{prop:absolutelyIrreducible} translates to:
\begin{corollary}\label{cor:irreducibilityCriterion}
	As before, assume that $|G|$ is invertible in $E$.
	If $V$ is a representation of $G$ over $E$, then $V$ is absolutely
	irreducible if and only if 
	$\left<\chi_V,\chi_V\right>=1$.       %
       %

\end{corollary}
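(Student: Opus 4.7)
The plan is to combine two results already in the text: Proposition \ref{prop:absolutelyIrreducible}, which characterizes absolute irreducibility as $\End_{E[G]}(V)=E$, and Proposition \ref{prop:Orthogonality}\ref{item:hom}, which expresses $\langle \chi_1,\chi_2\rangle$ as the dimension of the Hom-space. There is essentially nothing else to do.

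First I would apply Proposition \ref{prop:Orthogonality}\ref{item:hom} with $V_1=V_2=V$ to get the identity
\[
\langle \chi_V,\chi_V\rangle = \dim_E \Hom_{E[G]}(V,V) = \dim_E \End_{E[G]}(V).
\]
Since $\End_{E[G]}(V)$ always contains the scalars $E\cdot \id_V$, we have $\dim_E \End_{E[G]}(V)\geq 1$, with equality precisely when $\End_{E[G]}(V)=E$. Hence $\langle \chi_V,\chi_V\rangle=1$ if and only if $\End_{E[G]}(V)=E$.

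Next, I invoke Proposition \ref{prop:absolutelyIrreducible} (which is valid since $|G|$ is invertible in $E$): the equivalence of conditions (a) and (b) there states that $\End_{E[G]}(V)=E$ if and only if $V$ is absolutely irreducible. Combining the two equivalences yields the desired statement.

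There is no real obstacle here, as both ingredients have been established. The only point to be careful about is not to confuse ordinary irreducibility with absolute irreducibility: over a non-algebraically-closed field one can have $V$ irreducible while $\End_{E[G]}(V)$ is a nontrivial division algebra over $E$ (so $\langle \chi_V,\chi_V\rangle>1$), which is exactly what Proposition \ref{prop:absolutelyIrreducible} rules out under the equivalent condition $\End_{E[G]}(V)=E$.
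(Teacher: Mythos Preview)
Your proof is correct and follows exactly the approach the paper intends: the text introduces this corollary with ``Proposition \ref{prop:absolutelyIrreducible} translates to,'' and your argument is precisely that translation via Proposition \ref{prop:Orthogonality}\ref{item:hom}.
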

%
%

\begin{definition}
	We say that an irreducible representation $W$ is \emph{contained in
	$V$ with multiplicity $m$}, if
	$\frac{\left<\chi_V,\chi_W\right>}{\left<\chi_{W},\chi_W\right>}=m$,
	i.e.~if $m$ is maximal such that $W^{\oplus m}\subset V$.
\end{definition}

\begin{corollary}\label{cor:finitelyManyIrreducibleReps}
	If $G$ is a finite group and $|G|$ invertible in $E$, then  an irreducible representation $W$ of $G$
	with $\dim_E W = d$
	is contained in the regular representation $E[G]$ with multiplicity
	$d/\left<\chi_W,\chi_W\right>$.

	If $W_1,\ldots, W_r$ are the
	distinct irreducible representations of $G$, then
	\[|G|=\sum_{i=1}^r\frac{(\dim
	W_i)^2}{\left<\chi_{W_i},\chi_{W_i}\right>}.\]
\end{corollary}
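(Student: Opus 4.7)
The plan is to exploit the explicit character formula for the regular representation together with Corollary \ref{cor-irred-rep-in-rep}, which expresses any representation as a sum of irreducibles with multiplicities controlled by the inner product of characters.

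First I would compute the multiplicity of an irreducible $W$ of dimension $d$ in $E[G]$. By Example \ref{ex:characters}, the character $r_G$ of the regular representation satisfies $r_G(1) = |G|$ and $r_G(g) = 0$ for $g \neq 1$. Hence
\[
\langle r_G, \chi_W \rangle = \frac{1}{|G|} \sum_{g \in G} r_G(g)\, \chi_W(g^{-1}) = \frac{1}{|G|} \cdot |G| \cdot \chi_W(1) = \dim W = d.
\]
Applying Corollary \ref{cor-irred-rep-in-rep} to $V = E[G]$ then yields at once the decomposition
\[
E[G] \cong \bigoplus_{W \text{ irreducible}} W^{\,d/\langle \chi_W, \chi_W \rangle},
\]
where the direct sum ranges over the (finitely many, by Proposition \ref{prop:compositionSeries}) isomorphism classes of irreducible $E$-representations of $G$. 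This proves the first assertion.

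For the second assertion I would simply take $E$-dimensions on both sides. The left-hand side has dimension $|G|$, while the right-hand side has dimension $\sum_{i=1}^r (\dim W_i) \cdot \frac{\dim W_i}{\langle \chi_{W_i}, \chi_{W_i} \rangle}$, giving the formula
\[
|G| = \sum_{i=1}^r \frac{(\dim W_i)^2}{\langle \chi_{W_i}, \chi_{W_i} \rangle}.
\]

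I expect no real obstacle: the content has already been packaged into Corollary \ref{cor-irred-rep-in-rep} and Example \ref{ex:characters}, and the proof is purely a matter of plugging in the character of $E[G]$ and taking dimensions. The only subtlety worth noting is that the assumption $|G| \in E^\times$ is needed precisely so that Maschke's theorem and the inner-product decomposition of Corollary \ref{cor-irred-rep-in-rep} are available; absent this hypothesis the regular representation need not be semisimple and the statement would fail.
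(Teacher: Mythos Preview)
Your proof is correct and follows essentially the same approach as the paper: compute $\langle r_G,\chi_W\rangle=\dim W$ from the explicit values of $r_G$, apply Corollary \ref{cor-irred-rep-in-rep} to decompose $E[G]$, and take dimensions. The paper's argument is identical, just slightly more terse.
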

\begin{proof}
	If $W$ is an irreducible representation of $G$, then by Example
	\ref{ex:characters} we can compute
	\[\left<r_G,\chi_W\right>=\frac{1}{|G|}\sum_{g\in
	G}r_G(g){\chi_W\left(g^{-1}\right)}=\chi_W(1)=\dim W.\]
	It follows that
	\[E[G]=\sum_{i=1}^r W_i^{\frac{\dim W_i}{\left<\chi_{W_i},\chi_{W_i}\right>}},\]
	so
	\[|G|=\dim_{E}E[G]=\sum_{i=1}^r \frac{(\dim
	W_i)^2}{\left<\chi_{W_i},\chi_{W_i}\right>}\]
	as claimed. 
\end{proof}
\begin{remark}\label{rem:GrothendieckVsCharacters}
	\begin{enumerate}[label={(\alph*)}]
		\item If $G$ is a finite group and $|G|$ invertible in $E$, then we will
	identify the group $R_E(G)$ (Definition \ref{def:GrothendieckRing})
	with the free abelian group on the finite set of
	irreducible characters of $G$ on $E$.
		\item If $E\subset F$ is an extension of fields and $\chi$ a
	character of a representation $V$ of $G$ over $E$, then the character
	of $V\otimes_E F$ is the composition
	$G\xrightarrow{\chi}E\hookrightarrow F$, and abusing notation we will also
	write $\chi$ for the character of $V\otimes_E F$.
	\end{enumerate}
\end{remark}
\begin{proposition}\label{prop:comesFromBelow}
	We continue to assume that $|G|$ is invertible in $E$.
	Let $E\subset F$ be an extension of fields. A representation of $G$
	over $F$ comes from a representation of $G$ over $E$ if and only if its class lies in
$R_E(G)\subset R_F(G)$ (Proposition \ref{prop:injectivityOnK0}).\end{proposition}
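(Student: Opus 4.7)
The ``if'' direction is immediate: if $V \cong V_0 \otimes_E F$ for some $E[G]$-module $V_0$, then by definition $[V]$ is the image of $[V_0] \in R_E(G)$ under the inclusion of Proposition \ref{prop:injectivityOnK0}. So the substance is in the converse.

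For the converse, suppose $V$ is an $F[G]$-module whose class in $R_F(G)$ lies in $R_E(G)$. Write $[V] = \sum_i n_i [V_i]$ in $R_E(G)$, where the $V_i$ range over the finitely many isomorphism classes of irreducible $E[G]$-modules (Proposition \ref{prop:compositionSeries}) and $n_i \in \Z$. The plan is to show that all $n_i$ are non-negative; then $V_0 := \bigoplus_i V_i^{n_i}$ is a genuine $E[G]$-module, and $V_0 \otimes_E F$ has the same class as $V$ in $R_F(G)$. Since $|G|$ is invertible in $F$, Maschke's theorem (Theorem \ref{thm:maschke}) makes both $V$ and $V_0 \otimes_E F$ semisimple, so having the same class in $R_F(G)$ forces them to be isomorphic.

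The main task is thus the non-negativity of the $n_i$, and this is where I would reuse the key input from the proof of Proposition \ref{prop:injectivityOnK0}. Let $\{W_j\}$ be the distinct irreducible $F[G]$-modules. By Maschke, each $V_i \otimes_E F$ decomposes as $\bigoplus_j W_j^{a_{ij}}$ with $a_{ij} \in \Z_{\ge 0}$, and $V \cong \bigoplus_j W_j^{m_j}$ with $m_j \in \Z_{\ge 0}$. The crucial input, extracted from the proof of Proposition \ref{prop:injectivityOnK0} via Lemma \ref{lemma:homAndBaseChange} (non-isomorphic irreducible $E[G]$-modules $V_i \not\cong V_{i'}$ have $\Hom_{E[G]}(V_i,V_{i'}) = 0$, hence $\Hom_{F[G]}(V_i \otimes_E F, V_{i'} \otimes_E F) = 0$, so $V_i \otimes_E F$ and $V_{i'} \otimes_E F$ share no common irreducible summand), is that the supports $S_i := \{j : a_{ij} > 0\}$ are pairwise disjoint.

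Equating classes in $R_F(G)$, which is free abelian on the $[W_j]$, gives $m_j = \sum_i n_i a_{ij}$. By disjointness of the supports, for each $j$ at most one term on the right is non-zero; and if $j \in S_i$, then $m_j = n_i a_{ij}$, so $n_i = m_j / a_{ij} \ge 0$ since $m_j \ge 0$ and $a_{ij} > 0$. For indices $i$ whose support $S_i$ contributes to no $W_j$ appearing in $V$, we may take $n_i = 0$ (consistency: if $n_i \ne 0$ then by freeness some $m_j$ with $j \in S_i$ would be forced to be $n_i a_{ij}$, compelling $n_i > 0$). Hence every $n_i$ is non-negative, $V_0$ is a bona fide representation, and $V_0 \otimes_E F \cong V$ as claimed. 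The only delicate step is the disjoint-support observation, but it is already implicit in the earlier proof, so the argument is essentially a bookkeeping exercise once that is recognised.
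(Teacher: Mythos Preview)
Your proof is correct and follows essentially the same strategy as the paper's: both reduce to showing that the coefficients $n_i$ in the expansion $[V]=\sum_i n_i[V_i]$ are non-negative, using that distinct irreducible $E[G]$-modules remain orthogonal after base change (Schur plus Lemma~\ref{lemma:homAndBaseChange}). The paper compresses your disjoint-support computation into the single line $a_i\langle\psi_i,\psi_i\rangle=\langle\chi,\psi_i\rangle\ge 0$ via the character pairing of Proposition~\ref{prop:Orthogonality}, but the content is identical.
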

\begin{proof}
	Clearly, if a representation comes from $E$, then its class lies in
	$R_E(G)$.
	Conversely, 
	let $\chi$ be the character of a representation of $G$ over $F$, and
	assume that $\chi\in R_E(G)$. Let
	let $\psi_1,\ldots, \psi_r$ be the irreducible characters of $G$ over
	$E$. By assumption we can write
	$\chi=\sum_{i=1}^ra_i\psi_i$, with $a_i\in \Z$. Since $\chi$ and
	$\psi_i$ are characters of representations over $F$ (Remark
	\ref{rem:GrothendieckVsCharacters}), we have
	$\left<\psi_i,\psi_i\right>>0$, and 
	\[a_i\left<\psi_i,\psi_i\right>=\left<\chi,\psi_i\right>\geq 0,\]
	so $a_i\geq 0$.
\end{proof}


With the preceding corollary in mind, we make a detour to complex
representations.
\subsubsection{Complex representations}
In this section let $E=\C$.
\begin{definition}
	Recall that $\mathbf{C}_{\C,G}$ denotes the space of class functions
	$G\rightarrow \C$. For $\phi,\psi\in \mathbf{C}_{\C,G}$ we define \[(\phi|\psi)_G=\frac{1}{|G|}\sum_{g\in G}\phi(g)\overline{\psi(g)}.\]
			If no confusion is possible we write $(-,-)=(-,-)_G$. This is a scalar product on $\mathbf{C}_{E,G}$:
		$(\phi|\psi)=\overline{(\psi|\phi)}$, $(\phi|\phi)>0$ for $\phi\neq
		0$, and $(\phi|\psi)$ is linear (resp.~semi-linear) in $\phi$
		(resp.~$\psi$).
\end{definition}

\begin{lemma}
	If $\chi$ is the character of a complex representation  $\rho$ of $G$, and $\phi\in
	\mathbf{C}_{\C,G}$, then
	\[(\phi|\chi)=\left<\phi,\chi\right>.\]
\end{lemma}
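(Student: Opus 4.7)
The plan is to show that the two expressions differ only in how they handle the second argument: one uses complex conjugation, the other uses inversion. So the entire proof reduces to verifying the identity $\overline{\chi(g)} = \chi(g^{-1})$ for every $g \in G$.

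First I would unwind the definitions:
\[
(\phi|\chi) = \frac{1}{|G|}\sum_{g\in G}\phi(g)\overline{\chi(g)}, \qquad \langle \phi,\chi\rangle = \frac{1}{|G|}\sum_{g\in G}\phi(g)\chi(g^{-1}).
\]
Hence it suffices to prove that $\overline{\chi(g)} = \chi(g^{-1})$ for all $g \in G$.

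To establish this pointwise identity, fix $g \in G$. Since $G$ is finite, $\rho(g)$ has finite order in $\GL(V)$, so its eigenvalues (in an algebraic closure of $\C$, i.e.\ in $\C$ itself) are roots of unity $\lambda_1,\dots,\lambda_d$, where $d = \dim V$. Then
\[
\chi(g) = \Tr(\rho(g)) = \sum_{i=1}^d \lambda_i.
\]
Since each $\lambda_i$ is a root of unity, $|\lambda_i| = 1$, hence $\overline{\lambda_i} = \lambda_i^{-1}$. The eigenvalues of $\rho(g^{-1}) = \rho(g)^{-1}$ are $\lambda_1^{-1},\dots,\lambda_d^{-1}$, so
\[
\chi(g^{-1}) = \sum_{i=1}^d \lambda_i^{-1} = \sum_{i=1}^d \overline{\lambda_i} = \overline{\chi(g)}.
\]
This is precisely the observation already recorded in Example~\ref{ex:characters} when identifying the character of the dual representation with the complex conjugate character.

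Substituting back gives $(\phi|\chi) = \langle \phi,\chi\rangle$, which is the desired equality. There is no genuine obstacle here; the only point requiring care is the appeal to the finiteness of $G$ to guarantee that $\rho(g)$ is diagonalizable with roots of unity as eigenvalues.
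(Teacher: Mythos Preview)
Your proof is correct and follows essentially the same approach as the paper: both reduce to the identity $\overline{\chi(g)} = \chi(g^{-1})$, which holds because the eigenvalues of $\rho(g)$ are roots of unity and hence satisfy $\overline{\xi} = \xi^{-1}$. The paper's proof is simply a one-line version of what you wrote out in detail.
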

\begin{proof}
	This is clear as $\chi(g)$ is a sum of roots of unity, and if $\xi\in
	\C$ is a root of unity then $\xi^{-1}=\overline{\xi}$.
\end{proof}

We have seen that the irreducible characters of $G$ form an orthonormal system
in $\mathbf{C}_{\C,G}$ with respect to $\left<-,-\right>_G$. In fact, they also span $\mathbf{C}_{\C,G}$.
\begin{theorem}\label{thm:orthonormalbasis}
	Let $G$ be a finite group. Its irreducible characters $\chi_1,\ldots,
	\chi_r$ form an orthonormal basis of the space of class functions
	$\mathbf{C}_{\C,G}$.
\end{theorem}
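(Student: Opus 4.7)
The plan is to argue orthonormality first (which is essentially already done) and then attack the spanning statement via Schur's lemma applied to an averaging operator.

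For orthonormality, the orthogonality proposition applied over the algebraically closed field $\C$ gives exactly $\langle\chi_i,\chi_j\rangle = \delta_{ij}$, and the lemma just proved shows $(\chi_i|\chi_j) = \langle\chi_i,\chi_j\rangle$. So the $\chi_i$ form an orthonormal system in $\mathbf{C}_{\C,G}$ with respect to $(-|-)$; in particular they are linearly independent.

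For spanning, I would show that the orthogonal complement of $\C\chi_1+\ldots+\C\chi_r$ in $\mathbf{C}_{\C,G}$ is zero. So let $f\in\mathbf{C}_{\C,G}$ satisfy $(f|\chi_i)=0$ for $i=1,\ldots,r$. For any representation $\rho:G\to \GL(V)$, define the endomorphism
\[ T_{\rho,f} \;:=\; \sum_{g\in G} \overline{f(g)}\,\rho(g)\;\in\;\End_\C(V). \]
The first key step is to check that $T_{\rho,f}$ is $G$-equivariant, i.e.~lies in $\End_{\C[G]}(V)$. This uses only the class-function property of $f$ (and hence of $\overline{f}$): for $h\in G$, the substitution $g\mapsto h^{-1}gh$ together with $\overline{f(h^{-1}gh)} = \overline{f(g)}$ shows $\rho(h)\,T_{\rho,f}\,\rho(h)^{-1}=T_{\rho,f}$.

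Now assume $V$ is irreducible. By Schur's Lemma \ref{lemma:schur} (using that $\C$ is algebraically closed) we get $T_{\rho,f}=\lambda\cdot\id_V$ for some $\lambda\in\C$. Taking traces yields
\[ \lambda\cdot\dim V \;=\; \sum_{g\in G}\overline{f(g)}\,\chi_V(g) \;=\; |G|\cdot\overline{(f|\chi_V)} \;=\;0, \]
so $\lambda=0$ and thus $T_{\rho,f}=0$ for every irreducible $V$. Since $\Char(\C)=0$ and $G$ is finite, Maschke's Theorem \ref{thm:maschke} implies every finite-dimensional $\C[G]$-module is semisimple, so $T_{\rho,f}=0$ for \emph{every} representation $\rho$.

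Finally, apply this to the regular representation $\rho=r_G$ of $G$ on $\C[G]$ and evaluate at $1\in\C[G]$:
\[ 0 \;=\; T_{r_G,f}(1) \;=\; \sum_{g\in G}\overline{f(g)}\cdot g \quad\text{in } \C[G]. \]
Since $\{g\mid g\in G\}$ is a $\C$-basis of $\C[G]$, we conclude $\overline{f(g)}=0$ for all $g$, hence $f=0$. This shows the $\chi_i$ span $\mathbf{C}_{\C,G}$, completing the proof. The main obstacle is simply the initial verification that $T_{\rho,f}$ is $G$-equivariant — once that is in hand, Schur plus Maschke plus evaluating the regular representation at $1$ finishes things essentially mechanically.
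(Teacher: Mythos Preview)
Your proof is correct and follows essentially the same route as the paper's: both define an averaging operator attached to a class function orthogonal to all irreducible characters, use Schur's lemma plus semisimplicity to show this operator vanishes on every representation, and then read off $f=0$ by evaluating on the regular representation at the identity. The only cosmetic difference is that the paper writes its operator as $\sum_g \phi(g)\rho(g^{-1})$ (pairing directly via $\langle-,-\rangle$) whereas you use $\sum_g \overline{f(g)}\rho(g)$ (pairing via the Hermitian form), which amounts to the same thing.
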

\begin{proof}
	Let $\phi\in \mathbf{C}_{\C,G}$ be a class function  such that
	$\left<\phi,\chi\right>=0$ for all irreducible characters $\chi$ of
	$G$. We want to show that $\phi=0$.

	If $\rho:G\rightarrow \GL(V)$ is any representation of $G$,
	define 
	$\rho_{\phi}:=\sum_{g\in G}\phi(g)\rho(g^{-1})$. Then $\rho_{\phi}$ is an
	endomorphism of $V$, and it is easily checked that $\rho_{\phi}$ is
	actually an endomorphism of the representation $\rho$. If $\rho$ is
	irreducible, then Schur's Lemma \ref{lemma:schur} shows 
	that $\rho_\phi=\lambda\cdot \id_V$ for some $\lambda\in \C$. We have
	\[\lambda\dim
		V=\Tr(\rho_{\phi})=
		|G|\left<\phi,\chi_\rho\right>=0.\]
		It follows that $\rho_\phi=0$ even if $\rho$ is not
		irreducible, since $\rho$ is the direct sum of irreducible subrepresentations.
		Now let $\rho$ be the regular
	representation of $G$, i.e.~$G$ acts on the $|G|$-dimensional
	$E$-vector space with basis $\{e_g|g\in G\}$ via
	$\rho(g)(e_h)=e_{gh}$. Then 
	\[0=\rho_{\phi}(e_1)=\sum_{g\in G}\phi(g)\rho(g^{-1})(e_1)=\sum_{g\in
	G}\phi(g)e_{g^{-1}}.\]
	It follows that $\phi=0$, which is what we wanted to show.
\end{proof}
\begin{corollary}
	If $G$ is a finite group, then the number of nonisomorphic irreducible
	complex representations is equal to the number of conjugacy classes in
	$G$. This is also true over any splitting field of
	characteristic $0$.
\end{corollary}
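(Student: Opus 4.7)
The plan is to deduce this directly from Theorem \ref{thm:orthonormalbasis} by computing the dimension of $\mathbf{C}_{\C,G}$ in two different ways.

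First, I would compute $\dim_\C \mathbf{C}_{\C,G}$ combinatorially. A class function is by definition constant on each conjugacy class of $G$, so specifying one amounts to choosing a complex number for each conjugacy class. More precisely, if $C_1,\ldots,C_s$ are the distinct conjugacy classes of $G$, the indicator functions $\mathbf{1}_{C_i}:G\to\C$ (with $\mathbf{1}_{C_i}(g)=1$ if $g\in C_i$ and $0$ otherwise) are class functions, are $\C$-linearly independent since their supports are disjoint, and span $\mathbf{C}_{\C,G}$ because any class function $\phi$ can be written as $\phi=\sum_{i=1}^s \phi(g_i)\mathbf{1}_{C_i}$ with $g_i\in C_i$. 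Hence $\dim_\C \mathbf{C}_{\C,G}=s$, the number of conjugacy classes of $G$.

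Second, by Theorem \ref{thm:orthonormalbasis} the irreducible characters form an orthonormal basis of $\mathbf{C}_{\C,G}$, so their number equals $\dim_\C \mathbf{C}_{\C,G}=s$. Since (still by Corollary \ref{cor-irred-rep-in-rep}, which holds as $|G|$ is invertible in $\C$) a complex representation is determined up to isomorphism by its character, the number of nonisomorphic irreducible complex representations coincides with the number of irreducible characters, which is $s$. This settles the statement over $\C$.

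For the splitting field case, let $E$ be a splitting field for $G$ of characteristic $0$. The key observation is that the proof of Theorem \ref{thm:orthonormalbasis} transcribes to $E$ without change: $|G|$ is invertible in $E$ so Proposition \ref{prop:Orthogonality} applies; and since $E$ is a splitting field every irreducible $E[G]$-module $V$ is absolutely irreducible, whence $\End_{E[G]}(V)=E$ by Proposition \ref{prop:absolutelyIrreducible}, which is exactly the version of Schur's Lemma used in the spanning argument (the endomorphism $\rho_\phi$ is forced to be scalar multiplication). Thus the irreducible $E$-characters of $G$ form an orthonormal basis of $\mathbf{C}_{E,G}$, and the same indicator function calculation as above (now over $E$ in place of $\C$) gives $\dim_E \mathbf{C}_{E,G}=s$. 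I do not expect any real obstacle here; the only subtle point is recognizing that the proof of the orthonormal basis theorem uses nothing more than the splitting hypothesis and characteristic $0$.
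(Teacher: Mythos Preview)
Your argument over $\C$ is exactly the paper's: compute $\dim_{\C}\mathbf{C}_{\C,G}$ as the number of conjugacy classes and invoke Theorem~\ref{thm:orthonormalbasis}. For the splitting field statement, however, you take a genuinely different route. The paper does not rerun the orthonormal basis argument over $E$; instead it observes (via Proposition~\ref{prop:everythingComesFromBelow}) that inside any characteristic~$0$ splitting field $E$ there is a splitting subfield $E'$ finitely generated over $\Q$, embeds $E'$ into $\C$, and then uses Proposition~\ref{cor:repsAlgClosedFields} to transport the count of irreducibles from $\C$ back to $E'$ and hence to $E$.

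Your approach---noting that the proof of Theorem~\ref{thm:orthonormalbasis} only uses Schur's Lemma in the form $\End_{E[G]}(V)=E$, which holds for absolutely irreducible $V$ by Proposition~\ref{prop:absolutelyIrreducible}---is correct and arguably more self-contained: it avoids the embedding-into-$\C$ trick and the passage through $R_E(G)\cong R_F(G)$. The paper's approach, on the other hand, buys you the result without reopening the proof of the theorem, at the cost of invoking two earlier structural results about splitting fields. Both are clean; yours is more direct, the paper's more in the spirit of reducing everything to the complex case.
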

\begin{proof}
	It is clear that the dimension of $\mathbf{C}_{\C,G}$ is the number of
	conjugacy classes in $G$. By the theorem, this dimension is precisely
	the number of irreducible characters. If $E$ is a splitting 
	field of characteristic $0$, then there exists a splitting 
	field $E'\subset E$ which is finitely generated over $\Q$, such that
	every representation of $G$ over $E$ comes from $E'$. The field $E'$
	can be embedded in $\C$,  so the claim follows
	from Proposition \ref{cor:repsAlgClosedFields}.
\end{proof}

The following corollary will be crucial for Section \ref{sec:complexSwan}.
\begin{corollary}\label{cor:classFunctionCharacter}
	A class function $\phi$ is the character of a complex representation of $G$ if
	and only if it is a linear combination
	\[\phi=a_1\chi_1+\ldots+a_r\chi_r\]
	with $a_i\in \Z_{\geq 0}$ and $\chi_i$ characters of irreducible
	representations of $G$.
\end{corollary}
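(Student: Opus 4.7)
The ``if'' direction is essentially a tautology: given non-negative integers $a_1,\dots,a_r$ and irreducible characters $\chi_1,\dots,\chi_r$ coming from irreducible representations $V_1,\dots,V_r$, the direct sum $V:=V_1^{\oplus a_1}\oplus\cdots\oplus V_r^{\oplus a_r}$ has character $\chi_V=a_1\chi_1+\cdots+a_r\chi_r$ by additivity of traces (Example \ref{ex:characters}). So the whole content is in the ``only if'' direction, and I would present the proof as essentially a one-line application of the machinery already developed.

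For the ``only if'' direction, let $\phi=\chi_V$ for some finite-dimensional $\C[G]$-module $V$. First I invoke Theorem \ref{thm:orthonormalbasis}: the irreducible characters $\chi_1,\dots,\chi_r$ form a $\C$-basis of $\mathbf{C}_{\C,G}$, so we can uniquely write $\phi=\sum_{i=1}^r a_i\chi_i$ with $a_i\in\C$. Taking the Hermitian product with $\chi_j$ and using orthonormality (Theorem \ref{thm:orthonormalbasis}) gives $a_j=(\phi|\chi_j)$. Since $\phi$ and $\chi_j$ are both characters of complex representations, the Hermitian product coincides with the bilinear pairing: $(\phi|\chi_j)=\langle\phi,\chi_j\rangle$.

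The key step is now to identify this number as a non-negative integer. By Proposition \ref{prop:Orthogonality}\ref{item:hom},
\[a_j=\langle\chi_V,\chi_{V_j}\rangle=\dim_\C\Hom_{\C[G]}(V_j,V)\in\Z_{\geq 0},\]
where $V_j$ is an irreducible representation with character $\chi_j$. This finishes the proof.

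I do not anticipate any real obstacle: the entire argument simply glues together Theorem \ref{thm:orthonormalbasis} (basis), Proposition \ref{prop:Orthogonality} (interpretation of coefficients as dimensions of $\Hom$-spaces), and Example \ref{ex:characters} (additivity of characters). If one preferred to avoid Proposition \ref{prop:Orthogonality}\ref{item:hom}, an equally clean alternative is to apply Maschke's Theorem \ref{thm:maschke} directly to decompose $V\cong\bigoplus_i V_i^{\oplus m_i}$ with $m_i\in\Z_{\geq 0}$, take characters, and observe $a_i=m_i$ by uniqueness of the expansion in the basis $\{\chi_i\}$.
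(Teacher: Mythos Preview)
Your proof is correct and matches the paper's intended approach: the corollary is stated in the paper without proof, as an immediate consequence of Theorem \ref{thm:orthonormalbasis} together with Proposition \ref{prop:Orthogonality}, which is exactly what you do. The only cosmetic point is that Proposition \ref{prop:Orthogonality}\ref{item:hom} as stated gives $\langle\chi_V,\chi_j\rangle=\dim_\C\Hom_{\C[G]}(V,V_j)$ rather than $\Hom_{\C[G]}(V_j,V)$, but the form is symmetric (and over $\C$ both dimensions agree anyway), so this is harmless.
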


\subsubsection{Brauer's Theorem}
We will need one more tool in our bag before we can return to extensions of
local fields. We continue to work over the field of complex numbers.
\begin{definition}\label{defn:inducedClassFunction}
	Let $\alpha: H\rightarrow G$ be a morphism of groups. If $\phi\in
	\mathbf{C}_{\C,G}$, then the composition $\phi\circ \alpha$ is a class
	function on $H$, which we call \emph{the restriction of
	$\phi$}\index{restriction!class function}, and denote by $\alpha^*\phi$.
	
	Clearly, if $\phi$ is the character of a representation $\rho$ of $G$,
	i.e.~if $\phi\in R_{\C}^+(G)$, then $\alpha^*\phi$ is the character of
	the representation $\RES_{\phi}\rho$.

	If $\varphi\in \mathbf{C}_{\C,H}$, there is an \emph{induced class function} on $G$ \index{induced!class
	function} denoted by
	$\alpha_*\phi$, which can be computed using the following two special
	cases:
	\begin{enumerate}[label={(\alph*)}]
		\item\label{item:induction1} If $\alpha$ is injective, then
			\[\alpha_*\phi(g):=\frac{1}{|H|}\sum_{\begin{subarray}{c}
					x\in G\\
					xgx^{-1}\in H\end{subarray}}
					\phi(xgx^{-1}).\]
				\item\label{item:induction2} If $\alpha$ is surjective, then 
			\[\alpha_*\phi(g):=\frac{1}{|\ker(\alpha)|}\sum_{h\mapsto
			g}\phi(h).\]
		\end{enumerate}
	In general we define $\alpha_*\phi$ by factoring $\alpha$ into a
	surjective map followed by an injective map.
\end{definition}
\begin{lemma}\label{lem:ind-char}If $\alpha:H\rightarrow G$ is a morphism of finite groups and
	$\rho:H\rightarrow \GL(V)$ a complex representation of $H$ with character $\chi_\rho$, then $\alpha_*\chi_\rho$ is
	the character of $\Ind_{\alpha}\rho$.
\end{lemma}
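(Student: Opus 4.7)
The plan is to reduce the general statement to the two special cases of Definition \ref{defn:inducedClassFunction} by factoring $\alpha$ as the surjection $H \twoheadrightarrow \alpha(H)$ followed by the inclusion $\alpha(H) \hookrightarrow G$. This reduction is legitimate on the representation side because $\Ind$ is transitive with respect to composition: associativity of the tensor product yields
\[\C[G]\otimes_{\C[\alpha(H)]}\bigl(\C[\alpha(H)]\otimes_{\C[H]} V\bigr) \;\cong\; \C[G]\otimes_{\C[H]} V.\]
On the class function side, transitivity holds by construction. Hence it suffices to treat the surjective and injective cases separately.

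For the surjective case, let $K:=\ker(\alpha)$, so the ring map $\C[H]\to\C[G]$ is surjective with kernel the two-sided ideal $I$ generated by $\{k-1 : k\in K\}$. Then $\Ind_\alpha V = \C[G]\otimes_{\C[H]} V$ is the module of $K$-coinvariants $V_K = V/IV$. Since $|K|$ is invertible in $\C$, the averaging element $e_K := \tfrac{1}{|K|}\sum_{k\in K} k \in \C[H]$ is an idempotent whose image on $V$ is $V^K$, and multiplication by $e_K$ identifies $V_K$ with $V^K$. For $g\in G$, choosing any lift $\tilde g\in H$ with $\alpha(\tilde g)=g$, the action of $g$ on $V^K$ equals that of $\tilde g\cdot e_K$ on $V$, whose trace is
\[\frac{1}{|K|}\sum_{k\in K}\chi_\rho(\tilde g k) \;=\; \frac{1}{|K|}\sum_{h\mapsto g}\chi_\rho(h),\]
because $\tilde g k$ sweeps the fibre $\alpha^{-1}(g)$ as $k$ sweeps $K$. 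This matches formula \ref{item:induction2} of Definition \ref{defn:inducedClassFunction}.

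For the injective case, identify $H$ with its image and pick coset representatives $g_1,\ldots,g_n$ for $G/H$. Then $\C[G]=\bigoplus_{i=1}^n g_i\,\C[H]$ as a right $\C[H]$-module, so $\Ind_\alpha V=\bigoplus_{i=1}^n g_i\otimes V$ as a $\C$-vector space. The left action of $g\in G$ permutes the summands via the permutation $\sigma$ induced by $g$ on $G/H$; explicitly, $g\cdot(g_i\otimes v)=g_{\sigma(i)}\otimes(g_{\sigma(i)}^{-1}g\,g_i)v$ with $g_{\sigma(i)}^{-1}g\,g_i\in H$. Only fixed points of $\sigma$ contribute to the trace, giving
\[\chi_{\Ind_\alpha V}(g)=\sum_{\substack{i\\ g_i^{-1}g\,g_i\in H}}\chi_\rho(g_i^{-1}g\,g_i).\]
Writing each $x\in G$ uniquely as $x=g_ih$ with $h\in H$ and using that $\chi_\rho$ is a class function on $H$ shows that this sum equals $\tfrac{1}{|H|}\sum_{x\in G,\,x^{-1}gx\in H}\chi_\rho(x^{-1}gx)$; the substitution $x\mapsto x^{-1}$ then recovers formula \ref{item:induction1} of Definition \ref{defn:inducedClassFunction}.

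The main obstacle is purely bookkeeping: keeping track of conventions (left vs.\ right cosets, $x$ vs.\ $x^{-1}$) and verifying that both sides of the asserted equality respect the factorization $\alpha=\text{inj}\circ\text{surj}$. The essential ingredients are transitivity of induction together with the coinvariants-equals-invariants identification, which is available precisely because $|K|$ is invertible in $\C$.
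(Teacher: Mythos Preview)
Your proof is correct and follows the same overall strategy as the paper: factor $\alpha$ as a surjection followed by an injection and verify the character formula in each case separately, using transitivity of $\Ind$. The paper leaves the injective case as an exercise, which you carry out via the standard coset-representative computation. For the surjective case the paper argues slightly differently: it identifies $\Ind_\alpha V$ with the coinvariants $V/V'$ where $V'=\langle v-kv\rangle$, then reduces to $V$ irreducible and splits into the dichotomy $V'=0$ (so $K$ acts trivially and the formula is immediate) versus $V'=V$ (so $\Ind_\alpha V=0$, and one checks $\sum_{k\in K}kv=0$ for all $v$, forcing $\alpha_*\chi_\rho=0$). Your use of the averaging idempotent $e_K$ to identify $V_K\cong V^K$ and compute the trace as $\Tr(\tilde g\, e_K\,|\,V)$ is more direct and avoids both the reduction to irreducibles and the case split.
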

\begin{proof}
	Since $\alpha$ is the composition of an injection and a surjection, it
	suffices to check that the character of $\Ind_{\alpha}\rho$ satisfies the two formulas
	\ref{item:induction1} and \ref{item:induction2} if $\alpha$ is either
	injective, or surjective.

	In the first case this is a simple calculation, which we leave as an
	exercise. In the second case, $\alpha$ induces an isomomorphism
	$H/\ker(\alpha)\rightarrow G$. Since $\ker(\alpha)$ is a normal subgroup of $H$, the
	subspace $V':=\{v-hv \in V| h\in \ker(\alpha), v\in V\}$ is a subrepresentation,
	and $\ker(\alpha)$ acts trivially on $V/V'$. It is not difficult to
	show that the map $\Ind_\alpha
	\rho=E[H/\ker(\alpha)]\otimes_{E[H]}V\rightarrow V/V'$, defined by
	$h\ker(\alpha)\otimes
	v\mapsto hv+V'$ is an isomorphism of $H$ (or
	$H/\ker(\alpha)$)-representations.
	Without loss of generality we may assume that $\rho$ is irreducible.
	Hence $V'=V$ or $V'=0$. If $V'=0$, then $\ker(\alpha)$ acts trivially
	on $V$, so $\chi_{\rho}(hh')=\chi_{\rho}(h)$ if $h'\in \ker(\alpha)$.
	Hence the formula. 

	Otherwise, $V'=V$, $\Ind_{\alpha}\rho=0$, and we check that
	$\alpha_*\chi_\rho(g)=0$. Indeed, if $h_0\in \ker(\alpha)$
	then for all $v\in V'$ we see
	\[\sum_{h\in \ker(\alpha)}h(v-h_0v)=0.\]
	As $V=V'$,
	$\sum_{h\in
	\ker(\alpha)}hv=0$ for all $v\in V$. This means that
	\[\sum_{h\mapsto	g}\Tr_V(h)=\Tr_V\left(\tilde{g}\sum_{h\in\ker(\alpha)}h\right)=0\]
	for every $g\in H/\ker(\alpha)$ with lift $\tilde{g}\in H$. Hence $\alpha_*\chi_{\rho}=0$.
\end{proof}

The adjunction relation between induction and restriction of representations
induces a relation for induction and restriction of class functions:
\begin{proposition}[Frobenius reciprocity]\index{Frobenius
	receprocity}\label{prop:FrobeniusReciprocity}
	If $\alpha:H\rightarrow G$ is a homomorphism of groups, $\psi$ a class
	function on $H$, $\phi$ a class function on $G$, then
	\[(\psi|\alpha^*\phi)=(\alpha_*\psi|\phi).\]
\end{proposition}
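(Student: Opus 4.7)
The plan is to factor $\alpha$ as a surjection followed by an injection and then verify the identity in each of the two elementary cases separately. Concretely, write $\alpha = \iota \circ \pi$ with $\pi : H \twoheadrightarrow Q := H/\ker(\alpha)$ the quotient map and $\iota : Q \hookrightarrow G$ the induced injection. Directly from Definition \ref{defn:inducedClassFunction} one reads off $\alpha^* = \pi^* \circ \iota^*$ and $\alpha_* = \iota_* \circ \pi_*$, so it suffices to prove reciprocity for $\pi$ and for $\iota$ separately; the general case then follows by composing:
\[(\psi\,|\,\alpha^*\phi)_H = (\psi\,|\,\pi^*\iota^*\phi)_H = (\pi_*\psi\,|\,\iota^*\phi)_Q = (\iota_*\pi_*\psi\,|\,\phi)_G = (\alpha_*\psi\,|\,\phi)_G.\]

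For the surjective case, I would unfold the definitions and use that every fiber $\pi^{-1}(q)$ has exactly $|\ker\alpha|$ elements, so that
\[(\pi_*\psi\,|\,\phi)_Q = \frac{1}{|Q|\cdot|\ker\alpha|}\sum_{q\in Q}\sum_{h\mapsto q}\psi(h)\overline{\phi(q)} = \frac{1}{|H|}\sum_{h\in H}\psi(h)\overline{(\pi^*\phi)(h)}.\]
This is completely formal and presents no difficulty.

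For the injective case, identify $Q$ with its image in $G$. Writing out the left-hand side,
\[(\iota_*\psi\,|\,\phi)_G = \frac{1}{|G|\cdot|Q|}\sum_{g\in G}\sum_{\substack{x\in G\\ xgx^{-1}\in Q}} \psi(xgx^{-1})\overline{\phi(g)},\]
I would reparameterize by setting $h := xgx^{-1} \in Q$ and keeping $x \in G$ free (with $g = x^{-1}hx$ determined). Using that $\phi$ is a \emph{class} function, $\overline{\phi(g)} = \overline{\phi(h)}$, and the inner sum over $x$ contributes a factor of $|G|$, collapsing the expression to $\frac{1}{|Q|}\sum_{h\in Q}\psi(h)\overline{\iota^*\phi(h)} = (\psi\,|\,\iota^*\phi)_Q$. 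The only mild subtlety — and what I expect to be the one bookkeeping point to get right — is that the change of variables genuinely bijects the indexing set $\{(g,x)\in G\times G : xgx^{-1}\in Q\}$ with $Q\times G$; this is where the class-function hypothesis on $\phi$ is essential, so that the $x$-dependence really drops out.

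Combining these two cases via the factorization above yields the claimed identity. No deep input beyond the two explicit definitions is needed; the entire argument is a reindexing of double sums.
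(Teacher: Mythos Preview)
Your proof is correct, but it takes a different route from the paper's. The paper argues representation-theoretically: by Theorem~\ref{thm:orthonormalbasis} every class function is a $\C$-linear combination of characters, so by bilinearity one may assume $\psi=\chi_W$ and $\phi=\chi_V$; then Proposition~\ref{prop:Orthogonality} identifies $(\chi_W\mid\alpha^*\chi_V)$ with $\dim\Hom_{E[H]}(W,\RES_\alpha V)$, and the module-theoretic adjunction~\eqref{Ind-Res} converts this into $\dim\Hom_{E[G]}(\Ind_\alpha W,V)=(\alpha_*\chi_W\mid\chi_V)$. Your argument, by contrast, is a direct reindexing of the double sums using only the explicit formulas in Definition~\ref{defn:inducedClassFunction}, never touching representations at all. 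The advantage of your approach is that it is more elementary---it does not invoke the orthonormal basis theorem, nor Lemma~\ref{lem:ind-char}, nor the adjunction---and it makes transparent exactly where the class-function hypothesis on $\phi$ is used. The paper's approach, on the other hand, explains \emph{why} the identity holds: Frobenius reciprocity for class functions is the numerical shadow of the $(\Ind,\RES)$ adjunction, and the proof makes that provenance explicit.
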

\begin{proof}
	By Theorem \ref{thm:orthonormalbasis}, we know that $\psi$ is a $\C$-linear
	combination of characters of $H$, and similarly for $\phi$. We may
	thus assume that $\psi=\chi_W$, $\phi=\chi_V$. In this case we have
	\begin{align*}(\chi_W|\alpha^*\chi_V)&=\dim \Hom_{E[H]}(W,\RES_{\alpha}
	V)\\&=\dim \Hom_{E[G]}(\Ind_{\alpha} W,
	V)\\&=(\alpha_*\chi_W|\chi_V).\end{align*}
\end{proof}

In practice, a question about a general representation can often be reduced to
simpler representations on a different group.
\begin{theorem}[Brauer]\label{thm:Brauer}\index{Brauer's Theorem}
	If $G$ is a finite group and $\chi$ a character of a finite
	dimensional complex representation of $G$, then $\chi$ is a
	$\Z$-linear combination of characters of the form
	$\alpha_{i,*}\chi_i$, where $\alpha_i:H_i\hookrightarrow G$ is the
	inclusion of a subgroup, and $\chi_i$ is the character of a
	$1$-dimensional representation of $H$.
\end{theorem}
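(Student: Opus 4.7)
The plan is to strengthen the statement and prove it by a ring-theoretic argument. Let $V(G)\subset R_\C(G)$ be the $\Z$-submodule spanned by the characters $\alpha_*\chi$, where $\alpha\colon H\hookrightarrow G$ runs over inclusions of \emph{$p$-elementary} subgroups of $G$ (i.e.\ subgroups of the form $C\times P$ with $C$ cyclic of order prime to some prime $p$ and $P$ a finite $p$-group) and $\chi$ runs over the characters of $1$-dimensional complex representations of $H$. It suffices to show that $V(G)=R_\C(G)$, since the conclusion of the theorem as stated is weaker.

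The first key step is that $V(G)$ is an \emph{ideal} in the ring $R_\C(G)$. The relevant tool is the projection formula
\[ \alpha_*\bigl(\psi\cdot\alpha^*\phi\bigr)=\alpha_*(\psi)\cdot\phi,\qquad \psi\in R_\C(H),\ \phi\in R_\C(G),\]
which follows from Lemma \ref{lem:ind-char} and the adjunction \eqref{Ind-Res}. Applied to a $1$-dimensional $\psi=\chi$ on a $p$-elementary $H$, this reduces the ideal property to the assertion that on such an $H$ every character is a $\Z$-linear combination of characters induced from $1$-dimensional representations of subgroups of $H$. Since $H=C\times P$ with $C$ abelian, Proposition \ref{prop:RepresentationsOfAbelianGroups} disposes of the $C$-factor, and for the $P$-factor one argues by induction on $|P|$, using Exercise \ref{ex:repsInPosChar} to peel off a non-trivial $1$-dimensional quotient at each stage.

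The main obstacle, and the arithmetic heart of the proof, is to show that $1_G\in V(G)$. I would first establish Artin's weaker theorem that $|G|\cdot 1_G$ lies in the $\Z$-submodule spanned by $\alpha_*(1_C)$ with $C\subset G$ cyclic; this is a clean Möbius-inversion argument on the poset of cyclic subgroups of $G$. Then, for each prime $p$ dividing $|G|$, one constructs an element $\theta_p\in V(G)$ which takes integer values congruent to $1$ modulo $p^{v_p(|G|)}$ on every element of $G$; such a $\theta_p$ is built by averaging inductions of carefully chosen $1$-dimensional characters of $p$-elementary subgroups, with the congruence established via a lifting argument on characters of cyclic $p$-groups. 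Combining the $\theta_p$ with Artin's relation via a Chinese-remainder-style manipulation yields $1_G$ as an explicit $\Z$-combination of the prescribed induced characters.

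Once $1_G\in V(G)$ is known, the ideal property immediately gives $V(G)=R_\C(G)$, since $\chi=\chi\cdot 1_G\in V(G)$ for every $\chi\in R_\C(G)$; restricting to $R^+_\C(G)$ recovers the stated theorem. The formal parts of the proof (the ideal property, Artin's theorem, and the final deduction) are routine; the real difficulty lies in constructing the $\theta_p$ and verifying the mod-$p^{v_p(|G|)}$ congruence, which is the only place where a genuinely arithmetic argument is needed.
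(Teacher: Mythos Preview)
Your overall architecture is close to the paper's. Both arguments hinge on the same two ingredients: (i) every irreducible complex representation of a $p$-elementary group is monomial (induced from a $1$-dimensional representation of a subgroup), and (ii) the arithmetic fact underlying Theorem~\ref{thm:Brauer2}, namely that the index of the ``$p$-elementary part'' in $R_\C(G)$ is prime to $p$. The paper states (ii) as a separate theorem (citing Serre) and then carries out (i) explicitly; you fold both into the single statement $V(G)=R_\C(G)$ via the ideal-plus-unit strategy. These are equivalent packagings, and your sketch of the $\theta_p$ construction is exactly the content of Theorem~\ref{thm:Brauer2}, which the paper does not prove either.

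There is, however, a genuine gap in your argument for (i). You write that for the $P$-factor of a $p$-elementary group ``one argues by induction on $|P|$, using Exercise~\ref{ex:repsInPosChar} to peel off a non-trivial $1$-dimensional quotient.'' This does not work. Exercise~\ref{ex:repsInPosChar} concerns representations of $p$-groups over a field of \emph{characteristic $p$}; it says nothing about complex representations. Over $\C$, an irreducible representation of a $p$-group need not have any $1$-dimensional constituent: the quaternion group $Q_8$ has an irreducible $2$-dimensional complex representation. So there is nothing to ``peel off,'' and one cannot reduce to smaller rank this way. The correct argument (and the one the paper gives) is group-theoretic rather than representation-theoretic: for a non-abelian $p$-elementary $G=C\times P$, one uses that $G/Z(G)$ is a non-trivial $p$-group, hence has non-trivial center, to produce a normal abelian subgroup $H\not\subset Z(G)$; the isotypical decomposition of a faithful irreducible $V$ under $H$ then exhibits $V$ as induced from the stabiliser $H'\subsetneq G$ of one isotypical piece, and induction on $|G|$ finishes. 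Once you replace your ``peel off'' step by this monomiality argument, the rest of your proposal goes through and is essentially the paper's proof in different clothing.
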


Theorem \ref{thm:Brauer} follows from a slight variant, also due to Brauer.
Recall that if $p$ is a prime number, a group $G$ is called
\emph{$p$-elementary}\index{$p$-elementary group} if $G$ is the direct product
of a cyclic group of order prime to $p$ with a $p$-group.
\begin{theorem}\label{thm:Brauer2}
	Let $G$ be a finite group and $V_p\subset R(G)$ the subgroup generated
	by those characters, which are induced by $p$-elementary subgroups of
	$G$. The index of $V_p$ in $R(G)$ is finite and prime-to-$p$.
\end{theorem}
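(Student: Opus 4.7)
My plan is threefold: (i) show that $V_p$ is an ideal of $R(G)$; (ii) reduce the statement to exhibiting one integer coprime to $p$ inside $V_p$; (iii) produce such an integer by assembling contributions of $p$-elementary subgroups attached to conjugacy classes of $G$.

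For (i), if $\alpha\colon H \hookrightarrow G$ is a $p$-elementary subgroup, $\chi \in R(H)$ and $\psi \in R(G)$, then the projection formula
\[
\alpha_*(\chi)\cdot \psi \;=\; \alpha_*\bigl(\chi\cdot \alpha^*\psi\bigr),
\]
which is immediate from the $G$-equivariant isomorphism $\C[G]\otimes_{\C[H]}(W\otimes_\C \alpha^*V)\cong(\C[G]\otimes_{\C[H]}W)\otimes_\C V$, shows $\alpha_*(\chi)\cdot \psi \in V_p$. Hence $V_p$ is an ideal of $R(G)$.

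For (ii), the group $R(G)$ is free abelian of finite rank (equal to the number of conjugacy classes of $G$, by Theorem \ref{thm:orthonormalbasis}). Because $V_p$ is an ideal, any relation $N\cdot 1_G \in V_p$ with $\gcd(N,p)=1$ gives $N\cdot R(G) \subset V_p$, so $[R(G):V_p]$ is finite and divides a power of $N$, hence coprime to $p$. It therefore suffices to exhibit one such integer $N$ inside $V_p$.

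For (iii), attach to each $g \in G$ a $p$-elementary subgroup $H_g$ containing $g$ in the standard way: write $g = g_{p'}g_p$ as the commuting product of the prime-to-$p$ part and the $p$-part of $g$, let $C_g := \langle g_{p'}\rangle$ (cyclic of order prime to $p$), let $P_g$ be a $p$-Sylow subgroup of the centraliser $Z_G(C_g)$ containing $g_p$, and put $H_g := C_g \times P_g$. The heart of the argument is a Brauer--Banaschewski style lemma producing, for each conjugacy class $[g]$, an explicit virtual character $\theta_g \in R(H_g)$ (built from a faithful $1$-dimensional character of $C_g$ and suitable class functions on $P_g$) whose induction $\alpha_{g,*}\theta_g$ is $\Z$-valued on $G$, is congruent to $0$ modulo $p$ on all conjugacy classes whose prime-to-$p$ part is not conjugate to $g_{p'}$, and takes a value coprime to $p$ at $g$. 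An appropriate integer combination of these induced characters, organised by induction on the order of $g_{p'}$, then produces an element of $V_p$ equal to $N\cdot 1_G$ for some integer $N$ coprime to $p$, completing the proof.

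The main obstacle is the construction and mod-$p$ analysis of the characters $\theta_g$. The underlying inputs are that characters of a $p$-elementary group $H = C\times P$ factor as external tensor products $\chi_C\otimes\chi_P$, so that the prime-to-$p$ and $p$-contributions of an element separate cleanly; and that by Frobenius reciprocity (Proposition \ref{prop:FrobeniusReciprocity}) combined with the Mackey formula for $\alpha^*\alpha_*$, one can control the values of $\alpha_{g,*}\theta_g$ on any conjugacy class in $G$ in terms of the values of $\theta_g$ on $p$-elementary sections. The nonvanishing mod $p$ at $g$ then follows from elementary congruences on characters of $p$-groups, whose eigenvalues are $p$-power roots of unity and hence reduce well modulo $p$.
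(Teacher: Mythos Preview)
The paper does not give its own proof of Theorem~\ref{thm:Brauer2}; it explicitly defers to \cite[Ch.~10, Thm.~18]{Serre/Representations} and only shows how to deduce Theorem~\ref{thm:Brauer} from it. So there is nothing in the paper to compare against.

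Your outline is essentially the standard argument found in Serre's book. Steps (i) and (ii) are correct and cleanly stated: the projection formula makes $V_p$ an ideal, and then exhibiting a single prime-to-$p$ integer multiple of $1_G$ in $V_p$ suffices. For step (iii), the strategy of attaching a $p$-elementary subgroup $H_g = C_g \times P_g$ to each $g$ and inducing carefully chosen characters is the right one.

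One point of imprecision: you assert that $\theta_g \in R(H_g)$ can be chosen so that $\alpha_{g,*}\theta_g$ is $\Z$-valued on $G$. The natural building blocks (one-dimensional characters of $C_g$) take values in $A = \Z[\zeta_m]$, where $m$ is the exponent of $G$, and the standard argument first works in $A \otimes R(G)$ --- showing that the image of $1_G$ in $(A \otimes R(G))/(A \otimes V_p)$ is killed by an element of $A$ prime to $p$ --- and only then descends to $\Z$ (using that $p$ is unramified in $A$, or by averaging over $\Gal(\Q(\zeta_m)/\Q)$). Since you explicitly flag the construction and mod-$p$ analysis of $\theta_g$ as the main obstacle, this is not a fatal gap in a sketch, but be aware that ``$\Z$-valued'' is not automatic and is precisely where the auxiliary ring $A$ enters in the usual treatment.
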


We do not give a proof of Theorem \ref{thm:Brauer2} (see \cite[Ch.~10, Thm.~18]{Serre/Representations}),
but we show how to deduce Theorem \ref{thm:Brauer} from this.
\begin{proof}[Proof of Theorem \ref{thm:Brauer}]
	If $G$ is abelian then we know from Proposition
	\ref{prop:RepresentationsOfAbelianGroups} that every irreducible representation
	has dimension $1$, so we assume that $G$ is not abelian. 

	By Theorem \ref{thm:Brauer2}, $R(G)=\sum_{p\text{ prime}}V_p$, and
	thus every character of $G$ is a $\Z$-linear combination  of characters
	induced by a character of a $p$-elementary subgroup, for some $p$. Hence
	we may assume that $G$ is a $p$-elementary group. Let $\rho:G\rightarrow
	\GL(V)$ be an irreducible representation and $\chi$ its character. By
	induction on $|G|$ we may assume that $\rho$ is injective. Write
	$G=C\times P$, where $C$ is cyclic of order prime to $p$, and $P$ a
	$p$-group. Clearly $C$ is contained in the center of $G$, and by
	assumption $G/Z(G)$ is a nonzero $p$-group. Thus $Z(G/Z(G))\neq 0$,
	and $G/Z(G)$ contains a cyclic normal $p$-group; let $H\subset G$ be
	its preimage in $G$. Then $H$ is abelian and is not contained in
	$Z(G)$. Since $\rho$ is injective by assumption, $\rho(H)$ is abelian
	and not contained in the center of $\rho(G)$, i.e.~there exists $h\in
	H$, such that $\rho(h)$ is not given by multiplication with a scalar.
	This means that the irreducible decomposition of $\rho|_H:H\rightarrow
	\GL(V)$ contains nonisomorphic representations, since $H$ is abelian.
	Let $V=V_1\oplus\ldots \oplus V_n$ be the isotypical decomposition of $V$ as
	an $E[H]$-module, i.e.~the irreducible representations contained in
	$V_i$ are all isomorphic, and $V_i,V_j$ contain no common isomorphic
	irreducible subrepresentations, unless $i=j$. The elements of $G$
	permute the $V_i$ transitively, as $V$ is an irreducible
	$E[G]$-module.

	Define $H':=\{g\in G| gV_1=V_1\}$. This is a subgroup of $G$,
	containing $H$, and as $V$ is irreducible $H'\neq G$. We claim that
	$\rho$ is induced by a representation of $H'$. Indeed, $H'$ acts
	on $V_1$, and $E[G]\otimes_{E[H']}V_1\xrightarrow{\simeq} V$, as
	$V=\sum_{g\in G}gV_1$. 

	We have seen that $V$ is induced by a representation of $H'\subsetneqq
	G$, so the claim follows by induction.
\end{proof}
\begin{corollary}
	Let $G$ be a finite group of exponent $m$ and $E$ an algebraically
	closed field of characteristic $0$. Let $\zeta_m\in E$ be a primitive
	$m$-th root of unity. The injection $R_{\Q(\zeta)}(G)\rightarrow
	R_{E}(G)$ is an isomorphism. In other words, $\Q(\zeta)$ is a
	splitting field for $G$.
\end{corollary}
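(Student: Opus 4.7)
The plan is to use Brauer's Theorem \ref{thm:Brauer} to show that every irreducible $E$-representation of $G$ is already defined over $\Q(\zeta_m)$; once this is established, the map $R_{\Q(\zeta_m)}(G) \to R_E(G)$ will be surjective, and since it is injective by Proposition \ref{prop:injectivityOnK0}, it will be an isomorphism. The fact that $\Q(\zeta_m)$ is a splitting field will then follow by a short additional argument.

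In more detail, let $V$ be any finite dimensional representation of $G$ over $E$ with character $\chi$. By Brauer (Theorem \ref{thm:Brauer}), applied over the algebraically closed field $E$, we can write
\[ \chi = \sum_{i} n_i \cdot (\alpha_i)_* \chi_i, \qquad n_i \in \Z, \]
where $\alpha_i \colon H_i \hookrightarrow G$ is the inclusion of a subgroup and $\chi_i$ is the character of a one-dimensional representation $\rho_i \colon H_i \to E^\times$. The crucial observation is that since $G$ has exponent $m$, so does every $H_i$, and hence $\rho_i$ factors through the group $\mu_m(E)$ of $m$-th roots of unity in $E$. By hypothesis $\mu_m(E) \subset \Q(\zeta_m)$, so each $\rho_i$ is already defined over $\Q(\zeta_m)$: there is a representation $\rho_i^0 \colon H_i \to \Q(\zeta_m)^\times$ with $\rho_i = \rho_i^0 \otimes_{\Q(\zeta_m)} E$.

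Induction commutes with extension of scalars, so $(\alpha_i)_* \chi_i$ is the character of the $\Q(\zeta_m)[G]$-module $\Ind_{\alpha_i}(\rho_i^0)$, extended to $E$. Thus each $(\alpha_i)_* \chi_i$ lies in the image of $R_{\Q(\zeta_m)}(G) \to R_E(G)$, and therefore so does $\chi$. Since this holds for every $E$-representation, Proposition \ref{prop:comesFromBelow} implies that every $V$ is of the form $V \cong W \otimes_{\Q(\zeta_m)} E$ for some $W \in \Repf_{\Q(\zeta_m)}(G)$.

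It remains to deduce that $\Q(\zeta_m)$ is a splitting field. Let $V_1,\dots,V_r$ be the irreducible representations of $G$ over $E$, and choose $W_i$ over $\Q(\zeta_m)$ with $W_i \otimes_{\Q(\zeta_m)} E \cong V_i$; each $W_i$ is necessarily irreducible over $\Q(\zeta_m)$ (else $V_i$ would decompose). Conversely, if $W$ is any irreducible $\Q(\zeta_m)[G]$-module, then $W \otimes E$ decomposes into irreducible $E[G]$-modules, at least one of which is some $V_{i_0} \cong W_{i_0}\otimes E$; by Lemma \ref{lemma:homAndBaseChange}, $\Hom_{\Q(\zeta_m)[G]}(W_{i_0}, W) \neq 0$, forcing $W \cong W_{i_0}$ and hence $W \otimes E \cong V_{i_0}$ irreducible. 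So every irreducible representation of $G$ over $\Q(\zeta_m)$ is absolutely irreducible, proving the claim. The only real content is the application of Brauer's theorem together with the observation $\mu_m(E) \subset \Q(\zeta_m)$; no serious obstacle is expected.
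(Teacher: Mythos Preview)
Your argument is correct and follows essentially the same route as the paper: apply Brauer's Theorem to write any character as a $\Z$-combination of inductions of one-dimensional characters, then observe that a one-dimensional character of a subgroup of $G$ lands in $\mu_m\subset\Q(\zeta_m)$ and is therefore already defined over $\Q(\zeta_m)$.

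One small point of hygiene: Theorem~\ref{thm:Brauer} as stated in the paper is formulated only for \emph{complex} representations, so invoking it ``over the algebraically closed field $E$'' is, strictly speaking, not a direct citation. The paper handles this by first observing that $E$ contains an algebraic closure of $\Q$, which embeds into $\C$, and then using Proposition~\ref{cor:repsAlgClosedFields} to reduce to the case $E=\C$. You should either insert this reduction, or remark that the proof of Brauer's theorem goes through verbatim over any algebraically closed field of characteristic~$0$. Your final paragraph, spelling out why the isomorphism $R_{\Q(\zeta_m)}(G)\xrightarrow{\cong}R_E(G)$ forces $\Q(\zeta_m)$ to be a splitting field, is a welcome elaboration of what the paper leaves implicit in the phrase ``in other words''.
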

\begin{proof}
	Clearly $E$ contains an algebraic closure of $\Q$, which we can embed
	into $\C$. Proposition \ref{cor:repsAlgClosedFields} hence reduces the
	proof to
	the case $E=\C$. Let $\chi$ be the character of a complex
	representation of $G$. By Brauer's theorem, $\chi$ can be written
	\[\chi=\sum_{i=1}^r a_i \alpha_{i,*}\phi_i\]
	with $a_i\in \Z$, $\alpha_i:H_i\hookrightarrow G$ a subgroup and
	$\phi_i$ a $1$-dimensional character of $H_i$. In particular:
	$\phi_i:H_i\rightarrow \C^\times$ is a homomorphism, so $\phi_i$ has
	image contained in $\Q(\zeta_m)$ (even in its ring of
	integers), so $\phi_i$ comes from $\Q(\zeta_m)$, and
	hence so does $\alpha_{i,*}\phi_i$. 
\end{proof}
\begin{corollary}\label{cor:everythingComesFromQZeta}With the notation of the
	previous corollary, scalar extension induces an equivalence
	$\Repf_{\Q(\zeta)}G\rightarrow \Repf_{E}G$.
\end{corollary}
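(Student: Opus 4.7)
The plan is to verify that the functor $F:=-\otimes_{\Q(\zeta)}E\colon\Repf_{\Q(\zeta)}G\to\Repf_{E}G$ is essentially surjective, induces a bijection on isomorphism classes, and is ``fully faithful up to extension of scalars'' in the sense of Lemma \ref{lemma:homAndBaseChange}. Both source and target are semisimple abelian categories (Maschke's Theorem \ref{thm:maschke}, since $\Char=0$), so everything is controlled by what happens on irreducibles and their multiplicities.

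First I would show $F$ carries irreducibles to irreducibles. Let $V$ be an irreducible $\Q(\zeta)[G]$-module. The preceding corollary states that $\Q(\zeta)$ is a splitting field for $G$, so by Proposition \ref{prop:absolutelyIrreducible} we have $\End_{\Q(\zeta)[G]}(V)=\Q(\zeta)$. Lemma \ref{lemma:homAndBaseChange} then gives $\End_{E[G]}(V\otimes_{\Q(\zeta)}E)=E$, and a second application of Proposition \ref{prop:irreducibilityCriterionNeu} (or \ref{prop:absolutelyIrreducible}) forces $V\otimes_{\Q(\zeta)}E$ to be irreducible.

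Next I would deduce essential surjectivity. The preceding corollary provides a ring isomorphism $R_{\Q(\zeta)}(G)\xrightarrow{\cong}R_{E}(G)$. By the previous step the classes $[V_i\otimes E]$ of the images of the irreducible $\Q(\zeta)[G]$-modules are classes of irreducible $E[G]$-modules, and they form a $\Z$-basis of $R_{\Q(\zeta)}(G)$; via the isomorphism they map to a $\Z$-basis of $R_{E}(G)$. Since $R_{E}(G)$ is also free on the iso classes of irreducible $E[G]$-modules, a simple basis comparison (using the fact that both bases consist of irreducible classes and that $R^{+}$ maps into $R^{+}$) shows every irreducible $E[G]$-module is isomorphic to $V_i\otimes E$ for some $i$. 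By Maschke every object of $\Repf_{E}G$ is a direct sum of such, hence lies in the essential image of $F$.

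The remaining point is the hom-space statement: for $V,W\in\Repf_{\Q(\zeta)}G$, Lemma \ref{lemma:homAndBaseChange} gives a natural isomorphism
\[\Hom_{\Q(\zeta)[G]}(V,W)\otimes_{\Q(\zeta)}E\;\xrightarrow{\cong}\;\Hom_{E[G]}(F(V),F(W)),\]
which is the appropriate notion of fully faithfulness for a functor between a $\Q(\zeta)$-linear and an $E$-linear category. Together with essential surjectivity, this yields the equivalence; in particular $F$ is a bijection on isomorphism classes, since decomposing $V,W$ into isotypical components and comparing multiplicities shows $F(V)\cong F(W)\Rightarrow V\cong W$. The only subtle point is the interpretation of ``equivalence'' across the field extension; the real content, namely the compatibility of $F$ with direct sums, irreducibility, and hom-spaces, all comes directly from Maschke, Schur, and the splitting-field property already in hand.
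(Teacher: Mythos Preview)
Your argument is correct and follows the same route as the paper, which simply cites Proposition~\ref{prop:comesFromBelow} (for essential surjectivity, using the isomorphism $R_{\Q(\zeta)}(G)\cong R_E(G)$ from the previous corollary) together with Lemma~\ref{lemma:homAndBaseChange} (for the hom-space statement). Your Steps~1--2 unpack the essential surjectivity by hand via a basis comparison rather than invoking Proposition~\ref{prop:comesFromBelow} directly, but the content is the same; your observation that ``equivalence'' here must be read in the enriched sense (bijection on isomorphism classes plus the hom-space isomorphism after base change) is a point the paper leaves implicit.
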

\begin{proof}
	This is just Proposition \ref{prop:comesFromBelow} together with Lemma
	\ref{lemma:homAndBaseChange}.
\end{proof}
	



\subsection{Existence of the complex Artin and Swan
representations}\label{sec:complexSwan}
We now return to the study of a finite Galois extension $L/K$ of complete discretely
valued fields $L/K$ with separable residue extension of degree $f$. We fix $G:=\Gal(L/K)$. Also
recall the notations $v,v_L$ for the valuations of $K$ and $L$, and $A,
B$ for the discrete valuation rings of $K$, $L$.

Recall that we defined the function $i_G:G\rightarrow \Z_{\geq 0}\cup
\{\infty\}$ as $i_G(g)=v_L(gx-x)$. Here $x\in B$ is an element generating $B$ as an $A$-algebra.
Such an element exists by Theorem \ref{thm:generator}.
We also saw that $g\in G_n$ if and only if $i_G(g)\geq n+1$, which implies
that $i_G(hgh^{-1})=i_G(g)$, for any $h\in G$, as $G_n$ is normal. Thus $i_G$ is a class
function.

\begin{theorem}[Artin]\label{thm:ExistenceOfArtinComplex}
	In the situation above, the function $a_G:G\rightarrow \Z$,
	\[a_G(g):=\begin{cases}
			-fi_G(g) & \text{if }g\neq 1\\
			f\sum_{g'\neq 1}i_G(g')&\text{if }g=1
		\end{cases}
		\]
	is the character of a representation of $G$ over the complex numbers.
\end{theorem}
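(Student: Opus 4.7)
The plan is to invoke Corollary \ref{cor:classFunctionCharacter}: since normality of the $G_i$ in $G$ makes $i_G$ (and thus $a_G$) a class function, showing $a_G$ is a genuine character reduces to checking $\langle a_G, \chi \rangle \in \Z_{\geq 0}$ for every irreducible complex character $\chi$ of $G$. Writing $i_G(g) = \#\{i \geq 0 : g \in G_i\}$ for $g \neq 1$ and exchanging the order of summation, a routine manipulation should yield the closed formula
\[
\langle a_G, \chi_V\rangle \;=\; \sum_{i \geq 0} \frac{1}{[G_0:G_i]}\,\dim_\C\bigl(V/V^{G_i}\bigr),
\]
which is manifestly non-negative; only integrality remains.

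To reduce integrality to one-dimensional characters of subgroups I invoke Brauer's Theorem \ref{thm:Brauer}, expressing every character as a $\Z$-combination of $\Ind_H^G\psi$ with $H \leq G$ and $\dim\psi = 1$. Frobenius reciprocity (Proposition \ref{prop:FrobeniusReciprocity}) replaces $\langle a_G,\Ind_H^G\psi\rangle$ by $\langle\RES_H^G a_G,\psi\rangle$. Since any $A$-algebra generator of $B$ also generates it over $A^H$, one has $i_G|_H = i_H$; combining this with transitivity of the different (Proposition \ref{prop:transitivityOfDifferent}) and Proposition \ref{valuation-of-discriminant}(b), I expect to derive the restriction formula
\[
\RES_H^G a_G \;=\; f_{L^H/K}\cdot a_H \;+\; v_K(\mathfrak{d}_{L^H/K})\cdot r_H,
\]
with integer coefficients (where $r_H$ is the regular character of $H$). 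Because $\langle r_H,\psi\rangle = \dim\psi = 1$, the task reduces to showing $\langle a_H,\psi\rangle \in \Z$ for every one-dimensional $\psi$.

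Now set $N := \ker\psi \triangleleft H$ and factor $\psi = \bar\psi \circ \pi$ through the abelian quotient $H/N$ via a faithful character $\bar\psi$. I will prove the inflation identity $\langle a_H,\psi\rangle_H = \langle a_{H/N},\bar\psi\rangle_{H/N}$ by explicitly computing the pushed-down class function $\bar\sigma \mapsto \sum_{\sigma\mapsto\bar\sigma} a_H(\sigma)$ and verifying it equals $|N|\cdot a_{H/N}(\bar\sigma)$. On $H/N \setminus \{\bar 1\}$ this follows from the Herbrand-type identity $i_{H/N}(\bar\sigma) = e_{L/L^N}^{-1}\sum_{\sigma \mapsto \bar\sigma} i_H(\sigma)$ (the proposition preceding Corollary \ref{cor:Herbrand}) together with the multiplicativities $|N| = e_{L/L^N}f_{L/L^N}$ and $f_H = f_{L/L^N}f_{H/N}$; the value at the identity is then forced by $\sum_g a_H(g)=0$.

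Finally, since $H/N$ is abelian the Hasse-Arf Theorem \ref{thm:HasseArf} applies. The substitution $u = \psi_{L^N/L^H}(v)$ rewrites the closed formula of the first step as
\[
\langle a_{H/N},\bar\psi\rangle \;=\; \int_{-1}^\infty \dim_\C\bigl(W/W^{(H/N)^v}\bigr)\,dv,
\]
where $W$ is the one-dimensional representation with character $\bar\psi$. The integrand takes values in $\{0,1\}$ and equals $1$ precisely on $[-1, v^\star]$, where $v^\star$ is the largest break of the upper-numbering filtration of $H/N$ at which $\bar\psi$ is still nontrivial. Hasse-Arf forces $v^\star \in \Z$, so the integral equals $v^\star + 1 \in \Z$, completing the integrality argument. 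The main obstacle will be the algebraic bookkeeping behind the restriction formula and the inflation identity, both of which boil down to the Herbrand formula and transitivity of the different.
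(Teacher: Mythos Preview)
Your proposal is correct and follows essentially the same route as the paper: the closed formula for $\langle a_G,\chi_V\rangle$, Brauer plus Frobenius reciprocity, the restriction identity $\gamma^*a_G=\lambda\,r_H+a_H$ via transitivity of the different, and finally Hasse--Arf for the one-dimensional case. The only organisational differences are that the paper first reduces to the totally ramified situation (Lemma~\ref{lemma:totallyRamifiedArtin}) rather than carrying the residue degree $f_{L^H/K}$ through the restriction formula, and that the paper handles the one-dimensional case by a direct computation (Lemma~\ref{lemma:1dimensionalCharacter}) instead of your inflation identity $\langle a_H,\psi\rangle_H=\langle a_{H/N},\bar\psi\rangle_{H/N}$ followed by the integral in upper numbering; both packagings unwind to the same use of Herbrand's formula and the transitivity of $\phi$.
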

The proof of this theorem is the object of this section, and it justifies the
following definition.
\begin{definition}\label{def:swan-char}
	The function $a_G$ attached to the extension $L/K$ is called the \emph{Artin character of $G$} (or \emph{of $L/K$})\index{Artin
	character}. Once we know that $a_G$ is the character of a
	representation, we will see in Corollary
	\ref{cor:artinCharImpliesSwanChar} that the function $\sw_G:=a_G - (r_G - r_{G/G_0})$
	is also a character of $G$. It is called the 
	\emph{Swan character of $L/K$}\index{Swan character}. Here we write $r_{G/G_0}$
	for the
	composition $G\rightarrow G/G_0\xrightarrow{r_{G/G_0}} \C$. If $L/K$ is
	totally ramified, i.e.~if $f=1$, then $\sw_G=a_G-u_G$, where
	$u_G=r_G-1_G$ is the augmentation character of $G$ (see Example
	\ref{ex:characters}). 
\end{definition}

Since $i_G$ is a class function, so is $a_G$, and by Corollary
\ref{cor:classFunctionCharacter}, Theorem \ref{thm:ExistenceOfArtinComplex} is
equivalent to the following theorem.

\begin{theorem}\label{thm:ArtinIntegers}
	For any character $\chi$ of a complex representation of $G$, we have
	$f(\chi):=(a_G|\chi)\in \Z_{\geq 0}$.
\end{theorem}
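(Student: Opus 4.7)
The plan is to reduce the inequality $(a_G \mid \chi) \in \Z_{\geq 0}$ to the case of one-dimensional characters of abelian subgroups, where the Hasse--Arf theorem (Theorem~\ref{thm:HasseArf}) guarantees both integrality and a combinatorial interpretation that makes non-negativity visible.

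First, I would establish a restriction formula for the Artin character. For a subgroup $H\subseteq G$ with fixed field $K':=L^H$, letting $e',f'$ be the ramification index and residue degree of $K'/K$, I expect a formula of the shape
\begin{equation*}
\mathrm{Res}^G_H a_G \;=\; f' \cdot a_H + v_{K'}(\mathfrak{d}_{K'/K}) \cdot r_H,
\end{equation*}
proved by evaluating both sides on each $h\in H$: for $h\neq 1$ one has $i_H(h)=i_G(h)$ directly from the definition, and for $h=1$ one uses Proposition~\ref{valuation-of-discriminant}\ref{item:valuation-of-discriminant:b} together with the transitivity of the different (Proposition~\ref{prop:transitivityOfDifferent}) to compare $\sum_{g\in G\setminus 1} i_G(g)$ to $\sum_{h\in H\setminus 1} i_H(h)$. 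Combined with Frobenius reciprocity (Proposition~\ref{prop:FrobeniusReciprocity}), this yields, for $\alpha:H\hookrightarrow G$ and any character $\psi$ of $H$,
\begin{equation*}
f_G(\alpha_*\psi) \;=\; f'\cdot f_H(\psi) \,+\, v_{K'}(\mathfrak{d}_{K'/K})\cdot \psi(1),
\end{equation*}
where both $v_{K'}(\mathfrak{d}_{K'/K})\geq 0$ and $\psi(1)\geq 0$.

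Next I would invoke Brauer's theorem (Theorem~\ref{thm:Brauer}) to write an arbitrary character $\chi$ of $G$ as a $\Z$-linear combination of characters $\alpha_{i,*}\psi_i$ with $\psi_i$ a one-dimensional character of a subgroup $H_i$. By Step~1 this reduces the integrality of $f_G(\chi)$ to integrality of $f_H(\psi)$ for $H\subseteq G$ and $\psi:H\to \C^\times$ a one-dimensional character. Since $\psi$ factors through $H/[H,H]$, and by Proposition~\ref{prop:quotientsUpperNumbering} the upper-numbering filtration is compatible with quotients, we reduce further to the case where $H$ is abelian. Here, a direct computation rewrites the sum
\begin{equation*}
f_H(\psi) \;=\; \frac{1}{|H|}\sum_{h\in H} a_H(h)\,\overline{\psi(h)}
\end{equation*}
as an integral against the upper-numbering filtration: using the formula $a_H=\sum_{u\geq 0}(r_{H/H^u}-r_{H^u})\,du$ (rewriting the lower-numbered sum via $\phi_{L/K}$), one obtains $f_H(\psi)=1+u(\psi)$, where $u(\psi)\in[-1,\infty)$ is the largest real number with $\psi|_{H^{u(\psi)}}\neq 1$ (with the convention $u(\psi)=-1$ when $\psi$ is trivial). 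Since $H$ is abelian, Hasse--Arf tells us that the jumps of the filtration $\{H^u\}$ are integers, so $u(\psi)\in \Z_{\geq -1}$ and hence $f_H(\psi)\in \Z_{\geq 0}$.

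Brauer's theorem only gives $\Z$-combinations, so integrality is immediate but non-negativity requires one additional step. For this I would appeal to Artin's induction theorem: every character of $G$ is a $\Q_{\geq 0}$-linear combination of characters induced from cyclic subgroups. Applied to the pairing $(a_G\mid\chi)$ in combination with the Step~1 formula (which preserves non-negativity, since $f'$, $v_{K'}(\mathfrak{d}_{K'/K})$ and $\psi(1)$ are all non-negative), this reduces non-negativity to the abelian case already handled. The principal obstacle is the abelian step: without Hasse--Arf, one only obtains $f_H(\psi)\in \Q_{\geq 0}$, and the integrality half of the theorem would fail. The bookkeeping of ramification indices, residue degrees and different exponents in Step~1 is routine but delicate.
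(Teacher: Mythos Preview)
Your strategy for \emph{integrality} is essentially the paper's: establish a restriction formula for $a_G$, invoke Brauer's theorem to reduce to one-dimensional characters of subgroups, and then for a one-dimensional $\psi$ on $H$ identify $f_H(\psi)$ with $1+u(\psi)$ where $u(\psi)$ is the last upper-numbering jump of the abelian quotient $H/\ker\psi$, so that Hasse--Arf gives $f_H(\psi)\in\Z$. (Two minor points: your restriction formula should read $\mathrm{Res}^G_H a_G = f'\,a_H + f'\,v_{K'}(\mathfrak{D}_{B'/A})\cdot r_H$, with the \emph{different} rather than the discriminant and an extra factor $f'$ on the second term; and your intermediate formula ``$a_H=\sum_{u\ge 0}(r_{H/H^u}-r_{H^u})\,du$'' is garbled, though the endpoint $f_H(\psi)=1+u(\psi)$ is correct.)

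Your argument for \emph{non-negativity}, however, has a genuine gap. Artin's induction theorem does \emph{not} assert that every character is a $\Q_{\ge 0}$-linear combination of characters induced from cyclic subgroups; it only gives a $\Q$-linear combination, and signs are unavoidable in general. (For $G=\Z/2\times\Z/2$, every character induced from a proper cyclic subgroup has degree $\ge 2$, so the trivial character of degree $1$ cannot be a non-negative combination of such.) Thus your reduction of non-negativity to the abelian case does not go through.

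The paper handles non-negativity by a completely separate and much more elementary route (Lemma~\ref{lemma:ArtinRational}): after reducing to the totally ramified case one has
\[
a_G=\sum_{i\ge 0}\frac{1}{|G:G_i|}\,\alpha_{i,*}u_{G_i},
\]
and Frobenius reciprocity then gives, for any representation $V$,
\[
f(\chi_V)=\sum_{i\ge 0}\frac{1}{|G:G_i|}\bigl(\dim V-\dim V^{G_i}\bigr)\;\in\;\Q_{\ge 0}.
\]
This formula makes non-negativity immediate without any induction theorem, and then Brauer plus Hasse--Arf supply integrality exactly as you outlined.
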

Note that $(a_G|\chi)=(\chi|a_G)$, as $a_G$ takes values in $\Z$.

The proof of the theorem consists of two steps. First, we show that
$(a_G|\chi)$ is always a non-negative \emph{rational} number. Second, to show
that $(a_G|\chi)$ is an integer, we use Brauer's theorem \ref{thm:Brauer} to
reduce to the case where $\chi$ is induced by a degree $1$ character of a
subgroup. If $\chi$ is itself a character of a representation $\rho$
of dimension $1$, then $G/\ker(\rho)$ is an abelian group corresponding to a
Galois subextension $K'/K$ of $L/K$. In this case, we can relate the number
$(a_G|\chi)$ to the jumps in the ramification filtration of $G/\ker(\rho)$,
which we know to be integers by the Theorem of Hasse-Arf \ref{thm:HasseArf}.

Let us first see that without loss of generality we may assume that $L/K$ is
totally ramified, i.e.~that the residue extension is trivial.
\begin{lemma}\label{lemma:totallyRamifiedArtin}
	Write $\alpha:G_0=G^0\hookrightarrow G$ for the inclusion of the inertia group
	into $G$. Then $\alpha_*a_{G_0}=a_G$ and $\sw_G=\alpha_* \sw_{G_0}$.
\end{lemma}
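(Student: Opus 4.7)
The plan is to verify both identities by direct computation using the explicit formula for induction along an injection given in Definition \ref{defn:inducedClassFunction}\ref{item:induction1}. Two preliminary observations are crucial. First, since $G_0 \lhd G$, a conjugate $xgx^{-1}$ lies in $G_0$ if and only if $g$ does, so for $g \in G_0$ the conjugation set $\{x \in G : xgx^{-1} \in G_0\}$ is all of $G$, while for $g \notin G_0$ it is empty. Second, by the proof of Proposition \ref{prop:compatibilityWithSubgroups}, an element $x \in B$ generating $B$ over $A$ also generates $B$ over the valuation ring of $L^{G_0}$; hence $i_{G_0}(g) = v_L(gx-x) = i_G(g)$ for every $g \in G_0 \setminus \{1\}$. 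Moreover $g \in G_0$ iff $i_G(g) \geq 1$, so $i_G(g) = 0$ whenever $g \notin G_0$, $g \neq 1$.

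For the first identity $\alpha_* a_{G_0} = a_G$, split into three cases. If $g \notin G_0$ (so $g \neq 1$), then $\alpha_* a_{G_0}(g) = 0$ by the first observation, and $a_G(g) = -f i_G(g) = 0$ by the second. If $g \in G_0 \setminus \{1\}$, then
\[
\alpha_* a_{G_0}(g) = \frac{1}{|G_0|}\sum_{x \in G} a_{G_0}(xgx^{-1}) = \frac{|G|}{|G_0|}\cdot (-i_{G_0}(g)) = -f\, i_G(g) = a_G(g),
\]
where the middle equality uses that $L/L^{G_0}$ is totally ramified (so the factor $f$ in the definition of $a_{G_0}$ is $1$) and that $a_{G_0}$ is a class function on $G_0$. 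If $g = 1$, then $\alpha_* a_{G_0}(1) = f \cdot a_{G_0}(1) = f \sum_{g' \in G_0 \setminus \{1\}} i_{G_0}(g') = f \sum_{g' \neq 1} i_G(g') = a_G(1)$, again using that $i_G$ vanishes outside $G_0$.

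For the second identity, recall that since $L/L^{G_0}$ is totally ramified, $\sw_{G_0} = a_{G_0} - u_{G_0} = a_{G_0} - r_{G_0} + 1_{G_0}$. Induction of class functions is $\Z$-linear, so it suffices to identify $\alpha_* r_{G_0}$ and $\alpha_* 1_{G_0}$. By Lemma \ref{lem:ind-char}, $\alpha_* r_{G_0}$ is the character of $\C[G] \otimes_{\C[G_0]} \C[G_0] \cong \C[G]$, hence $\alpha_* r_{G_0} = r_G$. Applying the induction formula to the constant class function $1_{G_0}$ yields $\alpha_* 1_{G_0}(g) = |G|/|G_0| = f$ for $g \in G_0$ and $0$ otherwise, which is exactly $r_{G/G_0}$ by its definition in \ref{def:swan-char}. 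Combining,
\[
\alpha_* \sw_{G_0} = \alpha_* a_{G_0} - \alpha_* r_{G_0} + \alpha_* 1_{G_0} = a_G - r_G + r_{G/G_0} = \sw_G.
\]

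There is no real obstacle here; the only care required is keeping the residue degree factors straight and handling the case $g=1$ of $\alpha_* a_{G_0}$ separately from the general conjugacy-class formula, since that formula only covers $g \neq 1$ in its usual bookkeeping.
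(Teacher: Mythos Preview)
Your proof is correct and follows essentially the same approach as the paper: both compute $\alpha_*a_{G_0}$ via the induction formula, split into the three cases $g\notin G_0$, $g\in G_0\setminus\{1\}$, $g=1$, and then handle $\sw_G$ by identifying $\alpha_*r_{G_0}=r_G$ and $\alpha_*1_{G_0}=r_{G/G_0}$. One small wording issue: in the case $g\in G_0\setminus\{1\}$ you justify $a_{G_0}(xgx^{-1})=a_{G_0}(g)$ for $x\in G$ by saying ``$a_{G_0}$ is a class function on $G_0$,'' but that only gives invariance under $G_0$-conjugation; what you actually need (and have already set up in your preliminaries) is $i_{G_0}=i_G$ on $G_0$ together with the $G$-invariance of $i_G$.
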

\begin{proof}
	Recall that for $g\in G$, we have
	\[\alpha_*a_{G_0}(g)=\frac{1}{|G_0|}\sum_{\begin{subarray}{c}x\in
			G\\xgx^{-1}\in G_0\end{subarray}}a_{G_0}(xgx^{-1}),\]
			according to Definition
			\ref{defn:inducedClassFunction}.
			As $G_0$ is normal in $G$, $xgx^{-1}\in G_0$ if and
			only if $g\in G_0$. It follows that for $g\in
			G\setminus G_0$,
			\[\alpha_* a_{G_0}(g)=0=-fi_G(g)=a_G(g).\]
			Note that if $g\in G_0$, then 
			$i_{G_0}(xgx^{-1})=i_G(xgx^{-1})=i_G(g)$. Thus, if $g\neq 1$,
			we get
			\[\alpha_{*}a_{G_0}(g)=-\frac{|G|}{|G_0|}i_G(g)=a_G(g).\]
			Finally, if $g=1$,
			\[\alpha_*a_{G_0}(1)=\frac{|G|}{|G_0|}\sum_{g\in
				G_0\setminus \{1\}}i_{G_0}(g)=f\sum_{g\in
					G\setminus \{1\}}i_G(g)=a_G(1),\]
			as $i_G(g)=0$ for $g\not\in G_0$. This proves the
			statement about $a_G$.

			For $\sw_G$, we just have to note that
			$\alpha_*r_{G_0}=r_G$ and
			$\alpha_*1_{G_0}=r_{G/G_0}$.
			\end{proof}

Hence, for the rest of the section, we assume that $L/K$ is totally ramified, i.e.~that
$G=G_0$ and $f=1$.

\begin{lemma}\label{lemma:ArtinRational}
	For every complex character $\chi$ of $G$, we have $f(\chi)=(a_G|\chi)\in
	\Q_{\geq 0}$. More precisely, if $\chi$ is the character of a
	representation $V$, then
	\begin{equation}\label{eq:codimFormula}f(\chi)=\sum_{i\geq
		0}\frac{1}{|G:G_i|}\left(\dim V -\dim
		V^{G_i}\right).\end{equation}
\end{lemma}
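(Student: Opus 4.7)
The plan is to decompose the Artin character $a_G$ as a non-negative rational linear combination of characters induced from augmentation characters of the ramification subgroups, and then apply Frobenius reciprocity.

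First I would observe that $g \in G_n$ if and only if $i_G(g) \geq n+1$, so for $g \neq 1$ we may write
\[
i_G(g) = \sum_{n \geq 0} \mathbf{1}_{G_n \setminus \{1\}}(g),
\]
where $\mathbf{1}_S$ denotes the characteristic function of $S$. Using this, a direct calculation (splitting the cases $g = 1$, $g \in G_n \setminus \{1\}$, $g \notin G_n$) shows that $a_G = \sum_{n \geq 0} a_n$, where $a_n$ is the class function supported on $G_n$ defined by $a_n(g) = -1$ for $g \in G_n \setminus \{1\}$ and $a_n(1) = |G_n| - 1$.

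Next, I would identify $a_n$ in terms of the augmentation character $u_{G_n} = r_{G_n} - \mathbf{1}_{G_n}$ of $G_n$, which takes value $|G_n|-1$ at $1$ and $-1$ elsewhere on $G_n$. Since $G_n$ is normal in $G$, the induced character formula from Definition \ref{defn:inducedClassFunction}\ref{item:induction1} simplifies: the sum over $x$ with $xgx^{-1} \in G_n$ collapses to $|G|/|G_n|$ copies of $u_{G_n}(g)$ for $g \in G_n$, and vanishes otherwise. This gives $\alpha_{n,*} u_{G_n} = |G:G_n| \cdot a_n$, where $\alpha_n: G_n \hookrightarrow G$ is the inclusion. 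Hence
\[
a_G = \sum_{n \geq 0} \frac{1}{|G:G_n|} \, \alpha_{n,*} u_{G_n}.
\]

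Finally, pairing both sides with $\chi$ and applying Frobenius reciprocity (Proposition \ref{prop:FrobeniusReciprocity}) gives
\[
(a_G \mid \chi) = \sum_{n \geq 0} \frac{1}{|G:G_n|} \, (u_{G_n} \mid \alpha_n^* \chi)_{G_n}.
\]
Writing $u_{G_n} = r_{G_n} - \mathbf{1}_{G_n}$ and using the standard identities $(r_{G_n} \mid \alpha_n^* \chi)_{G_n} = \chi(1) = \dim V$ and $(\mathbf{1}_{G_n} \mid \alpha_n^* \chi)_{G_n} = \dim V^{G_n}$ (the latter by Proposition \ref{prop:Orthogonality}\ref{item:hom} applied to the trivial representation of $G_n$), we obtain $(u_{G_n} \mid \alpha_n^* \chi)_{G_n} = \dim V - \dim V^{G_n}$, which yields formula \eqref{eq:codimFormula}. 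Since each term $\dim V - \dim V^{G_n}$ is a non-negative integer and $|G:G_n|$ is a positive integer, we conclude $f(\chi) \in \Q_{\geq 0}$. The main conceptual step is the decomposition of $a_G$ in terms of the $\alpha_{n,*} u_{G_n}$; everything else is bookkeeping with Frobenius reciprocity.
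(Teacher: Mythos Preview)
Your proof is correct and follows essentially the same approach as the paper: both establish the decomposition $a_G=\sum_{n\ge 0}\frac{1}{|G:G_n|}\,\alpha_{n,*}u_{G_n}$ and then apply Frobenius reciprocity together with the identities for $(r_{G_n}\mid\alpha_n^*\chi)$ and $(\mathbf{1}_{G_n}\mid\alpha_n^*\chi)$. The only cosmetic difference is that you introduce the intermediate class functions $a_n$ and derive the decomposition from $i_G(g)=\sum_{n\ge 0}\mathbf{1}_{G_n\setminus\{1\}}(g)$, whereas the paper verifies the same identity by direct case analysis on $g$.
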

\begin{proof}
	First recall that $u_{G_i}=r_{G_i}-1_{G_i}$ is the augmentation
	character of the $i$-th ramification group in the lower numbering.
	If $\alpha_i:G_i\hookrightarrow G=G_0$ is the inclusion map, then
	$\alpha_{i,*}u_{G_i}(g)=0$ if $g\not\in G_i$, and if $g\in
	G_{i}\setminus\{1\}$ we get
	\[\alpha_{i,*}u_{G_i}(g)=-\frac{|G|}{|G_i|}=-|G:G_i|.\]
	Hence, if $g\in G_r\setminus G_{r+1}$, then
	\[a_G(g)=-(r+1)=\sum_{i=0}^{\infty}\frac{1}{|G:G_i|}\alpha_{i,*}u_{G_i}(g).\]
	For $1\in G$, we compute
	\[\sum_{i=0}^{\infty}\frac{\alpha_{i,*}u_{G_i}(1)}{|G:G_i|}=\sum_{i=0}^{\infty}(|G_i|-1)=\sum_{g\neq
	1}i_G(g)=a_G(1),\]
	so we have proved that
	\[a_G=\sum_{i=0}^\infty\frac{1}{|G:G_i|}\alpha_{i,*}u_{G_i}.\]

	If $\phi$ is a class function on $G$, then we use
	Frobenius reciprocity (Proposition \ref{prop:FrobeniusReciprocity}) to
	compute
	\begin{align*}
		f(\phi)&=(a_G|\phi)\\
		&=\sum_{i=0}^{\infty}\frac{1}{|G:G_i|}(\alpha_{i,*}u_{G_i}|\phi)\\
		&=\sum_{i=0}^{\infty}\frac{1}{|G:G_i|}(u_{G_i}|\alpha_i^*\phi)
	\end{align*}
	Recall that $u_{G_i}=r_{G_i}-1_{G_i}$, and
	$(r_{G_i}|\alpha_i^*\phi)=\overline{\phi(1)}$. 
	We obtain
	\begin{equation*}\label{eq:codimFormulaForClassFunction}
		f(\phi):=\sum_{i=0}^{\infty}\frac{1}{|G:G_i|}\left(\overline{\phi(1)}-(1_{G_i}|\alpha^*\phi)\right)
	\end{equation*}
	Finally, if $\phi=\chi_V$ for $V$ a $E[G]$-module, then $\overline{\phi(1)}=\phi(1)=\dim
	V$ and \[(1_{G_i}|\alpha_i^*\phi)=\dim_{E[G]}(\mathbf{1}_{G_i},V)=\dim
	V^{G_i},\] so we obtain \eqref{eq:codimFormula}.
\end{proof}
\begin{corollary}\label{cor:artinCharImpliesSwanChar}
	If $a_G$ is the character of a complex representation of $G$, then so is $\sw_G$.
\end{corollary}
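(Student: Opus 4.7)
The plan is to reduce to the totally ramified case and then use Corollary \ref{cor-irred-rep-in-rep}, which says that a class function is the character of a representation if and only if its inner product with every irreducible character lies in $\Z_{\geq 0}$.

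First I would handle the reduction to the totally ramified case. By Lemma \ref{lemma:totallyRamifiedArtin} we have $\sw_G = \alpha_* \sw_{G_0}$, where $\alpha : G_0 \hookrightarrow G$ is the inclusion of the inertia group. Since $L/L^{G_0}$ is totally ramified with Galois group $G_0$, it suffices to prove $\sw_{G_0}$ is a character of $G_0$: then Lemma \ref{lem:ind-char} produces a representation of $G$ whose character is $\sw_G$. So without loss of generality assume $G = G_0$, in which case $r_{G/G_0} = 1_G$ and hence $\sw_G = a_G - u_G$, where $u_G = r_G - 1_G$ is the augmentation character.

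Next I would verify the criterion of Corollary \ref{cor-irred-rep-in-rep}. Let $\chi = \chi_V$ be the character of an irreducible complex representation $V$ of $G$. Then
\[
(\sw_G \mid \chi) \;=\; (a_G \mid \chi) - (u_G \mid \chi) \;=\; (a_G\mid \chi) - \bigl(\dim V - \dim V^G\bigr).
\]
Both terms on the right are non-negative integers: the first because $a_G$ is assumed to be a character (Theorem \ref{thm:ExistenceOfArtinComplex}), the second by inspection. Hence $(\sw_G\mid\chi)\in\Z$. For non-negativity I would plug in the explicit formula \eqref{eq:codimFormula} from Lemma \ref{lemma:ArtinRational}. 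In the totally ramified case $G_0 = G$, so the $i=0$ term equals $\dim V - \dim V^G = (u_G\mid \chi)$, and
\[
(\sw_G \mid \chi) \;=\; \sum_{i \geq 1} \frac{1}{|G : G_i|}\bigl(\dim V - \dim V^{G_i}\bigr) \;\geq\; 0.
\]
Combining integrality and non-negativity gives $(\sw_G\mid\chi) \in \Z_{\geq 0}$ for every irreducible $\chi$, and Corollary \ref{cor-irred-rep-in-rep} then concludes that $\sw_G$ is the character of a complex representation of $G$.

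There is essentially no hard step here: all of the substance was absorbed into Theorem \ref{thm:ExistenceOfArtinComplex} (which in turn relied on Hasse--Arf), and the present corollary is just a bookkeeping check that subtracting the augmentation character from $a_G$ absorbs exactly the $i=0$ contribution in \eqref{eq:codimFormula}, leaving a manifestly non-negative expression.
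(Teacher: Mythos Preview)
Your argument is correct and mirrors the paper's proof almost verbatim: reduce to the totally ramified case via Lemma~\ref{lemma:totallyRamifiedArtin}, use \eqref{eq:codimFormula} to see that $(\sw_G\mid\chi)=\sum_{i\ge 1}\frac{1}{|G:G_i|}(\dim V-\dim V^{G_i})\ge 0$, and use that $a_G$ is a character to get integrality. One small slip: the criterion you invoke is Corollary~\ref{cor:classFunctionCharacter}, not Corollary~\ref{cor-irred-rep-in-rep} (the latter presupposes you already have a representation).
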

\begin{proof}
	We again assume that $L/K$ is totally ramified, see Lemma \ref{lemma:totallyRamifiedArtin}. If $\chi$ is the
	character of the representation $V$, then 
	$\left<r_G-1_G,\chi\right>=\dim V/V^G$. As $G=G_0$, the preceeding lemma shows
	that
	\[\left<\sw_G,\chi\right>=f(\chi)-\dim V/V^G=\sum_{i\geq 1}\frac{1}{|G:G_i|}\dim
		V/V^{G_i}\geq 0.\]
	Thus, if $a_G$ is a character of $G$, then $f(\chi)$ is an integer and
	hence $\left<\sw_G,\chi\right>\in
	\Z_{\geq 0}$, which implies that  $\sw_G$ is a character of $G$.
\end{proof}

Now we concentrate on characters of $1$-dimensional representations.
\begin{lemma}\label{lemma:1dimensionalCharacter}
	Let $\chi$ be the character of a $1$-dimensional complex
	representation $V$ of $G$. Then $\chi(G)\subset \C^\times$, and
	$\chi:G\rightarrow \C^\times$ is a homomorphism of groups. Let
	$H:=\ker(\chi)$ and $K'/K$ the associated subextension of $L/K$.
	Write $c'_{\chi}$ for the largest integer for which
	$(G/H)_{c'_{\chi}}\neq \{1\}$. Then
	\[f(\chi)=\phi_{K'/K}(c'_{\chi})+1\in \Z,\]
	where $\phi_{K'/K}$ is the function from Defintion
	\ref{defn:HerbrandFunction}.
\end{lemma}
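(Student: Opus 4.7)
My plan is to reduce the computation of $f(\chi)$ on $G$ to the analogous computation on the quotient $G/H = \Gal(K'/K)$, and then invoke the explicit formula \eqref{eq:formulaForPhi} together with Hasse-Arf for integrality.

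First I would record the easy preliminaries: since $\dim V=1$, the map $\rho\colon G\to \GL(V)\cong\C^\times$ coincides with $\chi$, so $\chi$ is a group homomorphism with kernel $H$, and it factors as $\chi=\pi^{*}\bar\chi$ where $\pi\colon G\twoheadrightarrow G/H$ and $\bar\chi\colon G/H\hookrightarrow\C^{\times}$ is injective. Because we have already reduced to $L/K$ totally ramified ($G=G_{0}$), Proposition~\ref{prop:compatibilityWithSubgroups} and Corollary~\ref{cor:Herbrand} (at index $0$) imply that $L/K'$ and $K'/K$ are also totally ramified, so the ramification index of $L/K'$ equals $|H|$.

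The heart of the argument is the identity $\pi_{*}a_{G}=a_{G/H}$. This I would verify by a direct case distinction using the definition of $\pi_{*}$ for a surjection (Definition~\ref{defn:inducedClassFunction}\ref{item:induction2}) together with the formula
\[
i_{G/H}(s)=\tfrac{1}{|H|}\sum_{\sigma\mapsto s}i_{G}(\sigma)
\]
from the proposition preceding Corollary~\ref{cor:Herbrand}: for $s\neq 1$ the sum $\tfrac{1}{|H|}\sum_{h\mapsto s}a_{G}(h)=-i_{G/H}(s)=a_{G/H}(s)$ directly, and for $s=1$ both sides equal $\tfrac{1}{|H|}\sum_{\sigma\in G\setminus H}i_{G}(\sigma)$. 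Granting this, Frobenius reciprocity (Proposition~\ref{prop:FrobeniusReciprocity}) gives
\[
f(\chi)=(a_{G}\mid\pi^{*}\bar\chi)_{G}=(\pi_{*}a_{G}\mid\bar\chi)_{G/H}=(a_{G/H}\mid\bar\chi)_{G/H}.
\]

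Now I would apply Lemma~\ref{lemma:ArtinRational} to the totally ramified extension $K'/K$ with Galois group $G/H$ and the faithful character $\bar\chi$: since $\bar\chi$ is injective, $\dim V^{(G/H)_{j}}=0$ precisely when $(G/H)_{j}\neq 1$, so
\[
(a_{G/H}\mid\bar\chi)_{G/H}=\sum_{j=0}^{c'_{\chi}}\frac{|(G/H)_{j}|}{|G/H|}=\phi_{K'/K}(c'_{\chi})+1
\]
by formula~\eqref{eq:formulaForPhi}. This gives the claimed equality $f(\chi)=\phi_{K'/K}(c'_{\chi})+1$. The one remaining point—which I expect to be the only nontrivial step beyond the Frobenius reciprocity calculation—is integrality: the value $\phi_{K'/K}(c'_{\chi})$ is by construction the largest real $v$ with $(G/H)^{v}\neq 1$, hence a jump of the upper numbering filtration of $G/H$. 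Since $\bar\chi$ realizes $G/H$ as a finite subgroup of $\C^{\times}$, the group $G/H$ is cyclic and in particular abelian, so the Hasse-Arf Theorem~\ref{thm:HasseArf} forces this jump to lie in $\Z$, completing the proof.
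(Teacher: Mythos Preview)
Your proof is correct and takes a genuinely different route from the paper's. The paper stays on the level of $G$: it applies the codimension formula \eqref{eq:codimFormula} from Lemma~\ref{lemma:ArtinRational} directly to the extension $L/K$, computes $f(\chi)=\phi_{L/K}(c_{\chi})+1$ where $c_{\chi}=\psi_{L/K'}(c'_{\chi})$, and then invokes the transitivity relation $\phi_{L/K}=\phi_{K'/K}\circ\phi_{L/K'}$ (Proposition~\ref{prop:transitivityOfPhi}) together with Corollary~\ref{cor:quotientCalculation} to rewrite this as $\phi_{K'/K}(c'_{\chi})+1$. You instead descend to the quotient first: the identity $\pi_{*}a_{G}=a_{G/H}$ (which is correct, and is essentially the statement that the Artin character is functorial for quotients, packaged via the proposition on $i_{G/H}$) lets you transport the whole computation to $K'/K$ before applying \eqref{eq:codimFormula}, so that transitivity of $\phi$ is never explicitly needed. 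Both approaches end with Hasse--Arf for the cyclic group $G/H$. Your route is arguably cleaner conceptually, since it isolates the ``push $a_{G}$ down to the quotient'' step as a standalone fact; the paper's route has the advantage of not needing to verify that step separately, using instead tools (transitivity of $\phi$, Herbrand) already established earlier in the section.
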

\begin{proof}
	Write $c_{\chi}:=\psi_{L/K'}(c'_{\chi})$.
	From Corollary \ref{cor:quotientCalculation} we know that
	$(G/H)_{c'_{\chi}}$ is the image of $G_{c_\chi}$ in $G/H$. Then $c_{\chi}$ is
	the largest integer, such that $G_{c_\chi}$ does not act trivially on
	$V$. Since $V$ is $1$-dimensional, this means that
	\[V^{G_i}=\begin{cases}V & \text{ if }i> c_{\chi}\\0 &\text{ else,
		}\end{cases}\]
	so by \eqref{eq:codimFormula}, we obtain
	\[f(\chi)=\sum_{i=0}^{c_{\chi}}\frac{1}{|G:G_i|}=\phi_{L/K}(c_{\chi})+1,\]
	according to \eqref{eq:formulaForPhi} below Definition
	\ref{defn:HerbrandFunction}.
	Finally, we have seen in Proposition \ref{prop:transitivityOfPhi} that
	\[\phi_{L/K}(c_{\chi})=\phi_{K'/K}\phi_{L/K'}(\psi_{L/K'}(c'_{\chi}))=\phi_{K'/K}(c'_{\chi}),\]
	so we get $f(\chi)=\phi_{K'/K}(c'_{\chi})+1$ as claimed. Moreover, as
	$G/H\subset \C^\times$, the extension $K'/K$ is abelian, so
	$\phi_{K'/K}(c'_{\chi})\in \Z$ according to the Hasse-Arf Theorem
	\ref{thm:HasseArf}.
\end{proof}

Now we can complete the proof of Theorem \ref{thm:ArtinIntegers} and hence of
Theorem \ref{thm:ExistenceOfArtinComplex}.
\begin{proof}[Proof of Theorem \ref{thm:ArtinIntegers}]
	Let $\chi$ be the character of a $\C[G]$-module $V$.  By Lemma
	\ref{lemma:ArtinRational} we know that $f(\chi)\in \Q_{\geq 0}$. It
	thus remains to show that $f(\chi)$ is an integer. Brauer's Theorem
	\ref{thm:Brauer} tells us that $\chi$ can be written as 
	\[\chi=\sum_{i=1}^ra_i \chi'_i\]
	with $a_i\in \Z$ and $\chi'_i=\gamma_{i,*}\chi_i$, where
	$\gamma_i:H_i\hookrightarrow G$ is a subgroup and $\chi_i$ the
	character of a \emph{$1$-dimensional} representation of $H_i$.
	Thus it remains to show that $f(\chi'_i)\in \Z$.

	Note that $f(\chi'_i)=(a_G|\chi'_i)=(\gamma_i^*a_G|\chi_i)$ by
	Frobenius reciprocity, so we
	have to understand the class function $\gamma_i^*a_G$.
	\begin{lemma}
		If $\gamma:H\hookrightarrow G$ is a subgroup, then there
		exists a non-negative integer $\lambda$, such that
		\[\gamma^*a_G=\lambda r_H + a_H.\]
	\end{lemma}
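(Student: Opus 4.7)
The plan is to compute $\gamma^*a_G - a_H$ pointwise on $H$ and identify the constant $\lambda$ with the valuation of a different. Since we may assume $L/K$ totally ramified (by Lemma \ref{lemma:totallyRamifiedArtin}), the intermediate extension $L^H/K$ and the top extension $L/L^H$ are both totally ramified, with $\Gal(L/L^H) = H$ and $[L:L^H] = |H|$.

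First I would treat $h \in H \setminus \{1\}$. By Proposition \ref{prop:compatibilityWithSubgroups}, $H_i = G_i \cap H$, so for $h \in H$ the conditions $h \in G_n$ and $h \in H_n$ coincide. Since $i_G(h) \ge n+1 \Leftrightarrow h \in G_n$ (and similarly for $i_H$), this forces $i_G(h) = i_H(h)$. Hence $(\gamma^*a_G)(h) = -i_G(h) = -i_H(h) = a_H(h)$, so the identity $\gamma^*a_G = \lambda r_H + a_H$ must hold at every $h \neq 1$, whatever $\lambda$ is (since $r_H(h) = 0$ there).

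Next, evaluating at $h = 1$ and using $r_H(1) = |H|$ forces
\[
\lambda \;=\; \frac{1}{|H|}\Bigl((\gamma^*a_G)(1) - a_H(1)\Bigr) \;=\; \frac{1}{|H|}\Bigl(\sum_{g \in G \setminus \{1\}} i_G(g) \;-\; \sum_{h \in H \setminus \{1\}} i_G(h)\Bigr) \;=\; \frac{1}{|H|}\sum_{g \in G \setminus H} i_G(g),
\]
which is visibly non-negative. The only real work is to see that $\lambda$ is an integer. For this I would invoke Proposition \ref{valuation-of-discriminant}\ref{item:valuation-of-discriminant:b} applied both to $L/K$ and to $L/L^H$, giving
\[
v_L(\mathfrak{D}_{B/A}) = \sum_{g \neq 1} i_G(g), \qquad v_L(\mathfrak{D}_{B/B^H}) = \sum_{h \in H \setminus \{1\}} i_G(h),
\]
where $B, B^H, A$ denote the rings of integers in $L, L^H, K$. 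Combined with the transitivity of the different (Proposition \ref{prop:transitivityOfDifferent}) and the fact that the ramification index of $L/L^H$ equals $|H|$, one gets
\[
\sum_{g \in G \setminus H} i_G(g) \;=\; v_L(\mathfrak{D}_{B/A}) - v_L(\mathfrak{D}_{B/B^H}) \;=\; |H|\cdot v_{L^H}(\mathfrak{D}_{B^H/A}).
\]
Dividing by $|H|$ identifies $\lambda$ with $v_{L^H}(\mathfrak{D}_{B^H/A}) \in \Z_{\ge 0}$, completing the proof.

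No step is really hard; the main point to get right is the bookkeeping between $i_G$ and $i_H$, which ultimately rests on $H_i = G_i \cap H$, and the conversion between differents via transitivity. It is worth noting (though not needed) that this identifies $\lambda$ with the valuation of the discriminant ideal of $L^H/K$, a natural invariant of the intermediate extension.
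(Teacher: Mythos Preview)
Your proof is correct and follows essentially the same approach as the paper: for $h\neq 1$ you use $i_G(h)=i_H(h)$ (the paper argues this more directly by noting that the same generator of $B$ over $A$ works over $B^H$), and for $h=1$ you identify $a_G(1)$ and $a_H(1)$ with valuations of differents and apply transitivity to obtain $\lambda=v_{L^H}(\mathfrak{D}_{B^H/A})$. The only cosmetic difference is that you first rewrite $\lambda$ as $\frac{1}{|H|}\sum_{g\in G\setminus H}i_G(g)$ before invoking the different, whereas the paper goes straight to $a_G(1)=v_L(\mathfrak{D}_{B/A})$; the ingredients and the resulting value of $\lambda$ are the same.
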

	\begin{proof}
		First let $h\in H\setminus \{1\}$. Then $r_H(h)=0$, and
		$i_G(h)=i_H(h)$, so $\gamma^*a_G(h)=\lambda r_H(h)+a_H(h)$ for
		any $\lambda$.

		The interesting case is $h=1$.  Recall that $A,B$ denote the
		valuation rings of $K,L$. Let $x$ be a generator of
		$B$ as an $A$-algebra. Then Theorem \ref{thm:ableitung}
		shows that $\mathfrak{D}_{B/A}=(f'(x))$, if $f$ is the monic
		minimal polynomial of $x$. Note that $f'(x)=\prod_{g\neq
		1}(x-gx)$, so $v_L(\mathfrak{D}_{B/A})=a_G(1)$, see
		Proposition \ref{valuation-of-discriminant}. Similary, if
		$K'=L^H$ and if $B'$ denotes the valuation ring of $K'$, then 
		$a_H(1)=v_L(\mathfrak{D}_{B/B'})$. Finally, Proposition
		\ref{prop:transitivityOfDifferent} shows that
		\[\mathfrak{D}_{B/A}=\mathfrak{D}_{B/B'}\mathfrak{D}_{B'/A},\]
		and applying $v_L$ gives
		\[a_G(1)=a_H(1)+v_L(\mathfrak{D}_{B'/A})=a_H(1)+|H|v_{K'}(\mathfrak{D}_{B'/A}).\]
		Since $r_H(1)=|H|$, the proof is complete, taking
		$\lambda=v_{K'}(\mathfrak{D}_{B'/A})$.
	       %

	       %
	       %
	       %
		%
	       %
	\end{proof}
	Returning to the proof of Theorem \ref{thm:ArtinIntegers}, we use the
	lemma to compute
	\begin{align*}
		f(\chi'_i)&=(\gamma_i^*a_G|\chi_i)\\
		&=\lambda(r_{H_i}|\chi_i)+(a_{H_i}|\chi_i)\\
		&=\lambda\chi_i(1)+(a_{H_i}|\chi_i)
	\end{align*}
	Since $\chi_i$ is the character of a $1$-dimensional representation of
	$H$, we have $\chi_i(1)=1$, and we know that $(a_H|\chi_i)\in \Z$, by
	Lemma \ref{lemma:1dimensionalCharacter}.
\end{proof}

\begin{remark}
	Note that the existence of the Artin representation implies the
	Hasse-Arf theorem: Let $L/K$ be an abelian Galois extension with group
	$G$. As before, it suffices to prove Proposition
	\ref{prop:simpleHasseArf}. We may hence assume that $G$ is cyclic of
	order $n$. Fixing a primitive $n$-th root of unit in $E$, we get an
	injective homomorphism $\chi:G\rightarrow E^\times$. Let $\mu$ be the
	largest integer such that $G_{\mu}\neq 0$. The existence of the Artin
	representation shows that $f(\chi)=(a_G|\chi)\in \Z$. But as in Lemma
	\ref{lemma:1dimensionalCharacter} we see that
	$f(\chi)=\phi_{L/K}(\mu)+1$, so $\phi_{L/K}(\mu)\in \Z$, which  implies 
	Proposition \ref{prop:simpleHasseArf}.
\end{remark}
\subsection{Integrality over $\Z_{\ell}$}\label{sec:rationality}
As before, let $L/K$ be a Galois extension of complete discretely valued fields with group
$G$, and assume that the extension of the residue fields is separable of
degree $f$.
We have seen in the previous section that the Artin function $a_G$ and the
Swan function $\sw_G:=a_G-(r_G-r_{G/G_0})$ are 
characters of  complex representations. For this reason, we call these
$\Z$-valued functions the \emph{Artin character} and the \emph{Swan
character}. By Proposition
\ref{prop:everythingComesFromBelow}, we know that the attached representations are
realizable over $\overline{\Q}$, and hence also over $\overline{\Q_{\ell}}$,
for any prime $\ell$. It is therefore meaningful to ask, whether the Artin and
Swan representations are defined over a ``nice'' subfield (or ring) of
$\overline{\Q_{\ell}}$. Answering this question is the main purpose of this
section.
\begin{theorem}\label{thm:integralityOfSwan}
	Let $\ell$ be a prime different from the residue characteristic of
	$K$. Then 
		\begin{enumerate}[label=\emph{(\alph*)},ref={(\alph*)}]
		\item\label{item:rationality} The Artin and Swan representations are realizable over
			$\Q_{\ell}$.
		\item\label{item:integrality} There exists a finitely
			generated projective left-$\Z_{\ell}[G]$-module $\Sw_G$,
			which is unique up to isomorphism, such that
			$\Sw_G\otimes_{\Z_\ell}\Q_{\ell}$ is isomorphic to the
			Swan representation, i.e.~has character $\sw_G$.
	\end{enumerate}
\end{theorem}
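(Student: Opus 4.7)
The plan is to reduce both parts to the Swan character alone. Since $a_G = \sw_G + (r_G - r_{G/G_0})$ by Definition \ref{def:swan-char}, and since the regular characters of $G$ and of the finite quotient $G/G_0$ are defined over $\Z$, it suffices to prove (a) and (b) for $\sw_G$. The critical input, extracted from the formula
\[
\sw_G = \sum_{i\geq 1}\frac{1}{|G:G_i|}\,\alpha_{i,*}u_{G_i}
\]
derived in the proof of Lemma \ref{lemma:ArtinRational} (after reducing to the totally ramified case via Lemma \ref{lemma:totallyRamifiedArtin}), is that $\sw_G$ vanishes on $G\setminus G_1$: for $g\in G_0\setminus G_1$ one has $i_G(g)=1$, and the two contributions $a_G(g)=-f$ and $(r_G-r_{G/G_0})(g)=-f$ cancel; for $g\notin G_0$ everything vanishes. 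By Corollary \ref{cor:structure-of-ramification-groups} the wild inertia $G_1$ is a $p$-group, where $p$ is the residue characteristic; since $\ell\neq p$, the Swan character is supported on elements of order prime to $\ell$.

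For part (a), I would apply Brauer's Theorem \ref{thm:Brauer} to write $\sw_G$, viewed first as a complex character, as a $\Z$-linear combination $\sum_i n_i\,\Ind_{H_i}^G\chi_i$ with each $\chi_i\colon H_i\to \C^\times$ one-dimensional. Transporting to $\overline{\Q_\ell}$, each $\chi_i$ has image in a finite group of roots of unity. By Proposition \ref{prop:comesFromBelow} applied with $E=\Q_\ell\subset F=\overline{\Q_\ell}$, realizability over $\Q_\ell$ amounts to showing that the class of $\sw_G$ lies in $R_{\Q_\ell}(G)\subset R_{\overline{\Q_\ell}}(G)$. Because $\sw_G$ is $\Z$-valued, the coefficients in its expansion in absolutely irreducible characters are $\Gal(\overline{\Q_\ell}/\Q_\ell)$-invariant, so the constituents come in full Galois orbits. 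The remaining Schur-index obstruction to descent is killed by the fact that $\sw_G$ vanishes on $\ell$-singular elements, via a Fong--Swan style argument over the local field $\Q_\ell$.

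For part (b), the main tool is Swan's theorem in modular representation theory: a virtual character of $G$ with values in $\Q_\ell$ is the character of a finitely generated projective $\Z_\ell[G]$-module if and only if it vanishes on all $\ell$-singular elements. The support condition established for $\sw_G$ is precisely this hypothesis, so the projective $\Z_\ell[G]$-module $\Sw_G$ exists. Uniqueness follows from $\Z_\ell$ being a complete discrete valuation ring: a finitely generated projective $\Z_\ell[G]$-module is determined up to isomorphism by its reduction modulo $\ell$ via Hensel-type lifting of idempotents in $\Z_\ell[G]$, and the class of this reduction in $K_0(\F_\ell[G])$ is in turn recoverable from the character on the generic fibre through the injectivity of the Cartan map.

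The main obstacle is the Schur-index step in part (a); everything else (Brauer's Theorem, Swan's theorem on projective $\Z_\ell[G]$-modules, Hensel lifting of idempotents) is a standard input either already present in the text or citable directly. If the Brauer-based approach stalls at the Schur-index issue, an alternative would be to return to the explicit $\Q$-linear combination $\sw_G=\sum_{i\geq 1}|G:G_i|^{-1}\Ind_{G_i}^G u_{G_i}$ and clear denominators by passing to the upper numbering, where the Hasse--Arf Theorem \ref{thm:HasseArf} imposes integrality on the combination.
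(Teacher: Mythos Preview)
Your proposal is close to the paper's argument in spirit, but two issues should be addressed.

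First, the separate attack on (a) is unnecessary: once you have (b), tensoring $\Sw_G$ with $\Q_\ell$ realises $\sw_G$ over $\Q_\ell$, and then $a_G=\sw_G+(r_G-r_{G/G_0})$ gives the Artin representation over $\Q_\ell$ as well. The paper simply remarks that (b) implies (a) and never confronts a Schur-index problem. Your Brauer/Galois-orbit/Fong--Swan detour is avoidable.

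Second, and more substantively, the ``Swan's theorem'' you invoke for (b) is slightly too strong as stated. The correct statement (this is the paper's Theorem~\ref{thm:mainRepThmIntegrality}\ref{thm:mainRepThmIntegrality-comes-from-below}) only says that a class $x\in R_{E'}(G)$ whose character takes values in $\Q_\ell$ and vanishes on $\ell$-singular elements lies in the \emph{image} of $e:P_{\Z_\ell}(G)\hookrightarrow R_{\Q_\ell}(G)$, i.e.\ is a \emph{virtual} projective class. It does not by itself force the preimage to lie in $P^+_{\Z_\ell}(G)$: the map $e$ is given by the decomposition matrix, and $Da\ge 0$ need not imply $a\ge 0$ in general. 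The paper closes this gap via Proposition~\ref{prop:integralityAbstract}, whose hypothesis (b) requires that some integer multiple $n\cdot\sw_G$ be the character of an \emph{actual} projective $\mathcal O_{E'}[G]$-module; granting this, a divisibility argument in the free abelian group $P_{\Z_\ell}(G)$ forces effectiveness. To verify that hypothesis, the paper uses precisely the formula you already wrote down: after reducing to the totally ramified case,
\[
|G_0|\cdot\sw_G=\sum_{i\ge 1}|G_i|\,\alpha_{i,*}u_{G_i},
\]
and since each $G_i$ for $i\ge 1$ is a $p$-group with $p\neq\ell$, the augmentation module $U_{G_i}$ is a projective $\Z_\ell[G_i]$-module (any $\Z_\ell$-free $\Z_\ell[G_i]$-module is projective when $(|G_i|,\ell)=1$), hence its induction is a projective $\Z_\ell[G]$-module. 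Thus $|G|\sw_G$ is exhibited as the character of an honest projective, and Proposition~\ref{prop:integralityAbstract} applies. You have this formula in hand; you just need to use it for the effectiveness step rather than only for the support computation. Your ``alternative'' suggestion of clearing denominators is in fact the right move, but the point is the \emph{projectivity} of the resulting summands (coming from $|G_i|$ prime to $\ell$), not a further appeal to Hasse--Arf.
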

\begin{remark}
	\begin{enumerate}[label={(\alph*)}]
		\item The analogous question about the realizability of
			$\sw_G$ over $\Q$ or even $\Z$ has a negative answer.
			Indeed, there are even examples of extensions where
			$\sw_G$ is not realizable over $\R$,
		\cite{Serre/Rationality}.
		\item There is no direct construction of the
			$\Z_\ell[G]$-module $\Sw_G$ known, but there is 
			a cohomological description of $\Sw_G$, see
			section \ref{sec:GOS-CohSwan}. Note however, that for this
			construction, one needs (amongst other
			ingredients), Theorem \ref{thm:integralityOfSwan}.
	\end{enumerate}
\end{remark}

Clearly \ref{item:integrality} of Theorem \ref{thm:integralityOfSwan} implies
\ref{item:rationality}.  In the proof of Theorem
\ref{thm:integralityOfSwan}, \ref{item:integrality}, one needs a good
understanding of the representation theory of $G$ over three different bases:
$\Q_{\ell}$, $\Z_{\ell}$ and $\F_{\ell}$. Until now, we have mainly discussed representations over a field of
characteristic prime to the order of the group (recall that even if $L/K$ is totally
ramified,  $G$ is an extension of a cyclic
group of order prime to $p$ and the  $p$-group $G_1$, so $|G|$ will not
necessarily be
prime to $\ell$). A complete presentation of the necessary
representation theory is beyond the scope of these notes, but we will
nonetheless try to give an overview.

\subsubsection{Projective representations}
If $E$ is a field and $G$ a group such that $|G|$ is invertible in $E$, then
it is a consequence of Maschke's Theorem \ref{thm:maschke} that two finitely generated
$E[G]$-modules are isomorphic if and only if their classes in $R_E(G)$ agree.
This is no longer the case if $|G|$ is not invertible in $E$ (Example
\ref{ex:repsInPosChar}).  We can recover this nice property, once we
concentrate on \emph{projective} $E[G]$-modules. We write $P_E(G)$ for the
Grothendieck group of projective finitely generated $E[G]$-modules.

Recall that if $R$ is a (not necessarily commutative) ring, and $M$ a
left-$R$-module, then $M$ is called \emph{projective}, if there exists a free
$R$-module $F$, such that $M$ is a direct summand of $F$.  

In the proof of Proposition \ref{prop:compositionSeries} we saw that an irreducible representation of a finite group $G$ on a field $E$ is a quotient of $E[G]$. Thus an irreducible representation is projective if and only if it is a direct summand of $E[G]$.

\begin{definition}Let $R$ be a ring.
	\begin{itemize}
		\item A surjective morphism $f:V\rightarrow W$ of left $R$-modules is
			an \emph{essential epimorphism} if for every
			proper submodule $V'\subsetneqq V$ we have
			$f(V')\subsetneqq W$.
		\item An essential epimorphism $f:P\rightarrow V$ with $P$
			projective is called \emph{projective cover} or
			\emph{projective envelope} of $V$\index{projective
			cover}.
	\end{itemize}
\end{definition}

\begin{lemma}\label{lemma:maxSSQuotient}
	Let $E$ be a field, $G$ a finite group and $V$ a finitely generated
	$E[G]$-module. Up to unique isomorphism
	there exists a unique quotient $q_{\max}:V\twoheadrightarrow S_{\max}$ with $S_{\max}$
	semi-simple  and $\dim_E S_{\max}$ maximal among all semi-simple quotients of
	$V$. 

	If $f:V\twoheadrightarrow S'$ is any quotient with $S'$
	semi-simple, then $S'$ is isomorphic to $S_{\max}$ if and only if $f$ is
	an essential epimorphism.
\end{lemma}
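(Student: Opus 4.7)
The plan is to identify $S_{\max}$ with $V/\rad(V)$, where $\rad(V) := J \cdot V$ and $J = J(E[G])$ is the Jacobson radical of the finite-dimensional (hence left-Artinian) $E$-algebra $E[G]$. First I would record the standard facts from the theory of Artinian rings: $E[G]/J$ is semisimple by Wedderburn, $J$ is nilpotent, and for the finitely generated module $V$ one has $\rad(V) = JV = \bigcap\{M \subsetneq V \mid M \text{ maximal}\}$. The quotient $V/\rad(V)$ is annihilated by $J$, so it is a module over the semisimple ring $E[G]/J$, hence semisimple. Moreover, for any submodule $K \subseteq V$ the quotient $V/K$ is semisimple if and only if $J$ annihilates it, i.e.\ if and only if $K \supseteq JV = \rad(V)$. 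Consequently every semisimple quotient of $V$ factors uniquely through the canonical surjection $q_{\max}: V \twoheadrightarrow V/\rad(V)$, which both gives maximality of $\dim_E V/\rad(V)$ and the uniqueness of $S_{\max}$ up to unique isomorphism.

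Next I would verify that $q_{\max}$ itself is an essential epimorphism by a direct application of Nakayama's lemma. If $V' \subsetneq V$ satisfied $q_{\max}(V') = V/\rad(V)$, then $V' + JV = V$, whence $J \cdot (V/V') = V/V'$; since $V/V'$ is finitely generated and $J$ is nilpotent (equivalently, contained in the Jacobson radical), Nakayama forces $V/V' = 0$, a contradiction. Since any surjection $V \twoheadrightarrow S'$ with $S' \cong S_{\max}$ differs from $q_{\max}$ only by an isomorphism of the target, all such surjections are essential.

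For the converse, suppose $f: V \twoheadrightarrow S'$ is an essential epimorphism with $S'$ semisimple. Setting $K := \ker f$, the first step gives $K \supseteq \rad(V)$, so that $K/\rad(V)$ is a submodule of the semisimple module $V/\rad(V)$ and admits a complement $W/\rad(V)$. Then $V = K + W$, so $f(W) = f(V) = S'$; essentiality of $f$ then forces $W = V$, whence $K = V \cap K = W \cap K = \rad(V)$. Therefore $S' \cong V/\rad(V) = S_{\max}$, completing the characterization.

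The only step requiring any real care is the foundational identity $\rad(V) = JV = \bigcap\{M \subsetneq V \text{ maximal}\}$ together with Nakayama's lemma for the module $V/V'$; both are standard for finitely generated modules over Artinian rings and I would simply cite them. With those inputs, the rest of the argument is a formal manipulation of the short exact sequence $0 \to \rad(V) \to V \to V/\rad(V) \to 0$ and the splitting properties of semisimple modules.
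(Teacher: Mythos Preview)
Your proof is correct. Both your argument and the paper's identify $S_{\max}$ with $V/\rad(V)$, but the routes differ. The paper works from first principles: it first picks a semisimple quotient $q_{\max}$ of maximal dimension by finiteness, then shows directly (by considering $q_{\max}\oplus q$ for a simple quotient $q$) that $\ker q_{\max}$ is contained in every maximal submodule, hence equals $\rad(V)$ defined as the intersection of maximal submodules. Essentiality of $q_{\max}$ is argued by hand (a proper $M$ surjecting onto $S_{\max}$ would surject onto every simple quotient, contradicting that $V/M$ is simple). You instead invoke the structure theory of the Artinian ring $E[G]$: set $\rad(V)=JV$ with $J$ the Jacobson radical, use Wedderburn to see $V/JV$ is semisimple, characterize semisimple quotients as those annihilated by $J$, and deduce essentiality from Nakayama. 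Your approach is cleaner and more conceptual, at the cost of citing more background; the paper's is more self-contained. The converse direction (essential $\Rightarrow$ $S'\cong S_{\max}$) is essentially the same in both: take a complement of $\ker f/\rad(V)$ in the semisimple module $V/\rad(V)$ and use essentiality to force it to be everything.
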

\begin{proof}
	Simple quotients of $V$ correspond to maximal submodules of $V$. Thus,
	as $\dim_E V<\infty$, there exists a quotient $q_{\max}:V\twoheadrightarrow S_{\max}$  such
	that $S_{\max}$ is semi-simple and such that its dimension is maximal among
	the semi-simple quotients of $V$.
	If $q:V\twoheadrightarrow S$ is a simple quotient, then $q_{\max}\oplus
	q:V\rightarrow S_{\max}\oplus S$ is not surjective, and as $S$ is
	simple, this implies that the image of $q_{\max}\oplus q$ does not
	contain $0\oplus S\subset S_{\max}\oplus S$, so $\image(q_{\max})\cap (0\oplus S)=0$.
	It follows that $\ker(q_{\max})\subset\ker(q)$. 
	
	Let $\rad(V)$ denote the intersection of all maximal submodules of $V$. we have proved that $\ker(q_{\max})\subset \rad(V)$. On the other hand, an element $v\in \rad(V)$ maps to $0$ in every simple quotient, and hence also in every semi-simple quotient. Thus
	$\ker(q_{\max})=\rad(V)$ and up to unique isomorphism, $q_{\max}$ is the quotient morphism $V\twoheadrightarrow V/\rad(V)$.
%
%
%
%
%
%
%

	Note that the maximal semi-simple quotient  is an essential
	epimorphism: Otherwise there would exist a maximal submodule
	$M\subsetneq V$  such that $q_{\max}(M)=S_{\max}$. This would mean that
	$M$ maps surjectively onto every simple quotient of $V$, which is
	impossible, as $V/M\neq 0$ is simple.

	If $q':V\rightarrow S'$ is a semi-simple quotient, then
	$S'$ is isomorphic to a direct summand of $S_{\max}$.
	In particular $q'(q_{\max}^{-1}(S'))=S'$, so if $q'$ is an essential
	epimorphism, then $q_{\max}^{-1}(S')=V$ and hence $S_{\max}=S'$.
\end{proof}
\begin{definition}
	Thanks to the lemma, we can talk about the \emph{maximal semi-simple
	quotient of $V$}.\index{maximal semi-simple quotient}
\end{definition}

\begin{proposition}\label{prop:projectiveEnvelope}
	Let $G$ be a group and $E$ a field.
	\begin{enumerate}[label=\emph{(\alph*)},ref=(\alph*)]
		\item\label{prop:projectiveEnvelope-K-projective} If $|G|$ is
			invertible in $E$, then every finitely generated $E[G]$-module is projective.
		\item\label{item:projectiveEnvelope} For every finitely
			generated $E[G]$-module $V$, there exists a projective
			envelope $f_V:P_V\rightarrow V$. The pair $(P_V,f_V)$
			is unique up to isomorphism.
		\item\label{item:projectiveEnvelopeOfSS}Every finitely
			generated projective
			$E[G]$-module $P$ is the projective envelope of its
			maximal semi-simple quotient.
		\item An indecomposable finitely generated projective $E[G]$-module is
			the projective envelope of a simple
			$E[G]$-module.
		\item\label{it:projectiveGrothendieckRing} If $P_E(G)$ denotes the Grothendieck ring of finitely
			generated projective
			$E[G]$-modules, then $P_E(G)$ is freely generated by
			the projective envelopes of simple $E[G]$-modules. In
			particular, two finitely generated projective $E[G]$-modules
			$P, P'$ are isomorphic if and only if their classes in
			$P_E(G)$ agree.
	\end{enumerate}
\end{proposition}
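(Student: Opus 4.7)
The plan is to exploit that $E[G]$ is a finite-dimensional, hence Artinian, $E$-algebra, placing us in the classical setting where projective covers exist and the Krull--Schmidt theorem holds. I would prove the five parts essentially in order, with the existence statement in (b) being the main technical step.

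Part (a) will follow immediately from Maschke's Theorem \ref{thm:maschke}: any surjection $F \twoheadrightarrow V$ from a free module splits, exhibiting $V$ as a direct summand of $F$, hence as a projective module.

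For (b), the strategy will be to produce a supply of building blocks via idempotents. I would first observe that the Jacobson radical $J = \rad(E[G])$ is nilpotent and $E[G]/J$ is semisimple, so a decomposition $\bar 1 = \bar e_1 + \cdots + \bar e_n$ into primitive orthogonal idempotents of $E[G]/J$ lifts to primitive orthogonal idempotents $e_i \in E[G]$ (this is standard idempotent lifting along a nilpotent ideal --- the central technical input). This yields $E[G] = \bigoplus E[G] e_i$ as a decomposition into indecomposable projectives, each with simple top $E[G] e_i / J e_i$. Given a finitely generated $V$, I would decompose its maximal semisimple quotient as $V/\rad(V) = \bigoplus_j S_j$, pick for each $S_j$ an indecomposable $P_j$ whose top is $S_j$, and use projectivity of $P := \bigoplus_j P_j$ to lift the isomorphism $P/\rad(P) \xrightarrow{\sim} V/\rad(V)$ to a morphism $f_V : P \to V$. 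Surjectivity follows from Nakayama (using $\rad(V) = JV$ together with nilpotence of $J$), and essentialness follows because any proper submodule of $P$ has strictly smaller image in $P/\rad(P)$, hence in $V/\rad(V)$. Uniqueness of $(P_V, f_V)$ is the standard diagram chase: a lift between two projective covers is surjective (its composition with the target's essential epimorphism is essential) and splits by projectivity, and the splitting's kernel sits inside the radical, forcing it to be zero.

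Parts (c) and (d) will follow quickly from (b). For (c), Lemma \ref{lemma:maxSSQuotient} shows that $P \twoheadrightarrow P/\rad(P)$ is essential, making $P$ by definition a projective envelope of $P/\rad(P)$. For (d), if $P$ is indecomposable projective with $P/\rad(P) = S_1 \oplus S_2$ nontrivially, then by the uniqueness in (b) applied to each summand, $P$ would split as $P_{S_1} \oplus P_{S_2}$, contradicting indecomposability. Finally, (e) will come from Krull--Schmidt for finite-length modules over the Artinian ring $E[G]$: every finitely generated projective decomposes uniquely into indecomposable summands, these are exactly the envelopes of simple modules by (c) and (d), and (b) produces such an envelope for every simple module. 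I expect the idempotent lifting step in (b) to be the main obstacle; once that input is in place, the rest is a formal unwinding of definitions and standard lifting arguments.
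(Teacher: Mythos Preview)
Your proposal is correct. The main divergence from the paper is in part (b): you build projective covers structurally via idempotent lifting through the nilpotent radical $J=\rad(E[G])$, first producing the indecomposable projectives $E[G]e_i$ with simple tops and then assembling a cover of $V$ from the decomposition of $V/\rad(V)$. The paper instead takes the direct ``minimal submodule'' route (following \cite[Prop.~41]{Serre/Representations}): choose any surjection $Q\twoheadrightarrow V$ with $Q$ projective, pick $P\subset Q$ of minimal length still surjecting onto $V$, and argue that $P$ is a direct summand of $Q$. Your approach front-loads more machinery (nilpotence of $J$, idempotent lifting) but pays off by giving an explicit description of the indecomposable projectives and making parts (c)--(e) almost automatic; the paper's approach is more elementary in that it avoids idempotents entirely, at the cost of leaving the structure of $P$ opaque. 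For (e), you invoke Krull--Schmidt explicitly to get the ``in particular'' clause, which is the right move; the paper's argument is terser and relies implicitly on the same uniqueness of decomposition. Parts (a), (c), (d) are essentially identical in both treatments.
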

\begin{proof}
	\begin{enumerate}[label={(\alph*)}]
		\item We have seen that if $|G|$ is invertible, then every finitely generated $E[G]$-module
			is the direct sum of irreducible $E[G]$-modules and
			that an irreducible $E[G]$-module is a direct summand
			of the regular representation, which is a free
			$E[G]$-module.
		\item We just give a sketch, for full details see
			\cite[Prop.~41]{Serre/Representations}. If $V$ is a
			finitely generated $E[G]$-module, there exists a
			finitely generated projective $E[G]$-module $Q$
			and a surjection $Q\twoheadrightarrow V$. Let
			$P\subset Q$ be a submodule of minimal length amongst
			the submodules $M$
			for which the composition $M\rightarrow Q\rightarrow
			V$ is surjective. Clearly the map $P\rightarrow V$ is
			an essential epimorphism.
			One proceeds to show that $P$ is a direct summand of
			$Q$ and hence projective.

			The uniqueness of projective envelopes is not
			difficult to prove.
		\item	This follows directly from the lemma.
	\item One easily sees that the projective envelope of a direct sum is the
	direct sum of the envelopes of the summands. We also know from
	\ref{item:projectiveEnvelopeOfSS} that every
	finitely generated projective $E[G]$-module is the projective envelope
	of a semi-simple $E[G]$-module. Hence the claim. 	
\item If $P$ is a finitely generated projective $E[G]$-module, $P$ can be
	written as a finite direct sum of indecomposable finitely generated
	projective $E[G]$-modules, each of which is the
	projective envelope of a simple $E[G]$-module $V_i$. Thus $P_E(G)$ is
	generated by the projective envelopes of simple $E[G]$-modules.
	On the other hand, every
	extension $0\rightarrow P_1\rightarrow P\rightarrow P_2\rightarrow
	0$ of projective $E[G]$-modules splits.\end{enumerate}
\end{proof}
\begin{example}
Assume $\Char(E)=p>0$ and let $G$ be a finite
$p$-group. 
Then every nonzero $E[G]$-module has nontrivial invariants (Exercise \ref{ex:repsInPosChar}). This means that every simple
$E[G]$-module is the trivial representation $E$. Given a $G$-equivariant
map $\phi:E[G]\rightarrow E$, we see that $\phi(g)=g\phi(1)=\phi(1)$
for every $g\in G$. This means that every simple quotient of $E[G]$ is
isomorphic to $E[G]\twoheadrightarrow E$, $g\mapsto 1$. Consequently, the
maximal semi-simple quotient of $E[G]$ is $E$, as a map $E[G]\rightarrow E\oplus E$ cannot be surjective.

Next, we show that $E[G]$ is indecomposable. Indeed, if $E[G]=U\oplus V$ with
$U,V\subsetneq E[G]$, then $\dim_E E[G]^G\geq 2$. But $E[G]^G=E\cdot \sum_{g\in
	G}g$ is $1$-dimensional. Proposition \ref{prop:projectiveEnvelope},  \ref{item:projectiveEnvelopeOfSS}, shows that the
morphism $E[G]\rightarrow E$ is the projective envelope of $E$.
\end{example}

Now let $E'/E$ be an extension of fields; we have
seen in Proposition \ref{prop:injectivityOnK0} that $R_E(G)\rightarrow
R_{E'}(G)$ is injective. The same is true for $P_E(G)$:

\begin{proposition}
The map $P_E(G)\rightarrow P_{E'}(G)$ is
injective.
\end{proposition}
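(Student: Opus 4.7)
The plan is to reduce the statement to showing that two finitely generated projective $E[G]$-modules $P$, $Q$ with $P\otimes_E E'\cong Q\otimes_E E'$ are already isomorphic over $E[G]$. Indeed, by Proposition \ref{prop:projectiveEnvelope}\ref{it:projectiveGrothendieckRing}, $P_E(G)$ is freely generated by the classes of indecomposable projectives, so any element of $P_E(G)$ can be written in the form $[P]-[Q]$ with $P,Q$ actual projectives, and a class of the form $[P]-[Q]$ vanishes in $P_{E'}(G)$ if and only if $P\otimes_E E'\cong Q\otimes_E E'$.

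The key tool will be a dimension count of $\Hom$-spaces. For any simple $E[G]$-module $S$ with projective envelope $P_S$, I claim
\[\Hom_{E[G]}(P_S,S)\cong \End_{E[G]}(S),\qquad \Hom_{E[G]}(P_T,S)=0\text{ if }T\not\cong S,\]
where $T$ runs through the simple $E[G]$-modules. To see this, observe that since $P_T\to T$ is an essential epimorphism onto a simple (hence semi-simple) module, Lemma \ref{lemma:maxSSQuotient} identifies it with the maximal semi-simple quotient $P_T\to P_T/\mathrm{rad}(P_T)$, so $\mathrm{rad}(P_T)$ is the \emph{unique} maximal submodule of $P_T$. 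Consequently every nonzero map $P_T\to S$ is surjective with kernel $\mathrm{rad}(P_T)$, and hence factors uniquely through $P_T/\mathrm{rad}(P_T)=T$; this forces $T\cong S$ and identifies the Hom-space with $\End_{E[G]}(S)$. Decomposing $P=\bigoplus_T P_T^{n_T(P)}$ (sum over isomorphism classes of simple $E[G]$-modules), we conclude
\[\dim_E\Hom_{E[G]}(P,S)=n_S(P)\cdot\dim_E\End_{E[G]}(S).\]

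Now I apply Lemma \ref{lemma:homAndBaseChange}, which extends verbatim from fields to the present setting (only flatness of $E'/E$ and $G$-equivariance of the natural Hom-to-base-change map are used), to get
\[\dim_E\Hom_{E[G]}(P,S)=\dim_{E'}\Hom_{E'[G]}(P\otimes_E E',\,S\otimes_E E'),\]
and similarly for $Q$. Since $P\otimes_E E'\cong Q\otimes_E E'$, the right-hand sides coincide, so $n_S(P)=n_S(Q)$ for every simple $S$, whence $P\cong Q$. The only delicate point is the Hom computation above; once the identification of the envelope map with the maximal semi-simple quotient is in hand, the rest is formal.
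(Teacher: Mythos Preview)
Your proof is correct and takes a genuinely different route from the paper's.

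The paper proceeds by showing that essential epimorphisms are stable under base change: if $P_S\to S$ is a projective envelope over $E$, then $P_S\otimes_E E'\to S\otimes_E E'$ is a projective envelope over $E'$. It then invokes the earlier argument (from the injectivity of $R_E(G)\to R_{E'}(G)$) that for nonisomorphic simples $S_1,S_2$ over $E$, the base-changed modules $S_1\otimes E'$ and $S_2\otimes E'$ have no common composition factors, hence $P_{S_1}\otimes E'$ and $P_{S_2}\otimes E'$ have disjoint maximal semi-simple quotients and therefore no common indecomposable summand. Your approach bypasses the stability of essential epimorphisms entirely: you extract the multiplicities $n_S(P)$ directly via the formula $\dim_E\Hom_{E[G]}(P,S)=n_S(P)\cdot\dim_E\End_{E[G]}(S)$, and then transport this dimension across the extension using only Lemma~\ref{lemma:homAndBaseChange}. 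This is more elementary and more self-contained; the paper's approach, on the other hand, yields the additional structural fact that projective envelopes are preserved under base change, which may be of independent interest.
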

\begin{proof}
	We have seen in Proposition \ref{prop:injectivityOnK0} that
	$R_{E}(G)\rightarrow R_{E'}(G)$ is injective.  
	More precisely, we have
	seen that if $V_1,V_2$ are nonisomorphic irreducible $E[G]$-modules,
	then $V_1\otimes_E E'$ and
	$V_2\otimes_EE'$ have no common composition factors.
	
	If $f:W\rightarrow V$ is an essential
	epimorphism of finitely generated $E[G]$-modules, then so is
	$f':W\otimes_E E'\rightarrow V\otimes_E E'$. To see this, fix a basis
	$\{e_i\}_{i\in I}$ of $E'$ over $E$. Then $W\otimes_E E'$ considered
	as a (not necessarily finitely generated) $E[G]$-module is just
	$\bigoplus_iW\otimes e_i\cong W^{\oplus|I|}$ and similarly for $V\otimes_E E'$. If $M\subset
	W\otimes_E E'$ is a maximal submodule such that $f'(M)=V\otimes_E E'$,
	then $M\cap (W\otimes e_i)\twoheadrightarrow V\otimes e_i$. As
	$f$ is an essential epimorphism it follows that $M=W\otimes_E E'$, and
	the claim is proved.

	Thus, if $V_1, V_2$ are nonisomorphic irreducible $E[G]$-modules with
	projective envelopes $P_1, P_2$, then $P_i\otimes E'\rightarrow
	V_i\otimes E'$, $i=1,2$, are projective envelopes. The maximal
	semi-simple quotient of $P_i\otimes E'$ is the maximal semi-simple
	quotient of $V_i\otimes E'$ according to Lemma
	\ref{lemma:maxSSQuotient}. It follows that the maximal semi-simple
	quotients of $P_1\otimes E'$ and $P_2\otimes E'$ have no common
	factors, and thus $P_E(G)\rightarrow P_{E'}(G)$ is injective.
\end{proof}
\subsubsection{Representation theory in mixed characteristic}\label{subsubsec:RepMixedChar}
Now let $E$ be a complete discretely valued field of characteristic $0$ with residue field $k$ of
characteristic $\ell>0$ and let $G$  be a finite group.  If $A:=\mathcal{O}_E$ is the valuation ring of $E$, we write $P_A(G)$ for the
Grothendieck ring of finitely generated projective $A[G]$-modules. Reduction
modulo the maximal ideal $\mathfrak{m}$ of $A$ gives us a homomorphism of groups
$P_A(G)\rightarrow P_k(G)$. 
\begin{proposition}
	$P_A(G)\rightarrow P_k(G)$ is an isomorphism.
\end{proposition}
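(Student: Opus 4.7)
The plan is to use the classification of projective Grothendieck groups from Proposition \ref{prop:projectiveEnvelope}\ref{it:projectiveGrothendieckRing}: both $P_A(G)$ and $P_k(G)$ are the free abelian groups on the sets of isomorphism classes of indecomposable finitely generated projective modules (the same proof as over a field works over the semi-local ring $A[G]$, since indecomposable projectives still correspond to conjugacy classes of primitive idempotents). It is therefore enough to show that reduction modulo $\mathfrak{m}$ induces a bijection between isomorphism classes of indecomposable finitely generated projective $A[G]$-modules and indecomposable finitely generated projective $k[G]$-modules.

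The central tool is lifting of idempotents. Since $A$ is $\mathfrak{m}$-adically complete and $A[G]$ is a finitely generated $A$-module, the ring $A[G]$ is $\mathfrak{m}A[G]$-adically complete, and $\mathfrak{m}A[G]$ is contained in the Jacobson radical of $A[G]$. A standard Hensel-type argument (inductively refining a lift $e_0$ of $\bar e$ by replacing it with $3e_n^2 - 2e_n^3$, or by solving the equation $e^2=e$ successively modulo $\mathfrak{m}^{n}$) then shows that any idempotent of $k[G]$ lifts to an idempotent of $A[G]$, and that two idempotents of $A[G]$ conjugate after reduction to $k[G]$ are already conjugate in $A[G]$. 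The same applies to matrix rings $M_n(A[G])$, whose reductions are $M_n(k[G])$.

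For surjectivity, any finitely generated projective $k[G]$-module $\bar P$ can be written as $\bar e \, k[G]^n$ for some idempotent $\bar e \in M_n(k[G])$; lifting $\bar e$ to an idempotent $e\in M_n(A[G])$ gives a finitely generated projective $A[G]$-module $P = e\, A[G]^n$ with $P/\mathfrak{m}P \cong \bar P$. For injectivity, suppose $P_1, P_2$ are indecomposable finitely generated projective $A[G]$-modules with $\bar P_1 \cong \bar P_2$. Using projectivity of $P_1$, any isomorphism $\bar\varphi: \bar P_1 \xrightarrow{\cong} \bar P_2$ lifts to an $A[G]$-linear map $\varphi: P_1 \to P_2$. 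Since $\varphi(P_1) + \mathfrak{m}P_2 = P_2$, Nakayama's lemma (applied to the finitely generated $A[G]$-module $P_2/\varphi(P_1)$, noting that $\mathfrak{m}A[G]$ is contained in the Jacobson radical) yields $\varphi(P_1) = P_2$; projectivity of $P_2$ then splits the surjection as $P_1 \cong P_2 \oplus \ker\varphi$, and indecomposability of $P_1$ forces $\ker\varphi = 0$.

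The main obstacle is really the lifting of idempotents in the non-commutative, potentially wildly ramified setting: one must be careful that $A[G]$, though not local, is still semi-perfect in the sense needed for the Hensel argument, which comes down to $\mathfrak{m}A[G]$ being contained in the Jacobson radical and to $A[G]$ being complete with respect to this ideal. Both facts follow from completeness of $A$ together with the fact that $A[G]$ is finitely generated as an $A$-module, so once these are recorded the rest is a mechanical bookkeeping argument with idempotents, Nakayama, and indecomposable summands.
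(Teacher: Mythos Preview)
Your proof is correct. The injectivity argument is essentially the same as the paper's: lift a morphism using projectivity and conclude it is an isomorphism via Nakayama (the paper states this more tersely, without passing through indecomposables, but the content is identical).

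For surjectivity, however, you take a genuinely different route. You lift idempotents directly in the $\mathfrak{m}A[G]$-adically complete ring $A[G]$ (or $M_n(A[G])$), which produces a projective $A[G]$-module in one step. The paper instead follows Serre \cite[\S14.4]{Serre/Representations}: given $\overline{P}$, it takes the projective envelope $P_n\to\overline{P}$ over each Artinian quotient $A_n[G]=A[G]/\mathfrak{m}^nA[G]$, checks that $P_n/\mathfrak{m}P_n\cong\overline{P}$, and then forms $\hat{P}=\varprojlim_n P_n$; a separate verification is then needed to show that $\hat{P}$ is projective over $A[G]$. Your idempotent-lifting argument is more self-contained and avoids both the machinery of projective envelopes at each finite level and the final projectivity check. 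The paper's approach, on the other hand, stays closer to the language of envelopes used in the surrounding section and makes the compatibility of the lifts $P_n$ with the inverse system more visible. One small point you gloss over: the statement that $P_A(G)$ is free on indecomposable projectives is only proved in the paper for fields (Proposition~\ref{prop:projectiveEnvelope}\ref{it:projectiveGrothendieckRing}); extending it to $A[G]$ amounts precisely to the semi-perfectness you invoke, so it is fine, but it is worth noting that this is an extra ingredient beyond what the paper has established at that point.
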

\begin{proof}
	We only give a rough sketch of the proof. For details, see
	\cite[\S14.4]{Serre/Representations}. If $P_1, P_2$
	are projective $A[G]$-modules, and $\bar{f}:\overline{P}_1\rightarrow
	\overline{P}_2$ is an isomorphism of $k[G]$-modules, then projectivity
	implies that $\bar{f}$ lifts to a morphism of $A[G]$-modules
	$f:P_1\rightarrow P_2$, which is an isomorphism by Nakayama's Lemma.
	This shows that the map $P_A(G)\rightarrow P_k(G)$ is injective.

	To see that it is surjective, one uses that $A$ is complete. Let
	$\mathfrak{m}$ be the maximal ideal of $A$. Write
	$A_n:=A/\mathfrak{m}^n$. If $\overline{P}$ is a projective
	$k[G]$-module, let $P_n\rightarrow \overline{P}$ denote the projective
	envelope of $\overline{P}$ considered as an $A_n[G]$-module. It is not
	difficult to verify that $P_n/\mathfrak{m}P_n\xrightarrow{\cong}
	\overline{P}$ as
	$k[G]$-modules. Passing to the limit over $n$ produces an
	$A[G]$-module $\hat{P}$, which is free as an $A$-module and whose
	reduction is isomorphic to the projective $k[G]$-module $P$. A calculation is
	required to show that this implies that $\hat{P}$ is actually
	projective as an $A[G]$-module.
\end{proof}

\begin{proposition}\label{prop:triangle}
	There is a commutative diagram of rings: 
	\begin{equation*}\label{eq:triangle}
		\begin{tikzcd}
			P_A(G)\cong
			P_k(G)\ar[swap]{dr}{e}\ar{rr}{c}&&R_k(G)\\&R_E(G)\ar[swap]{ur}{d}
		\end{tikzcd}
	\end{equation*}
	compatible with base
	change along finite extensions $E\subset E'$. 
\end{proposition}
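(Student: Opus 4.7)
The plan is to first define each of the three maps, then verify the commutativity and ring structure, and finally address base change. The map $c : P_k(G) \to R_k(G)$ is simply the forgetful map, sending the class of a projective $k[G]$-module to its class as a $k[G]$-module. The map $e : P_A(G) \to R_E(G)$ is defined by $[P] \mapsto [P \otimes_A E]$: since $P$ is a finitely generated projective (hence free, as $A$ is local) $A$-module, $P \otimes_A E$ is a finite-dimensional $E[G]$-module. The subtle map is the decomposition map $d : R_E(G) \to R_k(G)$, which requires a small argument.

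To define $d$, given a finitely generated $E[G]$-module $V$, I would first show that $V$ admits a $G$-stable $A$-lattice. Indeed, pick any $A$-lattice $L_0 \subset V$ (e.g.~the $A$-span of an $E$-basis), and put $L := \sum_{g \in G} gL_0$; this is a finitely generated $A$-submodule of $V$ spanning $V$ over $E$, hence a lattice, and it is manifestly $G$-stable. Then set $d([V]) := [L/\mathfrak{m}L] \in R_k(G)$. The main obstacle is to check that this does not depend on the choice of $L$. Given two $G$-stable lattices $L_1, L_2$, after rescaling by a power of a uniformizer we may assume $L_2 \subset L_1$ with $\mathfrak{m}^n L_1 \subset L_2$ for some $n$. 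The two short exact sequences
\[
0 \to L_2/\mathfrak{m}L_1 \to L_1/\mathfrak{m}L_1 \to L_1/L_2 \to 0, \qquad 0 \to \mathfrak{m}L_1/\mathfrak{m}L_2 \to L_2/\mathfrak{m}L_2 \to L_2/\mathfrak{m}L_1 \to 0
\]
of $k[G]$-modules, combined with the isomorphism $L_1/L_2 \xrightarrow{\sim} \mathfrak{m}L_1/\mathfrak{m}L_2$ given by multiplication by a uniformizer (which is $G$-equivariant), yield $[L_1/\mathfrak{m}L_1] = [L_2/\mathfrak{m}L_2]$ in $R_k(G)$. Additivity on short exact sequences in $V$ then shows $d$ is a well-defined homomorphism of abelian groups, and the observation that $L_1 \otimes_A L_2$ is a lattice in $V_1 \otimes_E V_2$ with reduction $(L_1/\mathfrak{m}L_1) \otimes_k (L_2/\mathfrak{m}L_2)$ makes $d$ a ring homomorphism.

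Commutativity of the triangle is then essentially tautological: for a finitely generated projective $A[G]$-module $P$, the module $P$ itself is a $G$-stable $A$-lattice in $P \otimes_A E$, so $d(e([P])) = [P \otimes_A k]$, which is exactly $c$ applied to the image of $[P]$ under the isomorphism $P_A(G) \cong P_k(G)$ of the preceding proposition. The fact that $c$ and $e$ are ring homomorphisms is immediate from the respective tensor-product structures, using that the tensor product of two finitely generated projective $A[G]$-modules over $A$ (with diagonal $G$-action) is again projective.

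For compatibility with a finite extension $E \subset E'$ with valuation rings $A \subset A'$ and residue fields $k \subset k'$, the three maps for $(E',A',k')$ fit into an analogous triangle, and the extension-of-scalars maps $P_A(G) \to P_{A'}(G)$, $R_E(G) \to R_{E'}(G)$, $R_k(G) \to R_{k'}(G)$ commute with $c$, $d$, $e$ respectively. For $c$ and $e$ this is clear from the definitions; for $d$ it suffices to note that if $L \subset V$ is a $G$-stable $A$-lattice, then $L \otimes_A A' \subset V \otimes_E E'$ is a $G$-stable $A'$-lattice whose reduction modulo the maximal ideal of $A'$ is $(L/\mathfrak{m}L) \otimes_k k'$. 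The hard point, as indicated, lies entirely in the well-definedness of $d$; once this is settled, every other verification is formal.
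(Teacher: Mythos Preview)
Your approach is essentially the same as the paper's, but there is a gap in the well-definedness argument for $d$. Your two short exact sequences
\[
0 \to L_2/\mathfrak{m}L_1 \to L_1/\mathfrak{m}L_1 \to L_1/L_2 \to 0, \qquad 0 \to \mathfrak{m}L_1/\mathfrak{m}L_2 \to L_2/\mathfrak{m}L_2 \to L_2/\mathfrak{m}L_1 \to 0
\]
only make sense when $\mathfrak{m}L_1 \subset L_2$, i.e.\ when the integer $n$ in your setup equals $1$. For general $n$ the symbol $L_2/\mathfrak{m}L_1$ is not a quotient, and $L_1/L_2$ is not even a $k$-vector space (it is an $A/\mathfrak{m}^n$-module), so the displayed sequences do not live in the category of $k[G]$-modules. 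You have arranged $L_2 \subset L_1$ with $\mathfrak{m}^n L_1 \subset L_2$ for \emph{some} $n$, but not $n=1$.

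The fix, which is exactly what the paper does, is induction on $n$. For $n>1$ introduce the intermediate $G$-stable lattice $L_3 := \mathfrak{m}^{n-1}L_1 + L_2$. Then $\mathfrak{m}L_3 \subset L_2 \subset L_3$, so your argument (the $n=1$ case) gives $[L_2/\mathfrak{m}L_2] = [L_3/\mathfrak{m}L_3]$; and $\mathfrak{m}^{n-1}L_1 \subset L_3 \subset L_1$, so the induction hypothesis gives $[L_3/\mathfrak{m}L_3] = [L_1/\mathfrak{m}L_1]$. With this induction inserted, the remainder of your proof (commutativity, ring structure, base change) is correct and matches the paper's treatment.
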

\begin{proof}
	The map $c$ is just given by ``forgetting'' the projectivity of a
	module, while $e$ is given by $-\otimes_A E$. Only $d$
	requires work: Given a $E[G]$-module $V$, pick a finitely
	generated free $A$-submodule $W$ of $V$ with $\rank_{A} W=\dim_E V$.
	Such a submodule is called \emph{lattice}.  Define $W':=	\sum_{g\in G}
	gW$. This is also a lattice, and an $A[G]$-module. We claim that the
	class of $W'/\mathfrak{m}W'$ in $R_k(G)$ is independent of the choice
	of the lattice $W$ (the isomorphism class of the representation
	$W'/\mathfrak{m}W'$ is not necessarily independent of the choice of
	$W$!). Let $W_1, W_2$ be $G$-stable lattices in	$V$, without loss of
	generality we may assume $W_2\subset W_1$: There exists some
	$r\geq 0$, such that $\mathfrak{m}^{r}W_2\subset W_1$.

	There exists a minimal $n$
	such that $\mathfrak{m}^nW_1\subset W_2$. We proceed by induction on
	$n$. If $n=0$, there is nothing to show. If $n=1$, the module
	$W_1/W_2$ is killed by multiplication with a uniformizer $\pi$ of $A$.
	This means we have an short exact sequence of $k[G]$-modules
	\[0\rightarrow W_1/W_2\xrightarrow{\pi}W_2/\mathfrak{m}W_2\rightarrow
		W_1/\mathfrak{m}W_1\rightarrow W_1/W_2\rightarrow 0\]
	This means that the classes of $W_1/\mathfrak{m}W_1$ and
	$W_2/\mathfrak{m}W_2$ agree in $R_k(G)$, as claimed.
	
	If $n>1$, take $W_3:=\mathfrak{m}^{n-1}W_1+W_2$. Then
	$\mathfrak{m}W_3\subset W_2\subset W_3$, so by the previous
	calculation the classes of $W_2/\mathfrak{m}W_2$ and
	$W_3/\mathfrak{m}W_3$ agree in
	$R_k(G)$. On the other hand, $\mathfrak{m}^{n-1}W_1\subset W_3\subset
	W_1$, so by induction the classes of $W_3/\mathfrak{m}W_3$ and
	$W_1/\mathfrak{m}W_1$ agree. This concludes the construction of the map
	$d:R_E(G)\rightarrow R_k(G)$.

	It is easy to see that this map is a ring homomorphism, and the compatibility with base change is also
	clear.
\end{proof}

\begin{exercise}\label{ex:projectivity-prime-to-p}
	The triangle \eqref{eq:triangle} is not so interesting if the order
	$|G|$ is prime to the residue characteristic $\Char(k)=\ell$. Indeed,
	under that assumption, prove that
	\begin{enumerate}[label={(\alph*)}]
		\item Every finitely generated $k[G]$-module is projective.
		\item Every finitely generated $A[G]$-module which is free
			over $A$ is projective.
		\item A projective $A[G]$-module $V$ is simple if and only if
			$V\otimes_A k$ is a simple $k[G]$-module.
		\item The maps $c,d,e$ in the triangle \eqref{eq:triangle} are
			isomorphisms.
	\end{enumerate}
\end{exercise}
We will give -- without proof -- some important properties of the maps
\eqref{eq:triangle}.

\begin{theorem}\label{thm:mainRepThmIntegrality}\leavevmode
	\begin{enumerate}[label=\emph{(\alph*)},ref=(\alph*)]
		\item\label{thm:mainRepThmIntegrality-d-surj} 
                     The map $d:R_E(G)\rightarrow R_k(G)$ is surjective.
		\item\label{thm:mainRepThmIntegrality-e-inj}
                       The map $e:P_k(G)\rightarrow R_E(G)$ is injective.
	       \item\label{thm:mainRepThmIntegrality-comes-from-below} Let $E'$ be a finite extension of the complete,
			discretely valued field $E$, and write $k'$ for its
			residue field.  We then have a commutative diagram
			\begin{equation*}
				\begin{tikzcd}
					P_{k'}(G)\ar{r}{e'}&R_{E'}(G)\\
					P_k(G)\ar[hook]{u}\ar{r}{e}&R_E(G),\ar[hook]{u}
				\end{tikzcd}
			\end{equation*}
			and a class $x\in R_{E'}(G)$ lies in $\image(e)$ if
			and only if its character $\chi_x$ has values in $E$ and
			if $\chi_x(g)=0$ whenever $\ell|\ord(g)$.
	\end{enumerate}
\end{theorem}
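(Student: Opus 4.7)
The three assertions require somewhat different techniques. I plan to establish part (a) via a modular version of Brauer's induction theorem, deduce part (b) from part (a) via a natural pairing compatibility, and attack part (c) by combining Brauer induction with a Galois descent argument.

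For (a), I would first observe that $d$ commutes with induction along subgroup inclusions $H \hookrightarrow G$: since $A[G]$ is free as an $A[H]$-module, the functor $A[G] \otimes_{A[H]} -$ is exact and commutes with $- \otimes_A k$. The modular analogue of Theorem \ref{thm:Brauer2} (for $R_k$ in place of $R$, proven along the same lines) asserts that $R_k(G)$ is generated as an abelian group by classes induced from $\ell$-elementary subgroups, so the problem reduces to the case $G = C \times P$ with $C$ cyclic of order prime to $\ell$ and $P$ an $\ell$-group. By Exercise \ref{ex:repsInPosChar}, every simple $k[G]$-module is $P$-trivial and hence inflated from a simple $k[C]$-module. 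Since $|C|$ is invertible in $A$, the roots of unity appearing in such a character lift from $k^\times$ to $A^\times$ by Hensel's lemma, giving the required preimage in $R_E(G)$.

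For (b), I would introduce the bilinear pairings
\[\langle V, W\rangle_E := \dim_E \Hom_{E[G]}(V, W), \qquad \langle P, V\rangle_k := \dim_k \Hom_{k[G]}(P, V)\]
on $R_E(G) \times R_E(G)$ and $P_k(G) \times R_k(G)$ respectively, together with the compatibility $\langle e(P), W\rangle_E = \langle P, d(W)\rangle_k$, which one proves by identifying $\Hom_{E[G]}(\tilde P \otimes_A E, W)$ with $\Hom_{A[G]}(\tilde P, W')$ for a suitable lattice $W' \subset W$, and then reducing mod $\mathfrak m$ using the projectivity of $\tilde P$. Now if $e(P) = 0$ in $R_E(G)$, then $\langle P, d(W)\rangle_k = 0$ for every $W \in R_E(G)$, and the surjectivity of $d$ established in (a) forces $\langle P, V\rangle_k = 0$ for all $V \in R_k(G)$. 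But on the canonical bases $\{[P_{S_i}]\}$ and $\{[S_j]\}$ of $P_k(G)$ and $R_k(G)$, the matrix of this pairing is $\mathrm{diag}(\dim_k \End_{k[G]}(S_i))$ by Proposition \ref{prop:projectiveEnvelope}\ref{item:projectiveEnvelopeOfSS} and Schur's Lemma, hence nondegenerate. Consequently $[P] = 0$ in $P_k(G)$, proving injectivity of $e$.

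For (c), the forward implication is a direct computation: for $g \in G$ with $\ell \mid \ord(g)$, decompose $g = g_s g_u$ in the cyclic group $\langle g \rangle$, where $g_s$ has order prime to $\ell$ and $g_u$ is a nontrivial commuting $\ell$-element. Restriction of a projective $A[G]$-module $\tilde P$ to $\langle g \rangle \cong \langle g_s \rangle \times \langle g_u \rangle$ is again projective; since $A[\langle g \rangle]$ is a product of complete local rings (projectivity of the $\langle g_s \rangle$-factors being ensured by Hensel-lifting the idempotents of $k[\langle g_s \rangle]$), in which finitely generated projectives are free, $\tilde P|_{\langle g \rangle}$ is free over each local factor, on which $g_u$ acts by the regular representation of a nontrivial cyclic group and hence has trace zero. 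This forces $\chi_{\tilde P \otimes E}(g) = 0$. For the backward implication one applies Brauer induction in $R_{E'}(G)$ to express $x$ as a $\Z$-combination of characters induced from $1$-dimensional characters $\chi_i$ of subgroups $H_i \subset G$, then uses the vanishing of $\chi_x$ on $\ell$-singular elements and the corresponding characterization on each $H_i$ to realize $x$ as $e'([P'])$ for some $[P'] \in P_{k'}(G)$. Finally, the Galois-stability of $\image(e')$ under $\Gal(E'/E)$, combined with the fact that $\chi_x$ takes values in $E$, forces $[P']$ to descend to $P_k(G)$. The main obstacle will be this last descent step, which demands carefully tracking the $\Gal(E'/E)$-action through the Brauer-induction expression and verifying that the resulting invariants descend to $P_k(G)$ rather than merely to $P_{k'}(G)^{\Gal(k'/k)}$.
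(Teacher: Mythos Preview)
The paper does not prove this theorem at all: immediately after the statement it says ``The proof of Theorem \ref{thm:mainRepThmIntegrality} is long and rather complicated, see \cite[Ch.~17]{Serre/Representations}. We omit it.'' So there is nothing in the paper to compare your proposal against directly; the right benchmark is the argument in Serre that the paper cites.

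Your outlines for (a) and (b) are correct and essentially match the standard approach. The pairing compatibility $\langle e(P),W\rangle_E = \langle P,d(W)\rangle_k$ together with the nondegeneracy of the Cartan pairing on $P_k(G)\times R_k(G)$ is exactly how (b) is deduced from (a) in Serre. The forward direction of (c) is also fine: your reduction to free modules over the local factors of $A[\langle g\rangle]$ and the vanishing of the trace of a nontrivial $\ell$-element on its regular representation is the right mechanism.

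The backward direction of (c), however, has a genuine gap. Brauer induction expresses $x$ as $\sum_i a_i\,\Ind_{H_i}^G\chi_i$, but the individual summands $\Ind_{H_i}^G\chi_i$ have no reason to vanish on $\ell$-singular elements---only their particular $\Z$-linear combination $x$ does. So you cannot invoke ``the corresponding characterization on each $H_i$'' term by term, and there is no evident way the global vanishing hypothesis on $\chi_x$ transfers to a usable condition on the individual $\chi_i$. (Nor can you argue inductively on $|G|$, since for the elementary subgroups themselves the Brauer expression is trivial and you are back to needing a direct argument.) The proof in Serre proceeds quite differently: one first assumes $E'$ large enough that $k'$ is a splitting field, and shows that both $e'(P_{k'}(G))$ and the lattice of virtual characters vanishing on $\ell$-singular elements are free of rank equal to the number of $\ell$-regular conjugacy classes; an index computation with Brauer characters then forces the inclusion to be an equality. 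The descent from $E'$ to $E$ via $\Gal(E'/E)$ is a separate step---where the hypothesis $\chi_x(G)\subset E$ finally enters---and you are right that it is delicate, but your sketch does not reach it, because the preceding step does not go through as written.
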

Here, the character $\chi_x$ is to be understood as follows: $E'$ has
characteristic $0$, and therefore $x$ can be uniquely written a $\Z$-linear
combination of classes $[V_i]$ of irreducible $E'$-representations;
$x=\sum_{i=1}^na_i[V_i]$, $a_i\in \Z$. We then define $\chi_x:=\sum_{i=1}^n
a_i\chi_{V_i}:G\rightarrow E'$.

The proof of Theorem \ref{thm:mainRepThmIntegrality} is long and rather complicated,
see \cite[Ch.~17]{Serre/Representations}.
We omit it. Note that statement
\ref{thm:mainRepThmIntegrality-comes-from-below} can be understood as a
generalization of Proposition \ref{prop:comesFromBelow}.

Admitting the theorem, we can prove Proposition \ref{prop:integralityAbstract}, which was the reason for our
discussion.

\begin{proposition}\label{prop:integralityAbstract}
	We continue to denote by $E$ a complete discretely valued field of
	characteristic $0$ with
	valuation ring $A$ and residue field $k$ of characterisitc $\ell>0$
	
	Let $E'$ be a finite extension of $E$ and $A'$ the ring of integers of
	$E'$. A class $x\in R_{E'}(G)$ arises from a projective $A[G]$-module
	if and only if
	\begin{enumerate}[label=\emph{(\alph*)},ref=(\alph*)]
		\item\label{item:prop44a} The character $\chi_x$ takes values in $E$.
		\item\label{item:prop44b} There exists $n\in \N$, such that $nx$ is the class of a
			$E'$-representation, which comes from a projective
			$A'[G]$-module.
	\end{enumerate}
	Moreover, the projective $A[G]$-module realizing $x$ is unique up to
	isomorphism.
\end{proposition}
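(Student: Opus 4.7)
The forward direction is almost immediate. If $x$ arises from a projective $A[G]$-module $P$, then $P\otimes_A E$ is an $E[G]$-module whose character equals $\chi_x$, so \ref{item:prop44a} holds; and $P\otimes_A A'$ is a projective $A'[G]$-module whose class in $R_{E'}(G)$ is $x$, so \ref{item:prop44b} holds even with $n=1$. So the content is in the converse.

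For the converse, my plan is to identify $x$ first as an element of $P_k(G)$ (using Theorem \ref{thm:mainRepThmIntegrality}\ref{thm:mainRepThmIntegrality-comes-from-below}) and then show it lies in the positive cone $P_k^+(G)\subset P_k(G)$. For the first step, apply Theorem \ref{thm:mainRepThmIntegrality}\ref{thm:mainRepThmIntegrality-comes-from-below} with $E'=E'$, $k'=k'$ to the class $nx$: since by \ref{item:prop44b} it comes from a projective $A'[G]$-module, its character vanishes on every $g\in G$ with $\ell\mid \ord(g)$. Because we are in characteristic $0$, dividing by $n$ shows that $\chi_x(g)=0$ for all such $g$. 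Combined with \ref{item:prop44a} (values of $\chi_x$ lie in $E$), Theorem \ref{thm:mainRepThmIntegrality}\ref{thm:mainRepThmIntegrality-comes-from-below} yields a (unique, by injectivity of $e$ in \ref{thm:mainRepThmIntegrality-e-inj}) class $y\in P_k(G)$ with $e(y)=x$ in $R_{E'}(G)$.

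It remains to show $y$ is the class of an honest projective $A[G]$-module, i.e.\ $y\in P_k^+(G)$. Write $y=\sum_i a_i[P_i]$ with $\{P_i\}$ the (pairwise non-isomorphic) indecomposable projective $k[G]$-modules and $a_i\in\Z$ (by Proposition \ref{prop:projectiveEnvelope}\ref{it:projectiveGrothendieckRing}, $P_k(G)$ is free on these generators). Base-changing to $k'$ decomposes each $P_i\otimes_k k'$ into indecomposable projective $k'[G]$-modules; since the $P_i$ are projective envelopes of pairwise non-isomorphic simple $k[G]$-modules, the argument in the proof of injectivity of $P_E(G)\to P_{E'}(G)$ shows that for $i\neq i'$ the indecomposable summands of $P_i\otimes_k k'$ and $P_{i'}\otimes_k k'$ have no isomorphism class in common. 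Thus the supports of the $a_i[P_i\otimes_k k']$ in $P_{k'}(G)$ are disjoint. But $ny\otimes_k k'$ is represented by an actual projective $A'[G]$-module by \ref{item:prop44b}, hence lies in $P_{k'}^+(G)$, which forces $na_i\geq 0$ and therefore $a_i\geq 0$ for every $i$. Thus $y\in P_k^+(G)\cong P_A^+(G)$, giving the desired projective $A[G]$-module $P$ with $[P\otimes_A E']=x$.

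Finally, uniqueness: if $P,P'$ are projective $A[G]$-modules with the same class in $R_{E'}(G)$, then under $P_A(G)\cong P_k(G)\hookrightarrow R_{E'}(G)$ (composition of the isomorphism with $e$ and the inclusion $R_E(G)\hookrightarrow R_{E'}(G)$, all injective by \ref{thm:mainRepThmIntegrality-e-inj} and Proposition \ref{prop:injectivityOnK0}) they have the same class in $P_A(G)$, hence are isomorphic by Proposition \ref{prop:projectiveEnvelope}\ref{it:projectiveGrothendieckRing}. The main subtlety in the whole argument is the positivity step: one must use that base change of indecomposable projectives over $k$ yields projective $k'[G]$-modules with pairwise disjoint indecomposable supports, so that positivity can be detected after multiplication by $n$ and after extension to $k'$.
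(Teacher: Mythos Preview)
Your argument is correct and follows the same overall strategy as the paper: use Theorem~\ref{thm:mainRepThmIntegrality}\ref{thm:mainRepThmIntegrality-comes-from-below} twice (first to extract the vanishing condition $\chi_x(g)=0$ for $\ell\mid\ord(g)$ from hypothesis~\ref{item:prop44b}, then to conclude $x=e(y)$ for a unique $y\in P_k(G)$), and finish by proving $y\in P_k^+(G)$.

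The only real difference is in the positivity step. You extend scalars to $k'$ and invoke the disjoint-support property of the $P_i\otimes_k k'$ (established in the proof that $P_k(G)\hookrightarrow P_{k'}(G)$) to read off $a_i\ge 0$ from the positivity of $ny$ in $P_{k'}(G)$. The paper instead \emph{restricts} scalars: it takes the projective $A'[G]$-module $V'$ realizing $nx$, views it as an $A[G]$-module $V$ (projective because $A'$ is free over $A$), checks that $e([V])=[E':E]\,nx$, and then uses injectivity of $e$ to conclude $[V]=[E':E]\,n\,y$ inside $P_A(G)$. Since $[V]$ is manifestly in $P_A^+(G)$ and $P_A(G)$ is free on the indecomposables, $y\in P_A^+(G)$ follows immediately. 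The paper's route is a touch cleaner because it avoids the disjoint-support discussion altogether; your route is equally valid and perhaps more in the spirit of ``detect everything after base change.'' Your uniqueness argument is the same as the paper's.
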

\begin{proof}
	Clearly, if the class $x$ comes from a projective $A[G]$-module, then
	\ref{item:prop44a} and \ref{item:prop44b} are satisfied.

	Conversely, assume that $x\in R_{E'}(G)$ satisfies \ref{item:prop44a}
	and \ref{item:prop44b}. Let $V'$ be the projective $A'[G]$-module
	corresponding to $nx$. Write $V$ for the $A[G]$-module obtained from
	$V'$ via the inclusion $A\subset A'$. Then $V$ is a projective
	$A[G]$-module as $A'$ is free over $A$.
	
	Since $E'/E$ is finite, $V\otimes_A E$ is an
	$E$-vector space of dimension $[E':E]\cdot \dim_{E'}(V'\otimes_{A'} E')$.  We compute the
	character of $V\otimes_A E$. Let $b_1,\ldots, b_r\in E'$ be a basis of $E'$ as a
	$E$-vector space such that $b_1=1$, and fix $g\in G$. If $v'\in V'$ is an eigenvector
	of $g$ acting on $V'$, then $v,
	b_2v,b_3v,\ldots$ are linearly independent eigenvectors of $g$ acting
	on $V$. Thus $\Tr(g|V\otimes_A E)=[E':E]\Tr(g|V'\otimes_{A'} E')$.
	Using the notation from Proposition \ref{eq:triangle}, we see that
	$e([V])=[E':E]nx\in R_{E'}(G)$.
	But by construction $e([V])\in R_E(G)\subset R_{E'}(G)$, so by Theorem
	\ref{thm:mainRepThmIntegrality} we see that $\chi_{e([V])}(g)=0$
	whenever $\ell|\ord(g)$. This means that $\chi_x(g)=0$
	whenever $\ell|\ord(g)$. Again by Theorem
	\ref{thm:mainRepThmIntegrality}, it follows that $x\in
	\image(P_A(G)\xrightarrow{e}R_E(G))$. Let $y\in P_A(G)$ be an element
	such that $e(y)=x$. Since $e$ is injective, $[V]=[E':E]ny$. As
	$V\in P^+_A(G)$, the same is true for $y$, i.e.~there exists a
	projective $A[G]$-module $Y$, such that $Y\otimes_A E'\cong V'$,
	i.e.~$[Y\otimes_A E]=x\in R_E(G)$.

	The projective $A[G]$-module $Y$ is unique up to isomorphism by
	Proposition
	\ref{prop:projectiveEnvelope}, \ref{it:projectiveGrothendieckRing}:
	The isomorphism class of $Y$ is determined by its class in $P_A(G)$,
	and by the injectivity of $e$, this class is uniquely determined.
\end{proof}

\subsubsection{The Swan character comes from $\Z_{\ell}$}
Using Proposition \ref{prop:integralityAbstract}, we can now finally prove the
$\ell$-adic integrality of the Swan conductor.

\begin{proof}[Proof of Theorem \ref{thm:integralityOfSwan}]
	Recall that we defined $\sw_G:=a_G-(r_G-r_{G/G_0})$, and that we saw that $a_G$,
	hence $\sw_G$, can be realized over a finite extension $E/\Q_{\ell}$.
	We want to apply Proposition
	\ref{prop:integralityAbstract}. We know that $\sw_G$ takes values in
	$\Z$, so to prove that $\sw_G$ can be realized by a projective
	$\Z_{\ell}[G]$-module, we just have to check that some multiple
	$n\sw_G$ can be realized by a projective $\mathcal{O}_E[G]$-module,
	where $\mathcal{O}_E$
	is the integral closure of $\Z_{\ell}$ in $E$.
	To see this,  note that by Lemma \ref{lemma:totallyRamifiedArtin} we
	know that $\sw_G$ is induced by $\sw_{G_0}$, so we
	may assume that $L/K$ is totally ramified, i.e.~that $G_0=G$. We computed in 
	Lemma \ref{lemma:ArtinRational} that
	\[a_{G}=\sum_{i=0}^{\infty}\frac{1}{|G:G_i|}\alpha_{i,*}u_{G_i},\]
	where $\alpha_i:G_i\hookrightarrow G$ are the inclusions of the
	ramification subgroups. It follows that
	\[|G|\sw_{G}=\sum_{i=1}^{\infty}|G_i|\alpha_{i,*}u_{G_i}.\]
	Since $u_{G_i}=r_{G_i}-1$, each $\alpha_{i,*}u_{G_i}$ can be realized
	over $\mathcal{O}_E[G]$ (even over $\Z_{\ell}[G]$). It remains to see
	hat the augmentation representations $U_{G_i}$, $i>0$, are projective
	$\Z_{\ell}[G_i]$-modules.
	
	As the groups $G_i$ are
	$p$-groups, for $i>0$, and since $p\neq \ell$, every finitely generated
	$\Z_{\ell}[G_i]$-module $V$ which is free (=torsion-free) over
	$\Z_{\ell}$ is projective. Indeed, one finds a surjection
	$F\twoheadrightarrow V$, with $F$ a free $\Z_{\ell}[G_i]$-module, and as
	$(|G_i|,\ell)=1$ for $i>0$, this surjection splits, and $V$ is a
	direct summand of $F$.  In particular the augmentation representation
	of $G_i$ is projective over $\Z_{\ell}[G_i]$ for $i>0$.

	It follows that $|G|\sw_{G}$ can be realized by a projective
	$\mathcal{O}_E[G]$-module, and Proposition \ref{prop:integralityAbstract} shows
	that $\sw_{G}$ can be realized by a finitely generated projective
	$\Z_\ell[G]$-module, unique
	up to isomorphism.
\end{proof}

\subsection{Swan conductor of an $\ell$-adic Galois representation}\label{sec:swanconductor}
In this section let $K$ be a complete discretely valued field with perfect residue field $k$ of characteristic $p>0$. Let
$K^{\sep}$ be a separable closure of $K$, and write
$G_K:=\Gal(K^{\sep}/K)$. Fix a second
prime number $\ell\neq p$. We are now interested in \emph{continuous
representations} $\rho:G_K\rightarrow \GL(V)$, where $V$ is
a finite dimensional vector space over a finite extension $E$ of $\Q_{\ell}$, equipped with its $\ell$-adic topology.
Such a representation is called \emph{$\ell$-adic Galois
representation}\index{$\ell$-adic Galois representation}\index{Galois
representation!$\ell$-adic}. 
\begin{definition}\label{defn:topologyOnGL}Let us recall the definition of the topology on
$\GL(V)$: The topology on $V$ makes $\End(V)$ into a topological ring, and we
can identify the group $\GL(V)$ with the subspace of $\End(V)\times \End(V)$
defined by $XY=1$, i.e.~with the set of pairs $(A,A^{-1})$, $A\in \GL(V)$. Using this
identification we equip $\GL(V)$ with the subspace topology to make it a
topological group. Analogously, if  $\mathcal{V}$ is a free
$\mathcal{O}_E$-module of finite rank, we can make
$\Aut_{\mathcal{O}_E}(\mathcal{V})$ into a topological group.
\end{definition}
\begin{lemma}\label{lemma:lattice}
	Let $E$ be a finite extension of $\Q_{\ell}$, $V$ a finite dimensional
	$E$-vector space, $G$ a profinite group and
	$\rho:G\rightarrow \GL(V)$ continuous representation.  Writing $\mathcal{O}_E$ for the ring of integers of
	$E$, there exists a free $\mathcal{O}_E$-submodule $\mathcal{V}\subset V$, such
	that $V=\mathcal{V}\otimes_{\mathcal{O}_E} E$ and such that $\rho$ factors
	\[\rho:G\rightarrow \GL(\mathcal{V})\rightarrow \GL(V),\]
	where $\GL(\mathcal{V}):=\Aut_{\mathcal{O}_E}(\mathcal{V})$.
\end{lemma}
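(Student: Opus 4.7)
The plan is to produce the lattice by averaging an arbitrary lattice over a suitably small open subgroup of $G$. First, I would fix an $E$-basis of $V$ and let $\mathcal{V}_0 \subset V$ be its $\mathcal{O}_E$-span; this is a free $\mathcal{O}_E$-module with $\mathcal{V}_0 \otimes_{\mathcal{O}_E} E = V$. The key topological fact I would establish is that $\GL(\mathcal{V}_0) = \Aut_{\mathcal{O}_E}(\mathcal{V}_0)$ is an open subgroup of $\GL(V)$. This follows because, identifying $\End(V)$ with matrices via the chosen basis, $\End(\mathcal{V}_0)$ is an open and closed neighborhood of the identity in $\End(V)$ (it is the closed unit polydisc for the $\ell$-adic topology); its intersection with $\GL(V)$ consists of automorphisms of $\mathcal{V}_0$, and one checks the inverse map stays in $\End(\mathcal{V}_0)$ by a standard determinant argument, so $\GL(\mathcal{V}_0)$ is open in $\GL(V)$.

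Next I would exploit continuity and compactness. By continuity of $\rho$, the preimage $H := \rho^{-1}(\GL(\mathcal{V}_0))$ is an open subgroup of the profinite group $G$. Since open subgroups of a profinite (hence compact) group have finite index (as already noted in the text as an exercise following the definition of profinite groups), the quotient $G/H$ is a finite set; let $g_1, \dots, g_n$ be coset representatives.

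Now I would set
\[
\mathcal{V} := \sum_{i=1}^{n} \rho(g_i) \mathcal{V}_0 \subset V.
\]
This is a finite sum of free $\mathcal{O}_E$-modules of rank $\dim_E V$ sitting inside $V$, hence a finitely generated $\mathcal{O}_E$-submodule of $V$; being a submodule of an $E$-vector space it is torsion-free, and $\mathcal{O}_E$ is a discrete valuation ring, so $\mathcal{V}$ is automatically free of finite rank. Since $\mathcal{V} \supset \mathcal{V}_0$ and $\mathcal{V}_0 \otimes_{\mathcal{O}_E} E = V$, we have $\mathcal{V} \otimes_{\mathcal{O}_E} E = V$, so $\rank_{\mathcal{O}_E} \mathcal{V} = \dim_E V$.

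Finally I would verify $G$-stability: for $g \in G$ and any $i$, write $g g_i = g_j h$ with $h \in H$; then $\rho(g)\rho(g_i) \mathcal{V}_0 = \rho(g_j)\rho(h) \mathcal{V}_0 = \rho(g_j) \mathcal{V}_0 \subset \mathcal{V}$, since $\rho(h) \in \GL(\mathcal{V}_0)$ preserves $\mathcal{V}_0$. Thus $\rho(g) \mathcal{V} \subset \mathcal{V}$, and applying the same to $g^{-1}$ gives equality. Therefore $\rho$ factors through $\GL(\mathcal{V})$, as required. The only subtle point is the openness of $\GL(\mathcal{V}_0)$ in $\GL(V)$, which is where one must be a little careful with the topology defined in Definition \ref{defn:topologyOnGL}; everything else is then a routine averaging argument.
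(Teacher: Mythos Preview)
Your proof is correct and follows essentially the same strategy as the paper: both fix a basis, note that $\GL_r(\mathcal{O}_E)$ is open in $\GL_r(E)$, use compactness of $G$ to reduce to finitely many translates of the standard lattice, and take their sum. The only cosmetic difference is that the paper phrases the compactness step as extracting a finite subcover of $G$ by preimages of cosets $M\GL_r(\mathcal{O}_E)$ (and then observes each $M_i\mathcal{V}'$ is of the form $\rho(g)\mathcal{V}'$), whereas you pull back $\GL(\mathcal{V}_0)$ to an open subgroup $H\subset G$ of finite index and sum over coset representatives; these are the same argument in slightly different clothing.
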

\begin{proof}
	Let
	$e_1,\ldots, e_r$ be a basis of $V$ and use it to identify $\GL(V)$ with
	$\GL_r(E)$. The 
	inclusion $\GL_r(\mathcal{O}_E)\subset \GL_r(E)$ given by the choice
	of the basis makes $\GL_r(\mathcal{O}_E)$ into an \emph{open}
	subgroup of $\GL_r(E)$. Then $\{\rho^{-1}(M\GL_r(\mathcal{O}_E))|M\in \GL_r(E)\}$
	is an open covering of $G$, and as $G$ is compact, it follows that
	there exists a minimal $n\geq 1$ and matrices $M_1,\ldots, M_n\in
	\GL_r(E)$, such that
	$\image(\rho)\subset M_1\GL_r(\mathcal{O}_E)\cup\ldots\cup
	M_n\GL_r(\mathcal{O}_E)$. 
	
	If
	$\mathcal{V}':=\sum_{i=1}^re_i\mathcal{O}_E \subset V$, then for every $g\in G$
	there exists $i$ such that $\rho(g)(
	\mathcal{V}')=M_i\mathcal{V}'$, and conversely, for every $i$ there
	exists $g\in G$ such that
	$M_i\mathcal{V}'=\rho(g)\mathcal{V}'$.

	Defining $\mathcal{V}:=\sum_{i=1}^n M_i\mathcal{V}'\subset V$ hence gives a
	$G$-stable, free sub-$\mathcal{O}_E$-module of $V$, such that
	$\mathcal{V}\otimes_{\mathcal{O}_E} E=V$.
\end{proof}

We will be particularly interested in how the \emph{wild
ramification subgroup} acts in an $\ell$-adic representation. Recall that the
upper numbering filtration on finite Galois groups is compatible with  taking
quotients (Herbrand's Theorem, Proposition \ref{prop:Herbrand}), which allowed
us to introduce the \emph{upper numbering filtration} on the infinite,
absolute
Galois group $G_K$ (Definition
\ref{defn:upperNumberingInfiniteExtension}). We saw that $G_K^0\unlhd G_K$ is
the kernel of the projection $G_K\twoheadrightarrow \Gal(\bar{k}/k)$.  If
$L/K$ is a finite  Galois extension, we also showed that
$\Gal(L/K)_1=\Gal(L/K)^{\phi_{L/K}(1)}$ is the unique $p$-Sylow subgroup of
$\Gal(L/K)$, and that the quotient $\Gal(L/K)/\Gal(L/K)_1$ is cyclic of order
prime-to-$p$ (Corollary \ref{cor:structure-of-ramification-groups}).  Passing to the inverse limit over all finite Galois extensions
we obtain a closed normal subgroup $P_K\unlhd G_K$, such that $P_K$ is a
pro-$p$-group, and such that $G_K/P_K$ is pro-cyclic with every finite
quotient of order prime to $p$. In particular, if $H\unlhd G_K$ is an open (hence finite index) normal subgroup,
then the image of $P_K$ in the finite Galois group $G_K/H$ is $(G_K/H)_1$.

\begin{definition}\label{defn:tameRepresentation}
	The pro-$p$-group $P_K$ is called the \emph{wild inertia
	group}\index{wild inertia group} or \emph{wild ramification group}.
	Let $R$ be a commutative ring and $\rho:G_K\rightarrow \GL_r(R)$ a
	homomorphism.
	\begin{enumerate}[label={(\alph*)}]
		\item $\rho$ is called \emph{unramified} if
			$G_{K}^0\subset \ker(\rho)$.
		\item $\rho$ is called \emph{tame} or \emph{tamely ramified} if
			$P_K\subset \ker(\rho)$. Otherwise $\rho$ is called
			\emph{wild} or \emph{wildly ramified}.
	\end{enumerate}
	Our main interest will be in the cases $R=\mathcal{O}_E, E$ or $\F_{\lambda}$, and
	$\rho$ continuous.
\end{definition}

We now reduce a Galois representation modulo $\ell$. 
\begin{definition}\label{defn:reductionModOfGaloisRep}
	Let $E$ be a finite extension of $\Q_{\ell}$ with valuation ring
	$\mathcal{O}_E$, uniformizer $\lambda$, and finite residue field
	$\F_\lambda$.  
	If $\mathcal{V}$ is a free finitely generated
	$\mathcal{O}_E$-module, and 
	$\rho:G_K\rightarrow \GL(\mathcal{V})$ a continuous
	representation, then the composition $\bar{\rho}:G_K\rightarrow
	\GL(\mathcal{V})\rightarrow
	\GL(\overline{\mathcal{V}})$, with
	$\overline{\mathcal{V}}=\mathcal{V}/\lambda\mathcal{V}$,  is called the
	\emph{reduction modulo $\lambda$}\index{reduction of Galois
	representation}\index{Galois representation!reduction} of $\rho$.
\end{definition}

The next lemma shows that in the situation of the definition, the
pro-$p$-group $P_K\subset G_K$ acts on $\mathcal{V}$ and $\overline{\mathcal{V}}$ through
the same \emph{finite} group.

\begin{lemma}\label{lemma:proPfactorsThroughFiniteQuotient}
	Let $\ell\neq p$ be two primes,
	$E$ a finite extension of $\Q_{\ell}$, $\mathcal{O}_E$ its
	valuation ring and $\F_{\lambda}$ its residue field. 
	If $P$ is a pro-$p$-group and $\rho:P\rightarrow
	\GL_r(\mathcal{O}_E)$ a continuous representation, then the image of
	$\rho$ is finite, and $\rho(P)\cap
	\ker(\GL_r(\mathcal{O}_E)\twoheadrightarrow \GL_r(\F_{\lambda}))=\{1\}$.	

	If 
	 $\rho:P\rightarrow \GL_r(E)$ is a continuous
	representation, then $\rho$ factors through a finite quotient of $P$.
\end{lemma}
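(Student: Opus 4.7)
The plan is to reduce everything to the statement that the kernel
\[ U^{(1)} := \ker\bigl(\GL_r(\mathcal{O}_E) \twoheadrightarrow \GL_r(\F_\lambda)\bigr) = 1 + \lambda M_r(\mathcal{O}_E) \]
is a pro-$\ell$-group. To see this, I would introduce the descending filtration $U^{(n)} := 1 + \lambda^n M_r(\mathcal{O}_E)$, note that it is a neighborhood basis of $1$ in $\GL_r(\mathcal{O}_E)$, and check that the successive quotients $U^{(n)}/U^{(n+1)}$ are, via $1 + \lambda^n A \mapsto \bar A$, isomorphic to the additive group of $M_r(\F_\lambda)$. The latter is an elementary abelian $\ell$-group, so $U^{(1)}/U^{(n)}$ is a finite $\ell$-group for every $n$, and $U^{(1)} = \varprojlim_n U^{(1)}/U^{(n)}$ is a pro-$\ell$-group.

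The next step is to observe that any continuous homomorphism $\phi$ from a pro-$p$-group to a pro-$\ell$-group with $\ell \neq p$ is trivial. Indeed, any open normal subgroup of the target has index an $\ell$-power; pulling back, the image in the corresponding finite quotient is simultaneously a quotient of a pro-$p$-group (hence has $p$-power order) and an $\ell$-group, so it is trivial, and trivially of every open subgroup means trivial.

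Combining these: since $U^{(1)}$ is open in $\GL_r(\mathcal{O}_E)$, the subgroup $H := \rho^{-1}(U^{(1)})$ is open in $P$, hence itself pro-$p$ (closed subgroups of pro-$p$-groups are pro-$p$). Then $\rho|_H : H \to U^{(1)}$ is a continuous map between pro-$p$ and pro-$\ell$ groups, so is trivial. This gives $\rho(P) \cap U^{(1)} = \{1\}$, so $\rho(P)$ injects into the finite group $\GL_r(\F_\lambda)$ and is therefore finite, proving the first assertion.

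For the second assertion, Lemma \ref{lemma:lattice} (applied to $P$, which is profinite) produces a $P$-stable $\mathcal{O}_E$-lattice $\mathcal{V} \subset V$ through which $\rho$ factors as $P \to \GL(\mathcal{V}) \to \GL(V)$. The first case then applies to show $\rho(P)$ is finite, i.e. $\rho$ factors through a finite quotient. The main obstacle is really just the first step, checking that $U^{(1)}$ is pro-$\ell$; once that filtration argument is in place the rest is a formal consequence of the incompatibility of pro-$p$ and pro-$\ell$ structures for $p \neq \ell$.
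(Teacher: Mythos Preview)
Your argument is correct and follows essentially the same route as the paper: identify the kernel $U^{(1)}=1+\lambda M_r(\mathcal{O}_E)$ as an open pro-$\ell$ subgroup, use that there are no nontrivial continuous maps from pro-$p$ to pro-$\ell$ groups to conclude $\rho^{-1}(U^{(1)})\subset\ker(\rho)$, and invoke Lemma~\ref{lemma:lattice} for the $\GL_r(E)$ case. In fact you supply more detail than the paper does, spelling out the filtration $U^{(n)}$ with successive quotients $M_r(\F_\lambda)$ to justify that $U^{(1)}$ is pro-$\ell$, whereas the paper simply asserts this.
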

\begin{proof}
	In Lemma \ref{lemma:lattice},  we saw  that any continuous map
	$P\rightarrow \GL_r(E)$ factors through (a conjugate of) the inclusion
	$\GL_r(\mathcal{O}_E)\subset \GL_r(E)$, so the final statement follows from the first.

	The kernel of the projection $\GL_r(\mathcal{O}_E)\rightarrow
	\GL_r(\F_{\lambda})$ is an open subgroup which can be written as
	$H:=\id+\lambda M_r(\mathcal{O}_E)$, where $\lambda$ is a uniformizer of
	$\mathcal{O}_E$ and $M_r(\mathcal{O}_E)$ the ring of $r\times
	r$-matrices with entries in $\mathcal{O}_E$. By continuity
	$\rho^{-1}(H)$ is an open subgroup of $P$ containing the kernel of
	$\rho$. The group $H$ is a pro-$\ell$-group, and it is not
	difficult to see that there are no non-trivial maps between pro-$p$ and
	pro-$\ell$-groups, if $\ell\neq p$. It
	follows that $\rho^{-1}(H)\subset\ker(\rho)$, and hence $\rho(P)\cap
	H=\{1\}$. As $H$ has finite index in $\GL_r(\mathcal{O}_E)$,
	$\ker(\rho)$ has finite index in $P$, which completes the proof.
\end{proof}

\begin{corollary}\label{cor:tameIFFreductionTame}
	If $\mathcal{V}$ is a free $\mathcal{O}_E$-module and
	$\rho:G_K\rightarrow \GL(\mathcal{V})$ a continuous representation,
	then $\rho$ is tame if and only if the reduction $\bar{\rho}:G_K\rightarrow
	\GL(\mathcal{V})\rightarrow \GL(\overline{\mathcal{V}})$ is tame. 
\end{corollary}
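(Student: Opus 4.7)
The plan is to argue both implications directly from Lemma \ref{lemma:proPfactorsThroughFiniteQuotient}, applied to the restriction $\rho|_{P_K}$ of $\rho$ to the wild inertia group. Tameness of $\rho$ (resp.\ $\bar\rho$) is by Definition \ref{defn:tameRepresentation} the statement that $P_K$ lies in the kernel of $\rho$ (resp.\ $\bar\rho$), so I only need to compare $\ker(\rho)\cap P_K$ with $\ker(\bar\rho)\cap P_K$.

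The easy direction is that tameness of $\rho$ implies tameness of $\bar\rho$: if $\rho(P_K)=\{1\}$ then clearly $\bar\rho(P_K)=\{1\}$, since $\bar\rho$ is obtained from $\rho$ by postcomposition with the reduction map $\GL(\mathcal{V})\to\GL(\overline{\mathcal{V}})$.

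For the converse, suppose $\bar\rho$ is tame, i.e.\ $\bar\rho(P_K)=\{1\}$. This means that the image $\rho(P_K)\subset\GL(\mathcal{V})$ is contained in the kernel $H$ of the reduction map $\GL(\mathcal{V})\twoheadrightarrow\GL(\overline{\mathcal{V}})$. Now apply Lemma \ref{lemma:proPfactorsThroughFiniteQuotient} to the continuous representation $\rho|_{P_K}:P_K\to\GL(\mathcal{V})=\GL_r(\mathcal{O}_E)$ of the pro-$p$-group $P_K$: the lemma asserts precisely that $\rho(P_K)\cap H=\{1\}$. Combining the two, $\rho(P_K)=\{1\}$, and $\rho$ is tame.

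There is no real obstacle here; the corollary is essentially a restatement of the lemma, whose content is that for $\ell\neq p$ the reduction map is injective on continuous images of pro-$p$-groups, because the kernel is a pro-$\ell$-group and $\Hom_{\cts}(\text{pro-}p,\text{pro-}\ell)=0$.
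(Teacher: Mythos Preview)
Your proof is correct and follows exactly the same approach as the paper. The paper's proof is the single line ``The lemma shows that $\rho(P_K)\cong\bar{\rho}(P_K)$,'' which is precisely your observation that Lemma~\ref{lemma:proPfactorsThroughFiniteQuotient} forces the reduction map to be injective on $\rho(P_K)$; you have simply unpacked both implications explicitly.
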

\begin{proof}
	The lemma shows that $\rho(P_K)\cong\bar{\rho}(P_K)$.
\end{proof}

We now use the Swan
representation to define a measure for ``how wild'' a representation is.
\begin{definition}\label{defn:SerresB}
	Let $E$ be a finite extension of $\Q_{\ell}$ with valuation ring
	$\mathcal{O}_E$, uniformizer $\lambda$, and residue field
	$\F_\lambda$. Let $\mathcal{V}$ be a free finitely generated
	$\mathcal{O}_E$-module, and 
	$\rho:G_K\rightarrow \GL(\mathcal{V})$ a continuous
	representation. 
	\begin{enumerate}[label={(\alph*)}]
\item Since $\F_{\lambda}$ is a finite field, $G:=G_K/\ker(\bar{\rho})$ is the
	Galois group of a finite extension $L/K$. Following Serre, we define
	\begin{align*}b(\rho):=b(\bar{\rho})&:=\dim_{\F_{\lambda}}\hom_{\F_{\lambda}[G]}({\Sw_G}\otimes_{\Z_{\ell}}\F_{\lambda} ,\bar{\rho}).
\end{align*}
	We also write $b(\mathcal{V})$, if the representation $\rho$ is
	implicitly given.
	Note that since $\Sw_G$ is a projective $\Z_{\ell}[G]$-module,
	$\overline{\Sw_G}$ is a projective $\F_{\lambda}[G]$-module, so the number $b(\mathcal{V})$ only depends on the class of
	$\overline{\mathcal{V}}$ in $R_{\F_{\lambda}}(G)$.
\item   Let $V$ be a finite dimensional $E$-vector space and
	$\rho:G_{K}\rightarrow \GL(V)$ a continuous representation. If
	$\mathcal{V}\subset V$ is a $G_K$-stable $\mathcal{O}_E$-lattice for
	$V$,  then the number $b(\mathcal{V})$ only depends on
	$\rho:G_K\rightarrow \GL(V)$, not on
	the choice of
	the lattice. Indeed, since $\Sw_G$ is projective, $b(\mathcal{V})$
	only depends on the class of $\overline{\mathcal{V}}$ in $R_{\F_{\lambda}}(G)$,
	which only depends on $\rho$ (same argument as in proof as Proposition
	\ref{prop:triangle}).  We
	denote this number by $b(V)$ or $b(\rho)$.
	\end{enumerate}
\end{definition}
\begin{remark}\label{rem:independenceOfFiniteQuotient}~\begin{enumerate}
	\item In the definition of $b(\rho)$ we used the group
	$G=G_K/\ker(\bar{\rho})$. Instead, we could have used $G_K/N$ for any open
	normal subgroup $N$ of $G_K$ contained in $\ker(\bar{\rho})$ without changing the
	result. See the final comments in the proof of Theorem
	\ref{thm:integralityOfSwanConductor}.
\item Taking this into account, we see that in the situation of Definition
	\ref{defn:SerresB}, if $\rho$ itself factors through a finite quotient
	$G$ of $G_K$, then 
	\begin{align*}b(\rho)&=\dim_{\F_\lambda[G]}\Hom_{\F_\lambda[G]}(\Sw_G\otimes_{\Z_\ell}\F_{\lambda},\bar{\rho})\\
		&=\rank_{\mathcal{O}_E}\Hom_{\mathcal{O}_E[G]}(\Sw_G\otimes_{\Z_{\ell}}\mathcal{O}_E,\rho)\\
		&=\dim_{E}\Hom_{E[G]}(\Sw_G\otimes_{\Q_{\ell}}E,\rho\otimes E).
	\end{align*}
	To see this, apply the exact functor
	$\Hom_{\mathcal{O}_{E}[G]}(\Sw_G\otimes_{\Z_{\ell}}\mathcal{O}_E,-)$
	to the short exact sequence of $\mathcal{O}_E[G]$-modules
	\[0\rightarrow \mathcal{V}\xrightarrow{\cdot
	\lambda}\mathcal{V}\rightarrow \overline{\mathcal{V}}\rightarrow 0.\]
\end{enumerate}
\end{remark}

\begin{proposition}\label{prop:formulaForSerresB}
	In the situation of the definition, let $\mathcal{V}$ be a finitely generated, free $\mathcal{O}_E$-module
	equipped with a continuous representation $\rho:G_K\rightarrow
	\GL(\mathcal{V})$.  
		If $G:=G_K/\ker(\bar{\rho})$, then
			\begin{equation}\label{eq:formulaForSerresb}b(\mathcal{V})=\sum_{i=1}^{\infty}\frac{|G_i|}{|G_0|}\dim_{\F_\lambda}(\overline{\mathcal{V}}/\overline{\mathcal{V}}^{G_i})\end{equation}
			where
			$\overline{\mathcal{V}}=\mathcal{V}/\lambda\mathcal{V}$,
			with $\lambda$ a uniformizer of
			$\mathcal{O}_E$.
\end{proposition}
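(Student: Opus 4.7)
The plan is to identify $b(\mathcal{V})$ with a character pairing $\langle \sw_G, \chi_W\rangle$ for a suitable virtual $E$-lift $W$ of $\overline{\mathcal{V}}$, and then expand using the explicit formula for $a_G$ from Lemma \ref{lemma:ArtinRational}.

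First I would reduce everything to the finite group $G = G_K/\ker(\bar\rho)$. Since $\overline{\Sw}_G$ is a projective $\F_\lambda[G]$-module, the functor $\Hom_{\F_\lambda[G]}(\overline{\Sw}_G,-)$ is exact, so $b(\mathcal{V})$ depends only on the class of $\overline{\mathcal{V}}$ in $R_{\F_\lambda}(G)$. After enlarging $E$ if needed (which does not affect the statement), Theorem \ref{thm:mainRepThmIntegrality}\ref{thm:mainRepThmIntegrality-d-surj} provides $G$-stable $\mathcal{O}_E$-lattices $\mathcal{W}_1,\mathcal{W}_2$ such that the reduction $[\mathcal{W}_1/\lambda\mathcal{W}_1]-[\mathcal{W}_2/\lambda\mathcal{W}_2]$ equals $[\overline{\mathcal{V}}]$ in $R_{\F_\lambda}(G)$. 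Setting $W := W_1\ominus W_2$ with $W_j=\mathcal{W}_j\otimes_{\mathcal{O}_E}E$, applying $\Hom_{\mathcal{O}_E[G]}(\Sw_G\otimes_{\Z_\ell}\mathcal{O}_E,-)$ to $0\to\mathcal{W}_j\xrightarrow{\lambda}\mathcal{W}_j\to\mathcal{W}_j/\lambda\mathcal{W}_j\to 0$ and using that $\Sw_G\otimes\mathcal{O}_E$ is projective yields
$$b(\mathcal{V}) \;=\; \dim_{\F_\lambda}\Hom_{\F_\lambda[G]}(\overline{\Sw}_G,\bar\rho) \;=\; \dim_E\Hom_{E[G]}(\Sw_G\otimes E, W).$$
By Proposition \ref{prop:Orthogonality}\ref{item:hom} (orthogonality of characters, available since $|G|$ is invertible in $E$), this final quantity equals the pairing $\langle \sw_G,\chi_W\rangle_G$.

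Next I would unfold this pairing. By Lemma \ref{lemma:totallyRamifiedArtin} we have $\sw_G=\alpha_*\sw_{G_0}$ with $\alpha:G_0\hookrightarrow G$, so Frobenius reciprocity (Proposition \ref{prop:FrobeniusReciprocity}) gives $\langle\sw_G,\chi_W\rangle_G=\langle\sw_{G_0},\chi_W|_{G_0}\rangle_{G_0}$. Since the residue extension of $L/L^{G_0}$ is trivial, $\sw_{G_0}=a_{G_0}-u_{G_0}=a_{G_0}-r_{G_0}+1_{G_0}$. Combining the codimension formula \eqref{eq:codimFormula} of Lemma \ref{lemma:ArtinRational} applied to $G_0$ with the standard identities $\langle r_{G_0},\chi\rangle=\chi(1)=\dim_EW$ and $\langle 1_{G_0},\chi\rangle=\dim_EW^{G_0}$, the $i=0$ contribution cancels and I obtain
$$\langle\sw_{G_0},\chi_W|_{G_0}\rangle_{G_0} \;=\; \sum_{i\geq 1}\frac{|G_i|}{|G_0|}\bigl(\dim_E W-\dim_E W^{G_i}\bigr).$$

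Finally, I need to transfer the fixed-space dimensions on the $E$-side to those on the $\F_\lambda$-side. For $i\geq 1$, Corollary \ref{cor:structure-of-ramification-groups} says that $G_i$ is a $p$-group, and since $\ell\neq p$, the order $|G_i|$ is a unit in $\mathcal{O}_E$. The averaging idempotent $e_i=|G_i|^{-1}\sum_{g\in G_i}g$ splits each lattice $\mathcal{W}_j$ as $\mathcal{O}_E[G_i]$-module into $\mathcal{W}_j^{G_i}\oplus(1-e_i)\mathcal{W}_j$, so $\dim_E W^{G_i}=\rank_{\mathcal{O}_E}\mathcal{W}_1^{G_i}-\rank_{\mathcal{O}_E}\mathcal{W}_2^{G_i}=\dim_{\F_\lambda}(\mathcal{W}_1/\lambda\mathcal{W}_1)^{G_i}-\dim_{\F_\lambda}(\mathcal{W}_2/\lambda\mathcal{W}_2)^{G_i}$. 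By Maschke for $G_i$ over $\F_\lambda$, the restriction of any $\F_\lambda[G]$-module to $G_i$ is semisimple, so $\dim_{\F_\lambda}M^{G_i}$ is additive in short exact sequences and hence descends to a well-defined function on $R_{\F_\lambda}(G)$. Applied to the equality $[\mathcal{W}_1/\lambda\mathcal{W}_1]-[\mathcal{W}_2/\lambda\mathcal{W}_2]=[\overline{\mathcal{V}}]$, this yields $\dim_E W^{G_i}=\dim_{\F_\lambda}\overline{\mathcal{V}}^{G_i}$, and likewise $\dim_EW=\dim_{\F_\lambda}\overline{\mathcal{V}}$. Substituting gives the claim.

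The main obstacle will be the passage between the Grothendieck groups $R_E(G)$, $P_{\F_\lambda}(G)$, and $R_{\F_\lambda}(G)$ in the first paragraph, which relies on the nontrivial surjectivity statement in Theorem \ref{thm:mainRepThmIntegrality}, and the bookkeeping required to make the argument work for the virtual lift $W$ rather than an honest representation. Once these foundations are in place, the computation reduces to applying the formulas already established for the Artin character.
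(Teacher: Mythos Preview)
Your argument is correct, but it takes a noticeably more roundabout path than the paper's proof. The paper works entirely over $\F_\lambda$ and never lifts $\overline{\mathcal{V}}$ to characteristic zero. Instead it uses the explicit decomposition $|G_0|\cdot\sw_G=\sum_{i\ge 1}|G_i|\,\alpha_{i,*}u_{G_i}$ already established in Lemma~\ref{lemma:ArtinRational} (with $\alpha_i:G_i\hookrightarrow G$), realizes each $\Ind_{G_i}^GU_{G_i}$ as a projective $\mathcal{O}_E[G]$-module (since $G_i$ is a $p$-group and $p\neq\ell$), reduces modulo $\lambda$, and then computes directly
\[
\Hom_{\F_\lambda[G]}\bigl(\Ind_{G_i}^GU_{G_i,\F_\lambda},\,\overline{\mathcal{V}}\bigr)
\;\cong\;\Hom_{\F_\lambda[G_i]}\bigl(U_{G_i,\F_\lambda},\,\RES_{G_i}^G\overline{\mathcal{V}}\bigr)
\;\cong\;\overline{\mathcal{V}}/\overline{\mathcal{V}}^{G_i}
\]
via the $\Ind$--$\RES$ adjunction. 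Summing over $i\ge 1$ and dividing by $|G_0|$ gives the formula in one stroke.

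The trade-off is clear: your route invokes the surjectivity of the decomposition map $d:R_E(G)\to R_{\F_\lambda}(G)$ from Theorem~\ref{thm:mainRepThmIntegrality}\ref{thm:mainRepThmIntegrality-d-surj}, which the paper quotes without proof as a substantial black box, and then carries the virtual class $W=W_1\ominus W_2$ through the character computation; the paper's route avoids this entirely by exploiting the concrete structure of $\Sw_G$ and staying in positive characteristic throughout. On the other hand, your approach makes transparent that $b(\mathcal{V})$ really is the value of the character pairing $\langle\sw_G,\chi\rangle$ on any lift, which conceptually ties the definition back to the characteristic-zero picture developed in Section~\ref{sec:complexSwan}. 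Both arguments ultimately hinge on the same structural fact, namely that the $G_i$ for $i\ge 1$ are $p$-groups with $p\neq\ell$, so that averaging idempotents (equivalently, Maschke for $G_i$) are available over $\mathcal{O}_E$ and $\F_\lambda$.
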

\begin{proof}
		We have seen that $|G_0|\cdot \sw_G=\sum_{i\geq 1}|G_i|u^*_{G_i}$, where $u^*_{G_i}$ denotes the
			character of the augmentation representation (Example
			\ref{ex:representationsFirstExamples},
			\ref{item:representationsFirstExamples:augmentation}) 
			$U_{G_i,\mathcal{O}_E}$ of $G_i$,
			induced along the inclusion $G_i\subset G$. It follows
			that 
			\begin{align*}
				|G_0|b(\mathcal{V})&=\dim_{\F_{\lambda}}(\hom_{\F_{\lambda}[G]}(\overline{\Sw_G^{\oplus
				|G_0|}},\overline{\mathcal{V}}))\\
				&=\sum_{i\geq
				1}\dim_{\F_{\lambda}}(\hom_{\F_{\lambda}[G]}(\overline{\Ind_{G_i}^GU_{G_i,\mathcal{O}_E}}^{\oplus
			|G_i|},\overline{\mathcal{V}}))\\
			&=\sum_{i\geq
			1}|G_i|\dim_{\F_{\lambda}}\hom_{\F_{\lambda}[G]}(\overline{\Ind^G_{G_i}U_{G_i,\mathcal{O}_E}},\overline{\mathcal{V}})
		\end{align*}
		 As the $G_i$, $i>0$, are $p$-groups, the augmentation
		representation is a projective $\mathcal{O}_E[G_i]$-module
		(Exercise \ref{ex:projectivity-prime-to-p}),
		its reduction is the augmentation representation of $G_i$ over
		$\F_{\lambda}$, because we already saw that
		$P_A(G_i)\rightarrow P_k(G_i)$ is an isomorphism. It is easily
		checked that induction and reduction modulo $\lambda$ commute,
		so
		$\overline{\Ind_{G_i}^GU_{G_i,\mathcal{O}_E}}=\Ind_{G_i}^GU_{G_i,\F_{\lambda}}$.

		Finally, note that by the properties of induction
		\[\hom_{\F_{\lambda}[G]}(\Ind_{G_i}^GU_{G_i,\F_{\lambda}},\overline{\mathcal{V}})=\hom_{\F_{\lambda}[G_i]}(U_{G_i,\F_{\lambda}},\RES_{G_i}^G\overline{\mathcal{V}})=\overline{\mathcal{V}}/\overline{\mathcal{V}}^{G_i}\]
		The claim follows.
\end{proof}

By definition, a representation $\mathcal{V}$ of $G_K$ is tame, if $P_K$ acts trivially. On the other
hand, the invariant $b(\mathcal{V})$ only depends on the reduction
of $\mathcal{V}$ modulo the maximal ideal. Nonetheless, as promised,
$b(\mathcal{V})$ does measure whether $\mathcal{V}$ is tame or not. 
\begin{proposition}
	Let $\rho:G_K\rightarrow \GL_r(\mathcal{O}_E)$ be a continuous
	representation. Then the following are equivalent:
	\begin{enumerate}
		\item\label{item:b0} $\rho\otimes E: G_K\xrightarrow{\rho}
			\GL_r(\mathcal{O}_E)\hookrightarrow\GL_r(E)$ is tame.
		\item\label{item:b1} $\rho$ is tame.
		\item\label{item:b2} $\bar{\rho}:G_K\xrightarrow{\rho}
			\GL_r(\mathcal{O}_E)\twoheadrightarrow
			\GL_r(\F_{\lambda})$ is tame.
		\item\label{item:b3} $b(\rho)=0$.
	\end{enumerate}
\end{proposition}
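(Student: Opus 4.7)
The strategy is a chain of easy equivalences plus one substantive step using the explicit formula for $b(\rho)$.

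First, \ref{item:b0} $\Leftrightarrow$ \ref{item:b1} is essentially tautological: since the inclusion $\GL_r(\mathcal{O}_E)\hookrightarrow\GL_r(E)$ is injective, the kernels of $\rho$ and of $\rho\otimes E$ agree, so $P_K$ is contained in one kernel iff it is contained in the other. Next, \ref{item:b1} $\Leftrightarrow$ \ref{item:b2} is Corollary \ref{cor:tameIFFreductionTame}, which says exactly that the reduction modulo $\lambda$ does not alter the action of the pro-$p$-group $P_K$ (because by Lemma \ref{lemma:proPfactorsThroughFiniteQuotient} the kernel of $\GL_r(\mathcal{O}_E)\twoheadrightarrow\GL_r(\F_\lambda)$ is a pro-$\ell$-group and so intersects $\rho(P_K)$ trivially).

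The core of the proposition is the equivalence \ref{item:b2} $\Leftrightarrow$ \ref{item:b3}, which I would extract from Proposition \ref{prop:formulaForSerresB}. Let $G=G_K/\ker(\bar\rho)$, which is the Galois group of some finite Galois extension $L/K$; then $\bar\rho$ is tame iff the image of $P_K$ in $G$ is trivial. By the discussion preceding Definition \ref{defn:tameRepresentation}, the image of $P_K$ in a finite quotient $G$ is precisely the wild inertia subgroup $G_1$, so tameness of $\bar\rho$ is equivalent to $G_1=\{1\}$, hence to $G_i=\{1\}$ for all $i\geq 1$.

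Now I apply the formula
\[b(\rho)=\sum_{i\geq 1}\frac{|G_i|}{|G_0|}\dim_{\F_\lambda}\bigl(\overline{\mathcal V}/\overline{\mathcal V}^{G_i}\bigr).\]
If $G_i=\{1\}$ for all $i\geq 1$, every summand vanishes and $b(\rho)=0$, giving \ref{item:b2} $\Rightarrow$ \ref{item:b3}. Conversely, assume $b(\rho)=0$. Each summand is a non-negative integer, so $\overline{\mathcal V}^{G_i}=\overline{\mathcal V}$ for every $i\geq 1$, i.e.\ each $G_i$ acts trivially on $\overline{\mathcal V}$. The only step requiring a small observation is that $G$ acts \emph{faithfully} on $\overline{\mathcal V}$ by construction ($G=G_K/\ker(\bar\rho)$ was defined as the image of $\bar\rho$), and therefore $G_i=\{1\}$ for all $i\geq 1$, proving \ref{item:b3} $\Rightarrow$ \ref{item:b2}. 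The only mildly delicate point is confirming that the ``$G_1$ of the finite quotient'' really is the image of $P_K$; this follows from Herbrand's theorem (Proposition \ref{prop:Herbrand}) applied to the finite Galois quotients of $G_K$ through which $\bar\rho$ factors, combined with the definition of $P_K$ as the inverse limit of these wild inertia subgroups.
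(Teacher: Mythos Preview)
Your proof is correct and follows the same route as the paper, which simply points to Corollary \ref{cor:tameIFFreductionTame} for \ref{item:b1}$\Leftrightarrow$\ref{item:b2} and to the formula of Proposition \ref{prop:formulaForSerresB} for \ref{item:b2}$\Leftrightarrow$\ref{item:b3}. Two small remarks: the summands $\tfrac{|G_i|}{|G_0|}\dim_{\F_\lambda}(\overline{\mathcal V}/\overline{\mathcal V}^{G_i})$ are non-negative \emph{rationals}, not necessarily integers, but this does not affect your argument; and the detour through faithfulness of $G$ on $\overline{\mathcal V}$ is not needed, since $\overline{\mathcal V}^{G_1}=\overline{\mathcal V}$ already says directly that $P_K$ acts trivially on $\overline{\mathcal V}$, i.e.\ $\bar\rho$ is tame.
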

\begin{proof}
	The equivalence of \ref{item:b0} and \ref{item:b1} is clear, while the equivalence of 
	\ref{item:b1} and \ref{item:b2}  is Corollary
	\ref{cor:tameIFFreductionTame}. The equivalence of
	\ref{item:b2} and \ref{item:b3} follows from the formula of Proposition
	\ref{prop:formulaForSerresB}.
\end{proof}

\subsubsection{The break decomposition}
We continue to use the notations from the previous section.  The simple
observation made in Lemma \ref{lemma:proPfactorsThroughFiniteQuotient} tells us that  in any $\ell$-adic representation
$\rho:G_K\rightarrow \GL_r(E)$, where $E$ is a finite extension of
$\Q_{\ell}$, $\ell\neq p$, the wild inertia group $P_K$\index{wild inertia
group} always acts through
a finite quotient.

We follow Katz \cite[Ch.~1]{Katz/Kloosterman} in studying  fairly general
representations of the pro-$p$-group $P_K$ (Definition
\ref{defn:tameRepresentation}), which factor through a finite
quotient. See also \cite[\S2.]{Laumon/Fourier}.

Let us first establish some more facts about the ramification filtration on
$G_K$.
\begin{lemma}\label{lemma:absoluteUpperNumberingFiltration} For $\lambda \in \R_{\geq 0}$, write
	\[G_K^{\lambda+}:=\overline{\bigcup_{\lambda'>\lambda}G_K^{\lambda'}}\]
	where $\overline{(-)}$ denotes the closure in the topological group
	$G_K$. 
	
	The decreasing filtration
$G_K^{\lambda}$, $\lambda \in \R_{\geq 0}$, satisfies the following
properties.
	\begin{enumerate}[label=\emph{(\alph*)},ref=(\alph*)]
		\item\label{item:ramificationFiltration1} $\bigcap_{\lambda>0}G_K^{\lambda}=\{1\}$
		\item For $\lambda>0$ we have 
			\[G_K^\lambda=\bigcap_{0<\lambda'<\lambda}G_K^{\lambda'}.\]
		\item $P_K=G_K^{0+}$
	\end{enumerate}
\end{lemma}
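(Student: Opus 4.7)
The plan is to reduce each assertion to the corresponding statement at each finite Galois subextension $L/K$ of $K^{\sep}/K$ via the defining inverse limit $G_K^\lambda = \varprojlim_L \Gal(L/K)^\lambda$ of Definition \ref{defn:upperNumberingInfiniteExtension}. The key preliminary observation — which I will establish first and which is the main technical obstacle — is that for every such $L$ the canonical projection $\pi_L\colon G_K \twoheadrightarrow \Gal(L/K)$ maps $G_K^\lambda$ \emph{onto} $\Gal(L/K)^\lambda$. By Herbrand's Theorem (Proposition \ref{prop:quotientsUpperNumbering}), whenever $L \subset L' \subset K^{\sep}$ are finite Galois over $K$, the restriction $\Gal(L'/K)^\lambda \twoheadrightarrow \Gal(L/K)^\lambda$ is surjective, so given $\bar g \in \Gal(L/K)^\lambda$ the nonempty finite preimages in each $\Gal(L'/K)^\lambda$ form an inverse system indexed by $L' \supset L$, and a compatible family in $\varprojlim_{L'} \Gal(L'/K)^\lambda = G_K^\lambda$ exists by a standard compactness argument.

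For \ref{item:ramificationFiltration1}, given $1 \neq g \in G_K$ I choose a finite Galois $L/K$ on which $g$ acts nontrivially. Since $\Gal(L/K)$ is finite, its lower numbering filtration is trivial for $i \gg 0$ and hence, via $\psi_{L/K}$, its upper numbering filtration is trivial for $\lambda \gg 0$. Therefore $g|_L \notin \Gal(L/K)^\lambda$ for some $\lambda > 0$, whence $g \notin G_K^\lambda$.

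For (b), the inclusion $\subset$ is immediate from the decreasing nature of the filtration. For the reverse inclusion I use the preliminary observation to reduce to showing at each finite Galois level that $\Gal(L/K)^\lambda = \bigcap_{0 < \lambda' < \lambda} \Gal(L/K)^{\lambda'}$. Because $\psi_{L/K}$ is a continuous strictly increasing function and the lower numbering jumps only at integers (via the convention $G_u = G_{\lceil u \rceil}$ of Definition \ref{defn:reals}), a short case distinction on whether $\psi_{L/K}(\lambda)$ is itself an integer shows that $\Gal(L/K)^{\lambda'} = \Gal(L/K)^\lambda$ for all $\lambda'$ in some small left neighborhood of $\lambda$, which suffices. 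The corresponding statement for $G_K$ follows because each $\Gal(L/K)^{\lambda'}$ takes only finitely many values as $\lambda'$ varies, so intersection commutes with the inverse limit.

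For (c), the preliminary observation yields $\pi_L(\bigcup_{\lambda > 0} G_K^\lambda) = \bigcup_{\lambda > 0} \Gal(L/K)^\lambda$, and a direct check using \eqref{eq:formulaForPhi} gives $\phi_{L/K}(1) > 0$ together with $\Gal(L/K)^\lambda = \Gal(L/K)_1$ for every $\lambda \in (0, \phi_{L/K}(1)]$; hence this union equals $\Gal(L/K)_1$. But $\Gal(L/K)_1$ is the image of $P_K$ in $\Gal(L/K)$ by construction of $P_K$, so $\bigcup_{\lambda > 0} G_K^\lambda$ and $P_K$ have identical images in every finite quotient of $G_K$. Since a closed subgroup of a profinite group is determined by its images in the finite discrete quotients, passing to closures gives $\overline{\bigcup_{\lambda > 0} G_K^\lambda} = P_K$, i.e.\ $G_K^{0+} = P_K$; once the preliminary observation is in place, this and the previous two parts are routine consequences of the finite-level statements combined with standard profinite formalism.
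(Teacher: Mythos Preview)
Your proof is correct and for parts \ref{item:ramificationFiltration1} and (c) follows essentially the same lines as the paper's argument: both reduce to finite quotients, use that $\Gal(L/K)^{\epsilon}\subset\Gal(L/K)_1$ for small $\epsilon>0$ with equality for some $\epsilon_0>0$, and conclude that $G_K^{0+}$ and $P_K$ have the same image in every finite quotient. Your explicit verification of the surjectivity $\pi_L(G_K^\lambda)=\Gal(L/K)^\lambda$ is something the paper simply asserts in passing, so you are more careful there.

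The one genuine difference is in part (b). You argue directly at each finite level, using left-continuity of $\lceil\psi_{L/K}(-)\rceil$ to show $\Gal(L/K)^{\lambda'}=\Gal(L/K)^{\lambda}$ for $\lambda'$ in a small left neighborhood of $\lambda$, and then pass to the inverse limit. The paper instead observes that $G_K/G_K^\lambda$ is the Galois group of an (infinite) algebraic extension of $K$ carrying the image of the filtration on $G_K$, and simply applies part \ref{item:ramificationFiltration1} to that quotient: since $(G_K/G_K^\lambda)^{\lambda'}=G_K^{\lambda'}/G_K^\lambda$ for $\lambda'<\lambda$ and is trivial for $\lambda'\ge\lambda$, the statement $\bigcap_{\lambda'>0}(G_K/G_K^\lambda)^{\lambda'}=\{1\}$ gives exactly the desired inclusion. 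The paper's route is slicker and avoids any computation with $\psi_{L/K}$, but tacitly uses that the proof of \ref{item:ramificationFiltration1} works verbatim for any Galois extension, not just $K^{\sep}/K$. Your approach is more elementary and self-contained; the case distinction you mention is in fact unnecessary since $\lceil-\rceil$ is left-continuous regardless.
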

\begin{proof}
	\begin{enumerate}[label={(\alph*)}]
		\item If $L/K$ is a finite Galois
			extension, 
			the quotient map $G_K\twoheadrightarrow \Gal(L/K)$
			induces surjections $G_K^{\lambda}\rightarrow
			\Gal(L/K)^{\lambda}$. Thus, if $g\in G_K$ lies
			$G^{\lambda}_K$ for all $\lambda >0$, then $g$ maps to
			$1$ in $\Gal(L/K)$ for every finite Galois extension
			$L/K$. The claim follows.
		\item The profinite group $G_K/G_K^{\lambda}$ corresponds to
			an algebraic extension $L/K$ (not necessarily finite)
			and the ramification filtration on its Galois group $G_K/G_K^{\lambda}$
			is given by the image of the filtration of $G_K$. Thus
			the claim follows from 
			\ref{item:ramificationFiltration1}.
		\item Let $L/K$ be a finite Galois extension. If $\epsilon>0$,
			then $\psi_{L/K}(\epsilon)>0$, so
			$\Gal(L/K)^{\epsilon}=\Gal(L/K)_{\lceil\psi_{L/K}(\epsilon)\rceil}\subset
			\Gal(L/K)_1$. 			Since the image of $P_K$ in $\Gal(L/K)$ is precisely
			$\Gal(L/K)_1$ it follows that $G_K^{0+}\subset P_K$.
			But we see more: there exists an $\epsilon_0>0$, such
			that $\Gal(L/K)^{\epsilon_0}=\Gal(L/K)_1$, so the closed
			subgroups $G_K^{0+}\subset P_K$ have the same images
			in every finite quotient, which shows that they are
			equal. (Recall that a closed subgroup $H$ of a profinite
			group $\varprojlim_{N} G/N$ is also profinite and
			isomorphic to $\varprojlim_{N} H/(N\cap H)$)
	\end{enumerate}
\end{proof}

\begin{definition}
	For convenience, we say that a \emph{$P_K$-module} is a $\Z[1/p]$-module $M$,
	together with a morphism $\rho:P_K\rightarrow \Aut_\Z(M)$, which
	factors through a finite discrete quotient. A morphism of $P_K$-modules
	is a morphism of $\Z[1/p]$-modules compatible with the action of
	$P_K$.
\end{definition}

\begin{proposition}[{\cite[Ch.~I, Prop.~1.1]{Katz/Kloosterman}}]\label{prop:breakDecomposition}
	For readability, write $G:=G_K$ and $P:=P_K$.
	Let $M$ be a $P$-module.
	\begin{enumerate}[label=\emph{(\alph*)},ref=(\alph*)]
		\item\label{item:breakDecomposition1} There exists a unique decomposition
			$M=\bigoplus_{x\in \R_{\geq 0}}M(x)$ of $P$-modules,
			such that
			\begin{enumerate}[label=\emph{(\roman*)},ref=(\roman*)]
				\item\label{property:decomposition1} $M(0)=M^P$
				\item\label{property:decomposition2} $M(x)^{G^x}=0$ for
					$x>0$.
				\item\label{property:decomposition3} $M(x)^{G^{y}}=M(x)$
					for $y>x$.
			\end{enumerate}
		\item $M(x)=0$ for all but finitely many $x\in \R_{\geq 0}$.
		\item\label{item:breakDecomposition3} For every $x\in \R_{\geq 0}$, the assignment $M\mapsto
			M(x)$ is an exact endofunctor on the category of
			$P$-modules.
		\item\label{item:breakDecomposition4} $\Hom_P(M(x),M(y))=0$ unless $x=y$.
	\end{enumerate}
\end{proposition}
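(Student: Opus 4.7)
The plan is to exploit that, by hypothesis, the $P_K$-action on $M$ factors through a finite quotient $\bar P$, which is a $p$-group since $P_K$ is pro-$p$; combined with $M$ being a $\Z[1/p]$-module this makes $|\bar P|$ invertible on $M$, so Maschke-style averaging operators are available. Write $P^\lambda \subset \bar P$ for the image of $G^\lambda_K$ for $\lambda > 0$ (a normal subgroup, since $G^\lambda_K \lhd G_K$) and set $P^0 := \bar P$. By the finiteness of $\bar P$ together with Lemma \ref{lemma:absoluteUpperNumberingFiltration}, only finitely many distinct values $\bar P = R_0 \supsetneq R_1 \supsetneq \cdots \supsetneq R_m = \{1\}$ occur, at break points $0 < \mu_1 < \cdots < \mu_m$, with $P^\lambda = R_{j-1}$ for $\lambda \in (\mu_{j-1},\mu_j]$ by left-continuity.

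Next I would introduce the central idempotents $f_j := |R_j|^{-1}\sum_{g\in R_j} g \in \Z[1/p][\bar P]$ for $j=0,\ldots,m$, which project $M$ onto $M^{R_j}$. The nesting $R_0 \supset \cdots \supset R_m$ gives $f_i f_j = f_{\min(i,j)}$, so that $f_0, f_1-f_0,\ldots,f_m-f_{m-1}$ are pairwise orthogonal idempotents summing to $f_m = 1$. Setting $M(0) := f_0 M$, $M(\mu_j) := (f_j - f_{j-1})M$, and $M(x) := 0$ for every other $x \in \R_{\geq 0}$ produces the decomposition of part (a); the axioms are then immediate: (i) because $f_0$ projects onto $M^{\bar P} = M^P$; (ii) because $f_{j-1}(f_j - f_{j-1}) = 0$ and the image of $G^{\mu_j}$ in $\bar P$ is $R_{j-1}$; (iii) because for $y > \mu_j$ the image of $G^y$ lies in $R_j$, which fixes $f_j M \supset M(\mu_j)$ pointwise. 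For uniqueness, any decomposition satisfying (i)--(iii) must have $M(x) = 0$ for $x \notin \{0,\mu_1,\ldots,\mu_m\}$: picking $y$ just above $x$ gives $P^y = P^x$, and (ii)--(iii) then make $M(x)$ simultaneously fixed and fixed-point-free under the same subgroup; the constraints $M(\mu_j) \subset f_j M$ together with $f_{j-1}|_{M(\mu_j)} = 0$ and the direct-sum hypothesis then pin down $M(\mu_j) = (f_j - f_{j-1})M$.

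Part (b) is immediate from the finiteness of the break set, and (c) follows by noting that on $P$-modules whose action factors through a common finite quotient refining $\bar P$, the endofunctor $M \mapsto M(x)$ is given by multiplication by a fixed central idempotent in the group algebra, hence is exact; passage to a common refinement handles morphisms between different $P$-modules. For (d), given a $P$-linear map $\phi: M(x) \to M(y)$ with $x \neq y$, I would locate a subgroup $R \subseteq \bar P$ from the filtration that fixes one of $M(x), M(y)$ pointwise while having no invariants on the other: for $0 = x < y = \mu_k$ take $R = \bar P$; for $0 < \mu_j = x < y = \mu_k$ take $R = R_j$, so that $M(x) \subset M^{R_j}$ while $f_j(f_k - f_{k-1})M = 0$ gives $M(y)^R = 0$; the case $x > y$ is symmetric, using $R_{j-1}$ with $x = \mu_j$ and Maschke averaging $M(x) = (1-f_{j-1})M(x)$ against the fact that $R_{j-1} \subset R_k$ acts trivially on $M(y)$. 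In each case $R$-equivariance of $\phi$ forces $\phi = 0$. The main bookkeeping obstacle will be keeping the distinguished role of $x = 0$ (where $M(0) = M^P$, not $M^{G^0}$) separate from the positive breaks and correctly invoking left-continuity of the filtration at each $\mu_j$.
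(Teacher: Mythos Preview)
Your proof is correct and follows essentially the same approach as the paper: both construct the decomposition via the family of central idempotents obtained by averaging over the images of the ramification subgroups in the finite quotient of $P_K$, and both verify (i)--(iii) by the obvious idempotent identities. Your enumeration by the finitely many distinct values $R_0\supsetneq\cdots\supsetneq R_m$ and telescoping idempotents $f_j-f_{j-1}$ is just a relabeling of the paper's $\pi(x+)(1-\pi(x))$; the only noticeable difference is in part~(d), where the paper observes more slickly that functoriality forces any $P$-map $M(x)\to M(y)$ to factor through $(M(y))(x)=0$, whereas you do a direct case-by-case analysis with specific subgroups $R$---both are fine.
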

\begin{proof}
	Let $\rho:P\rightarrow \Aut_{\Z}(M)$ be the representation implicit in
	saying that $M$ is a $P$-module and let $H=\image(\rho)$. By
	assumption, $H$ is a finite discrete group. For $x\in \R_{\geq 0}$,
	let $H(x):=\rho(G_K^x)$, and $H(x+):=\rho(G_K^{x+})=\bigcup_{y>x}H(y)$. In particular,
	$H=H(0+)$. Note that for every $x\in \R_{\geq 0}$, $H(x)$ and $H(x+)$
	are normal subgroups of $H$.

	For every $x\in \R_{\geq 0}$, we define elements of $\Z[1/p][H]$
	\[\pi(x):=\frac{1}{|H(x)|}\sum_{h\in H(x)}h\quad\text{  and
	}\quad\pi(x+):=\frac{1}{|H(x+)|}\sum_{h\in H(x+)}h\]
	We have seen such elements before: They define splittings of the
	projections
	$\Z[1/p][H(x)]\twoheadrightarrow \Z[1/p]$, $h\mapsto 1$, and similarly
	for $H(x+)$.
	\begin{lemma}
		Almost all the elements $\pi(x+)(1-\pi(x))$ are $0$, and the
		nonzero elements of the
		set $\{\pi(x+)(1-\pi(x))|x>0\}\cup\{\pi(0+)\}$ form a set of
		orthogonal, central idempotents in $\Z[1/p][H]$, whose sum is $1$.
	\end{lemma}
	\begin{proof}
		An easy calculation shows that for all $x\geq 0$, $\pi(x)$ and $\pi(x+)$ are
		idempotents. They are central as the groups $H(x)$,
		$H(x+)$ are normal subgroups of $H$. It follows that the same
		is true for $\pi(x+)(1-\pi(x))$ and $\pi(0+)$.
		For the orthogonality, note that $\pi(x+)\pi(x)=\pi(x)$, so
		$\pi(x+)(1-\pi(x))=\pi(x+)-\pi(x)$. For $y> x$ we see that $\pi(x)\pi(y)=\pi(x)$, and we compute
		\begin{align*}(\pi(x+)-\pi(x))(\pi(y+)-\pi(y))&=\pi(x+)\pi(y+)-\pi(x+)\pi(y)\\&=\pi(x+)-\pi(x+)\pi(y),\end{align*}
		which is $0$ if $\pi(y)\neq \pi(x)$. If $\pi(x)=\pi(y)$, then
		$H(x)=H(y)$, so $H(x)=H(x+)$ and hence $\pi(x+)-\pi(x)=0$.

       %

		Since $H$ is a finite group, we have $H(x)=H(x+)$ for almost
		all $x\in \R_{\geq 0}$, thus only finitely many
		$\pi(x+)(1-\pi(x))$ are nonzero. 

		Finally, let us compute the sum of the idempotents:
		\begin{equation}\label{eq:sumOfIdempotents}
			\sum_{x>0}\pi(x+)(1-\pi(x))+\pi(0+)=\sum^r_{n=1}(\pi(x_n+)-\pi(x_n))+\pi(0+)
		\end{equation}
		where $x_1<x_2<\ldots<x_r$ are the finitely many elements of
		$\R_{> 0}$ where $H(x_n+)\neq H(x_n)$. Note that in this case
		\[H(0+)=H(x_1),\quad H(x_{n-1}+)=H(x_n), n\in [2,r],\quad H(x_r+)=\{1\}.\]
        It follows that the coefficient of $h\in H(x_n)\setminus H(x_n+)$ in
		\eqref{eq:sumOfIdempotents} is
		\[\underbrace{-\frac{1}{|H(x_n)|}+\frac{1}{|H(x_{n-1}+)|}}_{=0}\underbrace{-\frac{1}{|H(x_{n-1})|}+\frac{1}{|H(x_{n-2}+)|}}_{=0}+\ldots=0
			\]
		A similar computation shows that the coefficient of $1$ in
		\eqref{eq:sumOfIdempotents} is $1$, so the proof is complete.
	\end{proof}
	From the lemma we obtain the decomposition of $M$ that we were
	looking for: For $x>0$, $M(x):=\{m\in M|\pi(x+)(1-\pi(x))m=m\}$, and
	$M(0):=\{m\in M| \pi(0+)m=m\}$. By construction, this is a
	decomposition into finitely many subrepresentations.

	We verify that it has the desired properties. For property
	\ref{property:decomposition1}, we just have to observe that
	$H(0+)=\image(P)$, as $P=G_K^{0+}$. For \ref{property:decomposition3},
	let $x_n$ be defined as in the previous paragraph,
	i.e.~such that $H(x_n)\supsetneqq H(x_n+)$. For $m\in M(x_n)$, we know by
	definition that $m=\pi(x_n+)m-\pi(x_n)m$. Thus, if $y>x_n$, then
	$H(y)\subset H(x_n+)$, and hence $m$ is invariant under $H(y)$, as
	$\pi(y)\pi(x)=\pi(x)$ if $y\geq x$. But this
	also means that $m=\pi(x_n+)m-\pi(x_n)m=m-\pi(x_n)m$, so
	$\pi(x_n)m=0$. This shows that $M(x_n)^{G^{x_n}}=0$, so
	\ref{property:decomposition2} also is verified.
	
	To prove that the decomposition of $M$ is unique, let
	$M=\bigoplus_{x\geq 0}M'(x)$ be a second decomposition with the
	properties \ref{property:decomposition1} -
	\ref{property:decomposition3}. Clearly, $M'(x)\neq 0$ if and only if
	$x$ is one of the $x_1,\ldots, x_r$ (or $x=0$) from above, and the same is true
	for $M(x)$.  We know that $M'(0)=M^P=M(0)$. For $i\in \{1,\ldots,r\}$, let $m\in M'(x_i)$. Then
	$\pi(x_i)m\in M'(x_i)^{G^{x_i}}=0$ and $\pi(y)m=m$ for $y>x_i$. We can use the first decomposition to uniquely write
	$m=m_0+m_1+\ldots+m_r$ with $m_0\in M^P$ and $m_j\in M(x_j)$. Note that $\pi(x_i)m_j=0$ for $i\leq j$ and $\pi(x_i)m_j=m_j$ for $i>j$. Thus, 
	\[0=\pi(x_i)m=m_0+\ldots+m_{i-1}.\]
	If $i=r$ is the final jump, we immediately see that $m=m_r$.
	If $i<r$, then 
	\[m=\pi(x_{i+1})m=m_0+\ldots+m_i.\]
	Again we see $m=m_i$.  We conclude that $M'(x_i)\subset M(x_i)$. The converse inclusion follows by symmetry.

	
	We already remarked that the decomposition of $M$ only has finitely many
	nonzero
	components, so it remains to check \ref{item:breakDecomposition3} and
	\ref{item:breakDecomposition4}. If $M,N$ are two $P$-modules and
	$\phi:M\rightarrow N$ an $H$-equivariant map, then $\phi$
	restricts to $\phi(x):M(x)\rightarrow N(x)$. This is obvious for
	$x=0$. If $x_i>0$ is one of the finitely many elements with the
	property that 
	$H(x_i)\supsetneqq H(x_i+)$, then for $m\in M(x_i)$, we immediately
	see that $\pi(x_i)\phi(m)=0$, and $\pi(x_i+)\phi(m)=\phi(m)$. Hence
	$\phi(m)\in N(x_i)$.  
	
	This also implies
	\ref{item:breakDecomposition4}: if $y\neq x$, then in the decomposition 
	$M(x)=\bigoplus_{y\in \R_{\geq 0}}(M(x))(y)$ of the $P$-module $M(x)$,
	we have $(M(x))(y)=0$ if $y\neq x$. Thus there are no nonzero
	$P$-morphisms $M(y)\rightarrow M(x)$, unless $y=x$.

	To see that the functor $M\mapsto M(x)$ is exact, let $0\rightarrow M'\rightarrow
	M\rightarrow M''\rightarrow 0$ be a short exact sequence of
	$P$-modules. Clearly $M'(x)\subset M(x)$, and $M(x)\twoheadrightarrow
	M''(x)$, as the map $M\rightarrow M''$ is surjective, and
	$M(y)\rightarrow M''(x)$ is the zero map if $y\neq x$. To see that the
	sequence $0\rightarrow M'(x)\rightarrow M(x)\rightarrow
	M''(x)\rightarrow 0$ is exact in the middle, we can use the same
	argument: $M'$ maps surjectively onto the kernel of $M\rightarrow
	M''$.
\end{proof}

\begin{definition}\label{def:breakDecomposition}
	If $M$ is a $P_K$-module, then the decomposition from Proposition
	\ref{prop:breakDecomposition} is called the \emph{break
	decomposition}\index{break decomposition} of $M$.
\end{definition}

\begin{corollary}\label{cor:preparationForSwan}Let $A$ be a $\Z[1/p]$-algebra, and $M$ an $A$-module, on
	which $P=P_K$ acts $A$-linearly through a finite quotient, i.e.~the representation $P\rightarrow
	\Aut_\Z(M)$ factors through a finite quotient and through $\Aut_A(M)\subset \Aut_\Z(M)$.
	\begin{enumerate}[label=\emph{(\alph*)},ref=(\alph*)]
		\item In the break decomposition $M=\bigoplus_{x\in
				\R_{\geq 0}}M(x)$, every $M(x)$ is an
				$A$-submodule of $M$.
		\item If $B$ is an $A$-algebra, then the break decomposition
			of $B\otimes_A M$ is $\bigoplus_{x\in \R_{\geq
			0}}B\otimes_AM(x)$.
		\item If the $\Z[1/p]$-algebra $A$ is local and noetherian,
			and $M$ a free $A$-module of finite rank, then every
			$M(x)$ is free of finite rank.
	\end{enumerate}
\end{corollary}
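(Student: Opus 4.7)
My plan is to exploit the explicit construction of the break decomposition given in the proof of Proposition \ref{prop:breakDecomposition}, in which the components $M(x)$ are realized as the images of certain central idempotents $e_x \in \Z[1/p][H]$, where $H$ denotes the finite image of $P_K$ in $\Aut_\Z(M)$. Explicitly, $e_0 = \pi(0+)$ and $e_x = \pi(x+) - \pi(x)$ for $x > 0$, where $\pi(x)$, $\pi(x+)$ are the averaging idempotents associated with $H(x)$ and $H(x+)$.

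For part (a), since by hypothesis the action of $P_K$ on $M$ is $A$-linear, the image $H$ lies inside $\Aut_A(M)$, so each idempotent $e_x$ acts as an $A$-linear endomorphism of $M$. Consequently $M(x) = e_x M$ is automatically an $A$-submodule. For part (c), one notes further that $e_x$ being an $A$-linear idempotent exhibits $M(x)$ as an $A$-direct summand of the free module $M$, hence as a finitely generated projective $A$-module; since $A$ is local, finitely generated projective modules are free, so $M(x)$ is free of finite rank.

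The only real work lies in part (b). The $P_K$-action on $B \otimes_A M$ is again $A$-linear and still factors through the same quotient $H$, so the idempotents $e_x$ act on $B \otimes_A M$ via $\id_B \otimes e_x$, yielding a decomposition $B \otimes_A M = \bigoplus_x B \otimes_A M(x)$ into $P_K$-submodules. The point I expect to require care is to check that this is the break decomposition of $B \otimes_A M$, i.e., that it satisfies the three characterizing properties \ref{property:decomposition1}--\ref{property:decomposition3}, from which uniqueness in Proposition \ref{prop:breakDecomposition} will give the result. The key observation is that $H$, being a finite continuous quotient of the pro-$p$-group $P_K$, is a finite $p$-group, and since $A$ is a $\Z[1/p]$-algebra, the orders $|H(x)|$ are invertible in $A$. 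Hence the averaging operator $\pi(x) \in \Z[1/p][H] \subset \End_A(M)$ computes $G^x$-invariants: for any $G^x$-stable $A$-submodule $N \subset M$ one has $N^{G^x} = \pi(x) N$, and this formula is preserved by tensoring with $B$. Applying this to $N = M(x)$, and observing that $\pi(x) e_x = 0$ (resp.\ $\pi(0+) e_0 = e_0$, and $\pi(y) e_x = e_x$ for $y > x$) in $\Z[1/p][H]$, properties \ref{property:decomposition1}--\ref{property:decomposition3} for $\{B \otimes_A M(x)\}$ are immediate from the corresponding identities for $\{M(x)\}$.

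In summary, all three parts follow from the single observation that the break components are cut out by $A$-linear central idempotents whose behavior with respect to invariants is controlled by the invertibility of $|H|$ in $A$; the latter is the one nontrivial ingredient and is what makes base change well-behaved.
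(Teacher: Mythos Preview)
Your proof is correct and follows essentially the same approach as the paper: all three parts are deduced from the fact that the break components are the images of the central idempotents $\pi(x+)(1-\pi(x))\in\Z[1/p][H]$, which act $A$-linearly and are compatible with base change. The paper's argument for (b) is terser---it simply notes that the same idempotents exist in $A[H]$ and $B[H]$, so the construction of Proposition~\ref{prop:breakDecomposition} applied to $B\otimes_A M$ yields $\bigoplus_x B\otimes_A M(x)$ directly---whereas you verify the characterizing properties and invoke uniqueness; but this is the same idea, and your extra care in checking that $\pi(x)$ still computes $G^x$-invariants after tensoring is a reasonable elaboration.
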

\begin{proof}
	\begin{enumerate}[label={(\alph*)}]
		\item If $a\in A$, then multiplication by $a$ induces a
			$P$-equivariant endomorphism of $M$, hence by
			Proposition \ref{prop:breakDecomposition},
			\ref{item:breakDecomposition3}, multiplication by $a$
			maps $M(x)$ to $M(x)$ for every $x\in \R_{\geq 0}$.
		\item This is clear by construction: the idempotents
			$\pi(x),\pi(x+)$ used in the construction of the
			decomposition also exist in $A[P]$ and $B[P]$.
		\item If $M$ is a free $A$-module of finite rank, then if $A$
			is noetherian, its
			$A$-submodule $M(x)$ is projective of finite rank.
			Hence if $A$ is local, $M(x)$ is free of
			finite rank.
	\end{enumerate}
\end{proof}

\begin{definition}\label{def:Swan-conductor}
	If $A$ is a local noetherian $\Z[1/p]$-algebra and $M$ a free $A$-module
	of finite rank on which $P_K$ acts $A$-linearly through a finite
	quotient, then the number $\rank_AM(x)$ is called
	\emph{multiplicity\index{multiplicity of a
	break}\index{break!multiplicity}
	of $x$}. If $\rank_AM(x)>0$, then $x$ is called \emph{break of
	$M$}\index{break}.

	The real number 
	\[\Swan(M):=\sum_{x\geq 0}x\rank_A(M(x))\]
	is called the \emph{Swan conductor of $M$}.
\end{definition}

\begin{remark}\label{rem:SwanAndBasechange}
	\begin{enumerate}[label={(\alph*)}]
		\item With this definition, it is obvious that $\Swan(M)=0$ if and only if
	the action of $P_K$ on  $M$ is trivial, i.e.~if and only if $M^{P_K}=M$.
\item If $B$ is an $A$-algebra, then $\Swan(M)=\Swan(M\otimes_A B)$ by
	Corollary \ref{cor:preparationForSwan}.
\item $\Swan(M)$ is additive in short exact sequences by Proposition
	\ref{prop:breakDecomposition}, \ref{item:breakDecomposition3}.
	\end{enumerate}
\end{remark}

Now we return to  representations of the larger groups $G_K^0$ or $G_K$.
\begin{lemma}\label{lemma:breakDecompositionForGReps} Let $M$ be a $\Z[1/p]$-module on which $G_K^0$
	(resp.~$G_K$) acts such that the restricted action of $P_K$ on $M$
	factors through a finite quotient. Then the break decomposition
	$M=\bigoplus_{x\geq 0}M(x)$ is a decomposition of $G_K^0$-modules
	(resp.~$G_K$-modules).
\end{lemma}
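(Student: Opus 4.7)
The plan is to exploit the fact that for every $\lambda \in \R_{\geq 0}$, the subgroup $G_K^\lambda \subset G_K$ is not merely closed but \emph{normal} in $G_K$. This is inherited from the finite level: for any finite Galois extension $L/K$, the ramification subgroups $\Gal(L/K)_i$ in the lower numbering are normal (they are defined by the $G$-action on $B/\mathfrak{m}_L^{i+1}$), and by Proposition~\ref{prop:quotientsUpperNumbering} the upper-numbering subgroups are normal as well. Passing to the inverse limit, $G_K^\lambda$ is normal in $G_K$ for all $\lambda$, and hence so are $G_K^{\lambda+}$ and $P_K = G_K^{0+}$.

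First I would run through the construction in the proof of Proposition~\ref{prop:breakDecomposition} again, but remembering the action of the ambient group. Let $H$ denote the (finite) image of $P_K$ in $\Aut_\Z(M)$, and let $H(\lambda) = \rho(G_K^\lambda)$, $H(\lambda+) = \rho(G_K^{\lambda+})$ as there. Since $G_K^\lambda$ and $G_K^{\lambda+}$ are normal in $G_K$, every $g \in G_K$ acts on $M$ by an automorphism that conjugates $H(\lambda)$ and $H(\lambda+)$ to themselves. Consequently, for the idempotents
\[\pi(\lambda) = \frac{1}{|H(\lambda)|} \sum_{h \in H(\lambda)} h, \qquad \pi(\lambda+) = \frac{1}{|H(\lambda+)|} \sum_{h \in H(\lambda+)} h\]
we have, for every $m \in M$,
\[g \cdot \pi(\lambda) m = \frac{1}{|H(\lambda)|} \sum_{h \in H(\lambda)} (ghg^{-1}) \cdot gm = \pi(\lambda) \cdot gm,\]
and similarly for $\pi(\lambda+)$. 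Thus the finitely many orthogonal central idempotents $\pi(x+)(1-\pi(x))$ and $\pi(0+)$ that cut out the decomposition $M = \bigoplus_{x \geq 0} M(x)$ all commute with the action of $G_K$, and therefore each summand $M(x)$ is a $G_K$-submodule.

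A more conceptual way to phrase the same argument is to invoke the uniqueness part of Proposition~\ref{prop:breakDecomposition}. For any $g \in G_K$, the translated decomposition $M = \bigoplus_{x \geq 0} g \cdot M(x)$ still satisfies properties \ref{property:decomposition1}--\ref{property:decomposition3}, because $g \cdot M^{G_K^x} = M^{g G_K^x g^{-1}} = M^{G_K^x}$ by normality, and similarly for $G_K^{x+}$. Uniqueness then forces $g \cdot M(x) = M(x)$ for every $x$. The identical argument (restricting all $g$ to $G_K^0$) handles the $G_K^0$-equivariant version. I do not anticipate any substantive obstacle: the whole lemma is the statement that break decomposition is a functorial construction once one remembers that the ramification subgroups in the upper numbering are normal.
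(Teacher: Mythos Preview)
Your first argument is essentially identical to the paper's proof: one observes that each $G_K^{x}$ and $G_K^{x+}$ is normal in $G_K$, hence conjugation by any $g\in G_K$ permutes $H(x)$ and $H(x+)$ among themselves, so the idempotents $\pi(x)$, $\pi(x+)$ commute with the $G_K$-action and the summands $M(x)$ are $G_K$-stable. The paper phrases the normality as ``$P_K$ is normal in $G_K$'' but the displayed computation there uses exactly what you spell out, namely $gH(x+)g^{-1}=H(x+)$; your formulation is if anything slightly more precise. Your alternative uniqueness argument is also correct and is a pleasant repackaging of the same fact.
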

\begin{proof}
	We only do the proof for $G_K$; the argument for $G_K^0$ is identical.
	Let $H\subset \Aut_{\Z}(M)$ denote the finite image of $P_K$, and as
	in
	the proof of Proposition \ref{prop:breakDecomposition}, for $x>0$ let
	$H(x)$ be the image of $G_K^x$, and for $x\geq 0$ let $H(x+)$ be the
	image of $G_K^{x+}$. We again  get idempotents $\pi(x)$ and $\pi(x+)$, and
	$\pi(0+)$. As $P_K$
	is a normal subgroup of $G_K$, the elements in the image of $G_K$ in
	$\Aut_\Z(M)$ commute with the $\pi(x)$, $\pi(x+)$:
	\[g\pi(x+)=\left(\frac{1}{|H(x+)|}\sum_{h\in H(x+)
	}ghg^{-1}\right)g=\pi(x+)g,\]
	and analogously for $\pi(x)$. As $M(x)=\{m\in M|
	\pi(x+)(1-\pi(x))m=m\}$, the claim follows.
\end{proof}

\subsubsection{Integrality}
Now let us return to $\ell$-adic representations. We denote by $K$
a complete discretely valued field with perfect residue field $k$ of
characteristic $p>0$. We write $G_K=\Gal(K^{\sep}/K)$ with respect to a
fixed separable closure $K^{\sep}$ of $K$, and $P_K=G_K^{0+}$ for its wild
inertia group, see the discussion preceding Definition
\ref{defn:tameRepresentation} and Lemma
\ref{lemma:absoluteUpperNumberingFiltration}.

Let $E$ be a finite extension of $\Q_{\ell}$ with $\ell$ a prime different from $p$. 
\begin{definition}
	If $V$ is a finite dimensional $E$-vector space and
	$\rho:G_K\rightarrow \GL(V)$ a continuous representation, then we know
	by Lemma \ref{lemma:proPfactorsThroughFiniteQuotient} that the
	restriction of $\rho$ to $P_K$ factors through a finite group. By
	Lemma \ref{lemma:breakDecompositionForGReps} and Corollary
	\ref{cor:preparationForSwan} the break decomposition
	\[V=\bigoplus_{x\in \R_{\geq 0}}V(x)\]
	is a decomposition of continuous $E$-representations of $G_K$.
	(For the continuity observe that $\rho$ factors through
	$\bigoplus_{x\geq 0}\GL(V(x))$, which carries the subspace topology in
	$\GL(V)$).

	The real number $\Swan(V)$ is called the \emph{Swan conductor of
	$V$}\index{Swan conductor}. By definition, $\Swan(V)=0$ if and only if
	$V$ is tame.
\end{definition}

If $V$ is a finite dimensional $E$-vector space and  $\rho:G_K\rightarrow
\GL(V)$ a continuous representation, then according to Lemma
\ref{lemma:lattice} there exists a free finite rank $\mathcal{O}_E$-submodule
$\mathcal{V}\subset V$ with $\rank_{\mathcal{O}_E} \mathcal{V}=\dim_E V$, such that $\rho$ factors through
$\GL(\mathcal{V})\subset \GL(V)$. We then get an $\mathcal{O}_E$-linear,
$G_K$-stable break decomposition 
\[\mathcal{V}=\bigoplus_{x\geq 0}\mathcal{V}(x)\]
and Corollary \ref{cor:preparationForSwan} shows that
$V(x)=\mathcal{V}(x)\otimes_{\mathcal{O}_E} E$. It follows that
$\Swan(V)=\Swan(\mathcal{V})$.

Similarly, if $\F_{\lambda}$ is the (finite) residue field of $E$ and
$\overline{\mathcal{V}}:=\mathcal{V}\otimes_{\mathcal{O}_E} \F_{\lambda}$ denotes the reduction of
$\mathcal{V}$, then we obtain an $\F_{\lambda}$-linear,
$G_K$-stable break decomposition
\[\overline{\mathcal{V}}=\bigoplus_{x\geq 0}\overline{\mathcal{V}}(x).\]
Again it follows from Corollary \ref{cor:preparationForSwan} that 
$\overline{\mathcal{V}}(x)=\mathcal{V}(x)\otimes_{\mathcal{O}_E}\F_{\lambda}$,
and hence 
that $\Swan(V)=\Swan(\mathcal{V})=\Swan(\overline{\mathcal{V}})$.

The final goal of this section is to prove that $\Swan(V)$ is an
integer.

\begin{theorem}\label{thm:integralityOfSwanConductor}
	If $V$ is a finite dimensional $E$-vector space, and
	$\rho:G_K\rightarrow \GL(V)$ a continuous representation, then
	$\Swan(V)=b(V)$  (Definition \ref{defn:SerresB}). 
\end{theorem}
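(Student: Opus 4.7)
The plan is to compute both invariants after reducing modulo $\lambda$ and to match them term by term through the break decomposition. Let $\mathcal{V}\subset V$ be a $G_K$-stable $\mathcal{O}_E$-lattice and $\bar{\mathcal{V}}:=\mathcal{V}/\lambda\mathcal{V}$ its reduction. Both sides of the desired equality depend only on $\bar{\mathcal{V}}$ viewed as a module over the finite quotient $G:=G_K/\ker(\bar\rho)$: for $b(V)$ this is built into Definition \ref{defn:SerresB}, while for $\Swan(V)$ it is the content of Remark \ref{rem:SwanAndBasechange} together with the observation (Corollary \ref{cor:preparationForSwan}) that $\bar{\mathcal{V}}(x)=\mathcal{V}(x)\otimes_{\mathcal{O}_E}\F_\lambda$. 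Throughout, $\{G_i\}$ will denote the lower numbering filtration on $G$ and $\{G^v\}$ the upper numbering, related by $G^v=G_{\psi_{L/K}(v)}$ where $L/K$ is the finite extension cut out by $\ker(\bar\rho)$.

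First I would invoke Proposition \ref{prop:formulaForSerresB} to rewrite
\[ b(V)=\sum_{i\geq 1}\frac{|G_i|}{|G_0|}\dim_{\F_\lambda}\bigl(\bar{\mathcal{V}}/\bar{\mathcal{V}}^{G_i}\bigr). \]
Since each $G_i$ is a normal subgroup of $G$ and the break decomposition $\bar{\mathcal{V}}=\bigoplus_{x\geq 0}\bar{\mathcal{V}}(x)$ is a decomposition of $G$-modules (Lemma \ref{lemma:breakDecompositionForGReps}), taking $G_i$-invariants commutes with the direct sum, yielding
\[ b(V)=\sum_{x\geq 0}\sum_{i\geq 1}\frac{|G_i|}{|G_0|}\dim_{\F_\lambda}\bigl(\bar{\mathcal{V}}(x)/\bar{\mathcal{V}}(x)^{G_i}\bigr). \]
The strategy is then to evaluate the inner sum for each break separately.

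The crucial observation is that the jumps of the upper numbering filtration $\{G^v\}$ on the finite group $G$ occur precisely at the real numbers $v=\phi_{L/K}(m)$ for $m\in\Z_{\geq 0}$ with $G_m\supsetneq G_{m+1}$; this is because $G^v=G_{\lceil\psi_{L/K}(v)\rceil}$ changes exactly when $\psi_{L/K}(v)$ crosses an integer. Therefore, if $x>0$ is a break of $\bar{\mathcal{V}}$, then necessarily $m:=\psi_{L/K}(x)\in\Z_{\geq 1}$, so that $G^x=G_m$ and $G^{x+}=G_{m+1}$. A short case analysis on $i\geq 1$ then gives: for $1\leq i\leq m$ we have $G_i\supseteq G_m=G^x$, whence $\bar{\mathcal{V}}(x)^{G_i}\subseteq\bar{\mathcal{V}}(x)^{G^x}=0$; for $i>m$ we have $G_i\subseteq G_{m+1}=G^{x+}$, whence $\bar{\mathcal{V}}(x)^{G_i}\supseteq\bar{\mathcal{V}}(x)^{G^{x+}}=\bar{\mathcal{V}}(x)$. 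For $x=0$ the summand $\bar{\mathcal{V}}(0)=\bar{\mathcal{V}}^{P_K}=\bar{\mathcal{V}}^{G_1}$ is fixed by every $G_i$ with $i\geq 1$, contributing zero.

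Combining this with the identity $\phi_{L/K}(m)=\sum_{i=1}^{m}|G_i|/|G_0|$ from \eqref{eq:formulaForPhi}, the contribution of each break $x>0$ becomes
\[ \dim_{\F_\lambda}\bar{\mathcal{V}}(x)\cdot\sum_{i=1}^{m}\frac{|G_i|}{|G_0|}=\dim_{\F_\lambda}\bar{\mathcal{V}}(x)\cdot\phi_{L/K}(m)=x\cdot\rank_{\mathcal{O}_E}\mathcal{V}(x), \]
and summing over all breaks recovers exactly $\Swan(V)$. The main obstacle is purely bookkeeping: correctly translating between the upper and lower numbering filtrations and verifying that the breaks of an $\ell$-adic representation of $G_K$ occur at the jump values $\phi_{L/K}(m)$ with $m$ integral. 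Once this matching is in place the remainder is an application of \eqref{eq:formulaForPhi}. As a free byproduct, the equality $\Swan(V)=b(V)$ establishes that the a priori real-valued $\Swan(V)$ is a non-negative integer.
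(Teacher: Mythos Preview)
Your proof is correct and follows essentially the same route as the paper's: reduce to the $\F_\lambda$-module $\bar{\mathcal{V}}$, split via the break decomposition, observe that each break $x>0$ is a jump of the upper numbering filtration of the finite quotient $G$ and hence has $\psi_{L/K}(x)\in\Z$, and then evaluate $\dim_{\F_\lambda}(\bar{\mathcal{V}}(x)/\bar{\mathcal{V}}(x)^{G_i})$ by a case split on $i$ relative to $m=\psi_{L/K}(x)$, invoking \eqref{eq:formulaForPhi} to recover $x$. The only cosmetic difference is that you start from the formula for $b(V)$ and work toward $\Swan(V)$, whereas the paper computes $\Swan(\bar{\mathcal{V}}(x))$ first and then sums; the ingredients and logic are identical.
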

\begin{remark}
	For $\Swan(V)$ to be an integer, it is necessary that
	$V$ really is a representation not just of $P_K$ but (at least) of
	$G_K^0$. For representations of $P_K$ it can still be shown that their
	Swan conductors are rational numbers, see \cite[p.~214]{Katz/IndependenceOfL}.
\end{remark}
\begin{proof}[Proof of Theorem \ref{thm:integralityOfSwanConductor}]
	We compute the number $\Swan(\overline{\mathcal{V}})$, where
	$\overline{\mathcal{V}}=\mathcal{V}\otimes_{\mathcal{O}_E}\F_\lambda$
	with $\mathcal{V}$ a free $\mathcal{O}_E$-module with continuous
	action $\rho_{\mathcal{O}_E}:G_K\rightarrow \GL(\mathcal{V})$ such
	that $\rho$ factors through $\rho_{\mathcal{O}_E}$. As the residue field
	$\F_{\lambda}$ of $E$ is a finite field, $\GL(\overline{\mathcal{V}})$ is a finite group,
	and consequently $G_K$ acts on $\overline{\mathcal{V}}$ through a
	finite quotient $G$, which corresponds to a finite Galois extension $L/K$.
	If $\overline{\mathcal{V}}=0$ then there is nothing to do, so assume
	$\overline{\mathcal{V}}\neq 0$.

	Let $x\in \R_{\geq 0}$ be an element such that	$\overline{\mathcal{V}}(x)\neq 0$. Then $G^x\neq G^{x+\epsilon}$ for all $\epsilon >0$ and
	hence $G_{\lceil\psi_{L/K}(x)\rceil}\neq
	G_{\lceil\psi_{L/K}(x)+\epsilon\rceil}$ for all $\epsilon>0$,
	so $\psi_{L/K}(x)\in \Z_{\geq 0}$. By the computation following Definition
\ref{defn:HerbrandFunction}, this means that
\[x=\phi_{L/K}(\psi_{L/K}(x))=\sum_{i=1}^{\psi_{L/K}(x)}\frac{|G_i|}{|G_0|}.\]
On the other hand, for an integer $i\in [1,\psi_{L/K}(x)]$ we have
$G^x=G_{\psi_{L/K}(x)}\subset G_i$, so 
	\[\overline{\mathcal{V}}(x)^{G_i}\subset
		\overline{\mathcal{V}}(x)^{G^x}=0.\]
		If $i>\psi_{L/K}(x)$, then $\phi_{L/K}(i)>x$, and hence
		$\overline{\mathcal{V}}(x)^{G_i}=\overline{\mathcal{V}}(x)$.
	Consequently we compute
	\[\dim_{\F_{\lambda}}(\overline{\mathcal{V}}(x)/\overline{\mathcal{V}}(x)^{G_i})=\begin{cases}\dim_{\F_{\lambda}}\overline{\mathcal{V}}(x)&i\leq
			\psi_{L/K}(x)\\0&i>\psi_{L/K}(x),\end{cases}
			\]
			and thus
			\begin{align*}
				\Swan(\overline{\mathcal{V}}(x))&=x\dim_{\F_{\lambda}}(\overline{\mathcal{V}}(x))\\
				&=\sum_{i=1}^{\infty}\frac{|G_i|}{|G_0|}\dim_{\F_{\lambda}}(\overline{\mathcal{V}}(x)/\overline{\mathcal{V}}(x)^{G_i})
			\end{align*}
	Both sides of this equation are additive with respect to direct sums,
	so we finally get
	\[\Swan(\overline{\mathcal{V}})=\sum_{i=1}^{\infty}\frac{|G_i|}{|G_0|}\dim_{\F_{\lambda}}(\overline{\mathcal{V}}/\overline{\mathcal{V}}^{G_i}).\]
	This shows that
	\[\Swan(\overline{\mathcal{V}})=b(V),\]
	see Proposition \ref{prop:formulaForSerresB}. In particular, note that
	the left-hand side and hence the right-hand side is independent of the
	choice of $G$.
\end{proof}

\begin{example}\label{ex:ASSwan}
	We continue Example \ref{ex:ASlowerNumbering}. Consider the field
	$K:=k\llparen x\rrparen$, where $k$ is an algebraically closed field of
	characteristic $p>0$. Let $L:=K[t]/(t^p-t-x^{-m})$. The Galois
	group of this extension is $G=\F_p$. 

	Assume that $(m,p)=1$, pick a prime $\ell\neq
	p$, and an arbitrary nontrivial $1$-dimensional representation $\chi:\F_p\rightarrow
	\Q_{\ell}^\times$. The reduction $\overline{\chi}:\F_p\rightarrow
	\F_\ell^{\times}$ is nontrivial, and we compute $\Swan(\chi)$: In
	Example \ref{ex:ASlowerNumbering} we saw that
	$G_0=G_1=\ldots=G_m\supsetneq G_{m+1}=0$.
	It follows that
	\[\Swan(\chi)=\Swan(\overline{\chi})=\sum_{i=1}^{\infty}\frac{|G_i|}{|G_0|}\dim_{\F_{\ell}}(\overline{\chi}/\overline{\chi}^{G_i})=\sum_{i=1}^m1=m\]
\end{example}

\part{Generalities about \'etale topology and $\ell$-adic sheaves}

\section{Recollection on \'etale morphisms}
We recall the definition of an \'etale morphism and state some of its properties without proof. 
The general references for this section are \cite{EGA4} and \cite{SGA1}, see also \cite[I, \S 3]{Milne/EtaleCohomologyBook}.

\begin{definition}\label{def-etale-algebra}
Let $A$ be a ring (commutative with $1$ as always). We say that an $A$-algebra $B$ is {\em \'etale}\index{etale@\'etale algebra} (or that $B$ is \'etale over $A$)
if $B$ is finitely presented as an $A$-algebra and one of the following equivalent conditions is satisfied:
\begin{enumerate} 
\item $B$ is formally \'etale over $A$, i.e for all $A$-algebras $C$ and all nilpotent ideals $I\subset C$ the map
                \[\Hom_{A-{\rm alg}}(B, C)\to \Hom_{A-{\rm alg}}(B, C/I),\quad  \varphi\mapsto \varphi \text{ mod }I\]
                 is bijective.
\item For all $A$-algebras $C$ and all ideals $I\subset C$ with $I^2=0$ the map
                \[\Hom_{A-{\rm alg}}(B, C)\to \Hom_{A-{\rm alg}}(B, C/I),\quad  \varphi\mapsto \varphi \text{ mod }I\]
                 is bijective.
\item $B$ is flat as an $A$-module and is unramified, i.e. $\Omega^1_{B/A}=0$.
\item $B$ is flat as an $A$-module and for each prime ideal $\fq\subset B$ 
        the natural map $k(\fp)=A_{\fp}/\fp A_{\fp}\to B_{\fq}/\fp B_{\fq}$ is
	a separable extension of fields,
       where $\fp:=A\cap \fq$. 
\item\label{def-etale-algebra-presentation} 
    If $B=A[x_1,\ldots, x_n]/I$ is a presentation of $B$, then for all prime ideals $\fp\subset A[x_1,\ldots, x_n]$
        with $\fp \supset I$, there exist polynomials $f_1,\ldots, f_n\in I$ such that 
             \[I_{\fp}= (f_1,\ldots, f_n)\subset A[x_1,\ldots, x_n]_{\fp}\quad\text{and}\quad
                     {\rm det}\left(\frac{\partial f_i}{\partial x_j}\right)_{i,j}\not\in \fp. \]
\end{enumerate}
\end{definition}

\begin{definition}\label{etale-morphism}
A morphism of schemes $f:X\to Y$ is {\em \'etale} \index{etale@\'etale morphism} if for any point $x\in X$ with image 
$y=f(x)\in Y$ there exist open neighborhoods $V=\Spec B$ of $x$ and $U=\Spec A$ of $y$ such that
$f$ restricts to a morphism $f_{|V}:V\to U$ and the induced map $A\to B$ makes
$B$ into an \'etale $A$-algebra.
\end{definition}

\begin{example}\label{ex-etale}\leavevmode
\begin{enumerate}
\item Let $k$ be a field and $A$ a $k$-algebra. Then $A$ is \'etale over $k$ iff 
          $A\cong \prod_{i\in I} L_i$, where $I$ is some finite index set and $L_i/k$ is a finite separable extension for every $i\in I$.
\item Open immersions are \'etale.
 \item\label{ex-etale:unramified}  In the situation of Definition \ref{def:unramified} the $A$-algebra $B$ is \'etale iff $L/K$ is unramified.
\end{enumerate}
\end{example}

\begin{proposition}[{\cite[(17.3.3), (17.3.4),
	(17.7.3)]{EGA4}}]\label{prop-etale}\leavevmode
\begin{enumerate}
\item The composition of \'etale morphisms is \'etale.
\item Let $Y'\to Y$ be a morphism of schemes.
        If $f: X\to Y$ is \'etale, then the base change map $X\times_{Y}Y'\to Y'$ is \'etale.
        The converse is true if $Y'\to Y$ is faithfully flat and quasi-compact (the latter condition is satisfied if e.g. 
         $Y'$ is noetherian.)
\item If $f:X\to S$ and $g:Y\to S$ are \'etale, so is any $S$-morphism $X\to Y$.
\end{enumerate}
\end{proposition}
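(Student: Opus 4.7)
The approach is to work in the affine setting via Definition \ref{etale-morphism} and use the characterization ``étale = flat + finitely presented + $\Omega^1_{B/A}=0$''. The main algebraic tool throughout is the standard first fundamental exact sequence of Kähler differentials, namely for a tower $A \to B \to C$
\[\Omega^1_{B/A}\otimes_B C \xrightarrow{\alpha} \Omega^1_{C/A} \to \Omega^1_{C/B} \to 0,\]
and its base change variant $\Omega^1_{B\otimes_A A'/A'} \cong \Omega^1_{B/A}\otimes_A A'$. Finite presentation and flatness are well-known to be stable under composition and arbitrary base change, and flatness together with finite presentation descends along faithfully flat quasi-compact morphisms. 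Granting these standard facts, each assertion reduces to a statement about differentials.

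For (a), composition: given $A\to B\to C$ with $B$ étale over $A$ and $C$ étale over $B$, the flatness and finite presentation of $C$ over $A$ follow from their analogues for the two component morphisms. Using the first fundamental sequence, $\Omega^1_{C/B}=0$ by assumption and $\Omega^1_{B/A}\otimes_B C = 0$ (also by assumption, tensored up), so $\Omega^1_{C/A}=0$. The claim then globalizes immediately from the affine open neighborhoods supplied by Definition \ref{etale-morphism}.

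For (b), base change: the stability of flatness, finite presentation, and the formula $\Omega^1_{B\otimes_A A'/A'} = \Omega^1_{B/A}\otimes_A A'$ give étaleness of the base change. For the converse under a faithfully flat quasi-compact morphism $Y'\to Y$, the content is descent: faithful flatness of $A\to A'$ implies that $B$ is flat over $A$ iff $B\otimes_A A'$ is flat over $A'$, and similarly that $\Omega^1_{B/A}=0$ iff $\Omega^1_{B/A}\otimes_A A' = 0$, so one concludes from the assumption on the base-changed algebra. Quasi-compactness is needed to descend finite presentation.

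For (c), factor the given $S$-morphism $f: X\to Y$ as
\[X \xrightarrow{(\mathrm{id},f)} X\times_S Y \xrightarrow{\pr_2} Y.\]
The projection $\pr_2$ is the base change of the étale morphism $X\to S$ and hence étale by (b). For the first map, observe that since $Y\to S$ is étale it is in particular unramified, so the diagonal $\Delta_{Y/S}: Y\to Y\times_S Y$ is an open immersion; this is the technical heart and the only place where one needs more than formal manipulations with the exact sequence. Since $(\mathrm{id},f)$ is the base change of $\Delta_{Y/S}$ along $f:X\to Y$, it is an open immersion, hence étale by Example \ref{ex-etale}. Applying (a) to the factorization yields that $f$ is étale, completing the proof.
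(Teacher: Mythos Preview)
The paper does not supply its own proof of this proposition; it simply records the statement and cites EGA4 (17.3.3), (17.3.4), (17.7.3). Your argument is a correct and standard direct proof using the characterization ``flat $+$ finitely presented $+$ unramified'' together with the fundamental exact sequence for K\"ahler differentials.

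One small imprecision worth noting in part (c): the graph morphism $(\mathrm{id},f):X\to X\times_S Y$ is the base change of $\Delta_{Y/S}:Y\to Y\times_S Y$ along $f\times\mathrm{id}_Y:X\times_S Y\to Y\times_S Y$, not literally along $f:X\to Y$. This does not affect the argument, since the conclusion (that $(\mathrm{id},f)$ is an open immersion) is the same.
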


\begin{proposition}[{\cite[Exp. I, Cor 5.4]{SGA1}}]\label{prop-determination-etale-maps}
Consider a diagram
\[\begin{tikzcd}
			X\ar[swap]{dr}{a}\ar{rr}{f,\,g}& &Y\ar{dl}{b}\\
                       &S,&
		\end{tikzcd}\]
where $X, Y, S$ are locally noetherian and connected, $b$ is separated and \'etale and $a=b\circ f= b\circ g$.
Assume there exists a point $x\in X$ with $f(x)=g(x)=:y$ and that $f$ and $g$ induce the same morphism
 $k(y)\to k(x)$ on the residue fields. Then $f=g$.
  \end{proposition}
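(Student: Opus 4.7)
The plan is to reduce the equality $f=g$ to the statement that the morphism $h:=(f,g):X\to Y\times_S Y$ factors through the diagonal $\Delta:Y\hookrightarrow Y\times_S Y$, and then to exploit that $\Delta(Y)$ is both open and closed in $Y\times_S Y$ under our hypotheses.

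First I would observe that, by Definition \ref{def-etale-algebra}, $b$ being \'etale implies $\Omega^1_{Y/S}=0$, i.e.\ $b$ is unramified. A standard consequence of $b$ being unramified is that $\Delta:Y\to Y\times_S Y$ is an open immersion (it is locally the inclusion of a connected component of a formally \'etale hypercover). On the other hand, $b$ being separated by assumption means exactly that $\Delta$ is a closed immersion. Combining these, $\Delta(Y)$ is an open and closed subscheme of $Y\times_S Y$. Since $a=b\circ f=b\circ g$, the universal property of the fibered product yields the $S$-morphism $h=(f,g):X\to Y\times_S Y$, and by construction $h$ factors through $\Delta$ if and only if $f=g$.

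Next I would consider the preimage $U:=h^{-1}(\Delta(Y))\subseteq X$, which is open and closed since $\Delta(Y)$ is. By the connectedness of $X$, $U$ is either empty or all of $X$, so it suffices to show that $x\in U$. Composing with $\Spec k(x)\to X$, the point $h(x)$ is described as a $k(x)$-valued point of $Y\times_S Y$, namely the pair of $k(x)$-points of $Y$ given by $y$ together with the residue field maps $k(y)\to k(x)$ induced by $f$ and by $g$. By hypothesis these two residue field maps coincide, so this $k(x)$-point factors through the diagonal; equivalently, $h(x)\in\Delta(Y)$ (scheme-theoretically at the residue field level, which is exactly what membership in an open subscheme amounts to). Hence $x\in U$, so $U=X$.

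Finally, because $\Delta$ is an open immersion identifying $Y$ with the open subscheme $\Delta(Y)$ of $Y\times_S Y$, the factorization $h:X\to \Delta(Y)\hookrightarrow Y\times_S Y$ gives a morphism $X\to Y$ whose compositions with the two projections of $Y\times_S Y$ are $f$ and $g$ respectively; therefore $f=g$. The main conceptual obstacle is the assertion that unramified morphisms have open diagonal, which underlies the whole argument; this is a foundational fact from \cite{EGA4} or \cite[Exp.\ I]{SGA1} and must be invoked rather than reproved. The role of the residue-field hypothesis is subtle but essential: without it one only controls the set-theoretic image of $x$ under $h$, not the scheme-theoretic one, and the argument would collapse at the step showing $h(x)\in\Delta(Y)$.
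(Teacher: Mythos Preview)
The paper does not give a proof of this proposition; it simply cites \cite[Exp.~I, Cor.~5.4]{SGA1} and uses the result as a black box. Your argument is correct and is exactly the standard proof from that reference: the diagonal of a separated unramified morphism is an open and closed immersion, so the equalizer $h^{-1}(\Delta(Y))$ is open and closed in the connected scheme $X$, and the residue-field hypothesis guarantees it contains $x$.

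One minor quibble: the parenthetical phrase ``it is locally the inclusion of a connected component of a formally \'etale hypercover'' does not parse and should be dropped. What you need is simply that an unramified morphism has open diagonal, which is \cite[Exp.~I, Cor.~5.3]{SGA1} or \cite[(17.4.2)]{EGA4}; this does not affect the validity of your argument.
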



\section{The \'etale fundamental group}\label{sec:fundamentalGroup}
In this section let $X$ be a connected noetherian scheme. We partly follow
\cite{SGA1} and \cite{Szamuely}.
\subsection{Definition and basic
properties}\label{sec:fundamentalGroup:basics}
\begin{definition}
	\begin{enumerate}
		\item An \emph{\'etale
			covering}\index{covering!\'etale}\index{\'etale
			covering} of $X$ is a surjective finite \'etale
			morphism $f:Y\rightarrow X$. The category of such finite
			\'etale morphisms is denoted $\FEt_X$.
		\item A \emph{geometric point of $X$}\index{geometric point} is a morphism $\bar{x}:\Spec
			\Omega\rightarrow X$, with $\Omega$ an algebraically closed field.
			Note that giving $\bar{x}$ is equivalent to giving a point $x$ of $X$
			and an embedding of the residue field $k(x)$ into $\Omega$.
		\item If $f:Y\rightarrow X$ is a finite \'etale cover and
			$\bar{x}:\Spec \Omega_1\rightarrow X$, $\bar{y}:\Spec
			\Omega_2\rightarrow Y$ geometric points, we say that
			\emph{$\bar{y}$ lies over $\bar{x}$} if there exists
			an algebraically closed field $\Omega$ containing both
			$\Omega_1$ and $\Omega_2$, such that the diagram
			\begin{equation*}
				\begin{tikzcd}
					\Spec \Omega\ar{rd}\rar&\Spec
					\Omega_2\rar{\bar{y}}&Y\dar{f}\\
					&\Spec \Omega_1\rar{\bar{x}}&X
				\end{tikzcd}
			\end{equation*}
			commutes. For the geometric point $\Spec
			\Omega_2\xrightarrow{\bar{y}} Y \xrightarrow{f} X$ we
			also write $f(\bar{y})$.
		\item If $\bar{x}:\Spec \Omega\rightarrow X$ is a geometric point of $X$, we denote by
			$\Fib_{\bar{x}}$ the functor
			\[\Fib_{\bar{x}}:\FEt_X\rightarrow \FinSet,\;
				(Y\rightarrow X)\mapsto \Hom_X(\Spec \Omega,
				Y).\]
				Note that $\Fib_{\bar{x}}(Y\rightarrow X)$ can
				be naturally 
				identified with the finite set
				underlying the geometric fiber
				$Y_{\bar{x}}:=Y\times_X \Spec \Omega$.
		\item If $\bar{x}$ is a geometric point of $X$, then
			the group $\Aut(\Fib_{\bar{x}})$ is called the
			\emph{\'etale fundamental group of $X$ with
				respect to the base point $\bar{x}$},
				and it is denoted
				$\pi_1^{\et}(X,\bar{x})$.\index{\'etale
				fundamental group} When confusion is unlikely,
				we will also write $\pi_1(X,\bar{x})$ for the
				\'etale fundamental group.
	\end{enumerate}
\end{definition}

Grothendieck defined the \'etale fundamental group in \cite{SGA1}, and we will
summarize its main properties in this section.

\begin{theorem}[{Grothendieck (\cite[V.7]{SGA1})}]
	Let $X$ be a connected noetherian scheme and $\bar{x}$ a geometric
	point. The \'etale fundamental group $\pi_1^{\et}(X,x)$ is a profinite group and the
	functor $\Fib_{\bar{x}}$ induces an equivalence
	\[\FEt_X\xrightarrow{\cong}\left\{\text{finite sets with continuous
	}\pi_1^{\et}(X,\bar{x})\text{-action}\right\}.\]
	In particular, open subgroups of $\pi^{\et}_1(X,\bar{x})$ correspond
	to (pointed) finite \'etale coverings, and open normal subgroups $U$ correspond to
	(pointed) Galois coverings with Galois group
	$\pi_1^{\et}(X,\bar{x})/U$ (see below).
\end{theorem}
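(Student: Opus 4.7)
The plan is to follow Grothendieck's axiomatic approach via Galois categories. First I would establish the basic categorical properties of $\FEt_X$: existence of finite fiber products (using that composites and base changes of finite \'etale morphisms are finite \'etale, by Proposition \ref{prop-etale}), existence of finite coproducts (disjoint unions), and, crucially, the existence of quotients by finite group actions. The decomposition into connected components works because $\FEt_X$ consists of finite morphisms, so every object has only finitely many connected components, and any morphism in $\FEt_X$ factors as an effective epimorphism followed by a finite \'etale monomorphism (in fact an open-and-closed immersion). Simultaneously, one checks that $\Fib_{\bar{x}}$ commutes with finite limits and colimits, reflects isomorphisms, and sends connected objects to non-empty sets; the non-emptiness of fibers uses that a finite \'etale surjection is surjective on geometric points, which follows from the going-down property of flat maps together with the fact that a finite \'etale $\Omega$-algebra is a product of copies of $\Omega$ (Example \ref{ex-etale}).

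Next I would prove the key pro-representability statement: $\Fib_{\bar{x}}$ is pro-represented by a cofiltered projective system $(P_i, p_i)_{i \in I}$ of pointed connected \emph{Galois} objects in $\FEt_X$, where a connected finite \'etale cover $P \to X$ together with a point $p \in \Fib_{\bar x}(P)$ is called Galois if $\Aut_X(P)$ acts simply transitively on $\Fib_{\bar x}(P)$, equivalently if $|{\Aut_X(P)}| = |\Fib_{\bar x}(P)|$. The hard part here, and the main obstacle, is the construction of Galois closures: given a pointed connected cover $(Y, y) \to X$ of degree $n$, one considers the $n$-fold fiber product $Y \times_X \cdots \times_X Y$, looks at the connected component containing the image of $y$ under the canonical map determined by the $n$ elements of $\Fib_{\bar x}(Y)$, and shows this component is a Galois cover dominating $Y$. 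Verifying that the resulting system is cofiltered (any two pointed Galois covers are dominated by a common one, obtained from the Galois closure of a connected component of their fiber product) and that $\Fib_{\bar x}(Z) = \varinjlim_i \Hom_X(P_i, Z)$ then gives pro-representability. Once this is in place, one defines
\[
\pi_1^{\et}(X,\bar x) := \Aut(\Fib_{\bar x}) = \varprojlim_i \Aut_X(P_i/X),
\]
which is automatically profinite since each $\Aut_X(P_i/X)$ is finite.

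Having pro-representability, I would establish the equivalence of categories as follows. For any $Z \in \FEt_X$, the set $\Fib_{\bar x}(Z)$ inherits a continuous action of $\pi_1^{\et}(X,\bar x)$ through its action on the $P_i$'s. To show full faithfulness, use Proposition \ref{prop-determination-etale-maps}: an $X$-morphism between \'etale covers is determined by its effect on a single geometric point over $\bar x$, hence by the induced $\pi_1^{\et}$-equivariant map of fibers. For essential surjectivity, reduce to the case of a transitive $\pi_1^{\et}$-set, which by continuity is of the form $\pi_1^{\et}(X,\bar x)/U$ for an open subgroup $U$; such $U$ contains some open normal subgroup corresponding to a Galois cover $P_i$, and then $P_i/H \to X$ for $H = U/\ker(\pi_1^{\et} \to \Aut_X(P_i))$ gives the desired finite \'etale cover (here one uses that $\FEt_X$ admits quotients by finite group actions, reducing the existence of $P_i/H$ locally to the construction of invariant rings under a finite group action on an \'etale algebra).

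Finally, the statement about correspondence between open subgroups and pointed covers, respectively open normal subgroups and pointed Galois covers, is a formal consequence of the equivalence: a transitive continuous $\pi_1^{\et}(X,\bar x)$-set is $\pi_1^{\et}(X,\bar x)/U$ for $U$ the stabilizer of a chosen point, and such a set corresponds to a connected pointed cover; $U$ is normal precisely when the action of $\pi_1^{\et}(X,\bar x)$ on $\pi_1^{\et}(X,\bar x)/U$ factors through the quotient group $\pi_1^{\et}(X,\bar x)/U$ acting simply transitively, which by the equivalence translates into $\Aut_X$ of the corresponding cover acting simply transitively on its geometric fiber, i.e.~the cover being Galois with group $\pi_1^{\et}(X,\bar x)/U$.
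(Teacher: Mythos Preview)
Your proposal is correct and follows essentially the same route as the paper's sketch: pro-representability of $\Fib_{\bar x}$ by a cofiltered system of pointed Galois covers, yielding $\pi_1^{\et}(X,\bar x)=\varprojlim \Aut_X(P_\alpha)$ as a profinite group, followed by the formal deduction of the equivalence and the subgroup correspondence. The paper is terser---it states the Galois-closure step as Proposition~\ref{prop:galoisclosure} without proof and does not spell out full faithfulness or essential surjectivity---so your added detail (the $n$-fold fiber product construction of the Galois closure, the quotient $P_i/H$ for essential surjectivity) is welcome; one small point to align with the paper's conventions is that $\Aut(\Fib_{\bar x})\cong\varprojlim \Aut_X(P_\alpha)^{\opp}$, the opposite arising because automorphisms of the represented functor $\Hom_X(P_\alpha,-)$ correspond contravariantly to automorphisms of $P_\alpha$.
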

This theorem has been extended to the non-noetherian case in \cite{Szamuely}.
\begin{definition}
	A finite \'etale covering $f:Y\rightarrow X$ is called \emph{Galois
	covering}\index{covering!Galois} if $Y$ is connected and if $\Aut_X(Y)$ acts transitively on
	$\Fib_{\bar x}(Y)$. If $f$ is Galois, then $\Aut_{X}(Y)$ is called the
	\emph{Galois group
	of $f$}.
\end{definition}
The property of being Galois is independent of the choice of the base point by
Proposition
\ref{prop-determination-etale-maps}. By the same proposition $\Aut_X(Y)$ is a finite
group, and if $Y$ is connected then $\Aut_X{(Y)}$ acts transitively on
$\Fib_{\bar{x}}(Y)$ if and only if $|\Aut_X{(Y)}|=\deg(Y/X)$.

Alternatively, let $G$ be a finite group, and consider the scheme $G_X:=\coprod_{g\in
G}X_{(g)}$, where $X_{(g)}=X$.
This is the constant $X$-group scheme associated with $G$. If $f:Y\rightarrow
X$ is a finite \'etale covering, then a homomorphism $G\rightarrow \Aut_X(Y)$
gives rise to an action $\phi:G_X\times_X Y\rightarrow Y$. If $Y$ is connected,
then $f$ is a Galois covering with group $G$ if and only if the morphism
\[(\id, \phi):G_X\times_X Y\rightarrow Y\times_X Y \]
is an isomorphism, i.e.~if $X$ is a \emph{$G$-torsor}.
\begin{proposition}[{\cite[\S 1.5]{Serre/Galois}, see also \cite[Prop.~5.3.9]{Szamuely}}]\label{prop:galoisclosure}
	Let $X$ be a connected noetherian scheme and $f:Y\rightarrow X$ a
	finite \'etale covering. There exists a Galois
	covering $g:P\rightarrow X$, factoring through $f$.
	
	If $Y$ is connected then one finds $g$, such that any
	other Galois covering $g':P'\rightarrow X$ which factors through $f$
	also factors through $g$. More succinctly, we have the following
	diagram:
	\begin{equation*}
		\begin{tikzcd}[column sep=huge]
			P'\ar[swap]{ddr}{g'}\ar{dr}\ar[dashed]{r}{\exists}& P\ar{d}\ar[bend
			left=50]{dd}{g}\\
			& Y\dar{f}\\
			& X
		\end{tikzcd}
	\end{equation*}
\end{proposition}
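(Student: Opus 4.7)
The plan is to use the equivalence of categories provided by Grothendieck's theorem just stated, translating the geometric statement into a purely group-theoretic one about the profinite group $\pi := \pi_1^{\et}(X,\bar x)$, where the relevant group-theoretic fact is the existence of the ``normal core'' of a finite-index subgroup.

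First I would recall the dictionary: connected finite \'etale covers of $X$ correspond to transitive continuous $\pi$-sets, hence to coset spaces $\pi/H$ with $H \leq \pi$ an open subgroup (well-defined up to conjugation); a connected cover $Y \leftrightarrow \pi/H$ is Galois precisely when $H$ is normal in $\pi$, in which case the Galois group is $\pi/H$. Moreover, an $X$-morphism $P \to Y$ between two finite \'etale covers $P \leftrightarrow \pi/N$, $Y \leftrightarrow \pi/H$ (with $P$ connected) corresponds to a $\pi$-equivariant map $\pi/N \to \pi/H$, which exists if and only if some conjugate of $N$ is contained in $H$; for $N$ normal this is just $N \subseteq H$.

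For the connected case, let $Y$ correspond to $\pi/H$ and set
\[ N := \bigcap_{g \in \pi} gHg^{-1}. \]
Since $H$ has finite index in $\pi$, it has only finitely many conjugates, so $N$ is an intersection of finitely many open subgroups and is therefore open; by construction $N$ is normal in $\pi$ and contained in $H$. The Galois cover $P \to X$ corresponding to $\pi/N$ then factors through $Y$, proving existence. For the universal property, suppose $g' : P' \to X$ is any Galois cover factoring through $f$, with $P' \leftrightarrow \pi/N'$ and $N' \trianglelefteq \pi$ open. Factoring through $f$ gives $N' \subseteq gHg^{-1}$ for some $g$, and since $N'$ is normal this forces $N' \subseteq g'Hg'^{-1}$ for \emph{every} $g'$, hence $N' \subseteq N$. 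Thus $P'$ admits an $X$-morphism to $P$, as desired.

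For the case where $Y$ is not assumed connected, decompose $Y = \bigsqcup_{i=1}^r Y_i$ into connected components (finitely many, since $Y \to X$ is finite); pick any $i$ and apply the connected case to $Y_i \to X$ to obtain a Galois cover $P \to X$ factoring through $Y_i \hookrightarrow Y$. No real obstacle arises here — the only thing one must verify is that the normal core of an open finite-index subgroup of a profinite group is open, which is immediate from finiteness of the conjugacy class of $H$; the rest is a direct unwinding of the Galois correspondence for $\FEt_X$.
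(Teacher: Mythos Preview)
The paper does not actually prove this proposition; it states it and then uses it. But your approach has a circularity problem within the paper's development. You invoke ``Grothendieck's theorem just stated'' (the equivalence $\FEt_X \simeq \{\text{finite continuous }\pi_1^{\et}(X,\bar x)\text{-sets}\}$), yet look at the paragraph that follows the proposition in the paper: it explains that this very proposition is what makes the system of pointed Galois covers $(P_\alpha,\phi_{\beta,\alpha})$ \emph{directed}, which in turn is used to exhibit $\pi_1^{\et}(X,\bar x)\cong \varprojlim_\alpha \Aut_X(P_\alpha)^{\opp}$ as a profinite group and to pro-represent the fiber functor. In other words, in the paper's (and SGA1's) logical order, Proposition~\ref{prop:galoisclosure} is an \emph{input} to Grothendieck's theorem, not a consequence of it. Your argument assumes the output to prove the input.

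A non-circular proof works directly at the level of covers. For connected $Y\to X$ of degree $n$, form the $n$-fold fiber product $Y\times_X\cdots\times_X Y$, pick a geometric point $(\bar y_1,\dots,\bar y_n)$ over $\bar x$ with the $\bar y_i$ pairwise distinct, and let $P$ be the connected component containing it. One checks that $P\to X$ is finite \'etale, that the symmetric group $S_n$ acts on the $n$-fold product by permuting factors and the stabilizer of $P$ acts transitively on $\Fib_{\bar x}(P)$, so $P\to X$ is Galois; any projection gives a map $P\to Y$. The minimality follows because any Galois $P'\to X$ factoring through $Y$ must hit every point of $\Fib_{\bar x}(Y)$, hence maps to $P$. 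Your normal-core computation is precisely what this construction becomes \emph{after} the equivalence is in place, but it cannot serve as the proof here.
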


\begin{example}\label{ex:fundamentalGroupOfAField}
	Let $K$ be a field and $X=\Spec K$. Giving a finite \'etale covering
	$f:Y\rightarrow X$ is equivalent to giving a finite \'etale
	$K$-algebra $A$, and setting $Y=\Spec A$. The $K$-algebra $A$ is a
	finite product of separable extensions of $K$, see Example
	\ref{ex-etale}. Hence $Y$ is connected if and only if $A$ itself is a
	separable field extension of $K$, and $f:Y\rightarrow X$ is a Galois
	covering if and only if $A/K$ is a Galois extension.
\end{example}

To see that $\pi_1^{\et}(X,\bar{x})$ is a profinite group in a natural way, Grothendieck
proceeds as follows. Let $\mathbf{\Gal}_X$ be the set of Galois coverings of $X$.
Let $I$ be an index set, so that we can write
$\mathbf{\Gal}_X=\{f_{\alpha}:P_{\alpha}\rightarrow X|\alpha\in I\}$. We
define an ordering on $I$ by writing $\alpha\leq \beta$ if and only if there
exists an $X$-morphism $P_{\beta}\rightarrow P_{\alpha}$. By Proposition
\ref{prop:galoisclosure} this system is directed: For $\alpha,\beta\in I$
every connected component of the
fiber product $P_{\alpha}\times_XP_{\beta}$ is an \'etale covering of $X$. By
Proposition \ref{prop:galoisclosure} we find $f_{\gamma}:P_{\gamma}\rightarrow
X$ factoring through $P_{\alpha}\times_XP_{\beta}$, so 
$\alpha,\beta\leq \gamma$. Now we fix arbitrary geometric points $p_{\alpha}\in
\Fib_{\bar{x}}(P_{\alpha})$. By Proposition
\ref{prop-determination-etale-maps}, for $\alpha\leq \beta\in I$, there exists a
\emph{unique} map $\phi_{\beta,\alpha}:P_{\beta}\rightarrow P_{\alpha}$,
mapping $p_{\beta}$ to $p_{\alpha}$.  The system
$(P_{\alpha},\phi_{\beta,\alpha})$ is a directed projective system. Note that
its inverse limit exists in the category of schemes because the maps
$\phi_{\beta,\alpha}$ are finite, and hence affine
(\cite[Prop.~8.2.3]{EGA4}), but in general it is not \'etale
over $X$, as it is usually not locally of finite type. Now let
$f:Y\rightarrow X$ be an arbitrary finite \'etale covering. For every
$\alpha\in I$ we obtain a map $\hom_X(P_{\alpha},Y)\rightarrow
\Fib_{\bar{x}}(Y)$, $g\mapsto g(p_{\alpha})$.  These maps give rise to a
map
\[\varinjlim_{I}\hom(P_{\alpha},Y)\rightarrow
	\Fib_{\bar{x}}(Y),\]
which is easily seen to be a bijection. In Grothendieck's words,
$\Fib_{\bar{x}}$ is \emph{strictly pro-representable}.

Moreover, it is not difficult to see that every automorphism of
$\Fib_{\bar{x}}$ comes from a unique automorphism of the projective system
$(P_\alpha,\phi_{\alpha,\beta})$: If $\psi\in \Aut(\Fib_{\bar{x}})$ then
for every $\alpha\in I$, there exists a unique automorphism of $P_{\alpha}$
mapping $p_{\alpha}$ to $\psi(p_{\alpha})$, as the $P_{\alpha}$ are Galois.
It follows that
\begin{equation}\label{eqn:proRepresentability}\pi_1^{\et}(X,\bar{x})=\Aut(\Fib_{\bar{x}})=\Aut\left(\varinjlim_I
	\hom_X(P_{\alpha},-)\right)=\varprojlim
	\Aut_X(P_\alpha)^{\opp},\end{equation}
where $(-)^{\opp}$ denotes the opposite group.

\begin{example}\label{ex:fundamentalGroupBasicExample}
	\begin{enumerate}
		\item Let us bring the above construction down to earth: Let $K$ be a field
	and write $X=\Spec K$. Choosing a
	geometric point $\bar{x}$ of $\Spec K$  amounts to choosing an embedding of $K$ into an
	algebraically closed field $\Omega$. Without loss of generality we
	take $\Omega=K^{\alg}$, an algebraic closure of $K$.  The Galois coverings
	$P_{\alpha}$ of $X$ in the above construction correspond to finite
	Galois extensions $L_{\alpha}$. Fixing a geometric point $p_{\alpha}$
	lying over $\bar{x}$ is equivalent to fixing a $K$-embedding of
	$L_{\alpha}$ into ${K}^{\alg}$. The transition maps
	$\phi_{\alpha,\beta}$ then correspond to embeddings $L_{\alpha}\subset
	L_{\beta}\subset K^{\alg}$, so the projective system
	$(P_{\alpha},\phi_{\alpha,\beta})$ corresponds to the inductive system
	of finite Galois extension of $K$ \emph{contained in $K^{\alg}$}.
	
	Finally, \eqref{eqn:proRepresentability} shows that
	\[\pi_1^{\et}(X,\bar{x})=\varprojlim
		\Aut_X(\Spec L_{\alpha})^{\opp}=\varprojlim_{\begin{subarray}{c}L\subset
			K^{\alg}\\L/K\text{ finite Galois}\end{subarray}}
		\Aut_K(L)=\Gal(K^{\alg}/K).\]
	\item If $X$ is a connected normal noetherian scheme, write $K(X)$ for
		its function field, i.e.~for the stalk at its generic point.
		Fix an algebraic closure $K(X)^{\alg}$, and let
		$\bar{\eta}:\Spec K(X)^{\alg}\rightarrow X$ denote the
		associated geometric point of $X$.
		If $K(X)^{\unr}$ is the composite of all finite Galois extensions
		$L/K(X)$, such that the normalization of $X$ in $L$ is \'etale
		over $X$, then $K(X)^{\unr}$ is an (usually infinite) Galois extension
		of $K(X)$ and there is a natural isomorphism 
		\[\Gal(K(X)^{\unr}/K(X))\cong \pi^{\et}_1(X,\bar{\eta}).\]
\end{enumerate}
\end{example}
\begin{proposition}[Dependence on base
	point]\label{prop:basePoint}
	Again $X$ is noetherian and connected.
	If $\bar{x}$ and $\bar{x}'$ are two geometric points of $X$, then
	there is a continuous isomorphism
	\[\pi_1^{\et}(X,\bar{x})\cong\pi_1^{\et}(X,\bar{x}'),\]
	which is canonical up to inner automorphism.
\end{proposition}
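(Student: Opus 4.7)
The plan is to produce an isomorphism $\lambda : \Fib_{\bar x} \xrightarrow{\cong} \Fib_{\bar x'}$ of fiber functors (a ``path'' from $\bar x$ to $\bar x'$) and then define the desired isomorphism of fundamental groups by conjugation. The key point is that the pro-representing projective system of Galois covers $\{P_\alpha\}_{\alpha\in I}$ constructed before Example \ref{ex:fundamentalGroupBasicExample} is intrinsic to $X$; only the compatible system of base-points $\{p_\alpha\}$ on top depends on $\bar x$.

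First I would choose, for each $\alpha\in I$, a geometric point $p'_\alpha\in\Fib_{\bar x'}(P_\alpha)$ in a way compatible with the transition morphisms $\phi_{\beta,\alpha}:P_\beta\to P_\alpha$, i.e.\ so that $\phi_{\beta,\alpha}(p'_\beta)=p'_\alpha$ whenever $\alpha\le\beta$. Such a compatible family is precisely an element of $\varprojlim_I \Fib_{\bar x'}(P_\alpha)$; since every $P_\alpha\to X$ is surjective, each $\Fib_{\bar x'}(P_\alpha)$ is a nonempty finite set, and the transition maps $\Fib_{\bar x'}(P_\beta)\to\Fib_{\bar x'}(P_\alpha)$ are surjective (because $\phi_{\beta,\alpha}$ is a morphism of connected Galois covers, hence surjective on every geometric fiber). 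Hence the inverse limit of finite nonempty sets is nonempty and the choice can be made. Using strict pro-representability $\Fib_{\bar x}(Y)=\varinjlim_\alpha\Hom_X(P_\alpha,Y)$ (and the analogous formula for $\Fib_{\bar x'}$ with the $p'_\alpha$), the rule $\lambda_Y(g(p_\alpha)):=g(p'_\alpha)$ for $g\in\Hom_X(P_\alpha,Y)$ defines a functorial bijection $\lambda_Y:\Fib_{\bar x}(Y)\xrightarrow{\cong}\Fib_{\bar x'}(Y)$; one checks well-definedness using the Galois property of the $P_\alpha$ and Proposition \ref{prop-determination-etale-maps}.

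Next I would define the isomorphism by conjugation: for $\sigma\in\pi_1^{\et}(X,\bar x)=\Aut(\Fib_{\bar x})$, set
\[
	c_\lambda(\sigma):=\lambda\circ\sigma\circ\lambda^{-1}\in\Aut(\Fib_{\bar x'})=\pi_1^{\et}(X,\bar x').
\]
Under the descriptions $\pi_1^{\et}(X,\bar x)=\varprojlim_\alpha\Aut_X(P_\alpha)^{\opp}$ and likewise for $\bar x'$, the map $c_\lambda$ is induced levelwise by the identifications $\Aut_X(P_\alpha)\to\Aut_X(P_\alpha)$ sending the unique $\tau\in\Aut_X(P_\alpha)$ with $\tau(p_\alpha)=p'_\alpha$ to the identity (compare \eqref{eqn:proRepresentability}); in particular $c_\lambda$ is continuous, being a projective limit of maps of finite discrete groups, and bijective.

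The final step is to check that $c_\lambda$ is canonical up to inner automorphism. Given a second path $\lambda':\Fib_{\bar x}\xrightarrow{\cong}\Fib_{\bar x'}$ arising from a different compatible family $\{p''_\alpha\}$, the composition $\mu:=\lambda'\circ\lambda^{-1}$ lies in $\Aut(\Fib_{\bar x'})=\pi_1^{\et}(X,\bar x')$, and a direct computation gives $c_{\lambda'}(\sigma)=\mu\,c_\lambda(\sigma)\,\mu^{-1}$, so the two isomorphisms differ by the inner automorphism $\Inn(\mu)$ of $\pi_1^{\et}(X,\bar x')$. The main technical obstacle in carrying this out is the compatible choice of the $p'_\alpha$ (equivalently, the nonemptiness and coherence in the projective limit), but this is handled once one knows that each $\Fib_{\bar x'}(P_\alpha)$ is finite and the transition maps between them are surjective.
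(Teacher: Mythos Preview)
Your proposal is correct and follows essentially the same approach as the paper. The paper itself gives only a one-sentence sketch (``Proving the proposition amounts to constructing an isomorphism of the inverse system $(P_{\alpha},\phi_{\alpha,\beta})$ constructed using $\bar{x}$ and the analogous inverse system constructed using $\bar{x}'$, which is not too difficult''), and what you have written is precisely a careful fleshing-out of this hint: the choice of a compatible family $(p'_\alpha)\in\varprojlim_\alpha\Fib_{\bar x'}(P_\alpha)$ is exactly the data of an isomorphism of the two projective systems, and the conjugation argument for ``canonical up to inner automorphism'' is the standard one. One small remark: your description of $c_\lambda$ at each finite level (``sending the unique $\tau$ with $\tau(p_\alpha)=p'_\alpha$ to the identity'') is phrased a bit awkwardly; in fact, once both fundamental groups are identified with the \emph{same} $\varprojlim_\alpha\Aut_X(P_\alpha)^{\opp}$ via $p_\alpha$ and $p'_\alpha$ respectively, the map $c_\lambda$ becomes the identity levelwise---but this is a cosmetic point and does not affect the argument.
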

Proving the proposition amounts to constructing an isomorphism of the inverse
system $(P_{\alpha},\phi_{\alpha,\beta})$ constructed using $\bar{x}$ and the
analogous inverse system constructed using $\bar{x}'$, which is not too
difficult.

From the Definition of the fundamental group we easily see that it is
functorial for schemes equipped with a geometric point.
\begin{proposition}[Functoriality]\label{prop:functorialityOfFundamentalgroup}
	If $f:X'\rightarrow X$ is a morphism of connected noetherian schemes,
	and $\bar{x}':\Spec \Omega\rightarrow X'$ a geometric point, then $f$
	induces a continuous homomorphism of groups
	\[\pi^{\et}(X',\bar{x}')\rightarrow \pi_1^{\et}(X,f(\bar{x}')).\]
\end{proposition}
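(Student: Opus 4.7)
The plan is to construct the homomorphism via pullback of finite \'etale covers and then check continuity via the profinite topology.

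First, I would observe that by Proposition \ref{prop-etale}, the morphism $f:X'\to X$ induces a base change functor
\[f^{*}:\FEt_X\longrightarrow \FEt_{X'},\qquad (Y\to X)\longmapsto (Y\times_X X'\to X'),\]
since finite \'etale morphisms are stable under base change. (Connectedness of $X'\times_X Y$ is not needed: $\FEt$ consists of finite \'etale covers, not necessarily connected ones.) Next, for a geometric point $\bar{x}':\Spec \Omega\to X'$, I would unwind the definitions to get a canonical bijection, natural in $Y\in \FEt_X$,
\[\Fib_{\bar{x}'}(f^{*}Y)=\Hom_{X'}(\Spec\Omega,Y\times_X X')\xrightarrow{\ \cong\ }\Hom_X(\Spec\Omega,Y)=\Fib_{f\bar{x}'}(Y),\]
where the middle bijection is the universal property of the fiber product, using that $\bar{x}'$ together with the identity on $\Spec\Omega$ provides the required $X'$-structure. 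In other words, there is a natural isomorphism of functors $\alpha:\Fib_{\bar{x}'}\circ f^{*}\xrightarrow{\cong}\Fib_{f\bar{x}'}$.

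From $\alpha$ I would define the map $f_{*}:\pi_1^{\et}(X',\bar{x}')\to \pi_1^{\et}(X,f\bar{x}')$ as follows: given $\sigma\in\Aut(\Fib_{\bar{x}'})$ and $Y\in\FEt_X$, set
\[f_{*}(\sigma)_Y:=\alpha_Y\circ \sigma_{f^{*}Y}\circ \alpha_Y^{-1}\in \Aut(\Fib_{f\bar{x}'}(Y)).\]
Naturality of $\alpha$ together with the fact that $\sigma$ is a natural automorphism immediately give that $f_{*}(\sigma)$ is a natural automorphism of $\Fib_{f\bar{x}'}$, and functoriality of the construction together with the equality $f_{*}(\sigma\tau)_Y=\alpha_Y\sigma_{f^{*}Y}\tau_{f^{*}Y}\alpha_Y^{-1}$ shows that $f_{*}$ is a group homomorphism.

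The only remaining step is continuity, which is the mildly delicate point. Recall that a basis of open neighborhoods of the identity in $\pi_1^{\et}(X,f\bar{x}')$ is given by the stabilizers of points $y\in\Fib_{f\bar{x}'}(Y)$ for $Y\in\FEt_X$ (these are the open subgroups coming from the equivalence with finite continuous $\pi_1$-sets). By construction, the preimage under $f_{*}$ of the stabilizer of $y$ is the stabilizer of $\alpha_Y^{-1}(y)\in\Fib_{\bar{x}'}(f^{*}Y)$, which is open in $\pi_1^{\et}(X',\bar{x}')$ since $f^{*}Y$ lies in $\FEt_{X'}$. Hence $f_{*}$ is continuous. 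The main (minor) obstacle is simply keeping track of the natural transformation $\alpha$ carefully enough to verify that these stabilizers match up; once this is done, both the homomorphism property and the continuity are formal.
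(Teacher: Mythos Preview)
Your proposal is correct and takes essentially the same approach as the paper: pull back finite \'etale covers along $f$, identify $\Fib_{\bar{x}'}\circ f^{*}$ with $\Fib_{f\bar{x}'}$, and transport automorphisms. The paper's own proof is a terse two-sentence version of exactly this, leaving continuity (which you spell out carefully via stabilizers) implicit.
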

\begin{proof}
Let $Z\rightarrow X$ be a finite \'etale covering. Then $Z'=Z\times_X
X'\rightarrow X'$ is an \'etale covering, and
$\Fib_{\bar{x}'}(Z')=\Fib_{f(\bar{x}')}(Z)$. It follows that any automorphism of
$\Fib_{\bar{x}'}$ induces an automorphism of $\Fib_{f(\bar{x}')}$.
\end{proof}
\begin{example}
	Let $X$ be connected and normal with function field $K(X)$. Fix an
	algebraic closure $K(X)^{\alg}$ of $K(X)$, and write $\bar{\eta}$ for the
	associated geometric point. With this setup Example
	\ref{ex:fundamentalGroupBasicExample} shows that
\[\Gal(K(X)^{\alg}/K(X))=\pi^{\et}_1(\Spec(K(X)),\bar{\eta})\rightarrow
	\pi^{\et}_1(X,\bar{\eta})\] is surjective.
\end{example}
\subsection{Complex varieties}
If $X$ is a scheme of finite type over the field of complex numbers $\C$, then
to $X$ one associates a complex analytic space $X^{\an}$. Its underlying
set is $X(\C)$. If $X$ is a smooth complex variety, then
$X^{\an}$ is a complex manifold. In this section we summarize how the \'etale
fundamental group of $X$ and the fundamental group of the topological space
$X^{\an}$ are related. For details and proofs we refer the reader to
\cite[Exp.~XII]{SGA1}.

If $f:Y\rightarrow X$ is a finite \'etale covering, then
$f^{\an}:Y^{\an}\rightarrow X^{\an}$ is a covering space of $X^{\an}$ with
finite fibers. This defines a functor from the category $\FEt_X$ to the
category of covering spaces of $X^{\an}$ with finite fibers. The Riemann
Existence Theorem (\cite[Exp.~XII, Thm.~5.1]{SGA1}) states that this functor is
in fact an equivalence. We will use the following reformulation in terms of
fundamental groups.

\begin{theorem}\label{thm:complexFundamentalGroup}
	Let $X$ be a connected finite type $\C$-scheme and $x\in X(\C)$.
	Then there is a natural continuous isomorphism
	\[{\pi_1(X^{\an},x)}^{\wedge}\xrightarrow{\cong}\pi_1^{\et}(X,x),\]
	where the left-hand side is the \emph{profinite completion} of the
	fundamental group of the topological space $X^{\an}$, based at $x$.
\end{theorem}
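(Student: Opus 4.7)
The plan is to set up both groups as the automorphism groups of a single fiber functor on a common category of finite covers, so that the comparison becomes a matter of comparing three equivalent Galois categories and then taking profinite completions.

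First I would construct the comparison map directly. Any finite étale covering $f\colon Y\to X$ analytifies to a finite-sheeted topological covering space $f^{\mathrm{an}}\colon Y^{\mathrm{an}}\to X^{\mathrm{an}}$, and the geometric fiber $\Fib_x(Y)$ is canonically identified with the topological fiber $(f^{\mathrm{an}})^{-1}(x)$. The classical monodromy action of $\pi_1(X^{\mathrm{an}},x)$ on this fiber is natural in $Y$, so it yields an element of $\Aut(\Fib_x)$, giving a homomorphism $\mu\colon\pi_1(X^{\mathrm{an}},x)\to \pi_1^{\et}(X,x)$. This is continuous (the target topology is generated by stabilizers of points in finite fibers, and these are pulled back from stabilizers of the corresponding topological monodromy actions), so by the universal property of profinite completion it factors through $\hat\mu\colon\pi_1(X^{\mathrm{an}},x)^{\wedge}\to\pi_1^{\et}(X,x)$.

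Next I would verify that $\hat\mu$ is bijective. For density of the image of $\mu$, take an open normal subgroup $U\lhd\pi_1^{\et}(X,x)$; it corresponds to a connected finite Galois étale covering $Y\to X$ with Galois group $G=\pi_1^{\et}(X,x)/U$. Since $Y$ is connected as a scheme of finite type over $\C$, $Y^{\mathrm{an}}$ is connected as well, hence $Y^{\mathrm{an}}\to X^{\mathrm{an}}$ is a connected topological covering; the monodromy action of $\pi_1(X^{\mathrm{an}},x)$ on its fiber is transitive, and since the fiber is a $G$-torsor this forces the composite $\pi_1(X^{\mathrm{an}},x)\xrightarrow{\mu}\pi_1^{\et}(X,x)\twoheadrightarrow G$ to be surjective. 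As this holds for every open normal $U$, the image of $\mu$ is dense, and hence $\hat\mu$ is surjective. For injectivity of $\hat\mu$ it suffices to show that, given a finite quotient $\pi_1(X^{\mathrm{an}},x)\twoheadrightarrow H$, the corresponding continuous finite $H$-set arises as the fiber of some finite étale covering of $X$. By standard covering-space theory this quotient corresponds to a connected finite topological covering of $X^{\mathrm{an}}$; the Riemann Existence Theorem (\cite[Exp.~XII, Thm.~5.1]{SGA1}) produces a finite étale covering of $X$ whose analytification realizes it, so the quotient of $\pi_1(X^{\mathrm{an}},x)^{\wedge}$ factors through $\pi_1^{\et}(X,x)$ via $\hat\mu$, proving injectivity.

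Finally, I would note naturality in $x$ (up to inner automorphism, compatibly with Proposition~\ref{prop:basePoint}), which is automatic since the construction of $\mu$ is functorial in the pointed scheme $(X,x)$. The main obstacle is, of course, the Riemann Existence Theorem used in the injectivity step: all of the easy direction (building the continuous homomorphism, checking density, and unravelling the profinite completion) is essentially formal, whereas surjectivity of the functor $\FEt_X\to\FCov(X^{\mathrm{an}})$ onto finite topological covers is the genuinely analytic input — it requires algebraizing an arbitrary finite étale cover of $X^{\mathrm{an}}$, and I would simply quote it from \cite{SGA1} rather than reprove it.
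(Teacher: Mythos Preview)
Your argument is correct and is the standard way to deduce the fundamental-group statement from the Riemann Existence Theorem. Note, however, that the paper does not actually give a proof of this theorem: it simply states it as a reformulation of the categorical equivalence $\FEt_X \simeq \{\text{finite covering spaces of }X^{\an}\}$ furnished by the Riemann Existence Theorem, citing \cite[Exp.~XII, Thm.~5.1]{SGA1} for the latter. Your proposal spells out precisely how that reformulation goes---constructing the monodromy map, checking density of the image via connectedness of Galois covers, and using Riemann Existence for injectivity---which is exactly the content the paper leaves implicit. So there is nothing to compare beyond observing that you have supplied the details the paper omits, and you have correctly isolated Riemann Existence as the only nontrivial input.
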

One uses this theorem to prove:
\begin{corollary}\label{cor:fundamentalGroupOfComplexCurve}
	Let ${X}$ be a smooth complex curve of genus $g$. Define
	$U:={X}\setminus\{x_1,\ldots x_n\}$ as complement of finitely many
	closed points. If $\bar{u}\in U$ is a geometric point, then
	the \'etale fundamental group $\pi_1^{\et}(U,\bar{u})$ is isomorphic
	to the profinite completion of the group
	\[\left<a_1,\ldots, a_g,b_1,\ldots,
		b_g,c_1,\ldots, c_n\left|
		[a_1,b_1]\cdot\ldots\cdot[a_g,b_g]c_1\cdot\ldots\cdot
		c_n=1\right.\right>.\]
More generally the corollary is true over any algebraically closed field of
characteristic $0$, \cite[Exp.~XIII, Cor.~2.12]{SGA1}. 
\end{corollary}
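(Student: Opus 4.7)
The plan is to reduce the statement to a purely topological computation of the fundamental group of a punctured Riemann surface via Theorem \ref{thm:complexFundamentalGroup}, then take the profinite completion.

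First, I would note that $U$ is a connected, smooth, finite type $\C$-scheme, so Theorem \ref{thm:complexFundamentalGroup} (a consequence of the Riemann existence theorem) applies and gives a natural isomorphism
\[\pi_1^{\et}(U,\bar{u}) \xrightarrow{\cong} \widehat{\pi_1(U^{\an}, u)}.\]
This moves the computation entirely into the realm of classical algebraic topology. The key remaining task is therefore to identify $\pi_1(U^{\an}, u)$ with the abstract group
\[\Gamma := \left\langle a_1,\ldots,a_g, b_1,\ldots,b_g, c_1,\ldots,c_n \,\middle|\, [a_1,b_1]\cdots[a_g,b_g]\,c_1\cdots c_n = 1 \right\rangle.\]

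Next, I would invoke the comparison between the algebraic genus of the smooth projective $\C$-curve $X$ and the topological genus of its associated complex manifold: $X^{\an}$ is a compact, connected, orientable, real $2$-manifold of genus $g$ in the topological sense (this follows from Serre's GAGA and the Hodge/Dolbeault identification of $H^1(X^{\an},\C)$ with $H^0(X,\Omega^1_X)\oplus H^1(X,\mathcal O_X)$, both of which have dimension $g$). Consequently $U^{\an}$ is a closed orientable surface of genus $g$ with $n$ punctures removed.

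To compute $\pi_1(U^{\an}, u)$, I would use the standard CW model: represent $X^{\an}$ as the quotient of a $4g$-gon with boundary word $[a_1,b_1]\cdots[a_g,b_g]$, so that $X^{\an}$ has one $0$-cell $u$, $2g$ one-cells labeled $a_i,b_i$, and a single $2$-cell $D$ attached along this word. Move the punctures $x_1,\ldots,x_n$ into the interior of $D$ and modify the cell structure by introducing, for each $j$, an edge $d_j$ from $u$ to a small loop $c_j$ winding once positively around $x_j$, together with subdividing $D$ accordingly. After removing the points $x_j$, the $n$-times punctured disk deformation retracts onto a wedge of $n$ circles realized by the $c_j$, and one obtains a new $2$-cell whose attaching word is $[a_1,b_1]\cdots[a_g,b_g]\,c_1\cdots c_n$. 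Applying the CW presentation of the fundamental group (equivalently van Kampen on small neighborhoods) yields $\pi_1(U^{\an},u) \cong \Gamma$.

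Finally, combining this identification with Step 1 gives $\pi_1^{\et}(U,\bar u) \cong \widehat{\Gamma}$, which is the claimed description. The only genuinely substantial ingredient is Theorem \ref{thm:complexFundamentalGroup}; the main conceptual obstacle is simply keeping the bookkeeping straight for the attaching word of the $2$-cell after puncturing, but once the punctures are placed in the interior of $D$ and the loops $c_j$ are chosen with consistent orientation, the relation $\prod[a_i,b_i]\cdot c_1\cdots c_n = 1$ appears as the image of $\partial D$ in the CW chain complex. (Note that for $n\geq 1$ one can use this relation to eliminate, say, $c_n$, recovering the classical fact that $\pi_1(U^{\an},u)$ is free of rank $2g+n-1$, consistent with $\chi(U^{\an}) = 2-2g-n$.)
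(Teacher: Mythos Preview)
Your proposal is correct and follows exactly the approach the paper indicates: the paper states this corollary immediately after Theorem~\ref{thm:complexFundamentalGroup} with only the remark ``One uses this theorem to prove'' and gives no further details, so you have simply filled in the standard topological computation of $\pi_1$ of a punctured orientable surface that the paper leaves implicit.
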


Similarly, since the fundamental group of a smooth quasi-projective complex variety
is a finitely generated group (this can be deduced from the curve case via
Lefschetz type theorems), we obtain:
\begin{corollary}
	If $X$ is a smooth complex quasi-projective variety, and $x\in X(\C)$, then
	$\pi^{\et}_1(X,x)$ is topologically finitely generated.
\end{corollary}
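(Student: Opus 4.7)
The plan is to reduce the statement to a purely topological fact via Theorem \ref{thm:complexFundamentalGroup}. That theorem identifies $\pi_1^{\et}(X,x)$ with the profinite completion $\widehat{\pi_1(X^{\an},x)}$. Since the profinite completion of any finitely generated abstract group is topologically finitely generated (the images of an abstract generating set are topological generators, being dense in the completion), it suffices to show that the topological fundamental group $\pi_1(X^{\an},x)$ is a finitely generated abstract group.

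To show finite generation of $\pi_1(X^{\an},x)$, I would argue by induction on $d := \dim X$. The base case $d=1$ is Corollary \ref{cor:fundamentalGroupOfComplexCurve}: a smooth quasi-projective complex curve is obtained from a smooth projective curve by removing finitely many points, and the explicit presentation given there exhibits $\pi_1(X^{\an},x)$ as a group on finitely many generators. The inductive step proceeds as follows: choose a projective embedding $X \hookrightarrow \P^N$, fix a compactification $\overline{X}$, and pick a sufficiently general hyperplane $H \subset \P^N$ so that $Y := X \cap H$ is smooth, quasi-projective, of dimension $d-1$, and contains the base point $x$. The Lefschetz hyperplane theorem (in the form valid for smooth quasi-projective varieties, due to Hamm--L\^e, and ultimately deducible from stratified Morse theory \`a la Goresky--MacPherson) asserts that for $d \geq 2$ the inclusion $Y^{\an} \hookrightarrow X^{\an}$ induces a surjection $\pi_1(Y^{\an},x) \twoheadrightarrow \pi_1(X^{\an},x)$. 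By induction, the left-hand side is finitely generated, so the right-hand side is as well.

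The formal reduction from the \'etale statement to the topological one is easy; the real content lies in the Lefschetz-type surjectivity result for possibly non-complete $X$. This is the step I expect to be the main obstacle, since the classical Lefschetz theorem is stated for smooth projective $X$ and a smooth hyperplane section, whereas here we must allow $X$ to be quasi-projective (and, if one works with a compactification, the hyperplane section meets the boundary divisor and need not be smooth there). Handling this requires either the quasi-projective Lefschetz theorem of Hamm--L\^e or the alternative route of observing that every complex algebraic variety admits the structure of a finite CW-complex (by the triangulability results of \L ojasiewicz and Hironaka), which immediately implies that $\pi_1(X^{\an},x)$ is finitely presented. Either path completes the proof, and combining with the profinite completion argument above yields the topological finite generation of $\pi_1^{\et}(X,x)$.
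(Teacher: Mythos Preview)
Your proposal is correct and follows essentially the same approach as the paper: reduce via Theorem \ref{thm:complexFundamentalGroup} to finite generation of the topological fundamental group, and deduce the latter from the curve case (Corollary \ref{cor:fundamentalGroupOfComplexCurve}) via Lefschetz-type theorems. The paper in fact gives only this sketch (``can be deduced from the curve case via Lefschetz type theorems''), so your write-up with the explicit induction and the reference to Hamm--L\^e is more detailed than what the paper provides.
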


With considerable effort, 
the theory of specialization maps for fundamental groups allows to generalize
the corollary:
\begin{theorem}
	Let $k$ be an algebraically closed field and $X$ a smooth
	quasi-projective $k$-variety
	with geometric point $\bar{x}$. Then $\pi_1^{\et}(X,\bar{x})$ is
	topologically finitely generated in each of the following two cases:
	\begin{itemize}
		\item $\Char(k)=0$ (implicit in \cite[p.291]{SGA1}, see also
			\cite[Thm.~1]{Kollar}\emph{)}.
		\item $\Char(k)>0$ and $X$ is proper \emph{(}\cite[Exp.~X,
			Thm.~2.9]{SGA1}\emph{)}.
	\end{itemize}
\end{theorem}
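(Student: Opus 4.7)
The plan is to treat the two cases separately, reducing the positive characteristic case to the characteristic zero case via the specialization theorem for fundamental groups; both reductions pass through a spreading-out step over a base of finite type over $\Z$.

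For $\Char(k)=0$, the strategy is to reduce to $k=\C$ and invoke the complex input already cited. Since $X$ is of finite type over $k$, I would choose a finitely generated $\Z$-subalgebra $R\subset k$ and a smooth quasi-projective $R$-scheme $\mathcal{X}$ whose base change to $k$ is $X$. Fixing any embedding $R\hookrightarrow \C$ produces a smooth quasi-projective complex variety $X_{\C}$. The key input is the invariance of $\pi_1^{\et}$ under extensions of algebraically closed base fields for geometrically connected schemes of finite type (\cite[Exp.~X, Cor.~1.8]{SGA1}), which yields $\pi_1^{\et}(X,\bar x)\cong \pi_1^{\et}(X_{\C},\bar x_{\C})$. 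Theorem~\ref{thm:complexFundamentalGroup} identifies the latter with the profinite completion of $\pi_1(X_{\C}^{\an},x_{\C})$, and this topological group is finitely generated by the Lefschetz-type results referred to just above the theorem, ultimately reducing to the curve case of Corollary~\ref{cor:fundamentalGroupOfComplexCurve}. The profinite completion of a finitely generated discrete group is topologically finitely generated, finishing this case.

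For $\Char(k)=p>0$ with $X$ proper, the idea is to deform $X$ inside a smooth proper family containing a characteristic-zero fiber. Using a very ample line bundle on $X$, I would embed $X \hookrightarrow \P^N_k$, take the connected component $H$ of $\mathrm{Hilb}_{\P^N/\Z}$ (with the Hilbert polynomial of $X$) containing $[X]$, and restrict to the open subscheme $S\subset H$ parametrizing smooth geometrically connected fibers. Then $S$ is of finite type over $\Z$ and contains the chosen characteristic-$p$ point; by projectivity of $\mathrm{Hilb}_{\P^N/\Z}$ over $\Z$ and a connectedness argument, $S$ may be chosen so that its image in $\Spec \Z$ meets the generic point, so $S$ has points with residue fields of characteristic zero. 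Picking such a point together with a geometric point $\bar\eta$ above it with algebraically closed residue field, the universal family $\mathcal{X}\to S$ has smooth proper geometric fibers, and its geometric generic fiber $\mathcal{X}_{\bar\eta}$ lives over an algebraically closed field of characteristic zero, so the first case applies to it. The specialization theorem for smooth proper morphisms (\cite[Exp.~X]{SGA1}) then supplies a continuous surjection $\pi_1^{\et}(\mathcal{X}_{\bar\eta})\twoheadrightarrow \pi_1^{\et}(X,\bar x)$, and since a continuous quotient of a topologically finitely generated profinite group is itself topologically finitely generated, the result follows.

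The hard part is the specialization theorem itself, whose proof rests on proper base change in \'etale cohomology combined with Abhyankar-type extension results to propagate \'etale covers across the total space. Properness is absolutely essential here: for open smooth varieties in positive characteristic the specialization map need not be surjective, and in fact wild ramification at infinity typically produces a $\pi_1^{\et}$ that is \emph{not} topologically finitely generated (as already suggested by the abundance of Artin--Schreier extensions of $k\llparen x\rrparen$ in Example~\ref{ex:ASlowerNumbering}). A secondary subtlety is the claim that the Hilbert component $S$ meets characteristic zero; this requires that $S$ is not set-theoretically contained in $V(p)\subset \Spec\Z$, and is the point at which the specific structure of the Hilbert scheme over $\Z$ is used rather than a naive ``lift to $W(k)$'' argument (which would fail in general, since not every smooth proper variety lifts to characteristic zero).
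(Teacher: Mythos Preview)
The characteristic-zero reduction is correct and standard; the only quibble is that invariance of $\pi_1^{\et}$ under extension of algebraically closed base fields for \emph{non-proper} varieties is in \cite[Exp.~XIII]{SGA1} rather than Exp.~X, Cor.~1.8 (which treats the proper case), but this does not affect the logic.

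The characteristic-$p$ argument, however, has a genuine gap exactly where you flag a ``secondary subtlety''. The assertion that the connected component of $[X]$ in the smooth locus of $\mathrm{Hilb}^P_{\P^N/\Z}$ meets characteristic zero does not follow from projectivity plus any connectedness argument, and it is false in general: Serre constructed smooth projective varieties over $\overline{\F}_p$ admitting no flat deformation to characteristic zero, and for such $X$ the relevant component lies entirely over $\Spec\F_p$. The distinction you draw between the Hilbert-scheme route and a naive lift to $W(k)$ is illusory---both ask for a mixed-characteristic deformation of $X$, and both fail for the same varieties. The argument in \cite[Exp.~X]{SGA1} avoids lifting $X$ altogether and instead proceeds by induction on $\dim X$: for projective $X$ of dimension $\geq 2$, blow up the base locus of a general pencil of hyperplane sections to obtain $\tilde X\to\P^1$ proper with geometrically connected fibers; the homotopy exact sequence $\pi_1^{\et}(\tilde X_{\bar s})\to\pi_1^{\et}(\tilde X)\to\pi_1^{\et}(\P^1)=1$ for a smooth fiber $\tilde X_{\bar s}$, together with $\pi_1^{\et}(\tilde X)\cong\pi_1^{\et}(X)$, reduces the problem to dimension $\dim X-1$. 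Only curves need to be lifted to characteristic zero, and curves always lift (\cite[Exp.~III]{SGA1}); the specialization theorem is applied only at that final step.
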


The picture is different for non-proper varieties in positive characteristic:
\begin{example}[\cite{Gille}]\label{ex:wildRamification}
	If $k$ is an algebraically closed field of characteristic
	$p>0$, $U$ a smooth affine curve over $k$ and $\bar{u}$ a geometric
	point of $U$, then the maximal
	pro-$p$-quotient of $\pi^{\et}_1(U,\bar{u})$ is a free pro-$p$-group on
	$\#k$ generators.
This can be seen by cohomological methods:
One checks that $H^i_{\et}(U,\F_p)=H^i(\pi^{\et}_1(U,\bar{u}),\F_p)$, for all
$i\geq 0$, where the
right-hand side is continuous group cohomology. Since $U$ is affine, we have
$H^2_{\et}(U,\F_p)=0$. The numbers of generators and relations of the maximal
pro-$p$-quotient of $\pi_1^{\et}(U,\bar{u})$  can be  described in terms
of the $\F_p$-dimension of the groups $H^1$ and $H^2$ above (\cite[\S4.]{Serre/GaloisCohomology}).
\end{example}

\subsection{Tame coverings}
The pathologies that appear in characteristic $p>0$, exemplified by Example
\ref{ex:wildRamification},  can partly be remedied by
restricting the attention to \emph{tame} coverings.

\begin{definition}\label{defn:tameness}
	\begin{enumerate}
		\item\label{def:tameness-wrt-emb} (\cite[Def.~2.2.2]{Grothendieck/Tame}) Let $X$ be a normal connected scheme and $U\subset X$ an open subset.
	If $f:V\rightarrow U$ is a finite \'etale covering, then $f$ is said
	to be \emph{tamely ramified along $X\setminus U$}, if and only if for
	every codimension $1$ point $\eta\in X\setminus U$, the extension
	$K(X)\subset K(V)$ is tamely ramified with respect to the discrete
	valuation defined by $\eta$.
\item\label{defn:tamenessInGeneral} (\cite[Sec.~4]{Kerz/tameness}) If $U$ is a normal connected separated scheme of finite type over a
	field, then a finite \'etale covering $f:V\rightarrow U$ is said to be
	\emph{tame}\index{covering!tame} if for all open immersions $U\hookrightarrow X$ with $X$ a
	proper connected normal $k$-scheme, $f$ is tamely ramified with
	respect to $X\setminus U$.

	If $U$ is not connected, then $f:V\rightarrow U$ is tame if the
	induced coverings of the components of $U$ are tame.
\item The category of tame coverings of $U$ is denoted $\FEt^{\tame}_U$. It is
	a full subcategory of $\FEt_U$.
	\end{enumerate}
\end{definition}
\begin{remark}
	\begin{enumerate}
	\item The normality assumption guarantees that the local ring
			$\mathcal{O}_{X,\eta}$ at a codimension $1$ point is a
			discrete valuation ring.
	\item If $U$ is a scheme over a field of characteristic $0$, then
		every finite \'etale covering is tame.
	\item  In Definition \ref{defn:tameness}, \ref{defn:tamenessInGeneral}, note that there always exists
		a normal connected \emph{proper} $k$-scheme containing $U$: By Nagata's
		compactification theorem (\cite{Luetkebohmert/Nagata}, \cite{Conrad/Nagata}) there exists a proper $X$
		containing $U$, which we can then normalize.
	\item In Definition \ref{defn:tameness}, \ref{defn:tamenessInGeneral}, if $\dim U=1$, then there
		exists a \emph{unique} normal proper curve $X$ containing $U$.
		So in this case tameness of coverings of $U$ only has to be
		checked on this single compactification.
	\end{enumerate}
\end{remark}
In \cite[Exp.~XIII, \S.2]{SGA1} Grothendieck defined the notion of a tame
covering, but only
with respect to a normal crossings divisor (see Definition \ref{def:ncd} and Theorem
\ref{thm:differentNotionsOfTameness}, \ref{item:grothendieck} below). In
characteristic $0$, the famous theorem of Hironaka on
resolution of singularities states that if $U$ is smooth then one can always find a
\emph{smooth} proper variety $X$ containing $U$, such that
$X\setminus U$ is a strict normal crossings divisor. Proving
the analogous statement in positive characteristic is an open
problem.
Theorem \ref{thm:differentNotionsOfTameness} below shows that Definition
\ref{defn:tameness}, \ref{defn:tamenessInGeneral}, agrees with Grothendieck's notion, whenever a comparison
is possible.  

\begin{definition}\label{def:ncd}
	Let $X$ be a locally noetherian scheme and $D$ an effective 
	Cartier divisor on $X$.
	\begin{enumerate}
	\item $D$ is said to have \emph{strict normal
		crossings}\index{strictly normal crossings} if for every $x\in
		\supp D$ the following hold:
		\begin{itemize}
			\item $\mathcal{O}_{X,x}$ is a regular local ring.
			\item There exists a regular sequence $t_1,\ldots,
				t_n$ and $m\in\{1,\ldots,n\}$,  such that the ideal
				$\mathcal{O}_X(-D)_x\subset
				\mathcal{O}_{X,x}$ is
				generated by $t_1\cdot\ldots\cdot t_m$. In other words: locally, $D$ 
				looks like the intersection of coordinate
				hyperplanes.
		\end{itemize}
	\item $D$ is said to have \emph{normal crossings}, if for every $x\in
		\supp D$ there exists an \'etale morphism $f:V\rightarrow X$,
		such that $x\in \image(f)$ and such that $f^*D$ has strict
		normal crossings. In other words: \'etale locally, $D$ looks
		like the intersection of coordinate hyperplanes.
	\end{enumerate}
\end{definition}

\begin{theorem}[{Kerz-Schmidt,
		\cite{Kerz/tameness}}]\label{thm:differentNotionsOfTameness}
	Let $f:V\rightarrow U$ be a finite \'etale covering of regular
	separated schemes over a field $k$. The following statements are
	equivalent:
	\begin{enumerate}
		\item\label{thm:tamenessA} $f$ is tame.
		\item\label{thm:tamenessB} For any regular $k$-curve $C$ and any morphism
			$\phi:C\rightarrow U$, the induced covering $V\times_U
			C\rightarrow C$ is tamely ramified along $\overline{C}\setminus
			C$, where $\overline{C}$ is the unique normal proper
			$k$-curve containing $C$ as an open subscheme.
	\end{enumerate}
	If there exists an open immersion $U\hookrightarrow X$ with $X$ a
	smooth, proper, separated, finite type $k$-scheme, such that
	$X\setminus U$ is a normal crossings divisor, then
	\ref{thm:tamenessA} and \ref{thm:tamenessB} are equivalent to
	\begin{enumerate}[resume]
		\item\label{item:grothendieck} $f$ is tamely ramified along $X\setminus U$.
	\end{enumerate}
\end{theorem}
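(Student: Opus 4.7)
The strategy is to prove the first equivalence (a)$\Leftrightarrow$(b) unconditionally, and then to show that under the NCD hypothesis the third condition slots in via (a)$\Rightarrow$(c)$\Rightarrow$(b). The one nontrivial ingredient outside formal manipulations with compactifications and the valuative criterion is Abhyankar's lemma in the NCD form, which controls how tame ramification behaves under base change and guarantees the compatibility with ramification indices prime to $p$.

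For (a)$\Rightarrow$(b), given $\phi\colon C\to U$, apply Nagata compactification to pick a normal proper $k$-scheme $X$ containing $U$. The valuative criterion of properness, applied to each DVR $\mathcal{O}_{\overline{C},c}$ for $c\in\overline{C}\setminus C$, produces a unique extension $\overline{\phi}\colon\overline{C}\to X$. For such a point $c$, let $\eta$ be the generic point of any codimension one component of $X\setminus U$ whose closure contains $\overline{\phi}(c)$. By (a) the local extension of function fields at $\eta$ is tame, and standard compositum arguments (ramification indices of tame extensions of complete DVRs multiply cleanly under tensoring with $\mathcal{O}_{\overline{C},c}$, and the inertia remains prime to $p$) show that $V\times_U C\to C$ is tame at $c$. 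If $\overline{\phi}(c)$ lies in a stratum of $X\setminus U$ of higher codimension, the argument is applied to any codimension one $\eta$ specializing to it.

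For (b)$\Rightarrow$(a), fix a normal proper compactification $X$ of $U$ and a codimension one point $\eta$ of $X\setminus U$. We must produce a regular curve $C\to U$ whose extension to $\overline{C}\to X$ meets $\overline{\{\eta\}}$ at a point $c$ transversally, in the sense that a uniformizer of $\mathcal{O}_{X,\eta}$ maps to a uniformizer of $\mathcal{O}_{\overline{C},c}$. Such $C$ is obtained by a Bertini-type construction: intersect $\dim X-1$ sufficiently general hyperplane sections through a closed point of $\overline{\{\eta\}}$ and normalize. For a transversal curve the ramification index and the upper numbering filtration of the decomposition group at $c$ in the pulled back covering agree with those of the original covering at $\eta$, so tameness on $C$ (given by (b)) forces tameness at $\eta$.

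Finally, in the NCD setting the implication (a)$\Rightarrow$(c) is tautological, since (c) is (a) applied to one specific compactification. For the reverse (c)$\Rightarrow$(a) we prove (c)$\Rightarrow$(b), which then combines with (b)$\Rightarrow$(a) already established. Given $\phi\colon\overline{C}\to X$ and $c\in\overline{C}\setminus C$, Abhyankar's lemma in regular local rings says that near a point of the NCD $X\setminus U$ a cover tame along each branch becomes \'etale after a Kummer base change $t_i\mapsto t_i^{e_i}$ in each local parameter cutting out a branch. Pulling this back along $\overline{\phi}$, the cover on $\overline{C}$ near $c$ is dominated by an extension whose ramification index is a product of the $e_i$, hence prime to $p$. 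The main obstacle is precisely this last step: ensuring that tameness measured only at the generic points of the components of $X\setminus U$ propagates to deeper strata and then controls the ramification picked up when $\overline{\phi}(c)$ lands in the intersection of several branches. This is exactly where the local form of Abhyankar's lemma is indispensable, and it also explains why the NCD hypothesis is needed for the equivalence with (c).
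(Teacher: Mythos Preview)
The paper does not prove this theorem; it is stated with a citation to Kerz--Schmidt and used as a black box. So there is no proof in the paper to compare against, and your plan should be weighed against the actual argument in \cite{Kerz/tameness}.

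Your outline for (c)$\Rightarrow$(b) via Abhyankar's lemma and the tautology (a)$\Rightarrow$(c) are on the right track, and the Bertini idea for (b)$\Rightarrow$(a) is in the correct spirit (though producing a transversal regular curve through a chosen point on a merely normal $X$ needs more care than you indicate).

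The genuine gap is in (a)$\Rightarrow$(b). You fix a single normal compactification $X$ and then, for $c\in\overline{C}\setminus C$, try to invoke tameness at a codimension~$1$ point $\eta$ of $X\setminus U$ whose closure contains $\overline\phi(c)$. But there is no ring homomorphism $\mathcal{O}_{X,\eta}\to\mathcal{O}_{\overline{C},c}$ in that situation: the map $\overline\phi$ only gives $\mathcal{O}_{X,\overline\phi(c)}\to\mathcal{O}_{\overline{C},c}$, and $\mathcal{O}_{X,\overline\phi(c)}$ is not a DVR when $\overline\phi(c)$ lies in a deeper stratum. So the phrase ``ramification indices of tame extensions of complete DVRs multiply cleanly under tensoring with $\mathcal{O}_{\overline{C},c}$'' has no content here --- there is nothing to tensor. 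The whole force of Definition~\ref{defn:tameness}\ref{defn:tamenessInGeneral} is that tameness is imposed for \emph{every} normal compactification, and the proof must exploit this freedom: one modifies $X$ (e.g.\ by blowing up at $\overline\phi(c)$ and normalizing, possibly iterating) until the strict transform of $\overline{C}$ meets the new boundary in a codimension~$1$ point where condition~(a) applies directly. Without this step the implication fails; indeed for a fixed $X$ the condition ``tame along $X\setminus U$'' does \emph{not} in general imply curve-tameness --- that is precisely why (c)$\Leftrightarrow$(a) requires the NCD hypothesis.
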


\subsection{The tame fundamental group}
With essentially the same method as used in Section \ref{sec:fundamentalGroup:basics}
one proves:

\begin{theorem}[{\cite{Kerz/tameness}}]\label{thm:tameFundamentalGroup}
	Let $U$ be a finite type, connected, separated, normal $k$-scheme, and $\bar{u}$ a
	geometric point of $U$. Denote by $\Fib^{\tame}_{\bar{u}}$ the
	restriction of the functor $\Fib_{\bar{u}}$ to $\FEt^{\tame}_U$. Then
	the group $\pi_1^{\tame}(U,\bar{u}):=\Aut(\Fib^{\tame}_{\bar{u}})$ is profinite, and
	$\Fib_{\bar{u}}^{\tame}$ induces an equivalence
	\[\FEt^{\tame}_{U}\xrightarrow{\cong} \left\{\text{finite sets with continuous
	}\pi_1^{\tame}(U,\bar{u})\text{-action}\right\}.\]
\end{theorem}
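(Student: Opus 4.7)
The plan is to verify that $\FEt^{\tame}_U$ is a Galois category in the sense of Grothendieck (\cite[Exp.~V]{SGA1}) with fibre functor $\Fib^{\tame}_{\bar u}$, and then invoke the main theorem of that formalism: the automorphism group of any such fibre functor is automatically profinite (as an inverse limit of finite groups), and the functor establishes an equivalence with finite continuous $\pi_1$-sets. Since $\FEt_U$ is already known to be a Galois category with fibre functor $\Fib_{\bar u}$, it suffices to verify that $\FEt^{\tame}_U$ is a full subcategory of $\FEt_U$ which is stable under all the constructions appearing in the Galois-category axioms, and that the restricted fibre functor still detects isomorphisms.

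First I would record the relevant stability properties of tame coverings, all of which I would prove by reduction to the curve criterion given by Theorem \ref{thm:differentNotionsOfTameness}\ref{thm:tamenessB}: (i) base change along a morphism $U'\to U$ of normal connected finite-type $k$-schemes preserves tameness; (ii) composition of tame covers is tame; (iii) the disjoint union of tame covers is tame, and conversely each connected component of a tame cover is tame; (iv) fibre products of tame covers over $U$ are tame; (v) for a tame Galois cover $V\to U$ with group $G$ and any subgroup $H\le G$, the quotient $V/H\to U$ is tame; (vi) most importantly, every connected tame cover $V\to U$ admits a Galois closure $P\to U$ which is itself tame. The curve-wise characterization makes (i)--(v) essentially formal, since all these operations commute with pulling back to a regular curve $C\to U$ and taking its unique regular proper compactification, and tameness on curves is stable under these operations by the classical theory of Section \ref{sec:extensions}. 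For (vi), the Galois closure of a connected cover $V\to U$ is a connected component of the iterated fibre product $V\times_U\cdots\times_U V$, so its tameness follows from (iv) and (iii).

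With these stability properties in hand, I would mimic the construction outlined below Proposition \ref{prop:galoisclosure} verbatim, but restricted to $\FEt^{\tame}_U$: let $\mathbf{\Gal}^{\tame}_U=\{f_\alpha\colon P_\alpha\to U\}_{\alpha\in I}$ be the set of tame Galois covers of $U$; by (vi) each connected tame cover is dominated by some $P_\alpha$, and by (iv)+(vi) the set $I$ is directed. Choosing compatible geometric points $p_\alpha\in\Fib_{\bar u}(P_\alpha)$ via Proposition \ref{prop-determination-etale-maps} yields a cofiltered system and the strict pro-representability
\[
\Fib^{\tame}_{\bar u}(V)=\varinjlim_{\alpha\in I}\Hom_U(P_\alpha,V)
\]
for every tame cover $V\to U$, exactly as in \eqref{eqn:proRepresentability}. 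From this the standard formal argument identifies
\[
\pi_1^{\tame}(U,\bar u)=\Aut(\Fib^{\tame}_{\bar u})\cong\varprojlim_{\alpha\in I}\Aut_U(P_\alpha)^{\opp},
\]
which is profinite, and gives the equivalence with finite continuous $\pi_1^{\tame}(U,\bar u)$-sets in the usual way (open subgroups correspond to tame covers via $V\mapsto \Stab_{p_\alpha}$ for $V$ dominated by $P_\alpha$; open normal subgroups correspond to tame Galois covers).

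The main obstacle will be step (vi), together with the closely related step (iv): one must show that tameness is preserved under forming fibre products and Galois closures of tame covers. The delicate point is that at a codimension-one point $\eta$ of a compactification $X\supset U$, the local extensions corresponding to the factors need not have coprime ramification indices, so one cannot directly invoke Abhyankar's lemma. The cleanest route is to bypass compactifications entirely and use the Kerz--Schmidt criterion (Theorem \ref{thm:differentNotionsOfTameness}\ref{thm:tamenessB}): for any regular curve $\phi\colon C\to U$, the pullback $(V_1\times_U V_2)\times_U C=(V_1\times_U C)\times_C(V_2\times_U C)$ is a fibre product over $C$ of two covers of $C$ that are tame along $\overline C\setminus C$ by hypothesis, and tameness of such a fibre product on a Dedekind scheme follows from Corollary \ref{cor:structure-of-ramification-groups} (the wild inertia subgroups at each missing point are the unique $p$-Sylow of the inertia, which is trivial on both factors, hence trivial on the product). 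The Galois-closure step then follows by iterating.
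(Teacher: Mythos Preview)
Your approach is correct and is exactly what the paper indicates: the paper does not spell out a proof but says the theorem is proved ``with essentially the same method as used in Section \ref{sec:fundamentalGroup:basics}'', and in the proof of Proposition \ref{prop:basic-properties-tame-fund-group}\ref{prop:basic-properties-tame-fund-group-surj} it makes explicit that one restricts the index set to $I^{\tame}$, remarks that this is again directed (left as an exercise), and writes $\pi_1^{\tame}(U,\bar u)=\varprojlim_{\alpha\in I^{\tame}}\Aut_U(P_\alpha)^{\opp}$. Your write-up is simply a fleshed-out version of this exercise.

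One small caveat: you invoke Theorem \ref{thm:differentNotionsOfTameness} (the curve criterion) to verify the stability properties (i)--(vi), but that theorem is stated for \emph{regular} $U$, whereas the present statement is for \emph{normal} $U$. For the closure properties you actually need (fibre products, quotients by subgroups, Galois closure), it is cleaner to argue directly from Definition \ref{defn:tameness}\ref{defn:tamenessInGeneral}: at each codimension-one point $\eta$ of any normal compactification $X$ of $U$, tameness is a condition on the ramification index of a discrete valuation, and the classical facts from Section \ref{sec:extensions} (ramification indices multiply in towers, a subextension of a tame extension is tame, a compositum of tame extensions of a complete discretely valued field is tame) give (ii), (iv), (v), (vi) without passing through curves. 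This avoids the regularity hypothesis and is presumably what the cited reference does.
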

\begin{definition}\label{def:tameFundamentalGroup}
	The profinite group $\pi_1^{\tame}(U,\bar{u})$ from the theorem is called the
	\emph{tame fundamental group of $U$}\index{tame fundamental group}
	with respect to the base point $\bar{u}$. 
\end{definition}
It is not difficult to check the following basic properties from the
definitions.
\begin{proposition}\label{prop:basic-properties-tame-fund-group}
	Let $k$ be a field and $U$, $\bar{u}$ as in Theorem
	\ref{thm:tameFundamentalGroup}. Then 
	\begin{enumerate}
		\item\label{prop:basic-properties-tame-fund-group-surj} There is a canonical continuous surjective map
			\begin{equation}\label{eq:tameQuotient}\pi_1^{\et}(U,\bar{u})\twoheadrightarrow
				\pi_1^{\tame}(U,\bar{u}).
			\end{equation}
			corresponding to the inclusion $\FEt^{\tame}_U\subset
			\FEt_{U}$.
		\item The kernel of \eqref{eq:tameQuotient} can be described
			as follows: If $X$ is a normal compactification of $U$
			and $\eta\in X\setminus U$ a codimension $1$ point,
			denote by $K^{\sh}_\eta$ the fraction field of the strict
			henselisation of the discrete valuation ring $\mathcal{O}_{X,\eta}$.
			Fix an algebraic closure $\overline{K^{\sh}_\eta}$. Then there is a
			morphism $G:=\Gal(\overline{K^{\sh}_\eta}/K^{\sh}_\eta)\rightarrow
			\pi^{\et}_1(U,\bar{u})$, canonical up to inner automorphism of
			$\pi^{\et}_1(U,\bar{u})$ by Proposition \ref{prop:basePoint} and
			Proposition \ref{prop:functorialityOfFundamentalgroup}. Denote by
			$P_\eta$
			the image of the wild ramification group of $G$.
			The kernel of \eqref{eq:tameQuotient} is the smallest closed normal
			subgroup $\mathbf{P}$ of $\pi^{\et}_1(U,\bar{u})$ containing $P_\eta$, where $\eta$ runs
			through all codimension $1$ points on all normal compactifications $X$
			of $U$.
		\item If $\Char{k}=p\geq 0$, then the map
			\eqref{eq:tameQuotient} induces an isomorphism
			\[\pi_1^{\et}(U,\bar{u})^{(p')}\xrightarrow{\cong}\pi_1^{\tame}(U,\bar{u})^{(p')}\]
			where $( - )^{(p')}$ denotes the maximal
			pro-prime-to-$p$-quotient.
	\end{enumerate}
\end{proposition}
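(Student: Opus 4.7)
The three statements are related: (a) gives the surjection, (b) identifies its kernel, and (c) is a formal consequence of (b) together with the fact that each $P_\eta$ is a pro-$p$-group. My plan is to treat them in order, using Theorem~\ref{thm:tameFundamentalGroup} to pass freely between subgroups of the fundamental groups and Galois quotients corresponding to (tame) coverings.

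For (a), I would argue that since $\FEt_U^{\tame}$ is a full subcategory of $\FEt_U$, restriction of fiber functors gives a continuous homomorphism $\pi_1^{\et}(U,\bar u)\to\pi_1^{\tame}(U,\bar u)$ by Proposition~\ref{prop:functorialityOfFundamentalgroup}-style functoriality. Surjectivity is then easiest to check at the level of finite quotients: every open normal subgroup of $\pi_1^{\tame}(U,\bar u)$ corresponds to a Galois tame covering $V\to U$, which is in particular a Galois \'etale covering, whence corresponds to an open normal subgroup of $\pi_1^{\et}(U,\bar u)$ with the same quotient. Passing to the inverse limit gives surjectivity.

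For (b), let $\mathbf{P}'$ denote the kernel of the surjection from (a); I plan to prove $\mathbf{P}=\mathbf{P}'$ by showing each inclusion. For $\mathbf{P}\subset\mathbf{P}'$, fix a tame Galois covering $V\to U$ corresponding to an open normal $N_V\lhd \pi_1^{\et}(U,\bar u)$. For any codimension $1$ point $\eta$ of a normal compactification $X$, base change along $\Spec K_\eta^{\sh}\to U$ produces a finite \'etale $K_\eta^{\sh}$-algebra which is tame by Definition~\ref{defn:tameness}\ref{def:tameness-wrt-emb}; hence the wild inertia acts trivially on the fiber, i.e.\ $P_\eta\subset N_V$. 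Taking the intersection over all tame Galois $V$ yields $P_\eta\subset\mathbf{P}'$ for each $\eta$, and since $\mathbf{P}'$ is closed and normal, the closed normal subgroup $\mathbf{P}$ generated by the $P_\eta$ is contained in $\mathbf{P}'$. Conversely, for $\mathbf{P}'\subset\mathbf{P}$, it suffices to show that every open normal $N\lhd\pi_1^{\et}(U,\bar u)$ containing $\mathbf{P}$ also contains $\mathbf{P}'$; equivalently, that the Galois \'etale covering $V_N\to U$ defined by $N$ is tame. But for each codimension $1$ point $\eta$, $P_\eta\subset\mathbf{P}\subset N$, so the wild inertia acts trivially on the fiber of $V_N$; this is precisely the tameness of $V_N$ at $\eta$. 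Then $\mathbf{P}'$ lies in the intersection of all such $N$, which equals $\mathbf{P}$ because in a profinite group every closed subgroup is the intersection of the open normal subgroups containing it.

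For (c), I use that each $P_\eta$ is a pro-$p$-group (as the wild inertia of a complete discretely valued field of residue characteristic $p$, cf.\ Corollary~\ref{cor:structure-of-ramification-groups}). The surjection from (a) induces a surjection $\pi_1^{\et}(U,\bar u)^{(p')}\twoheadrightarrow\pi_1^{\tame}(U,\bar u)^{(p')}$. Conversely, the composition $\pi_1^{\et}(U,\bar u)\twoheadrightarrow\pi_1^{\et}(U,\bar u)^{(p')}$ kills every pro-$p$ subgroup, hence kills each $P_\eta$, hence kills the closed normal subgroup $\mathbf{P}$ they generate; by (b), this map factors through $\pi_1^{\tame}(U,\bar u)$ and then through $\pi_1^{\tame}(U,\bar u)^{(p')}$. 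The two resulting maps are mutually inverse, giving the required isomorphism.

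\textbf{Main obstacle.} The only nontrivial point is the reformulation of tameness used in (b): that a finite \'etale covering of $U$ is tame in the sense of Definition~\ref{defn:tameness} if and only if, at every codimension $1$ point $\eta$ of every normal compactification, the wild inertia $P_\eta$ acts trivially on the geometric fiber. This uses that $K_\eta^{\sh}$ has separably closed residue field, so that the Galois group of its separable closure coincides with its inertia group and decomposes as an extension of its tame quotient by the pro-$p$ group $P_\eta$; the reformulation then just says that the covering becomes tame after base change to $\Spec K_\eta^{\sh}$, which is the defining condition of tameness along $\eta$. Once this dictionary is in place, (b) and (c) are formal.
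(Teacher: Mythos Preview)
Your proposal is correct and follows essentially the same route as the paper. The paper phrases part (b) as showing that the full subcategories $\Set(\pi_1^{\et}(U,\bar u)/\mathbf{P})$ and $\Set(\pi_1^{\tame}(U,\bar u))$ of $\Set(\pi_1^{\et}(U,\bar u))$ coincide, which unwinds to exactly your two inclusions; and for (c) the paper says only ``this follows from the previous part, as the groups $P_\eta$ are pro-$p$-groups,'' which is your argument. One minor wording issue: in your last step of (b) you appeal to ``every closed subgroup is the intersection of the open normal subgroups containing it,'' which is only true for closed \emph{normal} subgroups---but $\mathbf{P}$ is normal by construction, so the argument goes through.
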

\begin{proof}
	We sketch the argument.
	\begin{enumerate}
		\item As before let $\{P_{\alpha}|\alpha\in I\}$ be the set of
			Galois coverings of $U$.  We saw above Example
			\ref{ex:fundamentalGroupOfAField} that fixing points $p_{\alpha}\in
			\Fib_{\bar{u}}(P_{\alpha})$ defines unique morphisms
			$\phi_{\beta,\alpha}:P_{\beta}\rightarrow P_{\alpha}$
			for $\beta\geq \alpha$, and that the projective system
			$(P_{\alpha},\phi_{\beta,\alpha})$ induces an
			isomorphism $\pi_1^{\et}(U,\bar{u})\cong
			\varprojlim_{\alpha\in I}\Aut_U(P_{\alpha})^{\opp}$.
			If $I^{\tame}$ denotes the subset of $I$ such that
			$\alpha\in I^{\tame}$ if and only if
			$P_{\alpha}\rightarrow U$ is tame, then $I^{\tame}$ is
			also a directed set (exercise!), and
			$\pi_1^{\tame}(U,\bar{u})=\varprojlim_{\alpha\in
				I^{\tame}}\Aut_U(P_{\alpha})^{\opp}$. The
				claim follows.
			\item Just for this proof, if $G$ is a profinite
				group, we write $\Set(G)$ for the category of
				finite sets with continuous $G$-action. The
				categories
				$\Set(\pi_1^{\et}(U,\bar{u})/\mathbf{P})$ and
				$\Set(\pi_1^{\tame}(U,\bar{u}))$ are 
				strictly full subcategories of
				$\Set(\pi_1^{\et}(U,\bar{u}))$, and our goal
				is to show that they are they are identical. 

				In other words, we need to show that
				if $f:V\rightarrow U$  is a finite connected \'etale
				covering, then $f$ is tame if and only if
				$\mathbf{P}$ acts trivially on
				$\Fib_{\bar{u}}(V)$.  In fact we may assume
				$f$ to be Galois \'etale.

				Let $K(U)$, $K(V)$ denote the function fields of
				$U$ and $V$. Note that
				$\Gal(K(V)/K(U))=\Aut_U(V)^{\opp}$ is
				(noncanonically) isomorphic to
				$\Fib_{\bar{u}}(V)$ in
				$\Set(\pi_1^{\et}(U,\bar{u}))$.
				If $\mathbf{P}$ acts trivially on
				$\Fib_{\bar{u}}(V)$, then the image of the
				group $P_\eta$
				 in $\Gal(K(V)/K(U))$ is trivial for all
				codimension $1$ points $\eta\in X\setminus U$ in all normal
				compactifications $X$ of $U$. But this means
				that $K(V)/K(U)$ is tamely ramified at $\eta$.
				
				Conversely, if $f$ is tame, then $K(V)/K(U)$
				is tame with respect to all codimension $1$
				points $\eta$ in $X\setminus U$ in some normal
				compactification $X$ of $U$. This means that
				the image of $P_\eta$ is trivial in
				$\Gal(K(V)/K(U))$. But
				$\Gal(K(V)/K(U))=\Aut_U(V)^{\opp}$ is a
				quotient of $\pi_1^{\et}(U,\bar{u})$, so the
				image of 
				$\mathbf{P}$ in $\Gal(K(V)/K(U))$ is trivial,
				and thus $\mathbf{P}$ acts trivially on
				$\Fib_{\bar{u}}(V)$.
			\item This follows from the previous part, as the
				groups $P_\eta$ are pro-$p$-groups.
		\end{enumerate}
\end{proof}

Using the fact that every curve can be lifted to characteristic $0$ together
with Corollary \ref{cor:fundamentalGroupOfComplexCurve},
Grothendieck proves the following structure theorem for curves.
\begin{theorem}[{\cite[Exp.~XIII, Cor.~2.12]{SGA1}}]
	If $k$ is an algebraically closed field of characteristic $p\geq 0$,
	$X$ a smooth projective curve of genus $g$ over $k$ and $U=X\setminus \{x_1,\ldots,
	x_n\}$, with $x_1,\ldots, x_n$ closed points, then for any geometric point $\bar{u}$ of $U$, the
	tame fundamental group
	$\pi_1^{\tame}(U,\bar{u})$
	is a quotient of the profinite completion of the group
	\begin{equation}\label{eq:surfaceGroup}\left<a_1,\ldots, a_g,b_1,\ldots,
		b_g,c_1,\ldots, c_n\left|
		[a_1,b_1]\cdot\ldots\cdot[a_g,b_g]c_1\cdot\ldots\cdot
		c_n=1\right.\right>.
	\end{equation}
		Moreover, the maximal prime-to-$p$-quotient of
		$\pi_1^{\tame}(U,\bar{u})$ is isomorphic to the
		pro-prime-to-$p$-completion of \eqref{eq:surfaceGroup}.
\end{theorem}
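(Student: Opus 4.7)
The plan is to lift the data $(X, x_1, \ldots, x_n)$ to characteristic zero and then invoke Grothendieck's specialization theorem for tame fundamental groups in order to reduce the statement to Corollary \ref{cor:fundamentalGroupOfComplexCurve}. If $\Char k = 0$ the statement is \emph{already} Corollary \ref{cor:fundamentalGroupOfComplexCurve} (after passing to $\C$ via invariance of the \'etale fundamental group under extensions of algebraically closed fields of characteristic zero), so it is enough to treat the case $\Char k = p > 0$.

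First I would construct a lift. Let $W = W(k)$ be the ring of Witt vectors, with fraction field $K$ of characteristic $0$, and let $\bar K$ be an algebraic closure. Since $X$ is a smooth proper curve, the obstruction to lifting $X$ to a smooth proper scheme over $W$ lies in $H^2(X, T_X)$, which vanishes for dimension reasons. Hence there exists a smooth proper $\mathcal{X} \to \Spec W$ whose special fiber is $X$. Each closed point $x_i$ is a smooth point and, after replacing $W$ by a finite \'etale extension if necessary, extends to a section $s_i : \Spec W \to \mathcal{X}$; one may arrange the $s_i$ to be pairwise disjoint. Setting $\mathcal{D} = \sum_i s_i$ and $\mathcal{U} = \mathcal{X} \setminus \mathcal{D}$, we obtain a smooth relative curve $\mathcal{U} \to \Spec W$ with special fiber $U$ and geometric generic fiber $U_{\bar K}$, and $\mathcal{D}$ is a relative Cartier divisor of strict normal crossings type over $W$.

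Next I would apply the specialization theorem of Grothendieck--Murre (SGA1, Exp.~XIII, Cor.~2.12): attached to a geometric point $\bar v$ of $U_{\bar K}$ specializing to $\bar u$, there is a canonical \emph{surjective} specialization homomorphism
\[\operatorname{sp} : \pi_1^{\tame}(U_{\bar K}, \bar v) \twoheadrightarrow \pi_1^{\tame}(U, \bar u),\]
and its restriction to the maximal pro-prime-to-$p$ quotients is an \emph{isomorphism}. Since $\bar K$ has characteristic $0$, every finite \'etale cover of $U_{\bar K}$ is tame, so $\pi_1^{\tame}(U_{\bar K}, \bar v) = \pi_1^{\et}(U_{\bar K}, \bar v)$. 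By the invariance of the \'etale fundamental group under extensions of algebraically closed fields of characteristic $0$, one may choose an embedding $\bar K \hookrightarrow \C$ and compare with the analytification; Corollary \ref{cor:fundamentalGroupOfComplexCurve} then identifies $\pi_1^{\et}(U_{\bar K}, \bar v)$ with the profinite completion of the surface group \eqref{eq:surfaceGroup}. Composing with $\operatorname{sp}$ gives the asserted quotient map, and the isomorphism statement on prime-to-$p$ parts follows from the second half of the specialization theorem together with Proposition \ref{prop:basic-properties-tame-fund-group}, \ref{prop:basic-properties-tame-fund-group-surj}.

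The hard part will be the specialization theorem itself: the surjectivity requires that every finite tame \'etale cover of $U$ lift to a tame \'etale cover of $\mathcal{U}$ (via Grothendieck's existence theorem in formal geometry, combined with Abhyankar's lemma to guarantee that tame ramification along $\mathcal{D}$ is preserved along the specialization), and the isomorphism on pro-prime-to-$p$ quotients requires controlling that no prime-to-$p$ covers are ``created" by specializing from $\bar K$ to $k$. Both are classical but substantial inputs; the lifting of the curve and the passage from $\bar K$ to $\C$ are by comparison routine, and the combinatorial identification of the fundamental group in characteristic $0$ has been handled in Corollary \ref{cor:fundamentalGroupOfComplexCurve}.
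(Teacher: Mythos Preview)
Your proposal is correct and follows exactly the strategy the paper indicates: the paper does not give a proof but simply cites \cite[Exp.~XIII, Cor.~2.12]{SGA1}, prefacing the statement with the remark that Grothendieck proves it ``using the fact that every curve can be lifted to characteristic $0$ together with Corollary~\ref{cor:fundamentalGroupOfComplexCurve}.'' Your outline---lift $(X,x_1,\ldots,x_n)$ over $W(k)$, apply the specialization theorem for the tame fundamental group, and invoke the complex comparison---is precisely this, and your identification of the specialization theorem as the substantive input is accurate. One small point: since $k$ is algebraically closed, $W(k)$ is strictly henselian, so the sections $s_i$ lifting the $x_i$ exist without any base extension.
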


%


\section{\'Etale sheaves}
\label{sec:l-adic-sheaves}
\setcounter{subsection}{1}\setcounter{subsubsection}{0}
In this section we recall the definition of an \'etale sheaf on a scheme and give some basic constructions. 
\begin{convention}\label{conv-noetherian}
Throughout this section schemes are assumed to be {\em separated} and {\em noetherian}.
\end{convention}

\subsubsection{{\'E}tale neighborhoods}\label{etale-neighborhoods}
Let $X$ be a scheme. Recall that a {\em geometric point}\index{geometric point} of $X$ is a morphism 
$\bar{x}=\Spec k\to X$, where $k$ is an algebraically closed field. If $x$ is the image of $\bar{x}\to X$,
we say that $\bar{x}$ is a geometric point over $x$ or that $x$ is the center of $\bar{x}\to X$.
By abuse of notation we also denote by $\bar{x}$ the morphism $\bar{x}\to X$. 
An {\em \'etale neighborhood}\index{etale@\'etale neighborhood} of $\bar{x}$ is a diagram
\[\begin{tikzcd}
      ~ & U\ar{d}{u}\\
      \bar{x}\ar{r}\ar{ur} & X,
    \end{tikzcd}\]
where $u$ is \'etale. A morphism between two \'etale neighborhoods of $\bar{x}$ is given by the obvious commutative 
diagram. Note that the opposite category of \'etale neighborhoods of $\bar{x}$ is filtered.
(Indeed: If $U_*=(\bar{x}\to U\to X)$ and $V_*=(\bar{x}\to V\to X)$ are two \'etale neighborhoods, their cartesian product 
$\bar{x}\to U\times_X V\to X$ is another \'etale neighborhood mapping to $U_*$ and $V_*$. If 
$f,g: U_*\to V_*$ are two morphisms of \'etale neighborhoods,
then the connected component $U_0$ of $U$ containing the center of $\bar{x}$
naturally defines an \'etale neighborhood $U_{0*}$ of $\bar{x}$ with a morphism $j: U_{0,*}\to U_*$, 
and by Proposition \ref{prop-determination-etale-maps} we have $f\circ j=g\circ j$.)

\subsubsection{{\'E}tale sheaves}\label{etale-sheaves}
Let $X$ be a scheme. We denote by $(\et/X)$ the category with objects the \'etale $X$-schemes and morphisms 
the $X$-morphisms (these are automatically \'etale by Proposition \ref{prop-etale}). Let $A$ be a ring.
Then an {\em \'etale presheaf} \index{etale@\'etale presheaf} of $A$-modules on $X$ is a functor
\[\sF: (\et/X)^{\rm op}\to (A\text{-mod}).\]
In case $A=\Z$, we will simply say that $\sF$ is an \'etale presheaf on $X$ or is a presheaf on $X_{\et}$.
Let $\bar{x}\to X$ be a geometric point. The stalk of a presheaf of $A$-modules $\sF$ on $X_{\et}$ at $\bar{x}$ 
is given by 
\[\sF_{\bar{x}}:=\varinjlim_{\bar{x}\to U} \sF(U),\]
where the limit is over the \'etale neighborhoods of $\bar{x}$. (One can take just one neighborhood 
for each isomorphism class to avoid set theoretical problems.)

We say $\sF$ is an {\em \'etale sheaf}\index{etale@\'etale sheaf} of
$A$-modules on $X$ or a \emph{sheaf of $A$-modules} on $X_{\et}$,
if it is a presheaf of $A$-modules on $X_{\et}$ and for  any \'etale map $U\to X$ and all 
families  $\{u_i: U_i\to U\,|\,i\in I\}$ of \'etale maps with $\bigcup_i u_i(U_i)=U$ the sequence
\eq{sheaf-cond}{0\to \sF(U)\to \prod_i \sF(U_i)\rightrightarrows \prod_{i,j}\sF(U_i\times_U U_j)}
is exact.
Recall that the exactness of this sequence means that (1) an element $s\in \sF(U)$ is zero iff $s_{|U_i}=0$ for all $i$
and (2) for a collection of elements $s_i\in \sF(U_i)$, $i\in I$, there exists an element $s\in \sF(U)$ with $s_{|U_i}=s_i$ iff
    ${s_i}_{|U_{i}\times_U U_j}={s_j}_{|U_{i}\times_U U_j}$ for all $i,j\in I$. 
(In particular, if  $\{U_i\to U\}$ is a Zariski open cover this is just the usual sheaf condition.)
A morphism between (pre-)sheaves of $A$-modules on $X_{\et}$ is a natural transformation of functors. 

If $P$ is a presheaf of $A$-modules on $X_{\et}$ then there exits a sheaf of $A$-modules $aP$ on $X_{\et}$ and
a morphism $P\to aP$ such that any morphism $P\to \sF$ with $\sF$ a sheaf on $X_{\rm et}$ factors uniquely as
$P\to aP\to \sF$. The sheaf $aP$ is called the \emph{sheaf associated to $P$}
or \emph{the sheafification of $P$} and it satisfies
$(aP)_{\bar{x}}=P_{\bar{x}}$ for any geometric point $\bar{x}\to X$.
The category of presheaves of $A$-modules on $X_{\et}$ is abelian. Hence the category of sheaves of $A$-modules on 
$X_{\et}$ is also abelian. (The cokernel of a morphism of sheaves is given by the sheafification of the cokernel 
in the bigger category of presheaves.) A sequence of sheaves of $A$-modules 
$0\to \sF'\to \sF\to \sF''\to 0$ is exact if and only if the sequence of $A$-modules 
$0\to \sF'_{\bar{x}}\to \sF_{\bar{x}}\to \sF''_{\bar{x}}\to 0$
is exact for all geometric points $\bar{x}\to X$. 

\begin{remark}\label{rmk:sheaf-criterion}
Let $\sF$ be a presheaf on $X_{\et}$. Then $\sF$ is a sheaf iff
the sequence \eqref{sheaf-cond} is exact in the following two situations:
(1) $U\to X$ is an \'etale map and  $U=\bigcup_i U_i$ is a Zariski open cover 
(i.e. the family $\{u_i\}$ is given by the open immersions $\{U_i\inj U\}$)
  and (2) $U\to X$ is an \'etale map with $U$ affine and the family $\{u_i\}$ consists just of one
   affine, surjective and \'etale map  $U'\to U$, see e.g. \cite[II, Prop. 1.5.]{Milne/EtaleCohomologyBook}. 
\end{remark}

\begin{example}\label{ex-etale-sheaf}
Let $X$ be a scheme.
\begin{enumerate}
\item\label{constant-sheaf} 
    Let $M$ be an $A$-module. For $U\to X$ an \'etale map, denote by $\pi_0(U)$ the set of connected components
of $U$ (it is finite by Convention \ref{conv-noetherian}). 
Then $U\mapsto M^{\pi_0(U)}$ defines a sheaf of $A$-modules on $X_{\et}$.
It is called the {\em constant \'etale sheaf associated to $M$} \index{constant sheaf} and denoted by
$M_X$ or simply by $M$ again.
\item\label{ex-etale-sheaf-b} The assignment $U\mapsto \Gamma(U, \sO_U)$ defines a sheaf of $\Gamma(X, \sO_X)$-modules on $X_{\et}$.
        (Indeed, by Remark \ref{rmk:sheaf-criterion}  it suffices to show that for a faithfully flat morphism of rings
        $\varphi :A\to B$, the sequence $0\to A\xr{\varphi} B\xr{d} B\otimes_A B$ is exact, 
       where $d(b)=1\otimes b- b\otimes 1$. Since $\varphi$ is faithfully flat
       this is equivalent to proving that the sequence
         is exact when tensored with $B$. Thus we have to show that 
                  $0\to B\xr{d_0} B^{\otimes_A 2}\xr{d_1} B^{\otimes_A 3}$ is exact, where
         $d_0(b)=1\otimes b$ and $d_1(b_1\otimes b_2)=b_1\otimes 1\otimes b_2- 1\otimes b_1\otimes b_2$.
          Define $s_0: B^{\otimes 2}\to B$ by $s_0(b_1\otimes b_2)=b_1b_2$ and 
          $s_1: B^{\otimes 3}\to B^{\otimes 2}$, $s_1(b_1\otimes b_2\otimes b_3)=b_1\otimes b_2b_3$.
          Then $\id_B=s_0\circ d_0$ and $\id_{B^{\otimes 2}}= d_0\circ s_0 + s_1\circ d_1$,
          hence the exactness.)
          The underlying sheaf of abelian groups on $X_{\et}$ is denoted by $\G_{a,X}$ \index{Ga@$\G_a$}
           or simply $\G_a$ if it is clear that we view it as a sheaf on $X_{\et}$.
\item\label{ex-etale-sheaf-c} The assignment $U\mapsto \Gamma(U,\sO_U)^\times$ defines a sheaf on $X_{\et}$.
         This follows directly from b) above. We denote this sheaf by
	 $\G_{m,X}$ \index{Gm@$\G_m$} or simply $\G_m$. 
\item\label{ex-etale-sheaf-d}
          Let $n\ge 1$ be a natural number. We define the sheaf $\mu_n$ on $X_{\et}$ as the kernel of 
        multiplication by $n$ on $\G_m$, i.e. we have an exact sequence
       \[0\to \mu_n\to \G_m\xrightarrow{\cdot n}\G_m.\]
         Explicitly $\mu_n(U)=\{a\in \Gamma(U,\sO_U)\,|\, a^n=1\}$. 
         If $R$ is a ring we also write by abuse of notation $\mu_n(R):=\{a\in R\,|\, a^n=1\}$. We elaborate a bit.
           \begin{enumerate}[label=(\roman*)]
              \item\label{Kummer-i} Assume $n$ is invertible in $\sO_X$ then
                      \[0\to \mu_n\to \G_m\xrightarrow{\cdot n}\G_m\to 0\]
                      is an exact sequence of sheaves on $X_{\et}$. 
                      Only the surjectivity has to be proved and this follows from the fact that for all $U$ and all $a\in \G_m(U)$
               the map $\Spec \sO_U[t]/(t^n-a)\to U$ is \'etale.
             \item\label{Kummer-ii} 
                    Let $R\neq 0$ be a ring in which $n$ is invertible and in which the polynomial $t^n-1$ decomposes, i.e. 
                           \eq{Kummer-ii1}{t^n-1=(t-\zeta_1)\cdots (t-\zeta_n)\quad \text{in }R[t].}
                 Then $\{\zeta_1,\ldots, \zeta_n\}$ is a cyclic subgroup of order $n$ of $R^\times$. 
                  Indeed, let $K$ be a field and $\varphi: R\to K$ a morphism. Then $\varphi$ induces
                  a morphism $\{\zeta_1,\ldots, \zeta_n\}\to \mu_n(K)$. This morphism is injective.
            (Else differentiating \eqref{Kummer-ii1} would give $n\varphi(\zeta_i)^{n-1}=0$ for some $i$,
             which is absurd since $n, \varphi(\zeta_i)\in K^\times$.) Thus this morphism is bijective and we know that $\mu_n(K)$
              is cyclic of order $n$.
          
                Furthermore, any morphism  $R\to A$ into a {\em local} ring $A$ induces a bijection 
                       $\{\zeta_1,\ldots, \zeta_n\}=\mu_n(A)$.
              Indeed, if $A$ is a local $R$-algebra any $a\in A$ with $a^n=1$ is of the form $a=\zeta_i+x$ for some $i$ and $x\in \fm_A$
              the maximal ideal of $A$. We obtain 
                $1= (\zeta_i+x)^n= 1+ xu$ with $u\in A^\times$, i.e. $x=0$. This proves the bijection. 

            \item\label{Kummer-iii} Assume $R$ is as in \ref{Kummer-ii} above and $X$ is an $R$-scheme. 
                      Then the choice of a cyclic generator $\zeta\in R$ of $\{\zeta_1,\ldots, \zeta_n\}$
                     defines an isomorphism 
                    \[\mu_n\cong (\Z/n\Z)_X \quad \text{on }X_{\et}.\]
                          Explicitly for all \'etale maps $U\to X$ we have an isomorphism
                   \[(\Z/n\Z)^{\pi_0(U)}\xrightarrow{\simeq}\mu_n(U),\quad 
                                     (\bar{m_i})_{i\in \pi_0(U)}\mapsto (\zeta^{m_i})_{i\in \pi_0(U)}.\]
                    It suffices to prove this Zariski (or even \'etale) locally and hence it follows from \ref{Kummer-ii} above.
           \end{enumerate}
              Notice that the statements   \ref{Kummer-i}- \ref{Kummer-iii} are wrong if $n$ is not invertible on $X$.
  \item Let $G$ be a commutative group scheme, i.e. a commutative group object in the category of schemes. 
           Then the assignment $U\mapsto G(U):=\Hom(U, G)$ defines a sheaf of abelian groups on $X_{\et}$, 
           see e.g. \cite[VIII, Thm.~5.2]{SGA1}. In fact, it is not hard to see that 
             the Examples \ref{ex-etale-sheaf-b}-\ref{ex-etale-sheaf-d} are of this form.
\end{enumerate}
\end{example}

\subsubsection{Direct and inverse image}\label{direct-inverse-image}
Let $\pi:X\to Y$ be a morphism of schemes and $A$ a ring.  If $\sF$ is a sheaf of $A$-modules on $X_{\et}$, 
then we obtain
 a sheaf of $A$-modules $\pi_*\sF$ on $Y_{\et}$ via
\[\pi_*\sF(V)= \sF(V\times_Y X),\quad V\to Y\text{ \'etale}.\]
We call this sheaf the {\em direct image of $\sF$ under $\pi$}\index{direct image}. We get a 
left exact functor from the category of $A$-modules on $X_{\et}$ to the category of $A$-modules on $Y_{\et}$, 
$\sF\mapsto \pi_*(\sF)$. This functor has a left adjoint denoted by $\pi^*$, i.e. for each $\sF$ on $X_{\et}$ and 
$\sG$ on $Y_{\et}$ there is an isomorphism of abelian groups
\[\Hom(\sG,\pi_*\sF)\cong \Hom(\pi^*\sG, \sF),\]
which is functorial in $\sF$ and $\sG$. We call $\pi^*\sG$ the 
{\em inverse image of $\sG$ under $\pi$}\index{inverse image}.
If $\bar{x}$ is a geometric point on $X$ and we denote $\pi(\bar{x})$
the geometric point of $Y$ given by the composition $\bar{x}\to X\to Y$, then we have
\eq{inverse-image1}{(\pi^*\sG)_{\bar{x}}=\sG_{\pi(\bar{x})}.}
It follows that $\pi^*$ is an exact functor.
By abuse of notation we also write $\sG_{|X}:=\pi^* \sG$. In case $u:U\to Y$ is \'etale $u^*\sG$ coincides
with the restriction of the functor $\sG$ to the category $(\et/U)$.  
If $\pi: X\to Y$ is any morphism and $u: U\to Y$ is \'etale and we denote by $\pi_1: X\times_Y U\to U$ and 
$u_1:X\times_Y U\to X$ the projection maps then we clearly have
\eq{etale-base-change}{u^*\pi_*\sF=\pi_{1*}u_1^*\sF.}

\begin{example}\label{ex-pb-pf-constant}
\begin{enumerate}
\item Let $\pi: X\to Y$ be a morphism of schemes and $M$ an $A$-module. Then 
       (with the notation from Example \ref{ex-etale-sheaf}, \ref{constant-sheaf})
             \[\pi^*(M_Y)=M_X.\]
          Indeed, for an \'etale map $V\to Y$ there is a natural morphism
	  \[M_Y(V)=M^{\pi_0(V)}\to M^{\pi_0(V\times_Y X)}=\pi_*(M_X)(V).\] 
           (If $V$ is connected it is the diagonal.) This induces a morphism of sheaves 
             $M_Y\to \pi_*(M_X)$ on $Y_{\et}$, by adjunction also a morphism
             $\pi^*(M_Y)\to M_X$. It suffices to check that this latter map is an isomorphism on the stalks at the geometric
 points of $X$. This follows directly from \eqref{inverse-image1}.
\item In general $\pi_*(M_X)$ is not isomorphic to $M_Y$. For example, if $i: X\inj Y$ is a closed immersion 
         then $i_*(M_X)$ is zero in the stalks at geometric points whose center lies in $Y\setminus X$.
\end{enumerate}
\end{example}

\subsubsection{Action of a finite group}\label{finite-group-action}
Let $G$ be a finite group acting on a scheme $X$, i.e. we are given a group homomorphism 
$G\to \Aut(X)$. Assume $G$ acts admissibly\index{admissible action} on $X$, i.e. $X$ is a union of open affines $U=\Spec A$ such that
the action of $G$ restricts to an action on $U$. (This is e.g. the case if $X$ is quasi-projective over an affine scheme).
Then we can form the quotient $\pi: X\to X/G$, where $X/G$ is defined by glueing the schemes $U/G:=\Spec A^G$, for
$U$ as above. We have $\Hom(X,Y)^G=\Hom(X/G,Y)$ for all schemes $Y$, where $G$ acts on $\Hom(X,Y)$ via 
precomposition. In general, $X/G$ might not be noetherian, which violates our 
Convention \ref{conv-noetherian}.  Therefore, by convention we will only consider finite group actions on $X$ for which the
quotient $X/G$ is actually noetherian. 
This is e.g. the case if $X$ is of finite type over some (noetherian) base $S$ and  $G$ acts via $S$-automorphisms.
See \cite[Exp.V, 1]{SGA1}.

An \'etale sheaf of $A$-modules on $(X,G)$ \index{sheaf with $G$-action}
is an \'etale sheaf of $A$-modules $\sF$ on $X$ together with morphisms 
\[\sF(\sigma): \sF\to \sigma^*\sF,\quad \sigma\in G,\]
such that $\sF(1_{G})=\id_{\sF}$ and $\sF(\tau\sigma)=\tau^*(\sF(\sigma))\circ\sF(\tau)$.
In particular, the $\sF(\sigma)$'s are isomorphisms.
Sometimes we just say $\sF$ is a sheaf with $G$-action on $X$ and it is understood that
this action is compatible with the given action on $X$.
For any geometric point $\bar{x}\to X$ we obtain isomorphisms 
$\sF_{\bar{x}}\xr{\cong} \sF_{\sigma(\bar{x})}$, where $\sigma(\bar{x})$ denotes the composition 
$\bar{x}\to X\xr{\sigma}X$. Also notice that we have natural isomorphisms
 $\sigma_*\sigma^*\sF\cong \sF\cong \sigma^*\sigma_*\sF$ induced by adjunction;
hence to give a map $\sF(\sigma)$ as above is equivalent to give a map $\sigma_*\sF\to \sF$.

Let $\pi:X\to X/G$ be as above. The action of $G$ on $\sF$ induces maps $\sigma_*\sF\to \sF$ and  applying
$\pi_*$ we obtain a map $\pi_*\sF=\pi_*\sigma_*\sF\to\pi_*\sF$. Thus $G$ acts on $\pi_*\sF$
(where we equip $X/G$ with the trivial $G$-action). We can therefore form the sheaf
of $G$-invariant elements $(\pi_*\sF)^G$ whose section on $U\to X/G$ (\'etale map) are given by
\[(\pi_*\sF)^G(U)=\{a\in \sF(U\times_{X/G} X)\,|\, \sigma\cdot a=a \text{ for all }\sigma\in G\}, \]
where we abbreviate the notation $\sigma\cdot a:= \sF(\sigma)(a)$.

\subsubsection{Extension by zero}\label{extension-by-zero}
Let $j:U\inj X$ be an open immersion and denote by $i:Z\inj X$ the closed
immersion of the complement of $U$ (equipped with some scheme structure).
Let $\sF$ be a sheaf of $A$-modules on $U$. Then we define the {\em extension by zero}\index{extension by zero}
of $\sF$ as
\[j_!\sF:={\rm Ker}(j_*\sF\to i_*i^*j_*\sF),\]
where $j_*\sF\to i_*i^*j_*\sF$ is the adjunction map for $(i_*, i^*)$.
For a point $x\in X$ and $\bar{x}$ a geometric point over $x$ we have
\[(j_!\sF)_{\bar{x}}=\begin{cases} \sF_{\bar{x}}, &\text{if } x\in U, \\ 0,& \text{if }x\in Z.\end{cases}\]
It follows that $j_!$ is an exact functor from the category $A$-modules on $U_{\et}$ to the category of
$A$-modules on $X_{\et}$. It is left adjoint to $j^*$ i.e. there is a functorial isomorphism
\[\Hom(j_!\sG, \sF)\cong \Hom(\sG, j^*\sF),\]
where $\sG$ is a sheaf on $U_{\et}$ and $\sF$ a sheaf on $X_{\et}$.

\begin{definition}\label{defn:local-constant}
Let $X$ be a scheme, $A$ a noetherian ring and $M$ an $A$-module. 
Then we call an \'etale sheaf $\sF$ of $A$-modules on $X$ 
{\em locally constant with stalk}\index{locally constant sheaf} $M$ 
if there exists a family $\{u_i:U_i\to X\}$ of \'etale maps  with $\bigcup_i u_i(U_i)=X$ such that $\sF_{|U_i}\cong M_{U_i}$
for all $i$.
\end{definition}

\begin{proposition}\label{prop:loc-constant-sheaf-rep}
In the situation of Definition \ref{defn:local-constant} assume $M$ is finite (as a set). Then 
$\sF$ is represented by a finite \'etale group scheme $X_{\sF}$ over $X$, i.e. there is a finite \'etale morphism $X_{\sF}\to X$ such that
\[\sF\cong\Hom_X(-, X_{\sF}) \quad\text{on } (\et/X).\]
Assume $X$ is connected, then it follows that there exists a connected finite \'etale Galois cover $P\to X$ which trivializes
$\sF$, i.e. $\sF_{|P}\cong M_P$.
\end{proposition}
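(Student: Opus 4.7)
The plan is to construct $X_\sF$ by \'etale descent and then use the Galois closure machinery of Section~\ref{sec:fundamentalGroup} to trivialize it. First I would fix an \'etale cover $\{u_i: U_i \to X\}_{i \in I}$ witnessing local constancy, so that $\sF|_{U_i} \cong M_{U_i}$ for each $i$. Since $M$ is a finite set, the constant sheaf $M_{U_i}$ on $(\et/U_i)$ is represented by the finite \'etale $U_i$-scheme $V_i := \coprod_{m \in M} U_i$. On the double overlaps $U_{ij} := U_i \times_X U_j$, the two given local trivializations of $\sF$ produce canonical isomorphisms $\varphi_{ij}: V_j|_{U_{ij}} \xrightarrow{\cong} V_i|_{U_{ij}}$, and the cocycle condition on triple overlaps is automatic since it already holds at the level of sheaves.

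Next I would invoke effective descent for finite (equivalently affine) \'etale morphisms along the \'etale covering $\coprod_i U_i \to X$, a standard consequence of fpqc descent, to glue the pairs $(V_i, \varphi_{ij})$ into a finite \'etale morphism $X_\sF \to X$. To verify that $X_\sF$ represents $\sF$, I would observe that $\Hom_X(-, X_\sF)$ is automatically a sheaf on $X_\et$ (representable functors satisfy \'etale descent), and that its restriction to each $U_i$ agrees with $\Hom_{U_i}(-, V_i) \cong M_{U_i} \cong \sF|_{U_i}$ compatibly with the $\varphi_{ij}$; hence the two sheaves are isomorphic. The $A$-module structure on $\sF$ (addition and the scalar multiplications $[a]: \sF \to \sF$ for $a \in A$) transports via Yoneda into $X$-morphisms $X_\sF \times_X X_\sF \to X_\sF$ and $X_\sF \to X_\sF$, endowing $X_\sF$ with the structure of a finite \'etale commutative $X$-group scheme equipped with an $A$-action.

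For the final statement, assume $X$ is connected with geometric point $\bar{x}$. The finite \'etale cover $X_\sF \to X$ corresponds via the equivalence of Section~\ref{sec:fundamentalGroup:basics} to the finite continuous $\pi_1^\et(X,\bar{x})$-set $\Fib_{\bar{x}}(X_\sF)$ of cardinality $|M|$. Applying Proposition~\ref{prop:galoisclosure} to each connected component of $X_\sF$ and passing to a common Galois closure, I obtain a connected finite \'etale Galois cover $P \to X$ such that $X_\sF \times_X P$ is trivial as a cover of $P$, i.e.\ a disjoint union of copies of $P$. Since $P$ is connected, a choice of geometric point $\bar{p}$ of $P$ over $\bar{x}$ identifies the fiber with $M$, and this identification propagates globally, yielding $X_\sF \times_X P \cong M \times P$ as $P$-schemes. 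The \'etale-local compatibility of $A$-module structures then upgrades this to $\sF|_P \cong M_P$ as sheaves of $A$-modules on $P_\et$.

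The main technical input is the effective descent statement for finite \'etale morphisms along an \'etale cover; granting this, the rest is routine manipulation with Yoneda's lemma and the dictionary between finite \'etale covers of a connected scheme and finite continuous $\pi_1^\et$-sets.
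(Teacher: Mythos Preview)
Your proposal is correct and follows essentially the same approach as the paper: construct $X_\sF$ by descending the local representing schemes $U_i \times M$ along the \'etale cover $\coprod_i U_i \to X$ (the paper cites Theorem~\ref{thm:descent} for this), and then obtain the trivializing Galois cover $P$ by taking a common Galois closure dominating all connected components of $X_\sF$ (the paper phrases this via the discussion below Example~\ref{ex:fundamentalGroupOfAField}, which is exactly your use of Proposition~\ref{prop:galoisclosure}). The only point you leave implicit is that the covering family can be taken finite---needed for quasi-compactness in the descent theorem---but this is immediate from the standing noetherian hypothesis.
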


Before we prove the proposition we recall the following statement from descent theory:

\begin{theorem}[{\cite[VIII, Thm.~2.1,  Cor.~5.7]{SGA1}} and {\cite[Cor.~(17.7.3)]{EGA4}}]\label{thm:descent}
Let $f:U\to X$ be faithfully flat and quasi-compact. Let $V\to U$ be an affine morphism such that
there exists an isomorphism $\gamma: p_1^*V\xr{\simeq} p_2^* V$ satisfying
\[p_{13}^*(\gamma)\circ p_{12}^*(\gamma)= p_{23}^*(\gamma),\]
where $p_i: U\times_X U\to U$ and $p_{ij}: U\times_X U \times_X U\to U\times_X U$ denote 
 the projection on the respective factors, $p_i^*V$ denotes the pullback of  $V$ along $p_i$ and 
$p_{ij}^*(\gamma)$ denotes the pullback of $\gamma$ along $p_{ij}$.

Then there exists an affine morphism $Y\to X$ such that $V=Y\times_X U$. 
(One says  $V/U$ {\em descends} to $Y/X$.) Furthermore if $V\to U$ is finite or \'etale, then so is $Y\to X$.
\end{theorem}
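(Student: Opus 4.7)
The plan is to carry out classical faithfully flat descent, reducing the geometric statement to a module-theoretic one. First I would localize on $X$: since the conclusion is local on $X$ and all data (the morphism $V\to U$, the descent isomorphism $\gamma$, the cocycle condition) restrict to open subsets $X'\subset X$ with $U'=U\times_X X'$, I can assume $X=\Spec A$ is affine. Then, exploiting that $U\to X$ is quasi-compact and $V\to U$ is affine, I would replace $U$ by a finite disjoint union of affine opens mapping to $U$ whose union surjects onto $X$; this preserves faithful flatness and affineness of the pulled-back morphism and of $V$ over the new base. So the situation reduces to: $A\to B$ is a faithfully flat ring map, $V=\Spec C$ with $C$ a $B$-algebra, and $\gamma$ corresponds to an isomorphism of $(B\otimes_A B)$-algebras $\gamma^\sharp\colon C\otimes_A B\xrightarrow{\simeq} B\otimes_A C$ satisfying the cocycle identity over $B\otimes_A B\otimes_A B$. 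I want to produce an $A$-algebra $D$ with $D\otimes_A B\cong C$ compatible with $\gamma^\sharp$, and then set $Y=\Spec D$.

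The candidate is the equalizer
\[
D\;:=\;\bigl\{\,c\in C\;\bigm|\; 1\otimes c\;=\;\gamma^\sharp(c\otimes 1)\,\bigr\}\;\subset\; C,
\]
which is visibly an $A$-subalgebra. The heart of the proof — and the step I expect to be the main obstacle — is showing that the natural map $D\otimes_A B\to C$ is an isomorphism. The standard tool is the Amitsur (\v{C}ech) complex: for any $B$-module $N$, the augmented complex
\[
0\to N\to N\otimes_A B\to N\otimes_A B\otimes_A B\to\cdots
\]
(with differentials the alternating sums of insertions of $1$) becomes split exact after tensoring over $A$ with $B$; the contracting homotopy is the multiplication map $s_0(b_1\otimes b_2)=b_1b_2$ and its higher analogues, exactly as in Example \ref{ex-etale-sheaf}\,b). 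Since $A\to B$ is faithfully flat, split exactness after $-\otimes_A B$ forces exactness of the original sequence. Applying this with $N$ built from $C$ and using the cocycle condition to identify the two arrows $C\rightrightarrows C\otimes_A B$ as the pair appearing in the definition of $D$, one gets both that $D\otimes_A B\to C$ is an isomorphism and that the resulting identification is compatible with $\gamma^\sharp$ on the level of $B\otimes_A B$.

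With the affine, local statement in hand, I would globalize by noting that the construction of $D$ is functorial and commutes with localization on $X=\Spec A$, so the locally defined descended algebras glue to a quasi-coherent $\sO_X$-algebra $\sA$ on the original $X$; then $Y:=\bSpec_X\sA$ is affine over $X$ and satisfies $Y\times_X U\cong V$ compatibly with $\gamma$. Finally, the supplementary statement about finiteness and \'etaleness is an instance of fpqc descent of morphism properties: an $\sO_X$-algebra is of finite type (respectively finite, flat, unramified, \'etale) if and only if its pullback along the fpqc morphism $U\to X$ has the corresponding property over $U$. For finiteness and flatness this is standard fpqc descent of module properties, and for unramifiedness one uses that $\Omega^1_{Y/X}\otimes_{\sO_X}\sO_U\cong\Omega^1_{V/U}$ together with faithful flatness; combining flatness and unramifiedness gives \'etaleness via Definition \ref{def-etale-algebra}. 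The technical crux throughout remains the Amitsur exactness argument in the middle step — once that is granted, the rest is bookkeeping.
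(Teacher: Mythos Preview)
The paper does not prove this theorem at all: it is stated with references to \cite[VIII, Thm.~2.1, Cor.~5.7]{SGA1} and \cite[Cor.~(17.7.3)]{EGA4} and then used as a black box in the proof of Proposition~\ref{prop:loc-constant-sheaf-rep}. Your sketch is the standard argument one finds in those references (reduction to the affine case, Amitsur complex to show the equalizer gives the descended algebra, fpqc permanence of finite/\'etale), so there is nothing to compare against here.
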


\begin{proof}[Proof of Proposition \ref{prop:loc-constant-sheaf-rep}]
We find a finite family $\{u_i: U_i\to X\}$ of \'etale maps 
such that $u_i^*\sF\cong M_{U_i}$. The constant sheaf $M_{U_i}$ is
represented by the $U_i$-group scheme $U_i\times M$ ($=$ disjoint union over $|M|$ copies of $U_i$ with group action
induced by the one on $M$). Then $U_i\times M\to U_i$ is a finite \'etale
$U_i$-group scheme representing
$u_i^*\sF$.  Set $U:=\bigsqcup_i U_i$. Then we have an \'etale and surjective (in particular
quasi-compact by Convention \ref{conv-noetherian})  morphism $U\to X$ and a finite \'etale $U$-group scheme $U\times M$ representing $\sF_{|U}$.
Denote by $p_1, p_2: U\times_X U\to U$ the two projections.
The natural isomorphism $p_1^*(\sF_{|U})\cong p_2^*(\sF_{|U})$  induces a gluing data
$p_1^*(U\times M)\cong p_2^* (U\times M)$ and hence the finite \'etale $U$-group scheme $U\times M$
descends to a finite \'etale map $X_\sF\to X$, see Theorem \ref{thm:descent}. 
By construction and the sheaf property $X_{\sF}$ represents $\sF$ over $X$.

If $X$ is connected, then any connected component of $X_{\sF}$ is still a finite \'etale covering of $X$.
We take a finite \'etale Galois covering $P\to X$ which factors over all the
connected components of $X_{\sF}$; this can be achieved as in the discussion below Example
\ref{ex:fundamentalGroupOfAField}.
Then $X_\sF\times_X P\cong P\times M$ represents $\sF_{|P}$, i.e.
$\sF_{|P}\cong M_P$.
\end{proof}

\subsubsection{Constructible sheaves}\label{constructible-sheaves}
Let $X$ be a scheme. A subset $Z\subset X$ is called \emph{locally closed} if it is the intersection of an open and a closed subset
in $X$. We equip such a $Z$ with the reduced scheme structure and obtain an immersion $i: Z\inj X$.

Let $A$ be a noetherian ring which is torsion (i.e. $mA=0$ for some natural number $m$)  and 
$\sF$ a sheaf of $A$-modules on $X_{\et}$. Then we say that $\sF$ is {\em constructible}\index{constructable sheaf}
if there exist finite type $A$-modules $M_1,\ldots, M_n$ and locally closed subsets $X_1,\ldots, X_n\subset X$ such that
\[X=\bigsqcup_i X_i\quad \text{and}\quad \sF_{|X_i} \text{ is locally constant with stalks }M_i.\]
We have (see \cite[IX, Prop. 2.6, Prop. 2.9]{SGA4}):
\begin{enumerate}
\item\label{constr-sh-a} The category of constructible sheaves of $A$-modules is abelian.
\item\label{constr-sh-b} If $0\to \sF'\to\sF\to \sF''\to 0$ is a short exact sequence of \'etale sheaves of $A$-modules, and 
$\sF',\sF''$ are constructible, then so is $\sF$.
\item\label{constr-sh-c} An \'etale sheaf of $A$-modules is constructible iff it is a noetherian object in the category
of \'etale sheaves of $A$-module, i.e., any ascending sequence of submodules becomes stationary.
\end{enumerate}

\begin{remark}\label{rmk-pf-pb-constructible}
Let $\pi:X\to Y$ be a morphism and $\sG$ a constructible sheaf of $A$-modules on $Y_{\et}$, then 
$\pi^*\sG$ is a constructible $A$-module on $X_{\et}$. (This follows directly from Example \ref{ex-pb-pf-constant}.)
If $\pi$ is proper and $\sF$ is a constructible $A$-module on $X_{\et}$, then $\pi_*\sF$ is a constructible 
$A$-module on $Y_{\et}$, see \cite[Exp. XIV, Thm 1.1]{SGA4}. 
\end{remark}

\section{{$\ell$}-adic sheaves}
In this section we recall the notions
of constructible and lisse sheaves with $\ell$-adic coefficients. We show that lisse sheaves correspond
to continuous representations of the fundamental group. 
The references for this section are \cite[Arcata, II and  Rapport, 1.2]{SGA4.5}, \cite[(1.1)]{Deligne/WeilII} and 
\cite[Exp. V, VI]{SGA5}. {\em We keep Convention \ref{conv-noetherian}.}

\begin{definition}\label{def-lisse-sheaf}
Let $R$ be a complete local DVR with maximal ideal $\fm$. Assume $R/\fm$ has characteristic $\ell>0$.
Let $X$ be a scheme.
\begin{enumerate}
\item A {\em constructible $R$-sheaf}\index{constructible $R$-sheaf} on $X$ is a projective system of 
        $R$-modules  $\sF=(\sF_n)_{n\ge 1}$ on $X_{\et}$ satisfying the following two properties:
 \begin{enumerate}[label=(\roman*)]
   \item $\fm^n \cdot \sF_n=0$ and $\sF_n$ is a constructible $R/\fm^n$-module on $X_{\et}$, see \ref{constructible-sheaves}.
   \item\label{def-lisse-sheaf-b} $\sF_n= \sF_{n+1}\otimes_{R/\fm^{n+1}} R/\fm^n$, for all $n\ge 1$.
\end{enumerate}
\item A {\em lisse} $R$-sheaf\index{lisse sheaf} on $X$ is a constructible $R$-sheaf $\sF=(\sF_n)$ such that
 each $\sF_n$ is a locally constant sheaf of $R/\fm^n$-modules.
\end{enumerate}
\end{definition}

\begin{example}\label{ex-lisse}
\begin{enumerate}
\item\label{ex-lisse-finite-constant} 
       Let $R$ be as  above and $\sF$ a locally constant sheaf of finitely generated $R/\fm^{n_0}$-modules on $X_{\et}$.
         Then we can view $\sF$ also as a lisse $R$-sheaf $(\sF_n)$, via $\sF_n:= \sF$ for $n> n_0$ and
$\sF_n:= \sF\otimes_{R/\fm^{n_0}}R/\fm^n R$, for $n\le n_0$.  
  \item Let $\pi:X\to Y$ be a proper morphism and $\sF=(\sF_n)$ a constructible $R$-sheaf on $X$.
          We want to define the constructible sheaf $\pi_*\sF$. The first try is to consider
          the system $(\pi_*(\sF_n))$. But this might not satisfy \ref{def-lisse-sheaf-b}. 
          To repair this we have to use the following two facts 
         (see \cite[Exp. VI, Lem 2.2.2]{SGA5} and \cite[Exp. V, Prop. 3.2.3]{SGA5}):
\begin{enumerate}[label=(\roman*)]
\item The system $(\pi_*(\sF_n))$ satisfies
            the Mittag-Leffler-Artin-Rees condition, i.e., there exists a natural number $r_0$ such that for all
             $n$ and all $r\ge r_0$ we have  
               \[{\rm image}(\pi_*\sF_{n+r}\to \pi_*\sF_n)={\rm image}(\pi_*\sF_{n+r_0}\to \pi_*\sF_n)=:I_n.\]
\item There exists a natural number $s_0$ such that for all $n$ and all $s\ge s_0$ we have the natural isomorphism
              \[I_{n+s}\otimes_{R} R/\fm^n\xrightarrow{\simeq} I_{n+s_0}\otimes_{R} R/\fm^n.\]
\end{enumerate}
         Then we define $\pi_*\sF=(I_{n+s_0}\otimes_{R} R/\fm^n)_n$. There is a natural morphism of projective systems
         $\pi_*\sF\to (\pi_*(\sF_n))$ which is an {\em AR-isomorphism}, i.e., if we denote by $(K_n)$ and $(C_n)$ the kernel and 
       cokernel, respectively, then there exists a natural number $N$ such that transition maps 
         $K_{n+N}\to K_{n}$ and $C_{n+N}\to C_n$ are zero, for all $n\ge 1$. (Here we can take $N=r_0+s_0$.)
               
\item Let $\ell$ be an invertible prime on $X$. We have surjections of $\Z_\ell$-sheaves on $X_{\et}$,
     $\mu_{\ell^{n+1}}\to \mu_{\ell^n}$, $a\mapsto a^\ell$. The corresponding projective system of 
        $\Z_\ell$-modules on $X_{\et}$ is denoted by $\Z_\ell(1)$,
                 \[\Z_\ell(1):=(\mu_{\ell^n}).\]
             It follows from Example \ref{ex-etale-sheaf}, \ref{ex-etale-sheaf-d}, \ref{Kummer-iii}, that $\Z_\ell(1)$
      is a lisse $\Z_\ell$-sheaf on $X$. But notice that in general there is no \'etale covering $\{U_i\to X\}$
            such that ${\mu_{\ell^n}}_{|U_i}= (\Z/\ell^n)_{U_i}$ for all $n$ (e.g. $X=\Spec \Q$).
          This shows that lisse sheaves are in general not locally constant projective systems of sheaves.
\end{enumerate}
       \end{example}

\subsubsection{The category of lisse $R$-sheaves}\label{cat-lisse}
Let $R,\fm, \ell$ be as in Definition \ref{def-lisse-sheaf} and $X$ a scheme. 
Let $\sF=(\sF_n), \sG=(\sG_n)$ be two constructible $R$-sheaves on $X$. 
Then we have morphisms of $R$-modules
\[\Hom(\sF_{n+1}, \sG_{n+1})\to \Hom(\sF_n, \sG_n), \quad \varphi\mapsto \varphi\otimes R/\fm^n.\]
The category of constructible (resp.~lisse) $R$-sheaves is the category with morphisms given by 
\[\Hom(\sF,\sG)=\varprojlim_n \Hom(\sF_n, \sG_n).\]
It is an abelian category, see \cite[V, Thm 5.2.3]{SGA5}.

\subsubsection{The category of $\bar{\Q}_{\ell}$-sheaves}
Let $X$ be a scheme and $\ell$ an invertible prime on $X$. Fix an algebraic closure $\bar{\Q}_\ell$ of $\Q_\ell$.
\begin{enumerate}
\item\label{E-lisse} 
      Let $E$ be a {\em finite} field extension of $\Q_\ell$ inside $\bar{\Q}_\ell$ and $R$ the integral closure of
        $\Z_\ell$ in $E$. The category of constructible $E$-sheaves on $X$ is by definition the localization 
           of the category of constructible $R$-sheaves localized with respect to the full-subcategory of 
            torsion sheaves.  This means the following:
             \begin{description}
         \item[\bf Objects] There is an essentially surjective functor 
            \[(\text{constr.~} R\text{-sheaves})\to (\text{constr.~} E\text{-sheaves}), \quad \sF\mapsto \sF\otimes_R E\]
         \item[\bf Morphisms] $\Hom(\sF\otimes_R E, \sG\otimes_R E)= \Hom(\sF,\sG)\otimes_R E$.
               \end{description}

	       We say that a constructible $E$-sheaf $\sF\otimes_R E$ is
	       \emph{lisse} if there exists an \'etale covering $\{U_i\to X\}$ and
     lisse $R$-sheaves $\sF_i$ on $U_i$ such that $\sF_{|U_i}\otimes_R E\cong \sF_i\otimes_R E$.

\item If $E'/E/\Q_\ell$ are finite extensions inside $\bar{\Q}_\ell$, then there is a natural functor
      $(\text{constr.~} E\text{-sheaves})\to (\text{constr.~} E'\text{-sheaves})$ on objects given by 
       \[   \sF\otimes_R E\mapsto (\sF\otimes_R E)\otimes_E E'= \sF\otimes_{R} E'. \]      
 The category of constructible $\bar{\Q}_\ell$-sheaves\index{constructible $\bar{\Q}_\ell$-sheaves} is the 
inductive 2-limit over the categories of constructible $E$-sheaves, $E\subset \bar{\Q}_\ell$. This means the following:
\begin{description}
\item[\bf Objects] For all finite extensions $E/\Q_\ell$ inside $\bar{\Q}_\ell$ there are functors 
                    \[(\text{constr.~} E\text{-sheaves})\to (\text{constr.~}
			    \bar{\Q}_\ell\text{-sheaves}), \quad 
                              \sF\mapsto \sF\otimes_E \bar{\Q}_\ell.\]
                       These satisfy:
             \begin{enumerate}
                   \item Each object in $(\text{constr.~} \bar{\Q}_\ell\text{-sheaves})$ is isomorphic to an object of the
                           form $\sF\otimes_E \bar{\Q}_\ell$, for some constructible $E$-sheaf $\sF$.
                   \item If $E'/E$ is a finite extension and $\sF\in
			   (\text{constr.~} E\text{-sheaves})$, 
                           then there is a canonical isomorphism
                                            \[\sF\otimes_E \bar{\Q}_\ell\cong (\sF\otimes_E E')\otimes_{E'}\bar{\Q}_\ell.\]
              \end{enumerate}
 \item[\bf Morphisms] For $\sF, \sG\in (\text{constr. } E\text{-sheaves})$ we have
                               \[\Hom(\sF\otimes_E \bar{\Q}_\ell, \sG\otimes_E \bar{\Q}_\ell)=  
                                \Hom(\sF, \sG)\otimes_E \bar{\Q}_{\ell}.\]
\end{description}

A {\em lisse $\bar{\Q}_\ell$-sheaf}\index{lisse $\bar{\Q}_\ell$-sheaf} on $X$  is a constructible
$\bar{\Q}_\ell$-sheaf which is locally of the form $\sF\otimes_E\bar{\Q}_\ell$, with $\sF$ a lisse $E$-sheaf.
\end{enumerate}

\begin{definition}\label{defn-constant-lisse-sheaf}
Let $X$ be a scheme and $\ell$ an invertible prime on $X$. 
\begin{enumerate}
\item Let $R$ be a complete local DVR with maximal ideal $\fm$ and residue characteristic $\ell>0$.
        We say that a lisse $R$-sheaf $\sF=(\sF_n)$ 
        is {\em constant}\index{constant lisse sheaf} or {\em trivial}\index{trivial lisse sheaf}
        if there exists a finitely generated $R$-module $M$ such that we have an isomorphism 
        of projective systems  $(\sF_n)\cong ((M\otimes_{R} R/\fm^n)_X)$, where the index $X$
        denotes the constant sheaf associated to the module, see Example \ref{ex-etale-sheaf}, \ref{constant-sheaf}.
        In this case we also write $\sF=M_X$ or just $\sF=M$.
\item Let $E$ be a finite extension of $\Q_\ell$ with ring of integers $R$. We say that a lisse 
       $E$-sheaf  $\sF$ is {\em constant} or {\em trivial} 
        if there exists a finite dimensional $E$-vector space  $V$ and an
        $R$-lattice $M\subset V$ such that $\sF= M_X\otimes_R E$ in the above notations.
        As lisse $E$-sheaf $M_X\otimes_R E$ depends (up to isomorphism) only on $V$.
        We write $\sF=V_X$ or just $\sF=V$.
\item We say that a lisse $\Qlb$-sheaf is {\em constant} or {\em trivial} if there exists a finite dimensional 
        $\Qlb$-vector space $V$, a finite extension $E/\Q_\ell$ and a finite dimensional $E$-vector space $V_E$
         such that $V=V_E\otimes_E \Qlb$ and $\sF= V_{E,X}\otimes_E \Qlb$. 
         This is independent of the choice of $E$ and $V_E$. We write
	 $\sF=V_X$ or just $\sF=V$.
\end{enumerate}
\end{definition}

\begin{convention}\label{conv-l-adic-coefficients}
Let $X$ be a scheme and assume $\ell$ is a prime number which is invertible on $X$.
We say that a ring $A$ is an {\em $\ell$-adic coefficient ring}\index{$\ell$-adic coefficient ring}
if it is either $\Qlb$, a finite extension of $\Q_\ell$ or equal to $R$ or $R/\fm^n$, $n\ge 1$, where
$R$ is a complete local DVR finite over $\Z_{\ell}$ with maximal ideal $\fm$.
We can therefore speak about constructible (resp.~lisse, resp.~constant) $A$-sheaves on $X$.
(In case $A=R/\fm^n$  a lisse $A$-sheaf $\sF$ is just a locally constant and constructible sheaf of
$A$-modules on $X_{\et}$; by Example \ref{ex-lisse}, \ref{ex-lisse-finite-constant}, we can view $\sF$
also as a lisse $R$-sheaf.) We can write any constructible $A$-sheaf in the form $(\sF_n)\otimes_{R} A$
with $R$ as above and $(\sF_n)$ a constructible $R$-sheaf.
\end{convention}

\subsubsection{Tate twist}\label{Tate twist}\index{Tate twist}
Let $X$ be a scheme. For $n\ge 1$ and $i\ge 0$ denote by 
\[\Z/\ell^n(i)\]
the sheaf on $X_{\et}$ associated to 
\[U\mapsto 
          \underbrace{\mu_{\ell^n}(U)\otimes_{\Z/\ell^n}\ldots\otimes_{\Z/\ell^n}\mu_{\ell^n}(U)}_{i\text{-times}}\]
and by
\[\Z/\ell^n(-i)\]
the sheaf on $X_{\et}$ associated to
\[U\mapsto \Hom_U(\Z/\ell^n(i)_{|U}, \Z/\ell^n_{|U}).\]
Let $A$ be an $\ell$-adic coefficient ring and $\sF$ a constructible $A$-sheaf.
Then we define the $i$-th Tate twist of $\sF$
\[\sF(i),\quad i\in \Z,\]
as follows: Take a DVR $R$ finite over $\Z_\ell$ and
a constructible $R$-sheaf $(\sF_n)_n$ such that $\sF=(\sF_n)\otimes_R A$. Notice
that the $\sF_n$ are in particular sheaves of $\Z/\ell^n$-modules on $X_{\et}$. We define
$\sF_n(i)$ as the sheaf on $X_{\et}$ associated to $U\mapsto \sF_n(U)\otimes_{\Z/\ell^n} \Z/\ell^n(i)(U)$
and  define
\[\sF(i):= (\sF_n(i))\otimes_R A.\]
This does not depend on the choice of $R$ and $(\sF_n)$. 
It is a constructible $A$-sheaf and it is lisse if $\sF$ is. We have $\sF(i)(j)=\sF(i+j)$, for $i,j\in\Z$.

\subsubsection{Stalk of a lisse $A$-sheaf}\label{stalk-lisse-sheaf}
Let $\bar{x}\to X$ be a geometric point of $X$ and $\ell$ a prime number which is invertible on $X$.
\begin{enumerate}
\item Let $R$ be a complete local DVR with finite residue field of characteristic $\ell$ and 
          $\sF=(\sF_n)$ a lisse $R$-sheaf on $X$.
    We define the stalk of $\sF$ at $\bar{x}$ to be
       \[ \sF_{\bar{x}}:= \varprojlim_n \sF_{n,\bar{x}}.\]
\item Let $E$ be a finite extension of $\Q_\ell$ and $\sF$ a lisse $E$-sheaf.
        We define the stalk of $\sF$ at $\bar{x}$ to be
\[\sF_{\bar{x}}:= (\varprojlim_n \sF'_{n,\bar{x}})\otimes_R E,\]
where $R$ is the ring of integers of $E$ over $\Q_\ell$ and 
$\sF'=(\sF'_n)$ is a lisse $R$-sheaf defined on some \'etale neighborhood of $\bar{x}$ such that
locally around $\bar{x}$ we have $\sF=\sF'\otimes_R E$. This definition is independent of the choice of
    $\sF'$ (up to isomorphism). 
\item Let $\sF$ be a lisse $\Qlb$-sheaf. We define the stalk of $\sF$ at $\bar{x}$ to be 
            \[\sF_{\bar{x}}:= (\varprojlim_n \sF'_{n,\bar{x}})\otimes_R \Qlb,\]
where $R$ is the ring of integers of a finite extension $E$ over $\Q_\ell$ and 
$\sF'=(\sF'_n)$ is a lisse $R$-sheaf defined on some \'etale neighborhood of $\bar{x}$ such that
locally around $\bar{x}$ we have $\sF=(\sF'\otimes_R E)\otimes_{E}\Qlb$. This definition is independent of the choice of
$E$ and $\sF'$ (up to isomorphism). 
\end{enumerate}
Thus, all together, if $A$ is an $\ell$-adic coefficient ring the stalk $\sF_{\bar{x}}$ of a lisse $A$-sheaf $\sF$ 
at a geometric point $\bar{x}$ is defined.
Furthermore (with the above notation)  $\varprojlim_n \sF'_{n,\bar{x}}$ is a finite type $R$-module.
(In fact it is separated for the $\fm$-adic topology and $R$ is an
$\fm$-adically complete ring, thus
 it is generated by any lift of a system of generators of $(\varprojlim_n \sF'_{n,\bar{x}})/\fm=\sF'_{1,\bar{x}}$,
see e.g. \cite[Thm 8.4]{Matsumura}. ) Hence  $\sF_{\bar{x}}$ is a finite type $A$-module for general $A$.

We say that a lisse $A$-sheaf $\sF$ is {\em free}\index{free lisse $A$-sheaf} if its stalks are free $A$-modules.
In this case and if $X$ is connected the rank of $\sF$\index{rank of a free
lisse sheaf} -- denoted by $\rk(\sF)$ -- 
is by definition the rank of $\sF_{\bar{x}}$ at some (and hence any) geometric point of $X$. 

\subsubsection{$\ell$-adic representations}\label{rep-pi1}
Let $A$ be an $\ell$-adic coefficient ring (Convention
\ref{conv-l-adic-coefficients}), which is {\em not} $\Qlb$.  Let $M$ be a finitely generated $A$-module,
then we can equip it with the $\ell$-adic topology as follows: There is a DVR
$R$, finite over $\Z_\ell$,  such that $A$ is either
the fraction field  or  a quotient of $R$. We can find a finitely generated $R$-module $M'$ such that $M=M'\otimes_R A$ and we
define the topology on $M$ in such a way that the image of $\{m+\ell^n\cdot M'\}_{n\ge 0}$ in $M$ 
is a system of open neighborhoods of $m\in M$. 
This topology is independent of the choice of $M'$. (Indeed, if $A$ is a finite ring this is just the discrete topology and otherwise
$M'$ is an $R$-lattice in $M$ and for any other $R$-lattice $M''$ we find integers $a,b\ge 1$ such that $\ell^a \cdot M'\subset M''$ and 
$\ell^b\cdot M''\subset M'$; hence $M'$ and $M''$ define the same topology.) In particular the $A$-module of $A$-linear 
endomorphisms $\End_A(M)$ also carries the $\ell$-adic topology and the composition of endomorphisms is 
a continuous operation. Now let $\Aut_A(M)$ be the group of $A$-linear automorphisms $\Aut_A(M)$. 
We have an inclusion $\Aut_A(M)\to \End_A(M)\times \End_A(M)$, $\sigma\mapsto (\sigma, \sigma^{-1})$.
We put the product topology on $ \End_A(M)\times \End_A(M)$ and the subspace topology on $\Aut_A(M)$.
In this way   $\Aut_A(M)$ becomes a topological group (compare to Definition
\ref{defn:topologyOnGL}).

Let $X$ be a connected scheme and $\bar{x}\to X$ a geometric point. 
We define an {\em $A$-representation of $\pi_1(X,\bar{x})$}\index{$A$-representation of $\pi_1(X,\bar{x})$} 
to be a continuous group homomorphism
\[\pi_1(X,\bar{x})\to \Aut_A(M),\]
where $M$ is a finitely generated $A$-module.

By definition, a {\em $\Qlb$-representation of $\pi_1(X,\bar{x})$} is a group
homomorphism $\rho_{V}:\pi_1(X,\bar{x})\to \Aut_\Qlb(V)$, where $V$ is a
finite dimensional $\Qlb$-vector space, 
such that there exists a finite field extension $E/\Q_\ell$, a finite dimensional 
$E$-vector space $V_0$, an isomorphism $V\cong V_0\otimes_E\Qlb$ and an $E$-representation 
$\rho_{V_0}: \pi_1(X,\bar{x})\to \Aut_E(V_0)$ such that $\rho_V$ is equal to the composition
\[\rho_V: \pi_1(X,\bar{x})\xr{\rho_{V_0}} \Aut_E(V_0)\xr{\otimes_E
\Qlb}\Aut_{\Qlb}(V).\]

\begin{theorem}\label{thm-sheaf-vs-rep}
Let $X$ be a connected scheme and $\bar{x}$ a geometric point. Let $A$ be an $\ell$-adic coefficient ring. 
Then there is a natural equivalence of categories
\[(\text{lisse } A\text{-sheaves on }X)\xr{\simeq} (A\text{-representations of }\pi_1(X,\bar{x})),\quad \sF\mapsto \sF_{\bar{x}}.\]
Furthermore, let $\pi: X'\to X$ be a finite \'etale Galois cover with Galois group $G$. Then the above
equivalence induces an equivalence between the following subcategories
\[(\text{lisse } A\text{-sheaves on }X, \text{ constant on }X')\xr{\simeq} (\text{finitely gen.~} A[G]\text{-modules}).\]
If $A$ is finite then under this equivalence  a finitely generated $A[G]$-module $M$ corresponds to the 
lisse $A$-sheaf given by  $(\pi_*M_{X'})^G$ 
(see \ref{finite-group-action} and \eqref{thm-sheaf-vs-rep1} below for how we view $M_{X'}$ as a sheaf on $(X',G)$).

If $A$ is infinite and $M$ is a finitely generated $A[G]$-module,
then there exists a complete local DVR $R$, finite over $\Z_\ell$ and contained in $A$, and a finitely generated $R[G]$-module
$N$ such that $M=N\otimes_R A$. In this case $M$ corresponds to the lisse $A$-sheaf
$((\pi_*N_{X'})^G\otimes_RR/\fm^n)_n\otimes_R A$.
\end{theorem}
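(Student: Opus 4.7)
The plan is to reduce everything to the case of a finite coefficient ring by a compatible system of ``level-$n$'' constructions, and there to use the equivalence between finite \'etale coverings of $X$ and finite continuous $\pi_1^{\et}(X,\bar x)$-sets provided by Grothendieck's theorem.

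First I would construct the stalk functor and verify it takes values in continuous $A$-representations. For $A=R/\fm^n$ finite, a lisse $A$-sheaf $\sF$ is by definition a locally constant constructible sheaf of $A$-modules. By Proposition \ref{prop:loc-constant-sheaf-rep} there is a finite \'etale Galois cover $P\to X$ with group $H$ trivializing $\sF$; the functorial identification $\sF_{\bar x}=\Fib_{\bar x}(X_\sF)$ then equips the finite $A$-module $\sF_{\bar x}$ with an action of $\pi_1^{\et}(X,\bar x)$ that factors through the finite quotient $H$, hence is continuous. For $A=R$ a complete DVR one puts $\sF_{\bar x}:=\varprojlim_n \sF_{n,\bar x}$; the level-$n$ actions are compatible and assemble to a continuous action for the $\fm$-adic topology. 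The cases $A=E$ and $A=\Qlb$ follow formally from the definitions, using that $\sF_{\bar x}=(\sF'_{\bar x})\otimes_R A$ for an integral model $\sF'$.

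Next I would prove full faithfulness and essential surjectivity in the finite case $A=R/\fm^n$. Given a continuous representation $\pi_1^{\et}(X,\bar x)\to \Aut_A(M)$ with $M$ a finite $A$-module, continuity together with finiteness of $\Aut_A(M)$ forces the action to factor through a finite quotient $G=\pi_1^{\et}(X,\bar x)/U$, corresponding to a Galois covering $\pi:P\to X$ with group $G$. The explicit lisse $A$-sheaf realizing this representation is
\[\sF := (\pi_* M_P)^G,\]
where $M_P$ is viewed as a sheaf on $(P,G)$ via the given $A[G]$-structure on $M$, as in Section \ref{finite-group-action}. \'Etale-locally, any $U\to X$ that splits $P$, i.e.\ such that $P\times_X U\cong \coprod_G U$, makes $\pi_*M_P|_U\cong \prod_{g\in G}M_U$ on which $G$ permutes factors via its own left regular action; taking invariants picks out the diagonal copy, so $\sF|_U\cong M_U$, showing $\sF$ is lisse with fiber $M$. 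Conversely any lisse $A$-sheaf arises this way, because the trivialization on the Galois cover $P$ picks out exactly a $G$-action on $M=\sF_{\bar x}$; this also yields full faithfulness as both $\Hom$-groups identify with $\Hom_{A[G]}(M_1,M_2)$.

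To pass from $A=R/\fm^n$ to $A=R$, one uses the definitions of morphisms in \ref{cat-lisse} and on the representation side: both sides are the inverse limit of their level-$n$ analogues (using that a continuous representation to $\Aut_R(M)$ is the same as a compatible system of continuous representations to $\Aut_{R/\fm^n}(M/\fm^n M)$, and analogously for lisse sheaves by definition). The cases $A=E$ and $A=\Qlb$ then follow by tensoring/inverting $\ell$ and taking a $2$-colimit over finite extensions $E/\Q_\ell$, with both sides of the equivalence being compatible localizations of the $R$-level categories. Finally, the explicit description in the infinite case amounts to choosing an integral model $N\subset M$ stable under $G$, applying the finite-coefficient construction to each $N/\fm^n N$, and assembling: this gives exactly $\bigl((\pi_* N_{X'})^G\otimes_R R/\fm^n\bigr)_n\otimes_R A$. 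The main technical obstacle I expect is the continuity/finiteness bookkeeping when $A=R$: one must check that continuity of the $R$-linear action is equivalent to the data of the compatible system of actions on $M/\fm^n M$, and that a continuous $\pi_1^{\et}(X,\bar x)$-action on a finitely generated $R$-module factors through a projective limit of actions through finite Galois quotients; the existence of an invariant integral model in the $E$-case (needed to descend from $E$ to $R$ up to isogeny) is handled exactly as in Lemma \ref{lemma:lattice}, by compactness of $\pi_1^{\et}(X,\bar x)$ and openness of $\Aut_R(M')\subset \Aut_E(V)$.
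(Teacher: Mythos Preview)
Your proposal is correct and follows essentially the same route as the paper: reduce to finite $A$, construct the inverse functor as $M\mapsto(\pi_*M_P)^G$ for a Galois cover $\pi:P\to X$ through which the action factors, verify local triviality by the splitting $P\times_X P\cong P\times G$, and then pass to $R$, $E$, $\Qlb$ by limits and tensoring (invoking Lemma~\ref{lemma:lattice} for the integral model). The only cosmetic difference is that the paper describes the $\pi_1$-action on $\sF_{\bar x}$ via the pro-representing system $(P_\alpha)$ rather than via the representing scheme $X_\sF$, and it spells out the independence of the choice of $P$ explicitly; both are points you should make precise when writing up.
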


\begin{proof}
 Let $(P_\alpha)$ be an inverse system of finite \'etale Galois coverings of $X$ with maps $\bar{x}\to P_\alpha$
making the diagram 
\[\begin{tikzcd}
      ~ & P_\alpha\ar{d}\\
      \bar{x}\ar{r}\ar{ur} & X,
    \end{tikzcd}\]
commutative and such that an element of $\pi_1(X,\bar{x})$ is the same as a compatible system of $X$-automorphisms
of $(P_\alpha)$  fixing $\bar{x}$, i.e.~such that
$\pi_1^{\et}(X,\bar{x})=\varprojlim \Aut_X(P_{\alpha})^{\opp}$ (see
\eqref{eqn:proRepresentability}). 

{\em First case: $A$ is a finite ring.} Let $\sF$ be a lisse $A$-sheaf.
Given an \'etale neighborhood $U\to X$ of $\bar{x}$ we set $U_\alpha:=U\times_X P_\alpha$ and
obtain an inverse system $(U_\alpha)$ of \'etale neighborhoods of $\bar{x}$. 
An element $\sigma\in \pi_1(X,\bar{x})$ induces a compatible system of $X$-automorphisms of $(U_\alpha)$. 
Therefore such an element induces a map 
\[\sF(U)\to \sF(U_\alpha)\xr{\sigma^*} \sF(U_\alpha)\to \sF_{\bar{x}}.\]
Taking the limit over all \'etale neighborhoods $U$ of $\bar{x}$ we get a morphism 
$\sigma^*:\sF_{\bar{x}}\to \sF_{\bar{x}}$. We obtain in this way the structure of a 
 $\pi_1(X,\bar{x})$-representation on $\sF_{\bar{x}}$. Clearly this
 construction is functorial.

For the other direction let $M$ be an $A$-representation of $\pi_1(X,\bar{x})$. The group
$\Aut_A(M)$ is finite hence there is a finite \'etale Galois cover $\pi:P=P_\alpha\to X$ with Galois group $G$ such that
$\pi_1(X,\bar{x})\to \Aut_A(M)$ factors over $G$.  We consider the constant sheaf $M_{P}$ as a sheaf on
$(P,G)$ (see \ref{finite-group-action}) by defining the map $M_P(\sigma): \sigma_*M_P\to M_{P}$ ($\sigma\in G$)
via
\eq{thm-sheaf-vs-rep1}{M_P(V\times_{P,\sigma} P)=M\xr{\sigma^{-1}\cdot} M= M_P(V)}
where $V\to P$ is \'etale with $V$ connected.
Then set 
\[\sF_M:=\pi_*(M_P)^G.\]
We claim that $\sF_M$ is \'etale locally isomorphic to the constant sheaf $M$ and that this construction is 
independent of the choice of $\pi:P\to X$. Then $\sF_M$ is a lisse $A$-sheaf and the assignment 
$M\mapsto\sF_M$ is functorial.
To prove the claim let $V\to P$ be an \'etale map and assume $V$ is connected. We have 
$V\times_X P\cong V\times G$.  For $\sigma\in G$, the automorphism $\id_V\times \sigma$ ($\sigma\in G$) on $V\times_X P$ 
translates via this isomorphism into the automorphism $\id\times {\sigma\cdot}$ of $V\times G$. 
Hence
\mlnl{\sF_M(V)=\{a\in \pi_*(M_P)(V)\,|\, M_P(\sigma)(a)=a\text{ for all }\sigma\in G\}\\ = 
 \{(a_\tau)\in M\times G\,|\,  \sigma^{-1}\cdot a_{\sigma\cdot \tau}=a_{\tau} \text{ for all } \sigma,\tau\in G\}
              \cong M,}
where the last isomorphism is given by $ M\ni m\mapsto (\sigma\cdot m)_\sigma \in M\times G$.
Thus $\sF_{M|P}=M_P$. Moreover if we choose another $P'\to X$ we obtain a sheaf
 $\sF'_{M}$ in the same way. Then we find $P''\to X$ dominating $P'$ and $P$ and we construct $\sF''_M$.
There is a natural map $\sF_M\to \sF''_M$ induced by $\pi_*M_{P}\to \pi''_*(M_{P|P''})=\pi''_*M_{P''}$
and by the above we obtain 
\[\sF_{M|P''}= M_{P''}=\sF''_{M|P''}.\]
Thus the natural map $\sF_M\to \sF''_M$ is an isomorphism \'etale locally, hence is an isomorphism globally.  
Same with $\sF'_M$. This shows that $\sF_M$ is independent of the choice of $P\to X$.

We defined two functors $\sF\mapsto \sF_{\bar{x}}$ and $M\mapsto \sF_M$ and we have to show that they are inverse to each 
other. It is straightforward to check $\sF_{M,\bar{x}}=M$ (as $\pi_1(X,\bar{x})$-representations).
Now let $\sF$ be a lisse $A$-sheaf and $\sF_{\bar{x}}=M$ its associated representation.
By Proposition \ref{prop:loc-constant-sheaf-rep} there exists a finite \'etale  Galois cover 
$\pi: P\to X$ with Galois group $G$ such that $\pi^*\sF\cong M_P$. 
(In particular the representation $M$ of $\pi_1(X,\bar{x})$ factors over $G$.)
 Then 
\[\sF_M= \pi_*(\pi^*\sF)^G=\sF.\]
Here the first equality holds by definition; for the second, first observe that there is a
canonical map $\sF\to \pi_*(\pi^*\sF)^G$ (induced by adjunction); it suffices to check that it is an isomorphism
when restricted to $P$ which follows from $\sF_{M|P}\cong M_P$. 
This finishes the proof in the case $A$ is finite.

{\em Second case: $A=R$ is a DVR finite over $\Z_{\ell}$.} Let $\fm$ be the maximal ideal of $R$.
A lisse $R$-sheaf is a projective system $\sF=(\sF_n)$ as in Definition \ref{def-lisse-sheaf}.
By the first case each $\sF_n$ gives rise to a $\pi_1(X,\bar{x})$-representation $\sF_{n,\bar{x}}$
which factors over some finite quotient. Clearly they fit together to give a continuous
$\pi_1(X,\bar{x})$-representation  on $\sF_{\bar{x}}=\varprojlim_n \sF_{n,\bar{x}}$. We already remarked in 
\ref{stalk-lisse-sheaf} that $\sF_{\bar{x}}$ is actually a finitely generated $R$-module.
This gives the functor $\sF\mapsto \sF_{\bar{x}}$ from the statement.
To construct a functor in the other direction, let $M$ be a $R$-representation of $\pi_1(X,\bar{x})$.
We obtain induced representations on $M_n:=M\otimes_R R/\fm^n$ which by the first case
correspond to lisse $R/\fm^n$-modules $\sF_{M_n}$. It is straightforward to check
that we obtain in this way a projective system $\sF_M=(\sF_{M_n})$ which defines a lisse $R$-sheaf
and that the functor $M\mapsto \sF_M$ thus defined is inverse to $\sF\mapsto \sF_{\bar{x}}$.

{\em Third case: $A$ is a finite field extension of $\Q_\ell$ or equal to $\Qlb$.}
We get the functor $\sF\mapsto \sF_{\bar{x}}$ in an analogous way as above.
For the functor in the other direction  let $V$ be an $A$-representation of $\pi_1(X,\bar{x})$.
Then by Lemma \ref{lemma:lattice} we find a DVR $R$ finite over $\Z_\ell$ and
contained in $A$ and a finitely generated $R$-submodule 
$M$  of $V$ which has a continuous $\pi_1(X,\bar{x})$-action 
and satisfies $M\otimes_R A\cong V$ as $A[\pi_1(X,\bar{x})]$-modules. 
By the second case above $M$ gives rise to a lisse $R$-sheaf $\sF_M$ and we define
$\sF_V:=\sF_M\otimes_R A$. One checks that this construction is independent of the choice of $M$
and defines a functor $M\mapsto \sF_M$ which is inverse to the functor from the statement of the theorem.
This finishes the proof.
\end{proof}

\begin{example}\label{ex:lisse-sheaf-finite-field}
Let $k$ be a finite field with $q=p^n$ elements and $\ell$ a prime different from $p$. 
Then the lisse $\Qlb$-sheaves of rank $1$ on
$\Spec k$ correspond to homomorphisms $\Gal(\bar{k}/k)\to \Qlb^\times$ which factor over
a continuous homomorphism $\chi:\Gal(\bar{k}/k)\to E^\times$, with $E$ a finite extension of $\Q_\ell$.
Let us see what this is. We have an isomorphism of topological groups
$\widehat{\Z}\xr{\cong} \Gal(\bar{k}/k)$ under which $1$ is mapped to the
$q$-power Frobenius $x\mapsto x^q$. Let $R$ be the ring of integers of $E$
with maximal ideal $\fm=(\pi)$. Then a homomorphism $\chi$ as above factors over
a lattice $\frac{1}{\pi^n}R$, i.e.~is induced by a continuous group
homomorphism
$\chi_R: \widehat{\Z}\to \Aut_R(\frac{1}{\pi^n}R)=R^\times$, see Lemma
\ref{lemma:lattice}. 
The topological group $\widehat{\Z}$ is the free profinite group on $1$
generator, so giving a continuous
map from $\widehat{\Z}$ to a profinite group $G$ is the same as giving an
element of $G$.
Here $R^\times$ is profinite and hence a homomorphism $\chi_R$ as above
corresponds to the choice of an  element of $R^\times$.

All together we see that (up to isomorphism) a lisse $\Qlb$-sheaf on $\Spec k$
of rank $1$
corresponds uniquely to an element of $\bar{\Z}_\ell^\times$, the integral
closure of $\Z_{\ell}$ in $\bar{\Q}_{\ell}$.
\end{example}

\begin{example}\label{ex:cyclotomic-char}
Let $X$ be a connected scheme, $\bar{x}\to X$ a geometric point and $\ell$ a prime number invertible on $X$.
We have the lisse sheaf $\Z_\ell(1)$ on $X$ (see \ref{Tate twist}) at our disposal. The corresponding representation
of $\pi_1(X,\bar{x})$ is the following: In the algebraically closed field $k(\bar{x})$ take
a family of elements $\zeta_n\in k(\bar{x})$, $n\ge 1$, such that $\zeta_n$ is an $\ell^n$-th primitive root
of unity and $\zeta_{n+1}^\ell=\zeta_n$ for all $n$. Let $P\to X$ be a finite \'etale Galois cover with Galois group $G_P$
and a map $\bar{x}\to P$ inducing  $\bar{x}\to X$.
Denote by $x_P$ the image of $\bar{x}\to P$.   If $H^0(P,\sO_P)$ contains an $\ell^n$-th primitive root of unity,
then it also contains a unique $\ell^n$-th primitive root of unity which maps to $\zeta_n$ under the composition
$H^0(P,\sO_P)\to \sO_{P,x_P}\to k(x_P)\inj k(\bar{x})$ (see Example \ref{ex-etale-sheaf}, \ref{ex-etale-sheaf-d});
we denote it again by $\zeta_n\in H^0(P,\sO_P)$.
If $\zeta_n\in H^0(P,\sO_P)$, then for $\sigma\in G_P$ the element $\sigma(\zeta_n)$ is again an 
$\ell^n$-th primitive root of unity. Hence
\[\sigma(\zeta_n)= \zeta_{n}^{r_{n,\sigma}}, \quad \text{for some } r_{n,\sigma}\in (\Z/\ell^n\Z)^\times.\]
Let $n_P$ be the maximal $n$ such that $\zeta_n\in H^0(P,\mathcal{O}_P)$. We obtain a group homomorphism
\[G_P\to (\Z/\ell^{n_P}\Z)^\times, \quad \sigma\mapsto r_{n_P,\sigma}.\]
This map is independent of the choice of the $\zeta_n$. If $P'\to X$ is another such Galois cover which factors via
$P'\to P$ then we obtain a commutative diagram
\[\begin{tikzcd}
     G_{P'}\ar{d}\ar{r} & (\Z/\ell^{n_{P'}})^\times\ar{d}\\
      G_P \ar{r} & (\Z/\ell^{n_P})^\times,
    \end{tikzcd}\]
with vertical maps the natural surjections.
Therefore we can take the  inverse limit over an inverse system of finite \'etale Galois covers $(P_\alpha)$
pro-representing the fiber functor  $F_{\bar{x}}$ and obtain a $\Z_\ell$-representation
\[\pi_1(X,\bar{x})\to \Z_{\ell}^\times.\]
It is the representation corresponding to the lisse sheaf $\Z_\ell(1)$.
\end{example}

\begin{example}\label{ex:ArtinSchreier}
We can use Theorem \ref{thm-sheaf-vs-rep} to construct new lisse sheaves which
are related to Example \ref{ex:ASlowerNumbering}. For example
let $k$ be a perfect field of characteristic $p>1$ containing the field with $q=p^n$ elements $\F_q$ and 
$m\ge 1$ a natural number. Then the natural inclusion 
$k[x]\subset k[x][t]/(t^q-t-x^m):=B$ defines an \'etale Galois covering
$\pi: X:=\Spec B \to \A^1_k=\Spec k[x]$. (This map is \'etale by
 Definition \ref{def-etale-algebra}, \ref{def-etale-algebra-presentation}, since $\partial(t^q-t-x^m)/\partial t=-1$.)
The Galois group is isomorphic to $\F_q$, where $a\in \F_q$ acts via $\bar{t}\mapsto \bar{t}+a$.
Let $\bar{\eta}\to X$ be a geometric point over the generic point of $X$,
we obtain 
\[\F_q=\coker(\pi_1(X,\bar{\eta})\hookrightarrow \pi_1(\A^1_k,\bar{\eta})).\]
Now let $\ell$ be a prime different from $p$ and $A$ an $\ell$-adic coefficient ring and take
a group homomorphism 
\[\psi: (\F_q,+)\to A^\times.\]
Then the composition 
\[\pi_1(\A_k^1,\bar{\eta})\twoheadrightarrow \F_q\xr{\psi}A^\times=\Aut_A(A) \]
is an $A$-representation  of $\pi_1(\A_k^1,\bar{\eta})$ of rank $1$. Hence it defines  a free
lisse $A$-sheaf $\sL_{m,\psi}$ of rank $1$ on $\A^1_k$.
Its sheaf theoretic description is as follows: If $A$ is finite, then denote by $L_{\psi}$ the 
sheaf on $(X,\F_q)$ (see \ref{finite-group-action}) whose underlying sheaf on $X$ is the
constant sheaf $A$ and where the action of $a\in \F_q$ is given by
multiplication with $\psi(a)$. Concretely,
if $\bar{x}\to X$ is a geometric point we get an isomorphism
\[L_{\psi,\bar{x}}=A\to A=L_{\psi,\bar{x}+a}, \quad b\to \psi(a)b, \]
where $\bar{x}+a$ is the composition $\bar{x}\to X\xr{+a} X$. 
Then $\sL_\psi=\pi_*(L_{\psi})^{\F_q}$. One obtains the description for
general (infinite) $A$ by the usual 
limit procedure.  
\end{example}

\begin{example}\label{ex:Lang-torsor}
One can generalize the above example for the case $m=1$ as follows (see e.g. \cite[Sommes trig.~1.]{SGA4.5}):
Let $\F_q$ be the finite field with $q=p^n$ elements and $G$ a commutative algebraic group scheme over $\F_q$.
Let $F:G\to G$ be the Frobenius morphism (it is given by $\sO_G\to\sO_G$, $x\mapsto x^p$).
Then $F^n=(F\circ\ldots\circ F)$ ($n$-times) is an $\F_q$-endomorphism of $G$.
We obtain a finite \'etale  Galois covering (the \emph{Lang
isogeny}\index{Lang isogeny})
\[F-\id_G:G\to G\]
whose kernel and Galois group is the finite group $G(\F_q)$. Let $A$ be an $\ell$-adic coefficient ring and 
$\chi: G(\F_q)\to A^\times$ a homomorphism. Then  the composition
\[\pi_1(G,\bar{\eta})\twoheadrightarrow\coker(\pi_1(G,\bar{\eta})\xr{F-\id}\pi_1(G,\bar{\eta}))=G(\F_q)
\xr{\chi} A^\times \]
is an $A$-representation of $\pi_1(G,\bar{\eta})$.  Hence we obtain a lisse
rank $1$ sheaf
$\sL_\chi$ on $G$. In case $G=\A^1_{\F_q}$ we obtain the example above for $m=1$.
\end{example}


\section{$\ell$-adic cohomology}
\setcounter{subsection}{1}\setcounter{subsubsection}{0}
In this section we recall without proofs the main properties of \'etale cohomology
on a scheme over an algebraically closed field with coefficients in a lisse $\ell$-adic sheaf.
The references for this section are \cite{SGA4}, \cite{SGA4.5} and \cite{SGA5}.
See also \cite{LeiFu}.

\begin{convention}\label{conv-ell-adic-coh}
Let $k$ be a perfect field of characteristic $p\ge 0$ and $\ell$ a prime number which is invertible in $k$. 
Throughout this section a $k$-scheme is a scheme which is {\em separated} and of finite type over $k$.
\end{convention}

\subsubsection{\'Etale cohomology}\label{etale-cohomology}\index{etale@\'etale cohomology} 
Let $f:X\to Y$ be a morphism of $k$-schemes.
\begin{enumerate}
\item Let $A$ be a torsion ring. The category of $A$-modules on $X_{\et}$ has enough injectives.
Therefore  we can define the $i$-th higher direct image of an $A$-module $\sF$ on $X_{\et}$ under $f$
as the $i$-th right derived functor of $f_*$, i.e.
\[R^if_*\sF:=H^i(f_*\sI^\bullet),\]
where $\sF\to \sI^\bullet$ is an injective resolution of $A$-modules on $X_{\et}$. 

Choose a compactification of $f$ (using Nagata's theorem \cite{Conrad/Nagata},
\cite{Luetkebohmert/Nagata}), i.e. a proper morphism $\bar{f}:\bar{X}\to Y$
together with a dominant open immersion $j: X\inj \bar{X}$ such that $f=\bar{f}\circ j$.
Then we define
\[R^if_!\sF:= R^i\bar{f}_*j_!\sF.\]
This definition is independent of the choice of the compactification, see \cite[Arcata, IV, (5.3)]{SGA4.5}.

In case $k$ is {\it algebraically closed} and $\pi: X\to \Spec k$ is the structure map we also write
\[H^i(X,\sF):=\Gamma(\Spec k, R^i\pi_*\sF)\quad \text{and}\quad H^i_c(X,\sF):= H^i(\bar{X}, j_!\sF)\]
and call it the $i$-th cohomology and 
the $i$-th {\em cohomology with compact support}\index{cohomology with compact support} of $\sF$,
respectively. (The $A$-module $H^i(X,\sF)$ is in this case also equal to the $i$-th right derived functor 
of the global section functor $\Gamma(X,-)$.)

\item Assume $k$ is algebraically closed. Let $A$ be an $\ell$-adic coefficient ring
          (in the sense of Convention \ref{conv-l-adic-coefficients}) and let
	  $\sF$ be a constructible $A$-sheaf on $X_{\et}$. 
        Then we find a DVR $R$  which is finite over $\Z_\ell$  such that $A$
	is an $R$-algebra,        and a constructible $R$-sheaf $(\sF_n)_n$ such that $\sF=(\sF_n)\otimes_R A$.
        We define the $i$-th \'etale cohomology group of $\sF$, respectively 
the $i$-th \'etale cohomology group with compact support of $\sF$ as
\[H^i(X,\sF):=(\varprojlim_n H^i(X,\sF_n))\otimes_R A,\quad H^i_c(X, \sF)=(\varprojlim_n H^i_c(X,\sF_n))\otimes_R A.\]
This definition is independent of the choice of $R$ and the constructible $R$-sheaf $(\sF_n)$, 
see \cite[VI, 2.2]{SGA5}.
\end{enumerate}

\begin{remark}\label{rmk-coh-non-alg-closed}
Let $\bar{k}$ be an algebraic closure of $k$, $A$ a finite $\ell$-adic  coefficient ring, 
$\pi: X\to \Spec k$  a $k$-scheme and $\sF$ a constructible $A$-sheaf on $X$. Then $R^i\pi_*\sF$
and $R^i\pi_!\sF$ are lisse $A$-sheaves on $\Spec k$ and the corresponding
${\rm Gal}(\bar{k}/k)$-representations are equal to $H^i(X\otimes_k \bar{k},\sF)$ and 
$H^i_c(X\otimes_k \bar{k},\sF)$, respectively.
(This follows from the definition of the stalk at $\Spec \bar{k}\to \Spec k$ and \'etale base change, cf.~\eqref{etale-base-change}.)
\end{remark}

\subsubsection{}\label{coh-properties} In the following we assume that $k$ is {\em algebraically closed}.
  Let $f:X\to Y$ be a morphism of $k$-schemes, $A$ an $\ell$-adic coefficient ring and 
   $\sF$ a constructible $A$-sheaf on $X_{\et}$. We list some properties: 
\begin{enumerate}
\item\label{coh-finite} If $A$ is finite (i.e. $A=R/\fm^n$ for some DVR $R$ finite over $\Z_\ell$),
          then $R^if_!\sF$  and $R^if_*\sF$ are constructible on $Y$, see 
\cite[Arcata, IV, Thm (6.2)]{SGA4.5} and \cite[Th.~finitude, Thm 1.1]{SGA4.5}. 
If $A$ is general
       then $H^i(X,\sF)$ and $H^i_c(X,\sF)$ are finitely generated $A$-modules, cf. \cite[VI, 2.2]{SGA5}.
 \item\label{long-exact-seq} Let $0\to \sF'\to \sF\to \sF''\to 0$ be a short exact sequence of constructible $A$-sheaves.
      Then there is a long exact sequence 
      \[\cdots\to H^i(X, \sF)\to H^i(X,\sF'')\to H^{i+1}(X, \sF')\to\cdots,\]
      and similarly with $H^i$ replaced by $H^i_c$.
    (If $A$ is finite this follows immediately from the general theory of right derived functors and the fact that $j_!$ is exact.
     For general $A$ notice that the groups $H^i(X,\sF_n)$ are finite by \ref{coh-finite}; hence  
      any projective system of subquotients of  $(H^i(X,\sF_n))$ satisfies the Mittag-Leffler condition; 
       hence the long exact sequence of  pro-groups gives a long exact sequence in the limit;
         further if $A$ is not finite, then it is flat over $R$ and hence
         tensoring with $A$ over $R$ is exact.)
\item\label{finite-map} Assume $f:X\to Y$ is finite. If $A$ is finite then $R^if_*\sF=0$ for all $i\ge 1$,
            see \cite[Arcata, II, Prop. (3.6)]{SGA4.5}. 
          This implies (via a Leray spectral sequence argument) that for general $A$ we have 
         $H^i(X, \sF)=H^i(Y,f_*\sF)$. 
 \item\label{compact-localization} Assume $X$ is a proper $k$-scheme and 
                  $j:U\inj X$ is an open subscheme with  complement $i:Z\inj X$. 
        Then there is a long exact sequence  
 \[\cdots \rightarrow H^i_c(U, j^*\sF)\to H^i(X, \sF)\to H^{i}(Z, i^*\sF)\to H^{i+1}_c(U, j^*\sF)\to\cdots.\]
       If we write $\sF=(\sF_n)\otimes_R A$ then this sequence is induced as above by the exact sequences 
              $0\to j_!j^*\sF_n\to \sF_n \to i_*i^*\sF_n\to 0$, for $n\ge 1$.
\item\label{affine-vanishing} Assume $X$ is affine. Then 
                                  $H^i(X, \sF)=0$ for all $i>\dim X$, see \cite[XIV, Cor. 3.2]{SGA4}.
\item\label{general-vanishing} We have  $H^i(X,\sF)=0$ for all $i>2\dim X$, see
          \cite[X, Cor.~4.3]{SGA4}.
\item\label{comparison-analytic} Assume $k=\C$ and $X$ is smooth over $\C$. 
          Then 
  \[H^i(X, \Z/n\Z)\cong H^i(X(\C),\Z/n\Z)\quad \text{and} \quad H^i_c(X,\Z/n\Z)\cong H^i_c(X(\C),\Z/n\Z),\]
   where $n\ge 1$ and the right-hand side of the two isomorphisms is the singular cohomology 
   (resp.~with compact supports) of the  complex manifold $X(\C)$. 
     See \cite[XI, Thm.~4.4]{SGA4} and \cite[Arcata, IV, Thm (6.3)]{SGA4.5} 
           \item {\em Cohomology with support.}\label{coh-support}\index{cohomology with support}
       Let $j: U\inj X$ be an open subset of $X$ with complement $i: Z\inj X$. 
       If $A$ is a finite $\ell$-adic coefficient ring and $\sF$ a constructible $A$-sheaf on $X$
       we define 
     \[i^!\sF:={\rm Ker}(i^*\sF\to i^*j_*j^*\sF).\]
   We obtain a left exact functor $\sF\mapsto \Gamma_Z(X,\sF):=\Gamma(Z, i^!\sF)$. Its
right derived functor is denoted by $H^i_Z(X,\sF)$.

If $A$ is a general $\ell$-adic coefficient ring, we write $\sF=(\sF_n)\otimes_R A$ and set
     \[H^i_Z(X, \sF):=(\varprojlim_n H^i_Z(X,\sF_n))\otimes_R A. \]

We have an exact sequence
\[\ldots\to H^i(X,\sF)\to H^i(U,\sF_{|U})\to H^{i+1}_Z(X, \sF)\to\cdots.\]
(In case $A$ is finite this is the usual localization sequence. 
 For general  $A$ it follows from this and \ref{coh-finite} that $H^i_Z(X,\sF_n)$ is finite
and hence as above taking the inverse limit over the sequences for $\sF_n$ and tensoring with $A$ gives the
exact sequence in general. )
    \end{enumerate}

\begin{remark}\label{rmk-etale-sing-coh}
Let $X$ be a smooth $\C$-scheme. Notice that \ref{coh-properties}, \ref{comparison-analytic} also implies 
\[H^i(X,\Z_\ell)\cong H^i(X(\C),\Z)\otimes_\Z \Z_{\ell},\quad \text{for all primes }\ell,\]
where $\Z_\ell$ on the left-hand side denotes the constant lisse $\Z_{\ell}$-sheaf $(\Z/\ell^n)_n$.
Indeed, $H^i(X(\C),\Z)$ is a finitely generated $\Z$-module, hence 
\[H^i(X(\C),\Z)\otimes_\Z \Z_{\ell}=\varprojlim_n (H^i(X(\C),\Z)\otimes_\Z \Z/\ell^n).\]
On the other hand
the exact sequence $0\to \Z\xr{\cdot \ell^n}\Z\to \Z/\ell^n\to 0$ yields an exact sequence
\[0\to H^i(X(\C),\Z)\otimes_\Z \Z/\ell^n\to H^i(X(\C),\Z/\ell^n)\to 
      {_{\ell^n}H^{i+1}(X(\C),\Z)}\to 0,\]
where we denote by $_{\ell^n}(-)$ the kernel of the multiplication by $\ell^n$.
Using \ref{comparison-analytic} and taking $\varprojlim_n$ we arrive at a short exact sequence
\[0\to H^i(X(\C),\Z)\otimes_\Z \Z_\ell \to H^i(X,\Z_\ell)\to N\to 0,\]
where $N$ consists of the sequences $(a_n)$ with $a_n\in H^{i+1}(X(\C),\Z)$, $\ell^n\cdot a_n=0$, and such that
$a_{n-1}=\ell\cdot a_n$. In particular all the $a_n$ are $\ell$-power-torsion. Thus there exists an $n_0\ge 1$ such that
$\ell^{n_0}\cdot a_n=0$ for all $n$ (since $H^{i+1}(X(\C),\Z)$ is finitely generated over $\Z$).
Hence $a_n=\ell^{n_0}\cdot a_{n+n_0}=0$, i.e. $N=0$. 
\end{remark}

\begin{theorem}[Poincar\'e duality]\label{thm-PD}\index{Poincare@Poincar\'e duality}
Let $X$ be a smooth $k$-scheme of pure dimension $d$ and $\bar{k}$ an algebraic closure of $k$. 
Let $A$ be an $\ell$-adic coefficient ring and $\sF$ a  free lisse $A$-sheaf. Then for all $i\in \Z$ 
there is a natural (i.e. functorial in $\sF$) and ${\rm Gal}(\bar{k}/k)$-equivariant isomorphism
\[H^{2d-i}(X\otimes_k \bar{k}, \sF^\vee(d))\xr{\simeq} H^i_c(X\otimes_k \bar{k} ,\sF)^\vee.\]
Here $(d)$ is the Tate twist \ref{Tate twist}, 
$H^i_c(X\otimes_k \bar{k} ,\sF)^\vee:= \Hom_{A}(H^i_c(X\otimes_k \bar{k} ,\sF), A)$ and 
$\sF^\vee$ is defined as follows: write $\sF=(\sF_n)\otimes_R A$ with a free lisse $R$-sheaf $(\sF_n)$ 
(this is possible by Theorem \ref{thm-sheaf-vs-rep} and Lemma \ref{lemma:lattice} ), then
$\sF^\vee:=(\HHom(\sF_n, R/\fm^n))\otimes_R A$ is a free lisse $A$-sheaf.
\end{theorem}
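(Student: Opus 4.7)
The plan is to follow the classical strategy for \'etale Poincar\'e duality: construct the pairing via cup product into a trace map, reduce to finite coefficients, then to constant sheaves, and finally to the case of smooth curves, where the result rests on Kummer theory and self-duality of the Jacobian. Galois equivariance will be automatic from the canonicity of every construction.

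First I would construct a trace map $\Tr : H^{2d}_c(X\otimes_k\bar{k}, A(d)) \to A$ for $X$ smooth of pure dimension $d$ (componentwise if $X$ is not connected). For a smooth connected $k$-curve $C$ with smooth projective compactification $\bar{C}$, the Kummer sequence $1 \to \mu_n \to \G_m \xrightarrow{n} \G_m \to 1$ together with Hilbert 90 (Theorem \ref{thm:hilbert90}) gives $H^2(\bar{C}, \mu_n) \cong \Pic(\bar{C})/n$, and the degree map identifies this with $\Z/n$; via the localization sequence \ref{coh-properties}, \ref{compact-localization} this induces $H^2_c(C, \mu_n) \to \Z/n$. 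For higher dimension one fibers $X$ Zariski-locally by relative curves and builds the trace by iterating the curve trace, using smooth base change. Cup product $\sF \otimes \sF^\vee \to A_X$ followed by $\Tr$ then yields the pairing
\[
H^{2d-i}(X\otimes_k\bar{k}, \sF^\vee(d)) \otimes H^i_c(X\otimes_k\bar{k}, \sF) \longrightarrow H^{2d}_c(X\otimes_k\bar{k}, A(d)) \xrightarrow{\Tr} A
\]
defining the map of the theorem.

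Next I would reduce the isomorphism claim to the case of finite $\ell$-adic coefficients. Writing $\sF = (\sF_n)\otimes_R A$ with $(\sF_n)$ a lisse $R$-sheaf as in Definition \ref{def-lisse-sheaf}, the finiteness of $H^i(X,\sF_n)$ and $H^i_c(X,\sF_n)$ (property \ref{coh-properties}, \ref{coh-finite}) ensures Mittag-Leffler, so the formation of duals and inverse limits commute, and tensoring with the flat $R$-algebra $A$ then yields the general case from the case of finite rings $R/\m^n$. With finite coefficients, a lisse sheaf is trivialized by a finite \'etale Galois cover $\pi : Y \to X$, and the Hochschild-Serre spectral sequence, together with the duality between induction and restriction for the finite group $\Gal(Y/X)$, reduces the claim to the case of constant coefficients $M_X$ with $M$ a finite $A$-module.

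For constant coefficients I would proceed by induction on $d$. The case $d = 0$ is trivial. For $d = 1$ the localization sequence \ref{coh-properties}, \ref{compact-localization} reduces the problem to a smooth projective curve $\bar{C}$: the Kummer computation gives $H^0, H^2$ of $\bar{C}$ and a natural pairing $H^1(\bar{C}, \mu_n) \otimes H^1(\bar{C}, \Z/n) \to H^2(\bar{C}, \mu_n) \cong \Z/n$, whose perfectness is equivalent to Weil's self-duality of the Jacobian $J(\bar{C})[n]$. For $d > 1$ one writes $X$ locally as a smooth family of curves over a base $S$ of dimension $d-1$ and combines the Leray spectral sequence for this fibration with the curve case applied fiberwise (relative Poincar\'e duality) and the induction hypothesis on $S$. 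The main obstacle is the combination of constructing the trace map and proving the duality for smooth projective curves, which rests on genuinely geometric input (Kummer theory, Hilbert 90, the degree map, and self-duality of the Jacobian); the rest of the argument is a formal d\'evissage based on the smooth and proper base change theorems and the finiteness statements already recalled.
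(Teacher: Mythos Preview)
Your outline is a reasonable sketch of the classical SGA4 proof of Poincar\'e duality, but it is \emph{not} what the paper does. The paper's argument is much shorter in scope: for finite $\ell$-adic coefficient rings $A=R/\fm^n$ the result is simply cited from \cite[XVIII, (3.2.6.2)]{SGA4} (with the remark that the argument there, stated for $\Z/\ell^n$, works for $R/\fm^n$ since these rings are self-injective). The paper's own contribution is only the passage from finite coefficients to a general $\ell$-adic coefficient ring $A$. So where you propose to build the trace map, reduce to constant sheaves via Hochschild--Serre, and run an induction on dimension via fibrations by curves, the paper treats all of that as a black box.

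Regarding the limit step, your sketch (``Mittag--Leffler, so duals and inverse limits commute, then tensor with flat $A$'') is in the right direction but glosses over the point the paper treats most carefully. Writing $H^i=\varprojlim_m H^i_c(X\otimes_k\bar{k},\sF_m)$, the paper applies $\varprojlim_n\varinjlim_m$ to the finite-level duality isomorphisms and then identifies
\[
\varprojlim_n\varinjlim_m \Hom_R(H^i_c(X\otimes_k\bar{k},\sF_m),R/\fm^n)=\varprojlim_n\Hom_R(H^i,R/\fm^n)=\Hom_R(H^i,R),
\]
invoking \cite[VI, Lem.~2.2.2]{SGA5} for the compatibility of $H^i$ with its $\fm$-adic completion. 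Finally it uses that $H^i$ is finitely presented over the noetherian ring $R$ and that $A$ is flat over $R$ to conclude $\Hom_R(H^i,R)\otimes_R A\cong\Hom_A(H^i\otimes_R A,A)$. Your one-line reduction hides exactly these two ingredients (the $\varprojlim\varinjlim$ manipulation and the Hom--tensor commutation via flatness and finite presentation), which are the actual content of the paper's proof.
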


\begin{proof}
If $A$ is finite this is \cite[XVIII, (3.2.6.2)]{SGA4}. (In {\it loc.~cit.}~it is done for $A=\Z/\ell^n$, but since 
for any DVR $R$ with maximal ideal $\fm$, the ring $R/\fm^n$ is self injective, the same argument works.)
In general we can assume that $A$ is flat over $R$ and write $\sF=(\sF_n)\otimes_R A$ with $(\sF_n)$ a free lisse $R$-sheaf.
Since $\sF_n=\sF_{n+1}\otimes R/\fm^n$ we have  
$\varinjlim_m\HHom(\sF_m,R/\fm^n)= \HHom(\sF_n, R/\fm^n)$. Therefore 
the Poincar\'e  isomorphism for the $\sF_n$ yields a ${\rm Gal}(\bar{k}/k)$-equivariant isomorphism
\[H^{2d-i}(X\otimes_k \bar{k}, \sF^\vee(d))\xr{\simeq} 
   (\varprojlim_n\varinjlim_m \Hom_{R}(H^i_c(X\otimes_k \bar{k} ,\sF_m), R/\fm^n))\otimes_R A,\]
where we use that taking cohomology commutes with direct limits.
Set $H_m:=  H^i_c(X\otimes_k \bar{k} ,\sF_m)$ and $H:=\varprojlim_m H^i_m$. 
By \cite[VI, Lem. 2.2.2]{SGA5} and \cite[V, Prop. 3.2.3]{SGA5} there exists an $r_0\ge 1$ such that for $r\ge r_0$
  and $m\ge 1$ we have 
\[\image(H_{m+r}\to H_m)=\image(H_{m+r_0}\to H_m)=:H'_m\]
and there exists an $s_0\ge 1$ such that for $s\ge s_0$  and $n\ge 1$ we have an isomorphism
$H\otimes_R R/\fm^n\xrightarrow{\simeq} H'_{n+s}\otimes_R R/\fm^n$. 
Hence
\begin{align}
\varinjlim_m \Hom_{R}(H_m, R/\fm^n) &\cong \varinjlim_m \Hom_R(H'_m, R/\fm^n)\notag\\
                                                         & \cong \varinjlim_m \Hom_{R/\fm^n}(H'_m\otimes_R R/\fm^n, R/\fm^n)\notag\\
                                                         &\cong \Hom_{R/\fm^n}(H\otimes_R R/\fm^n, R/\fm^n)\notag\\
                                                         &\cong  \Hom_{R}(H, R/\fm^n).\notag
\end{align}
We obtain
\[\varprojlim_n\varinjlim_m \Hom_{R}(H_m, R/\fm^n)  = \varprojlim_n \Hom_R(H, R/\fm^n)  = \Hom_R(H, R).\]
Since $R$ is noetherian, $H$ is actually a finitely presented $R$-module and since $A$ is flat over $R$
we obtain $\Hom_R(H, R)\otimes_R A\cong \Hom_A(H\otimes_R A, A)$. Putting all the isomorphisms together
we obtain the statement.
\end{proof}

\begin{corollary}\label{cor-affine-vanishing-copact-support}
Let $X$ be a smooth affine $k$-scheme of pure dimension $d$, 
$A$ an $\ell$-adic coefficient ring and $\sF$ a free  lisse $A$-sheaf on $X$. Then
\[H^i_c(X\otimes_k \bar{k}, \sF)=0,\quad \text{for all }i<d.\]
\end{corollary}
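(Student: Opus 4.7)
The plan is to combine Poincaré duality (Theorem~\ref{thm-PD}) with the affine vanishing property \ref{coh-properties}, \ref{affine-vanishing}, reducing everything to the case of a finite coefficient ring so that dual-vanishing is the same as vanishing.

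First I would reduce to the case where $A$ is a finite ring of the form $R/\fm^n$ for some complete DVR $R$ finite over $\Z_\ell$. Write $\sF = (\sF_n) \otimes_R A$ with $(\sF_n)$ a lisse $R$-sheaf. By the definition of cohomology with compact support in the $\ell$-adic setting (see \ref{etale-cohomology}),
\[
H^i_c(X \otimes_k \bar{k}, \sF) = \bigl(\varprojlim_n H^i_c(X \otimes_k \bar{k}, \sF_n)\bigr) \otimes_R A,
\]
so it suffices to show $H^i_c(X \otimes_k \bar{k}, \sF_n) = 0$ for every $n \ge 1$ and every $i < d$, since a vanishing projective limit then yields the result after tensoring.

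Next, for a fixed $n$, apply Poincaré duality (Theorem~\ref{thm-PD}) to the lisse $R/\fm^n$-sheaf $\sF_n$: there is a natural isomorphism
\[
H^{2d-i}(X \otimes_k \bar{k}, \sF_n^\vee(d)) \xrightarrow{\simeq} H^i_c(X \otimes_k \bar{k}, \sF_n)^\vee,
\]
where $(-)^\vee = \Hom_{R/\fm^n}(-, R/\fm^n)$. For $i < d$ we have $2d-i > d = \dim X$, and since $X$ is affine the affine vanishing statement \ref{coh-properties}, \ref{affine-vanishing} gives $H^{2d-i}(X \otimes_k \bar{k}, \sF_n^\vee(d)) = 0$. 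Hence $H^i_c(X \otimes_k \bar{k}, \sF_n)^\vee = 0$.

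Finally I would argue that vanishing of the dual implies vanishing of the module itself in this context: $H^i_c(X \otimes_k \bar{k}, \sF_n)$ is a finitely generated module over the local Artinian ring $R/\fm^n$ (by \ref{coh-properties}, \ref{coh-finite}), and $R/\fm^n$ is self-injective, hence an injective cogenerator in the category of finitely generated $R/\fm^n$-modules; so $M^\vee = 0$ forces $M = 0$. This yields $H^i_c(X \otimes_k \bar{k}, \sF_n) = 0$ for all $n$ and all $i < d$, whence the projective limit vanishes and we conclude after tensoring with $A$. The only potentially delicate point is the step from vanishing of the dual to vanishing of the module, but it is handled cleanly by working at each finite level before passing to the limit.
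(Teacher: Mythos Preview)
Your proof is correct and follows the same approach as the paper, which simply writes ``This follows from Poincar\'e duality and \ref{coh-properties}, \ref{affine-vanishing}.'' You have carefully spelled out the reduction to finite coefficients and the self-injectivity argument needed to pass from vanishing of the dual to vanishing of the module itself, details the paper leaves implicit.
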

\begin{proof}
This follows from Poincar\'e duality and \ref{coh-properties}, \ref{affine-vanishing}.
\end{proof}

\begin{theorem}[Purity]\label{thm-purity}\index{purity}
Let $i: Y\inj X$ be a closed immersion of pure codimension $c$ between {\em smooth} $k$-schemes,
$A$ an $\ell$-adic coefficient ring and $\sF$ a lisse $A$-sheaf on $X$. Then there is a natural
${\rm Gal}(\bar{k}/k)$-equivariant isomorphism
\[H^j(Y\otimes_k \bar{k}, i^*\sF)\xr{\simeq} 
            H^{j+2c}_{Y\otimes_k \bar{k}}(X\otimes_k\bar{k},\sF(c)),\quad \text{for all }j,\]
where $(c)$ on the right-hand side denotes the Tate twist.
In particular $H^{j}_{Y\otimes_k \bar{k}}(X\otimes_k\bar{k},\sF)=0$ for all $j<2c$.
\end{theorem}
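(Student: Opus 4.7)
The plan is to mimic the limit argument from the proof of Theorem \ref{thm-PD} to reduce to finite coefficients, and then reduce the question to a direct computation on the pair $(\{0\}\subset \A^1_k)$. Concretely, writing $\sF=(\sF_n)\otimes_R A$ with $(\sF_n)$ a lisse $R$-sheaf, the definition
\[H^j_{Y\otimes_k\bar{k}}(X\otimes_k\bar{k},\sF)=(\varprojlim_n H^j_{Y\otimes_k\bar{k}}(X\otimes_k\bar{k},\sF_n))\otimes_R A\]
combined with Mittag--Leffler reduces us to the case $A=R/\fm^n$. A further d\'evissage along the $\fm$-adic filtration on $\sF_n$, together with the long exact sequence from \ref{coh-properties}, \ref{coh-support}, reduces further to the case where $A$ is a finite field and $\sF$ a lisse sheaf of $A$-modules.

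Next, the asserted cohomology isomorphism comes from a morphism of complexes of sheaves on $Y_\et$, a Gysin map $i^*\sF \to Ri^!\sF(c)[2c]$, whose being a quasi-isomorphism can be verified \'etale locally on $X$. Since $\sF$ is lisse, Proposition \ref{prop:loc-constant-sheaf-rep} shows that it becomes constant on a suitable finite \'etale cover; combined with the projection formula $Ri^!\sF\cong i^*\sF\otimes^{\mathbf{L}}_A Ri^!A_X$, this reduces us to the constant sheaf $\sF=A_X$. The standard local structure theorem for smooth morphisms then guarantees that, \'etale locally around any point of $Y$, the embedding $i$ factors as the zero section $Y\hookrightarrow \A^c_Y$ followed by an \'etale map to $X$; base change via \ref{rmk-pf-pb-constructible} reduces the problem to $X=\A^c_Y$ with $i$ the zero section, and a K\"unneth-type argument brings us down to the base case $c=1$, $Y=\Spec k$, $X=\A^1_k$.

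For this base case, the Kummer sequence $1\to \mu_{\ell^n}\to \G_m\xr{\cdot\ell^n}\G_m\to 1$ together with Hilbert's Theorem~90 (Theorem \ref{thm:hilbert90}) applied at the generic points of $\A^1_k$ and $\G_{m,k}=\A^1_k\setminus\{0\}$ compute $H^i(\A^1_k,\mu_{\ell^n})$ and $H^i(\G_{m,k},\mu_{\ell^n})$ for all $i$. Feeding this into the localization sequence from \ref{coh-properties}, \ref{coh-support}, produces an isomorphism $A\xr{\simeq} H^2_{\{0\}}(\A^1_k,A(1))$ together with the vanishing of $H^j_{\{0\}}(\A^1_k,A(1))$ for $j\neq 2$, which is exactly the claim for the pair $(\{0\}\subset \A^1_k)$.

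The principal obstacle will be constructing the Gysin map in a functorial and correctly normalized fashion, so that the successive reductions assemble into the asserted isomorphism rather than merely an abstract one; this requires pinning down a canonical generator of $H^2_{\{0\}}(\A^1_k,A(1))$ via a fixed trivialization of the Tate twist and verifying its compatibility with the refined Gysin maps produced in the K\"unneth step and in the \'etale descent. Once this normalization is in place, the four steps above combine to give the theorem, and the vanishing $H^j_{Y\otimes_k\bar{k}}(X\otimes_k\bar{k},\sF)=0$ for $j<2c$ drops out since negative cohomology of the smooth scheme $Y\otimes_k\bar{k}$ vanishes.
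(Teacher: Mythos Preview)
Your proposal is correct, but it takes a substantially more hands-on route than the paper. The paper's argument is three lines: for finite $A$ it simply cites \cite[XVI, Cor.~3.8, Rem.~3.10,~a)]{SGA4} for the sheaf-level purity statement $R^j i^!\sF=0$ ($j\neq 2c$) and $R^{2c}i^!\sF\cong i^*\sF(-c)$, then feeds this into the local--global spectral sequence
\[E_2^{a,b}=H^a(X\otimes_k\bar{k}, i_*R^b i^!\sF(c))\Rightarrow H^*_{Y\otimes_k\bar{k}}(X\otimes_k\bar{k}, \sF(c)),\]
which degenerates; the passage to general $A$ is by $\varprojlim_n$ and $\otimes_R A$, exactly as you do. What you are outlining---the projection formula for $Ri^!$, \'etale-local reduction to the zero section of $\A^c_Y$, K\"unneth reduction to $c=1$, and the explicit Kummer computation---is essentially a sketch of how the cited SGA4 result is proved. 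Your approach buys self-containedness and makes the Gysin normalization visible; the paper's buys brevity by outsourcing the local computation.

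One small caveat on your base case: the input you need is $H^1_{\et}(\A^1_{\bar k},\G_m)=\Pic(\A^1_{\bar k})=0$ and similarly for $\G_{m,\bar k}$, i.e.\ the scheme-theoretic form of Hilbert~90, whereas Theorem~\ref{thm:hilbert90} in the paper is the classical statement for cyclic field extensions. The two are of course related (the scheme version reduces, via \v{C}ech/Leray, to the field version on stalks), but as written the citation is not quite the right one; you would need to invoke $H^1_{\et}(-,\G_m)=\Pic(-)$ directly.
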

\begin{proof}
If $A$ is finite we have $R^j i^!\sF=0$, $j\neq 2c$, and $R^{2c} i^!\sF=i^*\sF(-c)$, by \cite[XVI, Cor 3.8, Rem. 3.10, a)]{SGA4}.
Therefore the statement follows in this case from the local-global spectral sequence
\[E_2^{a,b}=H^a(X\otimes_k\bar{k}, i_*R^b i^!\sF(c))\Rightarrow 
                                                     H^*_{Y\otimes_k\bar{k}}(X\otimes_k\bar{k}, \sF(c)).\]
For general $A$ applying $\varprojlim_n$ and $\otimes_R A$ gives the assertion.
\end{proof}

\begin{theorem}[Lefschetz trace formula, {\cite[Cycle, Cor 3.7]{SGA4.5}}]\label{thm-LTF}
                \index{Lefschetz trace formula}
Let $X$ be a smooth proper scheme over an algebraically closed field $k$ of characteristic $p>0$ and $\ell\neq p$ 
a prime number. Let $f: X\to X$ be a $k$-morphism and assume that the graph $\Gamma_f$ of $f$ and the diagonal
$\Delta_X$ intersect properly in $X\times X$ (i.e.~the intersection scheme $\Gamma_f\cap \Delta_X$ is 
either empty or zero dimensional). Then
\[(\Gamma_f\cdot \Delta_X)=\sum_i (-1)^i\Tr(f^*|H^i(X,\Q_\ell)).\]
Here the left-hand side is the degree of the intersection product $\Gamma_f\cdot \Delta_X$, concretely
\eq{thm-LFT1}{(\Gamma_f\cdot \Delta_X):= \sum_{x\in\Gamma_f\cap \Delta_X} {\rm length}(\sO_{X,x}/I_f),}
where $I_f\subset\sO_{X,x}$ is the ideal generated by the elements $a-f^*(a)$, $a\in \sO_{X,x}$.
\end{theorem}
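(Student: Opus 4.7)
The plan is to reinterpret $(\Gamma_f\cdot\Delta_X)$ cohomologically as a class in $H^{4d}(X\times X,\Q_\ell(2d))$ (where $d:=\dim X$), and then to unpack that class via the K\"unneth formula and Poincar\'e duality on $X$. The intended output of this unpacking is precisely the graded trace of $f^*$ acting on $H^*(X,\Q_\ell)$.

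First I would use the compatibility of the cycle class map with proper intersections (see \cite[Cycle, \S 2.3]{SGA4.5}): the proper intersection hypothesis gives
\[\mathrm{cl}_{X\times X}(\Gamma_f\cdot\Delta_X)\;=\;[\Gamma_f]\cup[\Delta_X]\quad\text{in }H^{4d}(X\times X,\Q_\ell(2d)),\]
and applying the trace isomorphism $H^{4d}(X\times X,\Q_\ell(2d))\xrightarrow{\simeq}\Q_\ell$ coming from Poincar\'e duality for $X\times X$ (Theorem \ref{thm-PD}) yields $(\Gamma_f\cdot\Delta_X)=\deg([\Gamma_f]\cup[\Delta_X])$. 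It remains to compute this degree.

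Next I would invoke the K\"unneth isomorphism $H^*(X\times X,\Q_\ell)\cong H^*(X,\Q_\ell)\otimes_{\Q_\ell}H^*(X,\Q_\ell)$ together with Poincar\'e duality on $X$. Fix a homogeneous $\Q_\ell$-basis $\{e_\alpha\}$ of $H^*(X,\Q_\ell)$ and the dual basis $\{\check e^\alpha\}\subset H^*(X,\Q_\ell)(d)$, characterized by $|\check e^\alpha|=2d-|e_\alpha|$ and $\deg(e_\alpha\cup\check e^\beta)=\delta_\alpha^\beta$. Requiring that the correspondence $\alpha\mapsto p_{1,*}([\Delta_X]\cup p_2^*\alpha)$ act as the identity on $H^*(X,\Q_\ell)$ forces the K\"unneth decomposition
\[[\Delta_X]=\sum_\alpha(-1)^{|e_\alpha|}\,e_\alpha\otimes\check e^\alpha\in H^{2d}(X\times X,\Q_\ell(d)),\]
the signs being dictated by graded commutativity of the cup product. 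Writing $\phi_f=(\id_X,f):X\inj X\times X$ for the closed immersion with image $\Gamma_f$, one has $[\Gamma_f]=\phi_{f,*}[X]$ and the projection formula yields $[\Gamma_f]\cup[\Delta_X]=\phi_{f,*}(\phi_f^*[\Delta_X])$. Since $p_1\circ\phi_f=\id_X$ and $p_2\circ\phi_f=f$, the K\"unneth pullback is $\phi_f^*(a\otimes b)=a\cup f^*b$, whence
\[\phi_f^*[\Delta_X]=\sum_\alpha(-1)^{|e_\alpha|}\,e_\alpha\cup f^*\check e^\alpha\in H^{2d}(X,\Q_\ell(d)).\]
Expanding $f^*\check e^\alpha=\sum_\beta c_\alpha^\beta\check e^\beta$ and using orthogonality of the two bases, $\deg(e_\alpha\cup f^*\check e^\alpha)=c_\alpha^\alpha$. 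Grouping by degree gives $\sum_{\alpha:|e_\alpha|=i}c_\alpha^\alpha=\Tr(f^*\mid H^{2d-i}(X,\Q_\ell))$, and summing with the signs and re-indexing $j=2d-i$ (using $(-1)^{2d-j}=(-1)^j$ since $2d$ is even) produces the desired $\sum_j(-1)^j\Tr(f^*\mid H^j(X,\Q_\ell))$.

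The hard part will be Step~1, the compatibility of the cycle class map with intersections of cycles of complementary dimension that are \emph{not} necessarily transverse: this is the substantive content of the \emph{Cycle} expos\'e of SGA~$4\tfrac{1}{2}$, and depends on deformation to the normal cone together with a local duality computation in \'etale cohomology. A secondary, purely bookkeeping, nuisance is the careful propagation of Koszul signs in the K\"unneth decomposition of $[\Delta_X]$ and in the Poincar\'e duality pairing; different conventions redistribute the $(-1)^{|e_\alpha|}$ between $[\Delta_X]$ and the duality isomorphism, and they must be matched consistently with the conventions fixed in Theorem \ref{thm-PD}.
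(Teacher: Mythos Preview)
The paper does not supply a proof of this theorem: it is quoted directly from \cite[Cycle, Cor.~3.7]{SGA4.5} and used as a black box in the proof of Grothendieck--Ogg--Shafarevich. Your outline is the standard cohomological argument (cycle class compatibility with intersection, K\"unneth decomposition of $[\Delta_X]$, projection formula for $\phi_f=(\id,f)$), which is exactly the route taken in the Cycle expos\'e you cite, so there is nothing to compare.

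Your sketch is correct in its broad strokes and you have identified the genuine input accurately: the hard point is the intersection/cup-product compatibility for non-transverse proper intersections, and the rest is linear algebra with Koszul signs. One small remark: the K\"unneth isomorphism for $\Q_\ell$-cohomology of a product of smooth projective varieties is itself a nontrivial input (finiteness and base change), so in a fully self-contained write-up you would want to cite that alongside Poincar\'e duality.
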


\begin{lemma}\label{lem-group-action-coh}
Let $k$ be an algebraically closed field, $X$ an integral $k$-scheme with generic point $\eta$, 
$E$ a finite extension of $\Q_\ell$ with ring of integers $R$ and $\sF$ a constructible $E$-sheaf on $X$.
Suppose there is a finite group $G$ acting  on $\sF$, i.e.~there is a constructible $R$-sheaf $\sF'=(\sF_n)$
such that the $\sF_n$ form a projective system of $R[G]$-modules and $\sF=\sF'\otimes_R E$.
Then the $G$-invariant sections  $\sF_n^{G}$ form a projective system which induces a 
constructible $R$-sheaf denoted by $(\sF')^G$.
We define the constructible $E$-sheaf  
\[\sF^G:= (\sF')^G\otimes_R E.\]
Then 
\[H^i(X, \sF^G)=H^i(X,\sF)^G.\]
\end{lemma}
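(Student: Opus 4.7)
The plan is to exploit the fact that $E$ has characteristic $0$, so $|G|$ is invertible in $E$ and the element
\[e_G := \frac{1}{|G|}\sum_{g\in G} g \;\in\; E[G]\]
is a well-defined idempotent. Via the $G$-action on $\sF' = (\sF_n)$ we have a ring homomorphism $R[G] \to \End(\sF')$, which after $-\otimes_R E$ extends to $E[G] \to \End(\sF)$. Hence $e_G$ acts as an idempotent endomorphism of the constructible $E$-sheaf $\sF$.

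First I would verify that the image of $e_G$ on $\sF$ is precisely $\sF^G$. This reduces to checking, for any finitely generated $R[G]$-module $N$, that the natural inclusion $N^G \otimes_R E \hookrightarrow (N \otimes_R E)^G$ is an isomorphism and agrees with the image of $e_G$. The first holds because any $v \in (N \otimes_R E)^G$ equals $e_G(v) = \frac{1}{|G|}\sum_g g(v)$ while $\sum_g g(v)$ already lies in $N^G \otimes_R E$; the second holds because the $G$-invariants of any finitely generated $E[G]$-module coincide with the image of $e_G$, since $|G|$ is invertible in $E$. Applying this to sections of the $\sF_n$ over \'etale opens and passing through $\otimes_R E$ gives $\sF^G = \image(e_G\colon \sF \to \sF)$.

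Since the category of constructible $E$-sheaves is an $E$-linear abelian category, every idempotent endomorphism splits, and we obtain a direct sum decomposition
\[\sF = \sF^G \oplus (1-e_G)\sF\]
preserved by the additive functor $H^i(X, -)$. The $G$-action on $\sF$ induces a $G$-action on $H^i(X, \sF)$ respecting this decomposition, acting trivially on the first summand. Once more because $|G|$ is invertible in $E$, the $G$-invariants of $H^i(X, \sF)$ equal the image of the induced idempotent $(e_G)_*$; by functoriality this image is exactly the direct summand $H^i(X, \sF^G)$, yielding the equality $H^i(X, \sF^G) = H^i(X, \sF)^G$. The only non-formal step to pin down is the identification of the sheaf $\sF^G$ (defined via the $R$-sheaves $\sF_n^G$, which make sense without any invertibility assumption on $|G|$) with the image of $e_G$ on the $E$-sheaf $\sF$; everything else is routine manipulation of idempotents in an abelian category and additivity of cohomology.
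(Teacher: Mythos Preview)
Your idempotent argument for the cohomology identity is correct and is genuinely cleaner than the paper's approach. The paper instead works at each finite level: it takes an injective resolution $\sF_n\to\sI^\bullet$ in the category of $R/\fm^n[G]$-modules, observes that $(\sI^j)^G$ is still injective over $R/\fm^n$, and tracks how the natural maps $H^i(X,\sF_n^G)\to H^i(X,\sF_n)^G$ have kernel and cokernel killed by $|G|$, before passing to the limit and tensoring with $E$. Your splitting $\sF=\sF^G\oplus(1-e_G)\sF$ in the abelian category of constructible $E$-sheaves sidesteps all of this and makes the formula $H^i(X,\sF^G)=H^i(X,\sF)^G$ essentially formal.

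However, you have skipped the first assertion of the lemma entirely: that the projective system $(\sF_n^G)_n$ \emph{induces} a constructible $R$-sheaf $(\sF')^G$. This is not automatic. A constructible $R$-sheaf in the sense of Definition~\ref{def-lisse-sheaf} requires $\sF_{n+1}\otimes_{R/\fm^{n+1}}R/\fm^n\cong\sF_n$, but the natural maps $\sF_{n_1}^G\otimes_R R/\fm^n\to\sF_n^G$ need not be isomorphisms (taking $G$-invariants is only left exact). The paper shows their cokernels are killed by $N=|G|$, uses finiteness of the stalks to pass to the stable image $(\sF')^G_n\subset\sF_n^G$, and checks that these stable images do form an honest constructible $R$-sheaf. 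Only then is $\sF^G:=(\sF')^G\otimes_R E$ defined. Your module-theoretic check that $N^G\otimes_R E\cong(N\otimes_R E)^G$ is exactly what is needed to identify this $\sF^G$ with $\image(e_G)$ stalkwise, but it does not by itself produce the constructible $R$-sheaf $(\sF')^G$, which is part of what the lemma claims. You should either supply that construction, or recast the lemma so that $\sF^G$ is \emph{defined} as $\image(e_G)$ in the abelian category of constructible $E$-sheaves and then note (using the $|G|$-torsion argument) that this agrees with the $R$-level description.
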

\begin{proof}
Set $N:=|G|$.
For all $n_1\ge n$ we have $\sF_{n_1}\otimes_{R} R/\fm^n= \sF_n$ by definition. 
We obtain natural maps 
\eq{lem-group-action-coh1}{(\sF_{n_1})^G\otimes_R R/\fm^n\to \sF_n^G,}
the cokernel of which is killed by $N$. (Indeed, it suffices to check this on the stalks. If we take a local section
$s_n\in\sF_n^G$  we can lift it to a local section $s\in\sF_{n_1}$. Then $s':=\sum_{\sigma\in G}\sigma(s)$
is a $G$-invariant section of $\sF^G_{n_1}$ which modulo $\fm^n$ equals $N\cdot s_n$.) 
Now the stalks of $\sF_n^G$ are finite (as sets). It follows that the images of the maps \eqref{lem-group-action-coh1}
become stationary for $n_1\to\infty$. We define $(\sF')^G_n\subset\sF^G_n$ to be the intersection over the images of all
these maps. Then $(\sF')^G_n$ is a constructible $R/\fm^n$-subsheaf of $\sF_n^G$  and by construction 
$(\sF')^G_{n+1}\otimes_R R/\fm^n=(\sF')^G_n$. We obtain the constructible $R$-sheaf
$(\sF')^G:=((\sF')^G_n)$ from the first part of the Lemma. Since the cokernel of the inclusion
$(\sF')^G_n\inj \sF^G_n$ is killed by $N$ we obtain:
\[H^i(X, \sF^G) \overset{\text{defn}}{=} (\varprojlim_n H^i(X, (\sF')^G_n))\otimes_R E
= (\varprojlim_n H^i(X, \sF_n^G))\otimes_R E.\]
 Let $\sF_n\to \sI^\bullet$ be a resolution of $\sF_n$ by injective $R/\fm^n[G]$-modules on $X_{\et}$.
The sheaf of $G$-invariants $(\sI^j)^G$ is an  injective sheaf of $R/\fm^n$-modules for all $j$.
(Since the  functor of taking $G$-invariants from the category of  $R/\fm^n[G]$-modules on $X_{\et}$ to the category of 
 $R/\fm^n$-modules on $X_{\et}$ has an exact left adjoint, namely the functor which to an $R/\fm^n$-module
$M$ associates the $R/\fm^n[G]$-module given by the same $M$ with trivial $G$-action.)
Further $H^0({\sI^\bullet}^G)=\sF_n^G$ and $H^j({\sI^\bullet}^G)$ is $N$-torsion for all $j\neq 0$,
by a similar argument as above. It follows that kernel and cokernel of 
$H^i(X, \sF_n^G)\to H^i(\Gamma(X,{\sI^\bullet}^G))$ are killed by $N$.
On the other hand $H^i(\Gamma(X,{\sI^\bullet}^G))=H^i(\Gamma(X,\sI^\bullet)^G)$ and kernel and
cokernel of the natural map $H^i(\Gamma(X,\sI^\bullet)^G)\to H^i(\Gamma(X,\sI^\bullet))^G=H^i(X,\sF_n)^G$
are killed by $N$. All together we obtain a natural map
\[H^i(X, \sF_n^G)\to H^i(X, \sF_n)^G\]
whose kernel and cokernel are killed by $2N$. Taking the limit and tensoring with $E$ we arrive at the statement.

\end{proof}

\part{Ramification theory of $\ell$-adic sheaves}

\section{Grothendieck-Ogg-Shafarevich}\label{sec:GOS} 
\setcounter{subsubsection}{0}
In this section we explain Grothendieck's proof of the Grothendieck-Ogg-Shafarevich formula, partly following Katz.
References for this section are \cite[Exp. X]{SGA5}, \cite[Ch. 2]{Katz/Kloosterman}, see also \cite{Raynaud}.

\subsubsection{}Throughout this section we fix the following notation:
\begin{itemize}
\item $k$ is a perfect field of characteristic $p>0$ and  $\bar{k}$ an algebraic closure.
\item $\ell$ is a prime number different from $p$.
\item $C$ is a smooth proper and geometrically connected curve over $k$.
\item $U\subset C$ is a {\em strict} open subset (in particular it is affine).
\item $K=k(C)$ is the function field of $C$, $\bar{K}$ an algebraic closure and $K^{\rm sep}\subset \bar{K}$ 
         a separable closure.  We denote by $\eta: \Spec K\to C$ the generic point of $C$ and by
          $\bar{\eta}: \Spec \bar{K}\to C$ the induced geometric point over $\eta$.
 \item\label{iotax} For a closed point $x\in C$ we denote by 
         $K_x$ the completion of $K$ with respect to the valuation $v_x:K^\times\to \Z$ corresponding to $x$
             and by $K^{\rm sep}_x$ a separable closure. For all $x$ we choose an embedding
          $\iota_x: K^{\rm sep}\inj K^{\rm sep}_x$ over $K$.
\item $G={\rm Gal}(K^{\rm sep}/K)$ is the absolute Galois group of $K$ and for a closed point $x\in C$
         we denote by   $D_x:=D_x^{\iota_x}$ the image of the inclusion
          ${\rm Gal}(K^{\sep}_x/K_x)\inj G$ induced by $\iota_x$, i.e. $D_x$ is the decomposition 
          subgroup of $G$ with respect to $\iota_x$. We denote by $P_x\subset I_x\subset D_x$ 
          the wild inertia and inertia subgroups, respectively.
         (Notice that if we take a different $K$-embedding $\iota_{x,1}:K^{\rm sep}\inj K^{\rm sep}_x$, 
          then there exists a $\sigma \in G$ such that $\iota_{x,1}=\iota_{x}\circ\sigma$; thus the resulting
           decomposition group is conjugate to $D_x$, i.e. $D^{\iota_{x,1}}_x=\sigma^{-1}D_x\sigma$.
            Same with $I_x, P_x$. Hence $D_x, P_x, I_x$ are only well-defined up to conjugation by elements in
            $G$.)
 \end{itemize}

\setcounter{subsection}{1}\setcounter{subsubsection}{0}

\subsubsection{}\label{swan-lisse-sheaf}
Let $A$ be an $\ell$-adic coefficient ring (Convention
\ref{conv-l-adic-coefficients}) and $\sF$ a free lisse $A$-module on $U$.
By Theorem \ref{thm-sheaf-vs-rep}, we can identify $\sF$ with a representation of $\pi_1(U,\bar{\eta})$ on 
$\sF_{\bar{\eta}}$. The natural surjection $G\to \pi_1(U,\bar{\eta})$ hence induces a 
$G$-action on $\sF_{\bar{\eta}}$, which we can further restrict to a $P_x$-action, $x\in C$.
The action of $P_x$ on $\sF_{\bar{\eta}}$ factors over a finite quotient of $P_x$, see 
Lemma \ref{lemma:proPfactorsThroughFiniteQuotient}. Thus the Swan conductor of the $P_x$-representation
on $\sF_{\bar{\eta}}$ is defined (Definition \ref{def:Swan-conductor}) and denoted by
\[\Swan_x(\sF).\]
We observe that $\Swan_x(\sF)$ is independent of the chosen embedding $\iota_x:
K^{\rm sep}\inj K^{\rm sep}_x$. Indeed, if we choose a different embedding
there is a $\sigma\in G$ such that
we have to consider the restriction of the $G$-action on $\sF_{\bar{\eta}}$ to the group $\sigma^{-1}P_x\sigma$.
But then $\sigma$ maps the break decomposition of $\sF_{\bar{\eta}}$ (Definition \ref{def:breakDecomposition}) 
with respect to $\sigma^{-1}P_x\sigma$ isomorphically to the break decomposition of $\sF_{\bar{\eta}}$
with respect to $P_x$. Hence we obtain the same number $\Swan_x(\sF)$,
 when we compute it with respect to the $\sigma^{-1}P_x\sigma$-action.

We recall the following facts (see Remark \ref{rem:SwanAndBasechange}, Theorem \ref{thm:integralityOfSwanConductor}):
\begin{enumerate}
\item\label{swan-lisse-sheaf-bc} 
 Let $A'$ be an $\ell$-adic coefficient ring and $A\to A'$ a ring homomorphism. Then for all $x\in C$
\[\Swan_x(\sF\otimes_A A')=\Swan_x(\sF).\]
\item $\Swan_x(\sF)\in \Z_{\ge 0}$.
\item\label{swan-lisse-sheaf-tame} $\Swan_x(\sF)=0\Longleftrightarrow $ $\sF$ is tame at $x$, i.e.~$(\sF_{\bar{\eta}})^{P_x}=\sF_{\bar{\eta}}$.
        In particular, $\Swan_x(\sF)=0$ for all $x\in U$.
\item\label{swan-lisse-sheaf-exactness} 
         Let $0\to \sF'\to\sF\to \sF''\to 0 $ be an exact sequence of free lisse $A$-sheaves on $U$. Then
          \[\Swan_x(\sF)=\Swan_x(\sF')+\Swan_x(\sF'').\]
\end{enumerate}

\subsubsection{} Recall  that the compactly supported Euler characteristic of a  lisse $\Qlb$-sheaf $\sF$ on $U$ 
is defined to be 
\begin{align*}
\chi_c(\bar{U},\sF) & =\sum_i (-1)^i \dim_{\Qlb} H^i_c(\bar{U}, \sF)\\
     &     =-\dim_{\Qlb} H^1_c(\bar{U},\sF)+\dim_{\Qlb} H^2_c(\bar{U}, \sF),
\end{align*}
where $\bar{U}=U\otimes_k\bar{k}$ and we abuse notation denoting the pullback of $\sF$ to $\bar{U}$ still by $\sF$.

The {\em Grothendieck-Ogg-Shafarevich}\index{Grothendieck-Ogg Shafarevich theorem}
 formula is the following theorem.
\begin{theorem}\label{thm-GOS}
Let $\sF$ be a lisse $\Qlb$-sheaf on $U$. Then
\[\chi_c(\bar{U},\sF)=
\rk(\sF)\cdot\chi_c(\bar{U},\Qlb) -\sum_{x\in C\setminus U}[k(x):k]\cdot \Swan_x(\sF).\]
\end{theorem}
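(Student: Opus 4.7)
The plan follows Grothendieck's strategy from SGA 5, Exp.~X. I will first perform several reductions. Both sides of the formula are invariant under base change to $\bar k$ (Remark~\ref{rmk-coh-non-alg-closed}), so I assume $k=\bar k$. Writing $\sF=\sF'\otimes_R\bar{\Q}_\ell$ for a lisse $R$-sheaf $\sF'=(\sF'_n)$ over a DVR $R$ finite over $\Z_\ell$, the three quantities $\chi_c(\bar U,\sF)$, $\rk\sF$ and $\Swan_x(\sF)$ are all governed by $\sF'_n$ for $n$ large (using \ref{swan-lisse-sheaf}, \ref{swan-lisse-sheaf-bc}), so it suffices to establish the formula for lisse $R/\fm^n$-sheaves. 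For such finite coefficients, continuity and finiteness force the representation of $\pi_1(U,\bar\eta)$ to factor through a finite Galois quotient $G=\Gal(V/U)$ for a connected finite \'etale Galois cover $\pi_U:V\to U$. Normalizing $C$ in $k(V)$ produces a smooth proper $k$-curve $D$ with $G$-action and a finite $G$-equivariant map $\pi:D\to C$ extending $\pi_U$. By additivity in short exact sequences (via \ref{coh-properties}, \ref{long-exact-seq} and \ref{swan-lisse-sheaf}, \ref{swan-lisse-sheaf-exactness}), it is enough to work with $\bar{\Q}_\ell[G]$-coefficients; denote the resulting representation by $M$.

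Next I reinterpret the cohomology via the cover: $\sF\cong(\pi_{U,*}M_V)^G$, so Lemma~\ref{lem-group-action-coh} combined with the vanishing of higher direct images under a finite map (\ref{coh-properties}, \ref{finite-map}) gives $H^i_c(\bar U,\sF)\cong(H^i_c(\bar V,\bar{\Q}_\ell)\otimes_{\bar{\Q}_\ell}M)^G$. Taking alternating sums and using $\dim N^G=|G|^{-1}\sum_g\chi_N(g)$,
\[\chi_c(\bar U,\sF)=\frac{1}{|G|}\sum_{g\in G}\chi_{H^*_c(\bar V)}(g)\,\chi_M(g),\]
where $\chi_{H^*_c(\bar V)}(g):=\sum_i(-1)^i\Tr(g\mid H^i_c(\bar V,\bar{\Q}_\ell))$. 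The $g=1$ contribution is $\chi_c(\bar V,\bar{\Q}_\ell)\cdot(\dim M)/|G|=\rk(\sF)\cdot\chi_c(\bar U,\bar{\Q}_\ell)$, using multiplicativity of $\chi_c$ by the degree for finite \'etale covers.

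The geometric crux is the evaluation for $g\neq 1$. Excision (\ref{coh-properties}, \ref{compact-localization}) for $V\hookrightarrow D$ gives $\chi_{H^*_c(\bar V)}(g)=\chi_{H^*(\bar D)}(g)-\#\{y\in D\setminus V:gy=y\}$, since $g$ acts on the finite set $D\setminus V$ by permutation. Since $V\to U$ is \'etale Galois, $g$ acts freely on $V$, so every fixed point of $g$ on $D$ lies above $C\setminus U$. The hypothesis $k=\bar k$ forces $D_y=I_y$ at each such $y$, hence $\hat K_y/\hat K_{\pi(y)}$ is totally ramified; by Theorem~\ref{thm:generator} a uniformizer $\pi_y$ generates $\hat\sO_{D,y}$ over $\hat\sO_{C,\pi(y)}$, so the ideal cutting out $\Gamma_g\cap\Delta_D$ at $y$ is $(g(\pi_y)-\pi_y)$, of length $i_{D_y}(g)=v_y(g(\pi_y)-\pi_y)$ from~\eqref{iG}. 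Applying the Lefschetz trace formula (Theorem~\ref{thm-LTF}),
\[\chi_{H^*_c(\bar V)}(g)=\sum_{\substack{y\in\pi^{-1}(C\setminus U)\\ gy=y}}\bigl(i_{D_y}(g)-1\bigr),\qquad g\neq 1.\]

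Finally I identify the Swan character. Regrouping the $g\neq 1$ sum by $y$ (noting the stabilizer of $y$ in $G$ is $D_y$), the contribution of $g\neq 1$ to $\chi_c(\bar U,\sF)$ becomes
\[\sum_{y\in\pi^{-1}(C\setminus U)}\frac{1}{|G|}\sum_{g\in D_y\setminus\{1\}}\bigl(i_{D_y}(g)-1\bigr)\chi_M(g).\]
In the totally ramified situation (Definition~\ref{def:swan-char}) one has $a_{D_y}(g)=-i_{D_y}(g)$ for $g\neq 1$, $a_{D_y}(1)=\sum_{g\neq 1}i_{D_y}(g)$, and $\sw_{D_y}=a_{D_y}-u_{D_y}$ with $u_{D_y}=r_{D_y}-1_{D_y}$, so a direct expansion collapses the inner sum to $-|D_y|\langle\sw_{D_y},\chi_M|_{D_y}\rangle=-|D_y|\cdot\Swan_{\pi(y)}(\sF)$. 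Orbit-stabilizer gives $|\pi^{-1}(x)|=|G|/|D_y|$ for each $x=\pi(y)\in C\setminus U$, and $[k(x):k]=1$ since $k=\bar k$, so the double sum collapses to $-\sum_{x\in C\setminus U}\Swan_x(\sF)$, yielding the claim. The hardest part will be the ramified Lefschetz computation: identifying the local intersection number at a wildly ramified fixed point with $i_{D_y}(g)$, and then performing the combinatorial rearrangement that extracts the Swan character $\sw_{D_y}$ from the expression $i_{D_y}(g)-1$. The key insight is that the $-1$ summand from excision for each fixed point provides exactly the subtraction of $u_{D_y}$ needed to pass from the Artin character to the Swan character.
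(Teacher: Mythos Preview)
Your overall strategy is Grothendieck's, the same as the paper's: reduce to a finite Galois cover trivializing the sheaf, express $H^i_c(\bar U,\sF)$ as $G$-invariants of $H^i_c(\bar V,\Qlb)\otimes M$, apply Lefschetz to each group element, and recognize the Swan character. But there are two compensating errors in the bookkeeping. The claim that the $g=1$ term equals $\rk(\sF)\cdot\chi_c(\bar U,\Qlb)$ via ``multiplicativity of $\chi_c$ by the degree for finite \'etale covers'' is false: the cover $V\to U$ is \'etale, but its extension $D\to C$ is generally wildly ramified along $C\setminus U$ (that is the whole point), and Hurwitz gives $\chi_c(\bar V)=|G|\,\chi_c(\bar U)-\sum_{y\in D\setminus V}\sw_{D_y}(1)$, not $|G|\,\chi_c(\bar U)$. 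Correspondingly, your inner sum $\sum_{g\in D_y\setminus\{1\}}(i_{D_y}(g)-1)\chi_M(g)$ equals $-|D_y|\langle\sw_{D_y},\chi_M\rangle+\rk(\sF)\cdot\sw_{D_y}(1)$, not $-|D_y|\langle\sw_{D_y},\chi_M\rangle$: you have silently dropped the $g=1$ contribution to $\langle\sw_{D_y},\chi_M\rangle$. These two mistakes cancel, so your final formula is correct, but the argument as written is wrong. The paper keeps the $\sigma=1$ term together with the others and uses the explicit formula for $\sw_{G_{U'},x}(1)$ from Proposition~\ref{prop-swan-formula}\ref{prop-swan-formula-sw} and Lemma~\ref{lem-EulerChar-Hur} to separate out $\rk(\sF)\cdot\chi_c(\bar U)$ only at the end.

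Separately, your reduction ``it suffices to establish the formula for lisse $R/\fm^n$-sheaves \ldots\ it is enough to work with $\bar{\Q}_\ell[G]$-coefficients'' is not justified by additivity alone. The step from $\Qlb$ down to $\F_\lambda$ needs a definition of $\chi_c$ for non-field coefficients that is compatible with base change; the paper builds this from a two-term complex of free modules (Lemma~\ref{lem-proj-resolution}, Corollary~\ref{cor-EulerChar-lisse-A-sheaf}). The step back up from $\F_\lambda[G]$-modules to characteristic-zero $E[G]$-modules needs the surjectivity of the decomposition map $d:R_E(G)\to R_{\F_\lambda}(G)$ (Theorem~\ref{thm:mainRepThmIntegrality}\ref{thm:mainRepThmIntegrality-d-surj}), a genuine input from modular representation theory rather than a formal consequence of exactness.
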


{\em Outline of the proof:} First we give a more general definition of the compactly supported Euler characteristic
which works for any free lisse $A$-sheaf $\sF$, where
 $A$ is any $\ell$-adic coefficient ring, and does not change when we replace $\sF$ by 
$\sF\otimes_A A'$, for any $A\to A'$. Using this and the corresponding fact for the Swan conductor we can 
reduce the theorem to a statement about lisse $\F_\lambda$-sheaves, where $\F_\lambda$ is a finite extension of $\F_\ell$.
The corresponding representation has finite monodromy group $G'$. Using facts from the representation theory in mixed characteristic
explained in Section \ref{subsubsec:RepMixedChar} we find a lisse $E$-sheaf $\sF'$, where $E$ is a finite extension of $\Q_\ell$, which
has the same finite monodromy group $G'$ and its compactly supported Euler characteristic and its Swan conductors are equal
to those of $\sF$. Thus we are reduced to the case where $\sF$ is a lisse $\bar{\Q}_\ell$-sheaf on $U$ which is trivialized 
by a finite \'etale cover $U'\to U$. In this case we can prove the theorem via a direct computation relying 
on the Lefschetz trace formula, the Hurwitz genus formula and the results from Section \ref{sec:swan}.
\\
\\
After developing some preliminaries, we will give a detailed proof at the end of this subsection.

\begin{lemma}\label{lem-swan-alg-closed}
Let $\bar{x}\in C\otimes_k\bar{k}$ be a closed point mapping to $x\in C$.
Let $A$ be an $\ell$-adic coefficient ring and $\sF$ a lisse $A$-sheaf on $U$.
Then 
\[\Swan_x(\sF)=\Swan_{\bar{x}}(\sF),\]
where here as everywhere we abuse notation and write $\sF$ on the right-hand side instead of $\sF_{|U\otimes_k\bar{k}}$.
\end{lemma}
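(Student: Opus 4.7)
The essential point is that both $\Swan_x(\sF)$ and $\Swan_{\bar x}(\sF)$ are defined in terms of a wild inertia group acting on the same vector space $\sF_{\bar\eta}$, and the two wild inertia groups coincide under a natural identification. My plan is to make this identification explicit.

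First I would set up the local picture. Since $k$ is perfect and $k(x)/k$ is finite separable, $\bar k$ is an algebraic closure of $k(x)$, and by the Cohen structure theorem one has $\widehat{\sO}_{C,x}\cong k(x)\llbracket t\rrbracket$, so that $K_x\cong k(x)\llparen t\rrparen$. A closed point $\bar x$ of $C\otimes_k\bar k$ lying over $x$ corresponds to a $k$-embedding $\sigma:k(x)\hookrightarrow\bar k$ and one obtains $\widehat{\sO}_{C\otimes_k\bar k,\bar x}\cong\bar k\llbracket t\rrbracket$, so $K_{\bar x}\cong\bar k\llparen t\rrparen$. Thus $K_{\bar x}/K_x$ is the ``unramified'' extension with residue extension $\bar k/k(x)$; concretely, $K_{\bar x}$ is the completion $\widehat{K_x^{\unr}}$ of the maximal unramified extension of $K_x$ determined by the embedding $\iota_x$.

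Second, I would translate this into a statement about Galois groups. Using Krasner's lemma together with the fact that for a complete DVR the separable closure of $K_x$ can be obtained by taking towers of totally ramified extensions over the maximal unramified extension, one has a canonical isomorphism
\[
G_{K_{\bar x}}\cong I_{K_x},
\]
and under this isomorphism the wild inertia groups match: $P_{K_{\bar x}}=P_{K_x}$. Composed with $G_{K_x}\to G\to\pi_1(U,\bar\eta)$ (the map coming from $\iota_x$) and with $G_{K_{\bar x}}\to\pi_1(\bar U,\bar\eta)\to\pi_1(U,\bar\eta)$ respectively, both wild inertia groups act on $\sF_{\bar\eta}$ through literally the same homomorphism.

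Third, once the two wild inertia actions on $\sF_{\bar\eta}$ have been identified, the break decompositions of \S\ref{prop:breakDecomposition} (and hence the numerical invariants in Definition \ref{def:Swan-conductor}) coincide, giving $\Swan_x(\sF)=\Swan_{\bar x}(\sF)$. The main technical step -- and the only real obstacle -- is carefully justifying the isomorphism $G_{K_{\bar x}}\cong I_{K_x}$ respecting wild inertia, i.e.\ checking that completing the maximal unramified extension neither changes the absolute Galois group nor distorts the upper numbering filtration on it; the independence of the choice of embedding $\iota_x$, observed in \ref{swan-lisse-sheaf}, then guarantees that this identification is well-defined.
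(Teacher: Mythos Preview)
Your proposal is correct and follows essentially the same route as the paper: identify $K_{\bar x}$ with (the completion of) the maximal unramified extension of $K_x$, deduce $G_{K_{\bar x}}\cong I_x$ with matching wild inertia, and conclude that the $P$-actions on $\sF_{\bar\eta}$ coincide. The paper's argument is slightly terser (it uses Hensel's lemma to pin down the prime and simply asserts the Galois identification), whereas you are more explicit about the Cohen structure theorem and the completion issue via Krasner's lemma; but the underlying idea is identical.
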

\begin{proof}
The point $\bar{x}$ corresponds to a prime ideal in $k(x)\otimes_k \bar{k}$ which by Hensel's Lemma
corresponds uniquely to a prime ideal in $K_x\otimes_k \bar{k}$.
Denote by $L$ the completion of the function field of $U\otimes_k \bar{k}$
with respect to $\bar{x}$; it is the localization of $K_x\otimes_k \bar{k}$
with respect to the prime ideal corresponding to $\bar{x}$. 
Then $L$ is a complete discrete valuation field with residue field $\bar{k}$. 
Since $k$ is perfect, the canonical inclusion $K_x\inj L$ is unramified. 
It follows that $L$ is isomorphic to the maximal unramified extension of $K_x$ inside 
$K_x^{\rm sep}$, hence its absolute Galois group is isomorphic to the inertia group $I_x$.
Hence the restriction to $I_x$ of the representation  corresponding to $\sF$ is isomorphic
to the representation corresponding to $\sF_{|U\otimes_k \bar{k}}$.
The claim follows.
\end{proof}

\begin{lemma}\label{lem-proj-resolution}
Assume $k=\bar{k}$. Let $A$ be an $\ell$-adic coefficient ring and $\sF$ a lisse $A$-sheaf on $U$.
Then there exists a two term complex of $A$-modules
\[C(\sF): C^1(\sF)\to C^2(\sF)\]
with the following properties:
\begin{enumerate}
\item $H^i(C(\sF))= H^i_c(U, \sF)$, all $i$.
\item\label{lem-proj-resolution-free} If $\sF$ is free then $C^i(\sF)$ is a free $A$-module of finite rank.
\item\label{lem-proj-resolution-exact} The functor $\sF\mapsto C(\sF)$ is exact.
\item\label{lem-proj-resolution-bc} 
  If $A'$ is an $\ell$-adic coefficient ring and $A\to A'$ a ring homomorphism and $\sF$ is free, we have a canonical  
     isomorphism   $C(\sF)\otimes_A A' \cong C(\sF\otimes_A A')$.
\end{enumerate}
\end{lemma}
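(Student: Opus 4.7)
The plan is to build the complex $C(\sF)$ first for finite coefficient rings and then pass to general $\ell$-adic $A$ by an inverse-limit argument. For finite coefficients, the key observation is that compactly supported cohomology on the affine curve $U$ vanishes outside degrees $1$ and $2$, so $R\Gamma_c(U,\sF)$ is a perfect complex concentrated in degrees $[1,2]$; the nontrivial part is realizing this by a functorial two-term complex of free $A$-modules.

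First I would establish the vanishing ranges. By Corollary \ref{cor-affine-vanishing-copact-support} together with \ref{coh-properties}, \ref{general-vanishing}, $H^i_c(U,\sF)=0$ for $i\notin\{1,2\}$, since $U$ is a smooth affine curve and $\sF$ is lisse; the vanishing $H^0_c=0$ comes from the fact that $\sF$ is lisse on the dense open $U\subsetneq C$, so any section of $j_!\sF$ on $C$ must vanish. Moreover these groups are finite over $A$ by \ref{coh-properties}, \ref{coh-finite} when $A$ is finite. Thus $R\Gamma_c(U,\sF)$ is quasi-isomorphic to a two-term complex of finite $A$-modules in degrees $1,2$.

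Next, for $A$ finite, I would realize this quasi-isomorphism class by an explicit cochain model on the proper compactification $C$. Choose a functorial acyclic resolution (e.g.\ the Godement resolution) $j_!\sF\to\mathcal{I}^\bullet$ on $C$; the complex $\Gamma(C,\mathcal{I}^\bullet)$ computes $H^*_c(U,\sF)$ and is manifestly exact in $\sF$. Taking the canonical truncation $\tau_{\geq 1}\tau_{\leq 2}$ produces a two-term complex of $A$-modules in degrees $1,2$ satisfying properties (1) and (3) tautologically. For property (2), I would replace this complex by a minimal free representative, using that $A$ is local and the cohomology groups are finite: a perfect complex of $A$-modules over a local Noetherian ring with cohomology in two adjacent degrees admits a two-term representative by finite free $A$-modules. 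Property (4) then follows from the usual projection formula for $-\otimes_A A'$ once the terms are free.

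The main obstacle will be to simultaneously satisfy freeness of the terms and exactness in $\sF$, since passing to a minimal free resolution is generally not functorial. To resolve this, I would instead fix once and for all an affine open cover $C=W_1\cup W_2$ and build $C(\sF)$ directly from the \v{C}ech cochain complex of a Godement-type resolution of $j_!\sF$ adapted to this cover. Freeness then reduces to showing that the \v{C}ech terms, after canonical truncation, can be arranged to be free; this is possible because over each affine intersection $j_!\sF$ is constructible with lisse restriction, and after pullback to a finite Galois \'etale cover trivializing $\sF$, the \v{C}ech terms become direct sums of free $A$-modules. Once the case of finite $A$ is settled, extension to general $\ell$-adic $A$ is routine: writing $\sF=(\sF_n)\otimes_R A$, define $C(\sF):=(\varprojlim_n C(\sF_n))\otimes_R A$; the Mittag-Leffler condition is automatic since each $C^i(\sF_n)$ is finite over $R/\fm^n$, so inverse limit commutes with cohomology, and all four properties transfer from each finite level.
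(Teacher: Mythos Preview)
Your diagnosis of the obstacle is correct: canonical truncation of a Godement complex yields exact functors but not finite free terms, while passing to a minimal free representative destroys functoriality. However, your proposed fix via a \v{C}ech--Godement bicomplex does not resolve this. The Godement resolution has terms that are infinite products of stalks; even after restricting to a two-term window you will not obtain finite free $A$-modules. The appeal to trivializing $\sF$ on a Galois cover does not help either, since $j_!\sF$ is not lisse on $C$ and the \v{C}ech terms you would be manipulating are global sections of flasque sheaves on opens of $C$, not anything manifestly free of finite rank. As written, the argument for property \ref{lem-proj-resolution-free} is a hope, not a proof.

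The paper sidesteps truncation entirely by a different construction. Fix once and for all a closed point $P\in U$ and set
\[
C^1(\sF):=H^1\bigl(C\setminus P,\,(j_!\sF)_{|C\setminus P}\bigr),\qquad
C^2(\sF):=H^0\bigl(P,\,(j_!\sF)(-1)_{|P}\bigr)\cong \sF_{\bar P}(-1).
\]
The point is that $C\setminus P$ is affine, so by \ref{coh-properties}, \ref{affine-vanishing}, together with the vanishing of $H^0_c$ you already noted, $H^i(C\setminus P,(j_!\sF)_{|C\setminus P})$ vanishes for $i\neq 1$; likewise $H^i(P,\cdot)=0$ for $i\neq 0$. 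Thus each $C^i$ is the \emph{unique} nonvanishing cohomology group of a left-exact functor, hence is itself an exact functor of $\sF$; this gives \ref{lem-proj-resolution-exact} for free. The differential $C^1(\sF)\to C^2(\sF)$ and property (1) come from the localization sequence for $P\hookrightarrow C$ combined with purity (Theorem \ref{thm-purity}). For \ref{lem-proj-resolution-free} and \ref{lem-proj-resolution-bc} one reduces to finite $A$ as you suggest, and then runs a $\sTor$ argument: resolving an arbitrary $A$-module $M$ by finite free modules and using exactness of $C^i$ gives $C^i(\sTor_j^A(\sF,M))=\sTor_j^A(C^i(\sF),M)$; for $\sF$ free the left side vanishes for $j\ge 1$, forcing $C^i(\sF)$ to be flat, hence free since $A$ is local, and $j=0$ yields base change. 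The passage from finite $A$ to general $\ell$-adic $A$ is then exactly as in your final paragraph.
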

\begin{proof}We are following the proof given in \cite[Lem.~2.2.7]{Katz/Kloosterman}.
Denote by $j:U\inj C$ the inclusion of $U$ into $C$. Pick once and for all a closed point $P\in U$.
It is a closed subscheme of $U$ of codimension 1 which is smooth over $k$. 
Thus the long exact localization
sequence \ref{coh-properties}, \ref{coh-support} together with purity (Theorem \ref{thm-purity}) gives a long exact sequence
\[\ldots\to H^i(C, j_!\sF)\to H^i(C\setminus P, (j_!\sF)_{|C\setminus P})\to
        H^{i-1}(P, (j_!\sF)(-1)_{|P})\to\ldots. \]
By definition $H^i(C, j_!\sF)= H^i_c(U,\sF)$, which vanishes for $i\neq 1,2$ by 
Corollary \ref{cor-affine-vanishing-copact-support}. By \ref{coh-properties}, \ref{affine-vanishing}, we have
\eq{lem-proj-resolution1}{H^i(P, (j_!\sF)(-1)_{|P})=0,\quad \text{for } i\neq 0.}
From this  and \ref{coh-properties}, \ref{affine-vanishing} we get
\eq{lem-proj-resolution2}{H^i(C\setminus P, (j_!\sF)_{|C\setminus P})=0,\quad
              \text{for } i\neq 1.}
Set 
\[C^1(\sF):=H^1(C\setminus P, (j_!\sF)_{|C\setminus P}),\quad 
 C^2(\sF):=H^0(P, (j_!\sF)(-1)_{|P}). \]
By \eqref{lem-proj-resolution1} and \eqref{lem-proj-resolution2} the functors $\sF\mapsto C^i(\sF)$ are exact and
by the long exact sequence above we get a complex $C^1(\sF)\to C^2(\sF)$ whose cohomology groups are equal
to $H^i_c(U,\sF)$. It remains to prove \ref{lem-proj-resolution-free} and \ref{lem-proj-resolution-bc}.
Notice that both statements are obvious if $A$ is a field extension of $\Q_\ell$. Thus we may assume that
there exists a DVR $R$ finite over $\Z_\ell$ that surjects onto $A$. In this case we can represent $\sF$ as a
projective system $(\sF_n)$ and by definition $C^i(\sF)=\varprojlim_n C^i(\sF_n)$.
Since $C^i(\sF)$ is a finitely generated $A$-module it suffices to prove \ref{lem-proj-resolution-free}
for $A$ finite, i.e. $A=R/\fm^n$.  Furthermore let $\pi\in\fm$ be a local parameter of $R$.
Then we have an exact sequence $0\to\sF\xr{\cdot\pi^n}\sF\to\sF_n\to 0$, which yields
$C^i(\sF)/\pi^n\cong C^i(\sF_n)$. If $R\to R'$ is a finite extension we get
\[C^i(\sF)\otimes_R R'=\varprojlim_n(C^i(\sF)\otimes_{R/\fm^n} R'/{\fm'}^n )=
       \varprojlim_n(C^i(\sF_n)\otimes_{R/\fm^n} R'/{\fm'}^n ).\]
Thus in \ref{lem-proj-resolution-bc} it suffices to consider $A$  (and hence also $A'$) finite.

So let $A$ be a finite local ring with residue characteristic prime to $p$ and $\sF$ a lisse  $A$-sheaf
on $U$. If $M$ is a finitely generated $A$-module we find a resolution $P_\bullet\to M$ by free finitely 
generated $A$-modules. We can view $M$ and the $P_j$ as constant sheaves on $U$ and form the
complex of lisse $A$-sheaves $\sF\otimes_A P_\bullet$, which is augmented towards $\sF\otimes_A M$.
The $j$-th homology of this complex is by definition $\sTor_j^A(\sF, M)$, which is a lisse
$A$-sheaf. Since $P_\bullet$ is a complex of free $A$-modules, we obtain 
$C^i(\sF\otimes_A P_\bullet)=C^i(\sF)\otimes_A P_\bullet$. All together the exactness of the $C^i$ gives
\[C^i(\sTor_j^A(\sF, M))=H_j(C^i(\sF\otimes_A P_\bullet))=
   H_j(C^i(\sF)\otimes_A P_\bullet)={\sTor}_j^A(C^i(\sF), M).\]
In case $\sF$ is free, the left-hand side vanishes for $j\ge 1$ thus $C^i(\sF)$ 
is a finitely generated flat $A$-module, hence is free. Furthermore, the case $j=0$ gives the base change statement.
This finishes the proof.
\end{proof}
As an immediate consequence we get:
\begin{corollary}\label{cor-EulerChar-lisse-A-sheaf}
Assume $k=\bar{k}$.
Let $\sF$ be a free lisse $A$-sheaf on $U$ and $C(\sF)$ the complex from Lemma \ref{lem-proj-resolution}.
Then we define 
\[\chi_c(U,\sF):= -\rk_A(C^1(\sF))+\rk_A(C^2(\sF)).\]
We have:
\begin{enumerate}
\item If $H^i_c(U,\sF)$ is a free $A$-module, for $i=1,2$, then 
          \[\chi_c(U,\sF)=\sum_i (-1)^i \rk_A H^i_c(U, \sF).\]
\item\label{cor-EulerChar-lisse-A-sheaf-bc} 
          If $A'$ is an $\ell$-adic coefficient ring and $A\to A'$ a ring homomorphism, then
        \[\chi_c(U,\sF)= \chi_c(U,\sF\otimes_A A').\]
\item\label{cor-EulerChar-lisse-A-sheaf-exactness}
        If $0\to \sF'\to \sF\to \sF''\to 0$ is an exact sequence of free lisse $A$-sheaves on $U$, then
        \[\chi_c(U,\sF)=\chi_c(U,\sF')+\chi_c(U,\sF'').\]
\end{enumerate}
\end{corollary}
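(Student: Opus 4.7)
The plan is to deduce all three statements essentially for free from the properties of the two-term complex $C(\sF) = (C^1(\sF) \to C^2(\sF))$ constructed in Lemma \ref{lem-proj-resolution}. Since $\chi_c(U,\sF)$ is defined as the Euler characteristic of this complex of free $A$-modules of finite rank (Lemma \ref{lem-proj-resolution}, \ref{lem-proj-resolution-free}), the verification reduces in each case to standard rank computations; there is no substantial obstacle, only bookkeeping.

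For (a), I would unpack the definition of the cohomology of $C(\sF)$ into the four-term exact sequence
\[
0 \to H^1_c(U,\sF) \to C^1(\sF) \to C^2(\sF) \to H^2_c(U,\sF) \to 0
\]
of $A$-modules, using that $H^i_c(U,\sF) = H^i(C(\sF))$. Under the hypothesis that both $H^i_c(U,\sF)$ are free $A$-modules, every term is free of finite rank, so the alternating sum of ranks vanishes, giving $\rk_A C^2(\sF) - \rk_A C^1(\sF) = \rk_A H^2_c(U,\sF) - \rk_A H^1_c(U,\sF)$, which is exactly the asserted formula.

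For (b), I would apply Lemma \ref{lem-proj-resolution}, \ref{lem-proj-resolution-bc}: the canonical isomorphism $C(\sF) \otimes_A A' \cong C(\sF \otimes_A A')$ implies $\rk_{A'} C^i(\sF \otimes_A A') = \rk_A C^i(\sF)$ for $i=1,2$, and the equality of Euler characteristics follows at once. For (c), I would invoke the exactness of the functor $\sF \mapsto C(\sF)$ (Lemma \ref{lem-proj-resolution}, \ref{lem-proj-resolution-exact}): a short exact sequence $0 \to \sF' \to \sF \to \sF'' \to 0$ of free lisse $A$-sheaves yields short exact sequences $0 \to C^i(\sF') \to C^i(\sF) \to C^i(\sF'') \to 0$, and since all terms are free $A$-modules of finite rank by \ref{lem-proj-resolution-free}, the ranks add, giving $\chi_c(U,\sF) = \chi_c(U,\sF') + \chi_c(U,\sF'')$.

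In summary, the proof is three short applications of additivity of rank in short exact sequences of free modules, combined with the structural properties of $C(\sF)$ already collected in the preceding lemma; no further ingredients are required.
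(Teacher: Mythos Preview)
Your proposal is correct and is precisely the argument the paper has in mind: the paper states the corollary with no proof beyond ``As an immediate consequence we get'', and the immediate consequence is exactly the three rank computations you spell out using the properties of $C(\sF)$ from Lemma~\ref{lem-proj-resolution}.
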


\subsubsection{}\label{Swan-representation-geometry}
Assume $k=\bar{k}$.
Let $U'\to U$ be a connected finite \'etale Galois covering with Galois group $G_{U'}={\rm Gal}(K'/K)$, where $K'=k(U')$.
We can extend it uniquely to a finite morphism $C'\to C$ between smooth proper and connected curves
over $k$. Let $x'\in C'\setminus U'$ be a closed point lying over 
$x\in C\setminus U$ and 
$G_{U',x'}=\{\sigma\in G_{U'}\,|\, \sigma(x')=x'\}={\rm Gal}(K'_{x'}/K_x)$ the decomposition group of 
 $G_{U'}$ at $x'$. By Theorem \ref{thm:integralityOfSwan} there exists a finitely generated and projective $\Z_\ell[G_{U',x'}]$-module
    \[\Sw_{G_{U',x'}}\]
   underlying the Swan representation of $G_{U',x'}$.
We define
\[\Sw_{G_{U'}, x}:= \Sw_{G_{U',x'}}\otimes_{\Z_{\ell}[G_{U',x'}]} \Z_\ell[G_{U'}].\]
and we denote its character by
\eq{Swan-representation-geometry:char}{\sw_{G_{U'}, x}: G_{U'}\to \Z_\ell.}

\begin{proposition}\label{prop-swan-formula}
In the situation above we have the following properties:
\begin{enumerate}
\item\label{prop-swan-formula-independence} The definition of $\Sw_{G_{U'}, x}$ is independent of the choice of $x'/x$.
\item\label{prop-swan-formula-sw} \[\sw_{G_{U'},x}(\sigma)= \begin{dcases} 
  \sum_{\genfrac{}{}{0pt}{}{y/x}{\sigma(y)=y}} (1-i_{G_{U',y}}(\sigma)),  & \text{if }\sigma\neq1,\\
  \left(\sum_{y/x} (1+v_{y}(\mathfrak{D}_{C'/C}))\right)- |G_{U'}|, &\text{if } \sigma=1,
 \end{dcases}\]
     where $y\in C'$ maps to $x$,  $i_{G_{U',y}}: G_{U',y}\to \Z$ is the function 
  defined in \eqref{iG} and $v_{y}(\mathfrak{D}_{C'/C})$ is the multiplicity at $y$ of the different
   $\mathfrak{D}_{B/\sO_{C,x}}$, where $B$ is the integral closure of $\sO_{C,x}$ in $K'$,
   see Definition \ref{def:different}.
\item\label{prop-swan-formula-cond-rep} Let $R$ be a complete DVR which is finite over $\Z_\ell$ 
         with maximal ideal $\fm$ and residue field $\F_\lambda$. 
Let $\sF$ be a free lisse $R$-sheaf and assume that $(\sF\otimes_R \F_\lambda)_{|U'}$ is trivial, i.e.
we can view $\sF_{\bar{\eta}}\otimes_R \F_\lambda$ as a $\F_{\lambda}[G_{U'}]$-module. 
    Then
  \[\Swan_x(\sF)= \Swan_x(\sF\otimes_R \F_\lambda)=   \dim_{\F_\lambda} 
 \Hom_{\F_\lambda[G_{U'}]}(\Sw_{G_{U'}, x}\otimes_{\Z_\ell} \F_\lambda,\sF_{\bar{\eta}}\otimes_{R} \F_\lambda ).\]
     \item\label{prop-swan-formula-Swan} 
       Let $\sF$ be a  lisse $\Qlb$-sheaf on $U$ such that $\sF_{|U'}$ is trivial, i.e. $\sF_{\bar{\eta}}$ is a 
         $\Qlb[G_{U'}]$-module.
     Then for all closed points $x\in C$
        \[\Swan_x(\sF)= 
            \frac{1}{|G_{U'}|}\sum_{\sigma\in G_{U'}} \sw_{G_{U'},x}(\sigma)\cdot\Tr(\sigma|\sF_{\bar{\eta}}).\]
\end{enumerate}
\end{proposition}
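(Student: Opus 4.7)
The plan is to prove the four parts of the proposition in order. Throughout I will exploit the fact that because $k=\bar k$, every closed point of $C$ (resp.~$C'$) has residue field $k$, so the residue extension at any $x'/x$ is trivial; in particular the decomposition group $G_{U',x'}$ equals the inertia group and the residue degree is $f=1$ in the notation of Definition \ref{def:swan-char}. I will also use Frobenius reciprocity to move between the local group $G_{U',x'}$ and the global group $G_{U'}$, and the fact from Remark \ref{rem:independenceOfFiniteQuotient} that the Swan conductor of a representation can be computed against $\Sw_G$ for any finite quotient $G$ through which it factors. For (a), $G_{U'}$ acts transitively on the fiber of $C'\to C$ over $x$, so any two points $x',x''$ over $x$ are related by some $\sigma\in G_{U'}$. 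The inner automorphism $\tau\mapsto\sigma\tau\sigma^{-1}$ is then a group isomorphism $G_{U',x'}\xrightarrow{\sim}G_{U',x''}$ that respects the lower-numbering ramification filtrations (it is induced by an isomorphism of completed local fields), so it identifies $\Sw_{G_{U',x'}}$ with $\Sw_{G_{U',x''}}$; the standard isomorphism $\Ind_H^G V\cong\Ind_{\sigma H\sigma^{-1}}^G V^\sigma$ then yields the required isomorphism between the two induced $\Z_\ell[G_{U'}]$-modules.

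For (b) I will fix some $x'/x$ and set $H:=G_{U',x'}=H_0$. By the last sentence of Definition \ref{def:swan-char} (applied to the locally totally ramified extension $K'_{x'}/K_x$) one has $\sw_H=a_H-u_H$, and a direct computation gives $\sw_H(\tau)=1-i_H(\tau)$ for $\tau\neq 1$ and $\sw_H(1)=\sum_{\tau\neq 1}i_H(\tau)-|H|+1$; Proposition \ref{valuation-of-discriminant}\ref{item:valuation-of-discriminant:b} rewrites the last sum as $v_{x'}(\mathfrak D_{C'/C})$. The induced character is then obtained from Definition \ref{defn:inducedClassFunction}: I reorganize the sum over $\tau\in G_{U'}$ with $\tau\sigma\tau^{-1}\in H$ by the orbit point $y:=\tau^{-1}(x')$, using that for each $\sigma$-fixed $y$ lying over $x$ there are exactly $|G_{U',y}|=|H|$ choices of $\tau$ and that $i_H(\tau\sigma\tau^{-1})=i_{G_{U',y}}(\sigma)$. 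The value at $\sigma=1$ comes from $\sw_{G_{U'},x}(1)=[G_{U'}\!:\!H]\cdot\sw_H(1)$ combined with the orbit-stabilizer identity $\sum_{y/x}|G_{U',y}|=|G_{U'}|$. The main obstacle will be precisely this combinatorial bookkeeping: cleanly matching the $\tau$-sum with a sum over $\sigma$-fixed $y$ for $\sigma\neq 1$, and disentangling the different and index contributions at $\sigma=1$.

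For (c), the first equality is the base-change invariance \ref{swan-lisse-sheaf-bc}. For the second, since the $D_x$-action on $\sF_{\bar\eta}\otimes_R\F_\lambda$ factors by hypothesis through its image $G_{U',x'}$, Theorem \ref{thm:integralityOfSwanConductor} together with Remark \ref{rem:independenceOfFiniteQuotient} applied to the complete discretely valued field $K_x$ yields
\[\Swan_x(\sF\otimes_R\F_\lambda)=\dim_{\F_\lambda}\Hom_{\F_\lambda[G_{U',x'}]}(\Sw_{G_{U',x'}}\otimes_{\Z_\ell}\F_\lambda,\,\sF_{\bar\eta}\otimes_R\F_\lambda),\]
and the adjunction \eqref{Ind-Res} converts this Hom-space into the analogous one over $G_{U'}$ with $\Sw_{G_{U'},x}\otimes_{\Z_\ell}\F_\lambda$ on the left, which is the stated formula.

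For (d), I descend $\sF$ to a lisse $E$-sheaf for some finite extension $E\subset\Qlb$ of $\Q_\ell$, choose an $\mathcal O_E$-lattice via Lemma \ref{lemma:lattice}, and apply the $E$-version of (c) supplied by the third line of Remark \ref{rem:independenceOfFiniteQuotient} to obtain $\Swan_x(\sF)=\dim_E\Hom_{E[G_{U'}]}(\Sw_{G_{U'},x}\otimes_{\Z_\ell}E,\,\sF_{\bar\eta})$. Since $\Char(E)=0$ the order $|G_{U'}|$ is invertible in $E$, so Proposition \ref{prop:Orthogonality}\ref{item:hom} rewrites this dimension as the pairing $\langle\sw_{G_{U'},x},\chi_{\sF_{\bar\eta}}\rangle_{G_{U'}}=\tfrac{1}{|G_{U'}|}\sum_\sigma \sw_{G_{U'},x}(\sigma)\chi_{\sF_{\bar\eta}}(\sigma^{-1})$. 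A one-line computation (valuations are $\sigma$-invariant) gives $i_G(\sigma^{-1})=i_G(\sigma)$ and hence $\sw_{G_{U'},x}(\sigma^{-1})=\sw_{G_{U'},x}(\sigma)$, so the substitution $\sigma\mapsto\sigma^{-1}$ converts this pairing into $\tfrac{1}{|G_{U'}|}\sum_\sigma\sw_{G_{U'},x}(\sigma)\Tr(\sigma|\sF_{\bar\eta})$, as claimed.
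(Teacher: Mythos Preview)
Your proof is correct and follows essentially the same route as the paper's for parts (b), (c), and (d): compute the induced character via Definition~\ref{defn:inducedClassFunction} and reorganize by points $y$ over $x$, invoke Theorem~\ref{thm:integralityOfSwanConductor} together with the $\Ind$--$\RES$ adjunction \eqref{Ind-Res}, and then pass to the character pairing using $\sw_{G_{U'},x}(\sigma^{-1})=\sw_{G_{U'},x}(\sigma)$.

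The only genuine difference is in (a). You prove independence directly by transporting $\Sw_{G_{U',x'}}$ along the conjugation isomorphism $G_{U',x'}\xrightarrow{\sim}G_{U',x''}$ and invoking $\Ind_H^G V\cong\Ind_{\sigma H\sigma^{-1}}^G V^{\sigma}$. The paper instead observes that, by the injectivity of $e:P_{\Z_\ell}(G_{U'})\hookrightarrow R_{\Q_\ell}(G_{U'})$ (Theorem~\ref{thm:mainRepThmIntegrality}\ref{thm:mainRepThmIntegrality-e-inj}), it suffices to check that the \emph{character} of the induced module is independent of $x'$, and then notes that this is immediate from the explicit formula in (b), which visibly does not mention any chosen $x'$. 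Your argument is more self-contained; the paper's is shorter once (b) is in hand and has the pleasant feature of making (a) a corollary of (b).
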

\begin{proof}
Denote by $\iota_{x'}: G_{U',x'}\inj G_{U'}$ the inclusion.
By Theorem \ref{thm:mainRepThmIntegrality},
\ref{thm:mainRepThmIntegrality-e-inj}, to prove
\ref{prop-swan-formula-independence} it suffices
 to check that the character of
\[\Ind_{\iota_{x'}}(\Sw_{G_{U',x'}})=\Sw_{G_{U',x'}}\otimes_{\Z_{\ell}[G_{U',x'}]} \Z_\ell[G_{U'}]\]
is independent of the choice of $x'$. Hence it suffices to prove \ref{prop-swan-formula-sw}.
To this end, notice that by Lemma \ref{lem:ind-char} the character of $\Ind_{\iota_{x'}}(\Sw_{G_{U',x'}})$
is given by
\[G_{U'}\ni\sigma\mapsto\chi(\sigma):= \frac{1}{|G_{U',x'}|} 
\sum_{\substack{\tau\in G_{U'}\\ \tau\sigma\tau^{-1}\in G_{U',x'}}} \sw_{G_{U',x'}}(\tau\sigma\tau^{-1}),\]
where $\sw_{G_{U',x'}}$ is the Swan character of $G_{U',x'}$, see Definition \ref{def:swan-char}.

{\em First case:} $\sigma\neq 1$.  We have 
\[\tau\sigma\tau^{-1}\in G_{U',x'}\Longleftrightarrow \sigma\in G_{U', \tau^{-1}(x')}=\tau^{-1}\circ G_{U',x'}\circ \tau\]
and
\[\sw_{G_{U',x'}}(\tau\sigma\tau^{-1})=\sw_{G_{U',\tau^{-1}(x')}}(\sigma).\] 
Since $G_{U'}$ acts transitively on the points over $x$, we have a bijection
of sets $G_{U'}/G_{U',x'}\xr{1:1} \{y/x\}\subset C'$. Hence
\[\chi(\sigma)=\sum_{\substack{y/x \\ \sigma(y)=y}} \sw_{G_{U',y}}(\sigma).\]
The formula in \ref{prop-swan-formula-sw} for $\sigma\neq 1$ thus follows from the definition of 
$\sw_{G_{U',y}}$, see Definition \ref{def:swan-char}. (Notice that since $k=\bar{k}$ we have $G_{U',x'}=G_{U',x'}^0$.)

{\em Second case:} $\sigma=1$. In this case
\mlnl{\chi(1)=\frac{|G_{U'}|}{|G_{U',x'}|} \cdot \sw_{G_{U',x'}}(1)=\sum_{y/x} \sw_{G_{U',y}}(1)\\
            =\sum_{y/x} \left((\sum_{\sigma\neq1} i_{G_{U',y}}(\sigma))- (|G_{U',y}|-1)\right).}
Since $\sum_{\sigma\neq1} i_{G_{U',y}}(\sigma)=v_y(\mathfrak{D}_{C'/C})$ (see
Proposition \ref{valuation-of-discriminant})
and $\sum_{y/x} |G_{U',y}|=|G_{U'}|$ we obtain the formula for  \ref{prop-swan-formula-sw}.

The formula in \ref{prop-swan-formula-cond-rep} follows directly from 
$\Swan_x(\sF)=b(\RES_{\iota_{x'}}(\sF_{\bar{\eta}}))$ (see Theorem \ref{thm:integralityOfSwanConductor}), the
definition of  $b(-)$ in Definition \ref{defn:SerresB} and that $\Ind_{\iota_{x'}}$ is left adjoint to 
$\RES_{\iota_{x'}}$, see \eqref{Ind-Res}.

Finally, let us prove \ref{prop-swan-formula-Swan}. First notice that if $P$ and $V$ are two finite dimensional $\Qlb$-representations 
of $G_{U'}$, then 
\[\Hom_{\Qlb[G_{U'}]}(P,V)= (P^\vee \otimes_{\Qlb} V)^{G_{U'}},\]
see Example \ref{ex:representationsFirstExamples}.
The $\Qlb$-dimension of the right-hand side is equal to the number of times the trivial
representation appears in $P^\vee \otimes_{\Qlb} V$.
Let $\chi_P$ (resp. $\chi_V$) be the character of $P$ (resp. $V$); the character of
$P^\vee$ is given by $\chi_{P^\vee}(\sigma)=\chi_{P}(\sigma^{-1})$. 
The character of $P^\vee \otimes_{\Qlb} V$ is given by $\chi_{P^\vee}\cdot \chi_V$. Hence 
by Corollary \ref{cor-irred-rep-in-rep} we obtain
\[\dim_{\Qlb} \Hom_{\Qlb[G_{U'}]}(P,V)= 
 \frac{1}{|G_{U'}|} \sum_{\sigma\in G_{U'}} \chi_{P}(\sigma^{-1})\cdot \chi_{V}(\sigma).\]
Therefore the statement for \ref{prop-swan-formula-Swan} follows from the formula
(cf. \ref{prop-swan-formula-cond-rep})
\[\Swan_x(\sF)=   \dim_{\Qlb} \Hom_{\Qlb[G_{U'}]}(\Sw_{G_{U'}, x}\otimes_{\Z_\ell} \Qlb,\sF_{\bar{\eta}}),\]
the discussion above and $\sw_{G_{U'}}(\sigma)=\sw_{G_{U'}}(\sigma^{-1})$, see \ref{prop-swan-formula-sw}.
\end{proof}

\begin{lemma}\label{lem-intersection-multiplicities}
In the situation of \ref{Swan-representation-geometry}  denote 
by $\Delta_{C'}\subset C'\times_{k} C'$ the diagonal and by $\Gamma_\sigma$
the graph of $\sigma$ acting on $C'$.
Then $\Gamma_\sigma$ and $\Delta_{C'}$ intersect properly and
\[(\Gamma_\sigma\cdot \Delta_{C'})=\sum_{x\in C\setminus U} 
\sum_{\genfrac{}{}{0pt}{}{x'/x}{\sigma(x')=x'}}  i_{G_{U',x'}}(\sigma),\quad \text{for all } \sigma\neq 1,\]
where the left-hand side is defined as in \eqref{thm-LFT1}. 
\end{lemma}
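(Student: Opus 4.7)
First I would identify the set-theoretic intersection $\Gamma_\sigma \cap \Delta_{C'}$ with the fixed point set $\mathrm{Fix}(\sigma) \subset C'$. The key observation is that, because $k = \bar k$ and $U' \to U$ is \'etale Galois, no closed point of $U'$ can be fixed by a nontrivial $\sigma$: a fixed point $y \in U'$ would force $\sigma$ to lie in the decomposition group $D_y$ of $y$ over its image in $U$, and $D_y$ equals the Galois group of the residue extension $k(y)/k$, which is trivial since $k$ is algebraically closed and $U' \to U$ is \'etale at $y$. Hence $\mathrm{Fix}(\sigma) \subset C' \setminus U'$, which is finite, so $\Gamma_\sigma$ and $\Delta_{C'}$ meet in finitely many points and therefore properly on the surface $C' \times_k C'$.

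Next I would compute the local intersection multiplicity at each fixed point $y \in C' \setminus U'$. By the formula \eqref{thm-LFT1} recalled in Theorem \ref{thm-LTF}, this multiplicity equals
\[(\Gamma_\sigma \cdot \Delta_{C'})_y = \mathrm{length}\bigl(\mathcal{O}_{C',y}/I_\sigma\bigr), \qquad I_\sigma := (a - \sigma^*(a) : a \in \mathcal{O}_{C',y}).\]
Since the quotient has finite length, I may pass to the completion $B := \widehat{\mathcal{O}}_{C',y}$. By Cohen's structure theorem and the fact that $k(y) = k$ (as $k = \bar k$), we have $B \cong k[\![\pi]\!]$ for any uniformizer $\pi$, and $\sigma$ fixes the subfield $k$ (the residue extension being trivial, the extension $B/\widehat{\mathcal{O}}_{C,x}$ is totally ramified).

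The main computational step is the identity $I_\sigma \cdot B = (\pi - \sigma(\pi))$. The inclusion $\supseteq$ is immediate by taking $a = \pi$. For $\subseteq$, expand an arbitrary $a = \sum_{n \geq 0} c_n \pi^n$ with $c_n \in k$, so that $\sigma(a) = \sum_n c_n \sigma(\pi)^n$ and
\[a - \sigma(a) = \sum_{n \geq 1} c_n \bigl(\pi^n - \sigma(\pi)^n\bigr) = (\pi - \sigma(\pi)) \cdot \sum_{n \geq 1} c_n \bigl(\pi^{n-1} + \pi^{n-2}\sigma(\pi) + \cdots + \sigma(\pi)^{n-1}\bigr),\]
which is divisible by $\pi - \sigma(\pi)$. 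Consequently
\[(\Gamma_\sigma \cdot \Delta_{C'})_y = v_y(\pi - \sigma(\pi)) = i_{G_{U',y}}(\sigma),\]
where the last equality uses Theorem \ref{thm:generator} (any uniformizer of $B$ generates $B$ over $\widehat{\mathcal{O}}_{C,x}$ in the totally ramified case) together with the definition \eqref{iG}.

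Finally, summing the local contributions and partitioning $\mathrm{Fix}(\sigma) \subset C' \setminus U'$ according to the image in $C \setminus U$ yields the claimed formula. The only step requiring genuine work is the computation $I_\sigma B = (\pi - \sigma(\pi))$; the rest is a straightforward combination of the Lefschetz-type intersection formula with the local ramification theory developed in Section \ref{Ram-groups-Hasse-Arf}.
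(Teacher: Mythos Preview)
Your proposal is correct and follows essentially the same approach as the paper: both pass to completions, identify the ideal $I_\sigma$ as principal generated by $\alpha-\sigma(\alpha)$ for a suitable generator $\alpha$, and read off the length as $i_{G_{U',x'}}(\sigma)$. The only cosmetic difference is that the paper invokes Theorem~\ref{thm:generator} directly to write $A'=A[\alpha]$ and deduces $I_\sigma=(\alpha-\sigma(\alpha))$ from the polynomial description, whereas you use the Cohen isomorphism $B\cong k\llbracket\pi\rrbracket$ and carry out the divisibility by an explicit power-series factorisation; both routes exploit that $\sigma$ fixes the coefficient ring.
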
 
\begin{proof}
Take $\sigma\in G_{U'}\setminus\{1\}$ and  $x'\in C'$ with $\sigma(x')=x'$.
(Notice that this implies $x'\in C'\setminus U'$ since $G_{U'}$ acts transitively and freely on the fibers over $U$.)
We have to show 
\[i_{G_{U',x'}}(\sigma)= {\rm length}(\sO_{C',x'}/ I_{\sigma}),\]
where $I_\sigma\subset \sO_{C',x'}$ is the ideal generated by the elements $a-\sigma^*(a)$, $a\in \sO_{C', x'}$. 
Denote by $x$ the image of $x'$ in $C$. Denote by $A$ the completion of $\sO_{C,x}$ and by 
$A'$ the completion of $\sO_{C',x'}$.  Then  the right-hand side of the equality above  is equal to
${\rm length}(A'/I_\sigma\cdot A')$.
Further by Theorem \ref{thm:generator} there exists a generator of $A'/A$, i.e.~an element 
$\alpha\in A'$ such that $A'=A[\alpha]$. It follows that $I_\sigma$ is the ideal generated
by $\alpha-\sigma^*(\alpha)$. Thus by definition:
${\rm length}(A'/I_\sigma\cdot A')= v_{x'}(\alpha-\sigma^*(\alpha))=i_{G_{U',x'}}(\sigma).$
\end{proof}   

\begin{notation}\label{not-EC}
Let $D$ be a smooth projective curve over an algebraically closed field  and 
$V\subset D$ a strict non-empty open subset. In Corollary \ref{cor-EulerChar-lisse-A-sheaf}, 
we defined $\chi_c(V,A)$ and in part \ref{cor-EulerChar-lisse-A-sheaf-bc} of this corollary we
saw that it is independent of the choice of $A$. We set
\[\chi_c(V):=\chi_c(V, A),\quad \chi(D):=\chi_c(V)+{\rm card}(D\setminus V)\]
where $A$ is some $\ell$-adic coefficient ring. 
We have an exact  sequence 
$\cdots\to H^i(D, A)\to H^i(D\setminus V, A)\to H^{i+1}_c(V, A)\to\cdots$. This shows that if
$A$ is a field then $\chi(D)=\sum_i (-1)^i \dim_A H^i(D,A)$.
\end{notation}

\begin{lemma}\label{lem-EulerChar-Hur}
In the situation of \ref{Swan-representation-geometry} we have
\[\chi(C')=2-2g(C')= |G_{U'}| \cdot \chi(C)
                                  -\sum_{x'\in C'\setminus U'} v_{x'}(\mathfrak{D}_{C'/C}),\]
where $v_{x'}(\mathfrak{D}_{C'/C})$  is as in Proposition \ref{prop-swan-formula}, \ref{prop-swan-formula-sw}
and $g(C')$ is the genus of the connected smooth projective curve
$C'$. In particular, $\chi_c(U')=2-2g(C')-\card(C'\setminus U')$.
\end{lemma}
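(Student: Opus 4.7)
The plan is to derive the formula from the Lefschetz trace formula (Theorem \ref{thm-LTF}) applied to each $\sigma \in G_{U'}$ acting on $C'$, together with the observation that cohomology of $C$ is recovered as $G_{U'}$-invariants of cohomology of $C'$.

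First, I would set up the cohomological computation. Since $C'$ is a smooth, connected, projective curve of genus $g(C')$, one has $H^0(C',\Q_\ell)=\Q_\ell$, $H^1(C',\Q_\ell)=\Q_\ell^{2g(C')}$, $H^2(C',\Q_\ell)=\Q_\ell(-1)$, so $\chi(C')=2-2g(C')$, establishing the first equality. Next, since $\pi\colon C'\to C$ is finite, property \ref{coh-properties}, \ref{finite-map} gives $H^i(C',\Q_\ell)=H^i(C,\pi_*\Q_\ell)$, and a stalk computation shows $(\pi_*\Z/\ell^n)^{G_{U'}}=\Z/\ell^n$ on $C$ (as $G_{U'}$ acts transitively on every fiber of $\pi$). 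Applying Lemma \ref{lem-group-action-coh} therefore yields
\[
H^i(C,\Q_\ell)=H^i(C',\Q_\ell)^{G_{U'}},\qquad i\geq 0.
\]
Taking alternating sums of $\Q_\ell$-dimensions and using $\dim V^{G}=\tfrac{1}{|G|}\sum_{\sigma}\Tr(\sigma|V)$ for any finite-dimensional $\Q_\ell$-representation, I get
\[
\chi(C)=\frac{1}{|G_{U'}|}\sum_{\sigma\in G_{U'}}\sum_{i}(-1)^i\Tr(\sigma^*\mid H^i(C',\Q_\ell)).
\]

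For $\sigma=1$ the inner sum equals $\chi(C')$ by definition. For $\sigma\neq 1$, the automorphism $\sigma$ acts nontrivially on the irreducible curve $C'$, so its fixed locus is a proper closed subset, hence zero-dimensional; equivalently $\Gamma_\sigma$ and $\Delta_{C'}$ meet properly in $C'\times C'$. The Lefschetz trace formula (Theorem \ref{thm-LTF}) then gives
\[
\sum_i(-1)^i\Tr(\sigma^*\mid H^i(C',\Q_\ell))=(\Gamma_\sigma\cdot\Delta_{C'}),\qquad \sigma\neq 1.
\]
Isolating $\sigma=1$ in the previous identity produces
\[
|G_{U'}|\cdot\chi(C)=\chi(C')+\sum_{\sigma\neq 1}(\Gamma_\sigma\cdot\Delta_{C'}).
\]

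Now I evaluate the right-hand sum using Lemma \ref{lem-intersection-multiplicities}, which expresses $(\Gamma_\sigma\cdot\Delta_{C'})=\sum_{x\in C\setminus U}\sum_{x'/x,\,\sigma(x')=x'} i_{G_{U',x'}}(\sigma)$. Swapping the order of summation and noting that $\sigma\in G_{U',x'}$ exactly when $\sigma$ fixes $x'$, I obtain
\[
\sum_{\sigma\neq 1}(\Gamma_\sigma\cdot\Delta_{C'})=\sum_{x'\in C'\setminus U'}\sum_{\sigma\in G_{U',x'}\setminus\{1\}} i_{G_{U',x'}}(\sigma)=\sum_{x'\in C'\setminus U'}v_{x'}(\mathfrak{D}_{C'/C}),
\]
where the last equality is Proposition \ref{valuation-of-discriminant}\ref{item:valuation-of-discriminant:b}. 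Substituting back and rearranging gives the Riemann--Hurwitz formula. The ``in particular'' statement is then immediate from Notation \ref{not-EC}, which defines $\chi(C')=\chi_c(U')+\card(C'\setminus U')$.

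The main obstacle is the justification that $H^i(C,\Q_\ell)=H^i(C',\Q_\ell)^{G_{U'}}$ in the $\Q_\ell$-coefficient setting: one must pass through the finite $\pi_*$, identify the invariants of $\pi_*\Z/\ell^n$ with $\Z/\ell^n$, and then apply Lemma \ref{lem-group-action-coh} at each finite level before taking limits and tensoring up to $\Q_\ell$. Once this is in place, the proof is a routine reorganization of the Lefschetz trace formula contributions according to the orbits of $G_{U'}$ on the fibers of $\pi$.
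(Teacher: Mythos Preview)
Your derivation of the Riemann--Hurwitz relation $\chi(C')=|G_{U'}|\cdot\chi(C)-\sum_{x'}v_{x'}(\mathfrak{D}_{C'/C})$ is correct and takes a genuinely different route from the paper. The paper first establishes $\chi(C)=2-2g(C)$ by lifting $C$ to characteristic zero over the Witt vectors and invoking the comparison with singular cohomology (\ref{coh-properties}, \ref{comparison-analytic}), and then simply cites the classical Hurwitz genus formula from \cite{Ha77}. You instead identify $H^i(C,\Q_\ell)$ with $H^i(C',\Q_\ell)^{G_{U'}}$ via Lemma \ref{lem-group-action-coh}, average traces over $G_{U'}$, and feed in the Lefschetz trace formula together with Lemma \ref{lem-intersection-multiplicities} and Proposition \ref{valuation-of-discriminant}. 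This is essentially the same mechanism the paper uses moments later in the proof of Theorem \ref{thm-GOS} for a general $\sF$, here specialized to the constant sheaf; so your approach is more self-contained (no external appeal to Hurwitz) but somewhat redundant with what follows, whereas the paper's approach isolates the genus--Euler-characteristic identification as a clean standalone input.

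There is, however, a gap in your treatment of the \emph{first} equality $\chi(C')=2-2g(C')$. You assert $\dim_{\Q_\ell}H^1(C',\Q_\ell)=2g(C')$ as known, but nothing in the paper prior to this lemma computes the $\ell$-adic Betti numbers of a curve in positive characteristic; this is precisely what the paper's lifting argument supplies. The cases $i=0,2$ are indeed easy (connectedness and Poincar\'e duality), but the $i=1$ case is the content of the lifting-plus-comparison step, and your argument still needs that input (or an equivalent one) to close.
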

\begin{proof}
Since we are over an algebraically closed field we have $\mu_\ell\cong \Z/\ell\Z$.
Hence $\chi(C)= \sum_{i=0}^2 (-1)^i\dim_{\F_\ell} H^i(C,\mu_\ell)$
and  \cite[Arcata III, Cor.~(3.5)]{SGA4.5} yields $\chi(C)=2-2g(C)$.
(This uses that if $J$ is the Jacobian of $C$ then $H^1(C,\mu_\ell)\cong J(k)[\ell]\cong (\Z/\ell\Z)^{2g}$.)
Now the lemma follows directly from the Hurwitz genus formula, see e.g. \cite[IV, Cor 2.4]{Ha77}.
\end{proof}

\begin{proof}[\bf Proof of Theorem \ref{thm-GOS}.]
	Given a closed point $x\in C$ the number of closed points $\bar{x}\in C\otimes_k \bar{k}$ lying over $x$ is $[k(x):k]$.
Hence by Lemma \ref{lem-swan-alg-closed} it suffices to consider the case $k=\bar{k}$.
Let $\sF$ be a lisse $\Qlb$-sheaf on $U$. By Theorem \ref{thm-sheaf-vs-rep} and Lemma \ref{lemma:lattice} 
we find a finite field extension $E/\Q_\ell$ with ring of integers
$R/\Z_\ell$ and a free lisse $R$-sheaf $\sF'$ such that $\sF=(\sF'\otimes_R E)\otimes_{E}\Qlb$. Thus by
\ref{swan-lisse-sheaf}, \ref{swan-lisse-sheaf-bc} and 
Corollary \ref{cor-EulerChar-lisse-A-sheaf}, \ref{cor-EulerChar-lisse-A-sheaf-bc}  it suffices to prove the following:
Let $\F_\lambda$ be a finite field of characteristic $\ell$ and $\sF$ a lisse $\F_\lambda$-sheaf of rank $r$, then 
\eq{GOS-Fl}{\chi_c(U,\sF)=
r\cdot\chi_c(U) -\sum_{x\in C\setminus U} \Swan_x(\sF).}
The sheaf $\sF$ corresponds to a homomorphism $\pi_1(U,\bar{\eta})\rightarrow\GL_r(\F_{\lambda})$;
since its target is a finite group
 we find a connected finite \'etale Galois cover $\pi: U'\to U$ with Galois group $G_{U'}$ which trivializes $\sF$.

Let $A$ be an $\ell$-adic coefficient ring. 
By Theorem \ref{thm-sheaf-vs-rep} we can identify the category of free lisse $A$-sheaves on $U$ which are trivial on $U'$ 
with the category of finitely generated $A[G_{U'}]$-modules which are free as $A$-modules.
We can restrict the functors $C^i$ from Lemma \ref{lem-proj-resolution-free}  to this category and obtain
functors 
\[T^i: (\text{fin. generated } A\text{-free } A[G_{U'}]\text{-modules})\to (\text{free }A\text{-modules})\]
with a natural transformation $T^1\to T^2$, such that for lisse $A$-sheaves 
$\sG$ which are trivialized by $\pi: U'\to U$ we get
\[H^i_c(U, \sG)= H^i(T^1(\sG_{\bar{\eta}})\to T^2(\sG_{\bar{\eta}})),\quad
	i\geq 0.\]
We define a function
\eq{GOS-nu}{\nu_A:  (\text{fin. generated } A\text{-free } A[G_{U'}]\text{-modules})\to \Z}
by
\mlnl{\nu_A(M):=
\rk_A (T^2(M))-\rk_A (T^1(M))-( \rk_A (M)\chi_c(U,\Qlb)
                                                                  - \sum_{x\in C\setminus U} \Swan_x(M)).}
Denote by $R_A(G_{U'})$ the Grothendieck group of the category on the left-hand side of \eqref{GOS-nu} 
(see Definition \ref{def:GrothendieckRing}).
Then it follows from  \ref{swan-lisse-sheaf}, \ref{swan-lisse-sheaf-exactness}
and Corollary \ref{cor-EulerChar-lisse-A-sheaf}, \ref{cor-EulerChar-lisse-A-sheaf-exactness}, 
that $\nu_A$ induces a well-defined group homomorphism
\[\nu_A: R_A(G_{U'})\to \Z.\]

Since we reduced to showing \eqref{GOS-Fl}, we have to prove that
$\nu_{\F_\lambda}=0$.
Let $E$ be a finite extension of $\Q_\ell$ with ring of integers $R$ and residue field $\F_\lambda$.
By Proposition \ref{prop:triangle} there is a homomorphism $d: R_E(G_{U'})\to R_{\F_{\lambda}}(G_{U'})$.
Recall that $d$ is constructed as follows: Let $V$ be an $E[G_{U'}]$-module, which is finite dimensional 
as an $E$-vector space. Take $M\subset V$ an $R[G_{U'}]$-submodule which is an $R$-lattice in $V$. 
Then $d([V])= [M\otimes_R\F_{\lambda}]$. It follows from \ref{swan-lisse-sheaf}, \ref{swan-lisse-sheaf-bc}
and Corollary \ref{cor-EulerChar-lisse-A-sheaf}, \ref{cor-EulerChar-lisse-A-sheaf-bc} that we obtain a commutative 
diagram
\[\begin{tikzcd}
			R_E(G_{U'})\ar[swap]{dd}{d}\ar{rd}{\nu_{E}}& ~\\
                           ~ &   \Z.\\
                   R_{\F_\lambda}(G_{U'}) \ar[swap]{ru}{\nu_{\F_\lambda}}   & ~
		\end{tikzcd}\]
By Theorem \ref{thm:mainRepThmIntegrality}, \ref{thm:mainRepThmIntegrality-d-surj} the map $d$ 
is surjective. Hence it suffices to show that $\nu_E=0$. Retranslating this
into lisse $E$-sheaves, we see that it suffices to prove  
equality \eqref{GOS-Fl} for $\sF$ a lisse $E$-sheaf of rank $r$ which is trivialized by $\pi: U'\to U$.
Set $V=\sF_{\bar{\eta}}$. It is an $E$-vector space of dimension $\dim_E V=r$ together with a $G_{U'}$-action.
We have (see Theorem \ref{thm-sheaf-vs-rep})
\[\pi^*\sF\cong V_{U'},\quad \sF\cong (\pi_*V_{U'})^{G_{U'}},\]
where $V_{U'}$ is the constant lisse $E$-sheaf on $U'$ defined by $V$.
Denote by $\bar{\pi}: C'\to C$ the unique finite morphism between smooth proper curves over $k$, which restricts
to $\pi: U'\to U$, and by $j: U\inj C$, $j':U'\inj C'$ the open immersions.
We obtain (via a direct computation)
\[j_!\sF\cong j_!(\pi_*V_{U'})^{G_{U'}}\cong (j_!\pi_*V_{U'})^{G_{U'}}\cong 
           (\bar{\pi}_*j'_!V_{U'})^{G_{U'}}.\]
By Lemma \ref{lem-group-action-coh} and \ref{coh-properties}, \ref{finite-map} we get
\[H^i_c(U,\sF)= H^i(C, (\bar{\pi}_*j'_!V_{U'})^{G_{U'}})=
  H^i(C, \bar{\pi}_*j'_!V_{U'})^{G_{U'}}=H^i_c(U', V_{U'})^{G_{U'}}.\]

  Next,  if $R$ is the ring of integers of $E$, then we find an $R$-lattice $M\subset V$ with $G_{U'}$-action such that $V\cong M\otimes_R E$ as $E[G_{U'}]$-modules.
If $(R/\fm^n)_{U'}\to \sI^\bullet$ is an injective resolution of the constant sheaf $R/\fm^n$ on $U'$, then 
$\sI^\bullet\otimes_{R/\fm^n} (M\otimes_R R/\fm^n)_{U'}$ is a resolution by injectives of the constant sheaf 
$(M\otimes_R R/\fm^n)$ which is compatible with the $G_{U'}$-action. 
This gives a $G_{U'}$-equivariant isomorphism 
$H^i_c(U', V_{U'})\cong H^i_c(U', E_{U'})\otimes_E V$. All together we obtain
\[H^i_c(U, \sF)=(H^i_c(U', E_{U'})\otimes_E V)^{G_{U'}}. \]
The dimension of the $E$-vector space $(H^i_c(U', E_{U'})\otimes_E V)^{G_{U'}}$
is equal to the number of times the trivial rank one representation is contained in
the $E[G_{U'}]$-module $H^i_c(U', E_{U'})\otimes_E V$. Hence Corollary \ref{cor-irred-rep-in-rep}
yields
\begin{align}\label{GOS1}
\chi_c(U,\sF)  &=  -\dim_E (H^1_c(U', E)\otimes_E V)^{G_{U'}}+ 
           \dim_E(H^2_c(U', E)\otimes_E V)^{G_{U'}}\\
       &=\frac{1}{|G_{U'}|}\sum_{\sigma\in G_{U'}} 
\Tr(\sigma|V)\left(\Tr(\sigma^*|H^2_c(U', E))-\Tr(\sigma^*|H^1_c(U', E))\right).\notag
\end{align}
Set $Y':=C'\setminus U'$ . The exact sequence 
$\cdots\to H^i(C', E)\to H^i(Y', E)\to H^{i+1}_c(U', E)\to\cdots$ from \ref{coh-properties}, \ref{compact-localization} yields 
a short exact sequence
\[0\to H^0(C', E)\to H^0(Y', E) \to H^1_c(U', E)\to H^1(C', E)\to 0\]
and an isomorphism
\[H^2_c(U',E)\cong H^2(C', E).\]
For $\sigma\in G_{U'}$ we obtain
\ml{GOS4}{\Tr(\sigma^*|H^2_c(U', E))-\Tr(\sigma^*|H^1_c(U', E))\\
     = - \Tr(\sigma^*|H^0(Y',E))+ \sum_{i=0}^2 (-1)^i \Tr(\sigma^*|H^i(C', E)).}
We can write
\[H^0(Y',E)=\bigoplus_{x'\in Y'} E_{x'},\]
where $E_{x'}=E$ and $\sigma\in G_{U'}$ acts via:
\[\sigma^*: E_{x'}= E\xr{\id} E=E_{\sigma(x')}.\]
Thus
\eq{GOS5}{
\Tr(\sigma^*|H^0(\bar{Y}',E))=
      \sum_{x\in Y}  \sum_{\genfrac{}{}{0pt}{}{x'/x}{\sigma(x')=x'}} 1,
          \quad \sigma\in G_{U'}.
}
Further, for $\sigma\neq 1$ the graph $\Gamma_\sigma$ and the diagonal $\Delta_{C'}$ intersect properly,
see Lemma \ref{lem-intersection-multiplicities}.
Hence the Lefschetz trace formula (Theorem \ref{thm-LTF}) yields
\eq{GOS6}{\sum_{i=0}^2 (-1)^i \Tr(\sigma^*|H^i(C', E))= (\Gamma_\sigma\cdot\Delta_{C'}), \quad \text{if }\sigma\neq 1.}
By Lemma \ref{lem-EulerChar-Hur}, 
equation \eqref{GOS5} and Proposition \ref{prop-swan-formula}, \ref{prop-swan-formula-sw} we get
\begin{align*}
\omit \span -\Tr(1^*|H^0(Y',E))+ \sum_{i=0}^2 (-1)^i \Tr(1^*|H^i(C',
E)) \\
&= -{\rm card}(Y')+ \chi(C')\\
\hspace{2cm}&=  -{\rm card}(Y')+     |G_{U'}| \cdot (\chi_c(U)+{\rm card}(Y))  
                                         -\sum_{x'\in Y'}
					 v_{x'}(\mathfrak{D}_{C'/C})\\
&=|G_{U'}| \cdot \chi_c(U)- 
  \sum_{x\in Y} \left(\left(\sum_{x'/x} 1+v_{x'}(\mathfrak{D}_{C'/C}))\right)-|G_{U'}|
  \right)\\
&=|G_{U'}| \cdot \chi_c(U)-\sum_{x\in Y} \sw_{G_{U',x}}(1).
				 \end{align*}
By \eqref{GOS5}, \eqref{GOS6}, Lemma \ref{lem-intersection-multiplicities} and 
Proposition \ref{prop-swan-formula}, \ref{prop-swan-formula-sw},  we have 
for $\sigma\in G_{U'}\setminus\{1\}$
\begin{align*}
	\omit \span -\Tr(\sigma^*|H^0(Y',E))+ \sum_{i=0}^2 (-1)^i \Tr(\sigma^*|H^i(C', E)) \\
\hspace{2cm}&= -\sum_{x\in Y}\sum_{\genfrac{}{}{0pt}{}{x'/x}{\sigma(x')=x'}} 1 +
      \sum_{x\in Y} \sum_{\genfrac{}{}{0pt}{}{x'/x}{\sigma(x')=x'}}
      i_{G_{U',x'}}(\sigma)\\
&= \sum_{x\in Y} \sum_{\genfrac{}{}{0pt}{}{x'/x}{\sigma(x')=x'}} (
i_{G_{U',x'}}(\sigma)-1  )\\
&=-\sum_{x\in Y} \sw_{G_{U'},x}(\sigma).\end{align*}
Hence by \eqref{GOS1}, \eqref{GOS4} and Proposition \ref{prop-swan-formula}, \ref{prop-swan-formula-Swan}
\begin{align*}
\chi_c(U,\sF) &=
                      \frac{1}{|G_{U'}|}\cdot
\Tr(1^*|V)\cdot\left (|G_{U'}| \cdot \chi_c(U)-\sum_{x\in Y}  \sw_{G_{U',x}}(1)\right) \\
&\phantom{=} - \frac{1}{|G_{U'}|}\cdot 
  \sum_{\sigma\in G_{U'}\setminus\{1\}} \left(\Tr(\sigma|V)\cdot\sum_{x\in Y}\sw_{G_{U',x}}(\sigma) \right)\\
   &= r\cdot \chi_c(U)-\sum_{x\in Y} \Swan_x(\sF).
\end{align*}
This finishes the proof.
\end{proof}

\begin{example}\label{ex:GOS-AS}
Let $k$ be an algebraically closed field of characteristic $p>0$,  $m$ a natural number with $(m,p)=1$
and $\psi: \F_p\to \Qlb^\times$ a group homomorphism. Let $\sL_{m,\psi}$ be the lisse rank $1$ sheaf  on 
$\A^1_k$ from Example \ref{ex:ArtinSchreier}. Denote by $\infty$ the point in the complement of 
$\A^1_k\subset\P^1_k$. Then $\Swan_\infty(\sL_{m,\psi})=m$, by Example \ref{ex:ASSwan}, 
and $\chi_c(\A^1_k)=1$ by Lemma \ref{lem-EulerChar-Hur}. Thus the Grothendieck-Ogg-Shafarevich 
formula gives
\[\chi_c(\A^1_k, \sL_{\psi})= 1-m.\]
\end{example}

\subsection{Cohomological description of the Swan representation in equal characteristic}\label{sec:GOS-CohSwan}

We fix the following situation: 
\begin{itemize}
\item $k=\bar{k}$ is an algebraically closed field of positive characteristic $p$. 
\item We write $\G_m=\G_{m,k}=\Spec k[t,\frac{1}{t}]$
      and denote by $\bar{\eta}\to \G_{m}$ a geometric point over the generic point. 
        We embed $\G_m\subset \P^1$ and write $\P^1\setminus \G_m=\{0,\infty\}$.
\item $K$ is a complete discrete valuation field containing $k$ with the property that $k$ maps isomorphically onto the residue field of $K$.
         The choice of a local parameter of $K$ induces an isomorphism of $K$ with the completion of $k(\P^1)$ at $\infty$,
         i.e. $K\cong k\llparen t^{-1}\rrparen$. {\em We fix such an isomorphism}.
\item Let $L/K$ be a finite Galois extension with Galois group $G$.
\end{itemize}

In the following we explain Katz' cohomological description of 
$\Sw_G$ (see Theorem \ref{thm:integralityOfSwan}).

\subsubsection{}\label{can-ext}Let $A$ be an $\ell$-adic coefficient ring (see Convention  \ref{conv-l-adic-coefficients}).
Let $\sF$ be a  lisse $A$-sheaf on $\G_m$. Following Katz, we define a {\em special}\index{special lisse sheaf on $\G_m$}  
lisse $A$-sheaf on $\G_m$ as follows: If $A$ is finite, we say $\sF$ is special
if it is tame at $0$ and if its {\em monodromy group}\index{monodromy group of a lisse sheaf}, i.e.,
the image under the corresponding representation $\pi_1(\G_m,\bar{\eta})\to \Aut(\sF_{\bar{\eta}})$, 
has a unique $p$-Sylow group. In general we can write $\sF=(\sF_n)\otimes_R A$ as in Convention \ref{conv-l-adic-coefficients} and
we say $\sF$ is special if $\sF_1$ is special.
By \cite[Cor.~1.5.7]{Katz/LocalToGlobal} the restriction functor induces an equivalence of categories  
\[(\text{special lisse } A\text{-sheaves on } \G_m )\xrightarrow{\simeq} (\text{lisse } A\text{-sheaves on } \Spec K ).\]
Composing the natural functor from the category of finitely generated $A[G]$- modules to the category of 
lisse  $A$-sheaves on  $\Spec K$, see Theorem \ref{thm-sheaf-vs-rep},
with the inverse of the above equivalence we obtain a fully faithful functor
\eq{can-ext1}{(-)^{\can}: (\text{fin.~gen.~} A[G]\text{-modules})\to 
(\text{special lisse } A\text{-sheaves on } \G_m ).}
If $M$ is a finitely generated $A[G]$-module, then the sheaf $M^{\can}$ is called the {\em canonical extension of $M$}
\index{canonical extension}. The name is justified by the fact that restricting the representation
corresponding to $M^{\can}$ to the decomposition group at $\infty$ gives back the original $A[G]$-module $M$.
(Here we view $G$ as a quotient of the absolute Galois group of $k\llparen t^{-1}\rrparen$ via the isomorphism
$K\cong k\llparen t^{-1}\rrparen$ fixed above.) The functor $(-)^{\can}$ commutes with direct sums, tensor products, duals, and
with changes of coefficients $A\to A'$.

\begin{theorem}[{\cite[Thm.~1.6.8]{Katz/LocalToGlobal}}]\label{thm:coh-swan}
Denote by $j: \G_m\inj \A^1$ the canonical open immersion.
Then  with the notation from Theorem \ref{thm:integralityOfSwan}
there is an isomorphism of $\Z_{\ell}[G]$-modules
\eq{coh-swan1}{H^1(\A^1, j_!(\Z_\ell [G]^{\can}))\cong \Sw_G.}
Here the $G$-module structure on the left-hand side is defined as follows:
 The left $G$-module $\Z_\ell[G]$ defines the constructible sheaf $j_!(\Z_\ell[G]^{\can})$ and 
the right $G$-module structure of $\Z_\ell[G]$ defines via functoriality a 
$G$-action on the sheaf $j_!(\Z_\ell[G]^{\can})$, which via
functoriality induces a $G$-action on the left-hand side of \eqref{coh-swan1}.
\end{theorem}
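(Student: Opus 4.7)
The strategy is to verify that $N_G := H^1(\A^1, j_!(\Z_\ell[G]^{\can}))$ is a finitely generated projective $\Z_\ell[G]$-module whose class $[N_G \otimes_{\Z_\ell} \Q_\ell]$ in $R_{\Q_\ell}(G)$ agrees with that of $\Sw_G \otimes_{\Z_\ell} \Q_\ell$. The isomorphism $N_G \cong \Sw_G$ will then follow from the uniqueness of projective lifts, combining the injectivity of $e: P_{\Z_\ell}(G) \to R_{\Q_\ell}(G)$ (Theorem \ref{thm:mainRepThmIntegrality}, \ref{thm:mainRepThmIntegrality-e-inj}) with Proposition \ref{prop:projectiveEnvelope}, \ref{it:projectiveGrothendieckRing}.

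First I would establish that $H^i(\A^1, j_!(V^{\can})) = 0$ for $i \neq 1$ and any finitely generated $\Z_\ell[G]$-module $V$: the case $i \geq 2$ is affine vanishing (\ref{coh-properties}, \ref{affine-vanishing}), while $H^0 = 0$ because a global section of $j_!(V^{\can})$ is a section of the lisse sheaf $V^{\can}$ on the connected $\G_m$ whose support is closed in $\A^1$, hence zero. Combined with the exactness of Katz's functor $(-)^{\can}$ and of $j_!$, the functor $V \mapsto H^1(\A^1, j_!(V^{\can}))$ is exact on the category of finitely generated $\Z_\ell[G]$-modules.

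Projectivity of $N_G$ would come from an Eilenberg--Watts identification. A base change argument modulo $\ell^n$ in the style of Lemma \ref{lem-proj-resolution}, \ref{lem-proj-resolution-bc} shows $N_G$ is $\Z_\ell$-free of finite rank. The natural identification $\Z_\ell[G]^{\can} \otimes_{\Z_\ell[G]} V \cong V^{\can}$, where the tensor is over the right $\Z_\ell[G]$-structure on $\Z_\ell[G]^{\can}$ induced by right multiplication on $\Z_\ell[G]$, together with exactness and commutation with filtered colimits, yields a natural isomorphism $H^1(\A^1, j_!(V^{\can})) \cong N_G \otimes_{\Z_\ell[G]} V$. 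Exactness of $N_G \otimes_{\Z_\ell[G]} -$ is $\Z_\ell[G]$-flatness; since $N_G$ is finitely generated and $\Z_\ell[G]$ is Noetherian, $N_G$ is projective.

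For the character, Grothendieck--Ogg--Shafarevich (Theorem \ref{thm-GOS}) applied to $V^{\can}$ on $\G_m$ for a $\Q_\ell[G]$-module $V$ gives $\chi_c(\G_m, V^{\can}) = -\Swan_\infty(V^{\can}) = -b(V)$, since $V^{\can}$ is tame at $0$ by construction and its monodromy at $\infty$ is the given $G$-representation $V$ under our fixed isomorphism $K \cong k\llparen t^{-1}\rrparen$, invoking Theorem \ref{thm:integralityOfSwanConductor} for the second equality. The passage from $\chi_c(\G_m,-)$ to $\chi(\A^1, j_!-)$ differs by the Euler characteristic of the local Galois cohomology $R\Gamma(I_\infty, V)$, which vanishes because $\dim V^{I_\infty} = \dim V_{I_\infty}$ by semisimplicity of the $I_\infty$-action on $V$ (valid since $\ell \neq p$). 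With the vanishing of $H^0, H^2$ this yields $\dim_{\Q_\ell} H^1(\A^1, j_!(V^{\can})) = b(V)$, and via the Eilenberg--Watts identification, $\dim_{\Q_\ell} (N_G \otimes \Q_\ell) \otimes_{\Q_\ell[G]} V = b(V)$. The same dimension equals $\dim_{\Q_\ell} (\Sw_G \otimes \Q_\ell) \otimes_{\Q_\ell[G]} V$ because $\Sw_G$ is self-dual (the character $\sw_G$ is integer-valued, and $i_G(g^{-1}) = i_G(g)$), forcing $[N_G \otimes \Q_\ell] = [\Sw_G \otimes \Q_\ell]$ in $R_{\Q_\ell}(G)$. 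The main obstacle will be carefully tracking the $\Z_\ell[G]$-bimodule structure through the Eilenberg--Watts step and establishing the precise local-monodromy behavior of $V^{\can}$ at both boundary points, which is the core content of Katz's canonical extension construction.
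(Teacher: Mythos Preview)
Your proposal is correct and follows essentially the same route as the paper's proof: both define the exact functor $T(M)=H^1(\A^1,j_!(M^{\can}))$, establish the Eilenberg--Watts identification $T(M)\cong T(A[G])\otimes_{A[G]}M$ together with projectivity and base change for $T(A[G])$, and then compute the character via Grothendieck--Ogg--Shafarevich on $\G_m$ (using tameness at $0$ and $\chi_c(\G_m)=0$) plus self-duality of $\Sw_G$, concluding by the uniqueness in Theorem~\ref{thm:integralityOfSwan}. The only cosmetic difference is that where the paper invokes the equality $\chi(\A^1,j_!W^{\can})=\chi_c(\A^1,j_!W^{\can})$ by citing \cite[2.0.7]{Katz/Kloosterman}, you supply a direct argument via the vanishing of the local Euler characteristic $\chi(R\Gamma(I_\infty,V))$; note that the justification for $\dim V^{I_\infty}=\dim V_{I_\infty}$ is really Maschke in characteristic~$0$ (the $I_\infty$-action factors through the finite group $G$), not the hypothesis $\ell\neq p$ per se.
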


\begin{proof}
We sketch the proof following Katz.
For an arbitrary $\ell$-adic coefficient ring $A$ we define the functor
\eq{coh-swan2}{T: (\text{fin. gen. } A[G]-\text{modules})\to (\text{fin. gen. } A-\text{modules})}
by
\[T(M):= H^1(\A^1, j_{!}(M^{\can})).\]
We have $H^i(\A^1, j_{!}(M^{\can}))=0$, for $i\neq 1$. (This is clear for $i=0$ and follows for $i\ge 2$ from 
\ref{coh-properties}, \ref{affine-vanishing}.) Hence $T$ is an exact functor.
Furthermore, $A[G]$ acts left-$A[G]$-linearly on itself by right multiplication. Thus
$T(A[G])$ becomes an $A[G]$-module by functoriality. Using an argument similar to the proof of Lemma \ref{lem-proj-resolution} one can
show:
\begin{enumerate}
\item\label{coh-swan3} $T(M)=T(A[G])\otimes_{A[G]} M$, for all finitely generated $A[G]$-modules $M$.
\item\label{coh-swan4} $T(A[G])$ is a finitely generated projective $A[G]$-module. 
\item\label{coh-swan5} $T(A[G])\otimes_{A} A'=T(A'[G])$, where $A\to A'$ is any map of $\ell$-adic coefficient rings.
\end{enumerate}
By \ref{coh-swan4}, \ref{coh-swan5} 
and the uniqueness part of Theorem \ref{thm:integralityOfSwan}, \ref{item:integrality},
it suffices to show that $T(\Qlb[G])$ is the Swan representation.
By Proposition \ref{prop:Orthogonality}, \ref{item:hom} and Corollary \ref{cor-irred-rep-in-rep}, it suffices to show
that for any irreducible $\Qlb[G]$-module $W$ we have
\eq{coh-swan6}{\dim_{\Qlb}(T(\Qlb[G])\otimes_{\Qlb} W^\vee)^G= 
                         \dim_{\Qlb}(\Sw_G\otimes_{\Z_\ell} W^\vee)^G.}
Since dualizing induces a bijection on the set of irreducible representations we may replace $W^\vee$ by $W$.
The Swan representation is self dual (as one easily checks that $\sw_G(g)=\sw_G(g^{-1})$).
Hence 
\[\dim_{\Qlb}(\Sw_G\otimes_{\Z_\ell} W)^G=
\dim_{\Qlb} \Hom_{\Qlb[G]}(\Sw_G\otimes_{\Z_\ell}\Qlb, W)=\Swan(W),\]
where the second equality follows from Theorem \ref{thm:integralityOfSwanConductor}
 and Remark \ref{rem:independenceOfFiniteQuotient}.
On the other hand by \ref{coh-swan3} above
\begin{align*}
\dim_{\Qlb}(T(\Qlb[G])\otimes_{\Qlb} W)^G &= \dim_{\Qlb}(T(\Qlb[G])\otimes_{\Qlb[G]} W)\\
&= \dim_{\Qlb}H^1(\A^1, j_!W^{\can}) \\
&=-\chi(\A^1, j_!W^{\can}).
\end{align*}
Furthermore, $\chi(\A^1, j_!W^{\can})=\chi_c(\A^1, j_!W^{\can})=\chi_c(\G_m, W^{\can})$,
where the first equality follows e.g. from \cite[2.0.7]{Katz/Kloosterman} and the second equality holds by definition.
We have $\chi_c(\G_m,\Qlb)=0$. Since $W^{\can}$ is lisse on $\G_m$ and tame in $0$, the
formula of Grothendieck-Ogg-Shafarevich yields
$\chi_c(\G_m, W^{\can})= -\Swan_{\infty}(W^{\can})= -\Swan(W)$.
Putting everything together yields the theorem.
\end{proof}


\section{Higher dimensional ramification theory via restriction to curves}\label{RTvC}
\setcounter{subsection}{0}\setcounter{subsubsection}{0}
In this section we present the ramification theory of lisse sheaves on a higher dimensional smooth variety over a
perfect field of positive characteristic, following \cite[3.]{EK12}.

\setcounter{subsubsection}{0}
\subsubsection{}\label{conventions-hdrt}Throughout this section we fix the following notation:
\begin{itemize}
\item $k$ is a perfect field of characteristic $p>0$ and  $\bar{k}$ an algebraic closure.
\item $\ell$ is a prime number different from $p$.
\item A $k$-scheme is a scheme which is separated and of finite type over $k$.
\item For a $k$-scheme $X$ we denote by $\Cu(X)$ the set of normalizations 
         of closed integral 1-dimensional subschemes of $X$. In other words, $C\in \Cu(X)$
         is a smooth connected curve over $k$ with a morphism $\nu: C\to X$, which is birational
	 onto its image.
\item If $C$ is a smooth connected $k$-scheme of dimension 1, we denote 
          by $\widehat{C}$ the (up to isomorphism) unique smooth projective curve over $k$, which
          admits an open dense embedding $C\inj \widehat{C}$.
  \end{itemize}

\subsection{Lisse sheaves with bounded ramification}

\begin{definition}\label{def:divisorial-compactification}
Let $U$ be a normal $k$-scheme. A {\em divisorial compactification}\footnote{This is non-standard terminology.}
\index{divisorial compactification} of $U$
is a normal proper $k$-scheme $X$ together with an open dense immersion $j:U\inj X$ such that the complement
$X\setminus U$ is the support of an effective Cartier divisor.
\end{definition}

\begin{remark}\label{rmk:divisorial-compactifications-exist}
Let $U$ be a normal $k$-scheme.
\begin{enumerate}
\item A divisorial compactification of $U$ exists.
        Indeed, by Nagata's compactification theorem (see e.g. \cite[Thm.~4.1]{Conrad/Nagata}),
        we find a dense open immersion of $U$ into a proper $k$-scheme $Y$. 
        Let $\tilde{Y}$ be the blow-up of $Y$ in the closed subscheme $Y\setminus U$ 
        (with its reduced scheme structure). We obtain a dominant open immersion $U\inj \tilde{Y}$
      whose complement is the support  of the exceptional divisor $E$ of the blow-up, which is an effective Cartier divisor.
      Now let $\nu :X\to \tilde{Y}$ be the normalization of $\tilde{Y}$. Then $\nu$ is a finite morphism which is an isomorphism
      over $U$. We obtain an open dense embedding $U\inj X$  whose complement is the support of
        the effective Cartier divisor $\nu^*E$. Then $U\inj X$ is a divisorial compactification of $U$.
\item If $C$ is a smooth connected curve over $k$, then with the notation 
       from \ref{conventions-hdrt} the inclusion $C\inj \widehat{C}$
        is the (up to isomorphism) unique divisorial compactification of $C$.
\item Let $X$ be a divisorial compactification of $U$ and $C\in \Cu(U)$. Then by the valuative criterion for properness,
        the morphism $\nu: C\to U$ extends uniquely to a morphism $\nu_X: \widehat{C}\to X$.
        Furthermore $\widehat{C}$ is the normalization of its image in $X$, hence $\widehat{C}\in \Cu(X)$.
\end{enumerate}
\end{remark}

\begin{definition}\label{def:swan-divisor}
Let $C$ be a smooth connected curve over $k$, $A$ an $\ell$-adic coefficient ring in the sense of Convention \ref{conv-l-adic-coefficients}
and $\sF$ a lisse $A$-sheaf on $C$. We define the {\em Swan conductor of $\sF$}
\index{Swan conductor of a lisse sheaf on a curve} to be the following
effective Cartier divisor on $\widehat{C}$:
\[\Swan(\sF):=\sum_{x\in \widehat{C}} \Swan_x(\sF)\cdot [x].\]
Here $\Swan_x(\sF)$ is the Swan conductor of $\sF$ on $C$ at $x$, see \ref{swan-lisse-sheaf}.
Notice that $\Swan_x(\sF)\in\Z_{\ge 0}$ by Theorem \ref{thm:integralityOfSwanConductor}.
\end{definition}

\begin{definition}\label{def:ramification-of-sheaves}
Let $U$ be a normal $k$-scheme, $U\inj X$ a divisorial compactification and $D$ an effective Cartier divisor supported in 
$X\setminus U$.  Let $A$ be an $\ell$-adic coefficient ring and $\sF$ a lisse $A$-sheaf on $U$. 
Then we say that the {\em ramification of $\sF$ is bounded by $D$}
\index{bounded ramification of a sheaf} if the following condition is satisfied:
\eq{def:ramification-of-sheaves1}{\Swan(\nu^*\sF)\le \nu_X^*D,\quad
   \text{for all }C\in \Cu(U). }
Here $\nu: C\to U$ is the natural map and $\nu_X: \widehat{C}\to X$ its unique extension; 
the inequality takes place in the monoid of effective divisors on $\widehat{C}$.
\end{definition}

\begin{example}\label{ex:ramification-AS}
Let $\sL_{m,\psi}$ be the lisse sheaf on $\A^1\subset \P^1$ from Example \ref{ex:ArtinSchreier}.
Then the ramification of $\sL_{m,\psi}$ is bounded by $m\cdot\{\infty\}$, by Example \ref{ex:ASSwan}. 
\end{example}

\begin{remark}\label{rmk:ramification-depends-on-F1}
In the situation above we find a DVR $R$ which is finite over $\Z_\ell$ and a lisse $R$-sheaf $(\sF_n)_n$ on $U$
such that $A$ is an $R$-algebra and $\sF\cong (\sF_n)\otimes_R A$. Then for any $\nu:C\to U$ in $\Cu(U)$
we have $\nu^*\sF\cong (\nu^*\sF_n)\otimes_R A$ and by \ref{swan-lisse-sheaf}, \ref{swan-lisse-sheaf-bc}
\[\Swan(\nu^*\sF)=\Swan(\nu^*\sF_1).\]
Thus whether the ramification of $\sF$ is bounded or not by a given Cartier divisor $D$ depends only on $\sF_1$.
\end{remark}

\begin{remark}\label{rmk:ramification-exact-sequences}
In the situation of Definition \ref{def:ramification-of-sheaves} if $0\to\sF'\to\sF\to\sF''\to 0$ is an exact sequence
of lisse $A$-sheaves on $U$ and the ramification of $\sF'$ and $\sF''$ is bounded by $D'$ and $D''$, respectively,
the ramification of $\sF$ is bounded by $D'+D''$. This follows immediately from 
\ref{swan-lisse-sheaf}, \ref{swan-lisse-sheaf-exactness}.
\end{remark}

\begin{proposition}\label{prop:lisse-sheaf-has-bounded-ramification}
Let $U$ be a normal $k$-scheme and $U\inj X$ a divisorial compactification. Let $A$ be an $\ell$-adic coefficient ring
and  $\sF$ a free lisse $A$-sheaf on $U$. Then there exists an effective Cartier divisor $D$ on $X$ supported in $X\setminus U$
such that the ramification of $\sF$ is bounded by $D$.
\end{proposition}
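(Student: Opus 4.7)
My plan is to reduce the problem to a local question about how Swan conductors behave under base change of complete discrete valuation rings. First, by Remark \ref{rmk:ramification-depends-on-F1} the condition of having ramification bounded by a divisor $D$ depends only on the reduction $\sF_1 := \sF \otimes_R R/\mathfrak m$, so I may assume $A$ is a finite field. Then the image of the monodromy representation is a finite group $G$, and Theorem \ref{thm-sheaf-vs-rep} yields a connected finite \'etale Galois cover $\pi \colon V \to U$ with group $G$ trivialising $\sF$. Let $\bar\pi\colon Y \to X$ denote the normalisation of $X$ in the function field $k(V)$; this is a finite surjective morphism between proper normal $k$-schemes restricting to $\pi$ over $U$. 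Because $U \hookrightarrow X$ is a divisorial compactification, $X \setminus U$ is the support of an effective Cartier divisor $D_0$, so it will suffice to exhibit an integer $N \geq 1$ such that $D := N \cdot D_0$ bounds the ramification of $\sF$.

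To produce $N$, I would extract local data from $\bar\pi$. Let $\xi_1, \ldots, \xi_r$ be the generic points of the codimension-$1$ irreducible components $Y_1, \ldots, Y_r$ of $X \setminus U$ (finitely many, as $X$ is noetherian). For each $j$, pick $\eta_j \in Y$ lying over $\xi_j$; then $\widehat{\mathcal O}_{X, \xi_j} \hookrightarrow \widehat{\mathcal O}_{Y, \eta_j}$ is an extension of complete DVRs whose fraction-field extension $L_{\eta_j}/K_{\xi_j}$ is finite Galois with group the decomposition subgroup $D_{\eta_j} \subset G$ at $\eta_j$. Its upper-numbering ramification filtration has finitely many jumps, all bounded by some nonnegative integer $b_j$. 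I would then set
\[N := \max_{1 \leq j \leq r} \left\lceil \frac{b_j \cdot \rk(\sF)}{v_{\xi_j}(D_0)} \right\rceil,\]
where $v_{\xi_j}(D_0) \geq 1$ is the multiplicity of $Y_j$ in $D_0$.

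To verify that $D = N D_0$ bounds the ramification, fix $C \in \Cu(U)$ and $c \in \widehat C \setminus C$, write $x := \nu_X(c)$ and let $Y_{j_1}, \ldots, Y_{j_s}$ be the codimension-$1$ components of $X \setminus U$ through $x$, with intersection multiplicities $m_i := (\widehat C \cdot Y_{j_i})_c$, so that $v_c(\nu_X^* D_0) = \sum_i m_i \cdot v_{\xi_{j_i}}(D_0)$. Pulling back $\bar\pi$ along $\nu_X$ and normalising yields a finite cover $\widetilde C \to \widehat C$ which is \'etale over $C$; picking a point $c' \in \widetilde C$ lying over $c$, the quantity $\Swan_c(\nu^* \sF)$ is determined by the action of the decomposition group at $c'$ on $\sF_{\bar\eta}$ (regarded as a $G$-module via $\pi$). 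The aim is then to bound the upper-numbering jumps of this local Galois group in terms of the $b_{j_i}$ and the $m_i$, thereby obtaining the estimate
\[\Swan_c(\nu^* \sF) \leq \rk(\sF) \cdot \sum_i m_i \cdot b_{j_i} \leq N \cdot v_c(\nu_X^* D_0).\]

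The main obstacle is this last estimate, which relates the local ramification on $\widehat C$ at $c$ to the global ramification of $\bar\pi$ at the $\eta_{j_i}$ through the intersection multiplicities $m_i$. Concretely, the composition $\Spec \widehat{\mathcal O}_{\widehat C, c} \to X$ should, after an appropriate localisation-and-completion procedure, factor through each $\Spec \widehat{\mathcal O}_{X, \xi_{j_i}}$ via an extension of complete DVRs of ramification index $m_i$; by Herbrand's theorem (Proposition \ref{prop:Herbrand}) and the transitivity of $\psi$ (Proposition \ref{prop:transitivityOfPhi}), the upper-numbering breaks scale by at most a factor of $m_i$ under such a base change, and the contributions from the distinct $\eta_{j_i}$ combine additively. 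Making this compatibility precise---especially when $x$ has codimension strictly greater than $1$, so that $\widehat{\mathcal O}_{X, x}$ is not itself a DVR and one must work with higher-dimensional complete local rings---is the technical heart of the argument.
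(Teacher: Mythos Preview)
Your reduction to a finite coefficient field and the passage to a trivialising Galois cover $V\to U$ with normalisation $Y\to X$ match the paper's setup. The divergence is in how you extract a uniform bound from the cover, and here there is a genuine gap.

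The main problem is your attempt to control $\Swan_c(\nu^*\sF)$ via the upper-numbering breaks $b_j$ at the codimension-one generic points $\xi_j$ of $X\setminus U$. Two issues arise. First, the residue field of $\widehat{\sO}_{X,\xi_j}$ is the function field of the divisor component $Y_j$, an imperfect field of characteristic $p$ (when $\dim X>1$); the ramification filtration developed in this paper requires separable residue extensions (see the standing hypothesis in Section~\ref{Ram-groups-Hasse-Arf}), so your $b_j$ are not defined within the framework at hand. Second, even granting a workable filtration at $\xi_j$, your appeal to Herbrand's theorem (Proposition~\ref{prop:Herbrand}) is misplaced: Herbrand describes how the upper-numbering filtration passes to \emph{quotients} of a fixed Galois group, not how Swan conductors behave under base change of the ground discrete valuation ring. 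There is no map of complete discrete valuation rings from $\widehat{\sO}_{X,\xi_j}$ to $\widehat{\sO}_{\widehat{C},c}$ (the point $c$ lies over a closed point of $X$, while $\xi_j$ is a generic point of a divisor), so the ``factorisation'' you invoke does not exist, and the scaling-by-$m_i$ estimate has no foundation here. Making such a comparison precise is exactly the content of the Abbes--Saito theory alluded to in Section~\ref{higher-GOS}, which lies well beyond the paper's toolkit.

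The paper sidesteps all of this by working with a more robust global invariant: a higher-dimensional discriminant ideal $\sI(D_{X'/X})\subset\sO_X$ (Definition~\ref{def:HigherDimDiscriminant}), defined via trace forms. Its key property (Lemma~\ref{lem:PropertiesHigherDimDiscriminant}\ref{lem:PropertiesHigherDimDiscriminant-semi-continuity}) is a semicontinuity statement: for any curve $\nu\colon C\to U$, the pullback $\nu_X^*\sI(D_{X'/X})$ is contained in the classical discriminant $\sI(D_{\widehat{C}'/\widehat{C}})$ of the induced cover of curves. One then chooses $D$ with $\sO_X(-D)\subset\sI(D_{X'/X})$ and bounds $\Swan_x(\nu^*\sF)$ by $\rk(\sF)$ times the local multiplicity of the curve-discriminant, using the explicit formula of Proposition~\ref{prop-swan-formula}\ref{prop-swan-formula-sw} relating the Swan character to the different. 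This yields the bound $\rk(\sF)\cdot D$ without ever needing ramification theory over imperfect residue fields.
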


Before we can prove the proposition we need a notion of discriminant in higher dimensions.
The following definition is due to Alexander Schmidt.
\begin{definition}\label{def:HigherDimDiscriminant}
Let $\pi: Y'\to Y$ be a finite and generically \'etale morphism of degree $n$ between connected  normal $k$-schemes.
Denote by $\Tr_{Y'/Y}: \pi_*\sO_{Y'}\to \sO_Y$ the trace map which is induced by $\Tr_{k(Y')/k(Y)}$.
We define the discriminant\index{discriminant in higher dimensions}
 $\sI(D_{Y'/Y})$ of $Y'/Y$ to be the sheaf associated to the following 
presheaf of ideals in $\sO_Y$ 
\[Y\supset U\mapsto
\sum_{\{x_1,\ldots, x_n\}}\det((\Tr_{Y'/Y}(x_ix_j))_{i,j})\cdot\sO_Y(U)\quad \subset \sO_Y(U),\]
where the sum is over all subset $\{x_1,\ldots, x_n\}\subset \sO_{Y'}(\pi^{-1}(U))$, which consist of 
$\sO_Y(U)$-linearly independent elements.
\end{definition}

\begin{lemma}\label{lem:PropertiesHigherDimDiscriminant}
In the situation of Definition \ref{def:HigherDimDiscriminant} the discriminant $\sI(D_{Y'/Y})$ has the following properties:
\begin{enumerate}
\item\label{lem:PropertiesHigherDimDiscriminant-coh} 
              $\sI(D_{Y'/Y})\subset \sO_Y$ is a coherent ideal sheaf.
\item\label{lem:PropertiesHigherDimDiscriminant-free} If $\pi: Y'\to Y$ is free and $e_1,\ldots, e_n$ is a basis
         of the $\sO_Y$-module $\pi_*\sO_{Y'}$, then $\sI(D_{Y'/Y})= \det((\Tr_{Y'/Y}(e_ie_j))_{i,j})\cdot\sO_Y$.
\item\label{lem:PropertiesHigherDimDiscriminant-dim1}
         Assume $Y=\Spec A$ and $Y'=\Spec B$ are smooth affine curves. Then
        $\sI(D_{Y'/Y})$ is the sheaf induced by the discriminant ideal $\mathfrak{d}_{B/A}\subset A$ 
      from Definition \ref{def:different}.
\item\label{lem:PropertiesHigherDimDiscriminant-et}
         If $U\subset Y$ is an open subset such that $\pi^{-1}(U)\to U$ is \'etale, then 
      $\sI(D_{Y'/Y})_{|U}=\sO_{U}$.
\item\label{lem:PropertiesHigherDimDiscriminant-semi-continuity}
      Let $\nu: C\to Y$ be a morphism from a smooth connected curve $C$ to $Y$ and  assume
       that $\nu(C)$ meets the locus over which $\pi$ is \'etale. Let $C'$ be 
       the normalization of an irreducible component of $C\times_Y Y'$, which is generically \'etale over $C$.
        We obtain a commutative  diagram 
\[\begin{tikzcd} C'\ar[swap]{d}{\pi_{C'}}\ar{r}{\nu'} & Y'\ar{d}{\pi}\\
                       C\ar{r}{\nu} & Y.
   \end{tikzcd}\]
 Then the image of $\sI(D_{Y'/Y})$ under $\nu^*: \nu^{-1}\sO_{Y}\to \sO_{C}$ is contained in $\sI(D_{C'/C})$,
       i.e.
\[\nu^*(\sI(D_{Y'/Y}))\subset \sI(D_{C'/C}).\]
\end{enumerate}
\end{lemma}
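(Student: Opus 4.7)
\emph{Plan.} Parts (a)--(d) are formal reductions, while (e) is the main step.

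For (a), I would work affine-locally on $Y=\Spec A$; since $Y$ is of finite type over $k$, $A$ is noetherian, so the ideal $\sI(D_{Y'/Y})(\Spec A)\subset A$ is automatically finitely generated, hence defines a coherent ideal sheaf. For (b), given an $A$-basis $e_1,\dots,e_n$ of $B$, any $A$-linearly independent $x_1,\dots,x_n$ has the form $x_i=\sum_j a_{ij}e_j$ with $a_{ij}\in A$, and the identity $\det(MTM^{t})=\det(M)^2\det T$ applied to $M=(a_{ij})$ and $T=(\Tr(e_ke_l))$ gives $\det(\Tr(x_ix_j))=\det(a_{ij})^2\det(\Tr(e_ie_j))$; the ideal is generated by the single element $\det(\Tr(e_ie_j))$. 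Part (c) reduces to (b) after shrinking $Y$ so that the finite torsion-free $A$-module $B$ becomes free of rank $n$ (possible since $A$ is Dedekind); the resulting principal generator matches the classical $\mathfrak{d}_{B/A}$ by Lemma \ref{lem:DiscriminantFormula}. For (d), on the étale locus $U$, $\pi_*\sO_{Y'}|_U$ is locally free with perfect trace pairing, so the Gram determinant on any local basis is a unit and (b) gives $\sI(D_{Y'/Y})|_U=\sO_U$.

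For (e), I would check the containment stalk-by-stalk at each closed point $P\in C$. Let $R=\sO_{C,P}$, a DVR, and $R'=\Gamma(\pi_{C'}^{-1}(\Spec R),\sO_{C'})$; by (c), the stalk $\sI(D_{C'/C})_P$ equals $\mathfrak{d}_{R'/R}$. Localizing $Y$ around $y=\nu(P)$ and setting $A=\sO_{Y,y}$, $B=(\pi_*\sO_{Y'})_y$, we may after further shrinking arrange that $B$ is free over $A$ on the relevant locus; then by (b) and base change, $\nu^{*}(\det(\Tr_{B/A}(x_ix_j)))=\det(\Tr_{S/R}(x_i\otimes 1\cdot x_j\otimes 1))$ for every $A$-linearly independent $\{x_i\}\subset B$, where $S:=B\otimes_A R$, and these generate the ideal $\mathfrak{d}_{S/R}\subset R$. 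The hypothesis that $\nu(C)$ meets the étale locus of $\pi$ ensures that the generic fiber $S\otimes_R\Frac(R)=\prod_s L_s$ is a finite étale $\Frac(R)$-algebra, exactly one of whose simple factors, $L_{s_0}$, is $\Frac(R')$; the integral closure of $S$ in $\prod_s L_s$ is $S^{\nu}=\prod_s R'_s$ with $R'_{s_0}=R'$.

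The desired containment is then produced by the chain
\[\mathfrak{d}_{S/R}\;\subset\;\mathfrak{d}_{S^{\nu}/R}\;=\;\prod_s\mathfrak{d}_{R'_s/R}\;\subset\;\mathfrak{d}_{R'_{s_0}/R}\;=\;\mathfrak{d}_{R'/R},\]
where the last inclusion is immediate since a product of ideals lies in each factor. The first inclusion expresses that normalization can only make the ramification milder, and I would obtain it from the characterization of the different as annihilator of Kähler differentials (Remark \ref{rem:differentAnnihilatesDifferentials}): the functorial surjection $\Omega^1_{S/R}\otimes_S S^{\nu}\twoheadrightarrow\Omega^1_{S^{\nu}/R}$ yields $\mathfrak{D}_{S/R}\cdot S^{\nu}\subset\mathfrak{D}_{S^{\nu}/R}$ on Kähler differents, and since $S$ and $S^{\nu}$ share a total ring of fractions, applying the norm to $R$ converts this to $\mathfrak{d}_{S/R}\subset\mathfrak{d}_{S^{\nu}/R}$. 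Equivalently, the classical conductor formula $\mathfrak{d}_{S/R}=N(\mathfrak{f})\cdot\mathfrak{d}_{S^{\nu}/R}$, with $\mathfrak{f}$ the conductor of $S$ in $S^{\nu}$, exhibits the same inclusion.

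The main obstacle is precisely this first inclusion $\mathfrak{d}_{S/R}\subset\mathfrak{d}_{S^{\nu}/R}$, together with the preliminary reduction to the case where $B$ is free over $A$: for $Y$ of dimension $>1$, $B$ need only be reflexive rather than locally free at points of codimension $\ge 2$, and one must either choose a generically free sub-$A$-algebra of $B$ of the right rank, or run the Gram-matrix argument directly with a ``pseudobasis'' — both technical but routine. A naive appeal to transitivity of the different (Proposition \ref{prop:transitivityOfDifferent}) for the chain $R\subset S\subset S^{\nu}$ is tempting but \emph{does not apply}, since $S$ and $S^{\nu}$ share a fraction field and $S$ is typically non-Gorenstein; so the Kähler-differential route (or the conductor-formula route) seems to be the unavoidable path.
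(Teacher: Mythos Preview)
Your treatment of parts (a)--(d) matches the paper's. The divergence is in part (e), and it is worth comparing.

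\textbf{What the paper does.} The paper never reduces to $B$ free over $A$, and never introduces a discriminant for the possibly non-normal ring $S=B\otimes_A R$. Instead it works directly with an arbitrary $A$-linearly independent family $x_1,\dots,x_n\in B$ and maps their images $\bar{x}_i$ into the product $\prod_i S_i$, where $S_1,\dots,S_r$ are the coordinate rings of the normalized irreducible components of $C\times_Y Y'$ (so $S_1=R'$). Two observations do all the work: (i) the trace $\Tr_{B\otimes_A R/R}$ factors through $\prod_i S_i$ as $\sum_i\Tr_{S_i/R}$, because the map $B\otimes_A R\to\prod_i S_i$ is an isomorphism over $\Frac(R)$ (here one uses that $\nu(C)$ meets the \'etale locus); (ii) each $S_i$ is finite torsion-free over the DVR $R$, hence free, so $\prod_i S_i$ has an honest $R$-basis $\{e_{ij}\}$ with block-diagonal trace form. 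Now the \emph{same} determinant identity you used for (b) --- $\det(\Tr(x_ix_j))=\det(M)^2\det(\Tr(e_ke_l))$ when $x=Me$ --- applied inside $\prod_i S_i$ gives
\[
\det(\Tr_{B\otimes_A R/R}(\bar{x}_i\bar{x}_j))\in\mathfrak{d}_{S_1/R}\cdots\mathfrak{d}_{S_r/R}\subset\mathfrak{d}_{S_1/R}=\mathfrak{d}_{R'/R}.
\]
That is the entire argument.

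\textbf{Where your route costs you.} Both obstacles you flag are artifacts of the detour through $\mathfrak{d}_{S/R}$. First, the freeness of $B$ over $A$ is irrelevant: the discriminant ideal is generated by Gram determinants of \emph{arbitrary} linearly independent families, and each such determinant can be pushed into $\prod_i S_i$ individually. Second, the inclusion $\mathfrak{d}_{S/R}\subset\mathfrak{d}_{S^\nu/R}$, which you call the main obstacle and propose to handle via K\"ahler differentials or the conductor formula, is in fact immediate from the Gram-matrix identity once you map into $S^\nu=\prod_i S_i$ --- this is exactly what the paper does. Your K\"ahler-differential argument is shakier than you suggest: Remark~\ref{rem:differentAnnihilatesDifferentials} identifies the different with $\mathrm{Ann}(\Omega^1)$ only in the monogenic case, and for non-Gorenstein $S$ the K\"ahler different and the Dedekind different can differ, so the passage from $\mathfrak{D}_{S/R}\cdot S^\nu\subset\mathfrak{D}_{S^\nu/R}$ to the discriminant inclusion via norms is not clean. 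The conductor formula would work, but it is heavier machinery than needed.

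In short: your plan is salvageable via the conductor formula, but the paper's route is strictly simpler --- it reuses the identity from (b) one more time and avoids both difficulties you anticipated.
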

\begin{proof}
By definition $\sI(D_{Y'/Y})$ is an $\sO_Y$-submodule of $\sO_Y$ 
hence \ref{lem:PropertiesHigherDimDiscriminant-coh}  is automatic.
Now let $U=\Spec A$ be an affine open subset of $Y$ and write $\pi^{-1}(U)=\Spec B$. 
Let $\{x_1,\ldots, x_n\}$ and $\{y_1,\ldots y_n\}$ be two sets of $A$-linearly independent elements of $B$ and denote
$B_x=\sum_i A x_i$, $B_y=\sum_i A y_i$. Assume $B_x\subset B_y$ then 
\eq{lem:PropertiesHigherDimDiscriminant-1}{
        \det((\Tr_{B/A}(x_ix_j))_{i,j})\cdot A\subset\det((\Tr_{B/A}(y_iy_j))_{i,j})\cdot A.}
Indeed by assumption we find an $n\times n$-matrix $M$ with coefficients in $A$ such that 
(with the obvious vector notation) $x= M y$ and we obtain
\[       \det((\Tr_{B/A}(x_ix_j))_{i,j})= \det(M)^2\cdot \det((\Tr_{B/A}(y_iy_j))_{i,j}),\]
in particular \eqref{lem:PropertiesHigherDimDiscriminant-1} holds.
This immediately implies \ref{lem:PropertiesHigherDimDiscriminant-free} and using 
Lemma \ref{lem:DiscriminantFormula} also \ref{lem:PropertiesHigherDimDiscriminant-dim1}. 

To prove \ref{lem:PropertiesHigherDimDiscriminant-et} we can assume that $B/A$ is \'etale and free with basis
$e_1,\ldots, e_n$ and we have to show $\delta:=\det((\Tr_{B/A}(e_ie_j))_{i,j})\in A^\times$. 
For this it suffices to show that for any prime $\mathfrak{p}\subset A$ the element 
$\delta$ maps to a unit in the residue field $k(\mathfrak{p})$.
But since $B/A$ is \'etale, $B\otimes_A k(\mathfrak{p})$ is \'etale over $k(\mathfrak{p})$ 
with basis $\bar{e}_i=e_i\otimes 1$, $i=1,\ldots, n$, and 
the image of $\delta$ in $k(\mathfrak{p})$ equals
$\det((\Tr_{B\otimes_A k(\mathfrak{p})/k(\mathfrak{p})}(\bar{e}_i\bar{e}_j))_{i,j})$.
It follows from Proposition \ref{prop:separability}, that this element is not zero in $k(\mathfrak{p})$.
Hence \ref{lem:PropertiesHigherDimDiscriminant-et} follows.

Finally,  in the situation of \ref{lem:PropertiesHigherDimDiscriminant-semi-continuity}, we can assume
that $Y'\to Y$ is given by a finite ring extension $B/A$.  
If the image of $C$ is a point, then by assumption it lies  in the locus over which $\pi$ is \'etale 
and hence $\sI(D_{Y'/Y})$ is the unit ideal around this point. Thus we can assume that $\nu(C)$ is a curve
and therefore $C'\to C$ corresponds to a finite and generically \'etale extension of Dedekind domains $S/R$.
Let $x_1,\ldots, x_n\in B$ be a
sequence of $A$-linearly independent elements. We have to show that the image of 
$\det((\Tr_{B/A}(x_ix_j))_{i,j})$ under $A\to R$ is contained in $\mathfrak{d}_{S/R}$, which amounts to show
that
\eq{lem:PropertiesHigherDimDiscriminant-2}{
                      \det((\Tr_{B\otimes_A R/R}(\bar{x}_i\bar{x}_j))_{i,j})\in\mathfrak{d}_{S/R},}
where $\bar{x}_i$ denotes the image of $x_i$ in $B\otimes_A R$. 
Observe that all irreducible components of $Y'\times_Y C$ map surjectively to $C$.
Indeed, since $\pi:Y'\to Y$ is finite and surjective and $Y$ is normal (hence universally unibranch)
$\pi$ is universally open, by \cite[Cor.~(14.4.4), (i)]{EGA4}. Let $S_1=S, S_2, \ldots, S_r$ be the affine
coordinate rings of the normalizations of the irreducible components of $Y'\times_Y C$.
We get a natural map $B\otimes_A R\to \prod_i S_i$ which becomes an isomorphism
when tensored with $\otimes_A k(Y)$, since $Y'\times_Y C\to C$ is generically \'etale.
It follows that we obtain a commutative diagram 
\[\begin{tikzcd} B\otimes_A R\ar{r}{}\ar[swap]{dr}{\Tr} & \prod_i S_i\ar{d}{\sum_i\Tr_{S_i/R}}\\        
                                          ~ & R. 
   \end{tikzcd}\]
The images of $\bar{x}_1,\ldots, \bar{x}_n$ in $\prod_i S_i$ span an $R$-submodule of rank $n$.
Further taking a basis $\{e_{ij}\}_{j=1,\ldots, n_i}$ of $S_i/R$ we get the following basis of $\prod_i S_i$
\[(e_{11},0\ldots, 0), \ldots ,( e_{1,n_1},0\ldots, 0),\ldots, (0,\ldots, 0, e_{r, 1}),\ldots, (0,\ldots, 0, e_{r,n_r}).\]
Therefore the formula \eqref{lem:PropertiesHigherDimDiscriminant-1} (applied with $\{y_i\}$ the above basis)
yields
\[\det((\Tr_{B\otimes_A R/R}(\bar{x}_i\bar{x}_j))_{i,j})\cdot A 
                                 \subset \mathfrak{d}_{S_1/R}\cdots \mathfrak{d}_{S_r/R}\subset \mathfrak{d}_{S_1/R}.\]
This proves \eqref{lem:PropertiesHigherDimDiscriminant-2} and finishes the proof of the lemma.
\end{proof}

\begin{remark}
A more intrinsic way to define the discriminant ideal $\sI(D_{Y'/Y})$ is the following:
There is a well defined $\sO_Y$-linear morphism
\[\phi_{Y'/Y}: \bigwedge^n(\pi_*\sO_{Y'})\otimes_{\sO_Y} \bigwedge^n(\pi_*\sO_{Y'})\to \sO_Y,\]
which sends $(x_1\wedge\ldots \wedge x_n)\otimes (y_1\wedge\ldots\wedge y_n)$ to $\det((\Tr_{Y'/Y}(x_iy_j))_{i,j})$,
see e.g. \cite[\S 1, 9]{Bourbaki/AlgebreIX}. 
For an $\sO_Y$-module $\sM$ denote by $\Gamma(\sM)$ the divided power algebra of $\sM$, see \cite[III, 1. Def.]{Roby}.
Recall that it is a graded $\sO_Y$-algebra which is locally generated by elements of the form
$x^{[n]}$, $x\in \sM$, $n\ge 1$, which have degree $n$ and satisfy $n!x^{[n]}=(x^{[1]})^n$. 
Denote by $\Gamma^2(\sM)$ the $\sO_Y$-module of homogeneous elements of degree 2.
There is a canonical  $\sO_Y$-linear map $\gamma_2: \Gamma^2(\sM)\to \sM\otimes_{\sO_Y}\sM$
satisfying $\gamma_2(x^{[2]})=x\otimes x$ and if $2$ is invertible in $\sO_Y$ it 
induces an isomorphism with the symmetric tensors $\Gamma^2(\sM)\xr{\simeq} (\sM\otimes_{\sO_Y}\sM)^{\Sigma_2}$,
see \cite[Prop.~III.1, Prop.~III.3]{Roby}.
Define $\Phi_{Y'/Y}:=\phi_{Y'/Y}\circ \gamma_2: \Gamma^2(\bigwedge^n\pi_*\sO_{Y'})\to \O_Y$.
Then we have  $\sI(D_{Y'/Y})= \image(\Phi_{Y'/Y})$, as follows directly from the definition.
\end{remark}

\begin{proof}[Proof of Proposition \ref{prop:lisse-sheaf-has-bounded-ramification}]
We can assume that $U$ is connected; let  $\bar{\eta}\to U$ be a geometric point over the generic point of $U$.
By Remark \ref{rmk:ramification-depends-on-F1} it suffices to consider the case in which $A$ is a finite field
of characteristic $\ell$.
In particular $\sF_{\bar{\eta}}$ is finite (as a set). Hence the representation
$\pi_1(U,\bar{\eta})\to {\rm Aut}(\sF_{\bar{\eta}})$ defined by $\sF$ factors
over a finite quotient, i.e.~there exists a finite connected \'etale Galois cover $\pi: U'\to U$ which trivializes $\sF$.
Let $\pi_X: X'\to X$ be the normalization of $X$ inside the function field of $U'$. Then $\pi_X$ is finite,
$\pi_X^{-1}(U)=U'$ and $X'$ is a divisorial compactification of $U'$. 
Let $\sI(D_{X'/X})$ be the discriminant of $X'/X$ (Definition \ref{def:HigherDimDiscriminant}). Since
$\sI(D_{X'/X})_{|U}=\sO_U$ we find an effective Cartier divisor $D$ on $X$ with 
$\sO_X(-D)\subset \sI(D_{X'/X})$.
We claim:
\eq{prop:lisse-sheaf-has-bounded-ramification1}{\text{The ramification of }\sF \text{ is bounded by } 
   \rk(\sF)\cdot  D.}
Take $\nu:C\to U$ in $\Cu(U)$. Let $C'$ be a connected component of $C\times_U U'$.
The projection maps induce maps $C'\to C$ and $C'\to U'$, which after compactification yield
a commutative diagram
\[\begin{tikzcd}[column sep=1.5cm]
			\widehat{C}'\ar[swap]{d}{\pi_{\widehat{C}}}\ar{r}{\nu_{X'}}& X'\ar{d}{\pi_{X}}\\
                          \widehat{C}\ar{r}{\nu_{X}} &   X.
		\end{tikzcd}\] 
 By Lemma \ref{lem:PropertiesHigherDimDiscriminant}, \ref{lem:PropertiesHigherDimDiscriminant-semi-continuity}, we have
\[\sO_{\widehat{C}}(-D_{|\widehat{C}})\subset \nu_X^*\sI(D_{X'/X})\subset \sI(D_{\widehat{C}'/\widehat{C}}).\]
Denote by $D_{\widehat{C}'/\widehat{C}}$ the effective divisor on $\widehat{C}$ given by $\sI(D_{\widehat{C}'/\widehat{C}})$;
it is the discriminant divisor. We obtain
\eq{prop:lisse-sheaf-has-bounded-ramification2}{\sum_{x'/x}[x':x]\cdot v_{x'}(\mathfrak{D}_{\widehat{C}'/\widehat{C}})
                = \text{mult}_x(D_{\widehat{C}'/\widehat{C}})\le \text{mult}_x(D_{|\widehat{C}}),}
where we use the notation from Proposition \ref{prop-swan-formula}.

On the other hand $C'\to C$ is a connected finite \'etale Galois covering, say with Galois group $G_{C'}$,
trivializing $\sE:=\nu^*\sF$. Let $\bar{\xi}\to C$ be a geometric point over the generic point of $C$
and $x\in \widehat{C}\setminus C$ a closed point.
Then 
\begin{align*}
\Swan_x(\sE) & = \dim_A  \Hom_{A[G_{C'}]}(\Sw_{G_{C'}, x}\otimes_{\Z_\ell} A,\sE_{\bar{\xi}})&&
                         \text{by  \ref{prop-swan-formula}\ref{prop-swan-formula-cond-rep}, 
                            \ref{rem:independenceOfFiniteQuotient}}\\
                             & =  \dim_A \left(\IntHom_A(\Sw_{G_{C'}, x}\otimes_{\Z_\ell} A,\sE_{\bar{\xi}})^{G_{C'}}\right)&&
                                        \text{by  \ref{ex:representationsFirstExamples}\ref{item:internalHom}}\\
                             &\le  \dim_A \IntHom_A(\Sw_{G_{C'}, x}\otimes_{\Z_\ell} A,\sE_{\bar{\xi}})\\
                             & =\rk(\sE)\cdot \dim_A (\Sw_{G_{C'}, x}\otimes_{\Z_\ell} A)\\
                             &= \rk(\sE)\cdot \sw_{G_{C'},x}(1)&& \text{by \eqref{Swan-representation-geometry:char}}\\
                            &=\rk(\sF)\cdot \left(\left(\sum_{x'/x}
			    [x':x]\left(1+v_{x'}\left(\mathfrak{D}_{\widehat{C}'/\widehat{C}}\right)\right)\right)- |G_{C'}|\right)&&
                            \text{by   \ref{prop-swan-formula}\ref{prop-swan-formula-sw}}\\
                             &\le \rk(\sF)\cdot\sum_{x'/x} [x':x]\cdot v_{x'}(\mathfrak{D}_{\widehat{C}'/\widehat{C}})&&
                            \text{by \eqref{eq:nef-formular}, \ref{ex-etale}\ref{ex-etale:unramified}}\\
                            &\le \rk(\sF) \cdot\text{mult}_x(D_{|\widehat{C}})&& 
                                               \text{by \eqref{prop:lisse-sheaf-has-bounded-ramification2}.}
\end{align*}
Hence $\Swan(\nu^*\sF)\le \rk(\sF) \cdot D_{|\widehat{C}}$ which proves 
the claim \eqref{prop:lisse-sheaf-has-bounded-ramification1} and the proposition.
\end{proof}

As a consequence of the Grothendieck-Ogg-Shafarevich theorem we obtain 
a universal bound for the cohomolgy with compact support of a lisse $\Qlb$-sheaf on a smooth curve over $k$ 
with fixed rank and ramification bounded by a divisor of fixed degree. More precisely:

\begin{definition}\label{def:complexity}
Let $C$ be a smooth and geometrically connected curve over $k$. 
Let $d\ge 0$ be a natural number.
Then we define
\[\sC_d:= 2g(\widehat{C})+2d+1,\]
where $g(\widehat{C})$ is the genus of $\widehat{C}$. 
In case $D$ is an effective divisor of degree $\deg(D)\ge 0$ on $\widehat{C}$ we set
\[\sC_D:=\sC_{\deg D}\]
and call it the {\em complexity of $D$}\index{complexity of a divisor}.
\end{definition}

\begin{proposition}\label{prop:coh-dim-bound}
Let $C$ be a smooth geometrically connected curve over $k$. Let $r\ge 1$ and $d\ge 0$ be natural numbers.
Then
\[\dim_{\Qlb} H^0_c(C\otimes_k \bar{k},\sF)+\dim_{\Qlb} H^1_c(C\otimes_k \bar{k},\sF)\le r\cdot \sC_d\]
for all lisse $\Qlb$-sheaves $\sF$ on $C\otimes_k \bar{k}$ of rank $r$ 
whose ramification is bounded by $r\cdot D$, where $D$ is an effective divisor of degree $\deg(D)=d$ and with  support ${\rm supp}(D)=\widehat{C}\otimes_k \bar{k}\setminus C\otimes_k \bar{k}$.
\end{proposition}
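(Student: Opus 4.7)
The plan is to reduce the bound to a direct application of the Grothendieck--Ogg--Shafarevich formula (Theorem~\ref{thm-GOS}), combined with standard rank bounds on $H^0_c$ and $H^2_c$. Write $\bar C := C \otimes_k \bar k$, $\widehat{\bar C} := \widehat{C} \otimes_k \bar k$, $g := g(\widehat C)$ and $n := \#(\widehat{\bar C}\setminus \bar C)$. Since we work over an algebraically closed field, the residue degrees appearing in GOS are all equal to $1$, and Lemma~\ref{lem-EulerChar-Hur} gives $\chi_c(\bar C) = 2-2g-n$. Thus
\[\chi_c(\bar C,\sF) \;=\; r(2-2g-n) \;-\; \sum_{x \in \widehat{\bar C}\setminus \bar C}\Swan_x(\sF).\]

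I will then collect three ingredients. First, applying Definition~\ref{def:ramification-of-sheaves} to the tautological element $\id_{\bar C}\in\Cu(\bar C)$ shows that the ramification hypothesis translates into an inequality $\Swan(\sF)\le r\cdot D$ of effective divisors on $\widehat{\bar C}$, so summing multiplicities yields $\sum_x \Swan_x(\sF) \le r\deg D = rd$. Second, the stalk of $\sF$ at a geometric generic point has dimension $r$, so $\dim H^0_c(\bar C,\sF)\le \dim \sF^{\pi_1(\bar C)}_{\bar\eta}\le r$, and this vanishes whenever $\bar C$ is non-proper (i.e.~$n\ge 1$), since then any global section of $j_!\sF$ on $\widehat{\bar C}$ must vanish near a boundary point and hence everywhere. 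Third, by Poincar\'e duality (Theorem~\ref{thm-PD}), $H^2_c(\bar C,\sF)\cong H^0(\bar C,\sF^\vee(1))^\vee$, whose dimension is also at most $r$.

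Combining these through the Euler characteristic identity $\dim H^1_c = \dim H^0_c + \dim H^2_c - \chi_c(\bar C,\sF)$ gives
\[\dim H^0_c + \dim H^1_c \;=\; 2\dim H^0_c + \dim H^2_c \;-\; r(2-2g-n) \;+\; \sum_x \Swan_x(\sF).\]
If $n\ge 1$, then $H^0_c = 0$, $n\le d$, and the right-hand side is at most $r + r(2g+n-2) + rd \le r(2g+2d-1) < r\sC_d$. If $n = 0$, then $\bar C$ is proper, forcing $D = 0$ and $d = 0$, and the right-hand side is at most $2r + r - r(2-2g) = r(2g+1) = r\sC_0$. In both cases the desired bound $r\sC_d$ is obtained. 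I do not anticipate any substantive obstacle: once GOS and the rank bounds on $H^0_c$ and $H^2_c$ are invoked, the proof is a bookkeeping computation, with the only minor subtlety being a case split according to whether $C$ is proper.
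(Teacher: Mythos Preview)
Your approach is essentially the same as the paper's: invoke Grothendieck--Ogg--Shafarevich together with the elementary bounds $\dim H^0_c\le r$ (and $=0$ in the non-proper case) and $\dim H^2_c\le r$ from Poincar\'e duality, then split into the cases $n=0$ and $n\ge 1$. One small caveat: Theorem~\ref{thm-GOS} is stated only for a \emph{strict} open subset $U\subsetneq C$, so when $n=0$ you cannot apply it directly to $\bar C$; the paper handles this by removing a single closed point and using the localization sequence to recover $\chi(\bar C,\sF)=r(2-2g)$, and you should insert the same one-line argument.
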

\begin{proof}
Observe that 
\eq{prop:coh-dim-bound0}{\dim_{\Qlb} H^0(C\otimes_k\bar{k},\sF)=
                                     \dim_{\Qlb} \sF_{\bar{\eta}}^{\pi_1(\widehat{C}\otimes_k\bar{k},\bar{\eta})}\le r,}
where $\sF_{\bar{\eta}}$ denotes the $\pi_1(\widehat{C}\otimes_k\bar{k},\bar{\eta})$-representation
corresponding to $\sF$, see Theorem \ref{thm-sheaf-vs-rep}.

First assume $C=\widehat{C}$ is projective and hence $D=0$. Let $x\in \widehat{C}\otimes_k \bar{k}$ be a 
closed point and denote by $U$ the complement. Notice that $\Swan_x(\sF)=0$.
The exact sequence 
\[\cdots \rightarrow  H^i(\widehat{C}\otimes_k\bar{k},\sF)\to H^i(x, \sF)\to
	H^{i+1}_c(U,\sF)\to\cdots,\] Theorem \ref{thm-GOS}
and Lemma \ref{lem-EulerChar-Hur} give
\begin{align}\label{prop:coh-dim-bound1}
\sum_i (-1)^i\dim_{\Qlb}H^i(\widehat{C}\otimes_k \bar{k},\sF)& =
                                                               \chi_c(U,\sF)+ \dim_{\Qlb} H^0(x,\sF)\\
   & =r \chi_c(U)+r\notag\\
   &= r\chi(\widehat{C}\otimes_k \bar{k})\notag\\
   &= r(2-2g(\widehat{C})).\notag
\end{align}
Furthermore by Poincar\'e duality (Theorem \ref{thm-PD}) and \eqref{prop:coh-dim-bound0}
\[\dim_{\Qlb} H^2(\widehat{C}\otimes_k \bar{k},\sF)=\dim_{\Qlb} H^0(\widehat{C}\otimes_k \bar{k},\sF^\vee(1))\le r.\]
Thus \eqref{prop:coh-dim-bound0} and \eqref{prop:coh-dim-bound1} yield
\[\dim_{\Qlb} H^0_c(C\otimes_k \bar{k},\sF)+\dim_{\Qlb} H^1_c(C\otimes_k \bar{k},\sF)\le r\cdot \sC_0.\]

Now assume the inclusion $C\subset \widehat{C}$ is strict, i.e.~$C$  is affine and the ramification
of $\sF$ is bounded by $D$ with ${\rm supp}(D)=\widehat{C}\otimes_k \bar{k}\setminus C\otimes_k\bar{k}$.
Then by Corollary \ref{cor-affine-vanishing-copact-support}, Poincar\'e duality \ref{thm-PD} and \eqref{prop:coh-dim-bound0}
\[\dim_{\Qlb}H^0_c(C\otimes_k \bar{k},\sF)=0, \quad
\dim_{\Qlb}H^2_c(C\otimes_k\bar{k},\sF)=\dim_{\Qlb}H^0(C\otimes_k\bar{k},\sF^\vee(1))\le r.\]
Thus by Grothendieck-Ogg-Shafarevich \ref{thm-GOS} and Lemma \ref{lem-EulerChar-Hur}
\begin{align*}\omit\span \dim_{\Qlb} H^1_c(C\otimes_k \bar{k},\sF) \\
	\hspace{2cm}&=r\cdot (2g(\widehat{C})-2)+r\cdot{\rm card}({\supp}(D)) +\dim_{\Qlb}
H^2_c(C\otimes_k\bar{k},\sF)\\
&\phantom{=}\quad+\sum_{x\in \,{\rm supp}(D)}\Swan_x(\sF)\\
&\le r\cdot(2g(\widehat{C})-2) +r d+ r+r d\\
&= r(2g(\widehat{C})+2d-1)\\
&< r\cdot
\sC_d.\end{align*}
\end{proof}

\begin{definition}\label{def:tame-lisse-sheaf}
Let $U$ be a normal $k$-scheme, $A$ an $\ell$-adic coefficient ring and $\sF$ a lisse $A$-sheaf on $U$.
We say that $\sF$ is {\em tame}\index{tame lisse sheaf} if for any $\nu:C\to U$ in $\Cu(U)$
we have $\Swan(\nu^*\sF)=0$.
\end{definition}

\begin{remark}\label{rmk:tame-bounded-by-zero}
We immediately see that $\sF$ is tame if and only if there exists a divisorial compactification $U\inj X$ 
with respect to which the ramification of $\sF$ is bounded by the zero divisor. Moreover, this is equivalent to
the ramification of $\sF$ being bounded by the zero divisor with respect to all divisorial compactifications.
\end{remark}

The following proposition is a direct consequence of Theorem \ref{thm:differentNotionsOfTameness}
due to Kerz and Schmidt.
\begin{proposition}[Kerz-Schmidt]\label{prop:tame-sheaf-vs-rep}
Let $U$ be a normal connected scheme over $k$, $\bar{x}\to U$ a geometric point and $A$ an $\ell$-adic coefficient ring.
Let  $\pi_1^{\tame}(U,\bar{x})$ be the tame fundamental group of $U$ at $\bar{x}$, 
see Definition \ref{def:tameFundamentalGroup}. Then the equivalence of categories from Theorem \ref{thm-sheaf-vs-rep}
restricts to an equivalence between the following full subcategories
\[(\text{tame } A\text{-sheaves on }U)\xr{\simeq} (A\text{-representations of }\pi_1^{\tame}(U,\bar{x})).\]
\end{proposition}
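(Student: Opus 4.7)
The plan is to verify that, under the equivalence of Theorem \ref{thm-sheaf-vs-rep}, tame lisse $A$-sheaves on $U$ correspond precisely to those continuous $A$-representations of $\pi_1(U,\bar{x})$ that factor through the surjection $\pi_1(U,\bar{x})\twoheadrightarrow \pi_1^{\tame}(U,\bar{x})$. Since the ambient equivalence is already fully faithful, it is enough to verify this bijection on objects.

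First I would reduce to the case where $A$ is a finite $\ell$-adic coefficient ring. Write $\sF=(\sF_n)\otimes_R A$ for a lisse $R$-sheaf with $R$ a DVR finite over $\Z_\ell$; by Remark \ref{rmk:ramification-depends-on-F1}, tameness of $\sF$ depends only on $\sF_1$. On the representation side, for any codimension $1$ point $\eta$ of a normal compactification of $U$, the wild inertia $P_\eta$ of Proposition \ref{prop:basic-properties-tame-fund-group} is pro-$p$; by Lemma \ref{lemma:proPfactorsThroughFiniteQuotient}, its continuous action on a free $R$-module factors through a finite quotient and is trivial if and only if the induced action on the reduction mod $\lambda$ is trivial. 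Since $\pi_1^{\tame}(U,\bar{x})$ is the quotient of $\pi_1(U,\bar{x})$ by the closed normal subgroup generated by the $P_\eta$, the representation attached to $\sF$ factors through $\pi_1^{\tame}$ iff the one attached to $\sF_1$ does, and we may suppose $A$ finite throughout.

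For finite $A$, a continuous $A$-representation $\rho$ has finite image, hence corresponds to a connected finite \'etale Galois covering $\pi\colon V\to U$ that trivializes the associated sheaf $\sF$. By the very definition of $\pi_1^{\tame}(U,\bar{x})$ via the category $\FEt^{\tame}_U$ (Theorem \ref{thm:tameFundamentalGroup}), $\rho$ factors through $\pi_1^{\tame}(U,\bar{x})$ iff $\pi$ is a tame covering in the sense of Definition \ref{defn:tameness}. So the problem reduces to proving: $\sF$ is tame (Definition \ref{def:tame-lisse-sheaf}) iff $\pi\colon V\to U$ is a tame covering. This is where Theorem \ref{thm:differentNotionsOfTameness} of Kerz and Schmidt is applied: by the equivalence of \ref{thm:tamenessA} and \ref{thm:tamenessB} there, $\pi$ is tame iff for every regular $k$-curve $C$ and every morphism $\phi\colon C\to U$, the pullback $V\times_U C\to C$ is tamely ramified along $\widehat{C}\setminus C$. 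Since $V\times_U C$ trivializes $\phi^*\sF$, this tameness at a boundary point $x$ is precisely the statement $\Swan_x(\phi^*\sF)=0$ (cf.\ \ref{swan-lisse-sheaf}, \ref{swan-lisse-sheaf-tame}). The direction ``$\pi$ tame $\Rightarrow$ $\sF$ tame'' then follows because $\Cu(U)$ is contained in the family of all such $\phi$. For the converse, given an arbitrary $\phi\colon C\to U$ with $C$ a regular $k$-curve, either $\phi$ is constant (so $\phi^*\sF$ is constant and has vanishing Swan conductor) or $\phi$ factors as $\phi=\nu\circ\psi$ with $\nu\colon C'\to U$ the normalization of the image of $\phi$ (so $C'\in\Cu(U)$) and $\psi\colon C\to C'$ a finite dominant morphism; tameness of $\nu^*\sF$ is preserved under the further pullback $\psi^*$, since for a boundary point $y$ of $\widehat{C}$ mapping to $y'\in\widehat{C'}$ the image in $G_{K_{y'}}$ of the wild inertia $P_{K_y}$ lies in $P_{K_{y'}}$.

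The principal obstacle is the last equivalence between curve-wise Swan vanishing and the ``covering-theoretic'' tameness of Definition \ref{defn:tameness}; essentially all the content is provided by Kerz and Schmidt's Theorem \ref{thm:differentNotionsOfTameness}, and the remaining work is to package the pro-$p$ reduction and the factorization of arbitrary curves through elements of $\Cu(U)$ cleanly.
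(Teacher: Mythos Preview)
Your proof is correct and follows essentially the same approach as the paper: reduce to finite $A$, then use the Kerz--Schmidt theorem (Theorem \ref{thm:differentNotionsOfTameness}) to translate tameness of the relevant covering into curve-wise tame ramification, which in turn is equivalent to vanishing of the Swan conductors, with arbitrary curves factored through elements of $\Cu(U)$. The only cosmetic difference is that the paper works with the finite \'etale group scheme $U_\sF$ representing $\sF$ in one direction and a trivializing tame Galois cover in the other, whereas you treat both directions symmetrically via the single Galois cover $V\to U$ corresponding to $\ker\rho$; your reduction to finite $A$ is also spelled out more explicitly (via Lemma \ref{lemma:proPfactorsThroughFiniteQuotient} and Proposition \ref{prop:basic-properties-tame-fund-group}) than the paper's one-line appeal to the construction of the equivalence.
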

\begin{proof}
In Theorem \ref{thm-sheaf-vs-rep} we constructed two quasi-inverse functors
\[(\text{lisse } A\text{-sheaves on }U)\substack{F \\ \longrightarrow\\ \longleftarrow\\G}
                         (A\text{-representations of }\pi_1(U,\bar{x}))\]
It suffices to show $F$ and $G$ induce functors between the full subcategories from the statement.
In view of the construction of $F$ and $G$ it suffices to consider the case where $A$ is a finite ring.
Let $\sF$ be a tame $A$-sheaf on $U$. Then we have to show that the $\pi_1(U,\bar{x})$-representation
$F(\sF)=\sF_{\bar{x}}$ factors over the continuous quotient map
 $\pi_1(U,\bar{x})\twoheadrightarrow \pi_1^{\tame}(U,\bar{x})$,
see Proposition \ref{prop:basic-properties-tame-fund-group}, \ref{prop:basic-properties-tame-fund-group-surj}.
By Proposition \ref{prop:loc-constant-sheaf-rep} the sheaf $\sF$ is represented by a 
finite \'etale $U$-group scheme $U_{\sF}$. The stalk $\sF_{\bar{x}}$ is equal to the fiber
of $U_\sF\to U$ at $\bar{x}$. By the  definition of $\pi_1^{\tame}(U,\bar{x})$ it suffices 
to prove that $U_\sF\to U$ is tame, see Definition \ref{defn:tameness}, \ref{defn:tamenessInGeneral}. By
Theorem \ref{thm:differentNotionsOfTameness} it suffices to check that for any smooth connected $k$-curve $C$
and any map $\nu:C\to U$ the base changed morphism $U_\sF\times_U C\to C$
is tamely ramified along $\widehat{C}\setminus C$. If $\nu(C)$ is a point there is nothing to show.
Otherwise $\nu$ factors over the normalization $\widetilde{\nu(C)}\to\nu(C)\subset U$.
If the base change of  $U_\sF\to U$  over $\widetilde{\nu(C)}$ is tame, then so is the base change over $C$,
see e.g. \cite[II, Cor.~7.8]{Neukirch/1999}. Thus we can assume that $\nu:C\to
U$ is in $\Cu(U)$ (see \ref{conventions-hdrt} for the Definition of
$\Cu(U)$).
Notice that $U_{\nu^*\sF}:=U_{\sF}\times_U C\to C$ is a finite \'etale morphism 
representing $\nu^*\sF$.  Since $\sF$ is tame we have $\Swan_y(\nu^*\sF)=0$ for all $y\in \widehat{C}$, i.e.
for $\bar{\eta}\to C$ a geometric point over the generic point of $C$ and $P_y$ the wild inertia group at $y$
we have $U_{\sF}\times_U\bar{\eta}=\sF_{\bar{\eta}}=\sF_{\bar{\eta}}^{P_y}= (U_{\sF}\times_U\bar{\eta})^{P_y}$
(see \ref{swan-lisse-sheaf}, \ref{swan-lisse-sheaf-tame}). This means that $U_{\sF}\times_U C\to C$ is tamely ramified.

It remains to show that if $M$ is an $A$-representation of $\pi_1^{\tame}(U,\bar{x})$ then the
lisse sheaf $G(M)$ is a tame sheaf on $U$. But by construction of $\pi_1^{\tame}(U,\bar{x})$ there is
a finite \'etale Galois cover $P\to U$ which is tame such that $G(M)_{|P}$ is trivial. It follows that
$G(M)$ is represented by a finite \'etale $U$-group-scheme $U_{G(M)}$ sitting inside
$P\to U_{G(M)}\to U$. It follows that $U_{G(M)}\to U$  is tame. Hence for $\nu:C\to U$ in $\Cu(U)$,
the base change $U_{G(M)}\times_U C\to C$ is tame (by Theorem \ref{thm:differentNotionsOfTameness}).
This means $\Swan(\nu^*G(M))=0$, so $G(M)$ is tame.
\end{proof}

\begin{remark}\label{rmk:twist-by-tame-character}
\begin{enumerate}
\item Let $A$ and $U$ be as above and $\sF$ a lisse $A$-sheaf on $U$ and $\sL$ a tame $A$-sheaf on $U$, which is free
of rank 1. Let $U\inj X$ be a divisorial compactification and $D$ an effective Cartier divisor supported in $X\setminus U$. 
Then the ramification of $\sF$ is  bounded by $D$  if and only if the ramification of $\sF\otimes \sL$ is bounded
by $D$. Indeed if $U$ is a curve and $x\in X\setminus U$ a point at infinity, then the restriction of 
the representation corresponding to $\sL$ to the wild inertia group at $x$ is trivial; in particular
$\Swan_x(\sF)=\Swan_x(\sF\otimes_A\sL)$. The general case follows from the curve case.
\item If $\sL$ is a  lisse $A$-sheaf  on $\Spec k$, then the pullback $\sL_{|U}$ to $U$ is tame.
\end{enumerate}
\end{remark}

\subsubsection{Weil sheaves}(See \cite[(1.1.7)-(1.1.12)]{Deligne/WeilII}.)
\begin{enumerate}
\item Let $\F_q$ be the finite field with $q=p^n$ elements and fix an algebraic closure $\F$.
          Denote by $F\in \Gal(\F/\F_q)$ the {\em geometric Frobenius} element given by
        $F(a)=a^{-q}$. We define the {\em Weil group $W(\F/\F_q)$ of $\F_q$} \index{Weil group}
        to be the subgroup of $\Gal(\F/\F_q)$ generated by $F$.
        Notice that under the isomorphism $\Gal(\F/\F_q)\xr{\simeq}\hat{\Z}$
	which sends $F$ to $1$,
       the group $W(\F/\F_q)$ is identified with $\Z$. It is equipped with the discrete topology.
\item Let $X$ be a geometrically connected $\F_q$-scheme and $\bar{x}\to X\otimes_{\F_q}\F$ a geometric point. 
       Then there is a short exact sequence (see \cite[IX, Thm.~6.1]{SGA1})
       \[0\to \pi_1(X\otimes_{\F_q} \F,\bar{x})\to \pi_1(X,\bar{x})\to \Gal(\F/\F_q)\to 0.\]
        We define the {\em Weil group of $X$ at $\bar{x}$} to be the topological group
       \[W(X,\bar{x})= \pi_1(X,\bar{x})\times_{\Gal(\F/\F_q)} W(\F/\F_q).\]
        Notice that since $W(\F/\F_q)$ has the discrete topology the subgroup 
      \[\pi_1(X\otimes_{\F_q} \F,\bar{x})=\pi_1(X,\bar{x})\times_{\Gal(\F/\F_q)} \{0\}\subset W(X,\bar{x})\]
        is open and closed.
      (In particular, $W(X,\bar{x})$ is {\em not} equipped with the induced topology from its inclusion
       into $\pi_1(X,\bar{x})$, since $\pi_1(X\otimes_{\F_q} \F,\bar{x})\subset \pi_1(X,\bar{x})$ is not
      open, the quotient group being infinite.) We obtain a short exact sequence of topological groups
      \[0\to \pi_1(X\otimes_{\F_q} \F,\bar{x})\to W(X,\bar{x})\to W(\F/\F_q)\to 0.\]
\item Let $X$ and $\bar{x}\to X\otimes_{\F_q}\F$ be as above. 
          The action of $W(\F/\F_q)$ on $\F$ induces an action of $W(\F/\F_q)$ on $X\otimes_{\F_q}\F$.
         By definition a {\em Weil sheaf on $X$}\index{Weil sheaf}
         is a constructible $\Qlb$-sheaf $\sF$ on $X\otimes_{\F_q} \F$ together with
        an action of $W(\F/\F_q)$ on $\sF$, i.e.~morphisms $\sF(\sigma):\sF\to \sigma^*\sF$, such that
       $\sF(\id)=\id$ and $\tau^*\sF(\sigma)\circ\sF(\tau)=\sF(\tau\circ\sigma)$.
       We say that $\sF$ is a \emph{lisse Weil sheaf} if the underlying $\Qlb$-sheaf on $X\otimes_{\F_q}\F$ is lisse.
     Morphisms between Weil sheaves are morphisms between the underlying $\Qlb$-sheaves, 
     which are compatible with the $W(\F/\F_q)$-action in the obvious sense.
           
       We record the following properties:
       \begin{enumerate}[label={(\roman*)}]
           \item There are natural functors
                   \mlnl{( \text{constructible } \Qlb\text{-sheaves on }X)\inj (\text{Weil sheaves on }X)\\
                                     \to (\text{constructible } \Qlb\text{-sheaves on }X\otimes_{\F_q}\F).}
           Here the first functor is given by pulling back a constructible sheaf
            on $X$ to $X\otimes_{\F_q} \F$ with $W(\F/\F_q)$ acting via restriction
              of the natural $\Gal(\F/\F_q)$-action; this functor is fully faithful. The second functor is 
              given by forgetting the $W(\F/\F_q)$-action. 
            \item Furthermore one can show that there is an equivalence of categories 
                    \[(\text{lisse } \text{Weil sheaves on }X)\xr{\simeq} (\Qlb\text{-representations of }W(X,\bar{x})), \quad
                                                  \sF\mapsto \sF_{\bar{x}}.\]
                     Here we define a $\Qlb$-representations of $W(X,\bar{x})$ as a group
                    homomorphism $W(X,\bar{x})\to \Aut_{\Qlb}(V)$ with $V$ a finite dimensional $\Qlb$-vector space,
                   which factors over a continuous homomorphism $W(X,\bar{x})\to \Aut_{E}(V_E)$, where
                    $E$ is a finite field extension of $\Q_\ell$ and $V_E$ is a finite dimensional $E$-vector space 
                    with an isomorphism $V_E\otimes_E \Qlb\cong V$.
         \end{enumerate}
\item Let $U$ be a normal $\F_q$-scheme,  $U\inj X$ a divisorial compactification and $D$ an effective Cartier
         divisor on $X$ supported in $X\setminus D$. Let $\sF$ be a lisse Weil sheaf on $U$.
         Then we say that the {\em ramification of $\sF$ is bounded by $D$} if the ramification of 
          the underlying lisse $\Qlb$-sheaf on $U\otimes_{\F_q}\F$ is bounded by $D\otimes_{\F_q}\F$.
\end{enumerate}

\begin{remark}
Notice that (up to isomorphism) a lisse Weil sheaf of rank 1 on $\Spec \F_q$ is just a group homomorphism
$W(\F/\F_q)=\Z\to \Qlb^\times$, i.e. it corresponds uniquely to an element in $\Qlb^\times$.
\end{remark}

\begin{theorem}[Deligne, see {\cite{EK12}}]\label{thm-finiteness-ram-sheaves}
Let $U$ be a {\em smooth} and connected $\F_q$-scheme, $U\inj X$ a divisorial compactification and 
$D$ an effective Cartier divisor supported in $X\setminus U$. Let $r\ge 1$ be a natural number.
Then, up to twist with a lisse rank $1$ Weil sheaf coming from $\Spec \F_q$,
there are only finitely many irreducible lisse Weil sheaves of rank $r$ on $U$ whose
ramification is bounded by $D$. 
\end{theorem}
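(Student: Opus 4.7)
The plan is to follow the strategy of Deligne, worked out in detail by Esnault--Kerz in \cite{EK12}: reduce the higher-dimensional statement on $U$ to the analogous finiteness statement on the curves $C \in \Cu(U)$, where one has direct access to the Grothendieck--Ogg--Shafarevich formula and the trace formula.

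The first step is reformulation in terms of trace functions. By Brauer--Nesbitt together with a Chebotarev density argument applied to the Weil group $W(U,\bar x)$ (using that geometric Frobenius elements at closed points of $U$ are dense), an irreducible lisse Weil $\Qlb$-sheaf on $U$ is determined, up to isomorphism, by the function on closed points $x\in |U|$ given by $x \mapsto \Tr(F_x \mid \sF_{\bar x})$. Twisting by a rank one Weil sheaf coming from $\Spec \F_q$ multiplies each such trace by a common power of a fixed element of $\Qlb^\times$. So the theorem becomes equivalent to the claim that the set of trace functions arising from the sheaves in the statement is finite modulo this single rescaling.

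The second step is the reduction to curves. For every $\nu\colon C\to U$ in $\Cu(U)$ the pullback $\nu^*\sF$ is a lisse Weil sheaf on $C$ of rank $r$ whose Swan divisor on $\widehat{C}$ is bounded by $r\cdot \nu_X^* D$; in particular $\deg(\nu_X^* D)$ is bounded in terms of $D$ and the degree of $\nu$. Proposition \ref{prop:coh-dim-bound} then yields, uniformly in $\sF$, a bound on $\dim H^i_c(C\otimes_k\bar k,\nu^*\sF)$ depending only on $r$ and $\deg(\nu_X^* D)$. Combined with the Grothendieck--Lefschetz trace formula, this gives uniform bounds on the algebraic integers $\Tr(F_y\mid (\nu^*\sF)_{\bar y})$ and shows that the associated $L$-function on $C$ is a polynomial of bounded degree with integral coefficients of bounded archimedean absolute value after Deligne's Weil II purity estimates. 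The one-dimensional case of the theorem --- Deligne's finiteness theorem for curves, resting on the Langlands correspondence for $\GL_r$ over function fields (Drinfeld for $r\le 2$, L.~Lafforgue in general) --- then shows that, up to twist by a rank one Weil sheaf from $\Spec \F_q$, there are only finitely many possibilities for $\nu^*\sF$ for each fixed $C$.

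The third and hardest step is to assemble these curve-by-curve finiteness statements into a single statement on $U$: one must synchronize the $C$-dependent twists into one global twist on $U$. The mechanism is a Wiesend-style ``skeleton'' principle, which says that a lisse Weil sheaf on $U$ is determined and controlled by its restrictions to the family $\Cu(U)$, so that a compatible system of (twisted) restrictions with bounded complexity descends to a lisse Weil sheaf on $U$; this compatibility is established via Chebotarev on closed points of $U$, but matching up the trace functions on each curve only determines each restriction up to a one-parameter ambiguity given by Weil-group characters of $\Spec\F_q$. The main obstacle is precisely the rigidity needed to promote these pointwise curve-by-curve matchings to a global isomorphism up to \emph{a single} Weil twist; this ultimately requires Deligne's full purity and weight machinery from Weil II plus the existence results for lisse sheaves on curves coming from the Langlands correspondence, and is the reason the complete proof lies well beyond the techniques developed in these notes. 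For the detailed execution we refer to \cite{EK12}.
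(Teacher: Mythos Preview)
Your sketch is broadly faithful to the Deligne--Esnault--Kerz strategy and correctly identifies the main ingredients (Chebotarev, the cohomology bound from Grothendieck--Ogg--Shafarevich, the curve case via the Langlands correspondence, and Weil~II), and you are right that the complete argument lies outside these notes and is referred to \cite{EK12}.

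The paper itself, however, does not attempt to outline the three-step argument you describe. Its treatment is purely a reduction: it introduces the set $\sV_r(X,D)$ of skeleton sheaves with bounded ramification, proves in Proposition~\ref{prop-Weil-to-skeleton} that the skeleton map $\mathrm{sk}:\sR_r(X,D)\to\sV_r(X,D)$ is injective and sends irreducibles to irreducibles, and then observes that Theorem~\ref{thm-finiteness-ram-sheaves} is an immediate consequence of the more general finiteness statement for skeleton sheaves (Theorem~\ref{thm:finiteness-ram-skeleton}), whose proof is deferred entirely to \cite{EK12}. So what you present as ``Step~3'' (the Wiesend-style skeleton principle) is in the paper not the final assembly step but rather the \emph{starting} formal framework, and the hard work---your Steps~1 and~2 together with the synchronization of twists---is packaged into the unproved Theorem~\ref{thm:finiteness-ram-skeleton}. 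Your outline is therefore closer to the actual contents of \cite{EK12} than to what the paper does, which is fine, but you should be aware of the difference.

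Two small inaccuracies in your Step~2: the hypothesis ``ramification bounded by $D$'' in Definition~\ref{def:ramification-of-sheaves} already means $\Swan(\nu^*\sF)\le\nu_X^*D$, without the extra factor of $r$; and the bound in Proposition~\ref{prop:coh-dim-bound} depends on the genus of $\widehat{C}$ as well, so it is uniform in $\sF$ for each fixed $C$, not uniform over all curves simultaneously.
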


In fact Deligne  proves a more general finiteness result (see Theorem \ref{thm:finiteness-ram-skeleton}). 
We need some preparations to state it.

\subsection{Skeleton sheaves}

\subsubsection{Semi-simplification}\label{Semisimplification}
Let $G$ be a group (possibly infinite), 
$E$ a field and $V$ a finite dimensional $E$-vector space with $G$ action, i.e.~a representation of $G$.
We can view $V$ as a left $E[G]$-module  and $\dim_E V<\infty$ implies that it is artinian and noetherian.
Therefore $V$ has finite length as $E[G]$-module, i.e.~there is a chain of $E[G]$-submodules
\[0=V_0\subset V_1\subset \ldots\subset V_r=V\]
such that the successive subquotients $V_i/V_{i-1}$ are  simple (or irreducible) $E[G]$-modules.
Such a chain is called a decomposition series of $V$.
By the Jordan-H\"older Theorem (see e.g. \cite[I, \S 4.7, Thm.~6]{Bourbaki/AlgebraI}) the length
$r$ is independent of the chosen decomposition series as is
the sequence $V_r/V_{r-1},\ldots, V_1/V_0$, at least up to permutation and isomorphism.
Therefore we can associate to $V$ the $E[G]$-module
\[V^{\rm ss}=\oplus_{i=1}^r V_i/V_{i-1}.\]
The isomorphism class of this module depends only on $V$ and it is called
the {\em semi-simplification of $V$}\index{semi-simplification}. 
Note that if $E$ is a finite extension of $\Q_{\ell}$ and if $G$ is a topological group which acts continuously on $V$ equipped with the $\ell$-adic topology (cf.~\ref{rep-pi1}), then the action of $G$
on the subspaces $V_i$ is automatically continuous. The semi-simplification $V^{\rm ss}$ therefore is a continuous semi-simple $G$-representation.

In particular if $\sF$ is a lisse $\Qlb$-sheaf on a connected $k$-scheme $X$ (resp. a lisse Weil sheaf), then we can 
view it as an  $\Qlb[\pi_1(X)]$-module (resp.~$\Qlb[W(X)]$-module) and can talk about
its semi-simplification $\sF^{\rm ss}$. Notice that $\sF^{\rm ss}$ is a direct sum of irreducible lisse
$\Qlb$-sheaves (resp. lisse Weil sheaves).

\begin{notation}\label{not-R-for-Weil}
Let $U$ be a smooth and connected $\F_q$-scheme and $r\ge 1$ a natural number.
We set
\[\sR_r(U)=\{\text{isomorphism classes of lisse rank $r$ Weil sheaves on $U$}\}/ \sim_{\rm ss},\]
where $\sim_{\rm ss}$ is the equivalence relation defined by
\[\sF\sim_{\rm ss}\sG:\Longleftrightarrow \sF^{\rm ss}\cong \sG^{\rm ss}.\]
Let $U\inj X$ be a divisorial compactification and $D$ an effective Cartier divisor supported on the complement.
We set
\mlnl{\sR_r(U,D):=\sR_r(X,D)\\
                :=\{[\sF]\in \sR_r(U)\,|\, \text{the ramification of }\sF^{\rm ss}\text{ is bounded by }D \},}
here $[\sF]$ denotes the class of a lisse Weil sheaf $\sF$ of rank $r$ in $\sR_r(U)$.
\end{notation}

\begin{definition}\label{defn:skeleton-sheaves}
Let $U$ be a smooth connected $\F_q$-scheme.
\begin{enumerate}
\item  A {\em skeleton sheaf}\index{skeleton sheaf} $\sF$
on $U$ of rank $r$ is a collection
$\sF=(\sF_{C})_{C\in\Cu(U)}$, with $\sF_C\in \sR_r(C)$, which is compatible in the sense
\[{\sF_C}_{|(C\times_U C')_{\rm red}}={\sF_{C'}}_{|(C\times_U C')_{\rm red}}, \quad \text{for all }C,C'\in \Cu(U).\]
We denote by $\sV_r(U)$ the set of rank $r$ skeleton sheaves on $U$.
\item\label{defn:skeleton-sheaves-irreducible} We say that a skeleton sheaf $\sF$ on $U$ 
             is {\em irreducible}\index{irreducible!skeleton sheaf}
         if it cannot be written as a direct sum $\sF_1\oplus\sF_2$, with $\sF_i\in\sV_{r_i}(U)$, $r_i\ge 1$.
       (For $C\in \Cu(U)$ we have a well-defined notion of direct sum in 
          $\bigcup_{r\ge 0}\sR_r{C}$ and for $\sF_i=(\sF_{i,C})_{C\in\Cu(U)} \in\sV_{r_i}(U)$, $r_i\ge 1$,  
          we define   $\sF_1\oplus \sF_2= (\sF_{1,C}\oplus\sF_{2,C})_{C\in \Cu(U)}$;
             clearly $\sF_1\oplus \sF_2$ lies in $\sV_{r_1+r_2}(U)$.)
\item Let $U\inj X$ be a divisorial compactification and $D$ an effective Cartier divisor supported in $X\setminus U$. 
We say that a skeleton sheaf $\sF$ has {\em ramification bounded by $D$} if
\[\sF_C\in \sR_r(\widehat{C},\nu_X^*D),\quad \text{for all } \nu:C\to U \text{ in }\Cu(U).\]
We denote by $\sV_r(U,D)$ or $\sV_r(X,D)$ the set of rank $r$ skeleton sheaves on $U$ 
whose ramification is bounded by $D$. 
\end{enumerate}
\end{definition}

\begin{remark}
Skeleton sheaves were introduced by Deligne and Drinfeld motivated by work of Wiesend, Kerz and Schmidt.
See \cite[2.2]{EK12} for a discussion where the name is coming from.
\end{remark}

\begin{proposition}\label{prop-Weil-to-skeleton}
Let $U$ be a smooth connected $\F_q$-scheme and $r\ge 1$.
There is a well-defined map
\[{\rm sk}:\sR_r(U)\to \sV_r(U),\quad [\sF]\mapsto {\rm sk}([\sF]):=([\sF_{|C}])_{C\in \Cu(U)}.\]
It has the following properties:
\begin{enumerate}
\item\label{sk-ram} 
     If $U\inj X$ is a divisorial compactification and $D$ an effective Cartier divisor supported on $X\setminus U$,
          then ${\rm sk}$ restricts to a map $\sR_r(X,D)\to \sV_r(X,D)$.
\item\label{sk-inj} The map ${\rm sk}$ is injective.
\item\label{sk-irred}
        A lisse Weil sheaf $\sF$ on $U$ is irreducible if and only if ${\rm sk}([\sF])$ is irreducible in the sense of
        Definition \ref{defn:skeleton-sheaves}, \ref{defn:skeleton-sheaves-irreducible}.
\end{enumerate}
 \end{proposition}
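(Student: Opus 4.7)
\begin{sketch}
The first task is to verify that $\mathrm{sk}$ is well defined. Given $\sF\sim_{\mathrm{ss}}\sF'$, pulling back a Jordan--H\"older composition series of $\sF$ by $\nu\colon C\to U$ yields a filtration of $\nu^{*}\sF$ whose associated graded pieces are the pullbacks of the irreducible Jordan--H\"older factors of $\sF^{\mathrm{ss}}$; consequently $(\nu^{*}\sF)^{\mathrm{ss}}\cong(\nu^{*}\sF')^{\mathrm{ss}}$ in $\sR_{r}(C)$, and the compatibility of $\mathrm{sk}([\sF])$ on $(C\times_{U}C')_{\mathrm{red}}$ follows from functoriality of pullback.

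Part \ref{sk-ram} will use the additivity of the Swan conductor on short exact sequences of lisse sheaves on curves (\ref{swan-lisse-sheaf}, \ref{swan-lisse-sheaf-exactness}): the Swan divisor depends only on the Grothendieck class, so $\Swan(\nu^{*}\sF)=\Swan(\nu^{*}\sF^{\mathrm{ss}})$. The hypothesis $[\sF]\in\sR_{r}(X,D)$ then directly yields $\Swan(\nu^{*}\sF)\le \nu_{X}^{*}D$ for every $\nu\colon C\to U$ in $\Cu(U)$, which is precisely the condition for $\mathrm{sk}([\sF])\in\sV_{r}(X,D)$.

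Part \ref{sk-inj} will be a Chebotarev density argument. After replacing both sheaves by their semisimplifications, the claim is that a semisimple lisse Weil sheaf on $U$ is determined up to isomorphism by the trace function $x\mapsto\Tr(F_{x}\mid\sF_{\bar x})$ on closed points of $U$: the conjugacy classes of Frobenii $F_{x}$ are dense in $W(U,\bar\eta)$, and by Brauer--Nesbitt a semisimple $\ell$-adic representation is detected by its trace. For the argument one needs that every closed point of $U$ lies on some $C\in\Cu(U)$ with a lift $x'\in C$ of the same residue field, which follows from a Bertini-type argument applied to a quasi-projective compactification of $U$. The equality of skeletons then forces equality of Frobenius traces everywhere, and Chebotarev concludes.

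For part \ref{sk-irred}, the direction ``$\sF$ reducible $\Rightarrow\mathrm{sk}([\sF])$ reducible'' is formal since $\mathrm{sk}$ respects direct sums. For the converse, assume $\sF$ is irreducible and that $\mathrm{sk}([\sF])=\sG_{1}\oplus\sG_{2}$ with both $\sG_{i}$ of positive rank. The compatibility on $(C\times_{U}C')_{\mathrm{red}}$ implies that at each closed point $x\in U$, the factorization $\det(1-tF_{x}\mid\sF_{\bar x})=P_{1,x}(t)P_{2,x}(t)$ induced by $\sG_{1},\sG_{2}$ is independent of the curve through $x$; applying the same observation to powers of Frobenius gives well-defined functions $\chi_{i}\colon\{F_{x}^{n}\}\to\Qlb$ with $\chi_{\sF}=\chi_{1}+\chi_{2}$. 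By Chebotarev density and continuity these extend to class functions on $W(U,\bar\eta)$, and Brauer--Nesbitt produces honest $W(U,\bar\eta)$-representations $V_{1},V_{2}$ of ranks $r_{1},r_{2}$ with characters $\chi_{i}$; by the injectivity of $\mathrm{sk}$ (part \ref{sk-inj}) one deduces $\sF^{\mathrm{ss}}\cong V_{1}\oplus V_{2}$, contradicting irreducibility. The main obstacle will be the extension step: a pointwise factorization of characteristic polynomials on a dense set of conjugacy classes does not automatically assemble into a global decomposition of representations, and one must carefully use the compatibility provided by the skeleton structure together with the algebraic constraint that the $P_{i,x}$ are independent of the chosen curve through $x$.
\end{sketch}
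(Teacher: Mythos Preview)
Your treatment of well-definedness, part \ref{sk-ram}, and part \ref{sk-inj} matches the paper's approach almost verbatim. For the curve-through-a-point step in \ref{sk-inj}, the paper simply cuts out by the first $d-1$ elements of a regular system of parameters at $x$, which is more elementary than invoking Bertini, but the overall structure (Chebotarev plus the fact that a semisimple representation is determined by its trace) is identical.

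Part \ref{sk-irred}, however, has a genuine gap, and you have correctly put your finger on it: you never explain how to promote the pointwise factorizations $P_{1,x}(t)P_{2,x}(t)$ to continuous class functions on $W(U,\bar\eta)$. The skeleton compatibility only guarantees agreement of the factorization on zero-dimensional intersections $(C\times_U C')_{\rm red}$, and there is no mechanism by which this patches to a continuous function on the dense set of Frobenii, let alone a virtual character. Brauer--Nesbitt is simply not available at this stage, because you do not yet have a class function to feed into it.

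The paper's route is entirely different and bypasses this difficulty. Instead of trying to lift the pieces $\sG_1,\sG_2$ to global representations, one produces, for every closed point $x\in U$, a single curve $\nu_x:C_x\to U$ through $x$ on which $\nu_x^*\sF$ is \emph{itself} irreducible. This is done in two steps: first, a group-theoretic lemma (Lemma \ref{lem-irred-curve}) shows that there is a finite \'etale Galois cover $U'\to U$ such that $\nu^*\sF$ is irreducible whenever $C\times_U U'$ is connected; the key point is that the image of $\pi_1(C)$ already surjects onto $G/H_1'$ and onto $(H_1')^{\rm ab}/\ell$, and since $H_1'$ is pro-$\ell$ this forces surjectivity onto all of $G$. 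Second, a Hilbert irreducibility argument (Proposition \ref{prop-irred-curve}, using Theorem \ref{thm-Hilbert-irre}) produces such a curve through any prescribed closed point. Given this, any decomposition ${\rm sk}([\sF])=\sF_1\oplus\sF_2$ forces, on each $C_x$, one of the summands to vanish; a closed-set argument on $U$ then shows one of them vanishes everywhere, and Chebotarev finishes. The substantial input you are missing is precisely the existence of these ``good'' curves.
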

\begin{proof}
For the well-definedness of ${\rm sk}$ we have to show that when we have two lisse Weil sheaves 
$\sF$, $\sG$ of rank $r$ on $U$ with $\sF^{\rm ss}=\sG^{\rm ss}$, then 
$(\sF_{|C})^{\rm ss}=(\sG_{|C})^{\rm ss}$ for all $C\to U$ in $\Cu(U)$.
But if we identify $\sF$ with a representation of $\pi_1(U)$ and 
$0=\sF_0\subset\sF_1\subset \ldots\subset \sF_r=\sF$ is a decomposition series of $\sF$, then 
the chain $0=\sF_{0|C}\subset \sF_{1|C}\subset\ldots\subset \sF_{r|C}=\sF_{|C}$
can be refined to a composition series of $\sF_{|C}$. Hence
\[(\sF_{|C})^{\rm ss}\cong ((\sF^{\rm ss})_{|C})^{\rm ss},\]
which immediately gives the well-definedness.

Part \ref{sk-ram} of the Proposition follows directly from the definitions. The parts  \ref{sk-inj}
and \ref{sk-irred} require some preliminary interludes. We start with part
\ref{sk-inj}, for which we recall some well-known facts: 

\subsubsection{Dirichlet density}\label{Dirichlet-density}
We follow \cite{Serre/ZetaL}. Let $Y$ be an irreducible $\F_q$-scheme of dimension $n\ge 1$. 
Let $||Y||$ be the set of closed points in $Y$. Then the sum
$\sum_{y\in ||Y||} \frac{1}{q^{\deg(y)\cdot s }}$ converges absolutely
 for $s\in \C$ with ${\rm Re}(s)>n$, where $\deg(y)=[\F_q(y):\F_q]$. 
(This is a consequence of the fact that the zeta function $\zeta(Y,s)$ of $Y$  converges in this domain.)
Let $M\subset ||Y||$ be a subset. We say that $M$ has {\em Dirichlet
density}\index{Dirichlet density} $\delta$ if 
\[\lim_{s\to n} \left(\frac{\sum_{y\in M} \frac{1}{q^{\deg(y)\cdot s}}}{ \log(\frac{1}{s-n})}\right)=\delta.\]
It follows from the fact that the zeta function of $Y$ has a simple pole at $s=n$, that the Dirichlet density of
$||Y||$ equals 1.

\subsubsection{}\label{Frobenius class}
Let $Y$ be an irreducible $\F_q$-scheme and $\bar{\eta}\to Y$ a geometric point over the generic point of $Y$.
Let $K$ be the residue field of the generic point of $Y$ and $K\inj \bar{K}$ the inclusion corresponding to 
$\bar{\eta}\to\eta$. Let $\F$ be an algebraic closure of $\F_q$ and denote by $F\in \Gal(\F/\F_q)$ the geometric Frobenius 
($F(a)=a^{-q}$). For $y\in ||Y||$ we denote by $F_y\in \pi_1(Y,\bar{\eta})$ the following
conjugation class, called the {\em Frobenius class at $y$}\index{Frobenius class at a point}:
Denote by $K_y$ the completion of $K$ at $y$ and choose an algebraic closure $\bar{K}_y$ with an embedding
$\bar{K}\inj \bar{K}_y$. Denote by $I_y\subset \Gal(\bar{K}_y/K_y)$ the inertia group. We have natural maps
\[\Gal(\F/k(y))\cong \Gal(\bar{K}_y/K_y)/I_y\to \pi_1(Y,\bar{\eta})\]
and we define $F_y$ to be the conjugacy class of the image of $F^{\deg(y)}\in \Gal(\F/k(y))$ in $\pi_1(Y,\bar{\eta})$.
If we choose a different embedding $\bar{K}\inj \bar{K}_y$ then the image of $F^{\deg(y)}$
in $\pi_1(U,\bar{\eta})$ will differ by conjugation, so that the class $F_y$ is independent of all the choices.

Notice that if we have a representation $\rho:\pi(Y,\bar{\eta})\to \Aut_A(M)$ where $M$ is a finite free module 
over some ring $A$, then we can talk about the trace and the characteristic polynomial of $\rho(F_y)$.

\begin{theorem}[Artin-Chebotarev density, see {\cite[Thm.~7]{Serre/ZetaL}}]\label{thm:Chebotarev}
Let $U$ be an irreducible $\F_q$-scheme, $\bar{\eta}\to U$ a geometric point over its generic point
 and $V\to U$ a pointed finite \'etale Galois covering with Galois group $G$.
Let $S\subset G$ be a subset of $G$ which is stable under conjugation.
Then the set
\[\{u\in ||U|| \,\,|\, F_u \text{ is mapped into } S \text{ under } \pi_1(U,\bar{\eta})\twoheadrightarrow G \}\]
has Dirichlet density equal to $|S|/|G|$.
\end{theorem}

\begin{corollary}\label{cor-Frobenius-dense}
Let $\bar{\eta}\to U$ be as in Theorem \ref{thm:Chebotarev} above. Then
the subset $\bigcup_{u\in ||U||} F_u$ is dense in $\pi_1(U,\bar{\eta})$.
\end{corollary}
\begin{proof}
Since $||U||$ has Dirichlet density 1, Theorem \ref{thm:Chebotarev} yields that $S:=\bigcup_{u\in ||U||} F_u$ 
maps surjectively to any finite quotient of $\pi_1(U,\bar{\eta})$. Since $\pi_1(U,\bar{\eta})$ is profinite the
statement follows.
\end{proof}

The following Proposition is a generalization of Corollary \ref{cor-irred-rep-in-rep}.
\begin{proposition}[{\cite[\S 12, No.~1, Prop.~3]{Bourbaki/AlgebraVIII}}]\label{prop:semi-simple-trace}
Let $E$ be a field of characteristic zero and $A$ an $E$-algebra with 1 (not necessarily commutative).
Let $M$ and $M'$ be two semi-simple (left) $A$-modules which are finite dimensional as $E$-vector spaces.
Then $M$ and $M'$ are isomorphic as $A$-modules if and only if $\Tr_{M/E}(a)=\Tr_{M'/E}(a)$, for all $a\in A$.
Here $\Tr_{M/E}(a)$ denotes the trace of the $E$-linear map $M\to M$, $m\mapsto a\cdot m$.
\end{proposition}

{\bf Proof of Proposition \ref{prop-Weil-to-skeleton}, \ref{sk-inj}}.
First of all notice that for every $u\in ||U||$ there is a curve $\nu:C\to U$ in $\Cu(U)$ and a point $u_1\in C$
such that $\nu$ induces an isomorphism $k(u)\xr{\simeq} k(u_1)$. Indeed we can take 
a regular sequence $t_1,\ldots, t_d\in \sO_{U,u}$ which generates the maximal ideal.
Then the vanishing locus of $t_1,\ldots, t_{d-1}$ defines a smooth curve $C_0$ in an open neighborhood of $u\in U$
and we can take $\nu: C\to U$ as the normalization of the closure of $C_0$ in $U$. 
 
Now let $\sF$ and $\sG$ be two semi-simple lisse Weil sheaves on $U$ of rank $r$ and assume 
$\sF_{|C}\cong \sG_{|C}$, for all $\nu : C\to U$. By the above we get
$\Tr_{\sF/\Qlb}(F_u)= \Tr_{\sG/\Qlb}(F_u)$ for all $u\in ||U||$.
(Here we identify $\sF$ and $\sG$ with their representations and use the notation from \ref{Frobenius class}.)
Therefore, there exists a finite field extension $E/\Q_\ell$ such that 
$\Tr_{\sF/\Qlb}$, $\Tr_{\sG/\Qlb}:\pi_1(U)\to E$ are continuous class functions which
by Corollary \ref{cor-Frobenius-dense} coincide on a dense open subset. Since the $\ell$-adic topology on $E$ is 
Hausdorff this implies $\Tr_{\sF/\Qlb}=\Tr_{\sG/\Qlb}$. Now Proposition \ref{prop:semi-simple-trace}
with $A=E[\pi_1(U)]$ gives $\sF=\sG$.\qed
\\
\\
\indent Part \ref{sk-irred} of Proposition \ref{prop-Weil-to-skeleton} will be a consequence of a series
of lemmas and propositions. We follow closely the arguments in \cite[Prop. B.1]{EK12}:

\begin{lemma}\label{lem:surj-on-ab-vs-surj}
Let $\ell$ be a prime number and $K\to H$ a morphism of profinite groups with $H$  a pro-$\ell$ group.
Denote by $H^{\rm ab}$ the abelianization of $H$ in the category of profinite groups; it is
the quotient of $H$ by the closure of the commutator subgroup.
If the induced map $K\to H^{\rm ab}/\ell H^{\rm ab}$ is surjective, then so is $K\to H$.
\end{lemma}
\begin{proof}[Proof (cf. {\cite[I.4.2]{Serre/GaloisCohomology}}).]
We may assume without loss of generality that we have an inverse system of maps 
between finite groups $K_i\to H_i$, $i\in I$,  which in the limit gives the map $K\to H$.
(Indeed we know $H=\varprojlim_U H/U$, where the limit is over all normal open subgroups $U\subset H$.
We denote by $K_U$ the preimage of $U$ in $K$ and can replace $K$ by $\varprojlim_U K/K_U$.)
Since $H^{\rm ab}= \varprojlim_i H_i^{\rm ab}$, we are reduced to the case in which
$K$ is a finite group and $H$ a finite $\ell$-group. Now assume the map $K\to H^{\rm ab}/\ell$ is surjective.
Since $H^{\ab}$ is a finitely generated $\Z/\ell^N\Z$-module (for some $N>>0$), Nakayama's Lemma
implies that $K\to H^{\rm ab}$ is surjective.

Let $\ell^r$ be the order of $H$, then by \cite[I, \S 6.5, Thm.~1]{Bourbaki/AlgebraI} there exists a sequence
\eq{lem:surj-on-ab-vs-surj1}{H=H^1\supset H^2\supset\ldots\supset H^{r+1}=\{1\}}
 such that $[H, H^i]\subset H^{i+1}$, $1\le i\le r$,
and $H^i/H^{i+1}$ is cyclic of order $\ell$. Denote by $K'\subset H$ the image of $K$  in $H$.
Assume $K'$ is strictly contained in $H$. Then there exists a minimal $i\le r$ such that   
$N:=K'\cdot H^{i+1}\subsetneq K'\cdot H^i=H$. Then $N\subset H$ is a normal subgroup. 
(Indeed it suffices to see that $H^i$ normalizes $N$. It clearly normalizes $H^{i+1}$ and 
for $k\in K'$ and $h\in H^i$ we have  $h\cdot k\cdot h^{-1}= k \cdot [k^{-1}, h]\in N$.)
Further $H/N$ is a non-zero quotient of $H^i/H^{i+1}$ hence is cyclic of order $\ell$.
Thus the quotient map $H\to H/N$ factors over $H^{\rm ab}\to H/N$. But $K$ maps to $1$ in $H/N$, 
a contradiction to the surjectivity of $K\to H^{\rm ab}$. Thus $K'=H$.
\end{proof}
  
\begin{lemma}[{\cite[Lem.~B.2]{EK12}}]\label{lem-irred-curve}
Let $U$ be a smooth connected $\F_q$-scheme and $\sF$ an irreducible lisse $\Qlb$-sheaf on $U$.
Then there exists a connected finite \'etale  Galois cover $U'\to U$ with the following property:
For any $\nu: C\to U$ in $\Cu(U)$ such that $C\times_U U'$ is connected the pullback $\nu^*\sF$ is irreducible.
\end{lemma}
\begin{proof}
Take a DVR $R$ finite over $\Z_\ell$ and a free lisse $R$-sheaf $(\sF_n)_n$ such that $\sF=(\sF_n)\otimes_R\Qlb$.
Let $\F_\lambda=R/\fm$ be the residue field of $R$ and denote by $\rho: \pi_1^{\et}(U)\to \GL_r(R)$ 
the $R$-representation corresponding to $(\sF_n)$, where $r=\rank \sF$. 
Let $G=\rho(\pi_1^{\et}(U))$ be the image of $\rho$ and set
\[H_1:=\ker\bigg(\pi_1^{\et}(U)\xrightarrow{\rho}\GL_r(R)\to \GL_r(\F_\lambda)\bigg),\quad 
 H_2:=\bigcap_{\varphi\in\Hom_{\rm cont}(H_1,\Z/\ell)}\ker(\varphi).\]
Then $H_1$ is an open normal subgroup of $\pi_1^{\et}(U)$, hence there exists a finite \'etale Galois cover 
$U_{H_1}\to U$ with $\pi_1^{\et}(U_{H_1})=H_1$. 
Now the group $H_1^{\rm ab}/\ell$ is profinite, in particular compact, and its Pontryagin dual is equal to 
\[\Hom_{\rm cont}(H_1^{\rm ab}/\ell,\R/\Z)=\Hom_{\rm cont}(\pi_1^{\et}(U_{H_1}),\Z/\ell)
 = H^1(U_{H_1, \et}, \Z/\ell).\]
Here the second equality follows from the fact that for a finite abelian group $A$ the first cohomology group
$H^1(U_{\rm et}, A)$ is in bijection to the set of $A$-torsors over $U$ 
(see e.g. \cite[III, \S4]{Milne/EtaleCohomologyBook}).
In particular $H_1^{\rm ab}/\ell$ is a finite group, cf.~\ref{coh-properties}, \ref{coh-finite}.
Further, by definition $H_2\subset H_1$ is  a closed normal subgroup.
We have $H_2=\ker(H_1\to H_1^{\rm ab}/\ell)$. (Clearly $\supset$; for the other inclusion it suffices
to see that for any non-zero element  $\sigma\in H_1^{\rm ab}/\ell$ there exists a continuous map
$H_1^{\rm ab}/\ell\to \Z/\ell$ sending $\sigma$ to a non-zero element. The construction of such a map is easily
achieved by considering a filtration as in \eqref{lem:surj-on-ab-vs-surj1}.)
Hence $H_1/H_2=H^{\rm ab}_1/\ell$ and therefore $H_2$ is an open normal subgroup of $H_1$.
Denote by $U'\to U$ the associated Galois cover. 

Now assume $\nu: C\to U$ is in $\Cu(U)$ and $C':=C\times_U U'$ is connected.
Then $C'\to C$ is a Galois cover with Galois group $\pi^{\et}(U)/H_2$.
Denote by $H_2':=\rho(H_2)\subset H_1':=\rho(H_1)$ 
the images of  $H_i$ in $G$. Notice that $H_1'$ is a normal subgroup and is contained in
$1+\fm\End_R(R^r)$, hence is pro-$\ell$. Denote by $\rho_C$ the composition (well-defined up to conjugation)
$\rho_C:\pi_1^{\et}(C)\to \pi_1^{\et}(U)\to G$. By the above $\pi^{\et}_1(U)/H_2$ is a quotient of $\pi_1^{\rm et}(C)$.
Hence $\rho_C$ maps onto $G/H'_2$ and a fortiori onto $G/H'_1$. Let $K\subset \pi_1^{\et}(C)$ 
be the preimage of $H'_1/H_2'={H'_1}^{\rm ab}/ \ell$. Then $K\to H_1'$ is a map of profinite groups which is surjective by Lemma \ref{lem:surj-on-ab-vs-surj}. All together we see that $\rho_C$ is surjective.
Since $\rho$ is irreducible so is $\rho_C$ and this finishes the proof.
\end{proof}

We will need the following version of the Hilbert irreducibility theorem.

\begin{theorem}[see e.g. {\cite[Cor.~A.2]{Drinfeld}}]\label{thm-Hilbert-irre}
Let $C$ be a smooth projective curve over $\F_q$ with function field $K$. Let $x\in C$ be a closed point
and denote by $K_x$ the completion of $K$ along $x$. Let $V\subset \A^n_K$ be a non-empty open subset
and $V'\to V$ a connected finite \'etale covering.
Then the set of $K$-rational points  $y\in V(K)$ which do not split in $V'$ (i.e.~$y\times_V V'$ is a single point)
is dense in $V(K_x)$. Here we equip $V(K_x)\subset K_x^n$ with the subspace topology.  
\end{theorem}

\begin{lemma}\label{lem:curve-con-points}
Let $k$ be a perfect field and $y,y'\in \A^d_k$ two closed points. Then there exists an integral curve
$\Gamma\subset \A^d_k$ with $y,y'$ contained in the smooth locus of $\Gamma$.
\end{lemma}
\begin{proof}
In case $y,y'$ are $k$-rational points we can simply take $\Gamma$ to be the line $L$ connecting $y$ and $y'$.
In general we find a finite Galois extension $k'/k$ such that $y, y'$ split completely in $\A^d_{k'}$, i.e.
\[y\times_{\Spec k} \Spec k'=\{y_1,\ldots,y_n\},\quad y'\times_{\Spec k}\Spec k'=\{y'_1,\ldots, y'_{n'}\}\]
with $k'$-rational points $y_i, y'_j\in \A^d_{k'}$. Denote by $L_{i,j}$ the line in $\A^d_{k'}$ connecting
$y_i$ with $y'_j$. Set and let $\Gamma'=\cup_{i,j} L_{i,j}$; it is a curve in $\A^d_{k'}$, which is smooth in the points
$y_i, y_j'$. Since $\Gal(k'/k)$ acts via permutation on $\{y_i\}$ and on $\{y'_j\}$ it also permutes
the $L_{i,j}$. In particular $\Gamma'$ is stable under the $\Gal(k'/k)$ action and by Galois descent 
(a special case of Theorem \ref{thm:descent}) there exists a curve $\Gamma_0\subset \A^d_{k}$ with
$\Gamma_0\times_k k'=\Gamma'$.  Let $\Gamma\subset \Gamma_0$ be an irreducible component (with its reduced scheme structure.)
Then $\Gamma\times_k k'$ is a closed subcurve  of $\Gamma'$, hence is the union of certain $L_{i,j}$
and therefore contains certain $y_i, y'_j$. It follows that $\Gamma$ contains $y,y'$ and that these lie in the smooth locus of $\Gamma$.
 \end{proof}

\begin{proposition}[{\cite[Prop.~B.1]{EK12}}]\label{prop-irred-curve}
Let $U$ be a smooth connected $\F_q$-scheme, $x\in U$ a closed point and $\sF\in \sR_r(U)$ irreducible.
Then there exists a curve $\nu: C\to U$ in $\Cu(U)$ such that $x\in \nu(C)$ and $\nu^*\sF$ is irreducible.
\end{proposition}
\begin{proof}
By \cite[Prop.~(1.3.14)]{Deligne/WeilII} there exists a character $\chi\in \sR_1(\F_q)$ such that
$\sF\otimes\chi$ is a lisse $\Qlb$-sheaf. Thus we may assume from the beginning that $\sF$
is an irreducible lisse $\Qlb$-sheaf on $U$. Let $U'\to U$ be the finite \'etale Galois cover from 
Lemma \ref{lem-irred-curve}. Then it suffices to construct a curve $\nu: C\to U$ in $\Cu(U)$ such that
$U'\times_U C$ is connected and $x\in\nu(C)$.

To this end we proceed as follows.  By Noether normalization (see e.g. \cite[Cor.~16.18]{Eisenbud})
there exists a finite and generically \'etale $\F_q$-morphism $f: U\to \A^d_{\F_q}$.
Set $y:=f(x)\in \A^d_{\F_q}$. 
Denote by $V\subset \A^d_{\F_q}$ a non-empty open subset such that $f^{-1}(V)\to V$ is finite \'etale.
Let $y'\in V$ be a closed point. Then by Lemma \ref{lem:curve-con-points} we find 
 $\mu: \Gamma \to \A^n_{\F_q}$ in $\Cu(\A^n_{\F_q})$, such that $\mu$ is an isomorphism over $y,y'$.
Let $K$ be the function field of $\Gamma$ and $K_y$ the completion along $y$.
Since $\Gamma$ is smooth the choice of a local parameter at $y$ yields a unique isomorphism
of $\F_q$-algebras $K_y\cong k(y)\llparen t\rrparen$ (see e.g. \cite[II, \S4]{Serre/LocalFields}). 
On the other hand $k(y)$ is a finite extension of $\F_q$ hence of the form $k(y)=\F_q[x]/f$ for some irreducible
polynomial $f$. Thus there is a closed point $z\in \A^1_k$ with $k(z)=k(y)$ and 
 the completion of $k(\A^1)$ at $z$ is isomorphic to $K_y$, i.e.~$k(\A^1)_z\cong K_y=k(z)\llparen t\rrparen$.

Let $\A^d_{\F_q}\to \A^1_{\F_q}$ be a linear projection such that the composition 
$\Gamma\to \A^d_{\F_q}\to \A^1_{\F_q}$ is finite.
Denote by $\A^{d-1}_{k(\A^1)}$ the base change of this projection along $\Spec k(\A^1)\to \A^1_{\F_q}$ 
(inclusion of the generic point) and 
by $V_{k(\A^1)}\subset \A^{d-1}_{k(\A^1)}$ the corresponding base change of $V$.
Since $\Gamma$ is finite over $\A^1$ and $\mu(\Gamma)\cap V\ni y'$ the map $\mu$ induces a morphism
$\Spec k(\A^1)_z=\Spec K_y\to  V_{k(\A^1)}$, i.e.~an element $\mu_z\in V_{k(\A^1)}(k(\A^1)_z)$.
By Theorem \ref{thm-Hilbert-irre} we find a point $v\in V_{k(\A^1)}(k(\A^1))$ which is $t$-adically arbitrarily close to 
$\mu_z$ and which does not split in $U'_{k(\A^1)}$. 
Since $\mu_z$ spreads out to $\Spec (k(z)\llbracket t\rrbracket)=\Spec \sO_{K_y}\to \Spec \sO_{\A^d_{\F_q},y}$, 
where the closed point maps to $y$, we can achieve that $v$ spreads out to a map 
$\Spec \sO_{\A^1, z}\to \Spec \sO_{\A^d_{\F_q},y}$, which sends the closed point to $y$.
Then $v$ defines a point in $V$ whose closure in $\A^d_{\F_q}$ is a curve containing $y$ and over which in $U'$
-- a fortiori in $U$ -- lies exactly one point. Let $u\in U$ be the unique point lying over $v$.
By the going-down theorem (see e.g. \cite[Thm.~9.4, (ii)]{Matsumura}) there exists a curve in $U$ whose generic point lies
over $v$ and which contains $x$; since $u$ is the only point in $U$ mapping to $v$ we get that the closure of $u$ in $U$
contains $x$. Let $\nu: C\to U$ be the normalization of $\overline{\{u\}}$. Then $x\in \nu(C)$ and 
$U'\times_U C$ is irreducible by the choice of $v$. This finishes the proof.
 \end{proof}

{\bf Proof of Proposition \ref{prop-Weil-to-skeleton}, \ref{sk-irred}.}
Let  $\sF$ be  a lisse Weil sheaf of rank $r$ on $U$.
If $\sF$ is not irreducible, then the class $[\sF]$ in $\sR_r(U)$ can be written as a non-trivial sum
$[\sF]=[\sF_1]\oplus [\sF_2]$. Hence ${\rm sk}([\sF])={\rm sk}([\sF_1])\oplus {\rm sk}([\sF_2])$ is not irreducible in
$\sV_r(U)$. If $\sF$ is irreducible, then for all closed points $x\in U$ there exists a curve $\nu_x: C_x\to U$ such that
$x\in\nu_x(C_x)$ and $\nu_x^*\sF$ is irreducible, by Proposition \ref{prop-irred-curve}.
Assume ${\rm sk}([\sF])=\sF_1\oplus \sF_2$, with $\sF_i\in \sV_{r_i}(U)$. Then it follows that either
$\nu^*_x\sF_1=0$ for all $x\in U$ or $\nu^*_x\sF_2=0$ for all $x\in U$.
(Indeed the set $Z_i$ of all $x\in U$ such that $\nu_x^*\sF_i=0$ is closed; since $U=Z_1\sqcup Z_2$ and 
$U$ is connected we have either $Z_1=U$ or $Z_2=U$.)
Since the $\sF_i$ are lisse we obtain that either $\sF_1=0$ or $\sF_2=0$. Hence  ${\rm sk}([\sF])$ is irreducible. 
This finishes the proof of Proposition \ref{prop-Weil-to-skeleton}.
\end{proof}

In view of Proposition \ref{prop-Weil-to-skeleton}, Theorem \ref{thm-finiteness-ram-sheaves}
is a consequence of the following theorem.

\begin{theorem}[Deligne]\label{thm:finiteness-ram-skeleton}
Let $U$ be a smooth connected $\F_q$-scheme, $U\inj X$ a divisorial compactification and 
$D$ an effective Cartier divisor supported in $X\setminus U$. Then the set of irreducible
skeleton sheaves $\sF\in \sV_r(X, D)$ is finite up to twists by elements from $\sR_1(\Spec \F_q)$
and its cardinality does not depend on the choices of $\ell\neq p$.
\end{theorem}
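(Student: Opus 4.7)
The plan is to reduce the statement to the corresponding finiteness theorem for irreducible lisse Weil sheaves on smooth curves, and then exploit the curve-by-curve structure of the skeleton data to descend from the curve case back to higher-dimensional $U$.

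First I would establish a \emph{determination principle}: for any irreducible $\sF = (\sF_C)_{C \in \Cu(U)} \in \sV_r(X,D)$, the Frobenius trace function
\[t_\sF : ||U|| \longrightarrow \Qlb, \qquad t_\sF(u) := \Tr(F_u \mid \sF_{C,u}) \text{ for any } C \in \Cu(U) \text{ with } u \in \nu(C),\]
is well-defined by the compatibility condition in Definition \ref{defn:skeleton-sheaves}, and it determines $\sF$. The second assertion follows from Artin-Chebotarev density (Corollary \ref{cor-Frobenius-dense}) combined with Proposition \ref{prop:semi-simple-trace} applied to each semisimple $\sF_C$; this is essentially a refinement of the argument used in Proposition \ref{prop-Weil-to-skeleton}, \ref{sk-inj}.

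Next I would import the finiteness statement in dimension one: for any smooth projective curve $\widehat{C}$ over $\F_q$ and any effective divisor $E$ on $\widehat{C}$, the set of irreducible classes in $\sR_r(\widehat{C}, E)$ is finite modulo twist by $\sR_1(\Spec \F_q)$, with cardinality independent of $\ell \neq p$. This is a consequence of the Langlands correspondence for $\GL_r$ over function fields (L.~Lafforgue, building on Drinfeld's treatment of the rank-two case), whose input from $\ell$-adic cohomology is precisely the uniform bound on $\dim H^i_c$ established in Proposition \ref{prop:coh-dim-bound} together with Deligne's purity results from Weil II.

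The main obstacle, and the heart of the proof, is the \emph{descent} step from curves: one must produce a \emph{finite} subfamily $\{C_1, \ldots, C_N\} \subset \Cu(U)$ for which the restriction map $\sF \mapsto (\sF_{C_i})_{i=1,\ldots,N}$ is injective on irreducible classes modulo twist. The plan is to choose the $C_i$ via the Hilbert-irreducibility and Chebotarev arguments underlying Proposition \ref{prop-irred-curve}: one arranges the $C_i$ to pass through closed points of $U$ of degree at most an explicit bound depending on $r$, $\dim U$, and the complexity of $D$, so that the associated Frobenius classes generate a dense subgroup of $\pi_1^{\et}(U \otimes_{\F_q} \F, \bar{x})$; the rigidity of continuous semisimple $\Qlb$-representations then forces $t_\sF$, and hence $\sF$, to be determined by the finite tuple $(\sF_{C_i})_i$. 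Combined with the curve-case finiteness, this yields finiteness of irreducible classes in $\sV_r(X,D)$ up to twist by $\sR_1(\Spec \F_q)$. Finally, the $\ell$-independence of the resulting cardinality follows from Deligne's conjecture on the existence of $\ell'$-adic companions (a theorem of Drinfeld at this level of generality), which produces a canonical bijection between the sets of irreducible skeleton sheaves of $\Qlb$-coefficients as $\ell$ varies over primes different from $p$.
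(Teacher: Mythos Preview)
The paper does not prove this theorem: immediately after the statement it writes ``For a proof of this Theorem, which relies on the Langlands correspondence \cite{Lafforgue} and Weil II \cite{Deligne/WeilII}, we refer the reader to \cite{EK12}.'' So there is no proof in the paper to compare against, only the indication that Lafforgue and Weil~II are the essential inputs, which your sketch correctly identifies.

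That said, your descent step~3 has a genuine gap. You argue that by choosing curves $C_1,\ldots,C_N$ through enough low-degree points, the Frobenius classes at those points are dense in $\pi_1^{\et}(U\otimes_{\F_q}\F,\bar{x})$, and then invoke ``rigidity of continuous semisimple $\Qlb$-representations'' to conclude that $(\sF_{C_i})_i$ determines $\sF$. But a skeleton sheaf is \emph{not} a representation of $\pi_1(U,\bar{x})$; that is precisely the content of Deligne's open question stated right after the theorem. The compatibility condition on $\sF=(\sF_C)_C$ only constrains the $\sF_C$ along the finite sets $(C\times_U C')_{\rm red}$, so knowing $\sF_{C_1},\ldots,\sF_{C_N}$ tells you $t_\sF$ only at the closed points lying on $\bigcup_i \nu_i(C_i)$, and there is no global representation on which Chebotarev density or continuity would let you extrapolate to the remaining points of $||U||$. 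Your argument would go through if you already knew $\sF={\rm sk}([\sG])$ for some lisse Weil sheaf $\sG$ on $U$, but then you would only be reproving Theorem~\ref{thm-finiteness-ram-sheaves}, not the stronger Theorem~\ref{thm:finiteness-ram-skeleton}.

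The actual argument in \cite{EK12} is more delicate: one has to control skeleton sheaves without assuming they globalize, and this uses the curve-case consequences of Lafforgue (algebraicity and weight bounds for Frobenius eigenvalues) together with a careful counting argument, rather than a density-in-$\pi_1(U)$ argument. Your steps~1, 2, and~4 are in the right spirit, but step~3 is where the real work lies and your sketch does not address it.
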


For a proof of this Theorem, which relies on the Langlands correspondence \cite{Lafforgue}
and Weil II \cite{Deligne/WeilII}, we refer the reader to \cite{EK12}.

\begin{question}[Deligne, see {\cite[Question 1.2]{EK12}}]
Let $U$ be a smooth connected $\F_q$-scheme, $U\inj X$ a divisorial compactification 
and $D$ an effective Cartier divisor supported in $X\setminus U$.
Is the injection ${\rm sk}: \sR_r(X,D)\inj \sV_r(X, D)$ actually a bijection?
\end{question}

\begin{remark}
The above question has a positive answer for $r=1$, see \cite[Cor.~V]{Kerz-Saito/Chow-modulus}. 
\end{remark}


\section{Grothendieck-Ogg-Shafarevich in higher dimensions following Kato-Saito}\label{higher-GOS}
\setcounter{subsection}{0}\setcounter{subsubsection}{0}
In this section we give a short account of the generalization of the Grothendieck-Ogg-Shafarevich formula
to higher dimension due to Kato and Saito.

\setcounter{subsubsection}{0}
\subsubsection{}\label{conventions-hdGOS}Throughout this section we fix the following notation:
\begin{itemize}
\item $k$ is a perfect field of characteristic $p>0$ and  $\bar{k}$ an algebraic closure.
\item $\ell$ is a prime number different from $p$.
\item A $k$-scheme is a scheme which is separated and of finite type over $k$.
\item If $X$ is a $k$-scheme we denote by $CH_0(X)$ the Chow group of zero-cycles on $X$ modulo
 rational equivalence, i.e.~it is the free abelian group on the set of closed points of $X$ modulo
the subgroup generated by elements of the form $\div(f)_C$, where $C\subset X$ is a closed integral subscheme of dimension 1,
         $f\in k(C)^\times$ and $\div(f)_C$ denotes the divisor on $C$ associated to $f$, which we view as a zero-cycle on $X$. 
 \end{itemize}

\subsection{Higher dimensional version of Grothendieck-Ogg-Shafarevich}

\subsubsection{Intersection product with the log-diagonal}
Let $U$ be a smooth connected $k$-scheme of dimension $d$. Let $j:U\inj X$ and $j': U\inj X'$ be two open embeddings 
of $U$ into integral proper $k$-schemes. If there exists a morphism
$\pi: X'\to X$ such that $\pi\circ j'=j$, then $\pi$ is unique and induces a proper morphism
$\pi: X'\setminus U\to X\setminus U$ and hence also a pushforward
\[\pi_*: CH_0(X'\setminus U)\to CH_0(X\setminus U),\]
which sends the class of a closed point $x'\in X'\setminus U$ to the class of $[x':\pi(x')]\cdot \pi(x')$.
We obtain a projective system indexed by open embeddings $j:U\inj X$ as above.
We set
\[CH_0(\partial U):=\varprojlim_j CH_0(X\setminus U),\quad 
                                                  CH_0(\partial U)_\Q:=\varprojlim_j (CH_0(X\setminus U)\otimes_\Z\Q).\]
The degree map $\deg_k: CH_0(X\setminus U)\to CH_0(\Spec k)=\Z$  is induced by pushforward along the
structure map $X\to \Spec k$ and hence induces well-defined degree maps
\[\deg_k: CH_0(\partial U)\to \Z,\quad \deg_k:CH_0(\partial U)_\Q\to \Q.\]
Notice that if $U$ is a curve with unique smooth compactification $j: U\inj C$, then
$CH_0(\partial U)=CH_0(C\setminus U)$ and $CH_0(\partial U)_\Q=CH_0(C\setminus U)\otimes_\Z \Q$.

Let $V\to U$ be a finite \'etale Galois cover with Galois group $G$. 
Denote by $\Delta_V\subset V\times_U V$ the diagonal. Notice that 
\[(V\times_U V)\setminus\Delta_V=\bigsqcup_{\sigma\in G\setminus\{1\}} \Gamma_\sigma,\]
where $\Gamma_\sigma$ is the graph of $\sigma: V\to V$. 
In \cite[Thm 3.2.3]{Kato} Kato-Saito define for each $\sigma\neq 1$ an element  
\[(\Gamma_\sigma,\Delta_{\bar{V}})^{\log}\in CH_0(\partial V)_\Q,\]
which is called {\em the intersection product of $\Gamma_\sigma$  with the log-diagonal}.
It has the following properties:
\begin{enumerate}
\item The subscheme $\Gamma_\sigma\subset V\times_U V$ defines a map
     \[\Gamma^*_\sigma: H^i_c(V\otimes_k\bar{k},\Q_\ell)\xr{p_1^*}
                       H^i_c(\Gamma_\sigma\otimes_k\bar{k},\Q_{\ell})\xr{p_{2*}} 
                  H^i_c(V\otimes_k \bar{k},\Q_\ell).\]
Here the $p_i:\Gamma_\sigma\to V$, $i=1,2$, are induced by the two projection maps $V\times_U V\to V$ 
and the pushforward $p_{2*}$ is induced by
\[p_{2*}(\Q_{\ell,\Gamma_\sigma})=(\Q_{\ell,V})^{|G|}\xr{\sum} \Q_{\ell,V} .\]
      Then (see \cite[Prop. 3.2.4]{Kato})
             \[\sum_{i=0}^{2d} (-1)^i\Tr(\Gamma_\sigma^*|H^i_c(V\otimes_k\bar{k},\Q_\ell))= 
        \deg_k(\Gamma_\sigma,\Delta_{\bar{V}})^{\log}.\]
\item Assume there exist compactifications $V\inj Y$ and $U\inj X$ and an {\em \'etale} morphism
       $Y\to X$ extending the covering $V\rightarrow U$. Then the image of
       $(\Gamma_\sigma,\Delta_V)^{\log}$ under the projection
        \eq{intersection-with-diagonal-proj}{   CH_0(\partial V)_\Q  \to CH_0(Y\setminus V)\otimes_\Z\Q}
          is zero (see \cite[Cor. 3.3.4]{Kato}).
\item Let $V\inj Y$ and $U\inj X$  be compactifications and 
       $Y\to X$ a morphism extending $V\to U$. 
       Assume $X$ is smooth and $X\setminus U$ is a strict normal crossings divisor. 
       If $Y\to X$ is tamely ramified along $X\setminus U$ 
        (in the sense of Definition \ref{defn:tameness},\ref{def:tameness-wrt-emb}), then  
        $(\Gamma_\sigma,\Delta_V)^{\log}$ maps to zero under the projection
        \eqref{intersection-with-diagonal-proj} (see \cite[Prop. 3.3.5, 2.]{Kato}).
\end{enumerate}

\subsubsection{Swan character class and Swan class}
Let $U$ be a smooth connected $k$-scheme.
\begin{enumerate}
\item\label{swan-char} Let $\pi: V\to U$ be a finite \'etale Galois covering with Galois group $G$. 
                                 Then in \cite[Def.~4.1.1]{Kato} the {\em Swan character} $s_{V/U}(\sigma)\in CH_0(\partial V)$, 
                              $\sigma\in G$, is defined by
\[ s_{V/U}(\sigma)=\begin{cases} -(\Gamma_\sigma,\Delta_V)^{\log},&\text{if } \sigma\neq 1\\
                                        \sum_{\sigma\neq 1} (\Gamma_\sigma,\Delta_V)^{\log}, & \text{if }\sigma=1.\end{cases}\]
\item Let $\sF$ be a lisse $\F_\lambda$-sheaf on $U$, for some finite extension 
        $\F_\lambda$ of $\F_\ell$. Assume $\sF$ is trivialized by a Galois
	covering $\pi: V\to U$ as in \ref{swan-char}.
        Then in \cite[Def.~4.2.1]{Kato} the {\em Swan class} $\Sw_{V/U}(\sF)\in CH_0(\partial V)_\Q$ is defined 
          by 
\[\Sw_{V/U}(\sF):= \sum_{\sigma\in G_{(p)}}\left( \dim_{\F_\ell}M^\sigma- 
                   \frac{\dim_{\F_\ell} M^{\sigma^p}/M^\sigma}{p-1}\right)\cdot s_{V/U}(\sigma).\]
           Here $M$ is the $G$-representation corresponding to $\sF$, $M^\sigma=\{m\in M\,|\,\sigma(m)=m\}$
            and   $G_{(p)}=\{\sigma\in G\,|\, \sigma \text{ has }p\text{-power order}\}$.
 \item Let $A$ be an $\ell$-adic coefficient ring and $\sF$ a lisse $A$-sheaf on $U$.
          Then in \cite[Def.~4.2.8]{Kato} the {\em Swan class} $\Sw_U(\sF)\in CH_0(\partial U)_\Q$ is defined as follows:
          We find a DVR $R$ finite over $\Z_\ell$ and a lisse $R$-sheaf $(\sF_n)$ such that
         $\sF=(\sF_n)\otimes_R A$. We find a Galois cover $\pi: V\to U$ as in \ref{swan-char} trivializing $\sF_1$
	 and we define
\[\Sw_U(\sF):=\frac{1}{|G|} \cdot \pi_*\Sw_{V/U}(\sF_1).\]
          This definition is independent of the choice of $R$, $(\sF_n)$, and $\pi$.
 \end{enumerate}

\begin{remark}
\begin{enumerate}
\item Considering Lemma \ref{lem-intersection-multiplicities}, the definition of $s_{V/U}(\sigma)$ above
      should remind you of the definition     of the character $a_G$ in Theorem \ref{thm:ExistenceOfArtinComplex}.
\item If $U$ is a smooth curve with smooth compactification  $X$ and $\sF$ is a lisse $A$-sheaf on $U$, then
          $\Sw_U(\sF)=\sum_{x\in X\setminus U} \Swan_x(\sF)[x]$ in 
            $CH_0(\partial U)_\Q=CH_0(X\setminus U)\otimes_Z\Q$, see \cite[Introduction]{Kato}.
\item In \cite[Conj.~4.3.7]{Kato} it is conjectured that $\Sw_U(\sF)$ always lies in the image 
          of $CH_0(\partial U)\to CH_0(\partial U)_\Q$. In dimension 1 this is a consequence of the Hasse-Arf theorem
         and the remark above. In dimension 2 this conjecture is proved in \cite[Cor.~5.1.7]{Kato}.
\end{enumerate}
\end{remark}

The higher dimensional version of the Grothendieck-Ogg-Shafarevich formula now takes the following form:
\begin{theorem}[{\cite[4.2.9]{Kato}}]\label{thm:higherGOS}
Let $U$ be a smooth and connected $k$-scheme of dimension $d$ and $\sF$ a lisse $\Qlb$-sheaf on $U$.
Then 
\mlnl{\chi_c(U\otimes_k \bar{k},\sF):=\sum_{i=0}^{2d}(-1)^i\dim_{\Qlb} H^i_c(U\otimes_k \bar{k},\sF)\\
         =\rk(\sF)\cdot\chi_c(U,\Qlb)-\deg_k(\Sw_U(\sF)). }
\end{theorem}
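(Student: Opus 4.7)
The plan is to mimic the proof of the one-dimensional case (Theorem \ref{thm-GOS}), replacing the classical Lefschetz trace formula by the first listed property of the log-diagonal intersection product: for a finite \'etale Galois cover $\pi:V\to U$ with group $G$ and $\sigma\in G\setminus\{1\}$,
\[
\sum_{i=0}^{2d}(-1)^i\Tr(\Gamma_\sigma^*\mid H^i_c(V\otimes_k\bar{k},\Q_\ell))=\deg_k(\Gamma_\sigma,\Delta_V)^{\log}.
\]
This is the main new geometric input, and it already encodes the ``boundary correction'' that in the curve case came from combining the Lefschetz formula on $\widehat{C}$ with the localization sequence for $\widehat{C}\setminus C$.

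First I would reduce to the case $k=\bar{k}$ (both sides are compatible with base change along $k\inj\bar{k}$ after weighting by residue degrees, exactly as in Lemma \ref{lem-swan-alg-closed} and the definition of $\deg_k$ on $CH_0(\partial U)_\Q$). Next I would reduce to a lisse sheaf $\sF$ trivialized by a connected finite \'etale Galois cover $\pi:V\to U$ with group $G$: pick a lattice (Lemma \ref{lemma:lattice}) and use the surjectivity statement from Theorem \ref{thm:mainRepThmIntegrality}, \ref{thm:mainRepThmIntegrality-d-surj}, together with the additivity of $\chi_c$ and of $\Sw_U(-)$ in short exact sequences, to reduce first to a lisse $\F_\lambda$-sheaf and then, after enlarging $V$, to the case where $\sF$ comes from a finitely generated $\F_\lambda[G]$-module $M$ of dimension $r$.

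Granting these reductions, the argument parallels the proof of Theorem \ref{thm-GOS}. Using $\pi_*\Qlb_V\otimes M\simeq \pi_*(M_V)$ together with Lemma \ref{lem-group-action-coh}, one obtains a $G$-equivariant isomorphism $H^i_c(U\otimes_k\bar{k},\sF)\cong(H^i_c(V\otimes_k\bar{k},\Qlb)\otimes_\Qlb M)^G$, hence
\[
\chi_c(U\otimes_k\bar{k},\sF)=\frac{1}{|G|}\sum_{\sigma\in G}\Tr(\sigma\mid M)\cdot\sum_i(-1)^i\Tr(\sigma^*\mid H^i_c(V\otimes_k\bar{k},\Qlb)).
\]
The $\sigma=1$ term equals $r\cdot\chi_c(V\otimes_k\bar{k},\Qlb)/|G|=r\cdot\chi_c(U\otimes_k\bar{k},\Qlb)$, using that $V\to U$ is finite \'etale of degree $|G|$; each $\sigma\neq 1$ term is converted by the log Lefschetz formula to $\Tr(\sigma\mid M)\cdot\deg_k(\Gamma_\sigma,\Delta_V)^{\log}$. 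Comparing the resulting expression against $\Sw_U(\sF)=|G|^{-1}\pi_*\Sw_{V/U}(\sF)$ and the definition of $s_{V/U}(\sigma)$, the theorem reduces to the character-theoretic identity
\[
\sum_{\sigma\neq 1}\Tr(\sigma\mid M)\,s_{V/U}(\sigma)=\pi^*\Sw_U(\sF)\quad\text{in }CH_0(\partial V)_\Q,
\]
i.e.\ to showing that on $G$ the class function $\sigma\mapsto\dim_{\F_\ell}M^\sigma-(p-1)^{-1}\dim_{\F_\ell}(M^{\sigma^p}/M^\sigma)$ on $G_{(p)}$ (extended by $0$) encodes the same information as the trace character via the Artin--Swan formalism.

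The main obstacle is the log Lefschetz trace formula itself: unlike in the curve case, the varieties $V$ here are not proper, and proving the identity for an arbitrary correspondence $\Gamma_\sigma$ requires either a refined Lefschetz--Verdier formula with a well-chosen compactification of $V$ whose boundary is controlled, or the logarithmic framework developed by Kato--Saito; this is precisely the content of \cite[Prop.~3.2.4]{Kato} and it lies well beyond elementary methods. A secondary difficulty is the modular-representation-theoretic step identifying the two class functions on $G$ above: this plays the role of Hasse--Arf in the curve case and again rests on the careful definition of $s_{V/U}$ through intersection with the log-diagonal, together with a Brauer-type induction on $p$-elementary subgroups to concentrate all contributions on $G_{(p)}$.
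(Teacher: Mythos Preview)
The paper does not prove this theorem; it only states it with a reference to Kato--Saito \cite[4.2.9]{Kato}, so there is no ``paper's own proof'' to compare against. Your outline is the natural adaptation of the curve argument and is in the right spirit, and you correctly isolate the two substantive inputs (the log Lefschetz trace identity of \cite[Prop.~3.2.4]{Kato} and the Brauer-character identity relating the trace function on $G$ to the coefficients $\dim_{\F_\ell}M^\sigma-(p-1)^{-1}\dim_{\F_\ell}(M^{\sigma^p}/M^\sigma)$ appearing in $\Sw_{V/U}$). Both of these are genuine results of \cite{Kato} and cannot be supplied by elementary means.

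There is, however, one step you treat as routine which is not: you assert that the $\sigma=1$ term $\tfrac{r}{|G|}\chi_c(V\otimes_k\bar k,\Qlb)$ equals $r\cdot\chi_c(U\otimes_k\bar k,\Qlb)$ ``using that $V\to U$ is finite \'etale of degree $|G|$''. Multiplicativity of $\chi_c$ under finite \'etale covers is not automatic in positive characteristic once $U$ is open in a larger variety: $\pi_*\Qlb$ is a lisse sheaf of rank $|G|$ on $U$ whose ramification along $\partial U$ is governed by the (possibly wild) ramification of $V/U$ at the boundary, so $\chi_c(V)=\chi_c(U,\pi_*\Qlb)$ need not equal $|G|\chi_c(U,\Qlb)$ a priori. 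In the curve case this was supplied by the Hurwitz formula (Lemma~\ref{lem-EulerChar-Hur}), and the boundary terms there combined with the $\sigma\neq 1$ contributions to produce $\sw_{G_{U'},x}(1)$; the same cancellation must be arranged here. The cleanest fix is to subtract $r$ times the identity for $\sF=\Qlb$ (noting that $\Sw_U(\Qlb)=0$ by independence of the trivializing cover), which eliminates the $\chi_c(V)$ term and leaves
\[
\chi_c(U,\sF)-r\,\chi_c(U,\Qlb)=\frac{1}{|G|}\sum_{\sigma\neq 1}\bigl(\Tr(\sigma\mid M)-r\bigr)\deg_k(\Gamma_\sigma,\Delta_V)^{\log},
\]
and it is this difference that must then be matched with $-\deg_k\Sw_U(\sF)$ via the representation-theoretic identity. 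With that correction your sketch is an accurate roadmap, but the heavy lifting remains inside \cite{Kato}.
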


We conclude with some remarks on more recent developments. For a more extensive overview we refer to \cite{Xiao}.

\subsection{Outlook on ramification theory following Abbes-Saito}
\begin{enumerate}
	\item In \cite{AbbesSaito07}, Abbes and Saito give a refinement of Theorem \ref{thm:higherGOS}.
		If $\pi: X\to\Spec k$ is a $k$-scheme and $A$ an $\ell$-adic coefficient ring, then
		the dualizing complex $K_X:=\pi^!(A_{\Spec k})$ is defined as an object in the 
		derived category of complexes of $A$-modules of finite tor-dimension with constructible cohomology.
		(In case $\pi$ is smooth of pure dimension $d$, $K_X\cong A(d)[2d]$.)
		Let $\sF$ be a constructible sheaf of free $A$-modules. Then in \cite[Def.~2.1.1]{AbbesSaito07}
		the {\em characteristic class of $\sF$} is defined as an element $C(\sF)\in H^0(X, K_X)$.
		It has the property that if $\pi$ is proper, then 
		\[\Tr_\pi(C(\sF))=\chi(X_{\bar{k}},\sF)\in A\]
		where $\Tr_\pi: H^0(X, K_X)\to H^0(\Spec k, A)=A$ is the trace map.

		Now assume $U$ is a smooth $k$-scheme and that $j:U\inj X$ is a compactification.
		Let $\sF$ be a free lisse $A$-sheaf on $U$ which is of Kummer type with respect to $X$
		(see \cite[Def.~3.1.1]{AbbesSaito07}). Then (\cite[Thm.~3.3.1]{AbbesSaito07})
		\eq{finer-higherGOS}{C(j_!\sF)= \rk(\sF)\cdot C(j_!A_U)-
			[\Sw_U^{\text{naive}}(\sF)]\quad \text{in }H^0(X,K_X),}
		where $\Sw_U^{\text{naive}}(\sF)$ is the naive Swan class (it is conjectured to be equal 
		to $\Sw_U(\sF)$ and it is known that they have the same degree) and 
		$[\Sw_U^{\text{naive}}(\sF)]$ denotes the cycle class of $\Sw_U^{\text{naive}}(\sF)$ in $H^0(X,K_X)$.
		In particular applying $\Tr_\pi$ we get back Theorem \ref{thm:higherGOS} for $\sF$ as above.
		Actually, Theorem \ref{thm:higherGOS} can be reproved in its full generality using the 
		equality \eqref{finer-higherGOS}, see \cite[Cor.~3.3.2]{AbbesSaito07}.
	\item\label{item:logSwanConductor} We comment on the relation between the ramification theory via curves which is described in Section \ref{RTvC}
		and the ramification theory developed by Abbes-Saito in \cite{AbbesSaito02}, \cite{AbbesSaito03},
		\cite{Saito09}, \cite{AbbesSaito11}. 

		Let $K$ be a complete discrete valuation field with arbitrary residue field,
		$K^{\rm sep}$ a separable closure of $K$ and $G=\Gal(K^{\rm sep}/K)$ the absolute Galois group.
		Then in \cite{AbbesSaito02} a {\em logarithmic ramification filtration} 
		$(G^r_{\log})_{r\in \Q_{\ge 0}}$ is defined, which has the following properties:
		\begin{enumerate}[label={(\roman*})]
			\item $G^s_{\log}\subset G^r_{\log}$, for $s<r$.
			\item $G_{\log}^0=I=$ inertia subgroup of $G$, 
				$G_{\log}^{0+}:=\overline{\bigcup_{r>0}G_{\log}^r}=P=$ wild inertia subgroup.
			\item Set $G_{\log}^{r+}:=\overline{\bigcup_{s>r}G_{\log}^s}$.  
				If $K$ has characteristic $p>0$ and ``comes from geometry'', then the quotient
				$G^r_{\log}/G^{r+}_{\log}$ is abelian and is killed by $p$ for all $r>0$.
			\item\label{item:logSwanConductor:d} In particular, if $\ell$ is a prime different from $p$, then according to Lemma
				\ref{lemma:proPfactorsThroughFiniteQuotient},
				every $\ell$-adic representation $V$ of
				$G_{\log}^{0+}$ factors through a finite
				quotient, and as in Proposition
				\ref{prop:breakDecomposition} we obtain a break
				decomposition $V=\bigoplus_{x\in \Q_{\geq 0}}
				V(x)$. One then defines the \emph{logarithmic Swan
					conductor} $\Sw^{\log}(V):=\sum_{x\in
					\Q_{\geq 0}} x\dim V(x)$.  \item If the
				residue field of $K$ is perfect, the filtration
				$G_{\log}^r$ coincides with the classical
				ramification filtration from Definition
				\ref{defn:upperNumberingInfiniteExtension}, and
				for an $\ell$-adic representation $V$ of $G$ we
				have $\Sw(V)=\Sw^{\log}(V)$.
		\end{enumerate}

		Now let $U$ be a smooth connected $k$-scheme and denote by
		$\bar{\eta}\to U$ a geometric point over the generic point of
		$U$. Assume that we have a dominant open embedding $j:U\inj X$
		into a smooth $k$-scheme such that the complement $X\setminus U$
		with its reduced structure is a strict normal crossings divisor
		$D=\cup_i D_i$. Denote the generic points of $D$ by $\xi_i\in
		D_i$ and by $G_{\xi_i}$ the decomposition subgroups at $\xi_i$
		of the absolute Galois group of $k(X)$ (well-defined up to
		conjugation).  Let $\sF$ be a free lisse $A$-sheaf on $U$.
		Define the \emph{logarithmic Swan conductor of $\mathcal{F}$ with respect to $X$ as} 
		\[\Sw_{X}^{\log}(\mathcal{F}):=\sum_{i}\Sw_{\xi_i}^{\log}(\mathcal{F})\cdot D_i,\]
		where $\Sw^{\log}_{\xi_i}(\mathcal{F})$ is the logarithmic Swan
		conductor of the representation $\mathcal{F}|_{G_{\xi}}$ in the
		sense of \ref{item:logSwanConductor},
		\ref{item:logSwanConductor:d} above. Note that this is
		well-defined, even though $G_{\xi_i}$ is only well-defined up to
		conjugation.

		In \cite[after Remark 3.10]{EK12} the following conjecture is
		formulated (see also \cite[Conj.~B]{Barrientos}):
		\begin{conjecture}
			Let $U$ be a smooth connected $k$-scheme and $\sF$ a lisse $A$-sheaf on $U$. Let
			$j:U\inj X$ be a divisorial compactification and $D$ an effective Cartier divisor supported in $X\setminus U$.
			Then the following two statements are equivalent:
			\begin{enumerate}
				\item The ramification of $\sF$ is bounded by $D$ in the sense of Definition \ref{def:ramification-of-sheaves}.
				\item For all $k$-morphisms $h: V\to X$ fitting into a commutative diagram 
					\[\begin{tikzcd}    
							~ & V\ar{d}{h}\\
							U\ar{ur}{j'}\ar[swap]{r}{j} & X,
						\end{tikzcd}\]
					such that $V$ is a smooth $k$-scheme, $j'$ is an open
					dominant embedding 
					and the complement $V\setminus U$ is a strict normal crossings divisor, we have 
					\[\Sw_V^{\log}(\sF)\le h^*D.\]
			\end{enumerate}
		\end{conjecture}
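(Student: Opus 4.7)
I would attempt the two implications separately, viewing (b)$\Rightarrow$(a) as a consequence of restriction to curves and (a)$\Rightarrow$(b) as the substantive direction.

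For (b)$\Rightarrow$(a) I would fix $\nu:C\to U$ in $\Cu(U)$ with proper extension $\nu_X:\widehat{C}\to X$, and first produce a smooth $k$-scheme $V$ with a dominant open immersion $U\hookrightarrow V$, a strict normal crossings complement $V\setminus U$, and a morphism $h:V\to X$ extending the identity on $U$. In characteristic $0$ this is standard resolution of singularities; in positive characteristic I would invoke de Jong's theorem on alterations together with a blow-up argument, accepting the auxiliary task of descending the eventual inequality from the alteration back to $V$. By the valuative criterion, $\nu_X$ lifts uniquely to a morphism $\mu:\widehat{C}\to V$ with $\mu^{-1}(U)=C$. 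Applying (b) to $h$ would give ${\rm Cond}^{\log}(h^*\sF)\le h^*D$ on $V$. For a closed point $x\in\widehat{C}\setminus C$ lying over a generic point $\xi_{i(x)}$ of a component $D_{i(x)}\subset V\setminus U$, the pullback formula for the Abbes--Saito log ramification filtration under the regular morphism $\Spec\widehat{\sO}_{\widehat{C},x}\to\Spec\widehat{\sO}_{V,\mu(x)}$, combined with the equality of the log and classical filtrations at a DVR with perfect residue field, should yield
\[\Swan_x(\nu^*\sF)={\rm cond}^{\log}_x(\mu^*h^*\sF)\le m_x\cdot{\rm cond}^{\log}_{\xi_{i(x)}}(h^*\sF)\le m_x\cdot\mathrm{mult}_{D_{i(x)}}(h^*D),\]
where $m_x$ is the multiplicity of $\mu^*D_{i(x)}$ at $x$. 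Summing over $x$ would give $\Swan(\nu^*\sF)\le\nu_X^*D$.

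For (a)$\Rightarrow$(b) I would fix $h:V\to X$ as in (b) and a generic point $\xi_i$ of a component $D_i\subset V\setminus U$. The strategy is to realise ${\rm cond}^{\log}_{\xi_i}(h^*\sF)$ as the Swan conductor of the restriction of $h^*\sF$ to a sufficiently general smooth curve $C'\subset V$ meeting $D_i$ transversally at a single very general closed point $v$ and avoiding all other components. Granting such a generic-constancy statement (partially established in \cite{Saito09} and \cite{Barrientos}), I would set $C:=C'\cap U$, take the induced map $\nu:\widehat{C}\to X$ as an element of $\Cu(U)$, and identify the unique point $x\in\widehat{C}$ above $v$. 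Then hypothesis (a) applied to $\nu$ together with the transversality of $C'$ to $D_i$ at $v$ would give
\[{\rm cond}^{\log}_{\xi_i}(h^*\sF)=\Swan_x(\nu^*\sF)\le(\nu_X^*D)_x=\mathrm{mult}_{D_i}(h^*D),\]
and summing over $i$ would yield ${\rm Cond}^{\log}(h^*\sF)\le h^*D$.

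The hard part will be the generic-constancy result invoked in (a)$\Rightarrow$(b): one must establish that for a general transverse curve $C'$, the log conductor at the generic point of $D_i$ is actually attained by the Swan conductor at the transverse intersection point. In full generality this identity is not known and lies at the heart of the geometric theory of Abbes--Saito ramification, intertwined with cleanliness and non-degeneracy conditions and with Saito's theory of the characteristic cycle; a proof at the stated level of generality seems to require substantial new input of this nature. A secondary difficulty, in direction (b)$\Rightarrow$(a), is the unavailability of embedded resolution of singularities in positive characteristic, which forces one to work with alterations and to argue carefully that the resulting log-conductor inequality descends to $V$.
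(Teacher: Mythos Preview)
The statement you are attempting to prove is labelled \emph{Conjecture} in the paper, and the paper does not give a proof: it only records that the case $D=0$ is Proposition~\ref{prop:tame-sheaf-vs-rep} (a consequence of the Kerz--Schmidt theorem, Theorem~\ref{thm:differentNotionsOfTameness}), and that the case where $X$ is smooth and $\sF$ has rank $1$ is due to Barrientos. So there is no proof in the paper to compare your proposal against; in general the conjecture is open.

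Your outline is a reasonable sketch of how one would expect a proof to go, and you correctly isolate the two hard points. One remark on (b)$\Rightarrow$(a): the difficulty with alterations is more serious than you indicate. Hypothesis (b) only quantifies over diagrams in which $j':U\hookrightarrow V$ is an \emph{open dominant embedding}. A de Jong alteration produces a smooth $V'$ with SNC boundary, but with $U'\to U$ a proper generically \'etale map rather than the identity; in particular $U$ is not an open subscheme of $V'$, so (b) says nothing about $V'$. To make the alteration route work you would need to relate ${\rm Cond}^{\log}$ on $V'$ to ${\rm Cond}^{\log}$ on some honest $V$ containing $U$, or else to argue directly from the curve $C$ to some $V$ of the required form --- which brings you back to embedded resolution. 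This is not a minor bookkeeping step.
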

		For $D=0$ this is Proposition \ref{prop:tame-sheaf-vs-rep};  for $X$ smooth and $\sF$ of rank $1$
		this conjecture is proved in \cite[Thm.~7.1]{Barrientos}. 
\end{enumerate}

\providecommand{\bysame}{\leavevmode\hbox to3em{\hrulefill}\thinspace}
\providecommand{\MR}{\relax\ifhmode\unskip\space\fi MR }
\providecommand{\MRhref}[2]{%
  \href{http://www.ams.org/mathscinet-getitem?mr=#1}{#2}
}
\providecommand{\href}[2]{#2}

\input{madrid.ind}
\newpage


\end{document}